\theoremstyle{plain}
\newtheorem{thm}[subsection]{Theorem}
\newtheorem{cor}[subsection]{Corollary}
\newtheorem{prop}[subsection]{Proposition}
\newtheorem{assumption}[subsection]{Basic Assumption}
\newtheorem{homotopical_assumption}[subsection]{Homotopical Assumption}
\newtheorem{cofibrancy}[subsection]{Cofibrancy Condition}
\theoremstyle{definition}
\newtheorem{defn}[subsection]{Definition}
\theoremstyle{remark}
\newtheorem{rem}[subsection]{Remark}
\newtheorem{calculation}[subsection]{Calculation}
\let\c@equation\c@subsection 
\newcommand{\CC}{{ \mathsf{C} }}
\newcommand{\DD}{{ \mathsf{D} }}
\newcommand{\ZZ}{{ \mathbb{Z} }}
\newcommand{\Ho}{{ \mathsf{Ho} }}
\newcommand{\sSet}{{ \mathsf{S} }}
\newcommand{\ssSet}{{ \mathsf{sS} }}
\newcommand{\Chaincx}{{ \mathsf{Ch} }}
\newcommand{\ModR}{{ \mathsf{Mod}_\capR }}
\newcommand{\sModR}{{ \mathsf{sMod}_\capR }}
\newcommand{\Spectra}{{ \mathsf{Sp}^\Sigma }}
\newcommand{\Operad}{{ \mathsf{Op} }}
\newcommand{\M}{{ \mathsf{M} }}
\newcommand{\SymArray}{{ \mathsf{SymArray} }}
\newcommand{\SymSeq}{{ \mathsf{SymSeq} }}
\newcommand{\sSymSeq}{{ \mathsf{sSymSeq} }}
\newcommand{\Set}{{ \mathsf{Set} }}
\newcommand{\Rt}{{ \mathsf{Rt} }}
\newcommand{\Lt}{{ \mathsf{Lt} }}
\newcommand{\Alg}{{ \mathsf{Alg} }}
\newcommand{\sLt}{{ \mathsf{sLt} }}
\newcommand{\sAlg}{{ \mathsf{sAlg} }}
\newcommand{\LL}{{ \mathsf{L} }}
\newcommand{\RR}{{ \mathsf{R} }}
\newcommand{\unit}{{ {\mathcal{K}} }}
\newcommand{\Bi}{{ \mathsf{Bi} }}
\newcommand{\Cube}{{ \mathsf{Cube} }}
\newcommand{\pCube}{{ \mathsf{pCube} }}
\newcommand{\QQ}{{ \mathsf{Q} }}
\newcommand{\TQ}{{ \mathsf{TQ} }}
\newcommand{\K}{{ \mathsf{K} }}
\newcommand{\AlgO}{{ \Alg_\capO }}
\newcommand{\LtO}{{ \Lt_\capO }}
\newcommand{\RtO}{{ \Rt_\capO }}
\newcommand{\sAlgO}{{ \sAlg_\capO }}
\newcommand{\sLtO}{{ \sLt_\capO }}
\newcommand{\capO}{{ \mathcal{O} }}
\newcommand{\capR}{{ \mathcal{R} }}
\newcommand{\capA}{{ \mathcal{A} }}
\newcommand{\Ev}{{ \mathrm{Ev} }}
\newcommand{\ev}{{ \mathrm{ev} }}
\newcommand{\inmap}{{ \mathrm{in} }}
\newcommand{\id}{{ \mathrm{id} }}
\newcommand{\op}{{ \mathrm{op} }}
\newcommand{\pr}{{ \mathrm{pr} }}
\newcommand{\HH}{{ \mathsf{h} }}
\newcommand{\hwedge}{{ \HH\wedge }}
\newcommand{\Smash}{{ \,\wedge\, }}
\newcommand{\ol}[1]{{ \overline{{#1}} }}
\newcommand{\tensor}{{ \otimes }}
\newcommand{\tensorcheck}{{ \check{\tensor} }}
\newcommand{\tensortilde}{{ \tilde{\tensor} }}
\newcommand{\tensordot}{{ \dot{\tensor} }}
\newcommand{\wequiv}{{ \ \simeq \ }}
\newcommand{\Iso}{{  \ \cong \ }}
\newcommand{\Equal}{{ \ = \ }}
\newcommand{\rarrow}{{ \longrightarrow }}
\newcommand{\larrow}{{ \longleftarrow }}
\newcommand{\circtilde}{{ \,\tilde{\circ}\, }}
\newcommand{\functor}[3]{{ {#1}\colon\thinspace{#2}\rarrow{#3} }}
\newcommand{\function}[3]{{ {#1}\colon\thinspace{#2}\rarrow{#3} }}
\newcommand{\functionsub}[3]{{ {#1}\colon{#2}\rightarrow{#3} }}
\newcommand{\subsetof}{{ \ \subset\ }}
\DeclareMathOperator{\hocolim}{hocolim}
\DeclareMathOperator{\colim}{colim}
\DeclareMathOperator{\holim}{holim}
\DeclareMathOperator{\Map}{Map}
\DeclareMathOperator{\End}{End}
\DeclareMathOperator{\BAR}{Bar}
\DeclareMathOperator{\U}{U}
\DeclareMathOperator{\Tor}{Tor}
\DeclareMathOperator{\Hombold}{\mathbf{Hom}}
\title[Homotopy completion and topological {Q}uillen homology]{Homotopy completion and topological {Q}uillen homology of structured ring spectra}
\author{John E. Harper}
\author{Kathryn Hess}
\address{Department of Mathematics, University of Western Ontario, London, Ontario, N6A 5B7, Canada}
\address{MATHGEOM, \'Ecole Polytechnique F\'ed\'erale de Lausanne, CH-1015 Lausanne, Switzerland}
\email{john.edward.harper@gmail.com}
\address{MATHGEOM, \'Ecole Polytechnique F\'ed\'erale de Lausanne, CH-1015 Lausanne, Switzerland}
\email{kathryn.hess@epfl.ch}
\begin{document}

\begin{abstract}  
Working in the context of symmetric spectra, we describe and study a homotopy completion tower for algebras and left modules over operads in the category of modules over a commutative ring spectrum (e.g., structured ring spectra). We prove a strong convergence theorem that for $0$-connected algebras and modules over a $(-1)$-connected operad, the homotopy completion tower interpolates (in a strong sense) between topological Quillen homology and the identity functor. 

By systematically exploiting strong convergence, we prove several  theorems concerning the topological Quillen homology of algebras and modules over operads. These include a theorem relating finiteness properties of topological Quillen homology groups and homotopy groups that can be thought of as a spectral algebra analog of Serre's finiteness theorem for spaces and H.R. Miller's boundedness result for simplicial commutative rings (but in reverse form). We also prove absolute and relative Hurewicz theorems and a corresponding Whitehead theorem for topological Quillen homology. Furthermore, we prove a rigidification theorem, which we use to describe completion with respect to topological Quillen homology (or $\TQ$-completion). The $\TQ$-completion construction can be thought of as a spectral algebra analog of Sullivan's localization and completion of
spaces, Bousfield-Kan's completion of spaces with respect to homology,
and Carlsson's and Arone-Kankaanrinta's completion and localization of spaces with respect to stable
homotopy. We prove analogous results for algebras and left modules over operads in unbounded chain complexes.
\end{abstract}

\maketitle

\section{Introduction}

Associated to each non-unital commutative ring $X$ is the completion tower arising in commutative ring theory
\begin{align}
\label{eq:NUCA_completion_tower}
  X/X^2 \leftarrow X/X^3 \leftarrow \cdots \leftarrow
  X/X^{n} \leftarrow X/X^{n+1} \leftarrow \cdots
\end{align}
of non-unital commutative rings. The limit of the tower \eqref{eq:NUCA_completion_tower} is the completion $X^\wedge$ of $X$, which is sometimes also called the $X$-adic completion of $X$. Here, $X/X^n$ denotes the quotient of $X$ in the underlying category by the image of the multiplication map $X^{\otimes n}\rarrow X$. In algebraic topology, algebraic $K$-theory, and derived algebraic geometry, it is common to encounter objects that are naturally equipped with algebraic structures more general than, for example, commutative rings, but that share certain formal similarities with these classical algebraic structures. A particularly useful and interesting class of such generalized algebraic structures are those that can be described as algebras and modules over operads; see Fresse \cite{Fresse_lie_theory}, Goerss-Hopkins \cite{Goerss_Hopkins_moduli_spaces}, Kriz-May \cite{Kriz_May}, Mandell \cite{Mandell}, and McClure-Smith \cite{McClure_Smith_conjecture}. 

These categories of (generalized) algebraic structures can often be equipped with an associated homotopy theory, or Quillen model category structure, which allows one to construct and calculate derived functors on the associated homotopy category. In \cite[II.5]{Quillen}, Quillen defines ``homology'' in the general context of a model category---now called Quillen homology---to be the left derived functor of abelianization, if it exists. Quillen homology often behaves very much like the ordinary homology of topological spaces, which it recovers as a special case. Quillen \cite{Quillen_rings} and Andr\'e \cite{Andre} originally developed and studied a particular case of Quillen's notion of homology for the special context of commutative rings, now called Andr\'e-Quillen homology. A useful introduction to Quillen homology is given in Goerss-Schemmerhorn \cite{Goerss_Schemmerhorn}; see also Goerss \cite{Goerss_f2_algebras} and H.R. Miller \cite{Miller} for a useful development (from a homotopy viewpoint) in the case of augmented commutative algebras.

In this paper we are primarily interested in the topological analog of Quillen homology, called topological Quillen homology, for (generalized) algebraic structures on spectra. The topological analog for commutative ring spectra, called topological Andr\'e-Quillen homology, was originally studied by Basterra \cite{Basterra}; see also Baker-Gilmour-Reinhard \cite{Baker_Gilmour_Reinhard}, Baker-Richter \cite{Baker_Richter}, Basterra-Mandell \cite{Basterra_Mandell, Basterra_Mandell_thh}, Goerss-Hopkins \cite{Goerss_Hopkins}, Lazarev \cite{Lazarev}, Mandell \cite{Mandell_TAQ}, Richter \cite{Richter}, Rognes \cite{Rognes_topological_Galois, Rognes_logarithmic} and Schwede \cite{Schwede_cotangent, Schwede_algebraic}.

\begin{assumption}
\label{assumption:commutative_ring_spectrum}
From now on in this paper, we assume that $\capR$ is any commutative ring spectrum; i.e., we assume that $\capR$ is any commutative monoid object in the category $(\Spectra,\tensor_S,S)$ of symmetric spectra \cite{Hovey_Shipley_Smith, Schwede_book_project}. Here, the tensor product $\tensor_S$ denotes the usual smash product \cite[2.2.3]{Hovey_Shipley_Smith} of symmetric spectra (Remark \ref{rem:smash_product_and_tensor_product}).
\end{assumption}

\begin{rem}
Among \emph{structured ring spectra} we include many different types of algebraic structures on spectra (resp. $\capR$-modules) including (i) associative ring spectra, which we simply call ring spectra, (ii) commutative ring spectra, (iii) all of the $E_n$ ring spectra for $1\leq n\leq \infty$ that interpolate  between these two extremes of non-commutativity and commutativity, together with (iv) any generalized algebra spectra (resp. generalized $\capR$-algebras)  that can be described as algebras over operads in spectra (resp. $\capR$-modules). It is important to note that the generalized class of algebraic structures in (iv) includes as special cases all of the others (i)--(iii). The area of stable homotopy theory that focuses on problems arising from constructions involving different types of structured ring spectra, their modules, and their homotopy invariants, is sometimes called \emph{brave new algebra} or \emph{spectral algebra}.
\end{rem}

In this paper we describe and study a (homotopy invariant) spectral algebra analog of the completion tower \eqref{eq:NUCA_completion_tower} arising in commutative ring theory. The tower construction is conceptual and provides a  sequence of refinements of the Hurewicz map for topological Quillen homology. More precisely, if $\capO$ is an operad in $\capR$-modules such that $\capO[\mathbf{0}]$ is trivial (i.e., $\capO$-algebras are \emph{non-unital}), we associate to $\capO$ itself a tower
\begin{align*}
  \tau_1\capO \leftarrow 
  \tau_2\capO \leftarrow \cdots \leftarrow
  \tau_{k-1}\capO \leftarrow 
  \tau_{k}\capO \leftarrow \cdots
\end{align*}
of $(\capO,\capO)$-bimodules, which for any $\capO$-algebra $X$ induces the \emph{completion} tower
\begin{align*}
  \tau_1\capO\circ_\capO(X) \leftarrow 
  \tau_2\capO\circ_\capO(X) \leftarrow \cdots \leftarrow
  \tau_{k-1}\capO\circ_\capO(X) \leftarrow 
  \tau_{k}\capO\circ_\capO(X) \leftarrow \cdots
\end{align*}
of $\capO$-algebras whose limit is the \emph{completion} $X^\wedge$ of $X$. There is a homotopy theory of algebras over operads (Theorem \ref{thm:positive_flat_stable_AlgO}) and this construction is homotopy invariant if applied to cofibrant $\capO$-algebras. We sometimes refer to the completion tower of a cofibrant replacement $X^c$ of $X$ as the \emph{homotopy completion tower} of $X$ whose homotopy limit is denoted $X^\hwedge$. By construction, $\tau_1\capO\circ_\capO(X^c)$ is the topological Quillen homology $\TQ(X)$ of $X$. Hence the homotopy completion tower of $X$ interpolates between $\TQ(X)$, which is the bottom term of the tower, and the homotopy completion $X^\hwedge$ of $X$. 

By systematically exploiting the strong convergence properties of this tower (Theorem \ref{MainTheorem} and its proof), we prove a selection of theorems concerning the topological Quillen homology of structured ring spectra. We also prove analogous results for left modules over operads (Definition \ref{defn:algebras_and_modules}). The first main theorem in this paper is the following finiteness theorem for topological Quillen homology. It can be thought of as a structured ring spectra analog of Serre's finiteness theorem for spaces (e.g., for the homotopy groups of spheres) and H.R. Miller's \cite[4.2]{Miller} boundedness result for simplicial commutative rings (but in reverse form); for a related but different type of finiteness result in the algebraic context of augmented commutative algebras over a field of non-zero characteristic, see Turner \cite{Turner}. The $\TQ$ finiteness theorem provides conditions under which topological Quillen homology detects certain finiteness properties.

\begin{rem}
In this paper, we say that a symmetric sequence $X$ of symmetric spectra is $n$-connected if each symmetric spectrum $X[\mathbf{t}]$ is $n$-connected. We say that an algebra (resp. left module) over an operad is $n$-connected if the underlying symmetric spectrum (resp. symmetric sequence of symmetric spectra) is $n$-connected, and similarly for operads.
\end{rem}

\begin{thm}[$\TQ$ finiteness theorem for structured ring spectra]
\label{thm:finiteness}
Let $\capO$ be an operad in $\capR$-modules such that $\capO[\mathbf{0}]$ is trivial. Let $X$ be a $0$-connected $\capO$-algebra (resp. left $\capO$-module) and assume that $\capO,\capR$ are $(-1)$-connected and $\pi_k\capO[\mathbf{r}],\pi_k\capR$ are finitely generated abelian groups for every $k,r$.
\begin{itemize}
\item[(a)] If the topological Quillen homology groups $\pi_k\TQ(X)$ (resp. $\pi_k\TQ(X)[\mathbf{r}]$) are finite for every $k,r$, then the homotopy groups $\pi_k X$ (resp. $\pi_k X[\mathbf{r}]$) are finite for every $k,r$. 
\item[(b)] If the topological Quillen homology groups $\pi_k\TQ(X)$ (resp. $\pi_k\TQ(X)[\mathbf{r}]$) are finitely generated abelian groups for every $k,r$, then the homotopy groups $\pi_k X$ (resp. $\pi_k X[\mathbf{r}]$) are finitely generated abelian groups for every $k,r$. 
\end{itemize}
\end{thm}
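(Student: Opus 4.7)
The plan is to exploit the homotopy completion tower of $X$ together with its strong convergence properties (Theorem \ref{MainTheorem}). The bottom of the tower is $\tau_1\capO\circ_\capO(X^c)\wequiv\TQ(X)$, and by strong convergence---which applies because $\capO$ is $(-1)$-connected and $X$ is $0$-connected---the connectivity of the homotopy fiber of $X\rarrow\tau_k\capO\circ_\capO(X^c)$ tends to infinity with $k$. In particular, for each fixed $n$ the induced map $\pi_n X\rarrow\pi_n\tau_k\capO\circ_\capO(X^c)$ is an isomorphism for $k$ large enough (depending only on $n$). Therefore it suffices to prove that for each $k$ and each $n$, the homotopy group $\pi_n\tau_k\capO\circ_\capO(X^c)$ is finite (resp.\ finitely generated), and then read off the conclusion degreewise on $\pi_* X$.

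I would establish this by induction on $k$. The base case $k=1$ is precisely the hypothesis. For the induction step, the key point is that the homotopy fiber $L_k$ of the map $\tau_k\capO\circ_\capO(X^c)\rarrow\tau_{k-1}\capO\circ_\capO(X^c)$ should be identified with the derived construction
\begin{align*}
  L_k\wequiv\capO[\mathbf{k}]\wedge^{L}_{\Sigma_k}\TQ(X)^{\wedge_\capR k},
\end{align*}
along the lines typical for operadic truncation towers. Three Künneth-type inputs then bound the homotopy of $L_k$ in each degree: (i) the $\capR$-module Künneth spectral sequence $\Tor^{\pi_*\capR}_{*,*}\bigl(\pi_*\TQ(X),\ldots,\pi_*\TQ(X)\bigr)\Rightarrow\pi_*\TQ(X)^{\wedge_\capR k}$, (ii) a further application of the same spectral sequence to smash with $\capO[\mathbf{k}]$, and (iii) the homotopy orbit spectral sequence $H_s(\Sigma_k;\pi_t(-))\Rightarrow\pi_{s+t}(-)_{h\Sigma_k}$. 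Since $\pi_*\capR$, $\pi_*\capO[\mathbf{k}]$, and $\pi_*\TQ(X)$ are all finite (resp.\ finitely generated) in each degree, and the group homology of the finite group $\Sigma_k$ with coefficients of this form is again of the same type in each degree, each spectral sequence has $E^2$-term of the required finiteness class in each total degree (using connectivity to ensure only finitely many terms contribute). Closure of the classes of finite and finitely generated abelian groups under extensions, applied to the long exact sequence of the fiber sequence $L_k\rarrow\tau_k\capO\circ_\capO(X^c)\rarrow\tau_{k-1}\capO\circ_\capO(X^c)$, completes the induction. The argument for left $\capO$-modules proceeds in exactly the same way, applied levelwise on the underlying symmetric sequence of symmetric spectra.

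The main obstacle I anticipate is the precise identification of the layer $L_k$ as the derived symmetric orbit construction on $\TQ(X)^{\wedge_\capR k}$ twisted by $\capO[\mathbf{k}]$: the tower is built from the truncation operads $\tau_k\capO$ via the relative circle product $\circ_\capO$, and pinning down the derived behavior of this construction on fibers is the technical heart of the argument. A secondary but more routine subtlety is verifying that the $\capR$-module Künneth spectral sequence over the graded ring $\pi_*\capR$ (which is merely levelwise finitely generated, not necessarily Noetherian) preserves finiteness and finite generation in each total degree; this should follow from standard bounded-below and resolution arguments, combined with the connectivity estimates that already underwrite strong convergence.
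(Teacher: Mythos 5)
Your overall strategy is the paper's own: induct up the homotopy completion tower, use strong convergence to read $\pi_*X$ off a finite stage (the proof of Theorem \ref{MainTheorem}(a) gives exactly that $\pi_iX\rarrow\pi_i(\tau_k\capO\circ_\capO X^c)$ is an isomorphism for $i\leq k$), identify the layers in terms of $\TQ(X)$, and propagate finiteness through the layers by spectral sequence arguments. Your inputs (i)--(iii) correspond to the paper's Eilenberg--Moore spectral sequence $\Tor^{\pi_*\capR[\Sigma_t]}_{*,*}\Rightarrow\pi_*(-\Smash^\LL_{\Sigma_t}-)$ (Proposition \ref{prop:eilenberg_moore}) together with the algebraic finiteness lemma for $\Tor$ over a connective ring with degreewise finitely generated homology (Proposition \ref{prop:finiteness_derived_tensor_uses_bar_construction}); your worry about $\pi_*\capR$ failing to be Noetherian is precisely what that lemma addresses, via a bar resolution over $\ZZ$ rather than projective resolutions over $\pi_*\capR$.

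The one genuine gap is the layer identification that you yourself flag. The formula $L_k\wequiv\capO[\mathbf{k}]\Smash^{\LL}_{\Sigma_k}\TQ(X)^{\wedge k}$ holds only under the additional hypothesis $\capO[\mathbf{1}]=I[\mathbf{1}]$ (Theorem \ref{thm:calculating_fiber_of_induced_map}(c)); the theorem assumes only that $\capO[\mathbf{0}]$ is trivial. In general $\tau_1\capO$ is a nontrivial operad concentrated in arity one, $\TQ(X)$ carries a $\tau_1\capO$-action, and the $k$-th layer is the derived \emph{relative} composition product $i_k\capO\circ^{\HH}_{\tau_1\capO}\TQ(X)$ --- a derived smash over the twisted group algebra built from $\capO[\mathbf{1}]^{\wedge k}$ and $\Sigma_k$, not merely over $\Sigma_k$; when $\capO[\mathbf{1}]\neq I[\mathbf{1}]$ these genuinely differ and your induction step does not go through as written. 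The paper repairs this by rewriting the layer as the realization of a two-sided bar construction $|\BAR(i_k\capO,\tau_1\capO,|\BAR(\tau_1\capO,\capO,X)|)|$ (Proposition \ref{prop:refined_bar_construction_calculation_for_homotopy_fiber}): each simplicial degree is then again of the form $\capO[\mathbf{k}]\Smash_{\Sigma_k}(-)^{\tensorcheck k}$, your K\"unneth/orbit inputs apply degreewise, and the homotopy spectral sequence of the realization (Propositions \ref{prop:homotopy_spectral_sequence} and \ref{prop:realzn_preserves_finiteness_properties}) transports degreewise finiteness to the layer, using connectivity to keep each total degree finitely contributed. With that one extra resolution-and-realization step inserted, your argument coincides with the paper's.
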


Since the sphere spectrum $S$ is $(-1)$-connected and $\pi_kS$ is a finitely generated abelian group for every $k$, we obtain the following immediate corollary.

\begin{cor}[$\TQ$ finiteness theorem for non-unital commutative ring spectra]
\label{cor:finiteness_commutative_ring_spectra}
Let $X$ be a $0$-connected non-unital commutative ring spectrum. If the topological Quillen homology groups $\pi_k\TQ(X)$ are finite (resp. finitely generated abelian groups) for every $k$, then the homotopy groups $\pi_k X$ are finite (resp. finitely generated abelian groups) for every $k$.  
\end{cor}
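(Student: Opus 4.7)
The plan is to obtain this corollary as the special case of Theorem \ref{thm:finiteness} in which $\capR = S$ is the sphere spectrum and $\capO$ is the non-unital commutative operad in $S$-modules, i.e., the operad whose $\capO$-algebras are precisely the non-unital commutative ring spectra. With these choices in hand, essentially all the work is done by Theorem \ref{thm:finiteness}, and it remains only to check the operadic hypotheses.

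First I would spell out the non-unital commutative operad explicitly: $\capO[\mathbf{0}] = *$ and $\capO[\mathbf{r}] \iso S$ for every $r \geq 1$, with trivial symmetric group action at each level. In particular, $\capO[\mathbf{0}]$ is trivial, as required. An algebra $X$ over this $\capO$ in $S$-modules unwinds to a non-unital commutative ring spectrum, so the $\capO$-algebra hypothesis of Theorem \ref{thm:finiteness} matches the hypothesis on $X$ in the statement of the corollary.

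Next I would verify the connectivity and finiteness assumptions. The sphere spectrum $S$ is $(-1)$-connected, so $\capR = S$ is $(-1)$-connected and (levelwise) $\capO$ is $(-1)$-connected, since each $\capO[\mathbf{r}]$ is either trivial or isomorphic to $S$. By Serre's classical finiteness theorem for the stable homotopy groups of spheres, $\pi_k S$ is a finitely generated abelian group for every $k$; indeed $\pi_0 S \iso \ZZ$ and $\pi_k S$ is finite for $k > 0$. Consequently $\pi_k \capR$ and $\pi_k \capO[\mathbf{r}]$ are finitely generated abelian groups for every $k, r$, and all hypotheses of Theorem \ref{thm:finiteness} are satisfied.

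With these verifications complete, parts (a) and (b) of Theorem \ref{thm:finiteness} apply directly to $X$ viewed as an $\capO$-algebra, yielding both assertions of the corollary. There is no genuine obstacle here beyond the identification of non-unital commutative ring spectra with algebras over the non-unital commutative operad, which is standard; this is why the authors label the result an immediate corollary.
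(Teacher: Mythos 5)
Your proposal is correct and follows exactly the route the paper intends: the authors deduce the corollary from Theorem \ref{thm:finiteness} by taking $\capR=S$ and $\capO$ the non-unital commutative operad, observing only that $S$ is $(-1)$-connected with $\pi_kS$ finitely generated for every $k$. Your additional verifications (the explicit description of $\capO$ with $\capO[\mathbf{0}]=*$ and $\capO[\mathbf{r}]\iso S$, and the appeal to Serre's finiteness theorem) are precisely the details the paper leaves implicit.
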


\begin{rem}
Since all of the theorems in this section apply to the special case of non-unital commutative ring spectra, it follows that each theorem below specializes to a corollary about non-unital commutative ring spectra, similar to the corollary above. To avoid repetition, we usually leave the formulation to the reader.
\end{rem}

We also prove the following Hurewicz theorem for topological Quillen homology. It can be thought of as a structured ring spectra analog of Schwede's \cite[5.3]{Schwede_algebraic} simplicial algebraic theories result, Goerss' \cite[8.3]{Goerss_f2_algebras} algebraic result for augmented commutative $\mathbb{F}_2$-algebras, Livernet's \cite[2.13]{Livernet} rational algebraic result for algebras over operads in non-negative chain complexes over a field of characteristic zero, and Chataur-Rodriguez-Scherer's \cite[2.1]{Chataur_Rodriguez_Scherer} algebraic result for algebras over cofibrant operads in non-negative chain complexes over a commutative ring. The $\TQ$ Hurewicz theorem provides conditions under which topological Quillen homology detects $n$-connected structured ring spectra. It also provides conditions under which the first non-trivial homotopy group agrees via the Hurewicz map with the first non-trivial topological Quillen homology group.

\begin{thm}[$\TQ$ Hurewicz theorem for structured ring spectra]
\label{thm:hurewicz}
Let $\capO$ be an operad in $\capR$-modules such that $\capO[\mathbf{0}]$ is trivial. Let $X$ be a $0$-connected $\capO$-algebra (resp. left $\capO$-module), $n\geq 0$, and assume that $\capO,\capR$ are $(-1)$-connected.
\begin{itemize}
\item[(a)] Topological Quillen homology $\TQ(X)$ is $n$-connected if and only if $X$ is $n$-connected.
\item[(b)] If topological Quillen homology $\TQ(X)$ is $n$-connected, then the natural Hurewicz map
$
  \pi_k X\rarrow\pi_k\TQ(X)
$
is an isomorphism for $k\leq 2n+1$ and a surjection for $k=2n+2$.
\end{itemize}
\end{thm}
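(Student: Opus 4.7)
The plan is to exploit the homotopy completion tower $\{\tau_k\capO\circ_\capO(X^c)\}_k$ together with the strong convergence of Theorem \ref{MainTheorem}. The first step is the layer identification: for $k\geq 2$ the homotopy fiber $L_k(X)$ of the map $\tau_k\capO\circ_\capO(X^c)\rarrow\tau_{k-1}\capO\circ_\capO(X^c)$ should be weakly equivalent to a suitably derived version of $\bigl(\capO[\mathbf{k}]\wedge(X^c)^{\wedge k}\bigr)_{h\Sigma_k}$, while $L_1(X)\wequiv\TQ(X)$. Using that the smash product of an $a$-connected and a $b$-connected symmetric spectrum is $(a+b+1)$-connected, and that homotopy orbits do not decrease connectivity, this yields the key estimate: if $X$ is $m$-connected and $\capO,\capR$ are $(-1)$-connected, then $L_k(X)$ is at least $(k(m+1)-1)$-connected.

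For part (a), the forward direction is immediate from the $k=1$ case: if $X$ is $n$-connected then $\TQ(X)\wequiv L_1(X)$ is $n$-connected. For the reverse direction I induct on $n\geq 0$, with base $n=0$ given by hypothesis. Assume the statement for $n-1$: then $\TQ(X)$ being $n$-connected (hence also $(n-1)$-connected) forces $X$ to be $(n-1)$-connected by the inductive hypothesis, so $L_k(X)$ is $(kn-1)$-connected for every $k\geq 2$, which is at least $n$ since $n\geq 1$. A secondary induction on $k$ up the fiber sequences $L_k\rarrow\tau_k\capO\circ_\capO(X^c)\rarrow\tau_{k-1}\capO\circ_\capO(X^c)$ then shows each $\tau_k\capO\circ_\capO(X^c)$ is $n$-connected. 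Strong convergence, which provides both $X\wequiv X^\hwedge=\holim_k\tau_k\capO\circ_\capO(X^c)$ and the fact that the homotopy fiber of $X\rarrow\tau_k\capO\circ_\capO(X^c)$ becomes arbitrarily highly connected as $k\to\infty$, promotes the stagewise conclusion to $X$ being $n$-connected.

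For part (b), I first apply part (a) to strengthen the hypothesis to $X$ being $n$-connected. The layer estimate now gives $L_k(X)$ at least $(k(n+1)-1)$-connected, hence at least $(2n+1)$-connected for every $k\geq 2$. Induction on $k$ up the fiber sequences $L_k\rarrow\tau_k\capO\circ_\capO(X^c)\rarrow\tau_{k-1}\capO\circ_\capO(X^c)$, starting from $\tau_1\capO\circ_\capO(X^c)\wequiv\TQ(X)$, shows that the homotopy fiber of $\tau_k\capO\circ_\capO(X^c)\rarrow\TQ(X)$ is $(2n+1)$-connected for every $k$. Strong convergence then implies that the Hurewicz map $X\rarrow\TQ(X)$, which factors as $X\wequiv\holim_k\tau_k\capO\circ_\capO(X^c)\rarrow\tau_1\capO\circ_\capO(X^c)\wequiv\TQ(X)$, has $(2n+1)$-connected homotopy fiber, yielding precisely the asserted isomorphism on homotopy groups in degrees $\leq 2n+1$ and surjection in degree $2n+2$.

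The main obstacle is the quantitative form of strong convergence: the connectivity bounds valid for each finite stage $\tau_k\capO\circ_\capO(X^c)$, and for the fibers $X\rarrow\tau_k\capO\circ_\capO(X^c)$, must transmit correctly through the homotopy limit to $X^\hwedge\wequiv X$, rather than merely furnishing a qualitative weak equivalence. A secondary subtlety is the layer identification, which requires a cofibrancy argument to ensure that $(X^c)^{\wedge k}$ computes the correct derived smash power, so that the connectivity estimates on the layers are homotopically meaningful.
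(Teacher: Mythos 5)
Your overall architecture---connectivity estimates on the layers of the completion tower, induction up the tower, and the quantitative form of strong convergence to pass from the stages to $X$ itself---is the paper's strategy, and your part (b) is essentially identical to the paper's argument. Two points deserve comment. First, your identification of the $k$-th layer as a derived version of $\bigl(\capO[\mathbf{k}]\wedge (X^c)^{\wedge k}\bigr)_{h\Sigma_k}$ is not correct as stated: the layer is $i_k\capO\circ^\HH_{\tau_1\capO}\bigl(\TQ(X)\bigr)$, which when $\capO[\mathbf{1}]=I[\mathbf{1}]$ is $\capO[\mathbf{k}]\Smash^\LL_{\Sigma_k}\TQ(X)^{\wedge^\LL k}$ --- smash powers of $\TQ(X)$, not of $X$ (already for a free algebra $X=\capO\circ(Y)$ the layer is $\capO[\mathbf{k}]\Smash_{\Sigma_k}Y^{\wedge k}$, not $\capO[\mathbf{k}]\Smash_{\Sigma_k}(\capO\circ(Y))^{\wedge k}$). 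Your downstream connectivity estimate ``$X$ $m$-connected $\Rightarrow$ $L_k(X)$ is $(k(m+1)-1)$-connected'' is nevertheless true, but it must be proved from the bar-construction model $|\BAR(i_k\capO,\capO,X)|$ (whose simplicial levels $i_k\capO\circ\capO^{\circ s}\circ(X)$ have the right connectivity) rather than from your formula. Second, because your layers are estimated in terms of the connectivity of $X$ rather than of $\TQ(X)$, the reverse implication in (a) forces you into a bootstrapping induction on $n$. That induction is valid (the base case is the standing $0$-connectivity hypothesis, and $kn-1\geq n$ for $k\geq 2$, $n\geq 1$ closes the inductive step), but the paper avoids it entirely: its Proposition \ref{prop:refined_bar_construction_calculation_for_homotopy_fiber} rewrites the $k$-th layer as a bar construction over $\tau_1\capO$ applied to $|\BAR(\tau_1\capO,\capO,X)|\wequiv\TQ(X)$, so the hypothesis that $\TQ(X)$ is $n$-connected directly yields that the $(k+1)$-st layer is $((k+1)n+k)$-connected, with no prior knowledge of the connectivity of $X$ needed. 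Your route is slightly longer but buys nothing extra here; the paper's layer identification is the sharper tool and is also what makes the tower comparable to the Goodwillie tower. With the layer formula corrected (or the connectivity estimate proved directly from the bar resolution), your proof is complete.
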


Note that one implication of Theorem \ref{thm:hurewicz}(a) follows from Theorem \ref{thm:hurewicz}(b). We also prove the following relative Hurewicz theorem for topological Quillen homology, which we regard as the second main theorem in this paper. It can be thought of as a structured ring spectra analog of the relative Hurewicz theorem for spaces. It provides conditions under which topological Quillen homology detects $n$-connected maps.

\begin{thm}[$\TQ$ relative Hurewicz theorem for structured ring spectra]
\label{thm:relative_hurewicz}
Let $\capO$ be an operad in $\capR$-modules such that $\capO[\mathbf{0}]$ is trivial. Let $\function{f}{X}{Y}$ be a map of $\capO$-algebras (resp. left $\capO$-modules) and $n\geq 0$. Assume that $\capO,\capR$ are $(-1)$-connected.
\begin{itemize}
\item[(a)] If $X,Y$ are $0$-connected, then $f$ is $n$-connected if and only if $f$ induces an $n$-connected map $\TQ(X)\rarrow\TQ(Y)$ on topological Quillen homology.
\item[(b)] If $X,Y$ are $(-1)$-connected and $f$ is $(n-1)$-connected, then $f$ induces an $(n-1)$-connected map $\TQ(X)\rarrow\TQ(Y)$ on topological Quillen homology.
\item[(c)] If $f$ induces an $n$-connected map $\TQ(X)\rarrow\TQ(Y)$ on topological Quillen homology between $(-1)$-connected objects, then $f$ induces an $(n-1)$-connected map $X^\hwedge\rarrow Y^\hwedge$ on homotopy completion.
\item[(d)] If topological Quillen homology $\TQ(X)$ is $(n-1)$-connected, then homotopy completion $X^\hwedge$ is $(n-1)$-connected.
\end{itemize}
Here, $\TQ(X)\rarrow\TQ(Y)$, $X^\hwedge\rarrow Y^\hwedge$ denote the natural induced zigzags in the category of $\capO$-algebras (resp. left $\capO$-modules) with all backward facing maps weak equivalences.
\end{thm}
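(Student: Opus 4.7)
The plan is to systematically exploit the homotopy completion tower together with its strong convergence (Theorem \ref{MainTheorem}), combined with an explicit identification of the tower layers. The key preliminary is to show that the homotopy fiber $L_k(X)$ of the map
$\tau_k\capO\circ_\capO(X^c)\rarrow\tau_{k-1}\capO\circ_\capO(X^c)$
is equivalent to a derived symmetric power of $\TQ(X)$:
\begin{align*}
  L_k(X)\wequiv\bigl(\capO[\mathbf{k}]\wedge\TQ(X)^{\wedge k}\bigr)_{h\Sigma_k},
\end{align*}
the spectral algebra analog of the classical identification $X^k/X^{k+1}\iso(X/X^2)^{\otimes k}/\Sigma_k$ for non-unital commutative rings.

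For part (b), model $\TQ$ by the two-sided simplicial bar construction $\TQ(X)\wequiv|\BAR(\tau_1\capO,\capO,X)|$. At each simplicial level, the functor $\capO\circ(-)$ on a $(-1)$-connected symmetric sequence is $(-1)$-connected (each summand $\capO[\mathbf{k}]\wedge_{\Sigma_k}(-)^{\wedge k}$ preserves $(-1)$-connectivity), and analogously applied to an $(n-1)$-connected map between $(-1)$-connected objects it yields an $(n-1)$-connected map. Geometric realization preserves this, so $\TQ(f)$ is $(n-1)$-connected. The forward direction of (a) runs the same argument with the sharper estimate that the summand $\capO[\mathbf{k}]\wedge_{\Sigma_k}f^{\wedge k}$ is at least $n$-connected whenever $X,Y$ are $0$-connected and $f$ is $n$-connected, since $f^{\wedge k}$ is then $(n+k-1)$-connected.

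For parts (c) and (d), use the layer identification. Under the hypotheses of (c), both $\TQ(X)$ and $\TQ(Y)$ are $(-1)$-connected, so a factor-by-factor argument gives that $\TQ(f)^{\wedge k}$ is $n$-connected, hence $L_k(f)$ is $n$-connected. Tower induction via the fibration sequences $L_k\rarrow\tau_k\rarrow\tau_{k-1}$ shows $\tau_k\capO\circ_\capO(f^c)$ is $n$-connected for every $k$, but the Milnor $\lim^1$ short exact sequence for the homotopy groups of the homotopy limit introduces a potential one-degree loss, giving $f^\hwedge$ is $(n-1)$-connected. For part (d), the hypothesis that $\TQ(X)$ is $(n-1)$-connected implies $\TQ(X)^{\wedge k}$ is $(kn-1)$-connected, so the layer connectivities tend to infinity; each $\tau_k\capO\circ_\capO(X^c)$ is $(n-1)$-connected, the tower $\{\pi_\ast\tau_k(X^c)\}_k$ is eventually constant in each degree and thus Mittag-Leffler, so $\lim^1$ vanishes in the relevant range and $X^\hwedge$ is $(n-1)$-connected.

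For the backward direction of (a), assume $X,Y$ are $0$-connected and $\TQ(f)$ is $n$-connected. By Theorem \ref{thm:hurewicz}(a), $\TQ(X),\TQ(Y)$ are $0$-connected, so $\TQ(X)^{\wedge(k-1)}$ is $(k-2)$-connected and a factor-by-factor argument gives that $L_k(f)$ is $(n+k-1)$-connected. Tower induction now yields $\tau_k\capO\circ_\capO(f^c)$ is $n$-connected with layer connectivities tending to infinity, so the towers $\{\pi_{n+1}\tau_k(f)\}_k$ stabilize, $\lim^1$ vanishes, and $f^\hwedge$ is $n$-connected. Strong convergence (Theorem \ref{MainTheorem}) identifies $X\wequiv X^\hwedge$ and $Y\wequiv Y^\hwedge$, yielding that $f$ itself is $n$-connected. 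The main obstacle is expected to be the precise derived identification of $L_k(X)$: setting up the $\Sigma_k$-equivariant smash product correctly and ensuring that the passage through the bar construction defining $\TQ$ preserves the requisite connectivity estimates without implicit loss.
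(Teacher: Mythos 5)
Your overall strategy --- induction up the homotopy completion tower, connectivity estimates for the layers, levelwise analysis of the bar construction followed by realization, and Milnor $\lim^1$ sequences for the homotopy limit --- is exactly the paper's proof, and your handling of parts (b), (c), (d) and of the asymmetry in (a) (direct argument one way, strong convergence the other) matches the paper's in all essentials. The one point that needs repair is the layer identification you single out as the crux: the formula $L_k(X)\wequiv\bigl(\capO[\mathbf{k}]\wedge\TQ(X)^{\wedge k}\bigr)_{h\Sigma_k}$ is only valid when $\capO[\mathbf{1}]=I[\mathbf{1}]$, i.e., when $\tau_1\capO=I$ (this is Theorem \ref{thm:calculating_fiber_of_induced_map}(c), which carries exactly that hypothesis). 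For a general operad the $k$-th layer is $i_k\capO\circ^\HH_{\tau_1\capO}\TQ(X)$, a derived composition over the truncated operad $\tau_1\capO$, which the paper computes as the realization of the two-sided bar construction $\BAR(i_k\capO,\tau_1\capO,|\BAR(\tau_1\capO,\capO,X)|)$ (Proposition \ref{prop:refined_bar_construction_calculation_for_homotopy_fiber}). Your connectivity conclusions survive because each simplicial degree of that bar construction is a coproduct of terms of the form $\capO[\mathbf{k}]\wedge_{\Sigma_k}(\cdots)^{\wedge k}$ with $(-1)$-connected inputs, so Propositions \ref{prop:connectivity}, \ref{prop:connectivity_of_smash_powers_of_maps}, and \ref{prop:connectivity_of_simplicial_maps_spectra} give the same estimates; but as stated your identification of $L_k$ is false in general and should be replaced by the derived circle product over $\tau_1\capO$. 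You also omit, but would need, the standard reduction to operads satisfying Cofibrancy Condition \ref{CofibrancyCondition} and to cofibrant $X,Y$ (via Theorem \ref{thm:comparing_homotopy_completion_towers} and Propositions \ref{prop:replacement_of_operads}, \ref{prop:forgetful_functor_commutes_with_holim}) so that the underlying objects are positive flat stable cofibrant and the smash-power connectivity estimates actually apply.
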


\begin{rem}
\label{rem:preservation_of_connectivity}
It is important to note Theorem \ref{thm:relative_hurewicz}(b) implies that the conditions in Theorem \ref{thm:relative_hurewicz}(c) are satisfied if $X,Y$ are $(-1)$-connected and $f$ is $n$-connected.
\end{rem}

As a corollary we obtain the following Whitehead theorem for topological Quillen homology. It can be thought of as a structured ring spectra analog of Schwede's \cite[5.4]{Schwede_algebraic} simplicial algebraic theories result, Goerss' \cite[8.1]{Goerss_f2_algebras} algebraic result for augmented commutative $\mathbb{F}_2$-algebras, and Livernet's \cite{Livernet_thesis} rational algebraic result for algebras over Koszul operads in non-negative chain complexes over a field of characteristic zero. As a special case, it recovers Kuhn's \cite{Kuhn} result for non-unital commutative ring spectra, and more generally, Lawson's \cite{Lawson} original structured ring spectra result (which is based on \cite{Harper_Bar}). The $\TQ$ Whitehead theorem provides conditions under which topological Quillen homology detects weak equivalences.

\begin{cor}[$\TQ$ Whitehead theorem for structured ring spectra]
\label{cor:whitehead}
Let $\capO$ be an operad in $\capR$-modules such that $\capO[\mathbf{0}]$ is trivial. Let $\function{f}{X}{Y}$ be a map of $\capO$-algebras (resp. left $\capO$-modules). Assume that $\capO,\capR$ are $(-1)$-connected. If $X,Y$ are $0$-connected, then $f$ is a weak equivalence if and only if $f$ induces a weak equivalence 
$
  \TQ(X)\wequiv\TQ(Y)
$
on topological Quillen homology.
\end{cor}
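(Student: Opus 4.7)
The plan is to deduce this corollary as an immediate formal consequence of the $\TQ$ relative Hurewicz theorem (Theorem \ref{thm:relative_hurewicz}(a)), so that all the substantive homotopical work has already been packaged there.

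For the forward direction, I would observe that topological Quillen homology is, by construction, a (derived) functor on the model category of $\capO$-algebras (resp. left $\capO$-modules): a cofibrant replacement followed by the indecomposables/abelianization functor. Consequently, any weak equivalence $\function{f}{X}{Y}$ induces a weak equivalence on the natural zigzag $\TQ(X)\rarrow\TQ(Y)$ described in the statement of Theorem \ref{thm:relative_hurewicz}.

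For the converse, I would argue as follows. Suppose $f$ induces a weak equivalence $\TQ(X)\wequiv\TQ(Y)$. A weak equivalence of symmetric spectra is $n$-connected for every integer $n\geq 0$, and correspondingly a weak equivalence of symmetric sequences of symmetric spectra is $n$-connected levelwise for every $n\geq 0$. Hence the induced map on $\TQ$ is $n$-connected for every $n\geq 0$. Since $X$ and $Y$ are $0$-connected by hypothesis, Theorem \ref{thm:relative_hurewicz}(a) applies and shows that $f$ itself is $n$-connected. Letting $n\rarrow\infty$, the map $f$ induces isomorphisms on all homotopy groups (of the underlying symmetric spectrum, resp. levelwise on the underlying symmetric sequence), so $f$ is a weak equivalence.

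There is no genuine obstacle in this corollary: the convergence and connectivity-estimate input lies entirely in Theorem \ref{thm:relative_hurewicz}, and the only point worth double-checking is that the conventions for $n$-connectivity recalled before Theorem \ref{thm:finiteness} really do piece together across all $n\geq 0$ to give weak equivalence in the appropriate model category, which is built into the definitions.
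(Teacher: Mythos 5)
Your proposal is correct and is essentially the paper's intended argument: the corollary is deduced directly from Theorem \ref{thm:relative_hurewicz}(a) by observing that a map is a weak equivalence if and only if it is $n$-connected for every $n\geq 0$, and quantifying over $n$ in both directions. The only cosmetic difference is that the forward implication can equally be read off from the ``only if'' half of Theorem \ref{thm:relative_hurewicz}(a) rather than from functoriality of the derived functor $\TQ$, but both routes are immediate and equivalent.
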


Associated to the homotopy completion tower is the \emph{homotopy completion spectral sequence}, which goes from topological Quillen homology to homotopy completion (Theorem \ref{MainTheorem}). It can be thought of as a structured ring spectra analog of Quillen's fundamental spectral sequence \cite[6.9]{Quillen_rings} for commutative rings and the corresponding spectral sequence studied by Goerss \cite[6.2]{Goerss_f2_algebras} for augmented commutative $\mathbb{F}_2$-algebras. As a special case, it recovers the spectral sequence in Minasian \cite{Minasian} for non-unital commutative ring spectra. Under the conditions of Theorem \ref{MainTheorem}(b), the homotopy completion spectral sequence is a second quadrant homologically graded spectral sequence and arises from the exact couple of long exact sequences associated to the homotopy completion tower and its homotopy fibers; this is the homotopy spectral sequence of a tower of fibrations \cite{Bousfield_Kan}, reindexed as a homologically graded spectral sequence. For ease of notational purposes, in Theorem \ref{MainTheorem} and Remark \ref{rem:strong_convergence}, we regard such towers $\{A_s\}$ of fibrations  as indexed by the integers such that $A_s=*$ for each $s<0$.

The third main theorem in this paper is the following strong convergence theorem for homotopy completion of structured ring spectra. It can be thought of as a structured ring spectra analog of Johnson-McCarthy's \cite{Johnson_McCarthy} rational algebraic tower results for non-unital commutative differential graded algebras over a field of characteristic zero. As a special case, it recovers Kuhn's \cite{Kuhn} and Minasian's \cite{Minasian} tower results for non-unital commutative ring spectra. For a very restricted class of cofibrant operads in simplicial sets, which they call primitive operads, McCarthy-Minasian \cite{McCarthy_Minasian_preprint} describe a tower that agrees with the completion tower in the special case of non-unital commutative ring spectra, but that is different for most operads.

\begin{thm}[Homotopy completion strong convergence theorem]
\label{MainTheorem}
Let $\capO$ be an operad in $\capR$-modules such that $\capO[\mathbf{0}]$ is trivial. Let $\function{f}{X}{Y}$ be a map of $\capO$-algebras (resp. left $\capO$-modules). 
\begin{itemize}
\item[(a)] If $X$ is $0$-connected and $\capO,\capR$ are $(-1)$-connected, then the natural  coaugmentation $X\wequiv X^\hwedge$ is a weak equivalence.
\item[(b)] If topological Quillen homology $\TQ(X)$ is $0$-connected and $\capO,\capR$ are $(-1)$-connected, then the homotopy completion spectral sequence
\begin{align*}
  E^1_{-s,t} &= \pi_{t-s}\Bigl(i_{s+1}\capO\circ^{\HH}_{\tau_1\capO}\bigl(\TQ(X)\bigr)\Bigr)
  \Longrightarrow
  \pi_{t-s}\bigl(X^{\hwedge}\bigr)\\
  \text{resp.}\quad
  E^1_{-s,t}[\mathbf{r}] &= \pi_{t-s}\Bigl(\bigl(i_{s+1}\capO\circ^{\HH}_{\tau_1\capO}\TQ(X)\bigr)[\mathbf{r}]\Bigr)
  \Longrightarrow
  \pi_{t-s}\bigl(X^{\hwedge}[\mathbf{r}]\bigr),\quad\text{$r\geq 0$},
\end{align*}
converges strongly (Remark \ref{rem:strong_convergence}).
\item[(c)] If $f$ induces a weak equivalence $\TQ(X)\wequiv\TQ(Y)$ on topological Quillen homology, then $f$ induces a weak equivalence $X^\hwedge\wequiv Y^\hwedge$ on homotopy completion. 
\end{itemize}
\end{thm}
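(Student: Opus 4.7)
The plan is to analyze the layers of the homotopy completion tower and to deduce all three statements from the resulting connectivity estimates, with strong convergence of the associated tower spectral sequence serving as the unifying tool.

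First, I would identify the homotopy fiber of each stage map. By construction of $\tau_k\capO$ as a finite filtration of $\capO$ in $(\capO,\capO)$-bimodules, the homotopy fiber $i_k\capO$ of $\tau_k\capO\rarrow\tau_{k-1}\capO$ is concentrated in arity $k$ as $\capO[\mathbf{k}]$ with its $\Sigma_k$-action, and its induced bimodule structure factors through $\tau_1\capO$. Applying the derived functor $-\circ_\capO(X^c)$ and rewriting the composition via a bar construction over $\tau_1\capO$ should produce a natural homotopy fiber sequence of $\capO$-algebras (resp. left $\capO$-modules)
\begin{align*}
  i_k\capO\circ^{\HH}_{\tau_1\capO}\TQ(X)
  \rarrow\tau_k\capO\circ_\capO X^c
  \rarrow\tau_{k-1}\capO\circ_\capO X^c,
\end{align*}
where I have used the identification $\TQ(X)\wequiv\tau_1\capO\circ_\capO X^c$. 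This exhibits the $k$-th layer of the tower as depending, up to weak equivalence, only on $\TQ(X)$ viewed as a $\tau_1\capO$-algebra (resp. left $\tau_1\capO$-module).

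Next, I would establish a connectivity estimate on these layers. Under the assumption that $\capO,\capR$ are $(-1)$-connected and $\TQ(X)$ is $0$-connected, iterated smash powers $\TQ(X)^{\wedge k}$ are $(k-1)$-connected, and the derived product with the $(-1)$-connected $i_k\capO$ over $\Sigma_k$ preserves this bound; hence the $k$-th layer is at least $(k-1)$-connected and each stage map $\tau_k\capO\circ_\capO X^c\rarrow\tau_{k-1}\capO\circ_\capO X^c$ is a $(k-2)$-equivalence. This is precisely the connectivity needed for strong convergence of the Bousfield-Kan homotopy spectral sequence of the tower of fibrations, establishing (b); the tautological reading of the $E^1$-page from the layer identification yields the stated formula. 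For (a), under the stronger hypothesis that $X$ is $0$-connected, Theorem \ref{thm:hurewicz}(a) ensures that $\TQ(X)$ is also $0$-connected, so the layer analysis applies, and a Milnor-style argument then shows that the natural coaugmentation $X^c\rarrow\holim_k\tau_k\capO\circ_\capO X^c=X^\hwedge$ is a weak equivalence. For (c), since every layer depends functorially on $\TQ(X)$ via the formula above, a weak equivalence $\TQ(X)\wequiv\TQ(Y)$ induces a layer-by-layer equivalence of homotopy completion towers; inducting up the tower using the five lemma on the fiber sequences and then passing to homotopy limits gives $X^\hwedge\wequiv Y^\hwedge$.

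The hard part will be establishing the derived layer identification as a composition product over $\tau_1\capO$: one must analyze the associated graded of the bimodule filtration $\tau_\bullet\capO$, verify that the induced $(\capO,\capO)$-bimodule action on $i_k\capO$ factors, up to the appropriate homotopy, through $\tau_1\capO$, and confirm the homotopy invariance properties that legitimize replacing $X^c$ by the $\tau_1\capO$-algebra $\TQ(X)$ inside the derived composition. Once this layer formula is in place, the remainder of the argument reduces to fairly standard connectivity propagation, spectral sequence comparison, and homotopy-limit lemmas.
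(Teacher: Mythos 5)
Your treatment of parts (b) and (c) follows the paper's route: the layer identification you describe is exactly Theorem \ref{thm:calculating_fiber_of_induced_map} (proved via the pushout square $i_k\capO\rightarrow\tau_k\capO\rightarrow\tau_{k-1}\capO$ of right $\capO$-modules and the bar construction $|\BAR(i_k\capO,\tau_1\capO,|\BAR(\tau_1\capO,\capO,X)|)|$ of Proposition \ref{prop:refined_bar_construction_calculation_for_homotopy_fiber}), the connectivity propagation for (b) matches the first part of the proof of Theorem \ref{thm:hurewicz} with $n=0$, and the layer-by-layer induction for (c) is Proposition \ref{prop:induction_argument_cofiber_sequence}. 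You are also right that the derived layer formula is the hard part.

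There is, however, a genuine gap in your argument for part (a). Controlling the homotopy fibers of the maps \emph{within} the tower tells you only that the tower is pro-constant in each degree, i.e.\ that $\pi_i X^\hwedge\cong\pi_i(\tau_k\capO\circ_\capO X^c)$ for $k$ large; it says nothing about the coaugmentation $X\rightarrow\tau_k\capO\circ_\capO X^c$. (The bottom map $X\rightarrow\TQ(X)$ is the Hurewicz map and is essentially never an equivalence, so no Milnor-sequence argument starting from the layers alone can conclude $X\simeq X^\hwedge$.) What is needed is the \emph{complementary} fiber sequence coming from the pushout $\capO^{>k}\rightarrow\capO\rightarrow\tau_k\capO$ of $(\capO,\capO)$-bimodules (Proposition \ref{prop:pushout_diagram_for_bar_construction_tower_coaugmented}): the "co-layer" $|\BAR(\capO^{>k},\capO,X)|$ measures the difference between $X\simeq|\BAR(\capO,\capO,X)|$ and $|\BAR(\tau_k\capO,\capO,X)|$, and one must show it is $k$-connected. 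That estimate uses the $0$-connectivity of $X$ itself (a $(k+1)$-fold smash power of a $0$-connected object is $k$-connected), not of $\TQ(X)$. Relatedly, your appeal to Theorem \ref{thm:hurewicz}(a) inverts the paper's logical order: the implication you need ($X$ $0$-connected $\Rightarrow$ $\TQ(X)$ $0$-connected) is itself proved by exactly this missing $\capO^{>k}$ analysis, so invoking it does not let you avoid supplying that step. Once the $\capO^{>k}$ estimate is in place, your Milnor-sequence comparison of $\holim_k|\BAR(\capO,\capO,X)|$ with $\holim_k|\BAR(\tau_k\capO,\capO,X)|$ does close the argument.
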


\begin{rem}
\label{rem:strong_convergence}
By \emph{strong convergence} of $\{E^r\}$ to $\pi_*(X^\hwedge)$ we mean that (i) for each $(-s,t)$, there exists an $r$ such that $E^r_{-s,t}=E^\infty_{-s,t}$ and (ii) for each $i$, $E^\infty_{-s,s+i}=0$ except for finitely many $s$. Strong convergence implies that for each $i$, $\{E^\infty_{-s,s+i}\}$ is the set of filtration quotients from a finite filtration of $\pi_i(X^\hwedge)$; see, for instance, Bousfield-Kan \cite[IV.5.6, IX.5.3, IX.5.4]{Bousfield_Kan} and Dwyer \cite{Dwyer_strong_convergence}.
\end{rem}

\begin{rem}[Connections with {G}oodwillie's calculus of functors]
Regard the homotopy completion tower as a tower of functors on the category of $\capO$-algebras, and consider the case when $\capO[\mathbf{1}]=I[\mathbf{1}]$ (Definition \ref{defn:operad}). Then it follows easily that (i) the bottom term (or first stage) $\TQ$ of the tower is $1$-excisive in the sense of \cite{Goodwillie_calc3, Kuhn_survey}, (ii) by Theorem \ref{thm:calculating_fiber_of_induced_map}(c), the $n$-th layer of the tower has the form $\capO[\mathbf{n}]\Smash^\LL_{\Sigma_n}\TQ^{\wedge^\LL n}$, and (iii) by the connectivity estimates in the proof of Theorem \ref{thm:hurewicz}, the identity functor and the $n$-th stage of the tower \emph{agree to order $n$} in the sense of \cite[1.2]{Goodwillie_calc3}; more precisely, they satisfy $O_n(0,1)$ as defined in \cite[1.2]{Goodwillie_calc3}. Here, $\wedge^\LL_{\Sigma_n}$, $\wedge^\LL$ are the total left derived functors of $\wedge_{\Sigma_n}$, $\wedge$, respectively. Properties (i)--(iii) illustrate that the homotopy completion tower is the analog, in the context of $\capO$-algebras, of  Goodwillie's Taylor tower of the identity functor. More precisely, according to \cite[1.6, proof of 1.8]{Goodwillie_calc3} and the results in \cite{Goodwillie_calc2} on cubical diagrams, it follows immediately from (i)--(iii) that there are maps of towers (under the constant tower $\{\id(-)^c\}$) of levelwise weak equivalences of the form 
$\{P_n\id(-)^c\}\rightarrow\{P_n\tau_n\capO\circ_\capO(-)^c\}\leftarrow\{\tau_n\capO\circ_\capO(-)^c\}$ where $(-)^c$ denotes functorial cofibrant replacement (see Definition \ref{defn:homotopy_completion}), and hence the homotopy completion tower is weakly equivalent to the Taylor tower of the identity functor on $\capO$-algebras, provided that the analogs of the appropriate constructions and results in \cite{Goodwillie_calc2, Goodwillie_calc3} remain true in the category of $\capO$-algebras; this is the subject of current work, and will not be further elaborated here (but see \cite{Kuhn_survey}). 

Since in the calculation of the layers in (ii) the operad $\capO$ plays a role analogous to that of the Goodwillie derivatives of the identity functor (see \cite{Goodwillie_calc3, Kuhn_survey}), this sheds some positive light on a conjecture of Arone-Ching \cite{Arone_Ching} that an appropriate model of the Goodwillie derivatives of the identity functor on $\capO$-algebras is weakly equivalent as an operad to $\capO$ itself. 
\end{rem}

The following relatively weak cofibrancy condition is exploited in the proofs of the main theorems above. The statements of these theorems do not require this cofibrancy condition since a comparison theorem (Theorem \ref{thm:comparing_homotopy_completion_towers}, Proposition \ref{prop:replacement_of_operads}) shows that the operad $\capO$ can always be replaced by a weakly equivalent operad $\capO'$ that satisfies this cofibrancy condition and such that the corresponding homotopy completion towers are naturally weakly equivalent.

\begin{cofibrancy}
\label{CofibrancyCondition}
If $\capO$ is an operad in $\capR$-modules, consider the unit map $\function{\eta}{I}{\capO}$ of the operad $\capO$ (Definition \ref{defn:operad}) and assume that  $I[\mathbf{r}]\rarrow\capO[\mathbf{r}]$ is a flat stable cofibration (Subsection \ref{sec:model_structures_on_capR_modules}) between flat stable cofibrant objects in $\ModR$ for each $r\geq 0$.
\end{cofibrancy}

\begin{rem}
This is the same as assuming that $I[\mathbf{1}]\rarrow\capO[\mathbf{1}]$ is a flat stable cofibration in $\ModR$ and $\capO[\mathbf{r}]$ is flat stable cofibrant in $\ModR$ for each $r\geq 0$. It can be thought of as the structured ring spectra analog of the following cofibrancy condition: if $X$ is a pointed space, assume that $X$ is well-pointed; i.e., assume that the unique map $*\rightarrow X$ of pointed spaces is a cofibration.
\end{rem}

Most operads appearing in homotopy theoretic settings in mathematics already satisfy Cofibrancy Condition \ref{CofibrancyCondition} and therefore require no replacement in the proofs of the theorems. For instance, Cofibrancy Condition \ref{CofibrancyCondition} is satisfied by every operad in  simplicial sets that is regarded as an operad in $\capR$-modules via adding a disjoint basepoint and tensoring with $\capR$ (Subsection \ref{sec:simplicial_bar_and_homotopy_completion}). 

In this paper, the homotopy groups $\pi_*Y$ of a symmetric spectrum $Y$  denote the \emph{derived} homotopy groups (or true homotopy groups) \cite{Schwede_book_project, Schwede_homotopy_groups}; i.e., $\pi_*Y$ always denotes the homotopy groups of a stable fibrant replacement of $Y$, and hence of a flat stable fibrant replacement of $Y$. See Schwede \cite{Schwede_homotopy_groups} for several useful properties enjoyed by the true homotopy groups of a symmetric spectrum.

\subsection{Organization of the paper}

In Section \ref{sec:preliminaries} we recall some preliminaries on algebras and modules over operads. The purpose of Section \ref{sec:homotopy_completion} is to describe homotopy completion (Definition \ref{defn:homotopy_completion}) and $\TQ$-completion, or less concisely, completion with respect to topological Quillen homology (Definition \ref{defn:quillen_homology_completion}) and to establish a comparison theorem for homotopy completion towers (Theorem \ref{thm:comparing_homotopy_completion_towers}). In Section \ref{sec:homotopical_analysis_of_the_completion_tower} we prove our main theorems, which involves a homotopical analysis of the completion tower. We establish several necessary technical results on the homotopical properties of the forgetful functors in Section \ref{sec:homotopical_analysis_forgetful_functors}, and on simplicial structures and the homotopical properties of the simplicial bar constructions in Section \ref{sec:homotopical_analysis_bar_constructions}. The results in these two sections lie at the heart of the proofs of the main theorems. The purpose of Section \ref{sec:model_structures} is to improve the main results in \cite{Harper_Spectra, Harper_Bar} on model structures, homotopy colimits and simplicial bar constructions from the context of operads in symmetric spectra to the more general context of operads in $\capR$-modules. This amounts to establishing certain technical propositions for $\capR$-modules sufficient for the proofs of the main results in \cite{Harper_Spectra, Harper_Bar} to remain valid in the more general context of $\capR$-modules; these results play a key role in this paper. In Section \ref{sec:chain_complexes_over_a_commutative_ring} we observe that the analogs of the main theorems stated above remain true in the context of unbounded chain complexes over a commutative ring.

\subsection*{Acknowledgments}

The authors would like to thank Greg Arone, Michael Ching, Bill Dwyer, Emmanuel Farjoun, Rick Jardine, Nick Kuhn, Haynes Miller, and Stefan Schwede for useful suggestions and remarks and Kristine Bauer, Mark Behrens, Bjorn Dundas, Benoit Fresse, Paul Goerss, Tom Goodwillie, Jens Hornbostel, Brenda Johnson, Tyler Lawson, Muriel Livernet, Ib Madsen, Mike Mandell, Randy McCarthy, Jack Morava, and Charles Rezk for helpful comments. The first author is grateful to Jens Hornbostel and Stefan Schwede for a stimulating and enjoyable visit to the Mathematisches Institut der Universit\"at Bonn in summer 2010, and to Mark Behrens and Haynes Miller for a stimulating and enjoyable visit to the Department of Mathematics at the Massachusetts Institute of Technology in summer 2011, and for their invitations which made this possible. The authors would like to thank the anonymous referee for his or her detailed suggestions and comments, which have resulted in a significant improvement.

\section{Preliminaries}
\label{sec:preliminaries}

The purpose of this section is to recall various preliminaries on algebras and modules over operads. In this paper the following two contexts will be of primary interest. Denote by $(\ModR,\Smash,\capR)$ the closed symmetric monoidal category of $\capR$-modules (Basic Assumption \ref{assumption:commutative_ring_spectrum}, Remark \ref{rem:dropping_the_adjective_left}), and by $(\Chaincx_\unit,\tensor,\unit)$ the closed symmetric monoidal category of unbounded chain complexes over $\unit$ \cite{Hovey, MacLane_homology}; here, $\unit$ is any commutative ring. Both categories have all small limits and colimits, and the null object is denoted by $*$. It will be useful in this paper, both for  establishing certain results and for ease of notational purposes, to sometimes work in the following more general context; see \cite[VII]{MacLane_categories} followed by \cite[VII.7]{MacLane_categories}.

\begin{assumption}
From now on in this section we assume that $(\CC,\Smash,S)$ is a closed symmetric monoidal category with all small limits and colimits. In particular, $\CC$ has an initial object $\emptyset$ and a terminal object $*$.
\end{assumption}

By \emph{closed} we mean there exists a functor $\CC^\op\times\CC\rarrow \CC: (Y,Z)\longmapsto \Map(Y,Z)$, which we call the \emph{mapping object}, which fits into
$
   \hom(X\Smash Y,Z)\Iso \hom(X,\Map(Y,Z))
$
isomorphisms natural in $X,Y,Z$, where $\hom$ denotes the set of morphisms in $\CC$. Define the sets $\mathbf{n}:=\{1,\dots,n\}$ for each $n\geq 0$, where $\mathbf{0}:=\emptyset$ denotes the empty set. If $T$ is a finite set, we denote by $|T|$ the number of elements in $T$.

\begin{defn}
\label{defn:symmetric_sequences}
Let $n\geq 0$.
\begin{itemize}
\item $\Sigma$ is the category of finite sets and their bijections. 
\item A \emph{symmetric sequence} in $\CC$ is a functor $\functor{A}{\Sigma^{\op}}{\CC}$. Denote by $\SymSeq$ the category of symmetric sequences in $\CC$ and their natural transformations. 
\item A symmetric sequence $A$ is \emph{concentrated at $n$} if $A[\mathbf{r}]=\emptyset$ for all $r\neq n$.
\end{itemize}
\end{defn}

For a more detailed development of the material that follows, see \cite{Harper_Spectra, Harper_Modules}.

\begin{defn}
Consider symmetric sequences in $\CC$. Let $A_1,\dotsc,A_t\in\SymSeq$. Their \emph{tensor product} $A_1\tensorcheck\dotsb\tensorcheck A_t\in\SymSeq$ is the left Kan extension of objectwise smash along coproduct of sets
\begin{align*}
\xymatrix{
  (\Sigma^{\op})^{\times t}
  \ar[rr]^-{A_1\times\dotsb\times A_t}\ar[d]^{\coprod} & &
  \CC^{\times t}\ar[r]^-{\Smash} & \CC \\
  \Sigma^{\op}\ar[rrr]^{A_1\tensorcheck\dotsb\tensorcheck
  A_t}_{\text{left Kan extension}} & & & \CC
}
\end{align*}
\end{defn}

If $X$ is a finite set and $A$ is an object in $\CC$, we use the usual dot notation $A\cdot X$ (\cite{MacLane_categories}, \cite[2.3]{Harper_Modules}) to denote the copower $A\cdot X$ defined by
$
  A\cdot X := \coprod_X A
$,
the coproduct in $\CC$ of $|X|$ copies of $A$. Recall the following useful calculations for tensor products.

\begin{prop}
Consider symmetric sequences in $\CC$. Let $A_1,\dotsc,A_t\in\SymSeq$ and $R\in\Sigma$, with $r:=|R|$. There are natural isomorphisms
\begin{align}
  \notag
  (A_1\tensorcheck\dotsb\tensorcheck A_t)[R]&\Iso\ 
  \coprod_{\substack{\function{\pi}{R}{\mathbf{t}}\\ \text{in $\Set$}}}
  A_1[\pi^{-1}(1)]\Smash\dotsb\Smash
  A_t[\pi^{-1}(t)],\\
  \label{eq:tensor_check_calc}
  &\Iso
  \coprod_{r_1+\dotsb +r_t=r}A_1[\mathbf{r_1}]\Smash\dotsb\Smash 
  A_t[\mathbf{r_t}]\underset{{\Sigma_{r_1}\times\dotsb\times
  \Sigma_{r_t}}}{\cdot}\Sigma_{r}
\end{align}
\end{prop}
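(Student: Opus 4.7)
The plan is to compute the value of the left Kan extension pointwise using the standard comma-category formula and then identify the resulting colimit. For $R\in\Sigma$, write $r:=|R|$. By the pointwise formula for left Kan extensions,
\begin{align*}
  (A_1\tensorcheck\dotsb\tensorcheck A_t)[R]
  \Iso
  \colim_{((T_1,\dots,T_t),\phi)\in(\coprod\downarrow R)}
  A_1[T_1]\Smash\dotsb\Smash A_t[T_t],
\end{align*}
where the comma category $(\coprod\downarrow R)$ in $(\Sigma^{\op})^{\times t}$ has objects consisting of a tuple $(T_1,\dots,T_t)$ of finite sets together with a bijection $\phi\colon R\rarrow T_1\sqcup\dotsb\sqcup T_t$, and morphisms are $t$-tuples of bijections compatible with $\phi$. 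First I would set up this formula and isolate the combinatorics in the indexing category.

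The key observation for the first isomorphism is that the connected components of $(\coprod\downarrow R)$ are in natural bijection with set maps $\function{\pi}{R}{\mathbf{t}}$: to $\pi$ we associate the canonical object with $T_i=\pi^{-1}(i)$ and $\phi=\id$, and any other object mapping into the same partition is uniquely isomorphic to this one via the relevant bijections $T_i\iso\pi^{-1}(i)$. Since $\Sigma$ contains only bijections, each component is a connected groupoid in which the canonical representative is a skeletal object, so the colimit over each component equals the value at that representative. Summing over components yields the first displayed isomorphism.

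For the second isomorphism, I would partition the set of functions $\function{\pi}{R}{\mathbf{t}}$ according to their cardinality sequence $(r_1,\dots,r_t)$, where $r_i:=|\pi^{-1}(i)|$, so $r_1+\dotsb+r_t=r$. Fixing a reference function $\pi_0$ with $\pi_0^{-1}(i)=\{r_1+\dotsb+r_{i-1}+1,\dots,r_1+\dotsb+r_i\}$, the group $\Sigma_r$ acts transitively on the set of $\pi$ with cardinality sequence $(r_1,\dots,r_t)$ by precomposition, with stabilizer $\Sigma_{r_1}\times\dotsb\times\Sigma_{r_t}$. Using the order-preserving bijections $\pi_0^{-1}(i)\iso\mathbf{r_i}$ together with the symmetric sequence structure of each $A_i$ to transport the smash products to the standard form $A_1[\mathbf{r_1}]\Smash\dotsb\Smash A_t[\mathbf{r_t}]$, the sub-coproduct for a fixed cardinality sequence becomes the coset-indexed coproduct
\begin{align*}
  A_1[\mathbf{r_1}]\Smash\dotsb\Smash A_t[\mathbf{r_t}]
  \underset{\Sigma_{r_1}\times\dotsb\times\Sigma_{r_t}}{\cdot}\Sigma_r,
\end{align*}
and summing over all $(r_1,\dots,r_t)$ with $r_1+\dotsb+r_t=r$ gives the second formula.

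The only potentially subtle point---and hence the main thing to be careful about---is checking that the $\Sigma_{r_i}$-actions appearing when one identifies $\pi^{-1}(i)\iso\mathbf{r_i}$ match the symmetric group actions coming from the symmetric sequence structure of $A_i$, and that the resulting $\Sigma_r$-action on $(A_1\tensorcheck\dotsb\tensorcheck A_t)[\mathbf{r}]$ agrees with the one induced by functoriality of the left Kan extension. This is a routine but careful naturality verification, tracking how bijections of $R$ act on partitions and on their canonical representatives; once done, the two displayed isomorphisms are immediate from the preceding two paragraphs.
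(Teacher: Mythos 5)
Your proof is correct: the pointwise comma-category formula, the identification of the connected components of the comma category with set maps $\function{\pi}{R}{\mathbf{t}}$ (each component being a contractible groupoid, so the colimit over it is the value at the canonical representative), and the orbit-stabilizer decomposition by fiber cardinalities are exactly the standard argument. The paper itself states this proposition without proof, deferring to the cited references \cite{Harper_Spectra, Harper_Modules}, where the argument given is the same as yours, so there is nothing to reconcile.
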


Here, $\Set$ is the category of sets and their maps, and \eqref{eq:tensor_check_calc} displays the tensor product $(A_1\tensorcheck\dotsb\tensorcheck A_t)[R]$ as a coproduct of $\Sigma_{r_1}\times\dotsb\times\Sigma_{r_t}$-orbits. It will be conceptually useful to extend the definition of tensor powers $A^{\tensorcheck t}$ to situations in which the integers $t$ are replaced by a finite set $T$.

\begin{defn}
Consider symmetric sequences in $\CC$. Let $A\in\SymSeq$ and $R,T\in\Sigma$. The \emph{tensor powers} $A^{\tensorcheck T}\in\SymSeq$ are defined objectwise by
\begin{align*}
  (A^{\tensorcheck\emptyset})[R]:= 
  \coprod_{\substack{\function{\pi}{R}{\emptyset}\\ \text{in $\Set$}}}
  S,\quad\quad
  &(A^{\tensorcheck T})[R]:= 
  \coprod_{\substack{\function{\pi}{R}{T}\\ \text{in $\Set$}}}
  \bigwedge_{t\in T} A[\pi^{-1}(t)]\quad(T\neq\emptyset).
\end{align*}
Note that there are no functions $\function{\pi}{R}{\emptyset}$ in $\Set$ unless $R=\emptyset$. We will use the abbreviation $A^{\tensorcheck 0}:=A^{\tensorcheck\emptyset}$.  
\end{defn}

\begin{defn}\label{defn:circle_product}
Consider symmetric sequences in $\CC$. Let $A,B,C\in\SymSeq$, and $r,t\geq 0$. The \emph{circle product} (or composition product) $A\circ B\in\SymSeq$ is defined objectwise by the coend
\begin{align}
  \label{eq:circle_product_calc}
  (A\circ B)[\mathbf{r}] := A\Smash_\Sigma (B^{\tensorcheck-})[\mathbf{r}]
  &\Iso 
  \coprod_{t\geq 0}A[\mathbf{t}]\Smash_{\Sigma_t}
  (B^{\tensorcheck t})[\mathbf{r}].
\end{align}
The \emph{mapping sequence} $\Map^\circ(B,C)\in\SymSeq$ and the \emph{mapping object} $\Map^\tensorcheck(B,C)\in\SymSeq$ are defined objectwise by the ends
\begin{align*}
  \Map^\circ(B,C)[\mathbf{t}] &:= \Map((B^{\tensorcheck \mathbf{t}})[-],C)^\Sigma \Iso 
  \prod_{r\geq 0}\Map((B^{\tensorcheck \mathbf{t}})[\mathbf{r}],
  C[\mathbf{r}])^{\Sigma_r},\\
  \Map^\tensorcheck(B,C)[\mathbf{t}] &:= \Map(B,C[\mathbf{t}\amalg -])^\Sigma \Iso
  \prod_{r\geq 0}\Map(B[\mathbf{r}],C[\mathbf{t}
  \boldsymbol{+}\mathbf{r}])^{\Sigma_r}.
\end{align*}
\end{defn}

These mapping sequences and mapping objects fit into isomorphisms
\begin{align}
  \label{eq:circle_mapping_sequence_adjunction}
  \hom(A\circ B,C)&\Iso\hom(A,\Map^\circ(B,C)),\\
  \label{eq:tensorcheck_mapping_sequence_adjunction}
  \hom(A\tensorcheck B,C)&\Iso\hom(A,\Map^\tensorcheck(B,C)),
\end{align}
natural in symmetric sequences $A,B,C$. Here, the $\hom$ notation denotes the indicated set of morphisms in $\SymSeq$.

\begin{prop}
\label{prop:closed_monoidal_on_symmetric_sequences}
Consider symmetric sequences in $\CC$. 
\begin{itemize}
\item [(a)] $(\SymSeq,\tensorcheck,1)$ has the structure of a closed symmetric monoidal category with all small limits and colimits. The unit for $\tensorcheck$ denoted ``$1$'' is the symmetric sequence concentrated at $0$ with value $S$.
\item [(b)] $(\SymSeq,\circ,I)$ has the structure of a closed monoidal category with all small limits and colimits. The unit for $\circ$ denoted ``$I$'' is the symmetric sequence concentrated at $1$ with value $S$. Circle product is not symmetric.
\end{itemize}
\end{prop}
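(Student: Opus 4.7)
The plan is to verify the structural claims by combining general categorical principles with the explicit formulas already displayed for $\tensorcheck$, $\circ$, $\Map^\tensorcheck$, and $\Map^\circ$. First, the existence of all small limits and colimits in $\SymSeq$ is immediate, since $\SymSeq=\CC^{\Sigma^{\op}}$ is a functor category and $\CC$ is both complete and cocomplete by assumption, so limits and colimits in $\SymSeq$ are computed objectwise in $\CC$.

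For part (a), I would recognize $\tensorcheck$ as the Day convolution product arising from the symmetric monoidal structure on $(\Sigma,\amalg,\emptyset)$ and the closed symmetric monoidal structure on $(\CC,\Smash,S)$. The unit axiom $1\tensorcheck A\iso A$ follows directly from the coproduct formula, since $1$ is concentrated at $\mathbf{0}$ with value $S$, and $S$ is the unit for $\Smash$. The symmetry and associativity isomorphisms are then transported from the corresponding isomorphisms in $(\Sigma,\amalg,\emptyset)$ and $(\CC,\Smash,S)$ by the universal property of left Kan extensions, and the pentagon and hexagon coherence conditions reduce to coherence in $\CC$. Closedness is checked by establishing \eqref{eq:tensorcheck_mapping_sequence_adjunction} via the end defining $\Map^\tensorcheck$ combined with closedness of $\CC$.

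For part (b), the circle product does not arise from Day convolution and I would verify its properties directly. The unit axioms $I\circ A\iso A$ and $A\circ I\iso A$ follow from \eqref{eq:circle_product_calc}: when $A=I$ only the $t=1$ summand survives and yields $B^{\tensorcheck 1}\iso B$, while when $B=I$ the tensor power formula shows that $(I^{\tensorcheck t})[\mathbf{r}]$ is a coproduct over bijections $\mathbf{r}\rarrow\mathbf{t}$ of copies of $S$, which after taking $\Sigma_t$-orbits recovers $A[\mathbf{r}]$. Non-symmetry is evident from the formula. For closedness, I would first observe that $(-)\circ B$ preserves all small colimits in the left variable (since $\Smash$, $\tensorcheck$, and coproducts do), and then exhibit the adjunction \eqref{eq:circle_mapping_sequence_adjunction} via a direct end/coend manipulation with the defining formula for $\Map^\circ$.

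The main obstacle is the associativity of the circle product, $(A\circ B)\circ C\iso A\circ(B\circ C)$. I would prove it by expanding both sides into iterated coproducts indexed by set functions via \eqref{eq:circle_product_calc}, interchanging coends by Fubini, and then providing a reindexing bijection between two-step partitions of the input set through an intermediate set and direct partitions through pairs of sets. This is essentially the classical identification underlying the monad structure of the free operad construction; once in hand, the pentagon coherence reduces to a routine check on iterated coproducts, and the corresponding unit coherences follow immediately from the unit axiom computations above.
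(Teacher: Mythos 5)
Your proposal is correct; the paper itself states this proposition without proof (deferring to the cited references for the detailed development), and your route --- Day convolution over $(\Sigma,\amalg,\emptyset)$ for $\tensorcheck$, direct verification of the unit axioms for $\circ$ from \eqref{eq:circle_product_calc}, and associativity of $\circ$ via the reindexing of two-step partitions (equivalently, the distributivity $(B\tensorcheck B')\circ C\iso(B\circ C)\tensorcheck(B'\circ C)$ together with cocontinuity of $-\circ C$ in the left variable) --- is exactly the standard argument used there. Your remark that only the one-sided adjunction \eqref{eq:circle_mapping_sequence_adjunction} is available for $\circ$ is also the right reading of ``closed monoidal'' in part (b), since $A\circ(-)$ does not preserve colimits.
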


\begin{defn}
\label{defn:hat_construction_embed_at_zero}
Let $Z\in\CC$. Define $\hat{Z}\in\SymSeq$ to be the symmetric sequence concentrated at $0$ with value $Z$.
\end{defn}

The functor $\function{\hat{-}}{\CC}{\SymSeq}$ fits into the adjunction
$
\xymatrix@1{
  \hat{-}\colon\CC\ar@<0.5ex>[r] & 
  \SymSeq:\Ev_0\ar@<0.5ex>[l]
}
$
with left adjoint on top and $\Ev_0$ the \emph{evaluation} functor defined objectwise by $\Ev_0(B):=B[\mathbf{0}]$. Note that $\hat{-}$ embeds $\CC$ in $\SymSeq$ as the full subcategory of symmetric sequences concentrated at $0$.

\begin{defn}\label{defn:corresponding_functor}
Consider symmetric sequences in $\CC$. Let $\capO$ be a symmetric sequence and $Z\in\CC$. The corresponding functor $\functor{\capO}{\CC}{\CC}$ is defined objectwise by
$
  \capO(Z):=\capO\circ(Z):=\amalg_{t\geq 0}\capO[\mathbf{t}]
  \Smash_{\Sigma_t}Z^{\wedge t}.
$
\end{defn}

\begin{prop}
Consider symmetric sequences in $\CC$. Let $\capO,A\in\SymSeq$ and $Z\in\CC$. There are natural isomorphisms
\begin{align}
\label{eq:circ_product_and_evaluate_at_zero}
  \widehat{\capO(Z)}=
  \widehat{\capO\circ(Z)}\Iso\capO\circ\hat{Z},\quad\quad
  \Ev_0(\capO\circ A)\Iso \capO\circ\bigl(\Ev_0(A)\bigr).
\end{align}
\end{prop}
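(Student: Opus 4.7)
The plan is to verify both natural isomorphisms by direct objectwise computation, unpacking the definitions of circle product, tensor powers, and the functors $\hat{(-)}$ and $\Ev_0$.

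For the first isomorphism, I would evaluate $(\capO\circ\hat{Z})[\mathbf{r}]$ using the circle product formula \eqref{eq:circle_product_calc}, reducing the problem to understanding $(\hat{Z}^{\tensorcheck t})[\mathbf{r}]$. Since $\hat{Z}$ is concentrated at $0$ with value $Z$, the tensor power formula
\[
  (\hat{Z}^{\tensorcheck t})[\mathbf{r}] =
  \coprod_{\substack{\function{\pi}{\mathbf{r}}{\mathbf{t}}\\ \text{in $\Set$}}}
  \bigwedge_{s\in\mathbf{t}} \hat{Z}[\pi^{-1}(s)]
\]
has each factor equal to $Z$ when $\pi^{-1}(s) = \emptyset$ and to $\emptyset$ (the initial object in $\CC$) otherwise. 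A wedge with any initial factor is initial, so the $\pi$-indexed term is nontrivial precisely when every fiber of $\pi$ is empty, which forces $\mathbf{r}=\emptyset$. Thus $(\hat{Z}^{\tensorcheck t})[\mathbf{r}] = \emptyset$ for $r>0$ and $(\hat{Z}^{\tensorcheck t})[\mathbf{0}] \iso Z^{\wedge t}$ via the unique empty function. Summing over $t$ gives $(\capO\circ\hat{Z})[\mathbf{r}] = \emptyset$ for $r>0$ and $(\capO\circ\hat{Z})[\mathbf{0}] \iso \coprod_{t\geq 0}\capO[\mathbf{t}]\Smash_{\Sigma_t} Z^{\wedge t} = \capO(Z)$, which by Definition \ref{defn:hat_construction_embed_at_zero} is exactly $\widehat{\capO(Z)}$.

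For the second isomorphism, I would compute $\Ev_0(\capO\circ A) = (\capO\circ A)[\mathbf{0}]$ by the same circle product formula, so that the key calculation is $(A^{\tensorcheck t})[\mathbf{0}]$. The tensor power formula gives a coproduct indexed by maps $\function{\pi}{\emptyset}{\mathbf{t}}$ in $\Set$; there is exactly one such map, and each preimage $\pi^{-1}(s)$ is empty, so
\[
  (A^{\tensorcheck t})[\mathbf{0}] \iso \bigwedge_{s\in\mathbf{t}} A[\mathbf{0}]
  \iso (\Ev_0 A)^{\wedge t}.
\]
Taking the coproduct over $t$ yields $\Ev_0(\capO\circ A) \iso \coprod_{t\geq 0}\capO[\mathbf{t}]\Smash_{\Sigma_t}(\Ev_0 A)^{\wedge t}$, which by Definition \ref{defn:corresponding_functor} is precisely $\capO\circ(\Ev_0 A)$.

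Both isomorphisms are manifestly natural in their arguments, since at every stage we used only the defining universal properties (left Kan extension via coends and the functoriality of coproducts, smash products, and coinvariants). The only point requiring mild care is the convention that an empty smash product equals the unit $S$ (so that the $t=0$ term in each circle product formula makes sense and matches the $t=0$ summand $\capO[\mathbf{0}]$ of $\capO(Z)$ and $\capO(\Ev_0 A)$); no serious obstacle arises since this convention is built into the definition of the tensor powers at $T = \emptyset$ given just above Definition \ref{defn:circle_product}.
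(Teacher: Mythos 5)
Your proof is correct and follows essentially the same route as the paper, whose entire proof is the one-line remark that the result follows from the circle product formula \eqref{eq:circle_product_calc} and the tensor product formula \eqref{eq:tensor_check_calc}; you have simply written out the objectwise computation that this citation implicitly asks the reader to perform. The details you supply (the vanishing of $(\hat{Z}^{\tensorcheck t})[\mathbf{r}]$ for $r>0$ because a smash product with an initial factor is initial, and the identification $(A^{\tensorcheck t})[\mathbf{0}]\iso(\Ev_0 A)^{\wedge t}$ via the unique empty map) are exactly right.
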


\begin{proof}
This follows from \eqref{eq:circle_product_calc} and \eqref{eq:tensor_check_calc}.
\end{proof}

\begin{defn}
\label{defn:operad}
Consider symmetric sequences in $\CC$. An \emph{operad} in $\CC$ is a monoid object in $(\SymSeq,\circ,I)$ and a \emph{morphism of operads} is a morphism of monoid objects in $(\SymSeq,\circ,I)$.
\end{defn}

\begin{rem} If $\capO$ is an operad, then the associated functor $\capO: \CC\to \CC$ is a monad.
\end{rem}

\begin{defn}
\label{defn:algebras_and_modules}
Let $\capO$ be an operad in $\CC$.
\begin{itemize}
\item A \emph{left $\capO$-module} is an object in $(\SymSeq,\circ,I)$ with a left action of $\capO$ and a \emph{morphism of left $\capO$-modules} is a map that respects the left $\capO$-module structure. Denote by $\LtO$ the category of left $\capO$-modules and their morphisms.
\item A \emph{right $\capO$-module} is an object in $(\SymSeq,\circ,I)$ with a right action of $\capO$ and a \emph{morphism of right $\capO$-modules} is a map  that respects the right $\capO$-module structure.  Denote by $\RtO$ the category of right $\capO$-modules and their morphisms.
\item An \emph{$(\capO,\capO)$-bimodule} is an object in $(\SymSeq,\circ,I)$  with compatible left $\capO$-module and right $\capO$-module structures and a \emph{morphism of $(\capO,\capO)$-bimodules} is a map that respects the $(\capO,\capO)$-bimodule structure. Denote by $\Bi_{(\capO,\capO)}$ the category of $(\capO,\capO)$-bimodules and their morphisms.
\item An \emph{$\capO$-algebra} is an algebra for the monad $\functor{\capO}{\CC}{\CC}$ and a \emph{morphism of $\capO$-algebras} is a map in $\CC$ that respects the $\capO$-algebra structure. Denote by $\AlgO$ the category of $\capO$-algebras and their morphisms.
\end{itemize}
\end{defn}

It follows easily from \eqref{eq:circ_product_and_evaluate_at_zero} that an $\capO$-algebra is the same as an object $Z$ in $\CC$ with a left $\capO$-module structure on $\hat{Z}$, and if $Z$ and $Z'$ are $\capO$-algebras, then a morphism of $\capO$-algebras is the same as a map $\function{f}{Z}{Z'}$ in $\CC$ such that $\function{\hat{f}}{\hat{Z}}{\hat{Z'}}$ is a morphism of left $\capO$-modules. In other words, an algebra over an operad $\capO$ is the same as a left $\capO$-module that is concentrated at $0$, and $\AlgO$ embeds in $\LtO$ as the full subcategory of left $\capO$-modules concentrated at $0$, via the functor $\function{\hat{-}}{\AlgO}{\LtO}$, $Z\longmapsto \hat{Z}$. Define the \emph{evaluation} functor $\function{\Ev_0}{\LtO}{\AlgO}$ objectwise by $\Ev_0(B):=B[\mathbf{0}]$.

\begin{prop}
\label{prop:basic_properties_LTO}
Let $\capO$ be an operad in $\CC$. There are adjunctions
\begin{align}
\label{eq:free_forgetful_adjunction}
\xymatrix{
  \CC\ar@<0.5ex>[r]^-{\capO\circ(-)} & \AlgO,\ar@<0.5ex>[l]^-{U}
}\quad\quad
\xymatrix{
  \SymSeq\ar@<0.5ex>[r]^-{\capO\circ-} & \LtO,\ar@<0.5ex>[l]^-{U}
}\quad\quad
\xymatrix{
  \AlgO\ar@<0.5ex>[r]^-{\hat{-}} & \LtO,\ar@<0.5ex>[l]^-{\Ev_0}
}
\end{align}
with left adjoints on top and $U$ the forgetful functor. All small colimits exist in $\AlgO$ and $\LtO$, and both reflexive coequalizers and filtered colimits are preserved (and created) by the forgetful functors. All small limits exist in $\AlgO$ and $\LtO$, and are preserved (and created) by the forgetful functors.
\end{prop}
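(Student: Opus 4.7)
The plan is to combine standard monadic category theory with the explicit formulas for the monads $\capO\circ(-)$ on $\CC$ and $\capO\circ-$ on $\SymSeq$. The first two adjunctions in \eqref{eq:free_forgetful_adjunction} are exactly the free-forgetful adjunctions associated to these monads. For the third, I would argue that the underlying adjunction $\hat{-}\dashv\Ev_0$ between $\CC$ and $\SymSeq$ lifts to the algebra categories: under the natural isomorphism $\widehat{\capO(Z)}\iso\capO\circ\hat{Z}$ of \eqref{eq:circ_product_and_evaluate_at_zero}, an $\capO$-algebra structure on $Z$ is exactly a left $\capO$-module structure on $\hat{Z}$, so $\hat{-}$ lifts to a functor $\AlgO\rarrow\LtO$; conversely, the natural composite $\capO(B[\mathbf{0}])\iso(\capO\circ B)[\mathbf{0}]\rarrow B[\mathbf{0}]$ endows $\Ev_0(B)=B[\mathbf{0}]$ with an $\capO$-algebra structure. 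The triangle identities at the algebra level are inherited from those at the underlying level.

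For the limit assertion, I would invoke the standard fact that for any monad $T$ on a complete category, the category of $T$-algebras is complete with limits created (hence preserved) by the forgetful functor; this handles $\AlgO$ and $\LtO$ uniformly.

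The substantive part is the colimit assertion. The strategy is to show that each of the monads $\capO\circ(-)$ and $\capO\circ -$ preserves reflexive coequalizers and filtered colimits; once this is established, a standard Linton-type argument then yields that the algebra category is cocomplete, with reflexive coequalizers and filtered colimits created by the forgetful functor (an arbitrary colimit being expressible as a reflexive coequalizer of free algebras on a diagram built from coproducts in the base). Preservation of filtered colimits is the easier half: $\Smash$ is cocontinuous in each variable since $\CC$ is closed, while $\Sigma_t$-orbits and the coproduct over $t\geq 0$ are themselves colimits, so all ingredients in $\capO\circ(Z)\iso\coprod_{t\geq 0}\capO[\mathbf{t}]\Smash_{\Sigma_t}Z^{\Smash t}$ commute past a filtered colimit in $Z$; the analogous statement for $\capO\circ-$ on $\SymSeq$ follows via \eqref{eq:circle_product_calc} objectwise.

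The main obstacle is preservation of reflexive coequalizers, which reduces to showing that the $t$-th smash power $(-)^{\Smash t}$ preserves reflexive coequalizers. The classical argument exploits the common section $s\colon Y\rarrow X$ of a reflexive pair $f,g\colon X\rightrightarrows Y$: using $s^{\Smash k}\Smash\id^{\Smash(t-k)}$ one interpolates between $X^{\Smash t}$ and $Y^{\Smash t}$, reducing the verification to cocontinuity of $\Smash$ in each variable (provided by the closed structure) while using the reflexivity to reconcile the mixed smash terms. Composing with the cocontinuous $\Sigma_t$-orbit functor and the coproduct over $t\geq 0$ yields preservation for $\capO\circ(-)$, and the identical argument, applied objectwise using \eqref{eq:circle_product_calc} and the corresponding preservation property for $\tensorcheck$-powers, yields preservation for $\capO\circ-$ on $\SymSeq$. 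With these preservation properties in hand, the Linton-type construction produces all small colimits in $\AlgO$ and $\LtO$, and the forgetful functors manifestly preserve and create the relevant classes.
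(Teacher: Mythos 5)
Your proposal is correct and follows essentially the same route as the sources the paper defers to for this statement (the proposition is recalled without proof from \cite{Harper_Spectra, Harper_Modules}): free-forgetful adjunctions from the monad structure, creation of limits by monadic forgetful functors, and cocompleteness via the Linton argument once one checks that smash (resp.\ $\tensorcheck$) powers, $\Sigma_t$-orbits, and coproducts preserve reflexive coequalizers and filtered colimits. The key interpolation step for reflexive coequalizers of smash powers, using the common section and cocontinuity of $\Smash$ in each variable, is exactly the standard argument.
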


\begin{defn}
Consider symmetric sequences in $\CC$. Let $\DD$ be a small category, and let $X,Y\in\SymSeq^\DD$. Denote by $\Map^\circ(X,Y)$ the indicated composition of functors $\DD^\op\times\DD\rarrow\SymSeq$. The \emph{mapping sequence} of $\DD$-shaped diagrams is defined by the end $\Map^\circ(X,Y)^\DD\in\SymSeq$. \end{defn}

 By the universal property of ends, it follows easily that for all $\capO\in\SymSeq$,  there are isomorphisms
\begin{align}
\label{eq:circle_adjunction_for_diagrams}
  \hom_\DD\bigl(\capO\circ X,Y)&\Iso\hom(\capO,\Map^\circ(X,Y)^\DD\bigr)
\end{align}
natural in $\capO,X,Y$ and that $\Map^\circ(X,Y)^\DD$ may be calculated by an equalizer in $\SymSeq$ of the form
\begin{align*}
  \Map^\circ(X,Y)^\DD\Iso
  \lim
  \biggl( 
  \xymatrix{
  \prod_{\alpha\in\DD}\limits
  \Map^\circ(X_\alpha,Y_\alpha)\ar@<2.0ex>[r]\ar@<1.0ex>[r] & 
  \prod_{(\functionsub{\xi}{\alpha}{\alpha'})\in\DD}\limits 
  \Map^\circ(X_\alpha,Y_{\alpha'})
  }
  \biggr).
\end{align*}
Here, $\capO\circ X$ denotes the indicated composition of functors $\DD\rarrow\SymSeq$, the $\hom_\DD$ notation on the left-hand side of \eqref{eq:circle_adjunction_for_diagrams} denotes the indicated set of morphisms in $\SymSeq^\DD$, and the $\hom$ notation on the right-hand side of \eqref{eq:circle_adjunction_for_diagrams} denotes the indicated set of morphisms in $\SymSeq$.

\begin{defn}
Let $\DD$ be a small category and $X\in\CC^\DD$ (resp. $X\in\SymSeq^\DD$) a $\DD$-shaped diagram. The \emph{endomorphism operad} $\End(X)$ of $X$ is defined by 
\begin{align*}
  \End(X):=\Map^\circ(\hat{X},\hat{X})^\DD
\quad\quad
\Bigl(\text{resp.}\quad
  \End(X):=\Map^\circ(X,X)^\DD
\Bigr)
\end{align*}
with its natural operad structure; i.e., such that for each $\alpha\in\DD$, the natural map $\End(X)\rarrow\Map^\circ(\hat{X}_\alpha,\hat{X}_\alpha)$ (resp. $\End(X)\rarrow\Map^\circ(X_\alpha,X_\alpha)$) is a morphism of operads.
\end{defn}

Let $X$ be a $\DD$-shaped diagram in $\CC$ (resp. $\SymSeq$). It follows easily from \eqref{eq:circle_mapping_sequence_adjunction} and \eqref{eq:circle_adjunction_for_diagrams} that giving a map of operads $\function{m}{\capO}{\End(X)}$ is the same as giving $X_\alpha$ an $\capO$-algebra structure (resp. left $\capO$-module structure) for each $\alpha\in\DD$, such that $X$ is a diagram of $\capO$-algebras (resp. left $\capO$-modules). Note that if $\DD$ is the terminal category (with exactly one object and no non-identity morphisms), then $\End(X)\Iso\Map^\circ(\hat{X},\hat{X})$ (resp. $\End(X)\Iso\Map^\circ(X,X)$), which recovers the usual endomorphism operad of an object $X$ in $\CC$ (resp. $\SymSeq$) \cite{Harper_Modules, Kriz_May}.

\section{Homotopy completion and $\TQ$-completion}
\label{sec:homotopy_completion}

The purpose of this section is to describe two notions of completion for structured ring spectra: (i) homotopy completion (Definition \ref{defn:homotopy_completion}) and (ii) $\TQ$-completion, or less concisely, completion with respect to topological Quillen homology (Definition \ref{defn:quillen_homology_completion}). We will also establish a rigidification theorem for derived $\TQ$-resolutions (Theorem \ref{thm:rigidification}), which is required to define $\TQ$-completion, and we will prove Theorem \ref{thm:comparing_homotopy_completion_towers} which compares homotopy completion towers along a map of operads.

Let $\function{f}{\capO}{\capO'}$ be a map of operads in $\capR$-modules. Recall that the change of operads adjunction
\begin{align}
\label{eq:quillen_adjunction_change_of_operads_nice}
\xymatrix{
  \Alg_{\capO}\ar@<0.5ex>[r]^-{f_*} & \Alg_{\capO'}\ar@<0.5ex>[l]^-{f^*}
}
\quad\quad
\Bigl(\text{resp.}\quad
\xymatrix{
  \Lt_{\capO}\ar@<0.5ex>[r]^-{f_*} & \Lt_{\capO'}\ar@<0.5ex>[l]^-{f^*}
}
\Bigr)
\end{align}
is a Quillen adjunction with left adjoint on top and $f^*$ the forgetful functor (more accurately, but less concisely, also called the ``restriction along $f$ of the operad action'') \cite{Harper_Spectra, Harper_Modules}; note that this is a particular instance of the usual change of monoids adjunction.

\begin{rem}
In this paper we always regard $\AlgO$ and $\LtO$ with the positive flat stable model structure (Theorem \ref{thm:positive_flat_stable_AlgO}), unless otherwise specified.
\end{rem}

\begin{defn}
Let $\function{f}{\capO}{\capO'}$ be a map of operads in $\capR$-modules. Let $X$ be an $\capO$-algebra (resp. left $\capO$-module) and define the $\capO$-algebra $\capO'\circ^\HH_\capO (X)$ (resp. left $\capO$-module $\capO'\circ^\HH_\capO X$) by
\begin{align*}
  \capO'\circ^\HH_\capO (X):=\RR f^*(\LL f_* (X)) = 
  \RR f^* \bigl(\capO'\circ^\LL_\capO (X)\bigr)\\
  \Bigl(\text{resp.}\quad
  \capO'\circ^\HH_\capO X:=\RR f^*(\LL f_* (X)) = 
  \RR f^* (\capO'\circ^\LL_\capO X)
  \Bigr).
\end{align*}
Here, $\RR f^*,\LL f_*$ are the total right (resp. left) derived functors of $f^*,f_*$, respectively. 
\end{defn}

\begin{rem}
\label{rem:compute-htpy-orbits}
Note that $\Alg_I=\ModR$ and $\Lt_I=\SymSeq$  (since $I$ is the initial operad) and that for any map of operads $\function{f}{\capO}{\capO'}$, there are weak equivalences
\begin{align*}
  \capO'\circ^\HH_\capO (X)\wequiv \LL f_* (X) = 
  \capO'\circ^\LL_\capO (X)\quad\quad
  \Bigl(\text{resp.}\quad
  \capO'\circ^\HH_\capO X\wequiv \LL f_* (X) = 
  \capO'\circ^\LL_\capO X
  \Bigr)
\end{align*}
in the underlying category $\Alg_I$ (resp. $\SymSeq$), natural in $X$; this follows from the property that the forgetful functor to the underlying category preserves weak equivalences.
\end{rem}

The \emph{truncation} functor $\function{\tau_k}{\SymSeq}{\SymSeq}$ is defined objectwise by  
\begin{align*}
  (\tau_k X)[\mathbf{r}]:=
  \left\{
  \begin{array}{rl}
    X[\mathbf{r}],&\text{for $r\leq k$,}\\
    *,&\text{otherwise},
  \end{array}
  \right.
\end{align*}
for each $k\geq 1$. In other words, $\tau_k X$ is the symmetric sequence obtained by truncating $X$ above level $k$. Let $\capO$ be an operad in $\capR$-modules such that $\capO[\mathbf{0}]=*$. It is easy to verify that the canonical map of operads $\capO\rarrow\tau_1\capO$ factors through each truncation $\tau_k\capO$, and hence gives rise to a commutative diagram of operads
\begin{align}
\label{eq:towers_of_operads}
\xymatrix{
  \{\tau_k\capO\}: &
  \tau_1\capO & \tau_2\capO\ar[l] & \tau_3\capO\ar[l] & 
  \dotsb \ar[l] & & \\
  \{\capO \}:\ar@<0.5ex>[u] &
  \ \ \ \capO\ar@<-1.5ex>[u]\ar@/_0.5pc/[ur]\ar@/_0.5pc/[urr]
  \ar@{}[urrrrr]_(.4){\dotsb}
}
\end{align}
and $(\capO,\capO)$-bimodules. In other words, associated to each such operad $\capO$ is a coaugmented tower $\{\capO\}\rarrow\{\tau_k\capO\}$ of operads and $(\capO,\capO)$-bimodules, where $\{\capO\}$ denotes the constant tower with value $\capO$. This tower underlies the following definition of completion for $\capO$-algebras and left $\capO$-modules, which plays a key role in this paper.

\begin{rem}
Let $\capO$ be an operad in $\capR$-modules such that $\capO[\mathbf{0}]=*$.
\begin{itemize}
\item[(i)] The canonical maps $\tau_1\capO\rarrow\capO\rarrow\tau_1\capO$ of operads factor the identity map.
\item[(ii)] Note that $\capO[\mathbf{0}]=*$ and $\capO[\mathbf{1}]=I[\mathbf{1}]$ if and only if $\tau_1\capO=I$, i.e., if and only if the operad $\capO$ agrees with the initial operad $I$ at levels $0$ and $1$.
\end{itemize}
\end{rem}

\begin{defn}
Let $\capO$ be an operad in $\capR$-modules such that $\capO[\mathbf{0}]=*$. Let $X$ be an $\capO$-algebra (resp. left $\capO$-module). The \emph{completion tower} of $X$ is the coaugmented tower of $\capO$-algebras (resp. left $\capO$-modules)
\begin{align}
\label{eq:towers_of_algebras}
  \{X\}\rarrow\{\tau_k\capO\circ_\capO (X)\}
  \quad\quad
  \Bigl(\text{resp.}\quad
  \{X\}\rarrow\{\tau_k\capO\circ_\capO X\}
  \Bigr)
\end{align}
obtained by applying $-\circ_\capO (X)$ (resp. $-\circ_\capO X$) to the coaugmented tower \eqref{eq:towers_of_operads}. The \emph{completion} $X^\wedge$ of $X$ is the $\capO$-algebra (resp. left $\capO$-module) defined by 
\begin{align}
\label{eq:completion_of_algebras}
  X^\wedge:=\lim\nolimits^\AlgO_k \bigl(\tau_k\capO\circ_\capO (X)\bigr)
  \quad\quad
  \Bigl(\text{resp.}\quad
  X^\wedge:=\lim\nolimits^\LtO_k \bigl(\tau_k\capO\circ_\capO X\bigr)
  \Bigr),
  \end{align}
 i.e., the limit of the completion tower of $X$. Here, $\{X\}$ denotes the constant tower with value $X$. Thus, completion defines a coaugmented functor on $\AlgO$ (resp. $\LtO$).
\end{defn}

\begin{rem}
We often suppress the forgetful functors $\Alg_{\tau_k\capO}\rarrow\AlgO$ and $\Lt_{\tau_k\capO}\rarrow\LtO$ from the notation, as in \eqref{eq:towers_of_algebras}.
\end{rem}

\subsection{Homotopy completion and topological {Q}uillen homology}
The purpose of this subsection is to introduce homotopy completion (Definition \ref{defn:homotopy_completion}) and topological Quillen homology (Definition \ref{defn:quillen_homology}).

In this paper we will primarily be interested in a homotopy invariant version of the completion functor, which involves the following homotopy invariant version of the limit functor on towers.

\begin{defn}
\label{defn:model_structure_on_towers}
Let $\M$ be a model category with all small limits and let $\DD$ be the category $\{0\leftarrow 1\leftarrow 2\leftarrow\cdots\}$ with objects the non-negative integers and a single morphism $i\leftarrow j$ for each $i\leq j$. Consider the category $\M^\DD$ of $\DD$-shaped diagrams (or towers) in $\M$ with the injective model structure \cite[VI.1.1]{Goerss_Jardine}. The \emph{homotopy limit} functor $\function{\holim}{\Ho({\M^\DD})}{\Ho(\M)}$ is the total right derived functor of the limit functor $\function{\lim}{\M^\DD}{\M}$.
\end{defn}

We are now in a good position to define homotopy completion.

\begin{defn}
\label{defn:homotopy_completion}
Let $\capO$ be an operad in $\capR$-modules such that $\capO[\mathbf{0}]=*$. Let $X$ be an $\capO$-algebra (resp. left $\capO$-module). The \emph{homotopy completion} $X^\hwedge$ of $X$ is the $\capO$-algebra (resp. left $\capO$-module) defined by 
\begin{align*}
  X^\hwedge:=\holim^\AlgO_k \bigl(\tau_k\capO\circ_\capO (X^c)\bigr)
  \quad\quad
  \Bigl(\text{resp.}\quad
  X^\hwedge:=\holim^\LtO_k \bigl(\tau_k\capO\circ_\capO X^c\bigr)
  \Bigr),
\end{align*}
the homotopy limit of the completion tower of the functorial cofibrant replacement $X^c$ of $X$ in $\AlgO$ (resp. $\LtO$). 
\end{defn}

\begin{rem}
It is easy to check that if $X$ is a cofibrant $\capO$-algebra (resp. cofibrant left $\capO$-module), then the weak equivalence  $X^c\rarrow X$ induces zigzags of weak equivalences
\begin{align*}
  X^\hwedge&\wequiv\holim^\AlgO_k \bigl(\tau_k\capO\circ_\capO (X)\bigr)\wequiv
  \holim^\AlgO_k \bigl(\tau_k\capO\circ^\HH_\capO (X)\bigr)\\
  \Bigl(\text{resp.}\quad
  X^\hwedge&\wequiv\holim^\LtO_k \bigl(\tau_k\capO\circ_\capO X\bigr)\wequiv
  \holim^\LtO_k \bigl(\tau_k\capO\circ^\HH_\capO X\bigr)
  \Bigr)
\end{align*}
in $\AlgO$ (resp. $\LtO$), natural in $X$. Hence the homotopy completion $X^\hwedge$ of a cofibrant $\capO$-algebra (resp. cofibrant left $\capO$-module) $X$ may be calculated by taking the homotopy limit of the completion tower of $X$.
\end{rem}

In this paper we consider topological Quillen homology of an $\capO$-algebra (resp. left $\capO$-module) as an object in $\AlgO$ (resp. $\LtO$) via the forgetful functor as follows. 

\begin{defn}
\label{defn:quillen_homology} 
If $\capO$ is an operad in $\capR$-modules such that $\capO[\mathbf{0}]=*$, and $X$ is an $\capO$-algebra (resp. left $\capO$-module), then the \emph{topological Quillen homology} $\TQ(X)$ of $X$ is the $\capO$-algebra (resp. left $\capO$-module) $\tau_1\capO\circ^\HH_\capO (X)$ (resp. $\tau_1\capO\circ^\HH_\capO X$).
\end{defn}

In particular, when applied to a cofibrant $\capO$-algebra (resp. cofibrant left $\capO$-module) $X$, the completion tower interpolates between  topological Quillen homology $\TQ(X)$ and homotopy completion $X^\hwedge$.

\subsection{$\TQ$-completion}
\label{subsec:TQ_completion}

The purpose of this subsection is to introduce a second naturally occurring notion of completion for structured ring spectra, called $\TQ$-completion, or less concisely, completion with respect to topological Quillen homology (Definition \ref{defn:quillen_homology_completion}). Defining $\TQ$-completion requires the construction of a rigidification of the derived $\TQ$-resolution \eqref{eq:derived_QH_resolution} from a diagram in the homotopy category to a diagram in the model category. This rigidification problem is solved in Theorem \ref{thm:rigidification}.

The $\TQ$-completion construction is conceptual and can be thought of as a spectral algebra analog of Sullivan's \cite{Sullivan_MIT_notes, Sullivan_genetics} localization and completion of spaces, Bousfield-Kan's \cite[I.4]{Bousfield_Kan} completion of spaces with respect to homology, and Carlsson's \cite[II.4]{Carlsson_equivariant} and Arone-Kankaanrinta's \cite[0.1]{Arone_Kankaanrinta} completion and localization of spaces with respect to stable homotopy. 

Here is the idea behind the construction. We want to define $\TQ$-completion $X_\TQ^\wedge$ of a structured ring spectrum $X$ to be the structured ring spectrum defined by (showing only the coface maps) the homotopy limit of
\begin{align*}
  X^{\wedge}_{\TQ}:=
  \holim\limits_{\Delta}
  \Bigl(
  \xymatrix{
  \TQ(X)\ar@<-0.5ex>[r]\ar@<0.5ex>[r] & 
  (\TQ)^2 (X)
  \ar@<-1.0ex>[r]\ar[r]\ar@<1.0ex>[r] &
  (\TQ)^3 (X)\cdots
  }
  \Bigr)
\end{align*}
the cosimplicial resolution (or Godement resolution) with respect to the monad (or triple) $\TQ$. However, there are technical details that one needs to resolve in order to make sense of this definition for $\TQ$-completion. This is because $\TQ$ naturally arises as a functor on the level of the homotopy categories, and to work with and make sense of the homotopy limit $\holim_\Delta$ we need a point-set level construction of the derived $\TQ$-cosimplicial resolution \eqref{eq:derived_QH_resolution}, or more precisely, a construction on the level of model categories. Successfully resolving this issue is the purpose of the rest of this subsection, and amounts to solving a rigidification problem (Theorem \ref{thm:rigidification}) for the derived cosimplicial resolution with respect to $\TQ$.

Let $\capO$ be an operad in $\capR$-modules such that $\capO[\mathbf{0}]=*$. Then the canonical map of operads $\function{f}{\capO}{\tau_1\capO}$ induces a Quillen adjunction as in \eqref{eq:quillen_adjunction_change_of_operads_nice} and hence induces a corresponding adjunction
\begin{align}
\label{eq:derived_adjunction_quillen_homology}
\xymatrix{
  \Ho(\Alg_{\capO})\ar@<0.5ex>[r]^-{\LL f_*} & \Ho(\Alg_{\tau_1\capO})\ar@<0.5ex>[l]^-{\RR f^*}
}
\quad\quad
\Bigl(\text{resp.}\quad
\xymatrix{
  \Ho(\Lt_{\capO})\ar@<0.5ex>[r]^-{\LL f_*} & \Ho(\Lt_{\tau_1\capO})\ar@<0.5ex>[l]^-{\RR f^*}
}
\Bigr)
\end{align}
on the homotopy categories. Hence topological Quillen homology $\TQ$ is the monad (or triple) on the homotopy category $\Ho(\AlgO)$ (resp. $\Ho(\LtO)$) associated to the derived adjunction \eqref{eq:derived_adjunction_quillen_homology}. Denote by $\K$ the corresponding comonad (or cotriple)
\begin{align*}
  \id\rarrow\TQ&\quad\text{(unit)},\quad\quad\quad
  &\id\larrow\K& \quad\text{(counit)}, \\
  \TQ\TQ\rarrow\TQ&\quad\text{(multiplication)},\quad\quad\quad
  &\K\K\larrow\K& \quad\text{(comultiplication)},
\end{align*}
on $\Ho(\Alg_{\tau_1\capO})$ (resp. $\Ho(\Lt_{\tau_1\capO})$). Then $\TQ=\RR f^*\LL f_*$ and $\K=\LL f_*\RR f^*$, and it follows that for any $\capO$-algebra (resp. left $\capO$-module) X, the adjunction \eqref{eq:derived_adjunction_quillen_homology} determines a cosimplicial resolution of $X$ with respect to topological Quillen homology $\TQ$ of the form
\begin{align}
\label{eq:derived_QH_resolution}
\xymatrix{
  X\ar[r] & 
  \TQ(X)\ar@<-0.5ex>[r]\ar@<0.5ex>[r] & 
  \TQ^2(X)
  \ar@<-1.0ex>[r]\ar[r]\ar@<1.0ex>[r]\ar@<-2.0ex>[l] &
  \TQ^3(X)\cdots\ar@<-2.5ex>[l]\ar@<-3.5ex>[l]
  }
\end{align}
This derived $\TQ$-resolution can be thought of as encoding what it means for $\TQ(X)$ to have the structure of a $\K$-coalgebra. More precisely, the extra structure on $\TQ(X)$ is the $\K$-coalgebra structure on the underlying object $\LL f_*(X)$ of $\TQ(X)$. One difficulty in working with the diagram \eqref{eq:derived_QH_resolution} is that it lives in the homotopy category $\Ho(\AlgO)$ (resp. $\Ho(\LtO)$). The purpose of the rigidification theorem below is to construct a model of \eqref{eq:derived_QH_resolution} that lives in $\AlgO$ (resp. $\LtO$). 

Consider any factorization of the canonical map $\function{f}{\capO}{\tau_1\capO}$ in the category of operads as $\capO\xrightarrow{g}J_1\xrightarrow{h}\tau_1\capO$, a cofibration followed by a weak equivalence (Definition \ref{defn:model-cat-operads}) with respect to the positive flat stable model structure on $\ModR$ (Definition \ref{defn:lets_define_flat_model_structure}); it is easy to verify that such factorizations exist using a small object argument (Proposition \ref{prop:functorial_factorizations_of_maps_of_operads}). The corresponding change of operads adjunctions have the form
\begin{align}
\label{eq:factored_adjunctions}
\xymatrix{
  \AlgO\ar@<0.5ex>[r]^-{g_*} & \Alg_{J_1}\ar@<0.5ex>[l]^-{g^*}\ar@<0.5ex>[r]^-{h_*} & 
  \Alg_{\tau_1\capO}\ar@<0.5ex>[l]^-{h^*}
}\quad
\Bigl(\text{resp.}\quad
\xymatrix{
  \LtO\ar@<0.5ex>[r]^-{g_*} & \Lt_{J_1}\ar@<0.5ex>[l]^-{g^*}\ar@<0.5ex>[r]^-{h_*} & 
  \Lt_{\tau_1\capO}\ar@<0.5ex>[l]^-{h^*}
}
\Bigr)
\end{align}
with left adjoints on top and $g^*,h^*$ the forgetful functors (more accurately, but less concisely, also called the ``restriction along $g,h$, respectively, of the operad action''). These are Quillen adjunctions and since $h$ is a weak equivalence it follows that the $(h_*,h^*)$ adjunction is a Quillen equivalence (Theorem \ref{thm:comparing_homotopy_categories}). We defer the proof of the following rigidification theorem to Section \ref{sec:homotopical_analysis_forgetful_functors} (just after Theorem \ref{thm:cofibration_property_needed_for_homology_completion}).

\begin{thm}[Rigidification theorem for derived $\TQ$-resolutions]
\label{thm:rigidification}
Let $\capO$ be an operad in $\capR$-modules such that $\capO[\mathbf{0}]=*$. Assume that $\capO[\mathbf{r}]$ is flat stable cofibrant in $\ModR$ for each $r\geq 0$. If $X$ is a cofibrant $\capO$--algebra (resp. cofibrant left $\capO$-module) and $n\geq 1$, then there are weak equivalences
$
  (g^*g_*)^n(X)\wequiv\TQ^n(X)
$
natural in such $X$.
\end{thm}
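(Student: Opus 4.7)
The plan is to identify the point-set expression $(g^*g_*)^n(X)$ as a rigid model for the $n$-fold iterate of the derived monad $\TQ = \RR f^* \LL f_*$ on the homotopy category, by systematically exploiting two facts: (i) forgetful functors along maps of operads preserve \emph{all} weak equivalences, since weak equivalences in categories of algebras (or left modules) over operads are detected on underlying $\capR$-modules; and (ii) since $h$ is a weak equivalence of operads between operads with flat stable cofibrant components, the change of operads adjunction $(h_*, h^*)$ is a Quillen equivalence (Theorem~\ref{thm:comparing_homotopy_categories}). The core calculation to establish first is the case $n=1$, namely that $g^* g_*(X) \wequiv \TQ(X)$ naturally in cofibrant $X$.

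For the case $n=1$, observation (i) gives $\RR g^* = g^*$ and $\RR h^* = h^*$ with no fibrant replacement required. Since $g$ is a cofibration of operads, the left Quillen functor $g_*$ sends cofibrant $X$ to a cofibrant object $g_*(X) \in \Alg_{J_1}$ (resp.\ $\Lt_{J_1}$), so $\LL g_*(X) = g_*(X)$. Since $(h_*, h^*)$ is a Quillen equivalence and $h^*$ preserves all weak equivalences, the derived unit of this Quillen equivalence, evaluated on the cofibrant object $g_*(X)$, collapses to the honest unit map $g_*(X) \rarrow h^* h_* g_*(X)$, which is therefore a weak equivalence. Applying $g^*$ then gives
\[
  g^* g_*(X) \wequiv g^* h^* h_* g_*(X) \Equal f^* f_*(X) \wequiv \RR f^* \LL f_*(X) \Equal \TQ(X),
\]
natural in cofibrant $X$, which settles the case $n=1$.

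For general $n$, I would proceed by induction, provided that $(g^*g_*)^k(X)$ remains cofibrant in $\AlgO$ (resp.\ $\LtO$) at each stage, so that the $n=1$ argument can be reapplied with $Y := (g^*g_*)^{n-1}(X)$ in place of $X$. This cofibrancy preservation is precisely what the preceding Theorem~\ref{thm:cofibration_property_needed_for_homology_completion} is designed to supply; it should identify the cofibration property forcing $g^*g_*$ to send cofibrant $\capO$-algebras (resp.\ cofibrant left $\capO$-modules) to cofibrant ones, making essential use of the hypothesis that $\capO[\mathbf{r}]$ is flat stable cofibrant for each $r \geq 0$ (and consequently that $J_1$ can be chosen with flat stable cofibrant components as well). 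Once this is in hand, the induction step is routine: applying the $n=1$ case to $Y$ yields $(g^*g_*)^n(X) = g^*g_*(Y) \wequiv \TQ(Y) \wequiv \TQ^n(X)$, where the last weak equivalence combines the inductive hypothesis with the fact that $\TQ$ sends weak equivalences to weak equivalences on the level of the homotopy category.

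The main obstacle is the cofibrancy preservation step at the heart of the induction: abstractly, the forgetful functor along a cofibration of operads need not preserve cofibrancy of algebras, so this is not a formal consequence of the Quillen adjunction $(g_*, g^*)$ alone. Securing it requires concrete structural features of the positive flat stable model structure on $\capR$-modules together with the flat stable cofibrancy hypothesis on $\capO[\mathbf{r}]$, which together control the interaction of $g_*$ (built via iterated pushouts along $g$) with the underlying $\capR$-module structure. Once this is established in Theorem~\ref{thm:cofibration_property_needed_for_homology_completion}, the two-step structure---core identification $\TQ \wequiv g^*g_*$ on cofibrant objects, followed by induction---immediately yields the desired natural weak equivalence $(g^*g_*)^n(X) \wequiv \TQ^n(X)$.
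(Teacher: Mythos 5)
Your proposal is correct and follows essentially the same route as the paper: the base case $g^*g_*(X)\wequiv\TQ(X)$ via the left Quillen functor $g_*$ preserving cofibrancy together with the unit weak equivalence for the Quillen equivalence $(h_*,h^*)$ (Theorem \ref{thm:comparing_homotopy_categories} and Proposition \ref{prop:unit_map_is_weak_equivalence}), and the induction via the non-formal fact that $g^*$ preserves cofibrant objects, which the paper obtains by combining Theorem \ref{thm:cofibration_property_needed_for_homology_completion} with Theorem \ref{thm:homotopical_analysis_of_forgetful_functors} exactly as you anticipate. One small correction: $g_*$ is left Quillen for any map of operads, so its preservation of cofibrancy does not use that $g$ is a cofibration of operads; that hypothesis is what makes the lifting argument for $g^*$ work.
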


The following description of $\TQ$-completion is closely related to \cite{Carlsson} and \cite{Hess}.

\begin{defn}
\label{defn:quillen_homology_completion}
Let $\capO$ be an operad in $\capR$-modules such that $\capO[\mathbf{0}]=*$. Assume that $\capO[\mathbf{r}]$ is flat stable cofibrant in $\ModR$ for each $r\geq 0$. Let $X$ be an $\capO$-algebra (resp. left $\capO$-module). The \emph{$\TQ$-completion} (or completion with respect to topological Quillen homology) $X_\TQ^\wedge$ of $X$ is the $\capO$--algebra (resp. left $\capO$-module) defined by (showing only the coface maps) the homotopy limit of the cosimplicial resolution
\begin{align}
\label{eq:homology_completion}
  X^{\wedge}_{\TQ}:=
  \holim\limits_{\Delta}
  \Bigl(
  \xymatrix{
  (g^*g_*)(X^c)\ar@<-0.5ex>[r]\ar@<0.5ex>[r] & 
  (g^*g_*)^2(X^c)
  \ar@<-1.0ex>[r]\ar[r]\ar@<1.0ex>[r] &
  (g^*g_*)^3(X^c)\cdots
  }
  \Bigr)
\end{align}
(or Godement resolution) of the functorial cofibrant replacement $X^c$ of $X$ in $\AlgO$ (resp. $\LtO$) with respect to the monad $g^*g_*$. Here, $\holim_\Delta$ is calculated in the category of $\capO$--algebras (resp. left $\capO$-modules).
\end{defn}

\begin{rem}
The $(g^*g_*)$-resolution can be thought of as encoding what it means for $\TQ(X)$ to have the structure of a $\K$-coalgebra. More precisely, the extra structure on $g^*g_*(X^c)\wequiv\TQ(X)$ is the $(g_*g^*)$-coalgebra structure on the underlying object $g_*(X^c)$ of $g^*g_*(X^c)$. In particular, the comonad $(g_*g^*)$ provides a point-set model for the derived comonad $\K$ that coacts on $\TQ(X)$ (up to a Quillen equivalence). This point-set model of $\K$ is conjecturally related to the Koszul dual cooperad associated to $\capO$ (see, for instance, \cite{Ching_duality, Fresse, Ginzburg_Kapranov}).
\end{rem}

It follows that the cosimplicial resolution in \eqref{eq:homology_completion} provides a rigidification of the derived cosimplicial resolution \eqref{eq:derived_QH_resolution}. One of our motivations for introducing the homotopy completion tower was its role as a potentially useful tool in analyzing $\TQ$-completion defined above, but an investigation of these properties and the $\TQ$-completion functor will be the subject of other papers and will not be elaborated here.

\subsection{Comparing homotopy completion towers}

The purpose of this subsection is to prove Theorem \ref{thm:comparing_homotopy_completion_towers}, which compares homotopy completion towers along a map of operads.

Let $\function{g}{\capO'}{\capO}$ be a map of operads in $\capR$-modules, and for each $\capO$-algebra (resp. left $\capO$-module) $X$, consider the corresponding $\capO'$-algebra (resp. left $\capO'$-module) $X$ given by forgetting the left $\capO$-action along the map $g$; here we have dropped the forgetful functor $g^*$ from the notation. Consider the map $\emptyset\rarrow X$ in $\Alg_{\capO'}$ (resp. $\Lt_{\capO'}$) and use functorial factorization in $\Alg_{\capO'}$ (resp. $\Lt_{\capO'}$) to obtain
\begin{align}
\label{eq:functorial_factorization_prime_notation}
  \emptyset\rarrow X'\rarrow X,
\end{align}
a cofibration followed by an acyclic fibration. 

In the next theorem we establish that replacing an operad $\capO$ by a weakly equivalent operad $\capO'$ changes the homotopy completion tower of $X$ only up to natural weak equivalence. In particular, the homotopy completion of $X$ as an $\capO'$-algebra is weakly equivalent to its homotopy completion as an $\capO$-algebra.

\begin{thm}[Comparison theorem for homotopy completion towers]
\label{thm:comparing_homotopy_completion_towers}
Let $\function{g}{\capO'}{\capO}$ be a map of operads in $\capR$-modules such that $\capO'[\mathbf{0}]=*$ and $\capO[\mathbf{0}]=*$. If $X$ is an $\capO$-algebra (resp. left $\capO$-module), then there are maps of towers
\begin{align}
\label{eq:comparing_completion_towers}
\xymatrix{
  \{X'\}\ar[d]\ar@{=}[r] & \{X'\}\ar[d]^{(\sharp)}
  \ar[r] & \{X\}\ar[d]\\
  \{\tau_k\capO'\circ_{\capO'}(X')\}\ar[r]^-{(*)} & 
  \{\tau_k\capO\circ_{\capO'}(X')\}\ar[r]^-{(**)} & 
  \{\tau_k\capO\circ_{\capO}(X)\}
}
\end{align}
\begin{align}
\label{eq:comparing_completion_towers_left_modules}
\text{resp.}\quad
\xymatrix{
  \{X'\}\ar[d]\ar@{=}[r] & \{X'\}\ar[d]^{(\sharp)}
  \ar[r] & \{X\}\ar[d]\\
  \{\tau_k\capO'\circ_{\capO'}X'\}\ar[r]^-{(*)} & 
  \{\tau_k\capO\circ_{\capO'}X'\}\ar[r]^-{(**)} & 
  \{\tau_k\capO\circ_{\capO}X\}
}
\end{align}
of $\capO'$-algebras (resp. left $\capO'$-modules), natural in $X$. If, furthermore, $g$ is a weak equivalence in the underlying category $\SymSeq$, and $X$ is fibrant and cofibrant in $\AlgO$ (resp. $\LtO$), then the maps $(*)$ and $(**)$ are levelwise weak equivalences; here, we are using the notation \eqref{eq:functorial_factorization_prime_notation} to denote functorial cofibrant replacement of $X$ as an $\capO'$-algebra (resp. left $\capO'$-module).
\end{thm}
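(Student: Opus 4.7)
The plan is to first construct the horizontal maps $(*)$ and $(**)$ (together with $(\sharp)$ as their composite), verify that the diagram commutes by naturality, and then use the homotopical analysis of relative composition products developed in Sections \ref{sec:homotopical_analysis_forgetful_functors}--\ref{sec:model_structures} to establish the weak equivalence claim.

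First I would construct the maps. The map $(*)$ is obtained by applying $-\circ_{\capO'}(X')$ (resp.\ $-\circ_{\capO'}X'$) to the morphism $\tau_k g\colon \tau_k\capO' \rarrow \tau_k\capO$ of $(\capO',\capO')$-bimodules, where the $\capO'$-bimodule structure on $\tau_k\capO$ is restricted along $g$. The map $(**)$ is built in two steps: (i) there is a natural isomorphism $\tau_k\capO \circ_{\capO'}(X') \iso \tau_k\capO \circ_\capO g_*(X')$ coming from the reflexive coequalizer description of the relative composition product together with the identification $g_*(-) = \capO\circ_{\capO'}(-)$; (ii) there is a canonical $\capO$-algebra map $g_*(X') \rarrow X$ adjoint to the identity $X' \rarrow g^*X = X$. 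Then $(**)$ is obtained by applying $\tau_k\capO\circ_\capO -$ to this map. The vertical maps and $(\sharp)$ are the natural coaugmentations of the completion tower (resp.\ the composite of the constructed horizontal maps). Naturality in $X$ is immediate from the construction, and the required diagram commutes on the nose.

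Next I would establish the weak equivalence claim. Assume $g$ is a weak equivalence in $\SymSeq$ and $X$ is fibrant and cofibrant in $\AlgO$ (resp.\ $\LtO$). For the map $(*)$: truncation is a levelwise operation on symmetric sequences, so $\tau_k g$ is a weak equivalence in $\SymSeq$, and both $\tau_k\capO'$ and $\tau_k\capO$ inherit the necessary flatness as right $\capO'$-modules from Cofibrancy Condition \ref{CofibrancyCondition} applied to $\capO$ and $\capO'$. Combined with the cofibrancy of $X'$ as an $\capO'$-algebra, the homotopical analysis of the relative composition product (modeling $\tau_k(-)\circ_{\capO'}(X')$ via the simplicial bar construction of Section \ref{sec:homotopical_analysis_bar_constructions}) implies that this functor sends $\tau_k g$ to a weak equivalence. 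For the map $(**)$: since $g$ is a weak equivalence of operads, the change-of-operads adjunction $(g_*, g^*)$ is a Quillen equivalence (Theorem \ref{thm:comparing_homotopy_categories}); hence the derived counit $g_*(X') \rarrow X$ is a weak equivalence in $\AlgO$ between cofibrant objects ($g_*(X')$ is cofibrant as the left Quillen image of the cofibrant $X'$, and $X$ is cofibrant by hypothesis). Applying $\tau_k\capO\circ_\capO -$, which by the same bar-construction analysis preserves weak equivalences between cofibrant $\capO$-algebras under Cofibrancy Condition \ref{CofibrancyCondition}, yields the desired weak equivalence.

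The main obstacle is the homotopy invariance of the relative composition product $-\circ_{\capO'}(-)$ in each of its arguments, since this is not a formal consequence of the definitions. The essential technical input is the simplicial bar construction machinery, which resolves $\tau_k\capO \circ_{\capO'}(X')$ by a geometric realization of iterated free composites and thereby reduces homotopy invariance to that of free composition products, for which the positive flat stable model structure on $\ModR$ together with Cofibrancy Condition \ref{CofibrancyCondition} provides the needed flatness at each arity. The hypothesis that $X$ is fibrant is used to ensure that $g^*X$ already represents the correct homotopy type, so that the cofibrant replacement $X' \rarrow g^*X = X$ in $\Alg_{\capO'}$ genuinely realizes $\LL g_* \RR g^*(X) \simeq g_*(X')$ when transported back to $\AlgO$.
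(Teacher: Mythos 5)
Your construction of the maps $(*)$, $(**)$, and $(\sharp)$ agrees with the paper's. The gap is in your argument for the levelwise weak equivalences: at two points you invoke Cofibrancy Condition \ref{CofibrancyCondition} ``applied to $\capO$ and $\capO'$'', but that condition is \emph{not} a hypothesis of this theorem, and it cannot be added without defeating the theorem's purpose. This comparison theorem is precisely the tool (together with Proposition \ref{prop:replacement_of_operads}) used to replace an arbitrary operad by one satisfying Cofibrancy Condition \ref{CofibrancyCondition} in the proofs of the main theorems, so its proof must go through for arbitrary operads with $\capO[\mathbf{0}]=*$ and $\capO'[\mathbf{0}]=*$. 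The bar-construction machinery of Sections \ref{sec:homotopical_analysis_forgetful_functors} and \ref{sec:homotopical_analysis_bar_constructions} genuinely requires flat stable cofibrancy of the spaces $\capO[\mathbf{r}]$, so routing the homotopy invariance of $-\circ_{\capO'}X'$ in its first variable through geometric realizations of simplicial bar constructions is not available here.

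The gap is closed by avoiding the bar construction entirely. For $(*)$: the map $\tau_k\capO'\circ_{\capO'}X'\rarrow\tau_k\capO\circ_{\capO'}X'$ is (isomorphic to) the unit of the change-of-operads adjunction between $\Lt_{\tau_k\capO'}$ and $\Lt_{\tau_k\capO}$ evaluated at $\tau_k\capO'\circ_{\capO'}X'$, which is cofibrant in $\Lt_{\tau_k\capO'}$ because $X'$ is cofibrant in $\Lt_{\capO'}$ and $\tau_k\capO'\circ_{\capO'}-$ is left Quillen; Proposition \ref{prop:unit_map_is_weak_equivalence}, which holds for arbitrary operads in $\capR$-modules, then gives the weak equivalence since $\tau_k\capO'\rarrow\tau_k\capO$ is a weak equivalence in $\SymSeq$. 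For $(**)$: the adjoint $\function{\xi}{\capO\circ_{\capO'}X'}{X}$ is a weak equivalence between cofibrant objects in $\LtO$ by Theorem \ref{thm:comparing_homotopy_categories} (this is where the fibrancy of $X$ enters, as you correctly note), and $\tau_k\capO\circ_\capO-$ preserves weak equivalences between cofibrant objects simply because it is a left Quillen functor (Ken Brown's lemma); no bar construction and no cofibrancy hypothesis on the operads is needed.
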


\begin{proof}
It suffices to consider the case of left $\capO$-modules. The map of operads $\capO'\rarrow\capO$ induces a commutative diagram of towers
\begin{align}
\label{eq:diagram_of_towers_associated_to_map_of_operads}
\xymatrix{
  \{\capO'\}\ar[d]\ar[r] & \{\capO\}\ar[d]\\
  \{\tau_k\capO'\}\ar[r] & \{\tau_k\capO\}
}
\end{align}
of operads and $(\capO',\capO')$-bimodules; here, $\{\capO'\}$ and $\{\capO\}$ denote the constant towers with values $\capO'$ and $\capO$, respectively. 

Consider the map of towers $(*)$. Each map $\tau_k\capO'\circ_{\capO'} X'\rarrow\tau_k\capO\circ_{\capO'}X'$ in $(*)$ is obtained by applying $-\circ_{\capO'}X'$ to the map $\tau_k\capO'\rarrow\tau_k\capO$. By \eqref{eq:diagram_of_towers_associated_to_map_of_operads}, this map is isomorphic to the composite
$
  \tau_k\capO'\circ_{\capO'}X'\xrightarrow{\eta}
  \tau_k\capO\circ_{\tau_k\capO'}\tau_k\capO'\circ_{\capO'}\circ X'
  \Iso
  \tau_k\capO\circ_{\capO'}X'
$
where $\function{\eta}{\id}{\tau_k\capO\circ_{\tau_k\capO'}-}$ is the unit map associated to the change of operads adjunction
$
\xymatrix@1{
  \Lt_{\tau_k\capO'}\ar@<0.5ex>[r] & 
  \Lt_{\tau_k\capO}\ar@<0.5ex>[l]
}
$. If, furthermore, $g$ is a weak equivalence in $\SymSeq$, then the map $\tau_k\capO'\rarrow\tau_k\capO$ is a weak equivalence, and since $X'$ is cofibrant in $\Lt_{\capO'}$ it follows from \ref{thm:comparing_homotopy_categories} and \ref{prop:unit_map_is_weak_equivalence} that $(*)$ is a levelwise weak equivalence. 

Consider the map of towers $(**)$ and the change of operads adjunction 
$
\xymatrix@1{
  \Lt_{\capO'}\ar@<0.5ex>[r] & 
  \Lt_{\capO}\ar@<0.5ex>[l]
}
$. The weak equivalence $X'\rarrow X$ of left $\capO'$-modules in \eqref{eq:functorial_factorization_prime_notation} has corresponding adjoint map
$
\function{\xi}{\capO\circ_{\capO'}X'}{X}
$. Each map $\tau_k\capO\circ_{\capO'}X'\rarrow\tau_k\capO\circ_{\capO}X$ in $(**)$ is obtained by applying $\tau_k\capO\circ_{\capO}-$ to the map $\xi$. If, furthermore, $g$ is a weak equivalence in $\SymSeq$, and $X$ is fibrant and cofibrant in $\LtO$, then by \ref{thm:comparing_homotopy_categories} the map $\xi$ is a weak equivalence between cofibrant objects in $\LtO$, and hence $(**)$ is a levelwise weak equivalence. To finish the proof, it suffices to describe the map of towers $(\sharp)$ in \eqref{eq:comparing_completion_towers_left_modules}. Each map $X'\rarrow\tau_k\capO\circ_{\capO'}X'$ is obtained by applying $-\circ_{\capO'}X'$ to the map $\capO'\rarrow\tau_k\capO$.
\end{proof}

We defer the proof of the following proposition to Section \ref{sec:homotopical_analysis_forgetful_functors}.

\begin{prop}
\label{prop:replacement_of_operads}
Let $\capO$ be an operad in $\capR$-modules such that $\capO[\mathbf{0}]=*$. Then there exists a map of operads $\function{g}{\capO'}{\capO}$ such that $\capO'[\mathbf{0}]=*$, and 
\begin{itemize}
\item[(i)] $g$ is a weak equivalence in the underlying category $\SymSeq$,
\item[(ii)] $\capO'$ satisfies Cofibrancy Condition \ref{CofibrancyCondition}.
\end{itemize}
\end{prop}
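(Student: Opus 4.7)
The plan is to obtain $\capO'$ by applying a functorial factorization in the category of operads to the unit map $\eta\colon I \to \capO$. Specifically, I would invoke Proposition~\ref{prop:functorial_factorizations_of_maps_of_operads} to factor $\eta$ as
\begin{align*}
  I \xrightarrow{\ \iota\ } \capO' \xrightarrow{\ g\ } \capO,
\end{align*}
where $\iota$ is a cofibration and $g$ is an acyclic fibration with respect to the (transferred) positive flat stable model structure on operads in $\capR$-modules. Setting this up correctly is the key step; the rest of the proof is a verification that the three required properties hold.

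First I would check that $\capO'[\mathbf{0}] = *$. Since $I[\mathbf{0}] = *$ and $\capO[\mathbf{0}] = *$, and the small object argument used to produce the factorization attaches free operad cells along generating cofibrations of $\SymSeq$ starting from $I$, no $\mathbf{0}$-ary material is ever introduced. Equivalently, one can restrict the argument to the reflective subcategory of operads with trivial nullary part, where the model structure restricts and the factorization is performed internally. Next, condition (i) that $g$ is a weak equivalence in the underlying $\SymSeq$ is immediate, since the positive flat stable model structure on operads has its weak equivalences and fibrations detected by the forgetful functor to $\SymSeq$, so $g$ is already an acyclic fibration at the level of symmetric sequences.

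The heart of the matter is condition (ii), namely that $\capO'$ satisfies Cofibrancy Condition~\ref{CofibrancyCondition}. Note that $I$ itself satisfies this condition trivially: $I[\mathbf{1}] = \capR$ is flat stable cofibrant in $\ModR$, $I[\mathbf{r}] = *$ is cofibrant for $r \neq 1$, and the identity maps at these arities are cofibrations. The claim to verify is then that the cofibration of operads $\iota\colon I \to \capO'$ induces, at each arity $r \geq 0$, a flat stable cofibration $I[\mathbf{r}] \to \capO'[\mathbf{r}]$ between flat stable cofibrant objects in $\ModR$. This follows from the cellular description of cofibrations of operads: $\iota$ is built as a transfinite composite of cobase changes of free operad maps on generating cofibrations of $\SymSeq$, and an inductive analysis of these free extensions (as developed in Section~\ref{sec:model_structures}) shows that each pushout step preserves arity-wise flat stable cofibrancy and produces arity-wise flat stable cofibrations.

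The main obstacle is precisely this final arity-wise analysis of the cellular structure of operadic cofibrations, which requires the filtration of free operad extensions along cofibrations of $\SymSeq$ to be compatible with the arity-wise flat stable model structure on $\ModR$. Once this is in hand, combining the three verifications above yields the desired replacement $g\colon \capO' \to \capO$.
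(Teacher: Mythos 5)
Your proposal is correct and follows essentially the same route as the paper: factor the unit map $I\rarrow\capO$ by the small object argument in the category of operads (Proposition \ref{prop:functorial_factorizations_of_maps_of_operads}), observe that $g$ is an underlying weak equivalence because weak equivalences and fibrations of operads are detected in $\SymSeq$, and verify Cofibrancy Condition \ref{CofibrancyCondition} by analyzing the cellular construction of $\capO'$. The one discrepancy is your citation of Section \ref{sec:model_structures} for the arity-wise analysis of the free operad extensions --- that section filters pushouts of free \emph{algebra/module} maps, not free \emph{operad} maps; the paper instead rests this verification on the Rezk-style filtered colimit description of the free operad functor $F$, though both proofs ultimately leave the arity-wise check at the level of an assertion.
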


Later in this paper, we need the following observation that certain homotopy limits commute with the forgetful functor.

\begin{prop}
\label{prop:forgetful_functor_commutes_with_holim}
Let $\capO$ be an operad in $\capR$-modules. Consider any tower $B_0\leftarrow B_1\leftarrow B_2\leftarrow\cdots$ of $\capO$-algebras (resp. left $\capO$-modules). There are natural zigzags
\begin{align*}
  U\holim^{\AlgO}_k B_k\wequiv \holim_k UB_k
  \quad\quad
\Bigl(\text{resp.}\quad
  U\holim^{\LtO}_k B_k\wequiv \holim_k UB_k
\Bigr)
\end{align*}
of weak equivalences. Here, $U$ is the forgetful functor \eqref{eq:free_forgetful_adjunction}.
\end{prop}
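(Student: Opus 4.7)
The plan is to observe that the forgetful functor $U$ lifts to a right Quillen functor between the diagram categories equipped with the injective model structure of Definition \ref{defn:model_structure_on_towers}, whence the desired zigzag of weak equivalences is a formal consequence of the standard properties of such functors.

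First, I would recall that by Proposition \ref{prop:basic_properties_LTO}, the forgetful functor $U$ is a right adjoint and hence strictly preserves all small limits. Moreover, the positive flat stable model structure on $\AlgO$ (resp. $\LtO$) is transferred from the underlying category along the free-forgetful adjunction, so both the fibrations and the weak equivalences in $\AlgO$ (resp. $\LtO$) are created by $U$. In particular, $U$ preserves fibrations, acyclic fibrations, weak equivalences, and all small limits.

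Second, since the indexing category $\DD=\{0\leftarrow 1\leftarrow 2\leftarrow\cdots\}$ of a tower is an inverse Reedy category, the injective model structure on the diagram category agrees with the Reedy model structure. In this model structure, the (acyclic) fibrations of a tower map $\function{f}{X}{Y}$ are precisely those maps for which $X_0\rarrow Y_0$ together with each induced matching map $X_k\rarrow Y_k\times_{Y_{k-1}}X_{k-1}$ is an (acyclic) fibration in $\M$. Since $U$ preserves both pullbacks and (acyclic) fibrations, it follows that applying $U$ levelwise preserves injective (acyclic) fibrations of towers, and hence $U$ defines a right Quillen functor between the diagram categories.

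Given a tower $B$ of $\capO$-algebras (resp. left $\capO$-modules), I would choose a functorial injective fibrant replacement $B\rarrow B'$, so that $\holim^{\AlgO}_k B_k\Iso\lim_k B'_k$. Applying the limit-preserving functor $U$ yields the natural isomorphism $U\lim_k B'_k\Iso\lim_k UB'_k$. Since $UB\rarrow UB'$ is an objectwise weak equivalence and $UB'$ is injective fibrant (by the right Quillen argument above), $UB'$ serves as an injective fibrant replacement of $UB$; comparing with any functorial choice of such a fibrant replacement in $\ModR^\DD$ (resp. $\SymSeq^\DD$) then produces the natural zigzag of weak equivalences $\lim_k UB'_k \wequiv \holim_k UB_k$. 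The main subtle point in this plan is the identification of injective fibrations on towers with Reedy fibrations via matching maps, since this is what reduces the preservation statement to formal properties of $U$ already in hand; once that is granted, everything else is routine.
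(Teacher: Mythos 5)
Your proposal is correct and is essentially the argument the paper intends: the paper's proof is a one-line citation of the dual of an external homotopy-colimit argument together with the observations that $U$ creates weak equivalences, fibrations, and limits, and that fibrant towers are (levelwise) fibrant. Your identification of the injective fibrations on towers with the Reedy/matching-map fibrations is exactly the content of the model structure cited in Definition \ref{defn:model_structure_on_towers} (Goerss--Jardine VI.1.1), so the "subtle point" you flag is already built into the setup and the rest goes through as you describe.
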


\begin{proof}
This follows from the dual of \cite[proof of 3.15]{Harper_Bar}, together with the observation that the forgetful functor $U$ preserves weak equivalences and that fibrant towers are levelwise fibrant.
\end{proof}

\section{Homotopical analysis of the completion tower}
\label{sec:homotopical_analysis_of_the_completion_tower}

The purpose of this section is to prove the main theorems stated in the introduction (Theorems \ref{thm:finiteness}, \ref{thm:hurewicz}, \ref{thm:relative_hurewicz}, and \ref{MainTheorem}). The unifying approach behind each of these theorems is to systematically exploit induction ``up the homotopy completion tower'' together with explicit calculations of the layers in terms of simplicial bar constructions (Theorem \ref{thm:calculating_fiber_of_induced_map} and Proposition \ref{prop:refined_bar_construction_calculation_for_homotopy_fiber}). An important property of these layer calculations, which we fully exploit in the proofs of the main theorems, is that the simplicial bar constructions are particularly amenable to systematic connectivity and finiteness estimates (Propositions \ref{prop:connectivity_of_simplicial_maps_spectra}, \ref{prop:connectivity}, and \ref{prop:homotopy_spectral_sequence}--\ref{prop:useful_finiteness_properties_dervied_smash}).

The first step to proving the main theorems is to establish conditions under which the homotopy completion tower of $X$ converges strongly to $X$. This is accomplished in Theorem \ref{MainTheorem}, which necessarily is the first of the main theorems to be proved. Establishing strong convergence amounts to verifying that the connectivity of the natural maps from X into each stage of the tower increase as you go up the tower, and verifying this essentially reduces to understanding the implications of the connectivity estimates in Propositions \ref{prop:connectivity_of_simplicial_maps_spectra} and \ref{prop:connectivity} when studied in the context of the calculations in Propositions \ref{prop:fattened_version_of_tower} and \ref{prop:pushout_diagram_for_bar_construction_tower_coaugmented} (see Proposition \ref{prop:connectivity_estimates}).

The upshot of strong convergence is that to calculate $\pi_i X$ for a fixed $i$, one only needs to calculate $\pi_i$ of a (sufficiently high but) finite stage of the tower. Having to only go ``finitely high up the tower'' to calculate $\pi_iX$, together with the explicit layer calculations in Theorem \ref{thm:calculating_fiber_of_induced_map} and Proposition \ref{prop:refined_bar_construction_calculation_for_homotopy_fiber}, are the key technical properties underlying our approach to the main theorems. For instance, our approach to the $\TQ$ finiteness theorem (Theorem \ref{thm:finiteness}) is to (i) start with an assumption about the finiteness properties of $\pi_i$ of $\TQ$-homology (which is the bottom stage of the tower), (ii) to use explicit calculations of the layers of the tower to prove that these same finiteness properties are inherited by $\pi_i$ of the layers, and (iii) to conclude that these finiteness properties are inherited by $\pi_i$ of each stage of the tower. Strong convergence of the homotopy completion tower then finishes the proof of the $\TQ$ finiteness theorem. It is essentially in this manner that we systematically exploit induction ``up the homotopy completion tower'' to prove each of the main theorems stated in the introduction.

\subsection{Simplicial bar constructions and the homotopy completion tower}
\label{sec:simplicial_bar_and_homotopy_completion}

Recall that $\capR$ is any commutative ring spectrum (Basic Assumption \ref{assumption:commutative_ring_spectrum}) and that $(\ModR,\Smash,\capR)$ denotes the closed symmetric monoidal category of $\capR$-modules (Definition \ref{defn:left_R_modules}). Denote by $\sSet$ (resp. $\sSet_*$) the category of simplicial sets (resp. pointed simplicial sets). There are adjunctions
$
\xymatrix@1{
  \sSet\ar@<0.5ex>[r]^-{(-)_+} & 
  \sSet_*\ar@<0.5ex>[l]^-{U}\ar@<0.5ex>[r]^-{\capR\tensor G_0} &
  \ModR,\ar@<0.5ex>[l]
}
$
with left adjoints on top and $U$ the forgetful functor (see Proposition \ref{prop:closed_symmetric_monoidal_structure_on_sym_sequences_pointed_ssets} for the tensor product $\tensor$ notation together with \eqref{eq:new_adjunctions_for_spectra_as_S_modules}). The functor $\capR\tensor G_0$ is left adjoint to ``evaluation at $0$''; the notation agrees with Subsection \ref{sec:model_structures_on_capR_modules} and \cite[after 2.2.5]{Hovey_Shipley_Smith}. Note that if $X\in\ModR$ and $K\in\sSet_*$, then there are natural isomorphisms $X\Smash K\Iso X\Smash (\capR\tensor G_0 K)$ in $\ModR$; in other words, taking the objectwise smash product of $X$ with $K$ (as pointed simplicial sets) is the same as taking the smash product of $X$ with $\capR\tensor G_0 K$ (as $\capR$-modules).

Recall the usual realization functor on simplicial $\capR$-modules and simplicial symmetric sequences; see also \cite[IV.1, VII.1]{Goerss_Jardine}.

\begin{defn} 
\label{defn:realization}
Consider symmetric sequences in $\ModR$. The \emph{realization} functors $|-|$ for simplicial $\capR$-modules and simplicial symmetric sequences are defined objectwise by the coends
\begin{align*}
  \functor{|-|}{\sModR}{\ModR},
  &\quad\quad
  X\longmapsto |X|:=X\Smash_{\Delta}\Delta[-]_+\ ,\\
  \functor{|-|}{\sSymSeq}{\SymSeq},
  &\quad\quad
  X\longmapsto |X|:=X\Smash_{\Delta}\Delta[-]_+\ .
\end{align*}
\end{defn}

\begin{prop}
\label{prop:realzns_fit_into_adjunctions}
The realization functors fit into adjunctions
\begin{align}
\label{eq:realization_mapping_object_adjunction_underlying}
\xymatrix{
  \sModR
  \ar@<0.5ex>[r]^-{|-|} & \ModR,\ar@<0.5ex>[l]
}\quad\quad
\xymatrix{
  \sSymSeq
  \ar@<0.5ex>[r]^-{|-|} & \SymSeq,\ar@<0.5ex>[l]
}
\end{align}
with left adjoints on top.
\end{prop}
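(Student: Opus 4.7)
The plan is to exhibit the right adjoints explicitly and then to appeal to the standard coend/end adjunction. For the underlying category version, note that since $(\ModR,\Smash,\capR)$ is closed symmetric monoidal and $\capR\tensor G_0$ is strong monoidal, $\ModR$ is tensored and cotensored over pointed simplicial sets via $X\Smash K\Iso X\Smash(\capR\tensor G_0 K)$. Write $\Map(K,Y)\in\ModR$ for the cotensor, i.e., the $\capR$-module characterized by the natural isomorphism $\hom_{\ModR}(X\Smash K,Y)\Iso\hom_{\ModR}(X,\Map(K,Y))$. Then define the candidate right adjoint $\function{\mathrm{Sing}}{\ModR}{\sModR}$ objectwise by $\mathrm{Sing}(Y)_n := \Map(\Delta[n]_+,Y)$, with simplicial structure maps induced by the cosimplicial structure of $\Delta[-]_+$.

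The adjunction then follows formally from the universal properties of coends and ends: there is a chain of natural isomorphisms
\begin{align*}
\hom_{\ModR}(|X|,Y)
&\Iso \hom_{\ModR}\bigl(X\Smash_\Delta \Delta[-]_+,\,Y\bigr)\\
&\Iso \catend_{[n]\in\Delta}\hom_{\ModR}\bigl(X_n\Smash\Delta[n]_+,\,Y\bigr)\\
&\Iso \catend_{[n]\in\Delta}\hom_{\ModR}\bigl(X_n,\,\Map(\Delta[n]_+,Y)\bigr)\\
&\Iso \hom_{\sModR}(X,\,\mathrm{Sing}(Y)),
\end{align*}
where the first step is the definition of $|-|$, the second is the universal property of the coend, the third applies the cotensor adjunction termwise, and the fourth is the standard expression of simplicial maps as an end of objectwise hom-sets.

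For the symmetric sequence version, apply the same construction objectwise in arity: for $Y\in\SymSeq$ set $\mathrm{Sing}(Y)[\mathbf{r}]_n := \Map(\Delta[n]_+,Y[\mathbf{r}])$, equipped with the inherited $\Sigma_r$-action and the obvious simplicial structure. Since $\hom_{\SymSeq}$ is an end over $\Sigma^{\op}$ of objectwise hom-sets in $\ModR$, and since the realization coend in $\sSymSeq\rarrow\SymSeq$ is also computed objectwise in arity, the verification of the adjunction reduces arity-by-arity to the underlying case already established.

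No essential obstacle is expected; the result is a purely formal consequence of the closed symmetric monoidal structure on $\ModR$ together with the adjunction between $\sSet_*$ and $\ModR$. The only minor point requiring care is to check that the cotensor $\Map(K,-)$ genuinely takes values in $\ModR$ (and not merely in the underlying category) and is natural in $K\in\sSet_*$; this is immediate from the closed structure on $\ModR$ and the strong monoidality of $\capR\tensor G_0$.
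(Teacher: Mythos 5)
Your proposal is correct and follows essentially the same route as the paper: the paper's proof simply declares that the functor given objectwise by $\Map(\capR\tensor G_0\Delta[-]_+,Y)$ is right adjoint to $|-|$ by the universal property of coends, which is precisely the $\mathrm{Sing}$ functor and coend/end computation you spell out. Your version just makes the formal chain of isomorphisms explicit.
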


\begin{proof}
Consider the case of $\capR$-modules (resp. symmetric sequences). Using the universal property of coends, it is easy to verify that the functor given objectwise by
$
  \Map(\capR\tensor G_0\Delta[-]_+,Y)
$
is a right adjoint of $|-|$.
\end{proof}

The following is closely related to \cite[IV.1.7]{Goerss_Jardine} and \cite[X.2.4]{EKMM}; see also \cite[A]{Dugger_Isaksen} and \cite[Chapter 18]{Hirschhorn}.

\begin{prop}
\label{prop:realization_monomorphisms_weak_equivalences}
Let $\function{f}{X}{Y}$ be a morphism of simplicial $\capR$-modules. 
If $f$ is a monomorphism (resp. objectwise weak equivalence), then $\function{|f|}{|X|}{|Y|}$ is a monomorphism (resp. weak equivalence).
\end{prop}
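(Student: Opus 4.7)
The approach is to analyze the realization via its skeletal filtration: for a simplicial $\capR$-module $X$, one has $|X| = \colim_n \Sk_n |X|$, where each $\Sk_n|X|$ fits into a natural pushout square in $\ModR$
\[
\xymatrix{
(L_n X \Smash \Delta[n]_+) \cup_{L_n X \Smash \partial\Delta[n]_+} (X_n \Smash \partial\Delta[n]_+) \ar[r]\ar[d] & X_n \Smash \Delta[n]_+ \ar[d] \\
\Sk_{n-1}|X| \ar[r] & \Sk_n |X|
}
\]
with $L_n X$ the $n$-th latching object (see \cite[VII.1]{Goerss_Jardine} and \cite[Ch.\ 18]{Hirschhorn}). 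The upper horizontal map is the pushout-product of the latching inclusion $L_n X \rarrow X_n$ with the boundary inclusion $\partial\Delta[n]_+ \hookrightarrow \Delta[n]_+$. I plan to induct up this filtration in both cases, deducing the relevant property for each $\Sk_n |f|$ and then passing to the sequential colimit.

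For the monomorphism case, the argument simplifies considerably: monomorphisms of $\capR$-modules are detected levelwise in the symmetric spectrum structure by monomorphisms of pointed simplicial sets, and taking diagonals of bisimplicial pointed sets preserves levelwise monomorphisms since the diagonal at bisimplicial degree $m$ evaluates to $X_{m,m} \rarrow Y_{m,m}$. This yields that $|f|$ is a monomorphism directly, without the skeletal induction. Alternatively, by induction one shows that both $L_n f$ and $f_n$ are monomorphisms (the former because latching objects are finite colimits of the $X_k$), hence the top map of the square is a monomorphism, and pushouts and filtered colimits of monomorphisms in $\ModR$ are again monomorphisms.

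For the weak equivalence case, the diagonal argument fails, because weak equivalences of $\capR$-modules are stable equivalences of the underlying symmetric spectra and are not detected levelwise in pointed simplicial sets. Instead, I would apply the gluing lemma along the skeletal filtration. The boundary inclusion $\partial\Delta[n]_+ \hookrightarrow \Delta[n]_+$ induces, after smashing with an $\capR$-module, a cofibration in $\ModR$ with respect to the flat stable model structure of Subsection \ref{sec:model_structures_on_capR_modules}; hence the upper horizontal map of the square is a cofibration, and the right vertical map is a weak equivalence by simplicial enrichment. Combining these with the inductive hypothesis via the gluing lemma yields that each $\Sk_n |f|$ is a weak equivalence; the sequential colimit along cofibrations then preserves this. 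The main obstacle is controlling the latching maps $L_n f$: the gluing lemma at each inductive step requires $L_n f$ to inherit the property of $f$, yet the hypotheses do not include Reedy cofibrancy on $X$ or $Y$. One reduces the latching analysis levelwise in the symmetric spectrum structure to pointed simplicial sets, where it is classical (cf.\ \cite[IV.1.7]{Goerss_Jardine}), and uses the flat stable model structure on $\ModR$ to supply enough cofibrations for the gluing lemma to apply at each level.
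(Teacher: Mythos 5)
Your overall strategy --- skeletal filtration of the realization plus an analysis of the latching (equivalently, degenerate) subobjects --- is exactly the route the paper takes: its proof defers to the proofs of 4.8 and 4.9 in \cite{Harper_Bar}, which run precisely this induction, with the latching maps handled by a filtration of the degenerate subobject $DX_n$ according to the number of degeneracies used (compare the proof of Proposition \ref{prop:connectivity_of_simplicial_maps_spectra} and \cite[4.3]{Jardine_generalized_etale}). Your monomorphism argument via the diagonal is correct and complete, since colimits of $\capR$-modules are created in symmetric spectra and hence levelwise in pointed simplicial sets.

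The weak equivalence half, however, has two genuine soft spots. First, the attaching maps in the skeletal filtration are not flat stable cofibrations in general: the pushout-product of the latching inclusion $L_nX\rarrow X_n$ with $\partial\Delta[n]_+\hookrightarrow\Delta[n]_+$ is assembled from the monomorphism $L_nX\rarrow X_n$ and from maps of the form $Z\Smash\partial\Delta[n]_+\rarrow Z\Smash\Delta[n]_+$ for an \emph{arbitrary} $\capR$-module $Z$; by Proposition \ref{prop:sending_cofibrations_to_monos} these are monomorphisms, but nothing places them in the much smaller class of flat stable cofibrations, so the flat stable gluing lemma is not available. What the induction actually needs --- and what the cited argument uses --- is that the gluing lemma holds for stable equivalences along monomorphisms (i.e., along level/injective cofibrations of the underlying symmetric spectra). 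Second, your proposed fix for the latching maps does not close the gap you correctly identified: stable equivalences are not detected levelwise in pointed simplicial sets, so one cannot ``reduce the latching analysis levelwise'' and invoke the classical bisimplicial statement. The correct argument filters $DX_n$ by the subobjects $D_jX_n$ generated by $s_0,\dots,s_j$, exhibits each step as a pushout of the monomorphism $D_{j-1}X_{n-1}\rarrow X_{n-1}$ along $s_j$, and inducts using the same monomorphism gluing lemma to conclude that $Df_n$ is a stable equivalence whenever $f$ is an objectwise stable equivalence. With these two corrections your induction goes through and coincides with the paper's argument.
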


\begin{proof}
This is verified exactly as in \cite[proof of 4.8, 4.9]{Harper_Bar}, except using $(\ModR,\Smash,\capR)$ instead of $(\Spectra,\tensor_S,S)$.
\end{proof}

The following is closely related to \cite[X.1.3]{EKMM}.

\begin{prop}
\label{prop:realization_respects_smash_tensor_and_circle}
Consider symmetric sequences in $\capR$-modules. 
\begin{itemize}
\item[(a)] If $X,Y$ are simplicial $\capR$-modules, then there is a  natural isomorphism\\ $|X\Smash Y|\Iso|X|\Smash|Y|$.
\item[(b)] If $X,Y$ are simplicial symmetric sequences, then there are natural isomorphisms $|X\tensorcheck Y|\Iso|X|\tensorcheck|Y|$ and $|X\circ Y|\Iso|X|\circ|Y|$.
\item[(c)] If $\capO$ is a symmetric sequence, and $B$ is a simplicial symmetric sequence, then there is a natural isomorphism $|\capO[\mathbf{k}]\Smash_{\Sigma_k}B^{\tensorcheck k}|\Iso \capO[\mathbf{k}]\Smash_{\Sigma_k}|B|^{\tensorcheck k}$ for every $k\geq 2$.
\end{itemize}
Here, smash products, tensor products and circle products of simplicial objects are defined objectwise.
\end{prop}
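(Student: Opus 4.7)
The plan is to prove (a) first and then deduce (b) and (c) by formal manipulation. Throughout, I would use the adjunction of Proposition \ref{prop:realzns_fit_into_adjunctions}, so that $|-|$ preserves all small colimits, together with the coend formulas \eqref{eq:tensor_check_calc} and \eqref{eq:circle_product_calc}, and the fact that the smash product in $(\ModR, \Smash, \capR)$ preserves colimits in each variable separately (being closed).

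For (a), I would unfold the realization using Definition \ref{defn:realization} and use that $\Smash$ is a left adjoint in each variable to rewrite
\[
  |X| \Smash |Y| \Iso (X \Smash Y) \Smash_{\Delta^{\op} \times \Delta^{\op}} \bigl(\Delta[-]_+ \Smash \Delta[-]_+\bigr),
\]
a double coend, while Definition \ref{defn:realization} gives $|X \Smash Y| \Iso (X \Smash Y) \Smash_{\Delta^{\op}} \Delta[-]_+$, a diagonal coend. The diagonal functor $\Delta^{\op} \to \Delta^{\op} \times \Delta^{\op}$ induces a natural comparison between these, and the claim is that this comparison is an isomorphism. This is the standard fact that the realization of a bisimplicial object coincides with the realization of its diagonal, applied to the bisimplicial $\capR$-module $(X \Smash Y) \Smash (\Delta[-]_+ \Smash \Delta[-]_+)$. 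In pointed simplicial sets the identification $\Delta[p]_+ \Smash \Delta[q]_+ \iso (\Delta[p] \times \Delta[q])_+$ combined with the Eilenberg-Zilber triangulation of $\Delta[p] \times \Delta[q]$ by $(p,q)$-shuffles produces an explicit inverse, and this transports to $\capR$-modules by smashing with $\capR$. Alternatively, I would invoke the cited results \cite[X.1.3]{EKMM} and \cite[IV.1.7]{Goerss_Jardine}, which give the analogous statement for $S$-modules and simplicial spectra.

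Given (a), the remaining parts are formal. For (b), evaluate $(X \tensorcheck Y)[\mathbf{r}]$ via \eqref{eq:tensor_check_calc} as a coproduct of $\Sigma_r$-orbits of smash products $X[\mathbf{r}_1] \Smash Y[\mathbf{r}_2]$; realization commutes with these coproducts and orbits as a left adjoint, and with smash by (a), yielding $|X \tensorcheck Y| \iso |X| \tensorcheck |Y|$. For the circle product, expand via \eqref{eq:circle_product_calc} as a coproduct of $X[\mathbf{t}] \Smash_{\Sigma_t} Y^{\tensorcheck t}$ and induct on $t$, using (a) to move realization past each iterated $\tensorcheck$-power, so that $|X \circ Y| \iso |X| \circ |Y|$. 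For (c), the object $\capO[\mathbf{k}] \Smash_{\Sigma_k} B^{\tensorcheck k}$ is a $\Sigma_k$-coequalizer of the smash of the simplicially constant symmetric sequence $\capO[\mathbf{k}]$ with the $k$-fold tensor power $B^{\tensorcheck k}$. Realization leaves the constant $\capO[\mathbf{k}]$ unchanged, passes through the coequalizer by left adjointness, and passes through $B^{\tensorcheck k}$ by (b); the hypothesis $k \geq 2$ simply ensures that the tensor power is genuinely iterated so that (a) is being applied at least once.

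The principal obstacle is the diagonal identification underlying (a); everything else is routine coend and left-adjoint bookkeeping. Since this identification is well-documented in the cited literature in the closely related settings of $S$-modules and simplicial spectra, the actual proof should be quite short.
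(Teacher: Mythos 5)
Your proposal is correct, and parts (b) and (c) are handled exactly as in the paper: expand $\tensorcheck$ and $\circ$ via the coproduct formulas \eqref{eq:tensor_check_calc} and \eqref{eq:circle_product_calc} and use that $|-|$ is a left adjoint. Where you differ is in the organization of part (a). The paper first proves $|X\times Y|\iso|X|\times|Y|$ for bisimplicial pointed sets (citing \cite[IV.1.4]{Goerss_Jardine}), passes to smash products of pointed simplicial sets by writing $\Smash$ as a quotient, and then obtains the $\capR$-module case by exhibiting $X\Smash Y\iso X\tensor_\capR Y$ as a reflexive coequalizer of $\tensor_S$-terms through which realization passes; you instead stay in $\ModR$ throughout and compare the diagonal coend $|X\Smash Y|$ with the double coend computing $|X|\Smash|Y|$, proving the comparison is an isomorphism via the shuffle decomposition of $\Delta[p]\times\Delta[q]$. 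Both arguments rest on the same classical diagonal-versus-iterated-realization fact, so this is a repackaging rather than a new idea; your version avoids the explicit reduction through symmetric spectra and pointed simplicial sets at the cost of having to justify Eilenberg--Zilber directly for coends in $\ModR$ (which does work, since $\Smash=\tensor_\capR$ preserves colimits in each variable, so the prism decomposition transports). Two minor remarks: the phrase ``transports to $\capR$-modules by smashing with $\capR$'' glosses over precisely the coequalizer step the paper makes explicit, so if you take the explicit route rather than citing \cite[X.1.3]{EKMM}, spell that out; and your closing comment about $k\geq 2$ is misleading --- the isomorphism in (c) also holds for $k=0,1$, and the hypothesis $k\geq 2$ in the statement merely reflects the cases used later in the paper rather than a genuine restriction of the argument.
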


\begin{rem}
\label{rem:realization_of_simplicial_pointed_spaces}
If $X\in\ssSet_*$, denote by $|X|:=X\Smash_\Delta\Delta[-]_+$ the realization of $X$. There is a natural isomorphism $X\times_\Delta\Delta[-]\Iso |X|$.
\end{rem}

\begin{proof}[Proof of Proposition \ref{prop:realization_respects_smash_tensor_and_circle}]
Consider part (a). Let $X,Y$ be simplicial objects in $\sSet_*$. By Remark \ref{rem:realization_of_simplicial_pointed_spaces}, together with \cite[IV.1.4]{Goerss_Jardine}, there is a natural isomorphism $|X\times Y|\Iso|X|\times|Y|$. Since realization $\functor{|-|}{\ssSet_*}{\sSet_*}$ is a left adjoint it commutes with colimits, and thus there is a natural isomorphism $|X\Smash Y|\Iso|X|\Smash|Y|$. Let $X,Y$ be simplicial $\capR$-modules and recall that $X\Smash Y\Iso X\tensor_\capR Y$. It follows that there are natural isomorphisms
$
  |X\Smash Y|
  \Iso\colim
  \Bigl(
  \xymatrix@1{
  |X|\tensor |Y| & |X|\tensor |\capR|\tensor |Y|
  \ar@<-0.5ex>[l]\ar@<0.5ex>[l]
  }
  \Bigr)
  \Iso|X|\Smash|Y|.
$
Parts (b) and (c) follow from part (a), together with the property that realization $|-|$ is a left adjoint and hence commutes with colimits.
\end{proof}

\begin{rem}
\label{rem:realization_induces_functor_on_algebras}
Let $\capO$ be an operad in $\capR$-modules. It follows easily from Proposition \ref{prop:realization_respects_smash_tensor_and_circle} that if $X$ is a simplicial $\capO$-algebra (resp. simplicial left $\capO$-module), then the realization of its underlying simplicial object $|X|$ has an induced $\capO$-algebra (resp. left $\capO$-module) structure; it follows that realization of the underlying simplicial objects induces functors
$
  \function{|-|}{\sAlgO}{\AlgO}
$ and 
$
  \function{|-|}{\sLtO}{\LtO}.
$
\end{rem}

\begin{rem}
In this paper we use the notation $\BAR$, as in Proposition \ref{prop:natural_map_is_weak_equivalence} below, to denote the simplicial bar construction (with respect to circle product) defined in \cite[5.30]{Harper_Bar}.
\end{rem}

\begin{prop}
\label{prop:natural_map_is_weak_equivalence}
Let $\capO\rarrow\capO'$ be a morphism of operads in $\capR$-modules. Let $X$ be a cofibrant $\capO$-algebra (resp. cofibrant left $\capO$-module). If the simplicial bar construction $\BAR(\capO,\capO,X)$ is objectwise cofibrant in $\AlgO$ (resp. $\LtO$), then the natural map
\begin{align*}
  &|\BAR(\capO',\capO,X)|\xrightarrow{\wequiv}\capO'\circ_\capO(X)
  \quad\quad
  \Bigl(\text{resp.}\quad
  &|\BAR(\capO',\capO,X)|\xrightarrow{\wequiv}\capO'\circ_\capO X
  \Bigr)
\end{align*}
is a weak equivalence.
\end{prop}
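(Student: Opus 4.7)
The plan is to prove this via the standard monadic bar construction argument, reducing the general case to the free case by exploiting compatibility between realization and circle product over $\capO$. I will treat the $\capO$-algebra case; the left module case is identical in structure.

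First, I would observe that as augmented simplicial $\capO'$-algebras, there is a natural isomorphism
\begin{align*}
  \BAR(\capO',\capO,X)\Iso \capO'\circ_\capO\BAR(\capO,\capO,X),
\end{align*}
which in simplicial degree $n$ is just the canonical identification $\capO'\circ\capO^{\circ n}\circ(X)\Iso\capO'\circ_\capO(\capO^{\circ(n+1)}\circ(X))$, compatible with face and degeneracy maps and with the augmentations (which on the right-hand side is $\capO'\circ_\capO$ applied to the augmentation $\BAR(\capO,\capO,X)\rarrow X$). Realization commutes with circle product by Proposition \ref{prop:realization_respects_smash_tensor_and_circle}, and since $\circ_\capO$ is a reflexive coequalizer of circle products and realization is a left adjoint (Proposition \ref{prop:realzns_fit_into_adjunctions}), realization also commutes with $\circ_\capO$. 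Hence there is a natural isomorphism
\begin{align*}
  |\BAR(\capO',\capO,X)|\Iso \capO'\circ_\capO\bigl|\BAR(\capO,\capO,X)\bigr|
\end{align*}
of $\capO'$-algebras, compatible with the augmentation to $\capO'\circ_\capO(X)$.

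Next I would reduce to verifying that the natural map $|\BAR(\capO,\capO,X)|\rarrow X$ is a weak equivalence between cofibrant $\capO$-algebras. For the weak equivalence, note that the augmented simplicial object $\BAR(\capO,\capO,X)\rarrow X$ admits an extra degeneracy $s_{-1}$ in the underlying category $\ModR$, arising from the unit $I\rarrow\capO$ of the operad applied to the outermost copy of $\capO$; this extra degeneracy produces a simplicial contraction, so the induced map $|\BAR(\capO,\capO,X)|\rarrow X$ is a simplicial homotopy equivalence in $\ModR$, hence in particular a weak equivalence. Since weak equivalences in $\AlgO$ are created by the forgetful functor (Theorem \ref{thm:positive_flat_stable_AlgO}) and realization commutes with the forgetful functor (Remark \ref{rem:realization_induces_functor_on_algebras}), this is a weak equivalence in $\AlgO$. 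Cofibrancy of $X$ is assumed, and cofibrancy of $|\BAR(\capO,\capO,X)|$ in $\AlgO$ follows from the objectwise cofibrancy hypothesis together with the property that realization of a simplicial $\capO$-algebra that is sufficiently cofibrant (with its canonical ``bar'' latching structure) is cofibrant; this is precisely where the hypothesis on $\BAR(\capO,\capO,X)$ is used.

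Finally, the change of operads adjunction \eqref{eq:quillen_adjunction_change_of_operads_nice} is a Quillen adjunction, so the left adjoint $\capO'\circ_\capO(-)$ preserves weak equivalences between cofibrant $\capO$-algebras. Applying $\capO'\circ_\capO(-)$ to the weak equivalence $|\BAR(\capO,\capO,X)|\xrightarrow{\wequiv} X$ therefore produces a weak equivalence $\capO'\circ_\capO|\BAR(\capO,\capO,X)|\xrightarrow{\wequiv}\capO'\circ_\capO(X)$, which combined with the identification of the previous paragraph yields the desired weak equivalence $|\BAR(\capO',\capO,X)|\xrightarrow{\wequiv}\capO'\circ_\capO(X)$. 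The main obstacle I anticipate is the cofibrancy of the realization; all other steps are formal, but promoting the objectwise cofibrancy hypothesis to cofibrancy of $|\BAR(\capO,\capO,X)|$ in $\AlgO$ requires the Reedy-type latching analysis of the bar construction, which is the technical heart of the argument.
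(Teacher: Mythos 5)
Your overall strategy (the cancellation isomorphism $\BAR(\capO',\capO,X)\Iso\capO'\circ_\capO\BAR(\capO,\capO,X)$, compatibility of realization with $\circ_\capO$, the extra degeneracy showing $|\BAR(\capO,\capO,X)|\rarrow X$ is a weak equivalence in the underlying category, and then applying the left Quillen functor $\capO'\circ_\capO(-)$) is the standard monadic bar resolution argument and those steps are fine. The genuine gap is the cofibrancy claim you yourself identify as the ``technical heart'': under the stated hypothesis -- that $\BAR(\capO,\capO,X)$ is merely \emph{objectwise} cofibrant in $\AlgO$ -- you cannot conclude that $|\BAR(\capO,\capO,X)|$ is cofibrant in $\AlgO$. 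Cofibrancy of the realization requires Reedy cofibrancy, and for the bar construction the latching maps are built from unit insertions $I\rarrow\capO$; making these into cofibrations is exactly what Cofibrancy Condition \ref{CofibrancyCondition} and Theorem \ref{thm:reedy_cofibrant_for_bar_constructions} are for, and neither is assumed here. Indeed the proposition is invoked (e.g.\ in Proposition \ref{prop:fattened_version_of_tower}) under only the weaker Condition \ref{CofibrancyConditionWeaker}, so your argument would not cover the cases in which the proposition is actually used. Without cofibrancy of $|\BAR(\capO,\capO,X)|$, Ken Brown's lemma does not let you push the weak equivalence $|\BAR(\capO,\capO,X)|\wequiv X$ through $\capO'\circ_\capO(-)$, and the augmented object $\BAR(\capO',\capO,X)\rarrow\capO'\circ_\capO(X)$ has no extra degeneracy of its own, so there is no direct repair along these lines.

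The paper sidesteps this entirely by routing through homotopy colimits rather than through cofibrancy of the realization: its one-line proof cites Theorem \ref{thm:bar_calculates_derived_circle}, whose proof (following \cite[1.10]{Harper_Bar}) uses that in this setting realization agrees with $\hocolim_{\Delta^\op}$ with \emph{no} Reedy cofibrancy hypotheses (Theorem \ref{main_hocolim_theorem}, ultimately because realization preserves weak equivalences, Proposition \ref{prop:realization_monomorphisms_weak_equivalences}), that $X\wequiv\hocolim_{\Delta^\op}\BAR(\capO,\capO,X)$ (Theorem \ref{thm:fattened_replacement}), and that $\LL f_*$ commutes with $\hocolim_{\Delta^\op}$ of an objectwise cofibrant diagram -- which is precisely where the objectwise cofibrancy hypothesis enters. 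To fix your write-up, either replace the final step by this hocolim argument, or strengthen your hypotheses to those of Theorem \ref{thm:reedy_cofibrant_for_bar_constructions} so that the realization really is cofibrant; as written, the deduction of cofibrancy of $|\BAR(\capO,\capO,X)|$ from objectwise cofibrancy alone is unjustified.
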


\begin{proof}
This follows easily from Theorem \ref{thm:bar_calculates_derived_circle} and its proof.
\end{proof}

The following theorem illustrates some of the good properties of the (positive) flat stable model structures (Section \ref{sec:model_structures}). We defer the proof to Section \ref{sec:homotopical_analysis_forgetful_functors}.

\begin{thm}
\label{thm:homotopical_analysis_of_forgetful_functors}
Let $\capO$ be an operad in $\capR$-modules such that $\capO[\mathbf{r}]$ is flat stable cofibrant in $\ModR$ for each $r\geq 0$.
\begin{itemize}
\item[(a)] If $\function{j}{A}{B}$ is a cofibration between cofibrant objects in $\AlgO$ (resp. $\LtO$), then $j$ is a positive flat stable cofibration in $\ModR$ (resp. $\SymSeq$).
\item[(b)] If $A$ is a cofibrant $\capO$-algebra (resp. cofibrant left $\capO$-module) and $\capO[\mathbf{0}]=*$, then $A$ is positive flat stable cofibrant in $\ModR$ (resp. $\SymSeq$).
\end{itemize}
\end{thm}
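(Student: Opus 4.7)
The plan is to reduce both claims to a careful analysis of the canonical pushout filtration for cell attachments in $\LtO$ (resp.\ $\AlgO$), following the strategy used for the analogous results in \cite{Harper_Spectra, Harper_Modules, Harper_Bar} and exploiting the good monoidal properties of the (positive) flat stable model structure on $\ModR$ developed in Section \ref{sec:model_structures}. I will focus on the left $\capO$-module case; the $\capO$-algebra case runs along the same lines, either by working directly in $\AlgO$ (replacing $\SymSeq$ by $\ModR$ throughout) or by embedding $\AlgO\hookrightarrow\LtO$ as the full subcategory of symmetric sequences concentrated at $\mathbf{0}$ and then evaluating at $\mathbf{0}$.

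First I would invoke the small object argument: every cofibration $j\colon A\to B$ between cofibrant objects in $\LtO$ is a retract of a (transfinite) relative cell complex built from the generating cofibrations of $\LtO$, which are of the form $\capO\circ i$ for $i\colon K\to L$ a generating positive flat stable cofibration in $\SymSeq$. Since positive flat stable cofibrations in $\SymSeq$ are closed under transfinite composition and retracts, it suffices to show that for a single pushout attachment
\begin{align*}
\xymatrix{
\capO\circ K \ar[r]\ar[d] & \capO\circ L \ar[d]\\
A \ar[r] & B
}
\end{align*}
in $\LtO$, the underlying map $A\to B$ in $\SymSeq$ is a positive flat stable cofibration. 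The key technical input, an adaptation to $\capR$-modules of the filtration appearing in \cite{Harper_Spectra, Harper_Bar}, is that the underlying symmetric sequence of $B$ admits a natural sequential filtration $A=B_0\to B_1\to B_2\to\cdots$ with $B=\colim_q B_q$, such that each $B_{q-1}\to B_q$ fits into a pushout square in $\SymSeq$
\begin{align*}
\xymatrix{
\capO_A[\mathbf{q}]\Smash_{\Sigma_q} Q^q_{q-1}(i) \ar[r]\ar[d] & \capO_A[\mathbf{q}]\Smash_{\Sigma_q} L^{\tensorcheck q} \ar[d]\\
B_{q-1}\ar[r] & B_q
}
\end{align*}
where $Q^q_{q-1}(i)$ is the iterated pushout-product cofibration associated to $i\colon K\to L$ and $\capO_A$ denotes the enveloping symmetric sequence encoding the $\capO$-action relative to $A$.

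Using the assumption that each $\capO[\mathbf{r}]$ is flat stable cofibrant in $\ModR$, together with the observation that $\capO_A$ inherits levelwise flat stable cofibrancy from $\capO$ and $A$ (proved by the same cell-induction run one step earlier in the transfinite construction of $A$), I would then apply the pushout-product axiom for the positive flat stable model structure on $\SymSeq$ established in Section \ref{sec:model_structures}. The crucial property of the \emph{positive} flat stable structure---and the reason it is chosen in this paper---is that $\Sigma_q$-orbits of iterated pushout-products of flat stable cofibrations remain positive flat stable cofibrations; hence each top horizontal map in the square above is a positive flat stable cofibration in $\SymSeq$, so its pushout $B_{q-1}\to B_q$ is too. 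Closure under transfinite composition and retracts then yields part (a). For part (b), the hypothesis $\capO[\mathbf{0}]=*$ forces $*$ to be the initial object of $\LtO$ (resp.\ $\AlgO$), so if $A$ is cofibrant then $*\to A$ is a cofibration between cofibrant objects, and part (a) identifies it as a positive flat stable cofibration in $\SymSeq$ (resp.\ $\ModR$)---i.e., $A$ itself is positive flat stable cofibrant.

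The main obstacle will be the verification that the $\Sigma_q$-orbit pushout-product maps $\capO_A[\mathbf{q}]\Smash_{\Sigma_q} Q^q_{q-1}(i) \to \capO_A[\mathbf{q}]\Smash_{\Sigma_q} L^{\tensorcheck q}$ really are positive flat stable cofibrations in $\SymSeq$. This depends on monoidal properties specific to the \emph{flat} (as opposed to ordinary) stable structure, which controls the interaction between iterated smash powers and symmetric group actions far better than the ordinary positive stable structure; the precise axioms needed for $\ModR$ and $\SymSeq$ are exactly the technical content that Section \ref{sec:model_structures} is devoted to transferring from symmetric spectra to $\capR$-modules, so the proof reduces to marshalling those inputs and executing the cell-filtration induction sketched above.
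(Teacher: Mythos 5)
Your proposal follows essentially the same route as the paper's proof: reduction to cell attachments, the filtration of the pushout by stages built from $\capO_A[\mathbf{t}]\Smash_{\Sigma_t}Q_{t-1}^t\rarrow\capO_A[\mathbf{t}]\Smash_{\Sigma_t}Y^{\wedge t}$ (resp. $\tensorcheck_{\Sigma_t}$), flat stable cofibrancy of the enveloping objects $\capO_A[\mathbf{t}]$ established by a parallel cell induction (this is Proposition \ref{prop:analysis_of_OA_symmetric_spectra}, which needs the refined filtration of all the $\capO_A[\mathbf{r}]$, not just the underlying object), and the equivariant/monoidal properties of the flat structure to conclude, with part (b) deduced from part (a) via the initial object exactly as you say. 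The one imprecision is your phrasing of the key step: what is actually used is that $Q_{t-1}^t\rarrow Y^{\tensorcheck t}$ is a cofibration between cofibrant objects in the \emph{projective} $\Sigma_t$-equivariant positive flat stable structure (because the generating cofibrations have cofibrant domains), so that applying $\capO_A[\mathbf{t}]\tensorcheck_{\Sigma_t}(-)$ yields a flat stable cofibration by adjunction against $\Map^{\tensorcheck}(\capO_A[\mathbf{t}],-)$, with positivity checked separately at simplicial level $0$ --- not a blanket closure statement about $\Sigma_t$-orbits of pushout-products of arbitrary flat cofibrations.
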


If $X$ is an $\capO$-algebra (resp. left $\capO$-module), then under appropriate cofibrancy conditions the coaugmented tower
$
  \{|\BAR(\capO,\capO,X)|\}\rarrow\{|\BAR(\tau_k\capO,\capO,X)|\}
$
obtained by applying $|\BAR(-,\capO, X)|$ to the coaugmented tower \eqref{eq:towers_of_operads}, provides a weakly equivalent ``fattened version'' of the completion tower of $X$.

\begin{cofibrancy}
\label{CofibrancyConditionWeaker}
If $\capO$ is an operad in $\capR$-modules, assume that $\capO[\mathbf{r}]$ is flat stable cofibrant in $\ModR$ for each $r\geq 0$.
\end{cofibrancy}

\begin{prop}
\label{prop:fattened_version_of_tower}
Let $\capO$ be an operad in $\capR$-modules such that $\capO[\mathbf{0}]=*$. Assume that $\capO$ satisfies Cofibrancy Condition \ref{CofibrancyConditionWeaker}. If $X$ is a cofibrant left $\capO$-module, then in the following commutative diagram of towers in $\LtO$
\begin{align*}
\xymatrix{
  \{|\BAR(\capO,\capO,X)|\}\ar[d]^{\wequiv}\ar[r] & 
  \{|\BAR(\tau_k\capO,\capO,X)|\}\ar[d]^{\wequiv}\\
  \{X\}\ar[r] & \{\tau_k\capO\circ_\capO X\},
}
\end{align*}
the vertical maps are levelwise weak equivalences.
\end{prop}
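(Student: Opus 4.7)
The plan is to reduce both vertical maps in the diagram to a single invocation of Proposition \ref{prop:natural_map_is_weak_equivalence}, one for each map of operads $\capO\rarrow\tau_k\capO$ (including the case $k=\infty$, i.e., the identity $\capO\rarrow\capO$, which produces the left-hand vertical map since $\capO\circ_\capO X\iso X$). Fixing $X$ as in the hypotheses, applying $|\BAR(-,\capO,X)|$ to the coaugmented tower \eqref{eq:towers_of_operads} of operads, and then composing with the natural comparison maps into $\{-\circ_\capO X\}$, we get precisely the commutative diagram in the statement. Proposition \ref{prop:natural_map_is_weak_equivalence} then yields the required levelwise weak equivalences, provided we can verify its two hypotheses: that $X$ is cofibrant as a left $\capO$-module (given by assumption) and that the simplicial bar construction $\BAR(\capO,\capO,X)$ is objectwise cofibrant in $\LtO$.

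The main step is therefore the objectwise cofibrancy verification. In simplicial degree $n$, we have $\BAR(\capO,\capO,X)_n\iso\capO\circ(\capO^{\circ n}\circ X)$, which is the free left $\capO$-module on the symmetric sequence $\capO^{\circ n}\circ X$. Since the free-forgetful adjunction $\capO\circ-\colon\SymSeq\leftrightarrows\LtO\colon U$ from \eqref{eq:free_forgetful_adjunction} is a Quillen adjunction (with respect to the positive flat stable model structure on $\SymSeq$ and $\LtO$), it suffices to show that $\capO^{\circ n}\circ X$ is positive flat stable cofibrant in $\SymSeq$ for every $n\geq 0$.

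To establish this, first note that $X$, being a cofibrant left $\capO$-module with $\capO[\mathbf{0}]=*$, is positive flat stable cofibrant in $\SymSeq$ by Theorem \ref{thm:homotopical_analysis_of_forgetful_functors}(b). Next, using the formula \eqref{eq:circle_product_calc} together with \eqref{eq:tensor_check_calc}, the symmetric sequence $\capO\circ Y$ is built from iterated pushout-products and $\Sigma_t$-coinvariants of tensor powers $Y^{\tensorcheck t}$ smashed (levelwise) with the $\capR$-modules $\capO[\mathbf{t}]$, each of which is flat stable cofibrant by Cofibrancy Condition \ref{CofibrancyConditionWeaker}. The monoidal and $\Sigma_t$-equivariant cofibrancy properties of the positive flat stable model structure, as developed in Section \ref{sec:model_structures}, then imply that $\capO\circ Y$ is positive flat stable cofibrant in $\SymSeq$ whenever $Y$ is; iterating this $n$ times starting from $Y=X$ gives the required cofibrancy of $\capO^{\circ n}\circ X$.

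The main obstacle is the third paragraph: it is precisely here that one needs to know that circle product with $\capO$ preserves positive flat stable cofibrancy of symmetric sequences. This is a nontrivial assertion because it involves $\Sigma_t$-coinvariants of $\capR$-module smash powers, and is exactly the kind of point that the positive flat stable model structure is engineered to handle. Once this step is in hand, the remainder of the argument is a direct and formal appeal to Proposition \ref{prop:natural_map_is_weak_equivalence} at every level of the tower, naturality in the operad map $\capO\rarrow\tau_k\capO$ giving the commutativity of the stated diagram.
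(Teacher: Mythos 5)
Your proposal is correct and follows essentially the same route as the paper: verify that $\BAR(\capO,\capO,X)$ is objectwise cofibrant in $\LtO$ and then invoke Proposition \ref{prop:natural_map_is_weak_equivalence} levelwise (including the identity map $\capO\rarrow\capO$ for the left-hand column). The one step you flag as the ``main obstacle''---that $\capO^{\circ n}\circ X$ is positive flat stable cofibrant in $\SymSeq$---does not require the direct monoidal analysis of $\capO\circ-$ you gesture at; it follows by iterating Theorem \ref{thm:homotopical_analysis_of_forgetful_functors}(b): $X$ is positive flat stable cofibrant in $\SymSeq$, so $\capO\circ(\capO^{\circ(n-1)}\circ X)$ is (inductively) a cofibrant left $\capO$-module, and forgetting back down to $\SymSeq$ via Theorem \ref{thm:homotopical_analysis_of_forgetful_functors}(b) again gives cofibrancy of $\capO^{\circ n}\circ X$, which is exactly how the paper's one-line proof closes the argument.
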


\begin{rem}
It follows from Remark \ref{rem:realization_induces_functor_on_algebras} that this diagram is a diagram of towers of left $\capO$-modules.
\end{rem}

\begin{proof}
Since $X$ is a cofibrant left $\capO$-module, by Theorem \ref{thm:homotopical_analysis_of_forgetful_functors} the simplicial bar construction $\BAR(\capO,\capO,X)$ is objectwise cofibrant in $\LtO$, and Proposition \ref{prop:natural_map_is_weak_equivalence} finishes the proof.
\end{proof}

\subsection{Homotopy fiber sequences and the homotopy completion tower}

The purpose of this subsection is to prove Theorem \ref{MainTheorem}(c). We begin by introducing the following useful notation. For each $k\geq 0$, the functor $\function{i_k}{\SymSeq}{\SymSeq}$ is defined objectwise by
\begin{align*}
  (i_k X)[\mathbf{r}]:=
  \left\{
  \begin{array}{rl}
    X[\mathbf{k}],&\text{for $r=k$,}\\
    *,&\text{otherwise}.
  \end{array}
  \right.
\end{align*}
In other words, $i_k X$ is the symmetric sequence concentrated at $k$ with value $X[\mathbf{k}]$.

\begin{prop}
\label{prop:pushout_diagram_for_bar_construction_tower}
Let $\capO$ be an operad in $\capR$-modules such that $\capO[\mathbf{0}]=*$. Let $X$ be an $\capO$-algebra (resp. left $\capO$-module) and $k\geq 2$. Then the left-hand pushout diagram
\begin{align}
\label{eq:pushout_diagram_bar_constructions}
\xymatrix{
  i_{k}\capO\ar[r]^-{\subsetof}\ar[d] & \tau_{k}\capO\ar[d] \\
  {*}\ar[r] & \tau_{k-1}\capO
}
\quad\quad
\xymatrix{
|\BAR(i_{k}\capO,\capO,X)|\ar[d]\ar[r]^-{(*)} & 
|\BAR(\tau_{k}\capO,\capO,X)|\ar[d]\\
{*}\ar[r] & 
|\BAR(\tau_{k-1}\capO,\capO,X)|
}
\end{align}
in $\RtO$ induces the right-hand pushout diagram in $\Alg_I$ (resp. $\SymSeq$).  The map $(*)$ is a monomorphism, the left-hand diagram is a pullback diagram in $\Bi_{(\capO,\capO)}$, and the right-hand diagram is a pullback diagram in $\AlgO$ (resp. $\LtO$).
\end{prop}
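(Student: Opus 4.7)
The plan is to first verify that the left-hand square is a pushout in $\RtO$ and a pullback in $\Bi_{(\capO,\capO)}$, then apply the functor $|\BAR(-, \capO, X)|$ to transport the pushout, then show the horizontal map $(*)$ is a monomorphism, and finally deduce that the induced right-hand square is also a pullback. The main technical obstacle I expect is the monomorphism claim, which hinges on the explicit coproduct formula for circle products together with Proposition \ref{prop:realization_monomorphisms_weak_equivalences}.

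First I would note that both $i_k\capO \hookrightarrow \tau_k\capO$ and $\tau_k\capO \rarrow \tau_{k-1}\capO$ are $(\capO,\capO)$-bimodule maps (the left $\capO$-action on $\tau_k\capO$ is well-defined because $\capO[\mathbf{0}] = *$ forces $(\tau_k\capO^{\tensorcheck t})[\mathbf{r}] = (\capO^{\tensorcheck t})[\mathbf{r}]$ for $r \leq k$, and the right $\capO$-action descends to the quotient $\tau_{k-1}\capO$). A direct levelwise computation in $\SymSeq$ shows that the underlying pushout of $* \leftarrow i_k\capO \hookrightarrow \tau_k\capO$ is $\tau_{k-1}\capO$ (only level $k$ is modified, with $\capO[\mathbf{k}]$ collapsed to $*$), and since cokernels of right-module maps inherit a right action from the codomain, this also computes the pushout in $\RtO$. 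Similarly, limits in $\Bi_{(\capO,\capO)}$ are created by the forgetful functor to $\SymSeq$, and a direct levelwise calculation verifies the pullback property there.

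For the induced right-hand diagram, at each simplicial level the functor $(-)\circ\capO^{\circ n}\circ X$ is a left adjoint in its first variable by \eqref{eq:circle_mapping_sequence_adjunction}, hence preserves pushouts; realization is also a left adjoint by Proposition \ref{prop:realzns_fit_into_adjunctions} and preserves the resulting pushout of simplicial objects. Composing, $|\BAR(-, \capO, X)|$ carries the left-hand pushout in $\RtO$ to the asserted right-hand pushout in $\Alg_I$ (resp.\ $\SymSeq$). To prove that $(*)$ is a monomorphism, I would use the explicit coproduct description \eqref{eq:circle_product_calc} to identify, at each simplicial level $n$, the map
\begin{align*}
  (i_k\capO \circ \capO^{\circ n} \circ X)[\mathbf{r}] \rarrow
  (\tau_k\capO \circ \capO^{\circ n} \circ X)[\mathbf{r}]
\end{align*}
with the inclusion of the $t=k$ summand into the coproduct $\coprod_{t \leq k} \capO[\mathbf{t}] \Smash_{\Sigma_t} (\capO^{\circ n}\circ X)^{\tensorcheck t}[\mathbf{r}]$ (since $i_k\capO[\mathbf{t}]=*$ for $t \neq k$), which is a monomorphism. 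Hence $\BAR(i_k\capO, \capO, X) \rarrow \BAR(\tau_k\capO, \capO, X)$ is a levelwise monomorphism of simplicial objects, and Proposition \ref{prop:realization_monomorphisms_weak_equivalences} implies that its realization $(*)$ is a monomorphism.

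Finally, by Proposition \ref{prop:basic_properties_LTO} limits in $\AlgO$ (resp.\ $\LtO$) are created by the forgetful functor to $\ModR$ (resp.\ $\SymSeq$), so it suffices to check that the right-hand square is a pullback in the underlying pointed category. In $\ModR$ (resp.\ $\SymSeq$) the strict cokernel of any monomorphism $A \hookrightarrow B$ satisfies $A \Iso \ker(B \rarrow B/A)$ (the preimage of the basepoint is precisely $A$), so the defining pushout square is simultaneously a pullback. Applied to the monomorphism $(*)$ together with the identification of its cokernel as $|\BAR(\tau_{k-1}\capO, \capO, X)|$ established above, this completes the argument.
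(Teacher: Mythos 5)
Your proposal is correct and follows essentially the same route as the paper's proof: colimits in $\RtO$ are computed in the underlying category, $\BAR(-,\capO,X)$ preserves the pushout levelwise via the adjunction \eqref{eq:circle_mapping_sequence_adjunction}, realization is a left adjoint preserving monomorphisms (Propositions \ref{prop:realzns_fit_into_adjunctions} and \ref{prop:realization_monomorphisms_weak_equivalences}), and the pullback claims reduce to the underlying category. You simply spell out two steps the paper leaves implicit (the levelwise identification of $(*)$ as a wedge-summand inclusion, and the kernel-of-cokernel argument for the pullback), both of which are sound.
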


\begin{proof}
It suffices to consider the case of left $\capO$-modules. The right-hand diagram is obtained by applying $|\BAR(-,\capO, X)|$ to the left-hand diagram. Since the forgetful functor $\RtO\rarrow\SymSeq$ preserves colimits, the left-hand diagram is also a pushout diagram in $\SymSeq$. It follows from the adjunction \eqref{eq:circle_mapping_sequence_adjunction} that applying $\BAR(-,\capO, X)$ to the left-hand diagram gives a pushout diagram of simplicial symmetric sequences. Noting that the realization functor $|-|$ is a left adjoint and preserves monomorphisms (\ref{prop:realzns_fit_into_adjunctions}, \ref{prop:realization_monomorphisms_weak_equivalences}), together with the fact that pullbacks in $\Bi_{(\capO,\capO)}$ and $\LtO$ are calculated in the underlying category, finishes the proof.
\end{proof}

\begin{prop}
\label{prop:retract_property_and_derived_circle}
Let $\capO$ be an operad in $\capR$-modules such that $\capO[\mathbf{0}]=*$, and let $k\geq 2$.
\begin{itemize}
\item[(a)] The canonical maps $i_k\capO\rarrow\capO\rarrow i_k\capO$ in $\Rt_{\tau_1\capO}$ factor the identity map.
\item[(b)] The functors
$
  \function{i_k\capO\circ_{\tau_1\capO}(-)}{\Alg_{\tau_1\capO}}{\AlgO}
$ and
$
  \function{i_k\capO\circ_{\tau_1\capO}-}{\Lt_{\tau_1\capO}}{\LtO}
$
preserve weak equivalences between cofibrant objects, and hence the total left derived functors $i_k\capO\circ^\HH_{\tau_1\capO}(-)$ and $i_k\capO\circ^\HH_{\tau_1\capO}-$ exist \cite[9.3, 9.5]{Dwyer_Spalinski}.
\end{itemize}
\end{prop}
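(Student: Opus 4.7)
The plan is to prove parts (a) and (b) in sequence, with part (b) following from part (a) via a retract argument combined with the change-of-operads Quillen adjunction.

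For part (a), I would define the two canonical maps by direct description at the level of symmetric sequences. The map $i_k\capO\rarrow\capO$ is the natural inclusion of the level-$k$ summand: identity on $\capO[\mathbf{k}]$ and zero elsewhere. The map $\capO\rarrow i_k\capO$ is the complementary projection: identity at level $k$ and zero at all other levels. Their composite is manifestly the identity on $i_k\capO$ since $k$ is the only nontrivial level. Compatibility with the right $\tau_1\capO$-action requires one small verification: using $(\tau_1\capO)[\mathbf{j}]=*$ for $j\neq 1$ (which follows from $\capO[\mathbf{0}]=*$), the calculation \eqref{eq:tensor_check_calc} gives $(\tau_1\capO^{\tensorcheck t})[\mathbf{r}]=*$ unless $r=t$, so the right $\tau_1\capO$-action on $\capO$ at level $r$ factors through $\capO[\mathbf{r}]\Smash\capO[\mathbf{1}]^{\Smash r}$; both the inclusion and projection at level $k$ then commute with this action tautologically. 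The right $\tau_1\capO$-action on $i_k\capO$ is, by definition, the restriction of that on $\capO$.

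For part (b), the plan is to exploit the change of operads Quillen adjunction \eqref{eq:quillen_adjunction_change_of_operads_nice} associated to the operad map $\tau_1\capO\rarrow\capO$ together with part (a). By this Quillen adjunction and Ken Brown's lemma, the left Quillen functor $\capO\circ_{\tau_1\capO}(-)\colon\Alg_{\tau_1\capO}\rarrow\AlgO$ (resp. $\capO\circ_{\tau_1\capO}-\colon\Lt_{\tau_1\capO}\rarrow\LtO$) preserves weak equivalences between cofibrant objects. Applying $-\circ_{\tau_1\capO}(X)$ to the factorization of part (a) yields a natural factorization
\begin{align*}
  i_k\capO\circ_{\tau_1\capO}(X)\rarrow\capO\circ_{\tau_1\capO}(X)\rarrow i_k\capO\circ_{\tau_1\capO}(X)
\end{align*}
of the identity in the underlying category $\ModR$ (resp. $\SymSeq$). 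Hence, given a weak equivalence $f\colon X\rarrow Y$ between cofibrant objects, the map $i_k\capO\circ_{\tau_1\capO}(f)$ is a retract in the arrow category of the weak equivalence $\capO\circ_{\tau_1\capO}(f)$, so is itself a weak equivalence. Since weak equivalences in $\AlgO$ and $\LtO$ are detected on underlying objects by \ref{prop:basic_properties_LTO}, this upgrades to a weak equivalence in $\AlgO$ (resp. $\LtO$). The existence of the total left derived functors then follows from \cite[9.3, 9.5]{Dwyer_Spalinski}.

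The main subtlety to navigate is that the retraction of part (a) lives only in $\Rt_{\tau_1\capO}$ and not in $\Bi_{(\capO,\capO)}$, so applying $-\circ_{\tau_1\capO}(X)$ produces a retract diagram only in $\ModR$ (resp. $\SymSeq$), not in $\AlgO$ (resp. $\LtO$). This is harmless thanks to the fact that the forgetful functors create weak equivalences, but it is worth flagging: one should not expect $i_k\capO\circ_{\tau_1\capO}(X)$ to be a retract of $\capO\circ_{\tau_1\capO}(X)$ as $\capO$-algebras.
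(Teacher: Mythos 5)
Your proposal is correct and follows essentially the same route as the paper: part (a) by inspection of levels, and part (b) by applying $-\circ_{\tau_1\capO}(-)$ to the retraction, noting that the middle map is a weak equivalence because $\capO\circ_{\tau_1\capO}(-)$ is left Quillen, and concluding by the retract axiom; your observation that the retract diagram lives only in the underlying category matches the paper's proof, which works in $\SymSeq$. (One tiny quibble: the fact that weak equivalences are detected on underlying objects is the content of Theorem \ref{thm:positive_flat_stable_AlgO}, not Proposition \ref{prop:basic_properties_LTO}.)
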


\begin{proof}
Part (a) is clear. To prove part (b), it suffices to consider the case of left $\tau_1\capO$-modules. Let $B\rarrow B'$ be a weak equivalence between cofibrant objects in $\Lt_{\tau_1\capO}$. By part (a) there is a retract of maps of the form
\begin{align*}
\xymatrix{
  i_k\capO\circ_{\tau_1\capO}B\ar[d]^{(*)}\ar[r] &
  \capO\circ_{\tau_1\capO}B\ar[d]^{(**)}\ar[r] &
  i_k\capO\circ_{\tau_1\capO}B\ar[d]^{(*)}\\
  i_k\capO\circ_{\tau_1\capO}B'\ar[r] &
  \capO\circ_{\tau_1\capO}B'\ar[r] &
  i_k\capO\circ_{\tau_1\capO}B'
}
\end{align*}
in $\SymSeq$. Since $\function{\capO\circ_{\tau_1\capO}-}{\Lt_{\tau_1\capO}}{\LtO}$ is a left Quillen functor (induced by the canonical map $\tau_1\capO\rarrow\capO$ of operads), we know that $(**)$ is a weak equivalence and hence $(*)$ is a weak equivalence.
\end{proof}

The following theorem illustrates a few more of the good properties of the (positive) flat stable model structures (Section \ref{sec:model_structures}). We defer the proof to Section \ref{sec:homotopical_analysis_bar_constructions}.

\begin{thm}
\label{thm:reedy_cofibrant_for_bar_constructions}
Let $\function{f}{\capO}{\capO'}$ be a morphism of operads in $\capR$-modules such that $\capO[\mathbf{0}]=*$. Assume that $\capO$ satisfies Cofibrancy Condition \ref{CofibrancyCondition}. Let $Y$ be an $\capO$-algebra (resp. left $\capO$-module) and consider the simplicial bar construction $\BAR(\capO',\capO,Y)$.
\begin{itemize}
\item[(a)] If $Y$ is positive flat stable cofibrant in $\ModR$ (resp. $\SymSeq$), then $\BAR(\capO',\capO,Y)$ is Reedy cofibrant in $\sAlg_{\capO'}$ (resp. $\sLt_{\capO'}$).
\item[(b)] If $Y$ is positive flat stable cofibrant in $\ModR$ (resp. $\SymSeq$), then $|\BAR(\capO',\capO,Y)|$ is cofibrant in $\Alg_{\capO'}$ (resp. $\Lt_{\capO'}$).
\end{itemize}
\end{thm}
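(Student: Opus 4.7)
The plan is to establish part (a) via a direct analysis of the Reedy latching maps, reducing it to an iterated pushout-product computation involving the unit map of $\capO$. Part (b) will then follow from the standard principle that realization of a Reedy cofibrant simplicial object is cofibrant.

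For part (a), recall that the simplicial bar construction has $n$-simplices $\BAR(\capO',\capO,Y)_n \Iso \capO' \circ \capO^{\circ n} \circ Y$, with face maps induced by the right $\capO$-action on $\capO'$ (via $f$), the operad multiplication $\capO \circ \capO \rarrow \capO$, and the left $\capO$-action on $Y$, and degeneracy maps induced by the unit $\function{\iota}{I}{\capO}$. Since $\function{\capO' \circ -}{\ModR}{\Alg_{\capO'}}$ (resp. $\function{\capO' \circ -}{\SymSeq}{\Lt_{\capO'}}$) is a left Quillen functor that commutes with the latching construction, it suffices to show that the $n$-th latching map of the ``interior'' simplicial symmetric sequence $\capO^{\circ n} \circ Y$ (retaining only the right $\capO$-action on $Y$) is a positive flat stable cofibration in $\ModR$ (resp. $\SymSeq$). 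Via a standard Eilenberg-Zilber style combinatorial argument, one identifies this latching map with an $n$-fold iterated pushout-product built from $n$ copies of $\iota$, circle-composed on the right with $Y$. Cofibrancy Condition \ref{CofibrancyCondition} ensures that $\iota$ is arity-wise a flat stable cofibration between flat stable cofibrant objects, and the pushout-product axiom together with positive flat stable cofibrancy of $Y$ then yields that this iterated pushout-product is a positive flat stable cofibration --- provided one controls the $\Sigma_k$-actions implicit in circle product.

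For part (b), I use the skeletal filtration $\Sk_{n-1}|\BAR(\capO',\capO,Y)| \rarrow \Sk_n|\BAR(\capO',\capO,Y)|$ of the realization. Each successive stage is a pushout along a map built from the $n$-th Reedy latching map and the boundary inclusion $\partial\Delta[n]_+ \rarrow \Delta[n]_+$. Reedy cofibrancy from part (a) together with compatibility of the positive flat stable model structure on $\Alg_{\capO'}$ (resp. $\Lt_{\capO'}$) with the simplicial copower (Section \ref{sec:model_structures}) ensures each such map is a cofibration; cofibrancy of the realization then follows by transfinite composition.

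The main obstacle I anticipate lies in part (a): carefully tracking the $\Sigma_k$-coinvariants that appear at each level of circle composition, to verify that iterated pushout-products of $\iota$ indeed yield positive flat stable cofibrations. This requires the positive flat stable model structure on $\ModR$ to interact well with free $\Sigma_k$-actions and the ``$\tensorcheck$''-style pushout-products encoded by circle product --- extending the technical framework developed for $(\Spectra,\tensor_S,S)$ in \cite{Harper_Bar} to the more general closed symmetric monoidal category $(\ModR,\Smash,\capR)$. Once these cofibrancy preservation properties for $\capR$-modules are in hand (precisely the purpose of Section \ref{sec:model_structures}), the argument sketched above goes through essentially by mimicking the $\Spectra$-level proof.
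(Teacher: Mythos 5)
Your proposal matches the paper's proof in essentially every respect: part (a) is reduced, via the left Quillen functor $\capO'\circ-$ (which preserves colimiting cones and cofibrations), to showing that the latching/degenerate-subobject maps of the underlying simplicial object are positive flat stable cofibrations, which the paper does by identifying them with punctured-cube (iterated pushout-product) colimits of the unit map $I\rarrow\capO$ circle-composed with $Y$ (Proposition \ref{prop:nice_description_of_degenerate_subobjects}, Proposition \ref{prop:flat_stable_cofibration_properties_symmetric_sequences}, Theorem \ref{thm:mixing_flat_stable_with_positive_flat_stable}), and part (b) follows from Reedy cofibrancy via the simplicial model structure and the skeletal filtration of realization \cite[VII.3.4]{Goerss_Jardine}. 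The only step you leave implicit is the identification of the coend realization $|-|$ with the realization internal to $\Alg_{\capO'}$ built from $\tensordot$ (Proposition \ref{prop:realizations_are_isomorphic}), which is needed before the skeletal-filtration argument in $\Alg_{\capO'}$ applies.
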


\begin{prop}
\label{prop:cofibrant_tau_1_O_algebras_and_bar_constructions}
Let $\capO$ be an operad in $\capR$-modules such that $\capO[\mathbf{0}]=*$. Assume that $\capO$ satisfies Cofibrancy Condition \ref{CofibrancyCondition}. If $X$ is a cofibrant $\capO$-algebra (resp. cofibrant left $\capO$-module), then $|\BAR(\tau_1\capO,\capO,X)|$ is cofibrant in $\Alg_{\tau_1\capO}$ (resp. $\Lt_{\tau_1\capO}$).
\end{prop}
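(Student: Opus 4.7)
The plan is to deduce this directly from Theorem \ref{thm:reedy_cofibrant_for_bar_constructions}(b) applied to the morphism of operads $\function{f}{\capO}{\tau_1\capO}$, with $Y = X$. That theorem produces cofibrancy of $|\BAR(\capO',\capO,Y)|$ in $\Alg_{\capO'}$ (resp. $\Lt_{\capO'}$) under two hypotheses: that $\capO$ satisfies Cofibrancy Condition \ref{CofibrancyCondition}, and that $Y$ is positive flat stable cofibrant in $\ModR$ (resp. $\SymSeq$). The first hypothesis is given to us by assumption, so the whole content of the proof reduces to verifying the second.

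To verify that $X$ is positive flat stable cofibrant in the underlying category, I would invoke Theorem \ref{thm:homotopical_analysis_of_forgetful_functors}(b). Its hypotheses are (i) $\capO[\mathbf{0}]=*$, which is given, and (ii) $\capO[\mathbf{r}]$ is flat stable cofibrant in $\ModR$ for each $r\geq 0$, which is precisely the content of Cofibrancy Condition \ref{CofibrancyConditionWeaker} and is part of Cofibrancy Condition \ref{CofibrancyCondition}. Since $X$ is by assumption cofibrant in $\AlgO$ (resp. $\LtO$), part (b) of that theorem yields that $X$ is positive flat stable cofibrant in $\ModR$ (resp. $\SymSeq$).

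Combining the two inputs, Theorem \ref{thm:reedy_cofibrant_for_bar_constructions}(b) with $\capO' = \tau_1\capO$ applies and gives that $|\BAR(\tau_1\capO,\capO,X)|$ is cofibrant in $\Alg_{\tau_1\capO}$ (resp. $\Lt_{\tau_1\capO}$), which is exactly the statement to be proved. There is no real obstacle here: the proposition is essentially a bookkeeping consequence of the two preceding (nontrivial) theorems whose proofs are deferred to Sections \ref{sec:homotopical_analysis_forgetful_functors} and \ref{sec:homotopical_analysis_bar_constructions}. The only point worth checking with care is that Cofibrancy Condition \ref{CofibrancyCondition} is genuinely stronger than the weaker condition required by Theorem \ref{thm:homotopical_analysis_of_forgetful_functors} (it is, since \ref{CofibrancyCondition} additionally asks that $I[\mathbf{r}]\rarrow\capO[\mathbf{r}]$ be a flat stable cofibration), so no extra hypothesis on $\capO$ is being smuggled in beyond what is already assumed.
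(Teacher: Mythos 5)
Your argument is exactly the paper's proof: the authors also deduce the proposition from Theorem \ref{thm:reedy_cofibrant_for_bar_constructions} applied to $\capO\rightarrow\tau_1\capO$, using Theorem \ref{thm:homotopical_analysis_of_forgetful_functors}(b) to verify that $X$ is positive flat stable cofibrant in the underlying category. Your bookkeeping of which cofibrancy condition feeds which theorem is correct, so there is nothing to add.
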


\begin{proof}
This follows from Theorems \ref{thm:reedy_cofibrant_for_bar_constructions} and \ref{thm:homotopical_analysis_of_forgetful_functors}.
\end{proof}

Next we explicitly calculate the $k$-th layer of the homotopy completion tower.

\begin{thm}
\label{thm:calculating_fiber_of_induced_map}
Let $\capO$ be an operad in $\capR$-modules such that $\capO[\mathbf{0}]=*$. Assume that $\capO$ satisfies Cofibrancy Condition \ref{CofibrancyCondition}. Let $X$ be an $\capO$-algebra (resp. left $\capO$-module), and let $k\geq 2$. 
\begin{itemize}
\item[(a)] There is a homotopy fiber sequence of the form
\begin{align*}
  &i_k\capO\circ^\HH_{\tau_1\capO}\bigl(\TQ(X)\bigr)\rarrow
  \tau_k\capO\circ^\HH_\capO (X)\rarrow
  \tau_{k-1}\capO\circ^\HH_\capO (X)\\
  \Bigl(\text{resp.}\quad
  &i_k\capO\circ^\HH_{\tau_1\capO}\TQ(X)\rarrow
  \tau_k\capO\circ^\HH_\capO X\rarrow
  \tau_{k-1}\capO\circ^\HH_\capO X
  \Bigr)
\end{align*}
in $\AlgO$ (resp. $\LtO$), natural in $X$.
\item[(b)] If $X$ is cofibrant in $\AlgO$ (resp. $\LtO$), then there are natural weak equivalences
\begin{align*}
  &|\BAR(i_k\capO,\capO,X)|\wequiv
  i_k\capO\circ_{\tau_1\capO}(|\BAR(\tau_1\capO,\capO,X)|)\wequiv
  i_k\capO\circ^\HH_{\tau_1\capO}\bigl(\TQ(X)\bigr)
  \\
  \Bigl(\text{resp.}\quad
  &|\BAR(i_k\capO,\capO,X)|\wequiv
  i_k\capO\circ_{\tau_1\capO}|\BAR(\tau_1\capO,\capO,X)|\wequiv
  i_k\capO\circ^\HH_{\tau_1\capO}\TQ(X)
  \Bigr).
\end{align*}
\item[(c)] If $X$ is cofibrant in $\AlgO$ (resp. $\LtO$) and $\capO[\mathbf{1}]=I[\mathbf{1}]$, then there are natural weak equivalences
\begin{align*}
  &\capO[\mathbf{k}]\Smash_{\Sigma_{k}}
  |\BAR(I,\capO,X)|^{\wedge k}\wequiv
  i_k\capO\circ^\HH_{\tau_1\capO}\bigl(\TQ(X)\bigr)
  \\
  \Bigl(\text{resp.}\quad
  &\capO[\mathbf{k}]\Smash_{\Sigma_{k}}
  |\BAR(I,\capO,X)|^{\tensorcheck k}\wequiv
  i_k\capO\circ^\HH_{\tau_1\capO}\TQ(X)
  \Bigr).
\end{align*}
\end{itemize}
\end{thm}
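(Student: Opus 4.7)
The three parts fit together naturally: part (b) computes the homotopy fiber appearing in part (a), and part (c) is a specialization of (b) to the case $\capO[\mathbf{1}]=I[\mathbf{1}]$. My plan is therefore to prove (b) first, then combine it with the pushout-pullback square of Proposition \ref{prop:pushout_diagram_for_bar_construction_tower} to deduce (a), and finally to unwind (b) in order to obtain the smash-power formula of (c).

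For (b), the key observation is that, because $\capO[\mathbf{0}]=*$, the right $\capO$-action on $i_k\capO$ (inherited from its role as the kernel of $\tau_k\capO\to\tau_{k-1}\capO$) factors through the canonical projection $\capO\to\tau_1\capO$: right-composing an arity-$k$ element of $i_k\capO$ with a tuple of elements of $\capO$ can land back in arity $k$ only when each inserted element lies in $\capO[\mathbf{1}]$. This change-of-rings structure yields a natural isomorphism of simplicial objects $\BAR(i_k\capO,\capO,X)\Iso i_k\capO\circ_{\tau_1\capO}\BAR(\tau_1\capO,\capO,X)$; realizing and applying Proposition \ref{prop:realization_respects_smash_tensor_and_circle}(b) together with the fact that realization is a left adjoint (Proposition \ref{prop:realzns_fit_into_adjunctions}) produces the first equivalence of (b). For the second, Proposition \ref{prop:cofibrant_tau_1_O_algebras_and_bar_constructions} shows that $|\BAR(\tau_1\capO,\capO,X)|$ is cofibrant in $\Lt_{\tau_1\capO}$, and Proposition \ref{prop:natural_map_is_weak_equivalence} identifies it up to weak equivalence with $\tau_1\capO\circ_\capO X\wequiv\TQ(X)$; since both sides are cofibrant in $\Lt_{\tau_1\capO}$, Proposition \ref{prop:retract_property_and_derived_circle}(b) allows me to apply $i_k\capO\circ_{\tau_1\capO}(-)$ while preserving the equivalence, landing at $i_k\capO\circ^\HH_{\tau_1\capO}\TQ(X)$.

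For (a), I first apply Proposition \ref{prop:fattened_version_of_tower} to a cofibrant replacement $X^c$ of $X$ in order to replace $\tau_k\capO\circ^\HH_\capO(X)$ and $\tau_{k-1}\capO\circ^\HH_\capO(X)$ by the weakly equivalent models $|\BAR(\tau_k\capO,\capO,X^c)|$ and $|\BAR(\tau_{k-1}\capO,\capO,X^c)|$. The square of Proposition \ref{prop:pushout_diagram_for_bar_construction_tower} is simultaneously a pushout and a pullback in $\LtO$ (resp.\ $\AlgO$), with top horizontal map a monomorphism. Since the underlying category of symmetric spectra is stable, such a strict pushout along a monomorphism furnishes a homotopy cofiber sequence, which by stability is a homotopy fiber sequence; part (b) identifies the fiber with $i_k\capO\circ^\HH_{\tau_1\capO}\TQ(X)$, and naturality in $X$ follows from that of the bar construction and the pushout square.

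For (c), when $\capO[\mathbf{1}]=I[\mathbf{1}]$ the operad $\tau_1\capO$ coincides with the initial operad $I$, so $\circ_{\tau_1\capO}=\circ_I$ and the coequalizer defining $\circ_I$ collapses. Using \eqref{eq:circle_product_calc} together with the concentration of $i_k\capO$ at arity $k$ with value $\capO[\mathbf{k}]$, one finds $i_k\capO\circ Y\Iso\capO[\mathbf{k}]\Smash_{\Sigma_k}Y^{\tensorcheck k}$, which specializes the formula from (b) to $\capO[\mathbf{k}]\Smash_{\Sigma_k}|\BAR(I,\capO,X)|^{\wedge k}$ in the algebra case and $\capO[\mathbf{k}]\Smash_{\Sigma_k}|\BAR(I,\capO,X)|^{\tensorcheck k}$ in the left-module case. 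Since $|\BAR(I,\capO,X)|$ is positive flat stable cofibrant by Theorem \ref{thm:reedy_cofibrant_for_bar_constructions}, and the positive flat stable model structure is designed precisely so that $\Sigma_k$-equivariant smash powers of such objects compute the total left derived functor, this strict expression agrees with the derived form $i_k\capO\circ^\HH_{\tau_1\capO}\TQ(X)$. The main obstacle is in (a): upgrading the strict pushout along a monomorphism to a genuine homotopy cofiber sequence requires verifying that $|\BAR(i_k\capO,\capO,X^c)|\to|\BAR(\tau_k\capO,\capO,X^c)|$ is in fact a cofibration (not merely a monomorphism) in the positive flat stable model structure on the underlying category, which should follow by combining the Reedy cofibrancy supplied by Theorem \ref{thm:reedy_cofibrant_for_bar_constructions} with a careful bookkeeping of the $\Sigma_k$-equivariant attaching cells at each simplicial level.
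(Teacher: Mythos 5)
Your proof follows essentially the same route as the paper's: the factorization of the right $\capO$-action on $i_k\capO$ through $\capO\rarrow\tau_1\capO$, the resulting isomorphism $\BAR(i_k\capO,\capO,X)\Iso i_k\capO\circ_{\tau_1\capO}\BAR(\tau_1\capO,\capO,X)$, the pushout--pullback square of Proposition \ref{prop:pushout_diagram_for_bar_construction_tower} combined with Proposition \ref{prop:fattened_version_of_tower} for part (a), and the collapse of $\circ_{\tau_1\capO}$ via \eqref{eq:circle_product_calc} when $\tau_1\capO=I$ for part (c); the only difference is that you prove (b) before (a) whereas the paper extracts (b) from the proof of (a). The ``main obstacle'' you flag at the end is not actually one: in symmetric spectra a pushout along a monomorphism is already a homotopy pushout (monomorphisms are cofibrations in the injective stable structure), which is exactly how the paper reads the homotopy fiber sequence off Proposition \ref{prop:pushout_diagram_for_bar_construction_tower} with no further cell-level bookkeeping.
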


For useful material related to homotopy fiber sequences, see \cite[II.8, II.8.20]{Goerss_Jardine}.

\begin{proof}
It suffices to consider the case of left $\capO$-modules. Consider part (a). It is enough to treat the special case where $X$ is a cofibrant left $\capO$-module. By Proposition \ref{prop:pushout_diagram_for_bar_construction_tower} there is a homotopy fiber sequence of the form
\begin{align}
\label{eq:homotopy_fiber_sequence_bar_constructions}
  |\BAR(i_{k}\capO,\capO,X)| \rarrow
  |\BAR(\tau_{k}\capO,\capO,X)| \rarrow
  |\BAR(\tau_{k-1}\capO,\capO,X)|
\end{align}
in $\LtO$, natural in $X$. By Proposition \ref{prop:fattened_version_of_tower} we know that \eqref{eq:homotopy_fiber_sequence_bar_constructions} has the form
\begin{align*}
  |\BAR(i_{k}\capO,\capO,X)|\rarrow
  \tau_k\capO\circ^\HH_\capO X\rarrow
  \tau_{k-1}\capO\circ^\HH_\capO X.
\end{align*}
Since the right $\capO$-action map $i_k\capO\circ\capO\rarrow i_k\capO$ factors as $i_k\capO\circ\capO\rarrow i_k\capO\circ\tau_1\capO\rarrow i_k\capO$, there are natural isomorphisms
\begin{align}
\label{eq:factoring_the_trivial_module_in_bar_construction}
  \BAR(i_{k}\capO,\capO,X)\Iso
  i_k\capO\circ_{\tau_1\capO}\BAR(\tau_1\capO,\capO,X)
\end{align}
of simplicial left $\capO$-modules. Applying the realization functor to \eqref{eq:factoring_the_trivial_module_in_bar_construction}, it follows from Proposition \ref{prop:realization_respects_smash_tensor_and_circle}, Proposition \ref{prop:cofibrant_tau_1_O_algebras_and_bar_constructions},  Theorem \ref{thm:homotopical_analysis_of_forgetful_functors}, and Proposition \ref{prop:fattened_version_of_tower}, that there are natural weak equivalences
\begin{align}
\label{eq:derived_functor_interpretation_of_above}
|\BAR(i_k\capO,\capO,X)|\wequiv
  i_k\capO\circ_{\tau_1\capO}|\BAR(\tau_1\capO,\capO,X)|\wequiv
  i_k\capO\circ^\HH_{\tau_1\capO}\TQ(X)
\end{align}
which finishes the proof of part (a). Part (b) follows from the proof of part (a) above. 

Consider part (c). Proceed as in the proof of part (a) above, and assume furthermore that $\capO[\mathbf{1}]=I[\mathbf{1}]$. It follows from \eqref{eq:circle_product_calc} that
\begin{align*}
  i_k\capO\circ |\BAR(I,\capO,X)|\wequiv
  \capO[\mathbf{k}]\Smash_{\Sigma_{k}}
  |\BAR(I,\capO,X)|^{\tensorcheck k}
\end{align*}
from which we can conclude, by applying the second equivalence in \eqref{eq:derived_functor_interpretation_of_above}, since $\tau_1\capO=I$ (Definition \ref{defn:operad}).
\end{proof}

\begin{prop}
\label{prop:induction_argument_cofiber_sequence}
Let $\capO$ be an operad in $\capR$-modules such that $\capO[\mathbf{0}]=*$. Assume that $\capO$ satisfies Cofibrancy Condition \ref{CofibrancyCondition}. Let $\function{f}{X}{Y}$ be a map between cofibrant objects in $\AlgO$ (resp. $\LtO$). If the induced map $|\BAR(\tau_1\capO,\capO,X)|\xrightarrow{\wequiv}|\BAR(\tau_1\capO,\capO,Y)|$ is a weak equivalence, then the induced map
$|\BAR(\tau_k\capO,\capO,X)|\xrightarrow{\wequiv}|\BAR(\tau_k\capO,\capO,Y)|$ is a weak equivalence for each $k\geq 2$.
\end{prop}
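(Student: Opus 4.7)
The plan is to argue by induction on $k \geq 1$. The base case $k = 1$ is precisely the hypothesis. Suppose inductively that for some $k \geq 2$ the induced map $|\BAR(\tau_{k-1}\capO,\capO,X)| \rarrow |\BAR(\tau_{k-1}\capO,\capO,Y)|$ is a weak equivalence; the goal is to promote this fact to level $k$.

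By Proposition \ref{prop:pushout_diagram_for_bar_construction_tower}, applied separately to $X$ and to $Y$, and by naturality of the bar construction in the first variable, we obtain a map of homotopy fiber sequences (in the stable setting of $\capR$-modules, the pushout square whose horizontal map $(*)$ is a monomorphism furnishes such a sequence) whose first terms are $|\BAR(i_{k}\capO,\capO,-)|$, whose middle terms are $|\BAR(\tau_{k}\capO,\capO,-)|$, and whose third terms are $|\BAR(\tau_{k-1}\capO,\capO,-)|$. Since the rightmost vertical map is a weak equivalence by the inductive hypothesis, it suffices to show the leftmost vertical map is a weak equivalence; the middle map then follows from the long exact sequence in true homotopy groups together with the five lemma.

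To analyze the fiber term, I would invoke Theorem \ref{thm:calculating_fiber_of_induced_map}(b), which supplies natural weak equivalences
\begin{align*}
  |\BAR(i_k\capO,\capO,X)| &\wequiv i_k\capO\circ_{\tau_1\capO}|\BAR(\tau_1\capO,\capO,X)|
\end{align*}
and similarly with $Y$ in place of $X$. By Proposition \ref{prop:cofibrant_tau_1_O_algebras_and_bar_constructions}, both $|\BAR(\tau_1\capO,\capO,X)|$ and $|\BAR(\tau_1\capO,\capO,Y)|$ are cofibrant in $\Alg_{\tau_1\capO}$ (resp. $\Lt_{\tau_1\capO}$), and by the hypothesis the induced map between them is a weak equivalence between cofibrant objects. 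Proposition \ref{prop:retract_property_and_derived_circle}(b) then ensures that $i_k\capO\circ_{\tau_1\capO}(-)$ preserves this weak equivalence, so the induced map $|\BAR(i_k\capO,\capO,X)| \rarrow |\BAR(i_k\capO,\capO,Y)|$ is a weak equivalence, closing the induction.

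The main obstacle is essentially bookkeeping: one needs to confirm that the pushout square of Proposition \ref{prop:pushout_diagram_for_bar_construction_tower}, with its monomorphism $(*)$, really does yield a homotopy cofiber/fiber sequence of (sufficiently cofibrant) $\capR$-modules to which a long exact sequence plus five lemma argument applies. The cofibrancy control afforded by Theorem \ref{thm:reedy_cofibrant_for_bar_constructions} and Theorem \ref{thm:homotopical_analysis_of_forgetful_functors}, combined with the fact that Cofibrancy Condition \ref{CofibrancyCondition} on $\capO$ is inherited by $\tau_k\capO$ and guarantees that $i_k\capO$ is suitably cofibrant as a right $\capO$-module, is what makes these homotopical manipulations legal at each stage.
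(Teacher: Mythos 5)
Your proposal is correct and follows essentially the same route as the paper: induct on $k$, use the map of homotopy fiber sequences coming from Proposition \ref{prop:pushout_diagram_for_bar_construction_tower}, identify the fiber terms via Theorem \ref{thm:calculating_fiber_of_induced_map}(b) together with Propositions \ref{prop:cofibrant_tau_1_O_algebras_and_bar_constructions} and \ref{prop:retract_property_and_derived_circle}(b), and conclude for the middle map. The only difference is that you spell out in detail the step the paper compresses into ``It follows from Theorem \ref{thm:calculating_fiber_of_induced_map} that the left-hand vertical map is a weak equivalence.''
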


\begin{proof}
It suffices to consider the case of left $\capO$-modules. Consider the 
\begin{align}
\label{eq:diagram_of_fibration_sequences_bar}
\xymatrix{
  |\BAR(i_k\capO,\capO,X)|\ar[d]\ar[r] & 
  |\BAR(\tau_k\capO,\capO,X)|\ar[d]\ar[r] & 
  |\BAR(\tau_{k-1}\capO,\capO,X)|\ar[d] \\
  |\BAR(i_k\capO,\capO,Y)|\ar[r] & 
  |\BAR(\tau_k\capO,\capO,Y)|\ar[r] & 
  |\BAR(\tau_{k-1}\capO,\capO,Y)|
}
\end{align}
commutative diagram in $\SymSeq$. It follows from Theorem \ref{thm:calculating_fiber_of_induced_map} that the left-hand vertical map is a weak equivalence for each $k\geq 2$. If $k=2$, then the right-hand vertical map is a weak equivalence by assumption, hence by Proposition \ref{prop:pushout_diagram_for_bar_construction_tower} and induction on $k$, the middle vertical map is a weak equivalence for each $k\geq 2$.
\end{proof}

\begin{proof}[Proof of Theorem \ref{MainTheorem}(c)]
It suffices to consider the case of left $\capO$-modules. By Theorem \ref{thm:comparing_homotopy_completion_towers} and Propositions \ref{prop:replacement_of_operads} and \ref{prop:forgetful_functor_commutes_with_holim}, we can suppose that $\capO$ satisfies Cofibrancy Condition \ref{CofibrancyCondition}. We can restrict to the following special case. Let $\function{f}{X}{Y}$ be a map of left $\capO$-modules between cofibrant objects in $\LtO$ such that the induced map $\tau_1\capO\circ_\capO X\rarrow \tau_1\capO\circ_\capO Y$ is a weak equivalence. We need to verify that the induced map 
$
  \function{f_*}{\tau_k\capO\circ_\capO X}{\tau_k\capO\circ_\capO Y}
$
is a weak equivalence for each $k\geq 2$. We know by Theorem \ref{thm:homotopical_analysis_of_forgetful_functors} that $X,Y$ are positive flat stable cofibrant in $\SymSeq$. If $k=1$, the map $f_*$ is a weak equivalence by assumption, and hence the induced map
$
  |\BAR(\tau_1\capO,\capO,X)|\rarrow|\BAR(\tau_1\capO,\capO,Y)|
$
is a weak equivalence by Proposition \ref{prop:fattened_version_of_tower}. It follows from Propositions \ref{prop:induction_argument_cofiber_sequence} and \ref{prop:fattened_version_of_tower} that $f_*$ is a weak equivalence for each $k\geq 2$, which finishes the proof.
\end{proof}

\subsection{Strong convergence of the homotopy completion tower}

The purpose of this subsection is to prove Theorem \ref{MainTheorem}(a). For each $k\geq 0$, the functor $\function{(-)^{>k}}{\SymSeq}{\SymSeq}$ is defined objectwise by
\begin{align*}
  (X^{>k})[\mathbf{r}]:=
  \left\{
  \begin{array}{rl}
    X[\mathbf{r}],&\text{for $r>k$,}\\
    *,&\text{otherwise}.
  \end{array}
  \right.
\end{align*}

\begin{prop}
\label{prop:pushout_diagram_for_bar_construction_tower_coaugmented}
Let $\capO$ be an operad in $\capR$-modules such that $\capO[\mathbf{0}]=*$. Let $X$ be an $\capO$-algebra (resp. left $\capO$-module) and $k\geq 1$. Then the left-hand pushout diagram
\begin{align}
\label{eq:pushout_diagram_bar_constructions_tower_coaugmented}
\xymatrix{
  \capO^{>k}\ar[d]\ar[r]^-{\subsetof} & \capO\ar[d]\\
  {*}\ar[r] & \tau_k\capO
}\quad\quad
\xymatrix{
  |\BAR(\capO^{>k},\capO,X)|\ar[d]\ar[r]^-{(*)} &
  |\BAR(\capO,\capO,X)|\ar[d]\\
  {*}\ar[r] & |\BAR(\tau_k\capO,\capO,X)|
}
\end{align}
in $\RtO$ induces the right-hand pushout diagram in $\Alg_I$ (resp. $\SymSeq$).  The map $(*)$ is a monomorphism, the left-hand diagram is a pullback diagram in $\Bi_{(\capO,\capO)}$, and the right-hand diagram is a pullback diagram in $\AlgO$ (resp. $\LtO$).
\end{prop}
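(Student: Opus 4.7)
The plan is to imitate the proof of Proposition \ref{prop:pushout_diagram_for_bar_construction_tower}, since this statement has exactly the same structural form; the only new input is the decomposition of right $\capO$-modules $\capO \supset \capO^{>k}$ with quotient $\tau_k\capO$, in place of $\tau_k\capO \supset i_k\capO$ with quotient $\tau_{k-1}\capO$.

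I first reduce, as in the analogous proof, to the case of left $\capO$-modules, since the algebra case follows via the full embedding $\hat{-}\colon\AlgO\to\LtO$ of algebras as left $\capO$-modules concentrated at $0$. Next, I verify that the left-hand square is a pushout in $\RtO$ by checking arity-by-arity in $\ModR$: for $r\leq k$ the map $\capO^{>k}[\mathbf{r}]\to\capO[\mathbf{r}]$ is $\ast\to\capO[\mathbf{r}]$, while for $r>k$ it is the identity on $\capO[\mathbf{r}]$; in either case the pushout with $\ast$ along this map is $(\tau_k\capO)[\mathbf{r}]$. Since the forgetful functor $\RtO\to\SymSeq$ preserves colimits (Proposition \ref{prop:basic_properties_LTO}), this remains a pushout in $\SymSeq$. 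Applying $\BAR(-,\capO,X)$ preserves the pushout at each simplicial degree, since in degree $n$ this functor is $(-)\circ(\capO^{\circ n}\circ X)$, which is a left adjoint by \eqref{eq:circle_mapping_sequence_adjunction}. Finally, the realization $|-|$ is a left adjoint by Proposition \ref{prop:realzns_fit_into_adjunctions} and hence preserves pushouts, giving the right-hand pushout square in $\SymSeq$ (equivalently in $\Alg_I$).

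For the monomorphism claim on $(*)$, the inclusion $\capO^{>k}\hookrightarrow\capO$ is arity-wise either $\ast\to\capO[\mathbf{t}]$ or an identity map, so taking circle product on the right with $\capO^{\circ n}\circ X$ produces, at each arity and each simplicial degree, a coproduct of such maps, which is again a monomorphism in $\ModR$. Realization then preserves this monomorphism by Proposition \ref{prop:realization_monomorphisms_weak_equivalences}. For the pullback claims, pullbacks in $\LtO$ and $\Bi_{(\capO,\capO)}$ are computed in the underlying category by Proposition \ref{prop:basic_properties_LTO} (and its bimodule analog), so it suffices to verify the assertion arity-wise in $\ModR$. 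At each arity, the realized square has the form of a monomorphism $A\hookrightarrow B$ together with $A\to \ast$ on one side and $B\to B/A$ on the other; in $\ModR$, built levelwise from pointed simplicial sets, such a square is simultaneously a pushout and a pullback, since the fiber of the quotient over the basepoint recovers precisely the subobject $A$.

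The step I anticipate as most delicate is the final pullback identification, because the identity ``pushout along a monomorphism with a $\ast$-corner is also a pullback'' does not hold in arbitrary pointed categories. However, it is a stable feature of pointed sets and therefore of pointed simplicial sets, symmetric spectra (hence $\capR$-modules), and symmetric sequences of these, where monomorphisms and the basepoint fibers can be tested level-wise; combined with the fact that realization $|-|$ preserves both pushouts and monomorphisms, the desired pullback property propagates through without any further homotopical input.
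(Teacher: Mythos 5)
Your proposal is correct and follows essentially the same route as the paper, which simply applies $|\BAR(-,\capO,X)|$ to the left-hand square and reuses the argument of Proposition \ref{prop:pushout_diagram_for_bar_construction_tower}: the pushout is preserved because the forgetful functor to $\SymSeq$, the functor $-\circ(\capO^{\circ n}\circ X)$, and realization are all colimit-preserving, while the monomorphism and pullback claims are checked in the underlying category. Your explicit levelwise justification that a pushout along a monomorphism with a $*$-corner is also a pullback in these pointed categories is exactly the point the paper leaves implicit, so there is no gap.
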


\begin{proof}
It suffices to consider the case of left $\capO$-modules. The right-hand diagram is obtained by applying $|\BAR(-,\capO, X)|$ to the left-hand diagram, and exactly the same argument used in the proof of Proposition \ref{prop:pushout_diagram_for_bar_construction_tower} allows to conclude.
\end{proof}

The following two propositions are well known in stable homotopy theory. For the convenience of the reader, we have included short homotopical proofs in the context of symmetric spectra; see also \cite[4.3]{Jardine_generalized_etale}. We defer the proof of the second proposition to Section \ref{sec:homotopical_analysis_forgetful_functors}.

\begin{prop}
\label{prop:connectivity_of_simplicial_maps_spectra}
Let $\function{f}{X}{Y}$ be a morphism of simplicial symmetric spectra (resp. simplicial $\capR$-modules). Let $k\in\ZZ$.
\begin{itemize}
\item[(a)] If $Y$ is objectwise $k$-connected, then $|Y|$ is $k$-connected.
\item[(b)] If $f$ is objectwise $k$-connected, then $\function{|f|}{|X|}{|Y|}$ is $k$-connected.
\end{itemize}
\end{prop}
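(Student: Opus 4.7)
The plan is first to reduce (a) to (b) by applying (b) to the canonical map $*\rarrow Y$. For (b), I would use the standard skeletal filtration of realization: define $\Sk_n|Y| := Y\Smash_\Delta (\Sk_n\Delta[-])_+$, so that $|Y| = \colim_n \Sk_n|Y|$, and note that by Proposition \ref{prop:realization_monomorphisms_weak_equivalences} the inclusions $\Sk_{n-1}|Y| \subset \Sk_n|Y|$ are monomorphisms. I would filter $|X|$ similarly and analyze the induced maps $\Sk_n|f|\colon \Sk_n|X|\rarrow\Sk_n|Y|$ on skeleta.

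The key input is the standard identification of the successive quotients in terms of latching objects $L_n Y$: there is a natural isomorphism $\Sk_n|Y|/\Sk_{n-1}|Y| \Iso (Y_n / L_n Y)\Smash S^n$, compatible with the map induced by $f$. Since $f_n\colon X_n\rarrow Y_n$ is $k$-connected by hypothesis, a relative analysis of latching maps gives that the induced map on quotients $X_n/L_n X\rarrow Y_n/L_n Y$ is $k$-connected as well, and smashing with $S^n$ therefore yields $(k+n)$-connected maps on cofibers. Starting from $\Sk_0|f| = f_0$, which is $k$-connected by hypothesis, induction on $n$ using the long exact sequence of true homotopy groups for the cofiber sequence $\Sk_{n-1}|Y|\rarrow\Sk_n|Y|\rarrow(Y_n/L_n Y)\Smash S^n$ (and similarly for $X$) shows that each $\Sk_n|f|$ is $k$-connected.

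To finish, I would invoke the fact that true homotopy groups of symmetric spectra commute with sequential colimits along monomorphisms, giving $\pi_*|f|\Iso\colim_n\pi_*\Sk_n|f|$, so the $k$-connectivity at each finite stage passes to $|f|$ itself. The main obstacle will be the bookkeeping around degeneracies and latching objects, in particular verifying that the relative latching map retains connectivity $k$ and that the identification of successive cofibers is natural in $f$. The $\capR$-module case reduces to the symmetric spectrum case, since the forgetful functor $\ModR\rarrow\Spectra$ preserves both connectivity and realization (the latter via Proposition \ref{prop:realization_respects_smash_tensor_and_circle}).
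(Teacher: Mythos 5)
Your proposal is correct and follows essentially the same route as the paper: the paper also reduces (a) to (b) via $*\rarrow Y$, handles the $\capR$-module case by applying the forgetful functor, and proves (b) by combining the skeletal filtration of realization with connectivity of the induced map on degenerate subobjects $DX_n\rarrow DY_n$ (which, as noted in the proof of Theorem \ref{thm:inclusion_of_degenerate_subobjects_positive_flat_stable}, are exactly your latching objects $L_nX$), established by the same filtration-of-degeneracies bookkeeping you flag as the main obstacle.
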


\begin{proof}
Consider  part (b) for the case of symmetric spectra. We need to verify that the realization $\function{|f|}{|X|}{|Y|}$ is $k$-connected. By exactly the same argument as in the proof of \cite[9.21]{Harper_Bar}, it follows from a filtration of degenerate subobjects (see also \cite[4.3]{Jardine_generalized_etale}) that the induced map $\function{Df_n}{DX_n}{DY_n}$ on degenerate subobjects is $k$-connected for each $n\geq 1$. Using exactly the same argument as in the proof of \cite[4.8]{Harper_Bar}, it then follows from the skeletal filtration of realization that $|f|$ is $k$-connected. Part (a) follows from part (b) by considering the map $*\rarrow Y$. The case of $\capR$-modules reduces to the case of symmetric spectra by applying the forgetful functor.
\end{proof}

\begin{rem}
\label{rem:smash_product_and_tensor_product}
It is important to note (Basic Assumption \ref{assumption:commutative_ring_spectrum}), particularly in Proposition \ref{prop:connectivity} below, that the tensor product $\tensor_S$ denotes the usual smash product of symmetric spectra \cite[2.2.3]{Hovey_Shipley_Smith}. For notational convenience, in this paper we use the smash product notation $\wedge$ to denote the smash product of $\capR$-modules (Definition \ref{defn:left_R_modules}), since the entire paper is written in this context. In particular, in the special case when $\capR=S$, the two agree $\Smash=\tensor_S$.
\end{rem}

\begin{prop}
\label{prop:connectivity}
Consider symmetric sequences in $\capR$-modules. Let $m,n\in\ZZ$ and $t\geq 1$. Assume that $\capR$ is $(-1)$-connected.
\begin{itemize}
\item[(a)] If $X,Y$ are symmetric spectra such that $X$ is $m$-connected and $Y$ is $n$-connected, then $X\tensor_S^\LL Y$ is $(m+n+1)$-connected.
\item[(b)] If $X,Y$ are $\capR$-modules such that $X$ is $m$-connected and $Y$ is $n$-connected, then $X\wedge^\LL Y$ is $(m+n+1)$-connected.
\item[(c)] If $X,Y$ are $\capR$-modules with a right (resp. left) $\Sigma_t$-action such that $X$ is $m$-connected and $Y$ is $n$-connected, then $X\wedge^\LL_{\Sigma_t} Y$ is $(m+n+1)$-connected.
\item[(d)] If $X,Y$ are symmetric sequences such that $X$ is $m$-connected and $Y$ is $n$-connected, then $X\tensorcheck^\LL Y$ is $(m+n+1)$-connected.
\item[(e)] If $X,Y$ are symmetric sequences with a right (resp. left) $\Sigma_t$-action such that $X$ is $m$-connected and $Y$ is $n$-connected, then $X\tensorcheck^\LL_{\Sigma_t} Y$ is $(m+n+1)$-connected.
\end{itemize}
Here, $\tensor_S^\LL$, $\wedge^\LL$, $\wedge^\LL_{\Sigma_t}$, $\tensorcheck^\LL$, and $\tensorcheck^\LL_{\Sigma_t}$ are the total left derived functors of $\tensor_S$, $\wedge$, $\wedge_{\Sigma_t}$, $\tensorcheck$, and $\tensorcheck_{\Sigma_t}$ respectively.
\end{prop}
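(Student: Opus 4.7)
The plan is to prove the five parts in the order stated, with each part building on its predecessors: (a) establishes the key connectivity estimate for the derived smash of symmetric spectra, (b) transfers it to $\capR$-modules via a bar resolution, (c) handles the $\Sigma_t$-coinvariants using a homotopy orbit spectral sequence, and (d)--(e) deduce the symmetric sequence versions by the levelwise formula for $\tensorcheck$.

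For part (a), I would first replace $X$ and $Y$ by flat stable cofibrant symmetric spectra, so that the derived tensor product is modeled by $X\tensor_S Y$. A flat stable cofibrant spectrum admits a cell presentation built from generating cofibrations of the form $F_\ell(\partial\Delta[n]_+ \rarrow \Delta[n]_+)$, where $F_\ell$ denotes the left adjoint to evaluation at level $\ell$. The key calculation is the isomorphism $F_\ell(K)\tensor_S F_{\ell'}(K')\Iso F_{\ell+\ell'}(K\wedge K')$, together with the fact that $F_\ell(S^n_+)$ is $(n-\ell-1)$-connected. Hence a cell used to build an $m$-connected spectrum satisfies $n-\ell\geq m+1$, and smashing two such cells produces a cell whose connectivity is at least $m+n+1$. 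A transfinite induction over cell attachments, using that $\tensor_S$ commutes with pushouts and filtered colimits in each variable and that connectivity is preserved under cell attachments in a stable setting, then proves part (a).

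For part (b), the two-sided simplicial bar construction $\BAR(X,\capR,Y)$ with $q$-simplices $X\tensor_S \capR^{\tensor_S q}\tensor_S Y$ provides a model for $X\wedge^\LL_\capR Y$ after cofibrant replacement of $X$ and $Y$. Since $\capR$ is $(-1)$-connected, iterating part (a) shows $\capR^{\tensor_S q}$ is $(-1)$-connected for every $q\geq 0$, and hence each simplicial level is $(m+n+1)$-connected by another application of (a). Proposition \ref{prop:connectivity_of_simplicial_maps_spectra}(a) then yields that the realization is $(m+n+1)$-connected. For part (c), I would model $X\wedge^\LL_{\Sigma_t}Y$ as the $\Sigma_t$-homotopy orbits of $X\wedge^\LL Y$, replacing $X$ if necessary by a $\Sigma_t$-free cofibrant model such as $E\Sigma_{t+}\Smash X$. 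Since $X\wedge^\LL Y$ is $(m+n+1)$-connected by (b), the homotopy-orbit spectral sequence
\begin{align*}
  E^2_{p,q} = H_p\bigl(\Sigma_t;\pi_q(X\wedge^\LL Y)\bigr)\Longrightarrow \pi_{p+q}\bigl((X\wedge^\LL Y)_{h\Sigma_t}\bigr)
\end{align*}
vanishes whenever $q\leq m+n+1$, and hence the homotopy orbits remain $(m+n+1)$-connected.

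For parts (d) and (e), the formula \eqref{eq:tensor_check_calc} expresses $(X\tensorcheck Y)[\mathbf{r}]$ as a coproduct of terms of the form $\bigl(X[\mathbf{r_1}]\Smash Y[\mathbf{r_2}]\bigr)\cdot_{\Sigma_{r_1}\times\Sigma_{r_2}}\Sigma_r$ with $r_1+r_2=r$. Replacing $X,Y$ by cofibrant objects and applying part (b) levelwise shows each summand is $(m+n+1)$-connected, and since induction along a finite-index subgroup and coproducts preserve connectivity, $X\tensorcheck^\LL Y$ is $(m+n+1)$-connected, proving (d). Part (e) follows from (d) by exactly the same homotopy-orbit argument as in (c). The main obstacle is part (a): one must verify that the flat stable cell decomposition is genuinely compatible with the connectivity estimates on generating cells (in particular that the shift-and-smash formula $F_\ell(K)\tensor_S F_{\ell'}(K')\Iso F_{\ell+\ell'}(K\wedge K')$ interacts correctly with $\Sigma_{\ell+\ell'}$-induction and the attaching maps), and that the transfinite cell-by-cell connectivity estimate is preserved under the passage to the colimit. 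Once part (a) is secured, the remaining parts are essentially formal consequences together with the realization and homotopy orbit connectivity estimates.
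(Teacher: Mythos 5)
Your proposal is correct and, for parts (a), (b), (d), follows essentially the paper's route: part (a) is the standard cell-induction argument (which the paper does not reprove but simply cites from Schwede's book), and part (b) is exactly the paper's argument---model $X\wedge^\LL Y$ by the realization of a two-sided simplicial bar construction over $\capR$, check that each simplicial level is $(m+n+1)$-connected by iterating (a), and conclude with Proposition \ref{prop:connectivity_of_simplicial_maps_spectra}. One technical point you elide in (b): for part (a) to apply to the levels $X\tensor_S\capR^{\tensor_S q}\tensor_S Y$ you need \emph{all} the tensor factors, including $\capR$ itself, to be flat stable cofibrant as symmetric spectra, and cofibrantly replacing only $X$ and $Y$ does not arrange this. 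The paper's fix is to first replace $\capR$ by a cofibrant monoid $\capR'$ (flat cofibrant as a spectrum by Theorem \ref{thm:homotopical_analysis_of_forgetful_functors_monoidal}) and then take $X',Y'$ cofibrant as $\capR'$-modules; you should insert the same step.

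Where you genuinely diverge is part (c). The paper regards $X\wedge^\LL_{\Sigma_t}Y$ as a derived balanced product over the group algebra spectrum $\capR[\Sigma_t]$, which is still $(-1)$-connected, and reruns the bar-construction argument of (b) verbatim. You instead identify $X\wedge^\LL_{\Sigma_t}Y$ with the homotopy orbits $(X\wedge^\LL Y)_{h\Sigma_t}$ and use the first-quadrant homotopy-orbit spectral sequence, whose vanishing for $q\leq m+n+1$ immediately propagates the connectivity bound from (b). This is valid and is close in spirit to the Eilenberg--Moore spectral sequence (Proposition \ref{prop:eilenberg_moore}) that the paper uses elsewhere; the two descriptions are just two models of the same derived coequalizer, so neither buys anything substantive over the other. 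Parts (d) and (e) via the levelwise formula \eqref{eq:tensor_check_calc} agree with the paper.
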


\begin{prop}
\label{prop:connectivity_estimates}
Let $\capO$ be an operad in $\capR$-modules such that $\capO[\mathbf{0}]=*$. Assume that $\capO$ satisfies Cofibrancy Condition \ref{CofibrancyConditionWeaker}. Let $X$ be a cofibrant $\capO$-algebra (resp. cofibrant left $\capO$-module) and $k\geq 1$. If $\capO,\capR$ are $(-1)$-connected and $X$ is $0$-connected, then $|\BAR(\tau_{k}\capO,\capO,X)|$ is $0$-connected and both $|\BAR(\capO^{>k},\capO,X)|$ and $|\BAR(i_{k+1}\capO,\capO,X)|$ are $k$-connected.
\end{prop}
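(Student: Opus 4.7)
The plan is to reduce the proposition to a connectivity estimate on each simplicial degree of the three bar constructions, and then invoke the realization-preserves-connectivity property already established.

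First, by Proposition \ref{prop:connectivity_of_simplicial_maps_spectra}, the realization of a simplicial object that is $k$-connected in each simplicial degree is itself $k$-connected. It therefore suffices to show that each simplicial degree $\BAR_n(\capO',\capO,X)$ has the required connectivity, namely $0$-connected when $\capO' = \tau_k\capO$, and $k$-connected when $\capO' = \capO^{>k}$ or $\capO' = i_{k+1}\capO$. Recall that the $n$-th simplicial degree of the two-sided bar construction is the iterated circle product $\capO' \circ \capO^{\circ n} \circ X$ (or $\capO' \circ \capO^{\circ n}(X)$ in the algebra case).

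Next, I would show by induction on $n$ that $Y_n := \capO^{\circ n} \circ X$ is $0$-connected. The base case $n=0$ is the hypothesis on $X$. For the inductive step, the circle product formula \eqref{eq:circle_product_calc} together with the tensor power formula \eqref{eq:tensor_check_calc} gives, at each arity $\mathbf{r}$,
\[
  (\capO \circ Y_{n-1})[\mathbf{r}] \iso \coprod_{t \geq 1} \capO[\mathbf{t}] \wedge_{\Sigma_t} (Y_{n-1}^{\tensorcheck t})[\mathbf{r}],
\]
where the sum begins at $t = 1$ because $\capO[\mathbf{0}] = *$. Iterating Proposition \ref{prop:connectivity}(d) shows that each $(Y_{n-1}^{\tensorcheck t})[\mathbf{r}]$ is $(t-1)$-connected, and smashing with the $(-1)$-connected $\capO[\mathbf{t}]$ over $\Sigma_t$ preserves this by Proposition \ref{prop:connectivity}(c). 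Minimizing the estimate $t-1$ over $t \geq 1$ yields that $Y_n$ is $0$-connected.

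Finally, with $Y_n$ known to be $0$-connected, I would run the same calculation on the outermost circle product $\capO' \circ Y_n$ for each of the three choices of $\capO'$. For $\capO' = \tau_k\capO$ the index $t$ ranges over $1 \leq t \leq k$, with minimum connectivity $t-1 = 0$ attained at $t=1$, giving $0$-connected. For $\capO' = \capO^{>k}$ the index satisfies $t \geq k+1$, so the minimum connectivity $t-1 \geq k$ is attained at $t = k+1$, giving $k$-connected. For $\capO' = i_{k+1}\capO$ only $t = k+1$ contributes, again giving $k$-connected. Combining with the first step completes the proof. The main obstacle is that Proposition \ref{prop:connectivity} provides estimates for the derived smash and tensor products, whereas each $\BAR_n$ is assembled from strict ones; I would handle this by appealing to Cofibrancy Condition \ref{CofibrancyConditionWeaker} together with Theorem \ref{thm:homotopical_analysis_of_forgetful_functors}(b), which guarantees that the cofibrant object $X$ is positive flat stable cofibrant in the underlying category, ensuring that each strict smash, orbit, and tensor product appearing in the bar construction coincides with its derived version in the connectivity range that matters. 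Verifying that this flat cofibrancy is preserved under the iterated circle products defining $Y_n$ is the technical point to check, but it is standard for the flat stable model structures on $\SymSeq$ and $\ModR$.
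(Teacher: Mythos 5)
Your proposal is correct and follows essentially the same route as the paper, whose proof is just the one-line citation of Theorem \ref{thm:homotopical_analysis_of_forgetful_functors}, Proposition \ref{prop:connectivity_of_simplicial_maps_spectra}, and Proposition \ref{prop:connectivity} that you have expanded: reduce to objectwise connectivity of the bar construction, compute each simplicial degree as an iterated circle product with the coproduct starting at $t=1$ because $\capO[\mathbf{0}]=*$, and apply the derived connectivity estimates, using flat cofibrancy of $X$ to pass between strict and derived products. The technical point you flag at the end is exactly what the paper's citation of Theorem \ref{thm:homotopical_analysis_of_forgetful_functors} is meant to supply.
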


\begin{proof}
This follows from Theorem \ref{thm:homotopical_analysis_of_forgetful_functors} and Propositions \ref{prop:connectivity_of_simplicial_maps_spectra} and \ref{prop:connectivity}.
\end{proof}

The following Milnor type short exact sequences are well known in stable homotopy theory (for a recent reference, see \cite{Dwyer_Greenlees_Iyengar}); they can be established as a consequence of \cite[IX]{Bousfield_Kan}.

\begin{prop}
\label{prop:short_exact_sequence}
Consider any tower $B_0\leftarrow B_1\leftarrow B_2\leftarrow\cdots$ of symmetric spectra (resp. $\capR$-modules). There are natural short exact sequences
\begin{align*}
  &0\rightarrow\lim\nolimits^1_k\pi_{i+1}B_k\rightarrow
  \pi_i\holim_k B_k\rightarrow
  \lim\nolimits_k\pi_i B_k\rightarrow 0.
\end{align*}
\end{prop}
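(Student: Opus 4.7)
The plan is to reduce to the standard $\lim/\lim^1$ fiber sequence for a Reedy fibrant tower of spectra. Since the forgetful functor $\ModR\to\Spectra$ preserves (and detects) stable fibrations, weak equivalences, products, and homotopy limits of towers, it suffices to treat the case of symmetric spectra.

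First, I would use functorial fibrant replacement in the injective (Reedy) model structure on $\M^{\DD}$ from Definition \ref{defn:model_structure_on_towers} to replace the given tower $\{B_k\}$ by a levelwise (stably) fibrant tower $\{\tilde B_k\}$ in which each transition map $\tilde B_{k+1}\to \tilde B_k$ is a stable fibration. This replacement preserves each $\pi_i B_k$ and, by the definition of $\holim$ as the total right derived functor of $\lim$, preserves $\pi_i\holim_k B_k$. For such a Reedy fibrant tower the homotopy limit is computed as the ordinary limit, and there is a well-known homotopy fiber sequence
\begin{align*}
  \holim\nolimits_k B_k\ \wequiv\ \lim\nolimits_k \tilde B_k \rarrow
  \prod_k \tilde B_k \xrightarrow{\ \id-\mathrm{shift}\ }
  \prod_k \tilde B_k,
\end{align*}
where $\mathrm{shift}$ is the composite of the projection to the $(k+1)$-st factor followed by the transition map $\tilde B_{k+1}\to \tilde B_k$. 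Geometrically this realizes $\lim \tilde B_k$ as the equalizer/fiber of $\id-\mathrm{shift}$.

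Second, I would take the long exact sequence in true homotopy groups associated to this fiber sequence. Because true homotopy groups of symmetric spectra commute with arbitrary products (this is one of the standard properties of $\pi_*$ established in Schwede \cite{Schwede_homotopy_groups}, already invoked in the paper), we have $\pi_n\prod_k \tilde B_k\iso\prod_k \pi_n\tilde B_k\iso\prod_k \pi_n B_k$. The endomorphism $\id-\mathrm{shift}$ of this product is the standard map whose kernel is by definition $\lim_k \pi_n B_k$ and whose cokernel is by definition $\lim^1_k \pi_n B_k$. Extracting the relevant four-term piece of the long exact sequence,
\begin{align*}
  \prod_k \pi_{i+1} B_k \xrightarrow{\id-\mathrm{shift}}
  \prod_k \pi_{i+1} B_k \rarrow \pi_i\holim\nolimits_k B_k \rarrow
  \prod_k \pi_i B_k \xrightarrow{\id-\mathrm{shift}}
  \prod_k \pi_i B_k,
\end{align*}
and identifying the kernel and cokernel of $\id-\mathrm{shift}$ as described, yields the desired short exact sequence, natural in the tower $\{B_k\}$.

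The only real work is justifying the fiber sequence displayed above for a Reedy fibrant tower of spectra; this is standard (see, e.g., Bousfield-Kan \cite[IX.3]{Bousfield_Kan} for the space-level result, which passes to spectra levelwise), and the modest subtlety of ensuring one works with the true/derived homotopy groups throughout is handled automatically by the convention, fixed in the introduction, that $\pi_*(-)$ always denotes the homotopy groups of a (flat) stable fibrant replacement. Compatibility with products of spectra is the one potential obstacle, but it is already part of the toolkit cited from \cite{Schwede_homotopy_groups}.
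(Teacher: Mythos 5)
Your proof is correct, and it is precisely the standard $\lim$/$\lim^1$ argument that the paper itself defers to: the paper gives no proof of this proposition, stating only that the sequences "are well known" and "can be established as a consequence of \cite[IX]{Bousfield_Kan}", which is exactly the fibrant-replacement-plus-$(\id-\mathrm{shift})$ fiber sequence you write down. The one place to be slightly careful is the claim that true homotopy groups commute with arbitrary products — this is cleanest when applied, as you in fact do, to the already stably fibrant tower $\{\tilde B_k\}$, where a product of $\Omega$-spectra is an $\Omega$-spectrum and the identification $\pi_n\prod_k\tilde B_k\iso\prod_k\pi_n\tilde B_k$ is elementary.
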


\begin{proof}[Proof of Theorem \ref{MainTheorem}(a)]
It suffices to consider the case of left $\capO$-modules. By Theorem \ref{thm:comparing_homotopy_completion_towers} and Propositions \ref{prop:replacement_of_operads} and \ref{prop:forgetful_functor_commutes_with_holim}, we can restrict to operads $\capO$ satisfying Cofibrancy Condition \ref{CofibrancyCondition}. It is enough to treat the following special case. Let $X$ be a $0$-connected, cofibrant left $\capO$-module. We need to verify that the natural coaugmentation
$
  X\wequiv\holim_k X \rarrow
  \holim_k(\tau_k\capO\circ_\capO X)
$
is a weak equivalence. By Proposition \ref{prop:fattened_version_of_tower} it suffices to verify that 
$
  \holim_k|\BAR(\capO,\capO,X)|\rarrow
  \holim_k|\BAR(\tau_k\capO,\capO,X)|
$
is a weak equivalence. Consider the commutative diagram
\begin{align*}
\xymatrix{
  \pi_i\holim_k|\BAR(\capO,\capO,X)|\ar[d]_{\Iso}\ar[r]^-{(*)} &
  \pi_i\holim_k|\BAR(\tau_k\capO,\capO,X)|\ar[d]^{(*'')}\\
  \lim_k\pi_i|\BAR(\capO,\capO,X)|\ar[r]^-{(*')} &
  \lim_k\pi_i|\BAR(\tau_k\capO,\capO,X)|
}
\end{align*}
for each $i$. Since $\lim^1_k\pi_{i+1}|\BAR(\capO,\capO,X)|=0$, the left-hand vertical map is an isomorphism by Proposition \ref{prop:short_exact_sequence}. We need to show that the map $(*)$ is an isomorphism, hence it suffices to verify that $(*')$ and $(*'')$ are isomorphisms. First note that Propositions \ref{prop:pushout_diagram_for_bar_construction_tower_coaugmented} and \ref{prop:connectivity_estimates} imply that $(*')$ is an isomorphism. Similarly, by Propositions \ref{prop:pushout_diagram_for_bar_construction_tower} and \ref{prop:connectivity_estimates}, it follows that for each $k\geq 1$ the induced map
$
  \pi_i|\BAR(\tau_{k+1}\capO,\capO,X)|\rarrow
  \pi_i|\BAR(\tau_{k}\capO,\capO,X)|
$
is an isomorphism for $i\leq k$ and a surjection for $i={k+1}$; in particular, for each fixed $i$ the tower of abelian groups $\{\pi_i|\BAR(\tau_{k}\capO,\capO,X)|\}$ is eventually constant. Hence  
$
  \lim\nolimits^1_k\pi_{i+1}|\BAR(\tau_k\capO,\capO,X)|=0
$
and by Proposition \ref{prop:short_exact_sequence} the map $(*'')$ is an isomorphism which finishes the proof. By the argument above, note that for each $k\geq 1$ the natural maps
$
  \pi_i X\rarrow
  \pi_i(\tau_{k}\capO\circ_\capO X)
$ and 
$
  \pi_i(\tau_{k+1}\capO\circ_\capO X)\rarrow
  \pi_i(\tau_{k}\capO\circ_\capO X)
$
are isomorphisms for $i\leq k$ and surjections for $i={k+1}$; we sometimes refer to this as the \emph{strong convergence} of the homotopy completion tower.
\end{proof}

\subsection{On $n$-connected maps and the homotopy completion tower}

The purpose of this subsection is to prove Theorems \ref{thm:hurewicz}, \ref{thm:relative_hurewicz}, and \ref{MainTheorem}(b).

\begin{prop}
\label{prop:refined_bar_construction_calculation_for_homotopy_fiber}
Let $\capO$ be an operad in $\capR$-modules such that $\capO[\mathbf{0}]=*$. Assume that $\capO$ satisfies Cofibrancy Condition \ref{CofibrancyCondition}. Let $X$ be a cofibrant $\capO$-algebra (resp. cofibrant left $\capO$-module) and $k\geq 2$. There are natural weak equivalences
\begin{align}
\label{eq:refined_bar_calculation_hello}
  |\BAR(i_k\capO,\capO,X)|\wequiv
  |\BAR(i_k\capO,\tau_1\capO,|\BAR(\tau_1\capO,\capO,X)|)|.
\end{align}
\end{prop}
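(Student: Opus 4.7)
The plan is to identify both sides of \eqref{eq:refined_bar_calculation_hello} with the common object
\begin{align*}
i_k\capO \circ_{\tau_1\capO} Y,\quad\text{where}\quad Y := |\BAR(\tau_1\capO,\capO,X)|,
\end{align*}
and then to concatenate the resulting zigzags of weak equivalences in $\AlgO$ (resp.\ $\LtO$).

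First I would treat the left-hand side by reusing the argument from the proof of Theorem \ref{thm:calculating_fiber_of_induced_map}. Since the right $\capO$-action on $i_k\capO$ factors through $\tau_1\capO$ via the canonical map $\capO\rarrow\tau_1\capO$, there is a natural isomorphism of simplicial objects $\BAR(i_k\capO,\capO,X) \Iso i_k\capO \circ_{\tau_1\capO} \BAR(\tau_1\capO,\capO,X)$, exactly as in \eqref{eq:factoring_the_trivial_module_in_bar_construction}. Applying realization, which commutes with colimits and with circle product by Proposition \ref{prop:realzns_fit_into_adjunctions} and Proposition \ref{prop:realization_respects_smash_tensor_and_circle}, then gives a natural isomorphism $|\BAR(i_k\capO,\capO,X)| \Iso i_k\capO \circ_{\tau_1\capO} Y$.

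For the right-hand side, I would invoke Theorem \ref{thm:bar_calculates_derived_circle} (the bar resolution theorem underlying Proposition \ref{prop:natural_map_is_weak_equivalence}) applied to the operad $\tau_1\capO$, the right $\tau_1\capO$-module $i_k\capO$, and the left $\tau_1\capO$-module $Y$. By Proposition \ref{prop:cofibrant_tau_1_O_algebras_and_bar_constructions}, $Y$ is cofibrant as a $\tau_1\capO$-algebra (resp.\ as a left $\tau_1\capO$-module), and by Theorem \ref{thm:homotopical_analysis_of_forgetful_functors} it is positive flat stable cofibrant in the underlying category. Combined with the flat stable cofibrancy of $i_k\capO$ and $\tau_1\capO$ guaranteed by Cofibrancy Condition \ref{CofibrancyCondition}, and with Theorem \ref{thm:reedy_cofibrant_for_bar_constructions} to ensure that $\BAR(i_k\capO,\tau_1\capO,Y)$ is Reedy cofibrant (so that its realization computes the correct homotopy coend), this yields a natural weak equivalence
\begin{align*}
|\BAR(i_k\capO,\tau_1\capO,Y)| \xrightarrow{\wequiv} i_k\capO \circ_{\tau_1\capO} Y.
\end{align*}
Concatenating this weak equivalence with the isomorphism of the previous paragraph gives the natural weak equivalence \eqref{eq:refined_bar_calculation_hello}.

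The main technical point to verify is that Theorem \ref{thm:bar_calculates_derived_circle} remains applicable when the coefficient $i_k\capO$ is only a right $\tau_1\capO$-module and not itself an operad. I expect this to be immediate from the proof of that theorem, which uses only the (Reedy) cofibrancy of the simplicial bar construction together with the universal property of realization, and makes no essential use of operadic composition on the coefficient side. Naturality in $X$ is automatic, since each step in the chain (the factorization of the $\capO$-action on $i_k\capO$, the commutation of realization with the coend $\circ_{\tau_1\capO}$, and the natural comparison map out of the bar construction) is functorial in its inputs.
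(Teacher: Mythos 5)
Your proposal is correct and follows essentially the same route as the paper's proof: both identify the left-hand side with $i_k\capO\circ_{\tau_1\capO}Y$ (via the factorization \eqref{eq:factoring_the_trivial_module_in_bar_construction} and the fact that realization commutes with circle products), and both identify the right-hand side with the derived relative circle product $i_k\capO\circ^{\HH}_{\tau_1\capO}Y$ by resolving $Y$ with its bar resolution over $\tau_1\capO$. The one place where you diverge is in how the right-hand identification is justified. You invoke Theorem \ref{thm:bar_calculates_derived_circle} (equivalently Proposition \ref{prop:natural_map_is_weak_equivalence}) with coefficient $i_k\capO$, but as stated that theorem only covers the case where the coefficient is an operad $\capO'$ receiving a map from $\capO$; you correctly flag this and your sketched repair is sound, since the needed input --- that $i_k\capO\circ_{\tau_1\capO}(-)$ preserves weak equivalences between cofibrant objects --- is exactly Proposition \ref{prop:retract_property_and_derived_circle}(b), and the rest is the extra-degeneracy argument for $\BAR(\tau_1\capO,\tau_1\capO,Y)\rarrow Y$ together with commuting realization past $\circ_{\tau_1\capO}$. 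The paper sidesteps the need to restate Theorem \ref{thm:bar_calculates_derived_circle} for right-module coefficients by instead routing through Theorem \ref{thm:fattened_replacement} and Proposition \ref{prop:commuting_hocolim_of_simplicial_objects} (commuting $i_k\capO\circ^{\HH}_{\tau_1\capO}$ with $\hocolim_{\Delta^\op}$), which packages the same extra-degeneracy and cofibrancy facts; so the two arguments are interchangeable in substance. For completeness, note that the paper also records a genuinely different proof suggested by a referee (Remark \ref{rem:suggested_proof_multisimplicial}), which identifies the right-hand side of \eqref{eq:refined_bar_calculation_hello} with $|\BAR(B,\capO,X)|$ for $B=|\BAR(i_k\capO,\tau_1\capO,\tau_1\capO)|$ via a bisimplicial object and then uses the weak equivalence of right modules $B\rarrow i_k\capO$; that route avoids derived-functor bookkeeping entirely.
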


Below we give a simple conceptual proof of this proposition using derived functors. An anonymous referee has suggested an alternate proof working directly with (bi)simplicial bar constructions, for which the interested reader may jump directly to Remark \ref{rem:suggested_proof_multisimplicial}. The following proposition is an easy exercise in commuting certain left derived functors and homotopy colimits; we defer the proof to Section \ref{sec:homotopical_analysis_forgetful_functors}.

\begin{prop}
\label{prop:commuting_hocolim_of_simplicial_objects}
Let $\capO$ be an operad in $\capR$-modules such that $\capO[\mathbf{0}]=*$. Let $k\geq 2$. If $B$ is a simplicial $\tau_1\capO$-algebra (resp. simplicial left $\tau_1\capO$-module), then there is a zigzag of weak equivalences
\begin{align*}
  i_k\capO\circ^\HH_{\tau_1\capO}\bigl(\hocolim\limits^{\Alg_{\tau_1\capO}}_{\Delta^\op}B\bigr)
  &\wequiv
  \hocolim\limits^{\Alg_{\capO}}_{\Delta^\op}
  i_k\capO\circ^\HH_{\tau_1\capO}(B) \\
  \Bigl(
  \text{resp.}\quad
  i_k\capO\circ^\HH_{\tau_1\capO}\hocolim\limits^{\Lt_{\tau_1\capO}}_{\Delta^\op}B
  &\wequiv
  \hocolim\limits^{\Lt_{\capO}}_{\Delta^\op}
  i_k\capO\circ^\HH_{\tau_1\capO}B 
    \Bigr)
\end{align*}
natural in $B$.
\end{prop}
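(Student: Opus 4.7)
The plan is to exhibit $i_k\capO\circ^\HH_{\tau_1\capO}(-)$ as a retract of a genuine left Quillen functor, and then invoke the general principle that left Quillen functors commute with $\hocolim_{\Delta^\op}$ up to natural weak equivalence. It suffices to treat the case of left $\capO$-modules; the case of $\capO$-algebras is entirely analogous.

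First, I would fix a functorial Reedy cofibrant replacement $\function{B^c}{}{B}$ in $\sLt_{\tau_1\capO}$, so that $\hocolim_{\Delta^\op}B\wequiv |B^c|$, each $B^c_n$ is cofibrant, and $|B^c|$ itself is cofibrant in $\Lt_{\tau_1\capO}$ (via the skeletal filtration, whose successive stages are pushouts along the Reedy latching cofibrations). The change of operads map $\tau_1\capO\rarrow\capO$ induces a Quillen adjunction \eqref{eq:quillen_adjunction_change_of_operads_nice} whose left adjoint is $\capO\circ_{\tau_1\capO}(-)$; since it is left Quillen, it preserves Reedy cofibrancy level-wise (latching objects are colimits, and left Quillen functors preserve (acyclic) cofibrations), and since it is a left adjoint it commutes with the realization coend. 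This yields a zigzag of weak equivalences
\begin{align*}
  \capO\circ^\HH_{\tau_1\capO}\hocolim\nolimits^{\Lt_{\tau_1\capO}}_{\Delta^\op}B
  \wequiv |\capO\circ_{\tau_1\capO}B^c|
  \wequiv \hocolim\nolimits^{\Lt_\capO}_{\Delta^\op}\capO\circ^\HH_{\tau_1\capO}B.
\end{align*}

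To transfer this to $i_k\capO$, I would apply $-\circ_{\tau_1\capO}B^c$ to the retract $i_k\capO\rarrow\capO\rarrow i_k\capO$ in $\Rt_{\tau_1\capO}$ furnished by Proposition \ref{prop:retract_property_and_derived_circle}(a), obtaining a natural retract $i_k\capO\circ_{\tau_1\capO}B^c\rarrow\capO\circ_{\tau_1\capO}B^c\rarrow i_k\capO\circ_{\tau_1\capO}B^c$ of simplicial left $\capO$-modules. Retracts of Reedy cofibrant objects are Reedy cofibrant (the latching cofibrations are inherited), so $i_k\capO\circ_{\tau_1\capO}B^c$ is Reedy cofibrant; its realization is therefore cofibrant and computes the homotopy colimit. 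The functor $i_k\capO\circ_{\tau_1\capO}(-)$ commutes with colimits (it is a coequalizer built from the left adjoint $i_k\capO\circ(-)$), hence with the realization coend, so $i_k\capO\circ_{\tau_1\capO}|B^c|\Iso |i_k\capO\circ_{\tau_1\capO}B^c|$. Combining these observations with Proposition \ref{prop:retract_property_and_derived_circle}(b), which identifies $i_k\capO\circ_{\tau_1\capO}(-)$ with $i_k\capO\circ^\HH_{\tau_1\capO}(-)$ on cofibrant objects (applied both to $|B^c|$ and level-wise to $B^c$), yields the desired chain
\begin{align*}
  i_k\capO\circ^\HH_{\tau_1\capO}\hocolim\nolimits^{\Lt_{\tau_1\capO}}_{\Delta^\op}B
  \wequiv i_k\capO\circ_{\tau_1\capO}|B^c|
  \Iso |i_k\capO\circ_{\tau_1\capO}B^c|
  \wequiv \hocolim\nolimits^{\Lt_\capO}_{\Delta^\op}i_k\capO\circ^\HH_{\tau_1\capO}B,
\end{align*}
natural in $B$.

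The main technical point I anticipate is the model-categorical bookkeeping: that retracts of Reedy cofibrant simplicial objects in $\Lt_{\tau_1\capO}$ and $\LtO$ are Reedy cofibrant, that realization of a Reedy cofibrant simplicial object is cofibrant and models $\hocolim_{\Delta^\op}$, and that $\capO\circ_{\tau_1\capO}(-)$ and $i_k\capO\circ_{\tau_1\capO}(-)$ genuinely preserve the properties used above. These are standard consequences of the positive flat stable model structures established in Theorem \ref{thm:positive_flat_stable_AlgO}, combined with the left-adjoint nature of $\capO\circ_{\tau_1\capO}(-)$ and the colimit-preservation property of $i_k\capO\circ_{\tau_1\capO}(-)$ noted above.
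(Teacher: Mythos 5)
Your proposal follows essentially the same strategy as the paper's own proof: cofibrantly replace the simplicial object, use the retract $i_k\capO\rarrow\capO\rarrow i_k\capO$ of Proposition \ref{prop:retract_property_and_derived_circle}(a) to transfer the good behaviour of the left Quillen functor $\capO\circ_{\tau_1\capO}(-)$ down to $i_k\capO\circ_{\tau_1\capO}(-)$, and identify underived with derived functors on (objectwise) cofibrant input. The packaging differs: the paper takes a projective cofibrant replacement $B^c$ in $\sLt_{\tau_1\capO}$ and applies the retract to the comparison map $|{-}|\rarrow\colim_{\Delta^\op}(-)$ (a retract of a weak equivalence is a weak equivalence), citing \cite{Harper_Bar}, whereas you take a Reedy cofibrant replacement and apply the retract at the level of Reedy cofibrancy. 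Both routes work, but two steps in your write-up need sharper justification.

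First, $i_k\capO\circ(-)$ is \emph{not} a left adjoint and does not preserve general colimits (the circle product is not additive in its right variable), so "commutes with colimits, hence with the realization coend" is not a valid justification for $i_k\capO\circ_{\tau_1\capO}|B^c|\Iso|i_k\capO\circ_{\tau_1\capO}B^c|$. The isomorphism is nevertheless true: realization commutes with circle products (Proposition \ref{prop:realization_respects_smash_tensor_and_circle}(b)) and, being a left adjoint on the underlying category, with the reflexive coequalizer defining $-\circ_{\tau_1\capO}-$; this is exactly the result from \cite{Harper_Bar} that the paper invokes. Second, Proposition \ref{prop:retract_property_and_derived_circle}(a) furnishes the retract only in $\Rt_{\tau_1\capO}$, so the induced retract of simplicial objects is a priori only a retract of simplicial symmetric sequences; to deduce Reedy cofibrancy of $i_k\capO\circ_{\tau_1\capO}B^c$ in $\sLt_{\capO}$, as your argument requires, you would need the retract to respect the left $\capO$-module structures, which is not established. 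This is easily sidestepped: Theorem \ref{main_hocolim_theorem} identifies $U\hocolim^{\LtO}_{\Delta^\op}X$ with $|UX|$ for an \emph{arbitrary} simplicial left $\capO$-module $X$, so Reedy cofibrancy of the underlying simplicial symmetric sequence (which your retract does give) suffices, together with the homotopy invariance of $\hocolim$ under levelwise weak equivalences. With these two repairs your argument goes through.
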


\begin{proof}[Proof of Proposition \ref{prop:refined_bar_construction_calculation_for_homotopy_fiber}]
It suffices to consider the case of left $\capO$-modules. For notational ease, define $B:=|\BAR(\tau_1\capO,\capO,X)|$. By Theorems \ref{thm:calculating_fiber_of_induced_map} and \ref{thm:fattened_replacement}, Proposition \ref{prop:commuting_hocolim_of_simplicial_objects}, Proposition \ref{prop:cofibrant_tau_1_O_algebras_and_bar_constructions} and Theorem \ref{main_hocolim_theorem}, there are natural weak equivalences
\begin{align*}
  |\BAR(i_k\capO,\capO,X)|\wequiv
  i_k\capO\circ^\HH_{\tau_1\capO}B
  \wequiv
  i_k\capO\circ^\HH_{\tau_1\capO}
  \hocolim\limits^{\Lt_{\tau_1\capO}}_{\Delta^\op}
  \BAR(\tau_1\capO,\tau_1\capO,B)\\
  \wequiv
  \hocolim\limits^{\Lt_{\capO}}_{\Delta^\op}
  i_k\capO\circ^\HH_{\tau_1\capO}
  \BAR(\tau_1\capO,\tau_1\capO,B)
  \wequiv
  \hocolim\limits^{\Lt_{\capO}}_{\Delta^\op}
  i_k\capO\circ_{\tau_1\capO}
  \BAR(\tau_1\capO,\tau_1\capO,B)\\
  \wequiv
  \hocolim\limits^{\Lt_{\capO}}_{\Delta^\op}
  \BAR(i_k\capO,\tau_1\capO,B)
  \wequiv
  |\BAR(i_k\capO,\tau_1\capO,B)|.
\end{align*}
\end{proof}

\begin{rem}
\label{rem:suggested_proof_multisimplicial}
Here is an alternate proof of Proposition \ref{prop:refined_bar_construction_calculation_for_homotopy_fiber} that was suggested by an anonymous referee. It suffices to consider the case of left $\capO$-modules. For notational ease, define $B:=|\BAR(i_k\capO,\tau_1\capO,\tau_1\capO)|$. The right-hand side of \eqref{eq:refined_bar_calculation_hello} is isomorphic to $|\BAR(B,\capO,X)|$ (they are both realizations of a bisimplicial symmetric sequence). Noting that the natural map $B\rarrow i_k\capO$ of right $\tau_1\capO$-modules (and hence of right $\capO$-modules) is a weak equivalence (\cite[8.4, 8.3]{Harper_Bar}), together with Theorem \ref{thm:homotopical_analysis_of_forgetful_functors} and Proposition \ref{prop:realization_monomorphisms_weak_equivalences}, it follows that $|\BAR(B,\capO,X)|\rarrow|\BAR(i_k\capO,\capO,X)|$ is a weak equivalence, which finishes the proof.
\end{rem}

\begin{proof}[Proof of Theorem \ref{thm:hurewicz}]
It suffices to consider the case of left $\capO$-modules. By Theorem \ref{thm:comparing_homotopy_completion_towers} and Propositions \ref{prop:replacement_of_operads} and \ref{prop:forgetful_functor_commutes_with_holim}, we can restrict to operads $\capO$ satisfying Cofibrancy Condition \ref{CofibrancyCondition}. It is enough to treat the special case where $X$ is a cofibrant left $\capO$-module. 

Consider part (a). Assume that $\tau_{1}\capO\circ_\capO X$ is $n$-connected. Then $|\BAR(\tau_{1}\capO,\capO,X)|$ is $n$-connected by \ref{prop:fattened_version_of_tower}, hence by Proposition \ref{prop:refined_bar_construction_calculation_for_homotopy_fiber}, together with Theorem \ref{thm:homotopical_analysis_of_forgetful_functors} and Propositions \ref{prop:connectivity_of_simplicial_maps_spectra} and \ref{prop:connectivity}, it follows that $|\BAR(i_{k+1}\capO,\capO,X)|$ is $((k+1)n+k)$-connected for each $k\geq 1$. Hence it follows from \ref{prop:pushout_diagram_for_bar_construction_tower} and \ref{prop:fattened_version_of_tower} that for each $k\geq 1$ the natural maps $
  \pi_i(\tau_{k+1}\capO\circ_\capO X)\rarrow
  \pi_i(\tau_{k}\capO\circ_\capO X)
$
are isomorphisms for $i\leq (k+1)n+k$ and surjections for $i=(k+1)(n+1)$. In particular, for each $i\leq 2n+1$ the tower $\{\pi_i(\tau_{k}\capO\circ_\capO X)\}$ is a tower of isomorphisms, and since $\tau_{1}\capO\circ_\capO X$ is $n$-connected, it follows that each stage in the tower $\{\tau_{k}\capO\circ_\capO X\}$ is $n$-connected. Since $X$ is $0$-connected by assumption, it follows from strong convergence of the homotopy completion tower (proof of Theorem \ref{MainTheorem}(a)) that the map $\pi_i X\rarrow\pi_i(\tau_{k}\capO\circ_\capO X)$ is an isomorphism for every $i\leq k$. Hence taking $k$ sufficiently large ($k\geq n$) verifies that $X$ is $n$-connected.

Conversely, assume that $X$ is $n$-connected. Then by Theorem \ref{thm:homotopical_analysis_of_forgetful_functors} and Propositions \ref{prop:connectivity_of_simplicial_maps_spectra} and \ref{prop:connectivity}, it follows that $|\BAR(\tau_{k}\capO,\capO,X)|$ is $n$-connected and both $|\BAR(\capO^{>k},\capO,X)|$ and $|\BAR(i_{k+1}\capO,\capO,X)|$ are $((k+1)n+k)$-connected for each $k\geq 1$. It follows from \ref{prop:pushout_diagram_for_bar_construction_tower}, \ref{prop:pushout_diagram_for_bar_construction_tower_coaugmented}, and \ref{prop:fattened_version_of_tower} that for each $k\geq 1$ the natural maps
$
  \pi_i X\rarrow
  \pi_i(\tau_{k}\capO\circ_\capO X)
$ and
$
  \pi_i(\tau_{k+1}\capO\circ_\capO X)\rarrow
  \pi_i(\tau_{k}\capO\circ_\capO X)
$
are isomorphisms for $i\leq (k+1)n+k$ and surjections for $i=(k+1)(n+1)$. Consequently, $\pi_i X\rarrow\pi_i(\tau_1\capO\circ_\capO X)$
is an isomorphism for $i\leq 2n+1$ and a surjection for $i=2n+2$. Since $X$ is $n$-connected, it follows that $\tau_{1}\capO\circ_\capO X$ is $n$-connected.

Consider part (b). Assume that $\tau_{1}\capO\circ_\capO X$ is $n$-connected. Then it follows from the proof of part (a) above that $\pi_i X\rarrow\pi_i(\tau_1\capO\circ_\capO X)$ is an isomorphism for $i\leq 2n+1$ and a surjection for $i=2n+2$.
\end{proof}

\begin{proof}[Proof of Theorem \ref{MainTheorem}(b)]
The homotopy completion spectral sequence is the homotopy spectral sequence \cite{Bousfield_Kan} associated to the tower of fibrations (of fibrant objects) of a fibrant replacement (Definition \ref{defn:model_structure_on_towers}) of the homotopy completion tower, reindexed as a (second quadrant) homologically graded spectral sequence. Strong convergence (Remark \ref{rem:strong_convergence}) follows immediately from the first part of the proof of Theorem \ref{thm:hurewicz} by taking $n=0$.
\end{proof}

We defer the proof of the following to Section \ref{sec:homotopical_analysis_forgetful_functors}.

\begin{prop}
\label{prop:connectivity_of_smash_powers_of_maps}
Consider symmetric sequences in $\capR$-modules. Let $\function{f}{X}{Z}$ be a map between $(-1)$-connected objects in $\ModR$ (resp. $\SymSeq$). Let $m\in\ZZ$, $n\geq -1$, and $t\geq 1$. Assume that $\capR$ is $(-1)$-connected.
\begin{itemize}
\item[(a)] If $X,Z$ are flat stable cofibrant and $f$ is $n$-connected, then $X^{\wedge t}\rarrow Z^{\wedge t}$ (resp. $X^{\tensorcheck t}\rarrow Z^{\tensorcheck t}$) is $n$-connected.
\item[(b)] If $B\in\ModR^{\Sigma_t^\op}$ (resp. $B\in\SymSeq^{\Sigma_t^\op}$) is $m$-connected, $X,Z$ are positive flat stable cofibrant and $f$ is $n$-connected, then $B\Smash_{\Sigma_t}X^{\wedge t}\rarrow B\Smash_{\Sigma_t}Z^{\wedge t}$ (resp. $B\tensorcheck_{\Sigma_t}X^{\tensorcheck t}\rarrow B\tensorcheck_{\Sigma_t}Z^{\tensorcheck t}$) is $(m+n+1)$-connected.
\end{itemize}
\end{prop}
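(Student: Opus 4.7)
The plan is to prove part (a) first by induction on $t$, and then derive part (b) from part (a) together with Proposition \ref{prop:connectivity}(c). Since the arguments for $\capR$-modules and for symmetric sequences are formally identical (using $\tensorcheck$ in place of $\wedge$), I focus on the $\capR$-module case.

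For part (a), the base case $t=1$ is trivial. For the inductive step, I factor $f^{\wedge t}$ as
\[
  X^{\wedge t} \xrightarrow{\ f\wedge\id\ } Z\wedge X^{\wedge(t-1)} \xrightarrow{\ \id\wedge f^{\wedge(t-1)}\ } Z^{\wedge t}
\]
and argue that each of the two maps is $n$-connected; the composition is then $n$-connected by the long exact sequence of homotopy groups (or equivalently, by concatenating cofiber sequences). For the first map, the cofiber is $\mathrm{cofib}(f)\wedge X^{\wedge(t-1)}$; here $\mathrm{cofib}(f)$ is $n$-connected (as the cofiber of an $n$-connected map), and $X^{\wedge(t-1)}$ is $(-1)$-connected by iteratively applying Proposition \ref{prop:connectivity}(b) to the $(-1)$-connected $\capR$-module $X$. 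Flat stable cofibrancy of $X$ and $Z$ makes $\wedge$ agree with $\wedge^\LL$ on these inputs, so Proposition \ref{prop:connectivity}(b) gives $n$-connectedness of the cofiber, and hence of the map. The second map is handled symmetrically: $Z$ is $(-1)$-connected, and by induction $f^{\wedge(t-1)}$ is $n$-connected, so its cofiber is $n$-connected.

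For part (b), part (a) implies that $f^{\wedge t}$ is $n$-connected, so its cofiber $C:=\mathrm{cofib}(f^{\wedge t})$ (which carries the natural left $\Sigma_t$-action by permutation of smash factors) is $n$-connected as an underlying $\capR$-module. The desired conclusion reduces to showing that the cofiber of $B\wedge_{\Sigma_t}f^{\wedge t}$ is $(m+n+1)$-connected. I will identify this cofiber with $B\wedge_{\Sigma_t}C$, and further with the total left derived $B\wedge^\LL_{\Sigma_t}C$. Proposition \ref{prop:connectivity}(c) applied to the $m$-connected $B$ and the $n$-connected $C$ then yields connectivity $m+n+1$, as required.

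The main obstacle is precisely the identification in part (b) of the underived $B\wedge_{\Sigma_t}(-)$ with the derived $B\wedge^\LL_{\Sigma_t}(-)$ on the $\Sigma_t$-equivariant inputs $X^{\wedge t}$, $Z^{\wedge t}$, and $C$, and verifying that $B\wedge_{\Sigma_t}(-)$ preserves the cofiber sequence $X^{\wedge t}\rarrow Z^{\wedge t}\rarrow C$. This is exactly the role of the \emph{positive} flat stable cofibrancy hypothesis on $X$ and $Z$: at positive levels, the $\Sigma_t$-action on the smash powers is sufficiently free that the homotopy orbits $\wedge_{\Sigma_t}$ agree with their derived counterparts and behave exactly, as controlled by the model-theoretic machinery developed in Section \ref{sec:model_structures}. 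Once this identification is in place, the argument reduces to a direct invocation of Proposition \ref{prop:connectivity}(c).
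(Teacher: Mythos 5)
Your argument is correct, but it takes a genuinely different route from the paper's. For part (a) the paper factors $f$ functorially as a flat stable cofibration $g$ followed by an acyclic fibration $h$, disposes of $h^{\wedge t}$ by flatness, and then analyzes $g^{\wedge t}$ via the filtration $X^{\wedge t}\rarrow Q^t_1\rarrow\dotsb\rarrow Q^t_{t-1}\rarrow Y^{\wedge t}$ of Definition \ref{def:filtration_setup_modules}, using the identification of the top quotient with $(Y/X)^{\wedge t}$; you instead telescope $f^{\wedge t}$ one smash factor at a time and read off the connectivity of each step from the homotopy cofiber $\mathrm{hocofib}(f)\wedge^\LL X^{\wedge(t-1)}$ together with Proposition \ref{prop:connectivity}(b). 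For part (b) the paper repeats the filtration argument after applying $B\Smash_{\Sigma_t}(-)$, using Propositions \ref{prop:cofibration}, \ref{prop:good_properties}, and \ref{prop:cofibrations_to_mono} to control the resulting monomorphisms and quotients; you collapse this to a single application of Proposition \ref{prop:connectivity}(c) to the $n$-connected $\Sigma_t$-object $\mathrm{hocofib}(f^{\wedge t})$. Your route is shorter and more ``derived-category'' in flavor, at the cost of the identifications you flag: the cleanest way to discharge them is not to ask that $B\Smash_{\Sigma_t}(-)$ preserve the point-set cofiber sequence, but to note that the connectivity of $B\Smash_{\Sigma_t}f^{\wedge t}$ depends only on its image in the homotopy category, which is $B\wedge^\LL_{\Sigma_t}f^{\wedge t}$ because $X^{\wedge t},Z^{\wedge t}$ are cofibrant in $\ModR^{\Sigma_t}$ (Proposition \ref{prop:cofibration}) and $B\Smash_{\Sigma_t}(-)$ preserves weak equivalences between cofibrant objects (Proposition \ref{prop:good_properties}); exactness of $B\wedge^\LL_{\Sigma_t}(-)$ then identifies the homotopy cofiber of that map with $B\wedge^\LL_{\Sigma_t}\mathrm{hocofib}(f^{\wedge t})$. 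The paper's filtration approach stays entirely point-set and reuses machinery needed elsewhere; yours buys brevity in exchange for leaning on stable-homotopy-category formalities that the paper keeps implicit.
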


\begin{prop}
\label{prop:holim_and_n_connected_maps}
Let $n\in\ZZ$. If $\{A_k\}\rarrow\{B_k\}$ is a map of towers in symmetric spectra (resp. $\capR$-modules)  that is levelwise $n$-connected, then the induced map $\holim_k A_k\rarrow\holim_k B_k$  is $(n-1)$-connected.
\end{prop}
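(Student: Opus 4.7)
The plan is to reduce the statement to an application of the Milnor-type short exact sequence of Proposition \ref{prop:short_exact_sequence}. First I would form the tower of levelwise homotopy fibers $F_k := \mathrm{hofib}(A_k \rightarrow B_k)$ in symmetric spectra (resp. $\capR$-modules). Since each $A_k \rightarrow B_k$ is $n$-connected by hypothesis, the long exact sequence of homotopy groups applied levelwise shows that each $F_k$ is $(n-1)$-connected, i.e., $\pi_j F_k = 0$ for every $j \leq n-1$.

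The second step uses that $\holim$ over the tower category preserves homotopy fiber sequences, being a derived right adjoint in a stable model category; hence the homotopy fiber of $\holim_k A_k \rightarrow \holim_k B_k$ is weakly equivalent to $\holim_k F_k$. It therefore suffices to show that $\holim_k F_k$ is $(n-2)$-connected, since a map whose homotopy fiber is $(n-2)$-connected is precisely an $(n-1)$-connected map.

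Applying Proposition \ref{prop:short_exact_sequence} to the tower $\{F_k\}$ yields, for every $i$, a short exact sequence
\begin{align*}
  0 \rightarrow \lim\nolimits^1_k \pi_{i+1} F_k \rightarrow \pi_i \holim_k F_k \rightarrow \lim\nolimits_k \pi_i F_k \rightarrow 0.
\end{align*}
Since $\pi_j F_k = 0$ for $j \leq n-1$ and every $k$, the right-hand term vanishes for $i \leq n-1$ and the left-hand term vanishes for $i+1 \leq n-1$, i.e., $i \leq n-2$. Consequently $\pi_i \holim_k F_k = 0$ for $i \leq n-2$, so $\holim_k F_k$ is $(n-2)$-connected, and the long exact sequence of the fiber sequence then gives that $\holim_k A_k \rightarrow \holim_k B_k$ is $(n-1)$-connected, as claimed.

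The argument is essentially bookkeeping against the Milnor sequence, so I do not anticipate a substantial obstacle. The only small point to verify is that the paper's convention ``$f$ is $n$-connected'' coincides with the standard one (namely, that $\mathrm{hofib}(f)$ is $(n-1)$-connected), which matches the usage in Theorems \ref{thm:hurewicz} and \ref{thm:relative_hurewicz}.
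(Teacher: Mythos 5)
Your argument is correct, and it is essentially the paper's argument: the paper's entire proof is ``this follows from the short exact sequences in Proposition \ref{prop:short_exact_sequence}'', and your connectivity bookkeeping against those Milnor sequences is exactly the intended content (your reading of the connectivity conventions is also the right one). The only difference is that you route through the tower of homotopy fibers, which requires the extra (standard, and true here) input that $\holim$ over a tower commutes with homotopy fibers; one can avoid this by comparing the two Milnor sequences for $\{A_k\}$ and $\{B_k\}$ directly, using that a levelwise isomorphism of towers of groups induces an isomorphism on $\lim$ and $\lim\nolimits^1$ while a levelwise surjection induces a surjection on $\lim\nolimits^1$, and then concluding with the five lemma.
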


\begin{proof}
This follows from the short exact sequences in Proposition \ref{prop:short_exact_sequence}.
\end{proof}

\begin{proof}[Proof of Theorem \ref{thm:relative_hurewicz}]
It suffices to consider the case of left $\capO$-modules. By Theorem \ref{thm:comparing_homotopy_completion_towers} and Propositions \ref{prop:replacement_of_operads} and \ref{prop:forgetful_functor_commutes_with_holim}, we can restrict to operads $\capO$ satisfying Cofibrancy Condition \ref{CofibrancyCondition}. 

We first prove part (c), where it is enough to consider the following special case. Let $X\rarrow Y$ be a map of left $\capO$-modules between cofibrant objects in $\LtO$ such that the induced map $\tau_1\capO\circ_\capO X\rarrow \tau_1\capO\circ_\capO Y$ is an $n$-connected map between $(-1)$-connected objects. Consider the corresponding commutative diagram \eqref{eq:diagram_of_fibration_sequences_bar} in $\SymSeq$. If $k=2$, then the right-hand vertical map is $n$-connected  by Proposition \ref{prop:fattened_version_of_tower}. It follows from  Proposition \ref{prop:refined_bar_construction_calculation_for_homotopy_fiber}, Proposition \ref{prop:cofibrant_tau_1_O_algebras_and_bar_constructions}, and Propositions \ref{prop:connectivity}, \ref{prop:connectivity_of_smash_powers_of_maps}, and \ref{prop:connectivity_of_simplicial_maps_spectra} that the left-hand vertical map is $n$-connected for each $k\geq 2$. Hence by Proposition \ref{prop:pushout_diagram_for_bar_construction_tower} and induction on $k$, the middle vertical map is $n$-connected for each $k\geq 2$, and Proposition \ref{prop:holim_and_n_connected_maps} finishes the proof of part (b). 

Consider part (b). It is enough to consider the following special case. Let $X\rarrow Y$ be an $(n-1)$-connected map of left $\capO$-modules between $(-1)$-connected cofibrant objects in $\LtO$. Consider the corresponding commutative diagram \eqref{eq:diagram_of_fibration_sequences_bar} in $\SymSeq$. It follows from Propositions \ref{prop:connectivity}, \ref{prop:connectivity_of_smash_powers_of_maps}, and \ref{prop:connectivity_of_simplicial_maps_spectra} that the right-hand vertical map is $(n-1)$-connected for $k=2$, and hence by Proposition \ref{prop:fattened_version_of_tower} the induced map $\tau_1\capO\circ_\capO X\rarrow \tau_1\capO\circ_\capO Y$ is $(n-1)$-connected.

Consider part (a). Proceeding as above for part (c), we know that for each $k\geq 1$ the induced map
$
  \tau_k\capO\circ_\capO X \rarrow
  \tau_k\capO\circ_\capO Y
$
is $n$-connected, and hence the bottom horizontal map in the 
\begin{align*}
\xymatrix{
  \pi_iX\ar[r]\ar[d] & \pi_iY\ar[d]\\
  \pi_i(\tau_k\capO\circ_\capO X)\ar[r] &
  \pi_i(\tau_k\capO\circ_\capO Y)
}
\end{align*}
commutative diagram is an isomorphism for every $i<n$ and a surjection for $i=n$. Since $X,Y$ are $0$-connected by assumption, it follows from strong convergence of the homotopy completion tower (proof of Theorem \ref{MainTheorem}(a)) that the vertical maps are isomorphisms for $k\geq i$, and hence the top horizontal map is an isomorphism for every $i<n$ and a surjection for $i=n$. Part (b) implies the converse.

Consider part (d). By arguing as in the proof of Theorem \ref{thm:hurewicz}, it follows that the layers of the homotopy completion tower are $(n-1)$-connected. Hence by  Proposition \ref{prop:short_exact_sequence} the homotopy limit of this tower is $(n-1)$-connected, which finishes the proof.
\end{proof}

\subsection{Finiteness and the homotopy completion tower}

The purpose of this subsection is to prove Theorem \ref{thm:finiteness}. The following homotopy spectral sequence for a simplicial symmetric spectrum is well known; for a recent reference, see \cite[X.2.9]{EKMM} and \cite[4.3]{Jardine_generalized_etale}.

\begin{prop}
\label{prop:homotopy_spectral_sequence}
Let $Y$ be a simplicial symmetric spectrum. There is a natural homologically graded spectral sequence in the right-half plane such that
\begin{align*}
  E^2_{p,q} = H_p(\pi_q(Y))\Longrightarrow\pi_{p+q}(|Y|)
\end{align*}
Here, $\pi_q(Y)$ denotes the simplicial abelian group obtained by applying $\pi_q$ levelwise to $Y$.
\end{prop}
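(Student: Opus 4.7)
The plan is to derive the spectral sequence from the skeletal filtration of the realization, in complete analogy with the classical homotopy spectral sequence for a bisimplicial set.

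First, I would reduce to the case where $Y$ is Reedy cofibrant. Replace $Y$ functorially by a Reedy cofibrant simplicial symmetric spectrum $Y'\wequiv Y$ in the positive flat stable model structure; by Proposition \ref{prop:realization_monomorphisms_weak_equivalences} the induced map $|Y'|\rarrow|Y|$ is a weak equivalence, and $\pi_q(Y')\Iso\pi_q(Y)$ levelwise since weak equivalences are detected on true homotopy groups. This reduction ensures that the skeletal filtration of $|Y'|$ is a filtration by cofibrations, so that the associated cofiber sequences give rise to long exact sequences of true homotopy groups.

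Second, I would analyze the skeletal filtration $\Sk_0|Y'|\subset\Sk_1|Y'|\subset\Sk_2|Y'|\subset\cdots$, whose colimit is $|Y'|$. The standard pushout square
\begin{align*}
\xymatrix{
  N_p Y'\Smash\partial\Delta[p]_+\ar[r]\ar[d] & \Sk_{p-1}|Y'|\ar[d]\\
  N_p Y'\Smash\Delta[p]_+\ar[r] & \Sk_p|Y'|
}
\end{align*}
where $N_p Y'$ denotes the non-degenerate subobject at level $p$ (see the filtration of degenerate subobjects used in Proposition \ref{prop:connectivity_of_simplicial_maps_spectra}), identifies the cofiber $\Sk_p|Y'|/\Sk_{p-1}|Y'|\Iso N_p Y'\Smash(\Delta[p]/\partial\Delta[p])_+\wequiv \Sigma^p N_p Y'$. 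Reedy cofibrancy guarantees that each left-hand vertical map is a cofibration, so the cofibration sequences are homotopy cofiber sequences of symmetric spectra.

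Third, I would assemble the exact couple. The long exact sequences of true homotopy groups associated to the cofiber sequences $\Sk_{p-1}|Y'|\rarrow\Sk_p|Y'|\rarrow\Sigma^p N_p Y'$ yield a right-half-plane, homologically graded exact couple with
\begin{align*}
  E^1_{p,q} = \pi_{p+q}(\Sigma^p N_p Y')\Iso\pi_q(N_p Y'),
\end{align*}
and the $d^1$ differential is the alternating-sum boundary map of the Moore complex associated to the simplicial abelian group $\pi_q(Y')$ restricted to its non-degenerate part. Since the Moore complex and the normalized complex compute the same homology, we obtain $E^2_{p,q}=H_p(\pi_q(Y'))\Iso H_p(\pi_q(Y))$.

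Fourth, I would address convergence. Since the filtration is exhausting and each inclusion $\Sk_{p-1}|Y'|\rarrow\Sk_p|Y'|$ is a cofibration with cofiber $\Sigma^p N_p Y'$, for each fixed total degree $p+q=n$ the connectivity of $\Sigma^p N_p Y'$ tends to infinity with $p$, which gives $\pi_n|Y'|\Iso\colim_p\pi_n\Sk_p|Y'|$ and ensures that the spectral sequence converges strongly to $\pi_{p+q}|Y|$. Naturality in $Y$ follows from the functoriality of skeletal filtration, of Reedy cofibrant replacement, and of the resulting exact couple. The main technical subtlety is the identification of $d^1$ with the Moore differential, which requires tracking the attaching maps of the non-degenerate cells through the pushout above; this is the step I expect to require the most care, though it is routine given the standard simplicial identities.
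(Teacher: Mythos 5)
The paper does not actually prove this proposition; it cites \cite[X.2.9]{EKMM} and \cite[4.3]{Jardine_generalized_etale} as references for a "well known" fact. Your outline is precisely the standard construction behind those references (skeletal filtration of the realization, exact couple, identification of $E^1$ with the normalized complex of $\pi_q(Y)$), so in that sense the approach is the intended one. Two points in your sketch need repair, however.

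First, in the category of symmetric spectra there is no natural "non-degenerate subobject" $N_pY'$ splitting off $Y'_p$; the correct pushout square attaches $Y'_p\Smash\Delta[p]_+$ along the union of $L_pY'\Smash\Delta[p]_+$ and $Y'_p\Smash\partial\Delta[p]_+$, where $L_pY'$ is the latching object (the degenerate subobject $DY'_p$ in the Reedy cofibrant case), and the cofiber of $\Sk_{p-1}|Y'|\rarrow\Sk_p|Y'|$ is $(Y'_p/L_pY')\Smash S^p$. To identify $\pi_q(Y'_p/L_pY')$ with the degree-$p$ term of the normalized complex of the simplicial abelian group $\pi_q(Y')$ you must additionally check that $\pi_q(L_pY')\rarrow\pi_q(Y'_p)$ is a (split) monomorphism with image the degenerate subgroup; this follows from the simplicial identities but is a step, not a definition. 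Second, and more seriously, your convergence argument is wrong in the generality in which the proposition is stated: for an arbitrary simplicial symmetric spectrum the levels $Y'_p$ need not be bounded below, so the connectivity of $\Sigma^p(Y'_p/L_pY')$ in a fixed total degree does \emph{not} tend to infinity with $p$. The identification $\pi_n|Y'|\Iso\colim_p\pi_n\Sk_p|Y'|$ holds for a different reason (homotopy groups of symmetric spectra commute with filtered colimits along levelwise cofibrations), and strong convergence then follows from Boardman's criterion for half-plane spectral sequences with exiting differentials, since the filtration is exhaustive and bounded below. Alternatively, your connectivity argument is valid verbatim under the hypothesis that $Y$ is levelwise $m$-connected for some $m$, which is the only case the paper actually uses (Proposition \ref{prop:realzn_preserves_finiteness_properties}); but as written it does not prove the proposition as stated.
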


The following finiteness properties for realization will be useful.

\begin{prop}
\label{prop:realzn_preserves_finiteness_properties}
Let $Y$ be a simplicial symmetric spectrum. Let $m\in\ZZ$. Assume that $Y$ is levelwise $m$-connected.
\begin{itemize}
\item[(a)] If $\pi_k Y_n$ is finite for every $k,n$, then $\pi_k |Y|$ is finite for every $k$.
\item[(b)] If $\pi_k Y_n$ is a finitely generated abelian group for every $k,n$, then $\pi_k|Y|$ is a finitely generated abelian group for every $k$.
\end{itemize}
\end{prop}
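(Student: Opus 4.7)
The plan is to apply the homotopy spectral sequence of Proposition \ref{prop:homotopy_spectral_sequence} and control the $E^2$-page in each total degree.

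First, since $Y$ is levelwise $m$-connected, for every $n$ we have $\pi_q(Y_n)=0$ whenever $q\leq m$, so the simplicial abelian group $\pi_q(Y)$ is identically zero for $q\leq m$. Consequently $E^2_{p,q}=H_p(\pi_q Y)=0$ for all $q\leq m$. For any fixed total degree $i$, the only possibly nonzero entries $E^\infty_{p,q}$ with $p+q=i$ therefore lie in the finite range $0\leq p\leq i-m-1$. This simultaneously gives strong convergence of the spectral sequence in each total degree and reduces the problem to controlling a finite number of subquotients.

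Second, I would show that each $E^2_{p,q}$ inherits the hypothesized finiteness property from $\pi_q(Y)$. Recall that $H_p(\pi_q Y)$ is the $p$-th homology of the (normalized) Moore chain complex associated to the simplicial abelian group $\pi_q(Y)$, whose chain group in degree $n$ is a subgroup of $\pi_q(Y_n)$. By hypothesis, each $\pi_q(Y_n)$ is finite (resp. finitely generated), hence so is every chain group, and therefore so is each homology group $H_p(\pi_q Y)$. Subgroups and quotients of finite (resp. finitely generated) abelian groups retain the same property, so each $E^\infty_{p,q}$, being a subquotient of $E^2_{p,q}$, is finite (resp. finitely generated).

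Third, by strong convergence of the spectral sequence, $\pi_i(|Y|)$ admits a finite filtration whose associated graded is the finite collection of nonzero $E^\infty_{p,q}$ with $p+q=i$. An extension of a finite (resp. finitely generated) abelian group by a finite (resp. finitely generated) abelian group retains the property, and iterating this through the finite filtration yields that $\pi_i(|Y|)$ is finite (resp. finitely generated). This proves both (a) and (b). The main subtlety is ensuring that the spectral sequence contributes only finitely many terms in each total degree, which is precisely what the levelwise $m$-connectivity assumption guarantees; without it, one could not turn finiteness at $E^2$ into finiteness of $\pi_*(|Y|)$.
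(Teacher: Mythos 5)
Your proof is correct and follows the same route as the paper, which simply cites the homotopy spectral sequence of Proposition \ref{prop:homotopy_spectral_sequence}; you have filled in exactly the details the paper leaves implicit (vanishing for $q\leq m$ forcing only finitely many contributions in each total degree, finiteness of $H_p(\pi_q Y)$ via the Moore complex, and closure of the finiteness properties under subquotients and extensions).
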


\begin{proof}
This follows from Proposition \ref{prop:homotopy_spectral_sequence}.
\end{proof}

Recall the following Eilenberg-Moore type spectral sequences; for a recent reference, see \cite[IV.4--IV.6]{EKMM}.

\begin{prop}
\label{prop:eilenberg_moore}
Let $t\geq 1$. Let $X,Y$ be $\capR$-modules with a right (resp. left) $\Sigma_t$-action. There is a natural homologically graded spectral sequence in the right-half plane such that
\begin{align*}
  E^2_{p,q} &= \Tor^{\pi_*\capR[\Sigma_t]}_{p,q}(\pi_*X,\pi_*Y)
  \Longrightarrow\pi_{p+q}(X\wedge^\LL_{\Sigma_t} Y).
\end{align*}
Here, $\capR[\Sigma_t]$ is the group algebra spectrum and $\wedge^\LL_{\Sigma_t}$ is the total left derived functor of $\wedge_{\Sigma_t}$.
\end{prop}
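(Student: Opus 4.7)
The plan is to model $X\wedge^\LL_{\Sigma_t}Y$ by a two-sided simplicial bar construction and then apply the homotopy spectral sequence of Proposition \ref{prop:homotopy_spectral_sequence} for simplicial symmetric spectra. Concretely, after replacing $X,Y$ by appropriate cofibrant objects in the category of (right/left) $\capR[\Sigma_t]$-modules, I would consider the simplicial object $B_\bullet=\BAR(X,\capR[\Sigma_t],Y)$ whose $n$-simplices are
\begin{align*}
  B_n\Iso X\Smash\capR[\Sigma_t]^{\wedge n}\Smash_{\Sigma_t}Y,
\end{align*}
with the standard face and degeneracy operators built from the right and left $\capR[\Sigma_t]$-actions on $X$ and $Y$. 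Under the cofibrancy hypotheses, extra-degeneracy/cofibrancy arguments as in \cite{EKMM} yield a natural weak equivalence $|B_\bullet|\wequiv X\wedge^\LL_{\Sigma_t}Y$.

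Next, I would apply Proposition \ref{prop:homotopy_spectral_sequence} to the simplicial symmetric spectrum $B_\bullet$ to get a right-half-plane homologically graded spectral sequence
\begin{align*}
  E^2_{p,q}=H_p(\pi_q(B_\bullet))\Longrightarrow\pi_{p+q}|B_\bullet|\Iso\pi_{p+q}(X\wedge^\LL_{\Sigma_t}Y).
\end{align*}
The identification of the $E^2$-page with $\Tor^{\pi_*\capR[\Sigma_t]}_{p,q}(\pi_*X,\pi_*Y)$ proceeds by recognizing the simplicial graded abelian group $\pi_*(B_\bullet)$, level by level in $\pi_*$-degree, as (up to degree shifts from the grading of $\pi_*\capR[\Sigma_t]$) the standard bar resolution computing $\Tor$ over the graded ring $\pi_*\capR[\Sigma_t]$. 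Taking normalized chains yields exactly the two-sided bar complex whose homology is $\Tor$.

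The main obstacle is the Künneth-type identification at the level of $\pi_*$: in general $\pi_*(A\wedge_{\Sigma_t}^\LL\capR[\Sigma_t]\wedge_{\Sigma_t}^\LL B)$ is not simply $\pi_*A\tensor_{\pi_*\capR[\Sigma_t]}\pi_*B$. To make this identification valid simplicial level by level, I would first replace $X$ by a cell $\capR[\Sigma_t]$-module $X^c\wequiv X$ built out of free cells of the form $\capR[\Sigma_t]\Smash F_m S^{n-1}_+$, so that smashing with $\capR[\Sigma_t]^{\wedge n}$ preserves $\pi_*$-freeness over $\pi_*\capR[\Sigma_t]$ (compare \cite[IV.4--IV.6]{EKMM}), and then invoke Proposition \ref{prop:connectivity} to control convergence. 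An alternative, perhaps cleaner, route is to start from a free resolution $P_\bullet\rarrow\pi_*X$ over $\pi_*\capR[\Sigma_t]$, realize it by a simplicial $\capR[\Sigma_t]$-module $P^{\mathrm{top}}_\bullet\rarrow X$ with $\pi_*P^{\mathrm{top}}_n$ free, and then apply Proposition \ref{prop:homotopy_spectral_sequence} to $P^{\mathrm{top}}_\bullet\wedge_{\Sigma_t}Y$; the freeness then produces the $\Tor$ $E^2$-term directly. Either way, once freeness (hence flatness over $\pi_*\capR[\Sigma_t]$) is arranged simplicial-level-wise, the calculation of $E^2$ and hence strong convergence follow from standard spectral sequence bookkeeping.
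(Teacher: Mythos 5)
The paper offers no proof of this proposition at all: it is explicitly a recalled result, with the reference \cite[IV.4--IV.6]{EKMM} standing in for the argument. Your second route --- realize a free resolution of $\pi_*X$ over $\pi_*\capR[\Sigma_t]$ by a simplicial (or filtered cell) $\capR[\Sigma_t]$-module $P_\bullet\rarrow X$ with each $\pi_*P_n$ free, smash with $Y$ over $\Sigma_t$, and run the homotopy spectral sequence of Proposition \ref{prop:homotopy_spectral_sequence} --- is essentially the EKMM argument being cited, and it works: since $\capR[\Sigma_t]\Smash_{\Sigma_t}Y\Iso Y$, the levelwise homotopy of $P_\bullet\Smash_{\Sigma_t}Y$ is literally $\pi_*P_\bullet\tensor_{\pi_*\capR[\Sigma_t]}\pi_*Y$, whose homology is the desired $\Tor$, with no K\"unneth input needed. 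Two small points you should make explicit: (i) realizing the algebraic resolution topologically uses that maps out of free $\capR[\Sigma_t]$-modules are detected on homotopy groups (freeness of $\pi_*\capR[\Sigma_t]\tensor\pi_*S^n$ gives $[\capR[\Sigma_t]\Smash S^n,M]\Iso\pi_nM$), and it is technically easier to build $P_\bullet$ as a filtered cell object as in \cite{EKMM} than to verify the simplicial identities for a genuine simplicial resolution; (ii) convergence is only conditional in general, and strong convergence requires connectivity hypotheses --- harmless here since the only application (Proposition \ref{prop:useful_finiteness_properties_dervied_smash}) assumes $\capR$ is $(-1)$-connected and $X,Y$ are $m$-connected.

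Your first route, the two-sided bar construction $\BAR(X,\capR[\Sigma_t],Y)$, does not yield the stated $E^2$-term on its own: the levelwise homotopy $\pi_*(X\Smash\capR[\Sigma_t]^{\Smash n}\Smash_{\Sigma_t}Y)$ involves a smash of $X$ with $Y$ over $\capR$ and is not the algebraic bar complex unless $\pi_*X$ or $\pi_*Y$ is flat over $\pi_*\capR$. You correctly identify this obstacle and pivot to the resolution argument, so the proposal as a whole is sound; I would simply drop the bar-construction framing rather than present it as the primary plan.
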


The following proposition, which is well known to the experts, will be needed in the proof of Proposition \ref{prop:useful_finiteness_properties_dervied_smash} below; since it is a key ingredient in the proof of Theorem \ref{thm:finiteness}, and since we are unaware of an appropriate reference in literature, we give a concise homotopy theoretic proof in Section \ref{sec:homotopical_analysis_forgetful_functors}.

\begin{prop}
\label{prop:finiteness_derived_tensor_uses_bar_construction}
Let $\capA$ be any monoid object in $(\Chaincx_\ZZ,\tensor,\ZZ)$. Let $M,N$ be unbounded chain complexes over $\ZZ$ with a right (resp. left) action of $\capA$. Let $m\in\ZZ$. Assume that $\capA$ is $(-1)$-connected, $M,N$ are $m$-connected, and $H_kM,H_k\capA$ are finitely generated abelian groups for every $k$. 
\begin{itemize}
\item[(a)] If $H_kN$ is finite for every $k$, then $H_k(M\tensor^\LL_\capA N)$ is finite for every $k$.
\item[(b)] If $H_kN$ is a finitely generated abelian group for every $k$, then $H_k(M\tensor^\LL_\capA N)$ is a finitely generated abelian group for every $k$.
\end{itemize}
Here, $\tensor^\LL_\capA$ is the total left derived functor of $\tensor_\capA$.
\end{prop}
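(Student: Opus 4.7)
The plan is to realize $M\tensor^\LL_\capA N$ as the totalization of a simplicial bar construction and then combine a level-wise Künneth analysis with a convergent spectral sequence. More precisely, after replacing $M$ (or $N$, or $\capA$) by a suitable cofibrant resolution in the projective model structure on $\Chaincx_\ZZ$---which preserves both the $m$-connectivity and the finite-generation of homology, by Künneth---the two-sided simplicial bar construction
\begin{align*}
  \BAR(M,\capA,N)_n \Equal M\tensor\capA^{\tensor n}\tensor N
\end{align*}
has realization $|\BAR(M,\capA,N)|$ weakly equivalent to $M\tensor^\LL_\capA N$. This reduces the problem to a level-by-level analysis of the simplicial chain complex $\BAR(M,\capA,N)$.

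First I would estimate the homology of each simplicial level. Since $\capA$ is $(-1)$-connected, an iteration of the Künneth short exact sequence shows that $H_k(\capA^{\tensor n})$ is finitely generated abelian for every $k,n$, and vanishes for $k<0$. Combining this with $H_*M$ finitely generated and the $m$-connectivity of $M,N$, another application of Künneth over $\ZZ$ gives that $H_k(M\tensor\capA^{\tensor n})$ is finitely generated in each degree and vanishes for $k\leq m$. For part (b), yet another Künneth step shows $H_k(M\tensor\capA^{\tensor n}\tensor N)$ is finitely generated for every $k,n$. For part (a), the same calculation, but using the elementary facts that (finite)$\tensor$(f.g.) is finite and $\Tor_1(\text{finite},\text{f.g.})$ is finite, yields that $H_k(M\tensor\capA^{\tensor n}\tensor N)$ is finite for every $k,n$. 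The same two Künneth steps show that each level $\BAR(M,\capA,N)_n$ is $(2m+1)$-connected.

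Next I would apply the homotopy spectral sequence for a simplicial chain complex---the unbounded chain complex analog of Proposition~\ref{prop:homotopy_spectral_sequence}, obtained from the standard double complex associated (via Dold-Kan or the skeletal filtration of realization) to $\BAR(M,\capA,N)$:
\begin{align*}
  E^2_{p,q} \Equal H_p\bigl(H_q\BAR(M,\capA,N)\bigr) \Longrightarrow H_{p+q}|\BAR(M,\capA,N)|.
\end{align*}
Here $H_q\BAR(M,\capA,N)$ denotes the simplicial abelian group obtained by applying $H_q$ levelwise. By the connectivity bound above, $E^2_{p,q}=0$ unless $p\geq 0$ and $q\geq 2m+2$; in particular, for each fixed total degree $n$ only finitely many bidegrees $(p,q)$ with $p+q=n$ contribute, so the spectral sequence converges strongly. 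Since each $E^2_{p,q}$ is a subquotient of $H_q(M\tensor\capA^{\tensor p}\tensor N)$, the level-wise Künneth analysis shows that $E^2_{p,q}$ is finite (case (a)) or finitely generated abelian (case (b)). Passing to $E^\infty$ preserves these properties, and strong convergence assembles them into a finite filtration of $H_n(M\tensor^\LL_\capA N)$ with subquotients of the corresponding finiteness type, finishing the proof.

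The main obstacle I expect is the initial reduction step: over $\ZZ$ (as opposed to a field), arbitrary chain complexes are not flat, so some care is required to ensure that $|\BAR(M,\capA,N)|$ really computes the derived tensor $M\tensor^\LL_\capA N$ without losing the hypotheses. The standard remedy---replacing $M$ or $N$ by a cofibrant replacement in the projective model structure, or replacing $\capA$ by a cofibrant monoid---preserves both the connectivity and the finiteness-of-homology hypotheses (again by Künneth applied to the associated quasi-isomorphism), but verifying this compatibility, together with the precise statement of the spectral sequence in the chain complex setting, is where the genuine bookkeeping lies.
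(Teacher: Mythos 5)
Your proposal is correct and follows essentially the same route as the paper: cofibrantly replace so that the two-sided bar construction $|\BAR(M,\capA,N)|$ computes $M\tensor^\LL_\capA N$, establish levelwise finiteness and a uniform connectivity bound by iterated K\"unneth (the paper packages this as the Eilenberg--Moore spectral sequence of Proposition \ref{prop:eilenberg_moore_chain_complexes} with $t=1$), and conclude via the homotopy spectral sequence of the simplicial chain complex (Propositions \ref{prop:homotopy_spectral_sequence_chain_complexes} and \ref{prop:realzn_preserves_finiteness_properties_chain_complexes}). The bookkeeping you flag at the end is handled in the paper by replacing $\capA$ with a cofibrant monoid $\capA'$ and $M,N$ with cofibrant $\capA'$-modules, whose underlying complexes are then cofibrant by Theorem \ref{thm:homotopical_analysis_of_forgetful_functors_monoidal}.
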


\begin{prop}
\label{prop:useful_finiteness_properties_dervied_smash}
Let $t\geq 1$. Let $X,Y$ be $\capR$-modules with a right (resp. left) $\Sigma_t$-action. Let $m\in\ZZ$. Assume that $\capR$ is $(-1)$-connected, $X,Y$ are $m$-connected, and $\pi_k X,\pi_k\capR$ are finitely generated abelian groups for every $k$.
\begin{itemize}
\item[(a)] If $\pi_k Y$ is finite for every $k$, then $\pi_k(X\wedge^\LL_{\Sigma_t} Y)$ is finite for every $k$.
\item[(b)] If $\pi_k Y$ is a finitely generated abelian group for every $k$, then $\pi_k(X\wedge^\LL_{\Sigma_t} Y)$ is a finitely generated abelian group for every $k$.
\end{itemize}
Here, $\wedge^\LL_{\Sigma_t}$ is the total left derived functor of $\wedge_{\Sigma_t}$.
\end{prop}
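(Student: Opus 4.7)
The plan is to reduce the topological finiteness statement to the algebraic one of Proposition \ref{prop:finiteness_derived_tensor_uses_bar_construction} via the Eilenberg--Moore spectral sequence of Proposition \ref{prop:eilenberg_moore}, and then invoke a connectivity argument to finish. First I would set $\capA := \pi_*\capR[\Sigma_t]$, viewed as a graded ring, that is, as a chain complex over $\ZZ$ with zero differential, and take $M := \pi_*X$, $N := \pi_*Y$. Since $\capR[\Sigma_t]\simeq\bigvee_{\sigma\in\Sigma_t}\capR$, there is an identification $\pi_k\capR[\Sigma_t]\iso\pi_k\capR\tensor\ZZ[\Sigma_t]$, which is finitely generated for every $k$ because $\pi_k\capR$ is and $\Sigma_t$ is finite. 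Hence $\capA$ is $(-1)$-connected with $H_k\capA$ finitely generated, $M,N$ are $m$-connected, and $H_kM=\pi_kX$ is finitely generated. Under the appropriate finiteness assumption on $H_kN=\pi_kY$ coming from (a) or (b), all hypotheses of Proposition \ref{prop:finiteness_derived_tensor_uses_bar_construction} are satisfied.

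Applying that proposition yields that $H_n(M\tensor^\LL_\capA N)$ is finite (resp. a finitely generated abelian group) for every $n$. Since $H_n(M\tensor^\LL_\capA N)\iso\bigoplus_{p+q=n}\Tor^\capA_{p,q}(M,N)$ as abelian groups and each $\Tor^\capA_{p,q}(M,N)$ appears as a direct summand, it follows that each bigraded piece $\Tor^\capA_{p,q}(M,N)=E^2_{p,q}$ of the Eilenberg--Moore spectral sequence is finite (resp. finitely generated).

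Next I would establish strong convergence of the spectral sequence together with finiteness of the nonzero terms on each antidiagonal. Since $M,N$ are $m$-connected, the usual bar resolution computing $\Tor^\capA_{*,*}(M,N)$ lives in internal degrees $q\geq 2m+2$ (two tensor factors, each concentrated in degrees $\geq m+1$, over the $(-1)$-connected ring $\capA$), so $E^2_{p,q}=0$ for $p<0$ or $q<2m+2$. For fixed total degree $n$, the nonzero terms with $p+q=n$ are therefore confined to the finite range $0\leq p\leq n-2m-2$. This, together with the first-quadrant-type vanishing, implies that the spectral sequence converges strongly and that $\pi_n(X\wedge^\LL_{\Sigma_t}Y)$ admits a finite filtration whose associated graded pieces are subquotients of the finitely many nonzero $E^\infty_{p,q}$ with $p+q=n$.

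Since subquotients of finite (resp. finitely generated) abelian groups remain finite (resp. finitely generated), each $E^\infty_{p,q}$, and hence each associated graded piece of the filtration, is finite (resp. finitely generated); as a finite extension of such groups, $\pi_n(X\wedge^\LL_{\Sigma_t}Y)$ inherits the corresponding property for every $n$, which finishes the proof of both (a) and (b). The only nontrivial input is the algebraic Proposition \ref{prop:finiteness_derived_tensor_uses_bar_construction}, so the main obstacle is purely bookkeeping: verifying the vanishing line and identifying the $\Tor$ total degrees with the spectral sequence $E^2$-page, together with the observation that the group algebra $\capR[\Sigma_t]$ inherits finiteness of $\pi_*$ from $\capR$ because $\Sigma_t$ is finite.
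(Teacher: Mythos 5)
Your proposal is correct and follows exactly the route the paper intends: the paper's proof is the one-line citation of Propositions \ref{prop:eilenberg_moore} and \ref{prop:finiteness_derived_tensor_uses_bar_construction}, and your argument (apply the algebraic finiteness result to $\capA=\pi_*\capR[\Sigma_t]$, $M=\pi_*X$, $N=\pi_*Y$ with zero differentials, extract each $\Tor_{p,q}$ as a direct summand of $H_{p+q}(M\tensor^\LL_\capA N)$, and use the connectivity vanishing line to get strong convergence of the Eilenberg--Moore spectral sequence) is precisely the intended expansion of that citation.
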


\begin{proof}
Part (a) follows from Propositions \ref{prop:eilenberg_moore} and \ref{prop:finiteness_derived_tensor_uses_bar_construction}, and the proof of part (b) is similar.
\end{proof}

\begin{proof}[Proof of Theorem \ref{thm:finiteness}]
It suffices to consider the case of left $\capO$-modules. By Theorem \ref{thm:comparing_homotopy_completion_towers} and Propositions \ref{prop:replacement_of_operads} and \ref{prop:forgetful_functor_commutes_with_holim}, we can restrict to operads $\capO$ satisfying Cofibrancy Condition \ref{CofibrancyCondition}. We first prove part (a), for which it suffices to consider the following special case. Let $X$ be a cofibrant left $\capO$-module such that $\tau_1\capO\circ_\capO X$ is $0$-connected and $\pi_i(\tau_1\capO\circ_\capO X)$ is objectwise finite for every $i$. Consider the cofiber sequences
\begin{align*}
\xymatrix{
  |\BAR(i_k\capO,\capO,X)|\ar[r] & 
  |\BAR(\tau_k\capO,\capO,X)|\ar[r] & 
  |\BAR(\tau_{k-1}\capO,\capO,X)|
}
\end{align*}
in $\SymSeq$. We know by Proposition \ref{prop:fattened_version_of_tower} that $\pi_i|\BAR(\tau_{1}\capO,\capO,X)|$ is objectwise finite for every $i$, hence  by Proposition \ref{prop:refined_bar_construction_calculation_for_homotopy_fiber}, Proposition \ref{prop:cofibrant_tau_1_O_algebras_and_bar_constructions}, and Propositions \ref{prop:realzn_preserves_finiteness_properties} and \ref{prop:useful_finiteness_properties_dervied_smash}, $\pi_i|\BAR(i_k\capO,\capO,X)|$ is objectwise finite for every $i$. By Proposition \ref{prop:pushout_diagram_for_bar_construction_tower} and induction on $k$, it follows that $\pi_i|\BAR(\tau_k\capO,\capO,X)|$ is objectwise finite for every $i$ and $k$. Hence by the first part of the proof of Theorem \ref{thm:hurewicz} (by taking $n=0$) it follows easily that $\pi_i(X^\hwedge)$ is objectwise finite for every $i$. If furthermore $X$ is $0$-connected, then by Theorem \ref{MainTheorem}(a) the natural coaugmentation $X\wequiv X^\hwedge$ is a weak equivalence which finishes the proof of part (a). The proof of part (b) is similar.
\end{proof}

\section{Homotopical analysis of the forgetful functors}
\label{sec:homotopical_analysis_forgetful_functors}

The purpose of this section is to prove Theorem \ref{thm:homotopical_analysis_of_forgetful_functors} together with several closely related technical results on the homotopical properties of the forgetful functors. We will also prove Theorem \ref{thm:rigidification} and Propositions \ref{prop:replacement_of_operads}, \ref{prop:connectivity}, \ref{prop:connectivity_of_smash_powers_of_maps}, and \ref{prop:finiteness_derived_tensor_uses_bar_construction}, each of which uses constructions or results established below in Section \ref{sec:homotopical_analysis_forgetful_functors}. It will be useful to work in the following context.

\begin{assumption}
From now on in this section we assume that $(\CC,\Smash,S)$ is a closed symmetric monoidal category with all small limits and colimits. In particular, $\CC$ has an initial object $\emptyset$ and a terminal object $*$.
\end{assumption}

In some of the propositions that follow involving homotopical properties of $\capO$-algebras and left $\capO$-modules, we will explicitly assume the following.

\begin{homotopical_assumption}
\label{HomotopicalAssumption}
If $\capO$ is an operad in $\CC$, assume that 
\begin{itemize}
\item[(i)] $\CC$ is a cofibrantly generated model category in which the generating cofibrations and acyclic cofibrations have small domains \cite[2.2]{Schwede_Shipley}, and that with respect to this model structure $(\CC,\Smash,S)$ is a monoidal model category \cite[3.1]{Schwede_Shipley}; and
\item[(ii)] the following model structure exists on $\AlgO$ (resp. $\LtO$): the model structure on $\AlgO$ (resp. $\LtO$) has weak equivalences and fibrations created by the forgetful functor $U$ \eqref{eq:free_forgetful_adjunction}; i.e., the weak equivalences are the underlying weak equivalences and the fibrations are the underlying fibrations.
\end{itemize}
\end{homotopical_assumption}

\begin{rem}
The main reason for working in the generality of a monoidal model category $(\CC,\Smash)$ is because when we start off with arguments using the properties of a particular monoidal model category, say, $(\ModR,\Smash)$, we are naturally led to need the corresponding results in the diagram category $(\SymSeq,\tensorcheck)$, and in the diagram category $(\SymArray,\tensortilde)$ (e.g., Proposition \ref{prop:cofibrant_operad_symmetric_array_underlying}). So working in the generality of a monoidal model category allows us to give a single proof that works for several different contexts. For instance, we also use the results in this section in the contexts of both symmetric spectra and unbounded chain complexes, even when proving the main theorems only in the context of symmetric spectra (e.g., in the proof of Proposition \ref{prop:finiteness_derived_tensor_uses_bar_construction}).
\end{rem}

\begin{defn}\label{def:symmetric_array}
Consider symmetric sequences in $\CC$. A \emph{symmetric array} in $\CC$ is a symmetric sequence in $\SymSeq$; i.e., a functor $\functor{A}{\Sigma^\op}{\SymSeq}$. Denote by $\SymArray:=\SymSeq^{\Sigma^\op}$ the category of symmetric arrays in $\CC$ and their natural transformations. 
\end{defn}

Recall from \cite{Harper_Spectra} the following proposition.

\begin{prop}
\label{prop:coproduct_modules}
Let $\capO$ be an operad in $\CC$, $A\in\AlgO$ (resp. $A\in\LtO$), and $Y\in\CC$ (resp. $Y\in\SymSeq$). Consider any coproduct in $\AlgO$ (resp. $\LtO$) of the form $A\amalg\capO\circ(Y)$ (resp. $A\amalg(\capO\circ Y)$). There exists a symmetric sequence $\capO_A$ (resp. symmetric array $\capO_A$) and natural isomorphisms
\begin{align*}
  A\amalg\capO\circ(Y) \Iso 
  \coprod\limits_{q\geq 0}\capO_A[\mathbf{q}]
  \Smash_{\Sigma_q}Y^{\wedge q}\quad
  \Bigl(\text{resp.}\quad
  A\amalg(\capO\circ Y) \Iso 
  \coprod\limits_{q\geq 0}\capO_A[\mathbf{q}]
  \tensorcheck_{\Sigma_q}Y^{\tensorcheck q}
    \Bigr)
\end{align*}
in the underlying category $\CC$ (resp. $\SymSeq$). If $q\geq 0$, then $\capO_A[\mathbf{q}]$ is naturally isomorphic to a colimit of the form
\begin{align*}
  \capO_A[\mathbf{q}]&\Iso
  \colim\biggl(
  \xymatrix{
    \coprod\limits_{p\geq 0}\capO[\mathbf{p}\boldsymbol{+}\mathbf{q}]
    \Smash_{\Sigma_p}A^{\wedge p} & 
    \coprod\limits_{p\geq 0}\capO[\mathbf{p}\boldsymbol{+}\mathbf{q}]
    \Smash_{\Sigma_p}(\capO\circ (A))^{\wedge p}\ar@<-0.5ex>[l]^-{d_1}
    \ar@<-1.5ex>[l]_-{d_0}
  }
  \biggl),\\
  \text{resp.}\quad
  \capO_A[\mathbf{q}]&\Iso
  \colim\biggl(
  \xymatrix{
    \coprod\limits_{p\geq 0}\capO[\mathbf{p}\boldsymbol{+}\mathbf{q}]
    \Smash_{\Sigma_p}A^{\tensorcheck p} & 
    \coprod\limits_{p\geq 0}\capO[\mathbf{p}\boldsymbol{+}\mathbf{q}]
    \Smash_{\Sigma_p}(\capO\circ A)^{\tensorcheck p}\ar@<-0.5ex>[l]^-{d_1}
    \ar@<-1.5ex>[l]_-{d_0}
  }
  \biggl),
\end{align*}
in $\CC^{\Sigma_q^\op}$ (resp. $\SymSeq^{\Sigma_q^\op}$), with $d_0$ induced by operad multiplication and $d_1$ induced by the left $\capO$-action map $\function{m}{\capO\circ (A)}{A}$ (resp. $\function{m}{\capO\circ A}{A}$). 
\end{prop}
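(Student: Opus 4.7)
The plan is to first present $A$ as a reflexive coequalizer of free $\capO$-algebras using its monad structure, then exploit that coproducts preserve colimits separately in each variable to reduce the computation of $A\amalg\capO\circ(Y)$ to that of coproducts of free algebras, and finally expand the resulting free algebra on a coproduct by distributivity of smash over coproduct.

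More concretely, since $\capO$ is a monad on $\CC$ and $A$ is an $\capO$-algebra, there is a canonical reflexive coequalizer presentation of $A$ in $\AlgO$ of the form $\capO\circ(\capO\circ(A)) \rightrightarrows \capO\circ(A) \rarrow A$, with the two parallel maps induced by the operad multiplication $\capO\circ\capO\rarrow\capO$ and by the $\capO$-action map $\capO\circ(A)\rarrow A$, respectively (with common section given by the operad unit). Since coproducts preserve colimits in each variable separately, the coproduct $A\amalg\capO\circ(Y)$ in $\AlgO$ is the reflexive coequalizer of the diagram $\capO\circ(\capO\circ(A))\amalg\capO\circ(Y) \rightrightarrows \capO\circ(A)\amalg\capO\circ(Y)$. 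Now the free $\capO$-algebra functor $\capO\circ(-)$ is left adjoint to the forgetful functor (Proposition \ref{prop:basic_properties_LTO}), so it preserves coproducts, yielding natural isomorphisms $\capO\circ(X)\amalg\capO\circ(Y) \Iso \capO\circ(X\amalg Y)$ in $\AlgO$.

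I would then compute the underlying object of $\capO\circ(X\amalg Y)$ in $\CC$ by expanding the circle product and applying distributivity of smash over coproduct. Decomposing $(X\amalg Y)^{\wedge t}$ as a coproduct indexed by functions $\pi\colon\mathbf{t}\rarrow\{1,2\}$ and reindexing by $(p,q)$ with $p=|\pi^{-1}(1)|$, $q=|\pi^{-1}(2)|$, and collecting terms according to the number $q$ of $Y$-factors, one obtains a natural isomorphism
\begin{equation*}
\capO\circ(X\amalg Y) \Iso
\coprod_{q\geq 0}\Bigl(\coprod_{p\geq 0}\capO[\mathbf{p}\boldsymbol{+}\mathbf{q}]\Smash_{\Sigma_p}X^{\wedge p}\Bigr)\Smash_{\Sigma_q}Y^{\wedge q}
\end{equation*}
in $\CC$, where the inner bracketed symmetric sequence carries the evident $\Sigma_q$-action inherited from the $\Sigma_{p+q}$-action on $\capO[\mathbf{p}\boldsymbol{+}\mathbf{q}]$.

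Finally, applying this formula with $X=A$ and $X=\capO\circ(A)$ in the reflexive coequalizer above, and observing that the forgetful functor $\AlgO\rarrow\CC$ preserves reflexive coequalizers (Proposition \ref{prop:basic_properties_LTO}) while each functor $(-)\Smash_{\Sigma_q}Y^{\wedge q}$ preserves colimits, the underlying object of $A\amalg\capO\circ(Y)$ is naturally isomorphic to $\coprod_{q\geq 0}\capO_A[\mathbf{q}]\Smash_{\Sigma_q}Y^{\wedge q}$, with $\capO_A[\mathbf{q}]\in\CC^{\Sigma_q^\op}$ given precisely by the displayed reflexive coequalizer, $d_0$ induced by operad multiplication and $d_1$ by the $\capO$-action on $A$. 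The left $\capO$-module case is formally identical, with $\CC$ replaced by $\SymSeq$ and $\Smash$ by $\tensorcheck$. The main bookkeeping obstacle is tracking the $\Sigma_t$-equivariance when expanding $(X\amalg Y)^{\wedge t}$ and verifying that the symmetric group actions on $\capO[\mathbf{p}\boldsymbol{+}\mathbf{q}]$, $X^{\wedge p}$, and $Y^{\wedge q}$ assemble into precisely the stated formula; this is an algebra-level analog of the two-factor case of the identity \eqref{eq:tensor_check_calc}.
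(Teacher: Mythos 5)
Your proposal is correct and is essentially the argument given in the cited reference \cite{Harper_Spectra} (the paper itself only recalls this proposition without reproving it): present $A$ by the canonical reflexive coequalizer of free algebras, use that $-\amalg\capO\circ(Y)$ preserves reflexive coequalizers (these being connected colimits) and that the free functor preserves coproducts, and then expand $\capO\circ(X\amalg Y)$ by distributing $\Smash$ over the coproduct and regrading by the number of $Y$-factors. The only point worth making explicit is that both parallel maps in the coequalizer preserve that $q$-grading (they only compose $\capO$- and $A$-factors, never $Y$-factors), so the coequalizer splits as the stated coproduct over $q$.
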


\begin{rem}
Other possible notations for $\capO_A$ include $\U_\capO(A)$ or $\U(A)$; these are closer to the notation used in \cite{Elmendorf_Mandell, Mandell} and are not to be confused with the forgetful functors. It is interesting to note---although we will not use it in this paper---that in the context of $\capO$-algebras the symmetric sequence $\capO_A$ has the structure of an operad; it parametrizes $\capO$-algebras under $A$ and is sometimes called the enveloping operad for $A$.
\end{rem}

\begin{prop}
\label{prop:OA_commutes_with_certain_colimits}
Let $\capO$ be an operad in $\CC$ and let $q\geq 0$. Then the functor 
$
  \function{\capO_{(-)}[\mathbf{q}]}{\AlgO}{\CC^{\Sigma_q^\op}}
$ (resp. 
$
  \function{\capO_{(-)}[\mathbf{q}]}{\LtO}{\SymSeq^{\Sigma_q^\op}}
$) preserves reflexive coequalizers and filtered colimits.
\end{prop}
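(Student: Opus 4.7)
The plan is to use the explicit formula for $\capO_A[\mathbf{q}]$ provided by Proposition \ref{prop:coproduct_modules} to reduce the claim to preservation properties of very simple building-block functors, and then to invoke standard categorical facts. Recall that Proposition \ref{prop:coproduct_modules} presents $\capO_A[\mathbf{q}]$ as a reflexive coequalizer (with common section induced by the unit of the operad $\capO$) in $\CC^{\Sigma_q^\op}$ (resp.\ $\SymSeq^{\Sigma_q^\op}$) of a parallel pair whose two sides are coproducts of terms of the form $\capO[\mathbf{p}\boldsymbol{+}\mathbf{q}]\Smash_{\Sigma_p}A^{\wedge p}$ and $\capO[\mathbf{p}\boldsymbol{+}\mathbf{q}]\Smash_{\Sigma_p}(\capO\circ(A))^{\wedge p}$, functorially in $A$ (and analogously with $\tensorcheck$ in the left $\capO$-module case).

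Since colimits commute with colimits, and since colimits in $\CC^{\Sigma_q^\op}$ and $\SymSeq^{\Sigma_q^\op}$ are computed objectwise, it suffices to verify that each of the functors $A\mapsto A^{\wedge p}$ and $A\mapsto(\capO\circ(A))^{\wedge p}$, viewed as functors $\AlgO\rarrow\CC^{\Sigma_p^\op}$ (resp.\ $\LtO\rarrow\SymSeq^{\Sigma_p^\op}$), preserves reflexive coequalizers and filtered colimits. The outer coproduct over $p$ and the orbit functors $\capO[\mathbf{p}\boldsymbol{+}\mathbf{q}]\Smash_{\Sigma_p}(-)$ are left adjoints (the latter is a composite of a tensor functor and a colimit), so they preserve every colimit and present no difficulty.

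For the smash powers, I would use Proposition \ref{prop:basic_properties_LTO} to pass through the forgetful functor $U\colon\AlgO\rarrow\CC$ (resp.\ $U\colon\LtO\rarrow\SymSeq$), which preserves (and creates) both reflexive coequalizers and filtered colimits. It then remains to show that, in the underlying category, the $p$-fold smash power $(-)^{\wedge p}$ (resp.\ $(-)^{\tensorcheck p}$) preserves reflexive coequalizers and filtered colimits. For filtered colimits this is immediate, since $(\CC,\Smash,S)$ and $(\SymSeq,\tensorcheck,1)$ are closed symmetric monoidal (Proposition \ref{prop:closed_monoidal_on_symmetric_sequences}), so smashing with a fixed object is a left adjoint, and filtered colimits commute with finite tensor products. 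For reflexive coequalizers, one uses the standard fact that in any cocomplete closed symmetric monoidal category, finite tensor powers preserve reflexive coequalizers; equivalently, reflexive coequalizers are sifted and sifted colimits commute with finite tensor products. Finally, by applying the same reasoning to the coproduct-of-orbits-of-smash-powers formula for $\capO\circ(-)$ (Definitions \ref{defn:circle_product} and \ref{defn:corresponding_functor}), one concludes that $\capO\circ(-)$, and hence $(\capO\circ(-))^{\wedge p}$, preserves reflexive coequalizers and filtered colimits.

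The only piece of genuine mathematical content in the argument is the classical fact that finite tensor powers preserve reflexive coequalizers in a cocomplete closed symmetric monoidal category; everything else is essentially bookkeeping the colimit formula from Proposition \ref{prop:coproduct_modules}. I therefore expect no serious obstacle.
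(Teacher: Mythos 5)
Your argument is correct and is essentially the paper's proof unpacked: the paper simply cites Proposition \ref{prop:basic_properties_LTO} together with \cite[5.7]{Harper_Modules}, which is precisely the fact you isolate, namely that the colimit presentation of $\capO_A[\mathbf{q}]$ from Proposition \ref{prop:coproduct_modules} reduces everything to the preservation of reflexive coequalizers and filtered colimits by finite smash powers (and by the forgetful functor). The one point you gloss slightly --- that $(-)^{\wedge p}$ preserves filtered colimits --- rests on the same finality-of-the-diagonal argument as the reflexive-coequalizer case, not merely on smashing with a fixed object being a left adjoint, but this is standard and causes no gap.
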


\begin{proof}
This follows from Proposition \ref{prop:basic_properties_LTO} and \cite[5.7]{Harper_Modules}.
\end{proof}

\begin{prop}
\label{prop:relating_the_OA_constructions}
Let $\capO$ be an operad in $\CC$ and $A$ an $\capO$-algebra. For each $q\geq 0$, $\capO_{\hat{A}}[\mathbf{q}]$ is concentrated at $0$ with value $\capO_A[\mathbf{q}]$; i.e., $\capO_{\hat{A}}[\mathbf{q}]\Iso\widehat{\capO_A[\mathbf{q}]}$.
\end{prop}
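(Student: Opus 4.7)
The plan is to apply the explicit reflexive-coequalizer formula for $\capO_{\hat{A}}[\mathbf{q}]$ from Proposition \ref{prop:coproduct_modules} in the left-module case and show, using that $\hat{A}$ is concentrated at $0$, that the resulting symmetric sequence is itself concentrated at $0$, with level-$0$ value given by the corresponding algebra formula for $\capO_A[\mathbf{q}]$.

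First I would observe that being concentrated at $0$ is preserved by all the operations appearing in the formula. Using the tensor-power calculation \eqref{eq:tensor_check_calc}, $\hat{A}^{\tensorcheck p}$ is concentrated at $0$ with value $A^{\wedge p}$: only the function $\pi\colon R\to\mathbf{p}$ with $R=\emptyset$ contributes nontrivially, since each preimage $\pi^{-1}(t)$ would otherwise have to be empty. Combined with the identification $\capO\circ\hat{A}\Iso\widehat{\capO(A)}$ from \eqref{eq:circ_product_and_evaluate_at_zero}, the same argument gives that $(\capO\circ\hat{A})^{\tensorcheck p}$ is concentrated at $0$ with value $\capO(A)^{\wedge p}$. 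Smashing level-wise with $\capO[\mathbf{p}\boldsymbol{+}\mathbf{q}]\in\CC$ and then taking $\Sigma_p$-coinvariants visibly preserves concentration at $0$.

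Feeding these into the left-module formula from Proposition \ref{prop:coproduct_modules}, each summand $\capO[\mathbf{p}\boldsymbol{+}\mathbf{q}]\Smash_{\Sigma_p}\hat{A}^{\tensorcheck p}$ (respectively $\capO[\mathbf{p}\boldsymbol{+}\mathbf{q}]\Smash_{\Sigma_p}(\capO\circ\hat{A})^{\tensorcheck p}$) is concentrated at $0$ with value $\capO[\mathbf{p}\boldsymbol{+}\mathbf{q}]\Smash_{\Sigma_p}A^{\wedge p}$ (resp. $\capO[\mathbf{p}\boldsymbol{+}\mathbf{q}]\Smash_{\Sigma_p}\capO(A)^{\wedge p}$). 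Since colimits in $\SymSeq^{\Sigma_q^{\op}}$ are computed level-wise, the reflexive coequalizer defining $\capO_{\hat{A}}[\mathbf{q}]$ is itself concentrated at $0$, and its value at $\mathbf{0}$ is precisely the reflexive coequalizer formula for $\capO_A[\mathbf{q}]$ in the $\capO$-algebra case.

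It remains only to verify that the face maps $d_0$ and $d_1$ correspond correctly under the embedding $\widehat{-}\colon\CC\to\SymSeq$: $d_0$ is induced on both sides by operad multiplication, and $d_1$ by the left $\capO$-action on $\hat{A}$, which under $\Ev_0$ reduces to the $\capO$-algebra structure map $\capO(A)\to A$ that defines $d_1$ in the algebra formula; $\Sigma_q$-equivariance is immediate from functoriality. There is no real obstacle here, and in particular no appeal to further homotopical machinery is required; the only point demanding attention is parsing the notation so that the two colimit formulas line up level-wise.
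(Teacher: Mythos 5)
Your proposal is correct and follows exactly the route the paper takes: its proof of this proposition is precisely the observation that the claim "follows from Proposition \ref{prop:coproduct_modules}, together with \eqref{eq:tensor_check_calc} and \eqref{eq:circ_product_and_evaluate_at_zero}," and your write-up is the careful expansion of that one-liner. The only quibble is the phrasing of the tensor-power step — the correct reading is that each $\hat{A}[\pi^{-1}(t)]$ is $\emptyset$ unless $\pi^{-1}(t)=\emptyset$, which forces $R=\emptyset$ — but your intended argument is clearly the right one.
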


\begin{proof}
This follows from Proposition \ref{prop:coproduct_modules}, together with \eqref{eq:tensor_check_calc} and \eqref{eq:circ_product_and_evaluate_at_zero}.
\end{proof}

\begin{defn}\label{def:filtration_setup_modules}
Let $\function{i}{X}{Y}$ be a morphism in $\CC$ (resp. $\SymSeq$) and $t\geq 1$. Define $Q_0^t:=X^{\wedge t}$ (resp. $Q_0^t:=X^{\tensorcheck t}$) and $Q_t^t:=Y^{\wedge t}$ (resp. $Q_t^t:=Y^{\tensorcheck t}$). For $0<q<t$ define $Q_q^t$ inductively by the left-hand (resp. right-hand) pushout diagrams
\begin{align*}
\xymatrix{
  \Sigma_t\cdot_{\Sigma_{t-q}\times\Sigma_{q}}X^{\wedge(t-q)}
  \Smash Q_{q-1}^q\ar[d]^{i_*}\ar[r]^-{\pr_*} & Q_{q-1}^t\ar[d]\\
  \Sigma_t\cdot_{\Sigma_{t-q}\times\Sigma_{q}}X^{\wedge(t-q)}
  \Smash Y^{\wedge q}\ar[r] & Q_q^t
}\quad
\xymatrix{
  \Sigma_t\cdot_{\Sigma_{t-q}\times\Sigma_{q}}X^{\tensorcheck(t-q)}
  \tensorcheck Q_{q-1}^q\ar[d]^{i_*}\ar[r]^-{\pr_*} & Q_{q-1}^t\ar[d]\\
  \Sigma_t\cdot_{\Sigma_{t-q}\times\Sigma_{q}}X^{\tensorcheck(t-q)}
  \tensorcheck Y^{\tensorcheck q}\ar[r] & Q_q^t
}
\end{align*}
in $\CC^{\Sigma_t}$ (resp. $\SymSeq^{\Sigma_t}$). We sometimes denote $Q_q^t$ by $Q_q^t(i)$ to emphasize in the notation the map $\function{i}{X}{Y}$. The maps $\pr_*$ and $i_*$ are the obvious maps induced by $i$ and the appropriate projection maps. 
\end{defn}

The following proposition is proved in \cite{Harper_Spectra} and is closely related to a similar construction in \cite{Elmendorf_Mandell}; for other approaches to these types of filtrations compare \cite{Fresse_modules, Schwede_Shipley}.

\begin{prop}\label{prop:small_arg_pushout_modules}
Let $\capO$ be an operad in $\CC$, $A\in\AlgO$ (resp. $A\in\LtO$), and $\function{i}{X}{Y}$ in $\CC$ (resp. $\SymSeq$). Consider any pushout diagram in $\AlgO$ (resp. $\LtO$) of the form
\begin{align}\label{eq:small_arg_pushout_modules}
\xymatrix{
  \capO\circ (X)\ar[r]^-{f}\ar[d]^{\id\circ (i)} & A\ar[d]^{j}\\
  \capO\circ (Y)\ar[r] & B.
}\quad\quad\text{resp.}\quad
\xymatrix{
  \capO\circ X\ar[r]^-{f}\ar[d]^{\id\circ i} & A\ar[d]^{j}\\
  \capO\circ Y\ar[r] & B.
}
\end{align}
The pushout in \eqref{eq:small_arg_pushout_modules} is naturally isomorphic to a filtered colimit of the form
$
  B\Iso 
  \colim\bigl(
  \xymatrix@1{
    A_0\ar[r]^{j_1} & A_1\ar[r]^{j_2} & A_2\ar[r]^{j_3} & \dotsb
  }
  \bigr)
$
in the underlying category $\CC$ (resp. $\SymSeq$), with $A_0:=\capO_A[\mathbf{0}]\Iso A$ and $A_t$ defined inductively by pushout diagrams in $\CC$ (resp. $\SymSeq$) of the form
\begin{align}\label{eq:good_filtration_modules}
\xymatrix{
  \capO_A[\mathbf{t}]\Smash_{\Sigma_t}Q_{t-1}^t\ar[d]^{\id\wedge_{\Sigma_t}i_*}
  \ar[r]^-{f_*} & A_{t-1}\ar[d]^{j_t}\\
  \capO_A[\mathbf{t}]\Smash_{\Sigma_t}Y^{\wedge t}\ar[r]^-{\xi_t} & A_t
}\quad\quad\text{resp.}\quad
\xymatrix{
  \capO_A[\mathbf{t}]\tensorcheck_{\Sigma_t}Q_{t-1}^t\ar[d]^{\id\tensorcheck_{\Sigma_t}i_*}
  \ar[r]^-{f_*} & A_{t-1}\ar[d]^{j_t}\\
  \capO_A[\mathbf{t}]\tensorcheck_{\Sigma_t}Y^{\tensorcheck t}\ar[r]^-{\xi_t} & A_t
}
\end{align}
\end{prop}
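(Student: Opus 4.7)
The plan is to express $B$ as the sequential colimit $\colim_t A_t$ of the sequence constructed in the statement by the indicated pushouts in $\CC$ (resp.\ $\SymSeq$), and then identify this colimit with the pushout $B$ in $\AlgO$ (resp.\ $\LtO$). First I would recall that $B$ can be computed as the coequalizer in $\AlgO$ (resp.\ $\LtO$) of the parallel pair $\capO\circ(X)\rightrightarrows A\amalg\capO\circ(Y)$: one arrow through $f$, the other through $\id\circ(i)$. By Proposition \ref{prop:coproduct_modules}, the underlying object of $A\amalg\capO\circ(Y)$ is naturally isomorphic to $\coprod_{t\geq 0}\capO_A[\mathbf{t}]\Smash_{\Sigma_t}Y^{\wedge t}$ (resp.\ $\coprod_{t\geq 0}\capO_A[\mathbf{t}]\tensorcheck_{\Sigma_t}Y^{\tensorcheck t}$), so the task reduces to analyzing this coequalizer in the underlying category.

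The strategy is to filter by the ``number of $Y$-tensor factors not reducible via the $\capO\circ(X)$-relations,'' which is exactly what the pushout-product filtration $X^{\wedge t}=Q_0^t\to\cdots\to Q_t^t=Y^{\wedge t}$ of Definition \ref{def:filtration_setup_modules} measures $\Sigma_t$-equivariantly. Accordingly, I would define $A_\infty:=\colim_tA_t$ with each $A_t$ built by the stated pushout \eqref{eq:good_filtration_modules} in $\CC$ (resp.\ $\SymSeq$), and then show by induction on $t$ that each $A_t$ carries a canonical $\capO$-algebra (resp.\ left $\capO$-module) structure extending that on $A_0=A$, with the maps $A_{t-1}\to A_t$ respecting this structure. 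The colimit $A_\infty$ then inherits an $\capO$-algebra structure, because filtered colimits in $\AlgO$ and $\LtO$ are created in the underlying category (Proposition \ref{prop:basic_properties_LTO}).

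To identify $A_\infty$ with $B$, I would verify the pushout universal property directly. Given any $C\in\AlgO$ together with a compatible pair consisting of an $\capO$-algebra map $A\to C$ and a map $Y\to UC$ (equivalently, an $\capO$-algebra map $\capO\circ(Y)\to C$) agreeing on $\capO\circ(X)$, the key point is that for each $t$ the canonical map $\capO_A[\mathbf{t}]\Smash_{\Sigma_t}Y^{\wedge t}\to C$ factors through $A_t$, since the pushout diagram \eqref{eq:good_filtration_modules} at stage $t$ encodes precisely the identifications forced by the coequalizer relations on $Q_{t-1}^t\subset Y^{\wedge t}$ (the factors that land, under the attaching map $\capO_A[\mathbf{t}]\Smash_{\Sigma_t}Q_{t-1}^t\to A_{t-1}$, in the already-constructed $A_{t-1}$). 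Extending the given map $A=A_0\to C$ inductively through these pushouts produces a unique $\capO$-algebra map $A_\infty\to C$, and the inverse construction (the evident map $B\to A_\infty$ out of the pushout) is immediate from the universal properties.

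The main obstacle is the inductive verification at each stage $t$ that the attaching pushout correctly absorbs the coequalizer relations and that $A_t$ carries a compatible $\capO$-algebra (resp.\ left $\capO$-module) structure. This requires combining the recursive description of $\capO_{A_{t-1}}$ provided by Proposition \ref{prop:coproduct_modules} with a detailed $\Sigma_t$-equivariant analysis of the successive layers $Q_q^t/Q_{q-1}^t$ of the pushout-product filtration, which must be shown to be built up $\Sigma_t$-equivariantly from $X$-factors, $Y$-factors, and induced symmetric group actions via $\Sigma_t\cdot_{\Sigma_{t-q}\times\Sigma_q}(-)$. While this bookkeeping is entirely standard in the operadic pushout-filtration literature (cf.\ \cite{Elmendorf_Mandell, Harper_Spectra, Schwede_Shipley, Fresse_modules}), it is the technical heart of the proof, and is the reason the author cites the detailed treatment in \cite{Harper_Spectra} rather than reproducing it here.
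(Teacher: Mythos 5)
Your overall architecture---present $B$ as a coequalizer, decompose $A\amalg\capO\circ(Y)$ via Proposition \ref{prop:coproduct_modules}, and filter by the number of $Y$-factors using the $Q_q^t$---is the right one; it is essentially the argument of \cite[4.20]{Harper_Spectra} that the paper cites here, and of the generalization the paper proves in detail as Proposition \ref{prop:filtering_OA}. However, one step of your plan is false as stated. The intermediate stages $A_t$ do \emph{not} carry $\capO$-algebra (resp.\ left $\capO$-module) structures, so the inductive claim that ``each $A_t$ carries a canonical $\capO$-algebra structure extending that on $A_0=A$'' cannot be established, and the subsequent appeal to creation of filtered colimits in $\AlgO$ collapses. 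Already for $A=\capO\circ(\emptyset)$ and $X=\emptyset$ one has $Q^t_{t-1}\Iso\emptyset$ and hence $A_t\Iso\coprod_{s\leq t}\capO[\mathbf{s}]\Smash_{\Sigma_s}Y^{\wedge s}$, the $t$-truncation of the free algebra $B=\capO\circ(Y)$; this is not closed under the structure maps (two filtration-$1$ elements multiply into filtration $2$), so $A_1$ admits no algebra structure compatible with $A_1\rarrow B$. The proposition asserts an isomorphism $B\Iso\colim_tA_t$ only \emph{in the underlying category}: the pushout $B$ already exists in $\AlgO$ by Proposition \ref{prop:basic_properties_LTO}, and the content is a computation of $UB$. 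The correct argument therefore never equips the $A_t$ with algebra structures; instead one characterizes cones in $\CC$ (resp.\ $\SymSeq$) out of the coequalizer diagram computing $UB$, shows these are in natural bijection with cones out of the tower $A_0\rarrow A_1\rarrow\cdots$ (this is exactly where the pushouts \eqref{eq:good_filtration_modules} and the equivariant layers $Q^t_{t-1}\rarrow Y^{\wedge t}$ enter), and concludes that the colimits agree. The only ``multiplicativity'' that survives is that the structure maps of $B$ restrict to maps $\capO[\mathbf{k}]\Smash A_{t_1}\Smash\dotsb\Smash A_{t_k}\rarrow A_{t_1+\dotsb+t_k}$, not to structures on individual stages.

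A secondary but genuine issue: your reduction ``the task reduces to analyzing this coequalizer in the underlying category'' uses the non-reflexive pair $\capO\circ(X)\rightrightarrows A\amalg\capO\circ(Y)$. The forgetful functor creates only \emph{reflexive} coequalizers (Proposition \ref{prop:basic_properties_LTO}), so that reduction is not justified as written; you must first replace the pushout by the reflexive presentation $A\amalg(\capO\circ(X))\amalg(\capO\circ(Y))\rightrightarrows A\amalg(\capO\circ(Y))$, as in the proof of Proposition \ref{prop:filtering_OA}, before computing in $\CC$ (resp.\ $\SymSeq$). With that substitution, and with the universal-property verification replaced by the cone comparison just described, your outline becomes the standard proof.
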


We are now in a good position to prove Theorem \ref{thm:homotopical_analysis_of_forgetful_functors}.

\begin{proof}[Proof of Theorem \ref{thm:homotopical_analysis_of_forgetful_functors}]
It suffices to consider the case of left $\capO$-modules. Consider part (a). Let $\function{i}{X}{Y}$ be a generating cofibration in $\SymSeq$ with the positive flat stable model structure, and consider the pushout diagram
\begin{align}
\label{eq:gluing_on_cells_proof_of_forgetful}
\xymatrix{
  \capO\circ X\ar[r]\ar[d] & Z_0\ar[d]^{i_0}\\
  \capO\circ Y\ar[r] & Z_1
}
\end{align}
in $\LtO$. Assume $Z_0$ is cofibrant in $\LtO$; let's verify that $i_0$ is a positive flat stable cofibration in $\SymSeq$. Let $A:=Z_0$. By Proposition \ref{prop:small_arg_pushout_modules}, we know $Z_1$ is naturally isomorphic to a filtered colimit of the form 
$
  Z_1\Iso 
  \colim\bigl(
  \xymatrix@1{
    A_0\ar[r]^{j_1} & A_1\ar[r]^{j_2} & A_2\ar[r]^{j_3} & \dotsb
  }
  \bigr)
$
in the underlying category $\SymSeq$, and hence it suffices to verify each $j_t$ is a positive flat stable cofibration in $\SymSeq$. By the construction of $j_t$ in Proposition \ref{prop:small_arg_pushout_modules}, it is enough to check that each $\id\tensorcheck_{\Sigma_t}i_*$ in \eqref{eq:good_filtration_modules} is a positive flat stable cofibration in $\SymSeq$. The generating cofibrations in $\SymSeq$ with the positive flat stable model structure have cofibrant domains, and by Proposition \ref{prop:generating_cofibration} we know that $i_*$ is a cofibration between cofibrant objects in $\SymSeq^{\Sigma_t}$ with the positive flat stable model structure. We need therefore only show that $\id\tensorcheck_{\Sigma_t}i_*$ is a flat stable cofibration in $\SymSeq$. 

Suppose $\function{p}{C}{D}$ is a flat stable acyclic fibration in $\SymSeq$. We want to verify $\id\tensorcheck_{\Sigma_t}i_*$ has the left lifting property with respect to $p$. Consider any such lifting problem; we want to verify that the corresponding solid commutative diagram
\begin{align}
\label{eq:final_lifting_argument}
\xymatrix{
  Q_{t-1}^t\ar[d]_{i_*}\ar[r] & 
  \Map^\tensorcheck(\capO_A[\mathbf{t}], C)
  \ar[d]^{(*)}\\
  Y^{\tensorcheck t}\ar[r]\ar@{.>}[ur] & 
  \Map^\tensorcheck(\capO_A[\mathbf{t}], D)
}
\end{align}
in $\SymSeq^{\Sigma_t^\op}$ has a lift. We know that $i_*$ is a flat stable cofibration in $\SymSeq^{\Sigma_t^\op}$, hence it is enough to verify that $(*)$ is a flat stable acyclic fibration in $\SymSeq$. By Proposition \ref{prop:analysis_of_OA_symmetric_spectra} below, $\capO_A[\mathbf{t}]$ is flat stable cofibrant in $\SymSeq$, hence we know  that $(*)$ has the desired property by \cite[6.1]{Harper_Modules}, which finishes the argument that $i_0$ is a positive flat stable cofibration in $\SymSeq$. Consider a sequence
$
\xymatrix@1{
  Z_0\ar[r]^{i_0} & Z_1\ar[r]^{i_1} & Z_2\ar[r]^{i_2} & \dotsb
}
$
of pushouts of maps as in \eqref{eq:gluing_on_cells_proof_of_forgetful}, and let $Z_\infty:=\colim_k Z_k$. Consider the naturally occurring map $\function{i_\infty}{Z_0}{Z_\infty}$, and assume $Z_0$ is cofibrant in $\LtO$. By the argument above, we know this is a sequence of positive flat stable cofibrations in $\SymSeq$, hence $i_\infty$ is a positive flat stable cofibration in $\SymSeq$. Since every cofibration $A\rarrow B$ in $\LtO$ is a retract of a (possibly transfinite) composition of pushouts of maps as in \eqref{eq:gluing_on_cells_proof_of_forgetful}, starting with $Z_0=A$, where $A$ is assumed to be cofibrant in $\LtO$, finishes the proof of part (a). Part (b) follows from part (a) by taking $A=\capO\circ\emptyset$, together with the natural isomorphism $\capO\circ\emptyset\Iso\widehat{\capO[\mathbf{0}]}$.
\end{proof}

\subsection{Homotopical analysis of the $\capO_A$ constructions}

The purpose of this subsection is to prove the following proposition, which we used in the proof of Theorem \ref{thm:homotopical_analysis_of_forgetful_functors}. It provides a homotopical analysis of the $\capO_A$ constructions, and a key ingredient in its proof is a filtration of $\capO_A$ (Proposition \ref{prop:filtering_OA}). We will also prove Proposition \ref{prop:analysis_of_OA_monoidal_model_category} and Theorem \ref{thm:homotopical_analysis_of_forgetful_functors_monoidal}, which are analogs of Proposition \ref{prop:analysis_of_OA_symmetric_spectra} and Theorem \ref{thm:homotopical_analysis_of_forgetful_functors}, respectively. These analogous results are applicable to a general class of monoidal model categories, but at the cost of requiring stronger assumptions.

The following proposition is motivated by \cite[13.6]{Mandell}.

\begin{prop}
\label{prop:analysis_of_OA_symmetric_spectra}
Let $\capO$ be an operad in $\capR$-modules such that $\capO[\mathbf{r}]$ is flat stable cofibrant in $\ModR$ for each $r\geq 0$. If $A$ is a cofibrant $\capO$-algebra (resp. cofibrant left $\capO$-module), then $\capO_A[\mathbf{r}]$ is flat stable cofibrant in $\ModR$ (resp. $\SymSeq$) for each $r\geq 0$.
\end{prop}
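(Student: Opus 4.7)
The strategy is a cell induction on cofibrant objects in $\AlgO$ (resp. $\LtO$). Specifically, I would show that the class $\mathcal{C}$ of objects $A$ such that $\capO_A[\mathbf{r}]$ is flat stable cofibrant in $\ModR$ (resp. $\SymSeq$) for every $r\geq 0$ contains the initial object and is closed under (i) pushouts along maps of the form $\capO\circ X\rarrow\capO\circ Y$ (resp. $\capO\circ X\rarrow\capO\circ Y$) with $X\rarrow Y$ a generating cofibration of the underlying category, (ii) (possibly transfinite) composition of cofibrations, and (iii) retracts. Since every cofibrant object in $\AlgO$ (resp. $\LtO$) arises in this way from the initial object (cf. the proof of Theorem \ref{thm:homotopical_analysis_of_forgetful_functors}), this suffices.

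The base case and the easy closure properties are handled first. For the initial object $A=\widehat{\capO[\mathbf{0}]}$, the identification $A\amalg(\capO\circ Y)\Iso\capO\circ Y$ combined with Proposition \ref{prop:coproduct_modules} gives a natural isomorphism $\capO_A[\mathbf{r}]\Iso\capO[\mathbf{r}]$, which is flat stable cofibrant by hypothesis. Proposition \ref{prop:OA_commutes_with_certain_colimits} tells us $\capO_{(-)}[\mathbf{r}]$ preserves filtered colimits, so since flat stable cofibrations are closed under transfinite composition, (ii) follows; (iii) is immediate.

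The heart of the argument is (i). Suppose $B=A\amalg_{\capO\circ X}(\capO\circ Y)$ is a pushout with $A\in\mathcal{C}$ and $i\colon X\rarrow Y$ a generating cofibration. The main technical step is to establish a filtration of $\capO_B[\mathbf{r}]$ analogous to the filtration of $B$ in Proposition \ref{prop:small_arg_pushout_modules}: a natural isomorphism $\capO_B[\mathbf{r}]\Iso\colim_s M_s$, where $M_0:=\capO_A[\mathbf{r}]$ and $M_s$ is built from $M_{s-1}$ by pushouts of the form
\begin{align*}
\xymatrix{
\capO_A[\mathbf{s}\boldsymbol{+}\mathbf{r}]\tensorcheck_{\Sigma_s}Q_{s-1}^s(i)\ar[r]\ar[d]_{\id\tensorcheck_{\Sigma_s}i_*} & M_{s-1}\ar[d]\\
\capO_A[\mathbf{s}\boldsymbol{+}\mathbf{r}]\tensorcheck_{\Sigma_s}Y^{\tensorcheck s}\ar[r] & M_s
}
\end{align*}
in $\ModR$ (resp. $\SymSeq$); here $\capO_A[\mathbf{s}\boldsymbol{+}\mathbf{r}]$ carries its natural $(\Sigma_s\times\Sigma_r)^{\op}$-action obtained by restricting the $\Sigma_{s+r}^\op$-action (with the algebra case formulated using $\Smash$ in place of $\tensorcheck$). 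To derive this filtration I would apply Proposition \ref{prop:coproduct_modules} to $B\amalg(\capO\circ Z)$ in two ways: directly, giving $\coprod_r\capO_B[\mathbf{r}]\tensorcheck_{\Sigma_r}Z^{\tensorcheck r}$; and via the identification $B\amalg(\capO\circ Z)\Iso A\amalg_{\capO\circ X}(\capO\circ(Y\amalg Z))$, invoking Proposition \ref{prop:small_arg_pushout_modules} with source $Y\amalg Z$ and reorganizing along the decomposition $(Y\amalg Z)^{\tensorcheck(s+r)}\Iso\coprod_{s,r}\Sigma_{s+r}\cdot_{\Sigma_s\times\Sigma_r}Y^{\tensorcheck s}\tensorcheck Z^{\tensorcheck r}$. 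Matching the $Z^{\tensorcheck r}$-components and using that $\capO_{(-)}[\mathbf{r}]$ preserves reflexive coequalizers (Proposition \ref{prop:OA_commutes_with_certain_colimits}) yields the desired filtration.

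The main obstacle, once the filtration is in hand, is verifying that each map $M_{s-1}\rarrow M_s$ is a flat stable cofibration in $\ModR$ (resp. $\SymSeq$). I would argue exactly as in the proof of Theorem \ref{thm:homotopical_analysis_of_forgetful_functors}(a): the map $i_*\colon Q_{s-1}^s\rarrow Y^{\tensorcheck s}$ is a flat stable cofibration in $\ModR^{\Sigma_s}$ (resp. $\SymSeq^{\Sigma_s}$) by Proposition \ref{prop:generating_cofibration}, and any lifting problem against a flat stable acyclic fibration $p$ transposes via adjunction to a lifting problem against $\Map^\tensorcheck(\capO_A[\mathbf{s}\boldsymbol{+}\mathbf{r}],p)$; the inductive hypothesis that $\capO_A[\mathbf{s}\boldsymbol{+}\mathbf{r}]$ is flat stable cofibrant guarantees, via \cite[6.1]{Harper_Modules}, that this transposed map is a $\Sigma_s$-equivariant flat stable acyclic fibration, so the required lift exists. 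Hence $\id\tensorcheck_{\Sigma_s}i_*$ is a flat stable cofibration, the transfinite composition $\capO_A[\mathbf{r}]\rarrow\capO_B[\mathbf{r}]$ is a flat stable cofibration, and $\capO_B[\mathbf{r}]$ is itself flat stable cofibrant, completing the induction.
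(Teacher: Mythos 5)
Your proposal is correct and follows essentially the same route as the paper: the paper deduces the statement from Proposition \ref{prop:homotopical_analysis_of_OA_on_maps_symmetric_spectra}(a) (a relative version for cofibrations $A\rarrow B$ with $\capO_A[\mathbf{r}]$ flat stable cofibrant), whose proof is exactly your cell induction, with your filtration of $\capO_B[\mathbf{r}]$ being the paper's Proposition \ref{prop:filtering_OA} and your adjunction/lifting argument matching the one in the proof of Theorem \ref{thm:homotopical_analysis_of_forgetful_functors}. The only organizational difference is that the paper states the relative version first and then specializes to $A=\capO\circ\emptyset$ via \eqref{eq:calculating_OA_for_initial_algebras_and_modules}, whereas you run the induction directly from the initial object.
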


The following proposition is closely related to \cite[13.6]{Mandell}.

\begin{prop}
\label{prop:analysis_of_OA_monoidal_model_category}
Let $\capO$ be an operad in $\CC$. Suppose that Homotopical Assumption \ref{HomotopicalAssumption} is satisfied, and assume that $\capO[\mathbf{r}]$ is cofibrant in $\CC^{\Sigma_r^\op}$ for each $r\geq 0$.  If $A$ is a cofibrant $\capO$-algebra (resp. cofibrant left $\capO$-module), then $\capO_A[\mathbf{r}]$ is cofibrant in $\CC^{\Sigma_r^\op}$ (resp. $\SymSeq^{\Sigma_r^\op}$) for each $r\geq 0$.
\end{prop}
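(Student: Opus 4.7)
The plan is to prove this by transfinite induction on the cell structure of a cofibrant $\capO$-algebra (respectively, cofibrant left $\capO$-module) $A$, using a filtration of $\capO_A[\mathbf{q}]$ that parallels the filtration of $B$ established in Proposition \ref{prop:small_arg_pushout_modules}. Since cofibrant objects are retracts of cell objects and cofibrancy in $\CC^{\Sigma_r^\op}$ is closed under retracts, it suffices to assume $A$ is a (possibly transfinite) composition of pushouts of free maps $\capO\circ(i)$ (resp. $\capO\circ i$), with $i$ ranging over generating cofibrations of $\CC$ (resp. $\SymSeq$), starting from $A_0:=\capO\circ\emptyset\cong\widehat{\capO[\mathbf{0}]}$.

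For the base case, applying Proposition \ref{prop:coproduct_modules} with $A$ the initial $\capO$-algebra (resp. initial left $\capO$-module) yields a natural isomorphism $\capO_{A_0}[\mathbf{q}]\cong\capO[\mathbf{q}]$, which is cofibrant in $\CC^{\Sigma_q^\op}$ (resp. $\SymSeq^{\Sigma_q^\op}$) by hypothesis. For the inductive step, I would assume $\capO_{A'}[\mathbf{s}]$ is cofibrant for every $s\geq 0$ and consider a pushout
\begin{align*}
\xymatrix{
  \capO\circ(X)\ar[d]_{\id\circ(i)}\ar[r] & A'\ar[d]\\
  \capO\circ(Y)\ar[r] & A
}
\end{align*}
along a generating cofibration $\function{i}{X}{Y}$. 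The next move is to invoke Proposition \ref{prop:filtering_OA} (the filtration of $\capO_A$) to present $\capO_A[\mathbf{q}]$ as a sequential colimit in $\CC^{\Sigma_q^\op}$ starting at $\capO_{A'}[\mathbf{q}]$, whose $t$-th transition is the pushout along a map of the form $\capO_{A'}[\mathbf{q}\boldsymbol{+}\mathbf{t}]\Smash_{\Sigma_t}Q_{t-1}^t(i)\rightarrow\capO_{A'}[\mathbf{q}\boldsymbol{+}\mathbf{t}]\Smash_{\Sigma_t}Y^{\wedge t}$, where $\Sigma_q\times\Sigma_t$ acts on $\capO_{A'}[\mathbf{q}\boldsymbol{+}\mathbf{t}]$ by restriction of its $\Sigma_{q+t}$-action.

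Cofibrancy at each stage of the filtration then reduces to two inputs: first, that $Q_{t-1}^t(i)\rightarrow Y^{\wedge t}$ is a cofibration in $\CC^{\Sigma_t}$ (resp. $\SymSeq^{\Sigma_t}$), which is the standard iterated consequence of the pushout product axiom built into Homotopical Assumption \ref{HomotopicalAssumption}; and second, that smashing this cofibration over $\Sigma_t$ with the equivariant object $\capO_{A'}[\mathbf{q}\boldsymbol{+}\mathbf{t}]$ produces a cofibration in $\CC^{\Sigma_q^\op}$. Granting these, each transition map in the filtration of $\capO_A[\mathbf{q}]$ is a cofibration between cofibrant objects, and hence its sequential colimit $\capO_A[\mathbf{q}]$ is cofibrant in $\CC^{\Sigma_q^\op}$. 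Closure under transfinite composition and retracts then completes the induction.

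The main obstacle I anticipate is the second verification above: cofibrancy of $\capO_{A'}[\mathbf{q}\boldsymbol{+}\mathbf{t}]$ in $\CC^{\Sigma_{q+t}^\op}$ does not in general survive restriction along $\Sigma_q\times\Sigma_t\hookrightarrow\Sigma_{q+t}$ without additional hypotheses, so one must either read Homotopical Assumption \ref{HomotopicalAssumption} as building in a sufficiently well-behaved (projective-type) structure on each $\CC^{\Sigma_n^\op}$, or arrange the filtration of Proposition \ref{prop:filtering_OA} so that the attaching maps are assembled from free $\Sigma_q$-cells whose cofibrancy has already been verified inductively, thereby sidestepping the restriction issue. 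The companion Proposition \ref{prop:analysis_of_OA_symmetric_spectra} in the flat stable setting avoids this complication, since flat cofibrancy is preserved under restriction to subgroups, which is exactly the ingredient that makes the pushout product step go through immediately there.
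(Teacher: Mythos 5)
Your argument is essentially the paper's. The paper packages the induction over the cell structure as the relative statement Proposition \ref{prop:homotopical_analysis_of_OA_on_maps_monoidal}(a) --- for a cofibration $A\rarrow B$ with each $\capO_A[\mathbf{r}]$ cofibrant, the induced maps $\capO_A[\mathbf{r}]\rarrow\capO_B[\mathbf{r}]$ are cofibrations --- and then deduces the present proposition in one line by taking $A=\capO\circ(\emptyset)$, using \eqref{eq:calculating_OA_for_initial_algebras_and_modules} and the cofibrancy hypothesis on $\capO[\mathbf{r}]$; the inductive step there is exactly your filtration from Proposition \ref{prop:filtering_OA}, with your second verification carried out by an adjoint lifting argument against the pullback corner map
$
  \Map^\tensorcheck(Y^{\tensorcheck t},C)\rarrow
  \Map^\tensorcheck(Q_{t-1}^t,C)\times_{\Map^\tensorcheck(Q_{t-1}^t,D)}
  \Map^\tensorcheck(Y^{\tensorcheck t},D)
$,
which is an acyclic fibration by the pushout-product axiom. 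The obstacle you anticipate is not actually one: for the projective model structure on $G$-objects with $G$ finite, restriction along a subgroup $H\leq G$ preserves cofibrancy, since a free cell $G\cdot Z$ restricts to a coproduct of free $H$-cells ($G$ being a free $H$-set); hence cofibrancy of $\capO_{A'}[\mathbf{q}\boldsymbol{+}\mathbf{t}]$ in $\CC^{\Sigma_{q+t}^\op}$ does pass to $\CC^{(\Sigma_q\times\Sigma_t)^\op}$, which is precisely the one-sentence justification the paper gives at this step.
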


\begin{thm}
\label{thm:homotopical_analysis_of_forgetful_functors_monoidal}
Let $\capO$ be an operad in $\CC$. Suppose that Homotopical Assumption \ref{HomotopicalAssumption} is satisfied, and assume that $\capO[\mathbf{r}]$ is cofibrant in $\CC^{\Sigma_r^\op}$ for each $r\geq 0$.
\begin{itemize}
\item[(a)] If $\function{j}{A}{B}$ is a cofibration between cofibrant objects in $\AlgO$ (resp. $\LtO$), then $j$ is a cofibration in the underlying category $\CC$ (resp. $\SymSeq$).
\item[(b)] If $A$ is a cofibrant $\capO$-algebra (resp. cofibrant left $\capO$-module), then $A$ is cofibrant in the underlying category $\CC$ (resp. $\SymSeq$).
\end{itemize}
\end{thm}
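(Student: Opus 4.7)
The plan is to mimic the proof of Theorem \ref{thm:homotopical_analysis_of_forgetful_functors} essentially verbatim, substituting the positive flat stable model structure on $\ModR$ (resp.\ $\SymSeq$) by the model structure on $\CC$ (resp.\ $\SymSeq$) provided by Homotopical Assumption \ref{HomotopicalAssumption}, and invoking Proposition \ref{prop:analysis_of_OA_monoidal_model_category} in place of Proposition \ref{prop:analysis_of_OA_symmetric_spectra}. Part (b) is an immediate special case of part (a), applied to the cofibration $\emptyset\to A$: the initial object of $\LtO$ is $\capO\circ\emptyset\iso\widehat{\capO[\mathbf{0}]}$, whose underlying symmetric sequence is cofibrant because $\capO[\mathbf{0}]$ is cofibrant by hypothesis.

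For part (a), it suffices to treat the case of left $\capO$-modules. By the usual retract-and-transfinite-composition argument, every cofibration between cofibrant objects of $\LtO$ is built from cobase changes of generating cofibrations of the form $\capO\circ i:\capO\circ X\to\capO\circ Y$, where $i:X\to Y$ is a generating cofibration of $\SymSeq$. Since retracts and transfinite compositions of underlying cofibrations are again underlying cofibrations, the whole problem reduces to showing that every pushout
\begin{align*}
\xymatrix{
  \capO\circ X\ar[r]\ar[d] & Z_0\ar[d]^{i_0}\\
  \capO\circ Y\ar[r] & Z_1
}
\end{align*}
in $\LtO$ with $Z_0$ cofibrant has underlying map $i_0$ a cofibration in $\SymSeq$. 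Proposition \ref{prop:small_arg_pushout_modules} presents $i_0$ as a sequential colimit of pushouts of the maps $\id\tensorcheck_{\Sigma_t} i_*:\capO_{Z_0}[\mathbf{t}]\tensorcheck_{\Sigma_t} Q_{t-1}^t\to\capO_{Z_0}[\mathbf{t}]\tensorcheck_{\Sigma_t} Y^{\tensorcheck t}$, so the core task is to verify that each such $\id\tensorcheck_{\Sigma_t} i_*$ is a cofibration in $\SymSeq$.

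I would establish this by the lifting argument used in the original proof (cf.\ diagram \eqref{eq:final_lifting_argument}): testing against an arbitrary acyclic fibration $p:C\to D$ in $\SymSeq$ and dualizing via \eqref{eq:tensorcheck_mapping_sequence_adjunction} converts the problem into a lifting problem of $i_*:Q_{t-1}^t\to Y^{\tensorcheck t}$ against $\Map^\tensorcheck(\capO_{Z_0}[\mathbf{t}],p)$ in $\SymSeq^{\Sigma_t^\op}$. The map $i_*$ is a cofibration between cofibrant objects of $\SymSeq^{\Sigma_t^\op}$, produced by iterating the pushout-product of $i$ with itself along the filtration defining $Q_{t-1}^t$. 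By Proposition \ref{prop:analysis_of_OA_monoidal_model_category}, $\capO_{Z_0}[\mathbf{t}]$ is cofibrant in $\SymSeq^{\Sigma_t^\op}$, and so an enriched pushout-product (SM7-type) axiom makes $\Map^\tensorcheck(\capO_{Z_0}[\mathbf{t}],p)$ an acyclic fibration in $\SymSeq^{\Sigma_t^\op}$, producing the required lift.

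The principal obstacle is securing this enriched pushout-product axiom for $\tensorcheck_{\Sigma_t}$ over $\SymSeq^{\Sigma_t^\op}$ directly from Homotopical Assumption \ref{HomotopicalAssumption}: in the symmetric spectra setting the analogous statement is precisely \cite[6.1]{Harper_Modules}, whereas here one must derive it from scratch. This is exactly why the hypothesis must be strengthened from ``$\capO[\mathbf{r}]$ cofibrant in $\CC$'' to ``$\capO[\mathbf{r}]$ cofibrant in $\CC^{\Sigma_r^\op}$''---it ensures that $\capO_{Z_0}[\mathbf{t}]$ carries enough equivariant cofibrancy for the lift to exist when combined with the pushout-product axiom on $(\CC,\Smash)$ lifted diagram-wise to $(\SymSeq^{\Sigma_t^\op},\tensorcheck_{\Sigma_t})$. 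Once this enrichment is in place, the remainder of the argument (the transfinite composition and retract closure) is formal and identical to the proof of Theorem \ref{thm:homotopical_analysis_of_forgetful_functors}.
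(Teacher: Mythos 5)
Your overall strategy---reduce to cell attachments, filter the pushout via Proposition \ref{prop:small_arg_pushout_modules}, and substitute Proposition \ref{prop:analysis_of_OA_monoidal_model_category} for Proposition \ref{prop:analysis_of_OA_symmetric_spectra}---matches the paper, and your treatment of part (b) is fine. But the crucial step has a genuine gap: you assert that $\function{i_*}{Q_{t-1}^t}{Y^{\tensorcheck t}}$ is ``a cofibration between cofibrant objects of $\SymSeq^{\Sigma_t^\op}$, produced by iterating the pushout-product of $i$ with itself.'' In a category satisfying only Homotopical Assumption \ref{HomotopicalAssumption}, the iterated pushout-product of a cofibration is only an \emph{underlying} cofibration in $\SymSeq$; it is not in general a cofibration in the projective $\Sigma_t$-equivariant model structure. (Take $\CC=\Chaincx_{\mathbb{F}_p}$ and $i$ the map $\emptyset\rarrow Y$ with $Y$ cofibrant: then $i_*$ is $\emptyset\rarrow Y^{\tensor t}$, and $Y^{\tensor t}$ with its permutation action is almost never projectively $\Sigma_t$-cofibrant.) The symmetric-spectra proof obtains the equivariant cofibrancy of $i_*$ from Proposition \ref{prop:cofibration}, which rests on Calculation \ref{calculation_example} and the characterization in Proposition \ref{prop:cofibration_characterization}, both special to the positive flat stable structure; none of that is available here. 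Since your lifting problem lives in an equivariant diagram category, solving it requires $i_*$ to be a projective cofibration there, so the argument breaks at exactly this point.

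The paper's proof dualizes in the other variable: via \eqref{eq:tensorcheck_mapping_sequence_adjunction}, a lifting problem for $\id\tensorcheck_{\Sigma_t}i_*$ against an acyclic fibration $\function{p}{C}{D}$ becomes a lifting problem for $\emptyset\rarrow\capO_A[\mathbf{t}]$ against the pullback corner map
\begin{align*}
  \Map^\tensorcheck(Y^{\tensorcheck t}, C)\rarrow
  \Map^\tensorcheck(Q_{t-1}^t,C)
  \times_{\Map^\tensorcheck(Q_{t-1}^t,D)}
  \Map^\tensorcheck(Y^{\tensorcheck t},D)
\end{align*}
in $\SymSeq^{\Sigma_t^\op}$. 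That corner map is an underlying (hence projective) acyclic fibration because $i_*$ is an underlying cofibration and SM7 holds for $(\SymSeq,\tensorcheck)$, and the lift then exists because $\capO_A[\mathbf{t}]$ is projectively cofibrant in $\SymSeq^{\Sigma_t^\op}$ by Proposition \ref{prop:analysis_of_OA_monoidal_model_category}. This also corrects your reading of why the hypothesis is strengthened to ``$\capO[\mathbf{r}]$ cofibrant in $\CC^{\Sigma_r^\op}$'': the equivariant cofibrancy is needed on the \emph{source} of the flipped lifting problem, not to make the mapping object into an acyclic fibration (underlying cofibrancy of $\capO_A[\mathbf{t}]$ would already suffice for that).
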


\begin{proof}
It suffices to consider the case of left $\capO$-modules. Consider part (a). This follows exactly as in the proof of Theorem \ref{thm:homotopical_analysis_of_forgetful_functors}, except using Proposition \ref{prop:analysis_of_OA_monoidal_model_category} instead of Proposition \ref{prop:analysis_of_OA_symmetric_spectra}, and replacing the lifting problem  \eqref{eq:final_lifting_argument} with a lifting problem of the form
\begin{align*}
\xymatrix{
  \emptyset\ar[r]\ar[d] & 
  \Map^\tensorcheck(Y^{\tensorcheck t}, C)\ar[d]^{(*)}\\
  \capO_A[\mathbf{t}]\ar[r]\ar@{.>}[ur] & 
  \Map^\tensorcheck(Q_{t-1}^t,C)
  \times_{\Map^\tensorcheck(Q_{t-1}^t,D)}
  \Map^\tensorcheck(Y^{\tensorcheck t},D)
}
\end{align*}
in $\SymSeq^{\Sigma_t^\op}$. Part (b) follows from part (a) by taking $A=\capO\circ\emptyset$, together with the natural isomorphism $\capO\circ\emptyset\Iso\widehat{\capO[\mathbf{0}]}$, since $\capO[\mathbf{0}]$ is cofibrant in $\CC$.
\end{proof}

When working with certain arguments involving left modules over an operad, we are naturally led to replace $(\CC,\Smash,S)$ with $(\SymSeq,\tensorcheck,1)$ as the underlying closed symmetric monoidal category. In particular, we will consider symmetric sequences in $(\SymSeq,\tensorcheck,1)$, i.e., symmetric arrays (Defintion \ref{def:symmetric_array}), together with the corresponding tensor product and circle product. To avoid notational confusion, we will use $\tensortilde$ to denote the tensor product of symmetric arrays and $\circtilde$ to denote the circle product of symmetric arrays. We summarize their structure and properties in the following propositions.

\begin{prop}
Consider symmetric sequences in $\CC$. Let $A_1,\dotsc,A_t$ and $A,B$ be symmetric arrays in $\CC$. Then the tensor product $A_1\tensortilde\dotsb\tensortilde A_t\in\SymArray$ and the circle product $A\circtilde B\in\SymArray$ satisfy objectwise the natural isomorphisms
\begin{align}
  \label{eq:tensor_tilde_calc}
  (A_1\tensortilde\dotsb\tensortilde A_t)[\mathbf{r}]  
  &\Iso
  \coprod_{r_1+\dotsb +r_t=r}A_1[\mathbf{r_1}]\tensorcheck\dotsb\tensorcheck 
  A_t[\mathbf{r_t}]\underset{{\Sigma_{r_1}\times\dotsb\times
  \Sigma_{r_t}}}{\cdot}\Sigma_{r},\\
  \label{eq:circle_tilde_calc}
  (A\circtilde B)[\mathbf{r}]
  &\Iso 
  \coprod_{t\geq 0}A[\mathbf{t}]\tensorcheck_{\Sigma_t}
  (B^{\tensortilde t})[\mathbf{r}].
\end{align}
\end{prop}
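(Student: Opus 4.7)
The plan is to derive both isomorphisms as immediate instances of the corresponding calculations for symmetric sequences in an arbitrary closed symmetric monoidal category, applied now to $(\SymSeq,\tensorcheck,1)$ as the ambient category. By Proposition \ref{prop:closed_monoidal_on_symmetric_sequences}(a), $(\SymSeq,\tensorcheck,1)$ is closed symmetric monoidal with all small limits and colimits, so the formal constructions of tensor product and circle product from Definition \ref{defn:circle_product} and the discussion preceding it apply verbatim with $\SymSeq$ in place of $\CC$. Under this reinterpretation, a symmetric array is precisely a symmetric sequence in $\SymSeq$, and the operations $\tensortilde$ and $\circtilde$ on symmetric arrays are exactly the operations $\tensorcheck$ and $\circ$ from earlier, but with $\Smash$ replaced throughout by $\tensorcheck$.

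Given this identification, I would derive formula \eqref{eq:tensor_tilde_calc} as a direct instance of \eqref{eq:tensor_check_calc}: the left Kan extension defining $A_1\tensortilde\dotsb\tensortilde A_t$ along coproduct of finite sets, evaluated at $\mathbf{r}\in\Sigma$, produces the indicated coproduct over partitions $r_1+\dotsb+r_t=r$ of the $\Sigma_{r_1}\times\dotsb\times\Sigma_{r_t}$-orbits of $A_1[\mathbf{r_1}]\tensorcheck\dotsb\tensorcheck A_t[\mathbf{r_t}]$ induced up to a $\Sigma_r$-action. Similarly, I would derive formula \eqref{eq:circle_tilde_calc} from \eqref{eq:circle_product_calc}: the coend defining the circle product of symmetric sequences in $\SymSeq$ collapses, by the discreteness of $\Sigma$, to the claimed coproduct $\coprod_{t\geq 0} A[\mathbf{t}]\tensorcheck_{\Sigma_t}(B^{\tensortilde t})[\mathbf{r}]$, after commuting the evaluation at $\mathbf{r}$ past the coproduct and orbit quotient.

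The only point requiring care is to confirm that the constructions $\tensortilde$ and $\circtilde$ used here are indeed the tensor product and circle product of symmetric sequences in the closed symmetric monoidal category $\SymSeq$, i.e., that they are simply the $\tensorcheck$ and $\circ$ constructions applied one categorical level up. Once that is granted, no genuinely new work is required, and the proposition is a bookkeeping consequence of the earlier formulas applied in $(\SymSeq,\tensorcheck,1)$ rather than $(\CC,\Smash,S)$. The main obstacle, such as it is, is simply tracking the notational conventions so that the two levels of symmetric-sequence structure do not get conflated.
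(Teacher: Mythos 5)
Your proposal is correct and matches the paper's (implicit) argument exactly: the paper offers no separate proof, precisely because it regards $\tensortilde$ and $\circtilde$ as the tensor and circle products of symmetric sequences taken in the closed symmetric monoidal category $(\SymSeq,\tensorcheck,1)$, so that \eqref{eq:tensor_tilde_calc} and \eqref{eq:circle_tilde_calc} are the formulas \eqref{eq:tensor_check_calc} and \eqref{eq:circle_product_calc} applied one categorical level up. Your "only point requiring care" is likewise the paper's own stance, stated shortly afterward as the blanket remark that all earlier statements for $(\CC,\Smash,S)$ hold equally for $(\SymSeq,\tensorcheck,1)$.
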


\begin{defn}
Consider symmetric sequences in $\CC$. Let $Z\in\SymSeq$. Define $\tilde{Z}\in\SymArray$ to be the symmetric array such that $\tilde{Z}[\mathbf{t}]\in\SymSeq^{\Sigma_t^\op}$ is concentrated at $0$ with value $Z[\mathbf{t}]$; i.e., $\tilde{Z}[\mathbf{t}]:=\widehat{Z[\mathbf{t}]}$ and hence $\tilde{Z}[\mathbf{t}][\mathbf{0}]=Z[\mathbf{t}]$.
\end{defn}

The adjunction immediately below Definition \ref{defn:hat_construction_embed_at_zero} induces objectwise the adjunction
$
\xymatrix@1{
  \tilde{-}\colon\SymSeq\ar@<0.5ex>[r] & 
  \SymArray:\Ev_0\ar@<0.5ex>[l]
}
$
with left adjoint on top and $\Ev_0$ the functor defined objectwise by $\Ev_0(B)[\mathbf{t}]:=\Ev_0(B[\mathbf{t}])=B[\mathbf{t}][\mathbf{0}]$; i.e., $\Ev_0(B)=B[-][\mathbf{0}]$. Note that $\tilde{-}$ embeds $\SymSeq$ in $\SymArray$ as a full subcategory.

\begin{prop}
Consider symmetric sequences in $\CC$. Let $\capO,A,B\in\SymSeq$ and $X,Y\in\SymArray$. There are natural isomorphisms
\begin{align}
  \label{eq:tilde_respects_monoidal_products}
  &\widetilde{A\tensorcheck B}\Iso\tilde{A}\tensortilde\tilde{B},
  \quad\quad
  \widetilde{A\circ B}\Iso\tilde{A}\circtilde\tilde{B},
  \quad\quad
  \Ev_0(\tilde{\capO}\circtilde Y)\Iso\capO\circ\Ev_0(Y),\\
  \label{eq:evaluate_at_zero_respects_monoidal_products}
  &\Ev_0(X\tensortilde Y)\Iso \Ev_0(X)\tensorcheck\Ev_0(Y),
  \quad\quad
  \Ev_0(X\circtilde Y)\Iso \Ev_0(X)\circ\Ev_0(Y).
\end{align}
\end{prop}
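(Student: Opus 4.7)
The plan is to observe that $(\SymArray,\tensortilde)$ and $(\SymArray,\circtilde)$ stand in exactly the same formal relationship to $(\SymSeq,\tensorcheck)$ and $(\SymSeq,\circ)$ as the latter do to $(\CC,\Smash)$ and $(\CC,\Smash)$: formulas \eqref{eq:tensor_tilde_calc} and \eqref{eq:circle_tilde_calc} are precisely the analogs of \eqref{eq:tensor_check_calc} and \eqref{eq:circle_product_calc} with the base closed symmetric monoidal category replaced by $(\SymSeq,\tensorcheck,1)$, and $\tilde{-}\colon\SymSeq\rarrow\SymArray$ plays the role of $\hat{-}\colon\CC\rarrow\SymSeq$. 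Accordingly, all five identities will follow from the defining formulas combined with the preservation properties of $\hat{-}$ and $\Ev_0$.

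First I would treat the base case $\widehat{X\Smash Y}\Iso\hat{X}\tensorcheck\hat{Y}$ for $X,Y\in\CC$: by \eqref{eq:tensor_check_calc}, the summand $A_1[\mathbf{r_1}]\Smash A_2[\mathbf{r_2}]$ vanishes unless $r_1=r_2=0$ (since both $\hat{X}$ and $\hat{Y}$ are supported at level $0$), so $(\hat{X}\tensorcheck\hat{Y})[\mathbf{r}]=\emptyset$ for $r>0$ and equals $X\Smash Y$ for $r=0$, agreeing with $\widehat{X\Smash Y}$. Applying this identity levelwise in the outer $\Sigma^{\op}$ variable, together with the fact that $\hat{-}$ is a left adjoint and hence preserves coproducts and $\Sigma_{r_1}\times\Sigma_{r_2}$-orbits, I can compare \eqref{eq:tensor_check_calc} applied to $A\tensorcheck B$ with \eqref{eq:tensor_tilde_calc} applied to $\tilde{A}\tensortilde\tilde{B}$ to obtain $\widetilde{A\tensorcheck B}\Iso\tilde{A}\tensortilde\tilde{B}$.

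For the circle-product identity $\widetilde{A\circ B}\Iso\tilde{A}\circtilde\tilde{B}$, I would first iterate the tensor identity just established to obtain $\tilde{B}^{\tensortilde t}\Iso\widetilde{B^{\tensorcheck t}}$ for every $t\geq 0$, then expand both sides via \eqref{eq:circle_product_calc} and \eqref{eq:circle_tilde_calc}; preservation of $\Sigma_t$-coinvariants and the coproduct over $t$ by $\hat{-}$ finishes the identification.

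For the three $\Ev_0$ identities, the key calculation is that $(Z\tensorcheck Z')[\mathbf{0}]\Iso Z[\mathbf{0}]\Smash Z'[\mathbf{0}]$, which is immediate from \eqref{eq:tensor_check_calc} at $R=\emptyset$, since only the summand with $r_1=r_2=0$ can contribute. Applying this levelwise to \eqref{eq:tensor_tilde_calc} gives $\Ev_0(X\tensortilde Y)\Iso\Ev_0(X)\tensorcheck\Ev_0(Y)$, and iterating yields $\Ev_0(Y^{\tensortilde t})\Iso\Ev_0(Y)^{\tensorcheck t}$ for every $t\geq 0$; plugging this into \eqref{eq:circle_tilde_calc} and comparing with \eqref{eq:circle_product_calc} produces $\Ev_0(X\circtilde Y)\Iso\Ev_0(X)\circ\Ev_0(Y)$. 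Finally, since $\Ev_0(\tilde{\capO})\Iso\capO$ by inspection, specializing $X=\tilde{\capO}$ gives $\Ev_0(\tilde{\capO}\circtilde Y)\Iso\capO\circ\Ev_0(Y)$. The only real obstacle is the bookkeeping needed to keep track of how the indexing coproducts, orbits, and smash products interact across the two layers of symmetric-sequence structure; there are no substantive difficulties, and the argument is essentially a formal consequence of \eqref{eq:tensor_check_calc} and \eqref{eq:circle_product_calc}.
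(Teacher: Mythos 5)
Your argument is correct, and it is exactly the intended one: the paper states this proposition without proof as a routine consequence of the defining formulas, and the analogous statement for $\hat{-}$ and $\Ev_0$ in \eqref{eq:circ_product_and_evaluate_at_zero} is proved in the paper by precisely the same appeal to \eqref{eq:circle_product_calc} and \eqref{eq:tensor_check_calc}. Your reduction to the base identities $\hat{X}\tensorcheck\hat{Y}\Iso\widehat{X\Smash Y}$ and $(Z\tensorcheck Z')[\mathbf{0}]\Iso Z[\mathbf{0}]\Smash Z'[\mathbf{0}]$, combined with the fact that $\hat{-}$ and evaluation preserve the relevant colimits (coproducts and orbit/coinvariant constructions), is the standard bookkeeping the authors have in mind.
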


\begin{prop}
Consider symmetric sequences in $\CC$.
\begin{itemize}
\item [(a)] $(\SymArray,\tensortilde,\tilde{1})$ is a closed symmetric monoidal category with all small limits and colimits. The unit for $\tensortilde$, denoted ``\,$\tilde{1}$'', is the symmetric array concentrated at $0$ with value the symmetric sequence $1$.
\item [(b)] $(\SymArray,\circtilde,\tilde{I})$ is a closed monoidal category with all small limits and colimits. The unit for $\circtilde$, denoted ``\,$\tilde{I}$'', is the symmetric array concentrated at $1$ with value the symmetric sequence $1$. Circle product is not symmetric.
\end{itemize}
\end{prop}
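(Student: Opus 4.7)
The plan is to recognize that $\SymArray$ is by definition $\SymSeq^{\Sigma^\op}$, i.e.\ the category of symmetric sequences in the category $\SymSeq$, and therefore that this proposition is a direct application of Proposition \ref{prop:closed_monoidal_on_symmetric_sequences} with the closed symmetric monoidal category $(\CC,\Smash,S)$ replaced by $(\SymSeq,\tensorcheck,1)$. Part (a) of Proposition \ref{prop:closed_monoidal_on_symmetric_sequences} guarantees that $(\SymSeq,\tensorcheck,1)$ is itself a closed symmetric monoidal category with all small limits and colimits, so the construction of symmetric sequences and of the two monoidal structures (tensor product and circle product) goes through verbatim one level up.

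More concretely, first I would verify that under this substitution the tensor product $\tensortilde$ of symmetric sequences in $(\SymSeq,\tensorcheck,1)$ (defined as the left Kan extension of the objectwise $\tensorcheck$ along coproduct of finite sets, as in the definition preceding \eqref{eq:tensor_check_calc}) agrees, via the coproduct-of-orbits calculation, with the formula \eqref{eq:tensor_tilde_calc}, and similarly that the circle product $\circtilde$ agrees with the formula \eqref{eq:circle_tilde_calc}. Both are instances of \eqref{eq:tensor_check_calc} and \eqref{eq:circle_product_calc} applied in $(\SymSeq,\tensorcheck,1)$, so nothing new must be checked beyond unwinding definitions. The units $\tilde{1}$ and $\tilde{I}$ are then, by definition, the symmetric sequences concentrated at $0$ and $1$ (respectively) with value the monoidal unit of $\SymSeq$, namely $1\in\SymSeq$, exactly as stated.

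Next I would handle the formal categorical properties. Since $\SymArray$ is a functor category into a complete and cocomplete category $\SymSeq$, all small limits and colimits exist and are computed objectwise. The closed structures on $(\SymArray,\tensortilde,\tilde{1})$ and $(\SymArray,\circtilde,\tilde{I})$ follow from the analogs of \eqref{eq:tensorcheck_mapping_sequence_adjunction} and \eqref{eq:circle_mapping_sequence_adjunction} by defining the respective mapping objects via ends in $\SymSeq$ over $\Sigma$, using the internal hom of $(\SymSeq,\tensorcheck,1)$ guaranteed by Proposition \ref{prop:closed_monoidal_on_symmetric_sequences}(a). Symmetry of $\tensortilde$ is inherited from symmetry of $\tensorcheck$ together with the symmetric-monoidal structure on $(\Sigma,\amalg)$ used in the Kan extension, and associativity and unit laws in both cases follow by the same formal manipulations as in the proof of Proposition \ref{prop:closed_monoidal_on_symmetric_sequences}. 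Non-symmetry of $\circtilde$ is inherited from non-symmetry of $\circ$ in $\SymSeq$.

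There is essentially no genuine obstacle here: the only minor point requiring attention is the bookkeeping of the two ``levels'' of symmetric-sequence structure (the outer $\Sigma$-action in $\SymArray$ and the inner $\Sigma$-action inside each $A[\mathbf{r}]\in\SymSeq$), which must be kept carefully separate when verifying the formulas \eqref{eq:tensor_tilde_calc} and \eqref{eq:circle_tilde_calc} and when checking compatibility of the natural isomorphisms \eqref{eq:tilde_respects_monoidal_products} and \eqref{eq:evaluate_at_zero_respects_monoidal_products}. Once this bookkeeping is in place, the proof is a direct appeal to Proposition \ref{prop:closed_monoidal_on_symmetric_sequences} applied in the closed symmetric monoidal category $(\SymSeq,\tensorcheck,1)$.
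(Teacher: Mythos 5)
Your proposal is correct and is exactly the argument the paper intends: the paper states this proposition without proof precisely because it regards $\SymArray=\SymSeq^{\Sigma^\op}$ as the category of symmetric sequences in $(\SymSeq,\tensorcheck,1)$, so that the statement is Proposition \ref{prop:closed_monoidal_on_symmetric_sequences} applied one level up, with the closed symmetric monoidal structure on $(\SymSeq,\tensorcheck,1)$ itself supplied by Proposition \ref{prop:closed_monoidal_on_symmetric_sequences}(a). Your attention to keeping the inner and outer $\Sigma$-actions separate is the right (and only) point of care.
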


Since all of the statements and constructions in earlier sections that were previously described in terms of $(\CC,\Smash,S)$ are equally true for $(\SymSeq,\tensorcheck,1)$, we will cite and use the appropriate statements and constructions without further comment. 

\begin{prop}
Consider symmetric sequences in $\CC$.
\begin{itemize}
\item[(a)] If $\capO$ is an operad in $\CC$, then $\tilde{\capO}$ is an operad in $\SymSeq$.
\item[(b)] If $A$ is a left $\capO$-module, then $\tilde{A}$ is a left $\tilde{\capO}$-module.
\item[(c)] There are adjunctions
\begin{align}
\label{eq:evaluate_at_zero_adjunction}
\xymatrix{
  \SymArray\ar@<0.5ex>[r]^-{\tilde{\capO}\circtilde-} & \Lt_{\tilde{\capO}},\ar@<0.5ex>[l]^-{U}
}\quad
\xymatrix{
  \LtO\ar@<0.5ex>[r]^-{\tilde{-}} & 
  \Lt_{\tilde{\capO}}\ar@<0.5ex>[l]^-{\Ev_0},
}\quad
\xymatrix{
  \Operad(\CC)\ar@<0.5ex>[r]^-{\tilde{-}} & 
  \Operad(\SymSeq)\ar@<0.5ex>[l]^-{\Ev_0}
}
\end{align}
with left adjoints on top, $U$ the forgetful functor and $\Ev_0$ the functor defined objectwise by $\Ev_0(B)[\mathbf{t}]:=\Ev_0(B[\mathbf{t}])=B[\mathbf{t}][\mathbf{0}]$, i.e., $\Ev_0(B)=B[-][\mathbf{0}]$. 
\end{itemize}
Here, $\Operad(\CC)$ denotes the category of operads in $\CC$, and similarly for $\Operad(\SymSeq)$.
\end{prop}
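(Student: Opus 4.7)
The plan is to derive parts (a) and (b) from the strong monoidality of $\tilde{-}$ recorded in \eqref{eq:tilde_respects_monoidal_products}, and then to obtain part (c) by restricting the underlying adjunction $\tilde{-} \dashv \Ev_0$ between $\SymSeq$ and $\SymArray$ to the relevant subcategories of modules and operads, using \eqref{eq:tilde_respects_monoidal_products} and \eqref{eq:evaluate_at_zero_respects_monoidal_products}.

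For parts (a) and (b), first I would observe that the natural isomorphisms $\widetilde{A \circ B} \iso \tilde{A} \circtilde \tilde{B}$, together with the tautological identification of $\tilde{I}$ with $\widetilde{I}$ (both being concentrated at $1$ with value the monoidal unit), exhibit $\functor{\tilde{-}}{\SymSeq}{\SymArray}$ as a strong monoidal functor from $(\SymSeq,\circ,I)$ to $(\SymArray,\circtilde,\tilde{I})$. Part (a) then follows from the general principle that strong monoidal functors send monoid objects to monoid objects: the operad multiplication $m\colon \capO\circ\capO \to \capO$ transports to $\tilde{\capO}\circtilde\tilde{\capO} \iso \widetilde{\capO\circ\capO} \xrightarrow{\tilde{m}} \tilde{\capO}$, the operad unit $I\to\capO$ transports to $\tilde{I}\to\tilde{\capO}$, and associativity and unit axioms are verified by applying $\tilde{-}$ to the corresponding commutative diagrams for $\capO$ and invoking naturality of the monoidality isomorphisms. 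Part (b) proceeds identically: the action map $\capO\circ A \to A$ transports via the same isomorphism to $\tilde{\capO}\circtilde\tilde{A} \to \tilde{A}$, and the module axioms are inherited by functoriality.

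For the three adjunctions of part (c), the first is an instance of the standard free-forgetful adjunction of Proposition \ref{prop:basic_properties_LTO} applied in the underlying closed symmetric monoidal category $(\SymSeq,\tensorcheck,1)$, once part (a) has been used to confirm that $\tilde{\capO}$ is an operad in $\SymSeq$. For the second adjunction I would check that the ambient adjunction $\tilde{-} \dashv \Ev_0\colon \SymSeq \rightleftarrows \SymArray$ lifts to left modules: part (b) ensures $\tilde{-}$ takes values in $\Lt_{\tilde{\capO}}$, while the isomorphism $\Ev_0(\tilde{\capO}\circtilde Y) \iso \capO\circ\Ev_0(Y)$ in \eqref{eq:tilde_respects_monoidal_products} equips $\Ev_0(Y)$ with a canonical left $\capO$-action whenever $Y$ is a left $\tilde{\capO}$-module. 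The required bijection of hom sets is then obtained by restricting the underlying hom-set bijection to action-preserving maps, with naturality of \eqref{eq:tilde_respects_monoidal_products} and \eqref{eq:evaluate_at_zero_respects_monoidal_products} doing the verification in each direction. The third adjunction goes through by the same strategy, using now that both $\tilde{-}$ and $\Ev_0$ are strong monoidal with respect to circle product, so each sends operads to operads and operad maps to operad maps.

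The only real obstacle is bookkeeping: verifying in parts (b) and (c) that the hom-set bijection supplied by the underlying adjunction restricts correctly to the subsets of equivariant or operad morphisms. This amounts to chasing \eqref{eq:tilde_respects_monoidal_products} and \eqref{eq:evaluate_at_zero_respects_monoidal_products} through the relevant equivariance and compatibility squares, with no technical surprises beyond careful diagram-chasing.
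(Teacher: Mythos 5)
Your proposal is correct, and it is exactly the argument the paper intends: the proposition is stated without proof, as an immediate consequence of the strong monoidality of $\tilde{-}$ (and of $\Ev_0$) with respect to circle products recorded in \eqref{eq:tilde_respects_monoidal_products} and \eqref{eq:evaluate_at_zero_respects_monoidal_products}, together with the remark that the general results for $(\CC,\Smash,S)$ (e.g., the free--forgetful adjunction of Proposition \ref{prop:basic_properties_LTO}) apply verbatim to $(\SymSeq,\tensorcheck,1)$. Your write-up simply supplies the omitted bookkeeping, and it does so correctly.
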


The following two propositions are exercises left to the reader. They will be needed in the proof of Proposition \ref{prop:analysis_of_OA_for_coproducts} below.

\begin{prop}
\label{prop:tilde_commutes_with_OA_constructions}
Let $\capO$ be an operad in $\CC$ and $A$ a left $\capO$-module. For each $q,r\geq 0$, $\tilde{\capO}_{\tilde{A}}[\mathbf{q}][\mathbf{r}]$ is concentrated at $0$ with value $\capO_A[\mathbf{q}][\mathbf{r}]$ (see \eqref{prop:coproduct_modules}); i.e., $\tilde{\capO}_{\tilde{A}}[\mathbf{q}]\Iso\widetilde{\capO_A[\mathbf{q}]}$.
\end{prop}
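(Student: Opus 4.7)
The plan is to apply Proposition~\ref{prop:coproduct_modules} twice: once to the operad $\capO$ with left $\capO$-module $A$ in the base $(\CC,\Smash,S)$, and once to the operad $\tilde\capO$ with left $\tilde\capO$-module $\tilde A$ in the base $(\SymSeq,\tensorcheck,1)$. The former presents $\capO_A[\mathbf{q}]\in\SymSeq^{\Sigma_q^\op}$ as a reflexive coequalizer of coproducts of terms $\capO[\mathbf{p+q}]\Smash_{\Sigma_p}A^{\tensorcheck p}$ and $\capO[\mathbf{p+q}]\Smash_{\Sigma_p}(\capO\circ A)^{\tensorcheck p}$; the latter presents $\tilde\capO_{\tilde A}[\mathbf{q}]\in\SymArray^{\Sigma_q^\op}$ by the same formula with $\tilde\capO,\tilde A,\tensortilde,\circtilde$ inserted in the appropriate slots. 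I will apply $\function{\widetilde{-}}{\SymSeq}{\SymArray}$ to the first presentation and verify termwise agreement with the second, which will give $\tilde\capO_{\tilde A}[\mathbf{q}]\Iso\widetilde{\capO_A[\mathbf{q}]}$ in $\SymArray$, exactly the concentration statement.

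Two ingredients make this work. First, the functor $\widetilde{-}$ is a left adjoint (with right adjoint $\Ev_0$, the adjunction immediately following Definition~\ref{defn:hat_construction_embed_at_zero}), hence preserves all colimits, in particular coproducts and reflexive coequalizers. Second, by \eqref{eq:tilde_respects_monoidal_products} the functor $\widetilde{-}$ carries $\tensorcheck$ to $\tensortilde$ and $\circ$ to $\circtilde$, so iterating yields natural isomorphisms $\widetilde{A^{\tensorcheck p}}\Iso\tilde A^{\tensortilde p}$ and $\widetilde{(\capO\circ A)^{\tensorcheck p}}\Iso(\tilde\capO\circtilde\tilde A)^{\tensortilde p}$. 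The mixed factor $\capO[\mathbf{p+q}]\Smash_{\Sigma_p}(-)$ is rewritten as $\widehat{\capO[\mathbf{p+q}]}\tensorcheck_{\Sigma_p}(-)$ so that everything lives in $\SymSeq$, and the routine identity $\widehat{X}\tensorcheck\widehat{Y}\Iso\widehat{X\Smash Y}$ together with the identification of $\widetilde{\widehat{\capO[\mathbf{p+q}]}}$ with $\tilde\capO[\mathbf{p+q}]$ embedded via the analogous mixed-tensor convention in $\SymArray$ then matches each term of the $\widetilde{-}$-image with the corresponding term in the second presentation.

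The main obstacle will be the bookkeeping needed to keep the two levels of symmetric-sequence structure straight: $\capO_A$ is a symmetric array in $\CC$, whereas $\tilde\capO_{\tilde A}$ is a symmetric array in $\SymSeq$, so $\tilde\capO_{\tilde A}[\mathbf{q}][\mathbf{r}]$ is itself a symmetric sequence while $\capO_A[\mathbf{q}][\mathbf{r}]$ lies in $\CC$. The ``concentrated at $0$'' assertion is precisely the claim that this outer symmetric-sequence layer is trivial away from degree $0$, and it is forced by the fact that every building block appearing in the colimit for $\tilde\capO_{\tilde A}[\mathbf{q}]$ factors through the image of $\widetilde{-}$. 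Once this is verified on representatives, evaluating the resulting isomorphism $\tilde\capO_{\tilde A}[\mathbf{q}]\Iso\widetilde{\capO_A[\mathbf{q}]}$ at $\mathbf{r}$ yields the displayed identification.
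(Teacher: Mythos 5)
The paper states this proposition without proof (it is one of the ``exercises left to the reader''), so there is no argument to compare against; your proposal is correct and is the intended one: both $\capO_A[\mathbf{q}]$ and $\tilde{\capO}_{\tilde{A}}[\mathbf{q}]$ admit the colimit presentation of Proposition~\ref{prop:coproduct_modules} (the latter taken over the base $(\SymSeq,\tensorcheck,1)$), and $\widetilde{-}$, being a colimit-preserving left adjoint that converts $\tensorcheck$ to $\tensortilde$ and $\circ$ to $\circtilde$ by \eqref{eq:tilde_respects_monoidal_products}, carries one presentation to the other term by term, with the mixed factors matching via $\tilde{\capO}[\mathbf{p}\boldsymbol{+}\mathbf{q}]\tensorcheck\widetilde{X}\Iso\widetilde{\capO[\mathbf{p}\boldsymbol{+}\mathbf{q}]\Smash X}$ exactly as you indicate. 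The only point worth adding in a full write-up is that the two pairs of maps $d_0,d_1$ also correspond under these isomorphisms, which holds by naturality because the operad and module structures on $\tilde{\capO}$ and $\tilde{A}$ are themselves obtained by applying $\widetilde{-}$ to those of $\capO$ and $A$.
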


\begin{prop}
\label{prop:symmetric_sequence_decompositions}
Consider symmetric sequences in $\CC$. Let $B$ be a symmetric sequence (resp. symmetric array) and $r,t\geq 0$. There are natural isomorphisms
\begin{align*}
  B[\mathbf{t}]\Iso
  \Bigl(
  \coprod\limits_{q\geq 0}\widehat{B[\mathbf{q}]}
  \tensorcheck_{\Sigma_q}I^{\tensorcheck q}
  \Bigr)[\mathbf{t}]
  \quad\quad
  \biggl(\text{resp.}\quad
  B[\mathbf{t}][\mathbf{r}]\Iso
  \Bigl(
  \coprod\limits_{q\geq 0}\widetilde{B[\mathbf{q}]}
  \tensortilde_{\Sigma_q}\hat{I}^{\tensortilde q}
  \Bigr)[\mathbf{r}][\mathbf{t}]
  \biggr).
\end{align*}
Here, $\hat{I}$ is the symmetric array concentrated at $0$ with value $I$.
\end{prop}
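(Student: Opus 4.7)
The plan is to verify the isomorphism by a direct unpacking of the coend definitions of $\tensorcheck$ and $\tensortilde$, using the fact that $\widehat{B[\mathbf{q}]}$ is concentrated in a single level. First I would treat the non-parenthetical case. By the calculation \eqref{eq:tensor_check_calc}, since $I$ is the symmetric sequence concentrated at $1$ with value $S$, only the summand with all $q_i=1$ contributes to $I^{\tensorcheck q}$, giving a symmetric sequence concentrated at $q$ whose value is $S\cdot\Sigma_q$ with the standard $\Sigma_q$-action. Similarly, since $\widehat{B[\mathbf{q}]}$ is concentrated at $0$, applying \eqref{eq:tensor_check_calc} again to $\widehat{B[\mathbf{q}]}\tensorcheck I^{\tensorcheck q}$ forces $r_0=0$ and $r_1=\cdots=r_q=1$ in the coproduct indexing, yielding a symmetric sequence concentrated at $q$ with value $B[\mathbf{q}]\cdot\Sigma_q$.

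Next I would take $\Sigma_q$-coinvariants with respect to the diagonal action (the action on $B[\mathbf{q}]$ from the symmetric sequence structure of $B$ combined with the permutation action on $I^{\tensorcheck q}$), which gives a symmetric sequence concentrated at $q$ with value $B[\mathbf{q}]$, carrying the original $\Sigma_q$-action inherited from $B$. Summing over $q\geq 0$, the evaluation at $\mathbf{t}$ picks out only the $q=t$ summand, producing $B[\mathbf{t}]$ as claimed, and naturality in $B$ is immediate from the functoriality of the coend formulas.

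For the parenthetical case involving symmetric arrays, the proof proceeds along the same lines, with $\tensortilde$ in place of $\tensorcheck$ and $\hat{I}$ in place of $I$. By \eqref{eq:tensor_tilde_calc}, since $\hat{I}$ is concentrated at level $0$ with value $I$ (itself concentrated at level $1$ with value $S$), the symmetric array $\hat{I}^{\tensortilde q}$ is concentrated at level $0$ with value $I^{\tensorcheck q}$. More precisely, the $q$-fold $\tensortilde$-product collapses onto the single term where each external index is $0$, producing a symmetric array whose evaluation at $\mathbf{r}$ is concentrated in the internal symmetric sequence at level $r$ exactly when $r=q$. Similarly $\widetilde{B[\mathbf{q}]}$, by definition, is the symmetric array whose value at $\mathbf{t}$ is the symmetric sequence $\widehat{B[\mathbf{q}][\mathbf{t}]}$ concentrated at $0$. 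Combining these observations via \eqref{eq:tensor_tilde_calc} and then taking $\Sigma_q$-coinvariants yields a symmetric array concentrated at level $q$ in the external variable, whose value at $\mathbf{r}$ recovers $B[\mathbf{q}][\mathbf{r}]$; summing over $q$ and evaluating at $\mathbf{r}$ and $\mathbf{t}$ gives the stated isomorphism.

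The main bookkeeping obstacle will be tracking which $\Sigma_q$-action survives after taking coinvariants: the diagonal $\Sigma_q$-action identifies the permutation-of-factors action on $I^{\tensorcheck q}$ (and $\hat{I}^{\tensortilde q}$) with the intrinsic $\Sigma_q$-action on $B[\mathbf{q}]$ coming from the symmetric sequence (resp.\ symmetric array) structure of $B$, so that the surviving $\Sigma_q$-action on $B[\mathbf{q}]$ (resp.\ on $B[\mathbf{q}][\mathbf{r}]$) is precisely the one built into $B$. Once this identification is pinned down, no additional verification is needed and naturality in $B$ is automatic.
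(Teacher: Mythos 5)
Your argument is correct, and it is the direct unwinding of \eqref{eq:tensor_check_calc} and \eqref{eq:tensor_tilde_calc} that the paper intends: the proposition is explicitly left as an exercise, and the one genuinely delicate point --- that the diagonal $\Sigma_q$-coinvariants in $\widehat{B[\mathbf{q}]}\tensorcheck_{\Sigma_q}I^{\tensorcheck q}$ cancel the free permutation action on $I^{\tensorcheck q}$ against the intrinsic action on $B[\mathbf{q}]$, leaving the latter as the residual symmetric-sequence action at level $q$ --- is exactly the one you identify. One bookkeeping slip to fix in the array case: $\hat{I}^{\tensortilde q}$ is concentrated at $0$ in the \emph{external} (array) variable with value $I^{\tensorcheck q}$ concentrated at internal level $q$, so the summand $\widetilde{B[\mathbf{q}]}\tensortilde_{\Sigma_q}\hat{I}^{\tensortilde q}$ is concentrated at $q$ in the \emph{internal} variable (not the external one, as you write), with its external variable running over the levels of $B[\mathbf{q}]$; this is precisely why the two evaluation indices appear swapped, as $(\cdots)[\mathbf{r}][\mathbf{t}]\Iso B[\mathbf{t}][\mathbf{r}]$, in the statement.
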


The following will be needed in the proof of Proposition \ref{prop:filtering_OA} below.

\begin{prop}
\label{prop:analysis_of_OA_for_coproducts}
Let $\capO$ be an operad in $\CC$, $A\in\AlgO$ (resp. $A\in\LtO$), $Y\in\CC$ (resp. $Y\in\SymSeq$) and $q\geq 0$. Consider any coproduct in $\AlgO$ (resp. $\LtO$) of the form $A\amalg\capO\circ (Y)$ (resp. $A\amalg(\capO\circ Y)$). There are natural isomorphisms 
\begin{align*}
  \capO_{A\amalg\capO\circ (Y)}[\mathbf{q}] &\Iso 
  \coprod\limits_{p\geq 0}\capO_A[\mathbf{p}\boldsymbol{+}\mathbf{q}]
  \Smash_{\Sigma_p}Y^{\wedge p},
  \quad
  \capO_{\capO\circ (Y)}[\mathbf{q}] \Iso 
  \coprod\limits_{p\geq 0}\capO[\mathbf{p}\boldsymbol{+}\mathbf{q}]
  \Smash_{\Sigma_p}Y^{\wedge p}\\
  \Bigl(\text{resp.}\quad
  \capO_{A\amalg(\capO\circ Y)}[\mathbf{q}] &\Iso 
  \coprod\limits_{p\geq 0}\capO_A[\mathbf{p}\boldsymbol{+}\mathbf{q}]
  \tensorcheck_{\Sigma_p}Y^{\tensorcheck p},
  \quad
  \capO_{\capO\circ Y}[\mathbf{q}] \Iso 
  \coprod\limits_{p\geq 0}\capO[\mathbf{p}\boldsymbol{+}\mathbf{q}]
  \Smash_{\Sigma_p}Y^{\tensorcheck p}
  \Bigr)
\end{align*}
in $\CC^{\Sigma_q^\op}$ (resp. $\SymSeq^{\Sigma_q^\op}$). In particular, there are natural isomorphisms 
\begin{align}
\label{eq:calculating_OA_for_initial_algebras_and_modules}
  \capO_{\capO\circ (\emptyset)}[\mathbf{q}]\Iso\capO[\mathbf{q}]
  \quad\quad
  \Bigl(\text{resp.}\quad
  \capO_{\capO\circ \emptyset}[\mathbf{q}]\Iso\widehat{\capO[\mathbf{q}]}
  \Bigr)
\end{align}
in $\CC^{\Sigma_q^\op}$ (resp. $\SymSeq^{\Sigma_q^\op}$).
\end{prop}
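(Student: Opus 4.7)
The plan is to deduce the general formula for $\capO_{A\amalg\capO\circ Y}[\mathbf{q}]$ by applying Proposition \ref{prop:coproduct_modules} in two different ways to the same iterated pushout, and then to read off the remaining identities as specializations. The $\capO$-algebra case is entirely analogous (replacing $\SymSeq$ by $\CC$ and $\tensorcheck$ by $\Smash$ throughout), so I will describe the argument only for left $\capO$-modules.

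Introduce an auxiliary symmetric sequence $Z$ and consider the iterated coproduct $(A\amalg\capO\circ Y)\amalg\capO\circ Z$ in $\LtO$. First I would apply Proposition \ref{prop:coproduct_modules} with base module $A':=A\amalg\capO\circ Y$ to rewrite this as
$$(A\amalg\capO\circ Y)\amalg\capO\circ Z\Iso\coprod_{q\geq 0}\capO_{A\amalg\capO\circ Y}[\mathbf{q}]\tensorcheck_{\Sigma_q}Z^{\tensorcheck q}$$
in $\SymSeq$, naturally in $Z$. Second I would re-associate: since $\capO\circ-\colon\SymSeq\rightarrow\LtO$ is a left adjoint by Proposition \ref{prop:basic_properties_LTO}, it preserves coproducts, so $(A\amalg\capO\circ Y)\amalg\capO\circ Z\Iso A\amalg\capO\circ(Y\amalg Z)$. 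A second application of Proposition \ref{prop:coproduct_modules}, combined with the standard distributivity identity
$$(Y\amalg Z)^{\tensorcheck r}\Iso\coprod_{p+q=r}Y^{\tensorcheck p}\tensorcheck Z^{\tensorcheck q}\underset{\Sigma_p\times\Sigma_q}{\cdot}\Sigma_r$$
for tensor powers in a symmetric monoidal category, together with a reindexing over pairs $(p,q)$ with $r=p+q$, expresses the same coproduct as
$$\coprod_{q\geq 0}\Bigl(\coprod_{p\geq 0}\capO_A[\mathbf{p}\boldsymbol{+}\mathbf{q}]\tensorcheck_{\Sigma_p}Y^{\tensorcheck p}\Bigr)\tensorcheck_{\Sigma_q}Z^{\tensorcheck q}.$$

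Comparing the two natural-in-$Z$ decompositions and extracting the coefficient of $Z^{\tensorcheck q}$ then yields the desired general isomorphism in $\SymSeq^{\Sigma_q^\op}$. The main technical obstacle is precisely this extraction step: one needs to know that the functor sending a symmetric array $M$ to the ``analytic'' endofunctor $Z\mapsto\coprod_q M[\mathbf{q}]\tensorcheck_{\Sigma_q}Z^{\tensorcheck q}$ on $\SymSeq$ is faithful on isomorphisms. I would handle this by noting that the identification constructed above is visibly summand-by-summand natural in $Z$, so the coefficient matching is built into the construction; alternatively one can evaluate at sufficiently ``spread out'' test sequences $Z$ (for instance, concentrated in a single level $n$ for varying $n$) so that distinct powers $Z^{\tensorcheck q}$ land in disjoint degree ranges and can be isolated. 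Finally, the identities involving $\capO_{\capO\circ Y}$ are then obtained from the general formula as the special case $A=\capO\circ\emptyset$, using that $\capO\circ\emptyset$ is the initial object of $\LtO$ so that $(\capO\circ\emptyset)\amalg\capO\circ Y\Iso\capO\circ Y$, together with the computation $\capO_{\capO\circ\emptyset}[\mathbf{r}]\Iso\widehat{\capO[\mathbf{r}]}$ of \eqref{eq:calculating_OA_for_initial_algebras_and_modules}; this last identity follows from applying Proposition \ref{prop:coproduct_modules} with $A=\capO\circ\emptyset$ so that both sides of its decomposition formula compute $\capO\circ Y$, and matching coefficients exactly as above.
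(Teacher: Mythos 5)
Your overall architecture is exactly the paper's: form the iterated coproduct $(A\amalg(\capO\circ Y))\amalg(\capO\circ Z)$, decompose it once with base $A\amalg(\capO\circ Y)$ and once (after re-associating and distributing tensor powers over the coproduct $Y\amalg Z$) with base $A$, both times via Proposition \ref{prop:coproduct_modules}, and then match coefficients of $Z^{\tensorcheck q}$. The issue is the step you yourself flag as the main obstacle: neither of your two proposed fixes for extracting the coefficients actually closes the argument. Option (1) is circular --- the second decomposition arises by regrouping a double coproduct indexed by $r=p+q$, and the claim that the resulting $q$-summand matches the $q$-summand of the first decomposition is precisely what needs proof; ``visibly summand-by-summand natural'' is an assertion, not an argument. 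Option (2) fails in the left-module case as stated: if $Z$ is concentrated at a single level $n$, then $Z^{\tensorcheck q}$ is indeed concentrated at level $qn$, but the coefficients $\capO_{A\amalg(\capO\circ Y)}[\mathbf{q}]$ are themselves symmetric sequences spread over all levels, so after forming $M[\mathbf{q}]\tensorcheck_{\Sigma_q}Z^{\tensorcheck q}$ every $q$ contributes to every sufficiently high level and the powers do not land in disjoint ranges.

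The paper's device for this step is worth internalizing. For $\capO$-algebras it embeds everything into $\SymSeq$ via $\hat{(-)}$, so that by Proposition \ref{prop:relating_the_OA_constructions} the coefficients are concentrated at level $0$; then setting $Z=I$ and invoking Proposition \ref{prop:symmetric_sequence_decompositions} recovers each coefficient on the nose, because $\widehat{B[\mathbf{q}]}\tensorcheck_{\Sigma_q}I^{\tensorcheck q}$ sits exactly at level $q$ with value $B[\mathbf{q}]$ --- no mixing occurs precisely because the coefficient is concentrated at $0$. For left $\capO$-modules the same trick requires going one categorical level up to symmetric arrays (Definition \ref{def:symmetric_array}): one replaces $\capO,A,Y$ by $\tilde{\capO},\tilde{A},\tilde{Y}$, lets the test object $Z$ range over $\SymArray$, takes $Z=\hat{I}$, and uses Proposition \ref{prop:tilde_commutes_with_OA_constructions} together with the symmetric-array half of Proposition \ref{prop:symmetric_sequence_decompositions}. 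Your proof becomes complete once you substitute this evaluation-at-the-unit argument (in the appropriate ambient category) for your extraction step; without it, or some equally rigorous substitute such as a cross-effect argument, there is a genuine gap.
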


\begin{proof}
Consider the left-hand natural isomorphisms. Since the case for left $\capO$-modules is more involved, it is useful to consider first the case of $\capO$-algebras. Let $A$ be an $\capO$-algebra and $Y\in\CC$.  Let $Z\in\SymSeq$, and consider the corresponding left $\capO$-module $\hat{A}$ and the corresponding symmetric sequence $\hat{Y}$. It follows easily from Proposition \ref{prop:coproduct_modules} and \cite[proof of 4.7]{Harper_Spectra} that there are natural isomorphisms 
\begin{align}
  \label{eq:first_calculation_proof}
  \hat{A}\amalg(\capO\circ \hat{Y})\amalg(\capO\circ Z)&\Iso
  \coprod\limits_{q\geq 0}\capO_{\hat{A}\amalg(\capO\circ \hat{Y})}[\mathbf{q}]
  \tensorcheck_{\Sigma_q}Z^{\tensorcheck q},\\
  \label{eq:second_calculation_proof}
  \hat{A}\amalg(\capO\circ \hat{Y})\amalg(\capO\circ Z)
  &\Iso \coprod\limits_{q\geq 0}
  \Bigl(
  \coprod\limits_{p\geq 0}\capO_{\hat{A}}[\mathbf{p}\boldsymbol{+}\mathbf{q}]
  \tensorcheck_{\Sigma_p}\hat{Y}^{\tensorcheck p}
  \Bigr)
  \tensorcheck_{\Sigma_q}Z^{\tensorcheck q},
\end{align}
in the underlying category $\SymSeq$. Comparing \eqref{eq:first_calculation_proof} with \eqref{eq:second_calculation_proof} and taking $Z=I$, together with Proposition \ref{prop:relating_the_OA_constructions} and Proposition \ref{prop:symmetric_sequence_decompositions}, gives a natural isomorphism of symmetric sequences of the form
\begin{align*}
  \capO_{A\amalg\capO\circ(Y)}[\mathbf{q}]\Iso
  \coprod_{p\geq 0}\capO_{A}[\mathbf{p}\boldsymbol{+}\mathbf{q}]
  \Smash_{\Sigma_p}Y^{\wedge p},\quad\quad q\geq 0,
\end{align*}
which finishes the proof of the left-hand natural isomorphisms for the case of $\capO$-algebras. 

Consider the case of left $\capO$-modules. Let $A$ be a left $\capO$-module and $Y\in\SymSeq$.  Let $Z\in\SymArray$ and consider the corresponding operad $\tilde{\capO}$ in $\SymSeq$, the corresponding left $\tilde{\capO}$-module $\tilde{A}$ and the corresponding symmetric array $\tilde{Y}$. Arguing as above, by Proposition \ref{prop:coproduct_modules} there is a natural isomorphism 
\begin{align}
  \label{eq:calculation_proof_modules}
  \coprod\limits_{q\geq 0}\tilde{\capO}_{\tilde{A}\amalg(\tilde{\capO}\circtilde \tilde{Y})}[\mathbf{q}]
  \tensortilde_{\Sigma_q}Z^{\tensortilde q}
  \Iso \coprod\limits_{q\geq 0}
  \Bigl(
  \coprod\limits_{p\geq 0}\tilde{\capO}_{\tilde{A}}[\mathbf{p}\boldsymbol{+}\mathbf{q}]
  \tensortilde_{\Sigma_p}\tilde{Y}^{\tensortilde p}
  \Bigr)
  \tensortilde_{\Sigma_q}Z^{\tensortilde q},
\end{align}
in the underlying category $\SymArray$. By \eqref{eq:calculation_proof_modules} and taking $Z=\hat{I}$, together with Proposition \ref{prop:tilde_commutes_with_OA_constructions} and Proposition \ref{prop:symmetric_sequence_decompositions}, gives a natural isomorphism of symmetric arrays of the form
\begin{align*}
  \Bigl(
  \capO_{A\amalg\capO\circ Y}[\mathbf{q}]
  \Bigr)[\mathbf{r}]\Iso
  \Bigl(
  \coprod\limits_{p\geq 0}\capO_{A}[\mathbf{p}\boldsymbol{+}\mathbf{q}]
  \tensorcheck_{\Sigma_p}Y^{\tensorcheck p}
  \Bigr)[\mathbf{r}],\quad\quad
  q,r\geq 0,
\end{align*}
which finishes the proof of the left-hand natural isomorphisms for the case of left $\capO$-modules. The proof of the right-hand natural isomorphisms is similar.
\end{proof}

The following filtrations are motivated by \cite[13.7]{Mandell} and generalize the filtered colimit construction of the form
\begin{align*}
  B\Iso\capO_B[\mathbf{0}]\Iso 
  \colim\bigl(
  \xymatrix@1{
    \capO_A[\mathbf{0}]\ar[r]^{j_1} & A_1\ar[r]^{j_2} & A_2\ar[r]^{j_3} & \dotsb
  }
  \bigr)
\end{align*}
in Proposition \ref{prop:small_arg_pushout_modules} to a filtered colimit construction of $\capO_B[\mathbf{r}]$ for each $r\geq 0$; for other approaches to these types of filtrations compare \cite{Fresse_modules, Schwede_Shipley}.

\begin{prop}
\label{prop:filtering_OA}
Let $\capO$ be an operad in $\CC$, $A\in\AlgO$ (resp. $A\in\LtO$), and $\function{i}{X}{Y}$ in $\CC$ (resp. $\SymSeq$). Consider any pushout diagram in $\AlgO$ (resp. $\LtO$) of the form \ref{eq:small_arg_pushout_modules}. For each $r\geq 0$, $\capO_B[\mathbf{r}]$ is naturally isomorphic to a filtered colimit of the form
\begin{align}
\label{eq:filtered_colimit_modules_refined}
  \capO_B[\mathbf{r}]\Iso 
  \colim\Bigl(
  \xymatrix{
    \capO_A^0[\mathbf{r}]\ar[r]^{j_1} & 
    \capO_A^1[\mathbf{r}]\ar[r]^{j_2} & 
    \capO_A^2[\mathbf{r}]\ar[r]^{j_3} & \dotsb
  }
  \Bigr)
\end{align}
in $\CC^{\Sigma_r^\op}$ (resp. $\SymSeq^{\Sigma_r^\op}$), with $\capO_A^0[\mathbf{r}]:=\capO_A[\mathbf{r}]$ and $\capO_A^t[\mathbf{r}]$ defined inductively by pushout diagrams in $\CC^{\Sigma_r^\op}$ (resp. $\SymSeq^{\Sigma_r^\op}$) of the form
\begin{align}
\label{eq:good_filtration_modules_refined}
\xymatrix{
  \capO_A[\mathbf{t}\boldsymbol{+}\mathbf{r}]\Smash_{\Sigma_t}Q_{t-1}^t\ar[d]^{\id\wedge_{\Sigma_t}i_*}
  \ar[r]^-{f_*} & \capO_A^{t-1}[\mathbf{r}]\ar[d]^{j_t}\\
  \capO_A[\mathbf{t}\boldsymbol{+}\mathbf{r}]\Smash_{\Sigma_t}Y^{\wedge t}\ar[r]^-{\xi_t} & \capO_A^t[\mathbf{r}]
}\quad\quad
\text{resp.}\quad
\xymatrix{
  \capO_A[\mathbf{t}\boldsymbol{+}\mathbf{r}]\tensorcheck_{\Sigma_t}Q_{t-1}^t\ar[d]^{\id\tensorcheck_{\Sigma_t}i_*}
  \ar[r]^-{f_*} & \capO_A^{t-1}[\mathbf{r}]\ar[d]^{j_t}\\
  \capO_A[\mathbf{t}\boldsymbol{+}\mathbf{r}]\tensorcheck_{\Sigma_t}Y^{\tensorcheck t}\ar[r]^-{\xi_t} & \capO_A^t[\mathbf{r}]
}
\end{align}
\end{prop}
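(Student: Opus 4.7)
The plan is to mimic, at the $r$-th level of the symmetric array $\capO_{(-)}$, the construction that appears in the proof of Proposition \ref{prop:small_arg_pushout_modules} (compare \cite{Harper_Spectra, Elmendorf_Mandell}). The two key inputs are Proposition \ref{prop:analysis_of_OA_for_coproducts}, which provides the coproduct formula
\begin{align*}
  \capO_{A\amalg\capO\circ(Y)}[\mathbf{q}] \Iso \coprod_{p\geq 0} \capO_A[\mathbf{p}\boldsymbol{+}\mathbf{q}]\Smash_{\Sigma_p}Y^{\wedge p}
\end{align*}
and its left $\capO$-module analog, and Proposition \ref{prop:OA_commutes_with_certain_colimits}, which guarantees that $\capO_{(-)}[\mathbf{r}]$ preserves reflexive coequalizers and filtered colimits.

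First I would exhibit $B$ as a reflexive coequalizer in $\AlgO$ (resp.\ $\LtO$) between coproducts of $A$ with free $\capO$-algebras (resp.\ free left $\capO$-modules), built from the pushout diagram \eqref{eq:small_arg_pushout_modules}, and then apply the functor $\capO_{(-)}[\mathbf{r}]$. By Proposition \ref{prop:OA_commutes_with_certain_colimits} this produces a reflexive coequalizer in $\CC^{\Sigma_r^\op}$ (resp.\ $\SymSeq^{\Sigma_r^\op}$) whose terms are described level-wise by Proposition \ref{prop:analysis_of_OA_for_coproducts}. An argument formally identical to the one behind Proposition \ref{prop:small_arg_pushout_modules}---which in the special case $\mathbf{r}=\mathbf{0}$ reduces to the original filtration, since $\capO_{A}[\mathbf{0}]\Iso A$---then reorganizes this reflexive coequalizer into the filtered colimit \eqref{eq:filtered_colimit_modules_refined} indexed by $t\geq 0$, where $\capO_A^t[\mathbf{r}]$ collects the "degree $\leq t$ in $Y$" contribution from the coproduct formula.

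To produce the inductive pushout square \eqref{eq:good_filtration_modules_refined}, at the $t$-th stage I would isolate the summand $\capO_A[\mathbf{t}\boldsymbol{+}\mathbf{r}]\Smash_{\Sigma_t}Y^{\wedge t}$ that first appears in the filtration at level $t$, and quotient out the image of $\capO_A[\mathbf{t}\boldsymbol{+}\mathbf{r}]\Smash_{\Sigma_t}Q^t_{t-1}$ inherited from the previous stage. Here the gadget $Q^t_{q}$ from Definition \ref{def:filtration_setup_modules} records precisely those cells of $Y^{\wedge t}$ in which at least one $Y$-factor has already been replaced by an $X$-factor and transported into $A$ via the attaching map $f_*$. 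Matching the resulting pushout squares with those of the statement is then a direct calculation using the $\Sigma_t\times\Sigma_r$-equivariance in the first and second blocks of inputs of $\capO_A[\mathbf{t}\boldsymbol{+}\mathbf{r}]$.

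The main obstacle is bookkeeping: the pushouts defining the $A_t$ of Proposition \ref{prop:small_arg_pushout_modules} live in the underlying category $\CC$ (resp.\ $\SymSeq$), not in $\AlgO$ (resp.\ $\LtO$), so one cannot naively apply $\capO_{(-)}[\mathbf{r}]$ to them. The careful step is to verify that the identifications forced by the pushout $B = A\amalg_{\capO\circ(X)}\capO\circ(Y)$ translate, after applying $\capO_{(-)}[\mathbf{r}]$ and decomposing via Proposition \ref{prop:analysis_of_OA_for_coproducts}, into exactly the attaching map $f_*$ and the $\id\wedge_{\Sigma_t}i_*$ appearing in \eqref{eq:good_filtration_modules_refined}, while keeping track simultaneously of the inner $\Sigma_t$-action and the outer $\Sigma_r$-action. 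The left $\capO$-module case is formally the same with $\Smash$ and $Q^t$ replaced by their symmetric-sequence analogs $\tensorcheck$ and the $\tensorcheck$-based $Q^t$.
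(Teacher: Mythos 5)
Your proposal follows essentially the same route as the paper's proof: present $B$ as a reflexive coequalizer of coproducts $A\amalg(\capO\circ X)\amalg(\capO\circ Y)\rightrightarrows A\amalg(\capO\circ Y)$, apply $\capO_{(-)}[\mathbf{r}]$ using Proposition \ref{prop:OA_commutes_with_certain_colimits}, decompose the resulting terms via Proposition \ref{prop:analysis_of_OA_for_coproducts}, and then reassemble the coequalizer as the filtered colimit of pushouts \eqref{eq:good_filtration_modules_refined} by analyzing cones, exactly generalizing the $r=0$ case of Proposition \ref{prop:small_arg_pushout_modules}. Your observation that one cannot simply apply $\capO_{(-)}[\mathbf{r}]$ to the underlying-category filtration of $B$, and must instead work from the reflexive coequalizer presentation, is precisely the point the paper's argument is organized around.
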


\begin{proof}
It suffices to consider the case of left $\capO$-modules. The argument is a generalization of the proof given in \cite[4.20]{Harper_Spectra} for the case $r=0$, hence it is enough to describe the constructions and arguments needed for future reference and for a reader of \cite[4.20]{Harper_Spectra} to be able to follow the proof. It is easy to verify that the pushout in \eqref{eq:small_arg_pushout_modules} may be calculated by a reflexive coequalizer in $\LtO$ of the form
\begin{align}
\label{eq:reflexive_coequalizer_for_desired_pushout}
  B
  \Iso\colim\Bigl(
  \xymatrix{
    A\amalg(\capO\circ Y)
    & A\amalg(\capO\circ X)\amalg(\capO\circ Y)
    \ar@<-0.5ex>[l]_-{\ol{i}}\ar@<0.5ex>[l]^-{\ol{f}}
  }
  \Bigr).
\end{align}
The maps $\ol{i}$ and $\ol{f}$ are induced by maps $\id\circ i_*$ and $\id\circ f_*$, which fit into the commutative diagram
\begin{align}
\label{eq:induced_maps_modules}
\xymatrix{
  A\amalg\bigl(\capO\circ(X\amalg Y)\bigr)\ar@<-0.5ex>[d]_{\ol{i}}\ar@<0.5ex>[d]^{\ol{f}} & 
  \capO\circ(A\amalg X\amalg Y)\ar[l]
  \ar@<-0.5ex>[d]_{\id\circ i_*}\ar@<0.5ex>[d]^{\id\circ f_*} &
  \capO\circ\bigl((\capO\circ A)\amalg X\amalg Y\bigr)\ar@<-0.5ex>[l]_-{d_0}\ar@<0.5ex>[l]^-{d_1}
  \ar@<-0.5ex>[d]_{\id\circ i_*}\ar@<0.5ex>[d]^{\id\circ f_*}\\
  A\amalg(\capO\circ Y) & \capO\circ(A\amalg Y)\ar[l] & 
  \capO\circ\bigl((\capO\circ A)\amalg Y\bigr)
  \ar@<-0.5ex>[l]_-{d_0}\ar@<0.5ex>[l]^-{d_1}
}
\end{align}
in $\LtO$, with rows reflexive coequalizer diagrams, and maps $i_*$ and $f_*$ in $\SymSeq$ induced by $\function{i}{X}{Y}$ and $\function{f}{X}{A}$ in $\SymSeq$. Here we have used the same notation for both $f$ and its adjoint \eqref{eq:free_forgetful_adjunction}. Applying $\capO_{(-)}[\mathbf{r}]$ to \eqref{eq:reflexive_coequalizer_for_desired_pushout} and \eqref{eq:induced_maps_modules}, it follows from Proposition \ref{prop:OA_commutes_with_certain_colimits} that $\capO_{B}[\mathbf{r}]$ may be calculated by a reflexive coequalizer 
\begin{align}
\label{eq:reflexive_coequalizer_for_OA}
  &\capO_B[\mathbf{r}]
  \Iso\colim\Bigl(
  \xymatrix{
    \capO_{A\amalg(\capO\circ Y)}[\mathbf{r}]
    & \capO_{A\amalg(\capO\circ X)\amalg(\capO\circ Y)}[\mathbf{r}]
    \ar@<-0.5ex>[l]_-{\ol{i}}\ar@<0.5ex>[l]^-{\ol{f}}
  }
  \Bigr)\\
\label{eq:induced_maps_modules_refined}
&\xymatrix{
  \capO_{A\amalg(\capO\circ(X\amalg Y))}[\mathbf{r}]\ar@<-0.5ex>[d]_{\ol{i}}\ar@<0.5ex>[d]^{\ol{f}} & 
  \capO_{\capO\circ(A\amalg X\amalg Y)}[\mathbf{r}]\ar[l]
  \ar@<-0.5ex>[d]\ar@<0.5ex>[d] &
  \capO_{\capO\circ((\capO\circ A)\amalg X\amalg Y)}[\mathbf{r}]\ar@<-0.5ex>[l]\ar@<0.5ex>[l]
  \ar@<-0.5ex>[d]\ar@<0.5ex>[d]\\
  \capO_{A\amalg(\capO\circ Y)}[\mathbf{r}] & 
  \capO_{\capO\circ(A\amalg Y)}[\mathbf{r}]\ar[l] & 
  \capO_{\capO\circ((\capO\circ A)\amalg Y)}[\mathbf{r}]
  \ar@<-0.5ex>[l]\ar@<0.5ex>[l]
}
\end{align}
in $\SymSeq^{\Sigma_r^\op}$ of the form \eqref{eq:reflexive_coequalizer_for_OA}, and that the maps $\ol{i}$ and $\ol{f}$ in \eqref{eq:reflexive_coequalizer_for_OA} fit into the commutative diagram \eqref{eq:induced_maps_modules_refined} in $\SymSeq^{\Sigma_r^\op}$, with rows reflexive coequalizer diagrams. By \eqref{eq:reflexive_coequalizer_for_OA}, $\capO_B[\mathbf{r}]$ may be calculated by the colimit of the left-hand column of \eqref{eq:induced_maps_modules_refined} in $\SymSeq^{\Sigma_r^\op}$. By \eqref{eq:induced_maps_modules_refined} and Proposition \ref{prop:analysis_of_OA_for_coproducts}, $f$ induces maps $\ol{f}_{q,p}$ that make the diagrams
\begin{align*}
\xymatrix{
  \capO_{A\amalg(\capO\circ(X\amalg Y))}[\mathbf{r}]\Iso\coprod\limits_{q\geq 0}\coprod\limits_{p\geq 0}
  \Bigl(\ \Bigr)\ar[d]^{\ol{f}} &
  \Bigl(
  \capO_A[\mathbf{p}\boldsymbol{+}\mathbf{q}\boldsymbol{+}\mathbf{r}]\tensorcheck_{\Sigma_p\times\Sigma_q}
  X^{\tensorcheck p}\tensorcheck Y^{\tensorcheck q}
  \Bigr)\ar[l]_-{\inmap_{q,p}}\ar@{.>}[d]^{\ol{f}_{q,p}}\\
  \capO_{A\amalg(\capO\circ Y)}[\mathbf{r}]\Iso\coprod\limits_{t\geq 0}\Bigl(\ \Bigr) &
  \Bigl(
  \capO_A[\mathbf{q}\boldsymbol{+}\mathbf{r}]\tensorcheck_{\Sigma_q}Y^{\tensorcheck q}
  \Bigr)\ar[l]_-{\inmap_q}
}
\end{align*}
in $\SymSeq^{\Sigma_r^\op}$ commute. Similarly, $i$ induces maps $\ol{i}_{q,p}$ that make the diagrams
\begin{align*}
\xymatrix{
  \capO_{A\amalg(\capO\circ(X\amalg Y))}[\mathbf{r}]\Iso\coprod\limits_{q\geq 0}\coprod\limits_{p\geq 0}
  \Bigl(\ \Bigr)\ar[d]^{\ol{i}} &
  \Bigl(
  \capO_A[\mathbf{p}\boldsymbol{+}\mathbf{q}\boldsymbol{+}\mathbf{r}]\tensorcheck_{\Sigma_p\times\Sigma_q}
  X^{\tensorcheck p}\tensorcheck Y^{\tensorcheck q}
  \Bigr)\ar[l]_-{\inmap_{q,p}}\ar@{.>}[d]^{\ol{i}_{q,p}}\\
  \capO_{A\amalg(\capO\circ Y)}[\mathbf{r}]\Iso\coprod\limits_{t\geq 0}\Bigl(\ \Bigr) &
  \Bigl(
  \capO_A[\mathbf{p}\boldsymbol{+}\mathbf{q}\boldsymbol{+}\mathbf{r}]
  \tensorcheck_{\Sigma_{p+q}}Y^{\tensorcheck (p+q)}
  \Bigr)\ar[l]_-{\inmap_{p+q}}
}
\end{align*}
in $\SymSeq^{\Sigma_r^\op}$ commute. 

We can now describe more explicitly what it means to give a cone in $\SymSeq^{\Sigma_r^\op}$ out of the left-hand column of \eqref{eq:induced_maps_modules_refined}. Let $\function{\varphi}{\capO_{A\amalg(\capO\circ Y)}[\mathbf{r}]}{\cdot}$ be a morphism in $\SymSeq^{\Sigma_r^\op}$ and define $\varphi_q:=\varphi\inmap_q$. Then $\varphi\ol{i}=\varphi\ol{f}$ if and only if the diagrams
\begin{align}
\label{eq:cone_data_refined}
\xymatrix{
  \capO_A[\mathbf{p}\boldsymbol{+}\mathbf{q}\boldsymbol{+}\mathbf{r}]
  \tensorcheck_{\Sigma_p\times\Sigma_q}X^{\tensorcheck p}
  \tensorcheck Y^{\tensorcheck q}\ar[d]^{\ol{i}_{q,p}}\ar[r]^-{\ol{f}_{q,p}} & 
  \capO_A[\mathbf{q}\boldsymbol{+}\mathbf{r}]\tensorcheck_{\Sigma_q}Y^{\tensorcheck q}\ar[d]^{\varphi_q}\\
  \capO_A[\mathbf{p}\boldsymbol{+}\mathbf{q}\boldsymbol{+}\mathbf{r}]
  \tensorcheck_{\Sigma_{p+q}}Y^{\tensorcheck(p+q)}
  \ar[r]^-{\varphi_{p+q}} & \cdot
}
\end{align}
commute for every $p,q\geq 0$. Since $\ol{i}_{q,0}=\id$ and $\ol{f}_{q,0}=\id$, it is sufficient to consider $q\geq 0$ and $p>0$. 

The next step is to reconstruct the colimit of the left-hand column of \eqref{eq:induced_maps_modules_refined} in $\SymSeq^{\Sigma_r^\op}$ via a suitable filtered colimit in $\SymSeq^{\Sigma_r^\op}$. The diagrams \eqref{eq:cone_data_refined} suggest how to proceed. Define $\capO_A^0[\mathbf{r}]:=\capO_A[\mathbf{r}]$ and for each $t\geq 1$ define $\capO_A^t[\mathbf{r}]$ by the pushout diagram \eqref{eq:good_filtration_modules_refined} in $\SymSeq^{\Sigma_r^\op}$. The maps $f_*$ and $i_*$ are induced by the appropriate maps $\ol{f}_{q,p}$ and $\ol{i}_{q,p}$. Arguing exactly as in \cite[proof of 4.20]{Harper_Spectra} for the case  $r=0$, it is easy to use the diagrams \eqref{eq:cone_data_refined} to verify that \eqref{eq:filtered_colimit_modules_refined} is satisfied.
\end{proof}

The following proposition is the key result used to prove Proposition \ref{prop:analysis_of_OA_monoidal_model_category}.

\begin{prop}
\label{prop:homotopical_analysis_of_OA_on_maps_monoidal}
Let $\capO$ be an operad in $\CC$. Suppose that Homotopical Assumption \ref{HomotopicalAssumption} is satisfied.
\begin{itemize}
\item[(a)] If $\function{j}{A}{B}$ is a cofibration in $\AlgO$ (resp. $\LtO$) such that $\capO_A[\mathbf{r}]$ is cofibrant in $\CC^{\Sigma_r^\op}$ (resp. $\SymSeq^{\Sigma_r^\op}$) for each $r\geq 0$, then $\capO_A[\mathbf{r}]\rarrow\capO_B[\mathbf{r}]$ is a cofibration in $\CC^{\Sigma_r^\op}$ (resp. $\SymSeq^{\Sigma_r^\op}$) for each $r\geq 0$.
\item[(b)] If $\function{j}{A}{B}$ is an acyclic cofibration in $\AlgO$ (resp. $\LtO$) such that $\capO_A[\mathbf{r}]$ is cofibrant in $\CC^{\Sigma_r^\op}$ (resp. $\SymSeq^{\Sigma_r^\op}$) for each $r\geq 0$, then $\capO_A[\mathbf{r}]\rarrow\capO_B[\mathbf{r}]$ is an acyclic cofibration in $\CC^{\Sigma_r^\op}$ (resp. $\SymSeq^{\Sigma_r^\op}$) for each $r\geq 0$.
\end{itemize}
\end{prop}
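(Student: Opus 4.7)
The plan is to reduce, by a transfinite induction, to showing the statements for pushouts along free extensions of generating (acyclic) cofibrations $\function{i}{X}{Y}$ of the underlying category, then analyze these pushouts via the filtration of Proposition \ref{prop:filtering_OA}. Since every cofibration in $\AlgO$ (resp.\ $\LtO$) is a retract of a transfinite composition of pushouts of the form \eqref{eq:small_arg_pushout_modules}, and since cofibrations in $\CC^{\Sigma_r^\op}$ (resp.\ $\SymSeq^{\Sigma_r^\op}$) are closed under retracts, pushouts, and transfinite composition, it suffices to verify the single pushout case while simultaneously checking that the cofibrancy hypothesis on $\capO_A[\mathbf{r}]$ is preserved at each stage of the reduction. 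That $\capO_{(-)}[\mathbf{r}]$ is well-behaved with respect to these colimits is ensured by Proposition \ref{prop:OA_commutes_with_certain_colimits}.

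Fix such a pushout diagram \eqref{eq:small_arg_pushout_modules} and apply Proposition \ref{prop:filtering_OA} to express $\capO_A[\mathbf{r}]\rarrow\capO_B[\mathbf{r}]$ as the sequential colimit of $\{\capO_A^t[\mathbf{r}]\}_{t\geq 0}$ with $\capO_A^0[\mathbf{r}]=\capO_A[\mathbf{r}]$ and each $j_t\colon\capO_A^{t-1}[\mathbf{r}]\rarrow\capO_A^t[\mathbf{r}]$ obtained as a pushout along
\begin{align*}
  \id\wedge_{\Sigma_t}i_*\colon \capO_A[\mathbf{t}\boldsymbol{+}\mathbf{r}]\wedge_{\Sigma_t}Q_{t-1}^t\rarrow \capO_A[\mathbf{t}\boldsymbol{+}\mathbf{r}]\wedge_{\Sigma_t}Y^{\wedge t}
\end{align*}
in $\CC^{\Sigma_r^\op}$ (and analogously with $\tensorcheck$ in the left module case). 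Hence the problem reduces to showing that each such map $\id\wedge_{\Sigma_t}i_*$ is a (acyclic) cofibration in $\CC^{\Sigma_r^\op}$ (resp.\ $\SymSeq^{\Sigma_r^\op}$), together with an inductive verification on $t$ that each $\capO_A^t[\mathbf{r}]$ remains cofibrant (so that the pushout preserves (acyclic) cofibrations at the next stage).

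The key step is an equivariant pushout-product estimate at two levels. First, by induction on $t$ using the defining pushouts in Definition \ref{def:filtration_setup_modules}, one shows that $\function{i_*}{Q_{t-1}^t}{Y^{\wedge t}}$ is a (acyclic) cofibration in $\CC^{\Sigma_t}$ (resp.\ $\SymSeq^{\Sigma_t}$) whose cells have the free form $\Sigma_t\cdot_{\Sigma_{t-q}\times\Sigma_q}X^{\wedge(t-q)}\wedge Q_{q-1}^q$; this reduces $\Sigma_t$-isotropy to the smaller subgroups $\Sigma_{t-q}\times\Sigma_q$. Second, because $\capO_A[\mathbf{t}\boldsymbol{+}\mathbf{r}]$ is by hypothesis cofibrant in $\CC^{\Sigma_{t+r}^\op}$, restriction along $\Sigma_t^\op\times\Sigma_r^\op\hookrightarrow\Sigma_{t+r}^\op$ provides the equivariant freeness needed for the pushout-product axiom to imply that tensoring over $\Sigma_t$ with $\capO_A[\mathbf{t}\boldsymbol{+}\mathbf{r}]$ sends (acyclic) cofibrations in $\CC^{\Sigma_t}$ to (acyclic) cofibrations in $\CC^{\Sigma_r^\op}$. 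Combining the two yields that each $j_t$ is a (acyclic) cofibration, and an inductive application of the pushouts \eqref{eq:good_filtration_modules_refined} shows that $\capO_A^t[\mathbf{r}]$ is again cofibrant in $\CC^{\Sigma_r^\op}$ (resp.\ $\SymSeq^{\Sigma_r^\op}$), completing the induction.

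The main obstacle is part (b): preserving \emph{acyclicity} when quotienting by a $\Sigma_t$-action is strictly more than what the ordinary monoidal pushout-product axiom provides, and this is precisely why the free $\Sigma_t$-cell decomposition of $Q_{t-1}^t$ plus the $\Sigma_{t+r}^\op$-cofibrancy of $\capO_A[\mathbf{t}\boldsymbol{+}\mathbf{r}]$ must be invoked together, in analogy with the flatness arguments developed in \cite{Harper_Spectra, Harper_Modules}. Once this is verified at each stage of the filtration \eqref{eq:filtered_colimit_modules_refined}, the assertions of (a) and (b) follow by taking the sequential colimit, since both cofibrations and acyclic cofibrations in $\CC^{\Sigma_r^\op}$ (resp.\ $\SymSeq^{\Sigma_r^\op}$) are closed under such colimits.
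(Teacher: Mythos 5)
Your overall architecture matches the paper's: reduce by retracts and transfinite composition to a single cell attachment \eqref{eq:small_arg_pushout_modules}, apply the filtration of Proposition \ref{prop:filtering_OA}, and reduce to showing that each map $\id\wedge_{\Sigma_t}i_*$ (resp. $\id\tensorcheck_{\Sigma_t}i_*$) in \eqref{eq:good_filtration_modules_refined} is an (acyclic) cofibration. The problem is in how you handle that last step.

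Your ``first level'' claim --- that $\function{i_*}{Q_{t-1}^t}{Y^{\wedge t}}$ is a projective (acyclic) cofibration in $\CC^{\Sigma_t}$ --- is not available under Homotopical Assumption \ref{HomotopicalAssumption} alone, and is false in general. The cells $\Sigma_t\cdot_{\Sigma_{t-q}\times\Sigma_{q}}X^{\wedge(t-q)}\Smash Q_{q-1}^q$ are induced from $\Sigma_{t-q}\times\Sigma_q$, not free, and for induction to produce projective cofibrations of $\Sigma_t$-objects you would need $X^{\wedge(t-q)}\Smash Q_{q-1}^q\rarrow X^{\wedge(t-q)}\Smash Y^{\wedge q}$ to be a projective cofibration of $(\Sigma_{t-q}\times\Sigma_q)$-objects --- which is the same problem one level down, bottoming out at the claim that $X^{\wedge s}$ with its permutation action is projectively $\Sigma_s$-cofibrant. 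That fails for a general monoidal model category (e.g., spaces, or chain complexes in positive characteristic); it is precisely the special ``symmetrizability'' that the paper proves for the \emph{positive flat stable} structure in Proposition \ref{prop:cofibration}, and Proposition \ref{prop:homotopical_analysis_of_OA_on_maps_monoidal} makes no such hypothesis. The paper's proof never needs $i_*$ to be equivariantly cofibrant: given an acyclic fibration $\function{p}{C}{D}$ in $\CC^{\Sigma_r^\op}$, it transposes the lifting problem for $\id\tensorcheck_{\Sigma_t}i_*$ against $p$ into a lifting problem for $\emptyset\rarrow\capO_A[\mathbf{t}\boldsymbol{+}\mathbf{r}]$ against the pullback-corner map
$
  \Map^\tensorcheck(Y^{\tensorcheck t},C)\rarrow
  \Map^\tensorcheck(Q_{t-1}^t,C)\times_{\Map^\tensorcheck(Q_{t-1}^t,D)}
  \Map^\tensorcheck(Y^{\tensorcheck t},D)
$
in $\CC^{(\Sigma_t\times\Sigma_r)^\op}$. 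Since $\capO_A[\mathbf{t}\boldsymbol{+}\mathbf{r}]$ is projectively cofibrant there (by restriction from $\Sigma_{t+r}^\op$), one only needs this corner map to be an \emph{underlying} acyclic fibration, which follows from SM7 using only that $i_*$ is a cofibration in the underlying category $\CC$ (resp. $\SymSeq$) --- a consequence of the ordinary pushout-product axiom. So the equivariance is absorbed entirely by the cofibrancy hypothesis on $\capO_A$, not by any equivariant cofibrancy of $i_*$. You should restructure your key step as this single adjoint lifting argument; as written, part (b) in particular rests on an acyclic equivariant cofibration claim that cannot be justified from the stated hypotheses. (The inductive verification that each $\capO_A^t[\mathbf{r}]$ is cofibrant is also unnecessary: pushouts preserve (acyclic) cofibrations regardless of the cofibrancy of their corners.)
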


\begin{proof}
It suffices to consider the case of left $\capO$-modules. We first prove part (a). Let $\function{i}{X}{Y}$ be a generating cofibration in $\SymSeq$, and consider a pushout diagram of the form \eqref{eq:gluing_on_cells_proof_of_forgetful} in $\LtO$. Assume $\capO_{Z_0}[\mathbf{r}]$ is cofibrant in $\SymSeq^{\Sigma_r^\op}$ for each $r\geq 0$; let's verify that $\capO_{Z_0}[\mathbf{r}]\rarrow\capO_{Z_1}[\mathbf{r}]$ is a cofibration in $\SymSeq^{\Sigma_r^\op}$ for each $r\geq 0$. Define $A:=Z_0$, and let $r\geq 0$. By \ref{prop:filtering_OA} we know that $\capO_{Z_1}[\mathbf{r}]$ is naturally isomorphic to a filtered colimit of the form
$
  \capO_{Z_1}[\mathbf{r}]\Iso 
  \colim\bigl(
  \xymatrix@1{
    \capO_A^0[\mathbf{r}]\ar[r]^{j_1} & 
    \capO_A^1[\mathbf{r}]\ar[r]^{j_2} & 
    \capO_A^2[\mathbf{r}]\ar[r]^{j_3} & \dotsb
  }
  \bigr)
$
in $\SymSeq^{\Sigma_r^\op}$, hence it is enough to verify each $j_t$ is a cofibration in $\SymSeq^{\Sigma_r^\op}$. By the construction of $j_t$ in Proposition \ref{prop:filtering_OA}, we need only show that each $\id\tensorcheck_{\Sigma_t}i_*$ in \eqref{eq:good_filtration_modules_refined} is a cofibration in $\SymSeq^{\Sigma_r^\op}$. Suppose $\function{p}{C}{D}$ is an acyclic fibration in $\SymSeq^{\Sigma_r^\op}$. We need to verify that $\id\tensorcheck_{\Sigma_t}i_*$ has the left lifting property with respect to $p$. Consider any such lifting problem; we want to verify that the corresponding solid commutative diagram
\begin{align*}
\xymatrix{
  \emptyset\ar[r]\ar[d] & 
  \Map^\tensorcheck(Y^{\tensorcheck t}, C)\ar[d]^{(*)}\\
  \capO_A[\mathbf{t}\boldsymbol{+}\mathbf{r}]\ar[r]\ar@{.>}[ur] & 
  \Map^\tensorcheck(Q_{t-1}^t,C)
  \times_{\Map^\tensorcheck(Q_{t-1}^t,D)}
  \Map^\tensorcheck(Y^{\tensorcheck t},D)
}
\end{align*}
in $\SymSeq^{(\Sigma_t\times\Sigma_r)^\op}$ has a lift. By assumption, $\capO_A[\mathbf{t}\boldsymbol{+}\mathbf{r}]$ is cofibrant in $\SymSeq^{\Sigma_{t+r}^\op}$, hence $\capO_A[\mathbf{t}\boldsymbol{+}\mathbf{r}]$ is cofibrant in $\SymSeq^{(\Sigma_t\times\Sigma_r)^\op}$, and it is enough to check that $(*)$ is an acyclic fibration in $\SymSeq$. We know that $i_*$ is a cofibration in $\SymSeq$ by \cite[7.19]{Harper_Modules}, hence we know that $(*)$ has the desired property by \cite[6.1]{Harper_Modules}, which finishes the argument that $\capO_{Z_0}[\mathbf{r}]\rarrow\capO_{Z_1}[\mathbf{r}]$ is a cofibration in $\SymSeq^{\Sigma_r^\op}$ for each $r\geq 0$. Consider a sequence
$
\xymatrix@1{
  Z_0\ar[r] & Z_1\ar[r] & Z_2\ar[r] & \dotsb
}
$
of pushouts of maps as in \eqref{eq:gluing_on_cells_proof_of_forgetful}. Assume $\capO_{Z_0}[\mathbf{r}]$ is cofibrant in $\SymSeq^{\Sigma_r^\op}$ for each $r\geq 0$. Define $Z_\infty:=\colim_k Z_k$, and consider the natural map $Z_0\rarrow Z_\infty$. We know from above that 
$
\xymatrix@1{
  \capO_{Z_0}[\mathbf{r}]\ar[r] & 
  \capO_{Z_1}[\mathbf{r}]\ar[r] & 
  \capO_{Z_2}[\mathbf{r}]\ar[r] & \dotsb
}
$
is a sequence of cofibrations in $\SymSeq^{\Sigma_r^\op}$, hence $\capO_{Z_0}[\mathbf{r}]\rarrow\capO_{Z_\infty}[\mathbf{r}]$ is a cofibration in $\SymSeq^{\Sigma_r^\op}$. Since every cofibration $A\rarrow B$ in $\LtO$ is a retract of a (possibly transfinite) composition of pushouts of maps as in \eqref{eq:gluing_on_cells_proof_of_forgetful}, starting with $Z_0=A$, and $\capO_A[\mathbf{r}]$ is cofibrant in $\SymSeq^{\Sigma_r^\op}$ for each $r\geq 0$, the proof of part (a) is complete. The proof of part (b) is similar.
\end{proof}

\begin{proof}[Proof of Proposition \ref{prop:analysis_of_OA_monoidal_model_category}]
This follows from Proposition \ref{prop:homotopical_analysis_of_OA_on_maps_monoidal}(a) by taking $A=\capO\circ(\emptyset)$ (resp. $A=\capO\circ\emptyset$), together with \eqref{eq:calculating_OA_for_initial_algebras_and_modules} and the assumption that $\capO[\mathbf{r}]$ is cofibrant in $\CC^{\Sigma_r^\op}$ for each $r\geq 0$.
\end{proof}

The following proposition is the key result used to prove Proposition \ref{prop:analysis_of_OA_symmetric_spectra}.

\begin{prop}
\label{prop:homotopical_analysis_of_OA_on_maps_symmetric_spectra}
Let $\capO$ be an operad in $\capR$-modules. 
\begin{itemize}
\item[(a)] If $\function{j}{A}{B}$ is a cofibration in $\AlgO$ (resp. $\LtO$) such that $\capO_A[\mathbf{r}]$ is flat stable cofibrant in $\ModR$ (resp. $\SymSeq$) for each $r\geq 0$, then $\capO_A[\mathbf{r}]\rarrow\capO_B[\mathbf{r}]$ is a positive flat stable cofibration in $\ModR$ (resp. $\SymSeq$) for each $r\geq 0$.
\item[(b)] If $\function{j}{A}{B}$ is an acyclic cofibration in $\AlgO$ (resp. $\LtO$) such that $\capO_A[\mathbf{r}]$ is flat stable cofibrant in $\ModR$ (resp. $\SymSeq$) for each $r\geq 0$, then $\capO_A[\mathbf{r}]\rarrow\capO_B[\mathbf{r}]$ is a positive flat stable acyclic cofibration in $\ModR$ (resp. $\SymSeq$) for each $r\geq 0$.
\end{itemize}
\end{prop}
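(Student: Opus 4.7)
The strategy is to mimic closely the proof of Proposition \ref{prop:homotopical_analysis_of_OA_on_maps_monoidal}, specializing it to the model structures on $\ModR$ and $\SymSeq$, but being careful about the distinction between the flat stable and positive flat stable model structures. I will treat only the case of left $\capO$-modules; the case of $\capO$-algebras is similar. The proof proceeds by analyzing a single cell attachment, then a sequence of such, and concluding via the usual retract argument, since every cofibration in $\LtO$ is a retract of a (possibly transfinite) composition of pushouts of maps of the form $\capO\circ X \rarrow \capO\circ Y$ with $\function{i}{X}{Y}$ a generating (acyclic) cofibration in $\SymSeq$ equipped with the positive flat stable model structure.

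For part (a), consider a pushout square of the form \eqref{eq:gluing_on_cells_proof_of_forgetful} with $A:=Z_0$, and assume $\capO_A[\mathbf{r}]$ is flat stable cofibrant in $\SymSeq$ for each $r\geq 0$. By Proposition \ref{prop:filtering_OA}, $\capO_{Z_1}[\mathbf{r}]$ is built from $\capO_A[\mathbf{r}]$ by a sequence of pushouts against maps of the form $\id\tensorcheck_{\Sigma_t}i_*$ as in \eqref{eq:good_filtration_modules_refined}, so it suffices to show each such $\id\tensorcheck_{\Sigma_t}i_*$ is a positive flat stable cofibration in $\SymSeq^{\Sigma_r^\op}$. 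Testing against a positive flat stable acyclic fibration $\function{p}{C}{D}$ and using the adjunction \eqref{eq:tensorcheck_mapping_sequence_adjunction}, the lifting problem becomes a question of lifting $\emptyset\rarrow\capO_A[\mathbf{t}\boldsymbol{+}\mathbf{r}]$ against the pullback corner map
\[
(*)\colon \Map^\tensorcheck(Y^{\tensorcheck t},C)\rarrow
\Map^\tensorcheck(Q_{t-1}^t,C)\times_{\Map^\tensorcheck(Q_{t-1}^t,D)}\Map^\tensorcheck(Y^{\tensorcheck t},D)
\]
in $\SymSeq^{(\Sigma_t\times\Sigma_r)^\op}$. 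By hypothesis, $\capO_A[\mathbf{t}\boldsymbol{+}\mathbf{r}]$ is flat stable cofibrant, hence it suffices to check that $(*)$ is a flat stable acyclic fibration in $\SymSeq$. Since $i$ is a generating positive flat stable cofibration in $\SymSeq$ with positive flat stable cofibrant domain, Proposition \ref{prop:generating_cofibration} implies that $i_*\colon Q_{t-1}^t\rarrow Y^{\tensorcheck t}$ is a positive flat stable cofibration (in fact a flat stable cofibration at positive levels) between flat stable cofibrant objects in $\SymSeq^{\Sigma_t}$, so the desired property of $(*)$ follows from the pushout product axiom in the flat stable model structure as established in \cite[6.1]{Harper_Modules}.

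For part (b), run the same filtration argument, but now with $j$ an acyclic cofibration. The generating acyclic cofibrations in the positive flat stable model structure produce maps $i_*$ that are positive flat stable acyclic cofibrations between flat stable cofibrant objects in $\SymSeq^{\Sigma_t}$, and again appealing to \cite[6.1]{Harper_Modules} together with the hypothesis that $\capO_A[\mathbf{t}\boldsymbol{+}\mathbf{r}]$ is flat stable cofibrant, each $\id\tensorcheck_{\Sigma_t}i_*$ is a positive flat stable acyclic cofibration in $\SymSeq^{\Sigma_r^\op}$. A (possibly transfinite) composition and retract argument, together with the stability of positive flat stable acyclic cofibrations under pushout and transfinite composition, finishes the proof.

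The main obstacle is the verification that the pullback corner map $(*)$ is a flat stable acyclic fibration (in part (a)) or a flat stable fibration (in part (b)) in $\SymSeq$; this is exactly the pushout product/pullback corner axiom and it is the place where the cofibrancy hypothesis on $\capO_A[\mathbf{t}\boldsymbol{+}\mathbf{r}]$ is indispensable. Once this is established, the rest of the proof is formal manipulation with the filtration of Proposition \ref{prop:filtering_OA} and closure properties of cofibrations.
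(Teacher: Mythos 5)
Your part (a) is essentially the paper's own argument: the same filtration from Proposition \ref{prop:filtering_OA}, the same reduction to the maps $\id\tensorcheck_{\Sigma_t}i_*$ in \eqref{eq:good_filtration_modules_refined}, and the same lifting argument exploiting the flat stable cofibrancy of $\capO_A[\mathbf{t}\boldsymbol{+}\mathbf{r}]$ together with \cite[6.1]{Harper_Modules} and Proposition \ref{prop:generating_cofibration}. (You phrase the adjoint lifting problem with $\emptyset\rarrow\capO_A[\mathbf{t}\boldsymbol{+}\mathbf{r}]$ on the left, which is the form the paper uses in the monoidal analog, Proposition \ref{prop:homotopical_analysis_of_OA_on_maps_monoidal}; the paper's proof of the present statement runs the adjunction the other way, as in \eqref{eq:final_lifting_argument}. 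The two are equivalent.)

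Part (b), however, has a gap as written. You assert that for a generating acyclic cofibration $i$ in the positive flat stable structure, the iterated pushout-product $i_*\colon Q_{t-1}^t\rarrow Y^{\tensorcheck t}$ is a positive flat stable \emph{acyclic} cofibration, and that ``again appealing to \cite[6.1]{Harper_Modules}'' each $\id\tensorcheck_{\Sigma_t}i_*$ is then an acyclic cofibration. Neither step is covered by what you cite: Proposition \ref{prop:generating_cofibration} asserts only that $i_*$ is a cofibration between cofibrant objects, with no acyclicity claim; and to detect that $\id\tensorcheck_{\Sigma_t}i_*$ is a \emph{positive} flat stable acyclic cofibration by lifting, you must lift against all positive flat stable fibrations, which form a strictly larger class than the flat stable fibrations to which the pushout-product axiom of \cite[6.1]{Harper_Modules} applies --- you would need the mixed statement of Proposition \ref{prop:mixing_flat_stable_with_positive_flat_stable_tensorcheck} (or its analog) to make this run. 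The paper sidesteps the issue entirely: it proves part (b) by showing separately that $\capO_A[\mathbf{r}]\rarrow\capO_B[\mathbf{r}]$ is a monomorphism and a weak equivalence, via the homotopy-invariance argument of Proposition \ref{prop:homotopical_analysis_of_certain_pushouts} (which uses Propositions \ref{prop:cofibration} and \ref{prop:good_properties} to see that $\capO_A[\mathbf{t}\boldsymbol{+}\mathbf{r}]\tensorcheck_{\Sigma_t}(-)$ carries $i_*$ to a monomorphism and a weak equivalence), and then quotes part (a) for the cofibration property; a cofibration that is a weak equivalence is an acyclic cofibration. Your route can be repaired along these lines, but as written the acyclicity assertion in part (b) is not justified.
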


\begin{proof}
It suffices to consider the case of left $\capO$-modules. Consider part (a). Let $\function{i}{X}{Y}$ be a generating cofibration in $\SymSeq$ with the positive flat stable model structure, and consider a pushout diagram of the form \eqref{eq:gluing_on_cells_proof_of_forgetful} in $\LtO$. Assume $\capO_{Z_0}[\mathbf{r}]$ is flat stable cofibrant in $\SymSeq$ for each $r\geq 0$; let's verify that $\capO_{Z_0}[\mathbf{r}]\rarrow\capO_{Z_1}[\mathbf{r}]$ is a positive flat stable cofibration in $\SymSeq$ for each $r\geq 0$. Define $A:=Z_0$, and let $r\geq 0$. By Proposition \ref{prop:filtering_OA}, $\capO_{Z_1}[\mathbf{r}]$ is naturally isomorphic to a filtered colimit of the form
$
  \capO_{Z_1}[\mathbf{r}]\Iso 
  \colim\bigl(
  \xymatrix@1{
    \capO_A^0[\mathbf{r}]\ar[r]^{j_1} & 
    \capO_A^1[\mathbf{r}]\ar[r]^{j_2} & 
    \capO_A^2[\mathbf{r}]\ar[r]^{j_3} & \dotsb
  }
  \bigr)
$
in $\SymSeq$, hence it is enough to verify each $j_t$ is a positive flat stable cofibration in $\SymSeq$. By the construction of $j_t$ in Proposition \ref{prop:filtering_OA}, we need only check that each $\id\tensorcheck_{\Sigma_t}i_*$ in \eqref{eq:good_filtration_modules_refined} is a positive flat stable cofibration in $\SymSeq$. By Proposition \ref{prop:generating_cofibration}, $i_*$ is a cofibration between cofibrant objects in $\SymSeq^{\Sigma_t}$ with the positive flat stable model structure.  It is thus enough to verify that $\id\tensorcheck_{\Sigma_t}i_*$ is a flat stable cofibration in $\SymSeq$. 

Suppose $\function{p}{C}{D}$ is a flat stable acyclic fibration in $\SymSeq$. We want to show that $\id\tensorcheck_{\Sigma_t}i_*$ has the left lifting property with respect to $p$. By assumption $\capO_A[\mathbf{t}\boldsymbol{+}\mathbf{r}]$ is flat stable cofibrant in $\SymSeq$, hence by exactly the same argument used in the proof of Theorem \ref{thm:homotopical_analysis_of_forgetful_functors}, $\id\tensorcheck_{\Sigma_t}i_*$ has the left lifting property with respect to $p$, which finishes the argument that $\capO_{Z_0}[\mathbf{r}]\rarrow\capO_{Z_1}[\mathbf{r}]$ is a positive flat stable cofibration in $\SymSeq$ for each $r\geq 0$. Consider a sequence
$
\xymatrix@1{
  Z_0\ar[r] & Z_1\ar[r] & Z_2\ar[r] & \dotsb
}
$
of pushouts of maps as in \eqref{eq:gluing_on_cells_proof_of_forgetful}, define $Z_\infty:=\colim_k Z_k$, and consider the naturally occurring map $Z_0\rarrow Z_\infty$. Assume $\capO_{Z_0}[\mathbf{r}]$ is flat stable cofibrant in $\SymSeq$ for each $r\geq 0$.  By the argument above we know that
$
\xymatrix@1{
  \capO_{Z_0}[\mathbf{r}]\ar[r] & 
  \capO_{Z_1}[\mathbf{r}]\ar[r] & 
  \capO_{Z_2}[\mathbf{r}]\ar[r] & \dotsb
}
$
is a sequence of positive flat stable cofibrations in $\SymSeq$, hence $\capO_{Z_0}[\mathbf{r}]\rarrow\capO_{Z_\infty}[\mathbf{r}]$ is a positive flat stable cofibration in $\SymSeq$. Noting that every cofibration $A\rarrow B$ in $\LtO$ is a retract of a (possibly transfinite) composition of pushouts of maps as in \eqref{eq:gluing_on_cells_proof_of_forgetful}, starting with $Z_0=A$, together with the assumption that $\capO_A[\mathbf{r}]$ is flat stable cofibrant in $\SymSeq$ for each $r\geq 0$, finishes the proof of part (a). Consider part (b). By arguing exactly as in part (a), except using generating acyclic cofibrations instead of generating cofibrations, it follows that $\capO_A[\mathbf{r}]\rarrow\capO_B[\mathbf{r}]$ is a monomorphism and a weak equivalence in $\SymSeq$; for instance, this follows from exactly the same argument used in the proof of Proposition \ref{prop:homotopical_analysis_of_certain_pushouts}. Noting by part (a) that $\capO_A[\mathbf{r}]\rarrow\capO_B[\mathbf{r}]$ is a positive flat stable cofibration in $\SymSeq$ finishes the proof.
\end{proof}

\begin{proof}[Proof of Proposition \ref{prop:analysis_of_OA_symmetric_spectra}]
This follows from Proposition \ref{prop:homotopical_analysis_of_OA_on_maps_symmetric_spectra}(a) by taking $A=\capO\circ(\emptyset)$ (resp. $A=\capO\circ\emptyset$), together with \eqref{eq:calculating_OA_for_initial_algebras_and_modules} and the assumption that $\capO[\mathbf{r}]$ is flat stable cofibrant in $\ModR$ for each $r\geq 0$.
\end{proof}

\subsection{Homotopical analysis of $\capO_A$ for cofibrant operads}

The purpose of this subsection is to prove Theorem \ref{thm:rigidification}. We will also prove Theorems \ref{thm:cofibration_property_needed_for_homology_completion}, \ref{thm:forgetful_functor_cofibrant_operad_lifting_argument}, and \ref{thm:forgetful_functor_operad_symmetric_spectra_explore_flat} (resp. Propositions \ref{prop:analysis_of_OA_monoidal_model_category_cofibrant_operad} and \ref{prop:analysis_of_OA_monoidal_model_category_cofibrant_operad_spectra}), which are analogs of Theorem \ref{thm:homotopical_analysis_of_forgetful_functors} (resp. Proposition \ref{prop:analysis_of_OA_symmetric_spectra}). These analogous results, for operads in $\capR$-modules and operads in a general class of monoidal model categories, require strong assumptions on the (maps of) operads involved, that allow us to replace arguments involving filtrations of $\capO_A$ with lifting arguments involving maps of endomorphism operads of diagrams.

In the next results, we need to work with operads satisfying good lifting properties, as specified by the definition below.  

\begin{defn}
\label{defn:model-cat-operads} Suppose that $\CC$ satisfies Homotopical Assumption \ref{HomotopicalAssumption}(i). A morphism of operads in $\CC$ is a \emph{fibration} (resp.~\emph{weak equivalence}) of operads if the underlying morphism of symmetric sequences is a fibration (resp.~weak equivalence) in the corresponding projective model stucture on $\SymSeq$.  A \emph{cofibration} of operads in $\CC$ is a morphism of operads that satisfies the left lifting property with respect to all fibrations of operads that are weak equivalences.   An operad $\capO$ in $\CC$ is \emph{cofibrant} if the unique map from the initial operad to  $\capO$ is a cofibration of operads.
\end{defn}

While we have found it convenient to use model category terminology in the definition above, none of the results in this paper require a model structure to exist on the category of operads in $\CC$, and we will not establish one in this paper. The following proposition was used in Subsection \ref{subsec:TQ_completion}.

\begin{prop}
\label{prop:functorial_factorizations_of_maps_of_operads}
Let $\function{f}{\capO}{\capO'}$ be a map of operads in $\CC$. Suppose that $\CC$ satisfies Homotopical Assumption \ref{HomotopicalAssumption}(i). Then $f$ has a functorial factorization in the category of operads as $\capO\xrightarrow{g}J\xrightarrow{h}\capO'$, a cofibration followed by a weak equivalence which is also a fibration (Definition \ref{defn:model-cat-operads}).
\end{prop}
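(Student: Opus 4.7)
The plan is to produce the factorization by running Quillen's small object argument directly in the category of operads, using the free–forgetful adjunction to import generating maps from the projective model structure on symmetric sequences. First, I would appeal to Homotopical Assumption \ref{HomotopicalAssumption}(i) to equip $\SymSeq$ with the projective model structure; its generating cofibrations $I_{\SymSeq}$ inherit smallness of domains from those of $\CC$, via the standard adjunctions $G_r$ left adjoint to evaluation at level $r$. Let $\functor{F}{\SymSeq}{\Operad(\CC)}$ denote the free operad functor, left adjoint to the forgetful functor $U$, and form the set of morphisms $F(I_{\SymSeq})$ in $\Operad(\CC)$.

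Next I would apply the small object argument to $f\colon\capO\rarrow\capO'$ relative to $F(I_{\SymSeq})$, producing a functorial factorization $\capO\xrightarrow{g}J\xrightarrow{h}\capO'$ in which $g$ is a (transfinite) relative $F(I_{\SymSeq})$-cell complex and $h$ has the right lifting property with respect to $F(I_{\SymSeq})$. By adjunction, $h$ has the right lifting property with respect to $F(I_{\SymSeq})$ if and only if $U(h)$ has the right lifting property with respect to $I_{\SymSeq}$, which by the cofibrantly generated structure on $\SymSeq$ is equivalent to $U(h)$ being an acyclic fibration in the projective model structure; hence $h$ is both a fibration and a weak equivalence of operads in the sense of Definition \ref{defn:model-cat-operads}. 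For $g$, the same adjunction shows that each map in $F(I_{\SymSeq})$ has the left lifting property with respect to every morphism of operads whose underlying map in $\SymSeq$ is an acyclic fibration, i.e.\ with respect to every acyclic fibration of operads; this lifting property is stable under pushouts, transfinite composition, and retracts, so $g$ is a cofibration of operads. Functoriality is automatic from the standard set-indexed form of the small object argument.

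The main technical obstacle is verifying the smallness condition needed to run the small object argument inside $\Operad(\CC)$: we must show that the domains of the maps in $F(I_{\SymSeq})$ are small relative to the class of $F(I_{\SymSeq})$-cell complexes. By adjointness this reduces to showing that the domains of $I_{\SymSeq}$ are small in $\SymSeq$ with respect to the underlying symmetric-sequence maps of $F(I_{\SymSeq})$-cell complexes, which in turn follows provided the forgetful functor $U\colon\Operad(\CC)\rarrow\SymSeq$ preserves suitably large transfinite composites of pushouts of free maps. This preservation is a standard consequence of the closed symmetric monoidal structure on $\CC$: since $\Smash$ preserves filtered colimits in each variable (its right adjoints being $\Map$), the free operad monad on $\SymSeq$ is accessible, and pushouts of free-operad maps admit a filtration analogous to that of Proposition \ref{prop:small_arg_pushout_modules} whose underlying sequence in $\SymSeq$ is built by transfinitely iterated pushouts along maps involving tensor powers of $I_{\SymSeq}$. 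Combining accessibility of the free operad monad with the smallness of domains of $I_{\SymSeq}$ in $\SymSeq$ gives the required smallness in $\Operad(\CC)$, completing the argument.
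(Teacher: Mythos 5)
Your proposal is correct and follows essentially the same route as the paper: a small object argument in the category of operads, run against the image under the free operad functor of the generating cofibrations of $\SymSeq$, with the smallness hypothesis discharged by the fact that the forgetful functor $U$ to $\SymSeq$ preserves sufficiently large filtered colimits (because the free operad monad is finitary). The paper simply outsources the bookkeeping to Schwede--Shipley's Lemma 2.3 and Remark 2.4, using Rezk's filtered-colimit formula for the free operad to see that $U$ commutes with filtered colimits, whereas you verify the lifting and smallness claims by hand.
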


\begin{proof}
Consider symmetric sequences in $\CC$. Since $\CC$ satisfies Homotopical Assumption \ref{HomotopicalAssumption}(i), it is easy to verify, using the corresponding adjunctions $(G_p,\Ev_p)$ in \eqref{eq:adjunctions_stable_flat}, that the diagram category $\SymSeq$ also satisfies Homotopical Assumption \ref{HomotopicalAssumption}(i). Consider the free-forgetful adjunction
$
 \xymatrix@1{
  F\colon\SymSeq\ar@<0.5ex>[r] & 
  \Operad\colon U\ar@<0.5ex>[l]
}
$
with left adjoint on top and $U$ the forgetful functor; here, $\Operad$ denotes the category of operads. It is easy to verify that the functor $F$ can be constructed by a  filtered colimit of the form
\begin{align*}
  F(A)\Iso\colim\bigl(I\rightarrow I\amalg A\rightarrow 
  I\amalg A\circ(I\amalg A)\rightarrow
  I\amalg A\circ(I\amalg A\circ(I\amalg A))\rightarrow\dotsc\bigr)
\end{align*}
in the underlying category $\SymSeq$; this useful description appears in \cite{Rezk}. Since the forgetful functor $U$ commutes with filtered colimits, it follows from \cite[Remark 2.4]{Schwede_Shipley} that the smallness conditions required in \cite[Lemma 2.3]{Schwede_Shipley} are satisfied, and the (possibly transfinite) small object argument described in the proof of \cite[Lemma 2.3]{Schwede_Shipley} finishes the proof.
\end{proof}

The following theorem is motivated by \cite[4.1.14]{Rezk}.

\begin{thm}
\label{thm:cofibration_property_needed_for_homology_completion}
Let $\function{g}{\capO}{\capO'}$ be a cofibration of operads in $\CC$. Suppose that $\capO,\capO'$ and $\CC$ satisfy Homotopical Assumption \ref{HomotopicalAssumption}.
\begin{itemize}
\item[(a)] If $\function{i}{X}{Z}$ is a cofibration in $\Alg_{\capO'}$ (resp. $\Lt_{\capO'}$), and $X$ is cofibrant in the underlying category $\CC$ (resp. $\SymSeq$), then $i$ is a cofibration in $\AlgO$ (resp. $\LtO$).
\item[(b)] If the forgetful functor $\AlgO\rarrow\CC$ (resp. $\LtO\rarrow\SymSeq$) preserves cofibrant objects, and $Y$ is a cofibrant $\capO'$-algebra (resp. cofibrant left $\capO'$-module), then $Y$ is cofibrant in $\AlgO$ (resp. $\LtO$).
\end{itemize}
\end{thm}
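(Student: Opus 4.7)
Plan:

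Following the approach of Rezk \cite[4.1.14]{Rezk}, I would translate the cofibration lifting problem in $\AlgO$ (resp.\ $\LtO$) into a lifting problem in the category of operads, where the hypothesis that $g\colon\capO\to\capO'$ is a cofibration of operads can be applied directly. For part (a), by Homotopical Assumption \ref{HomotopicalAssumption}(ii), it suffices to verify that $i\colon X\to Z$ has the left lifting property in $\AlgO$ against every underlying acyclic fibration $p\colon A\to B$ in $\CC$ (resp.\ in $\SymSeq$). Given a solid commutative square $(f,h)\colon i\to p$ in $\AlgO$, the goal is to construct a diagonal $\capO$-algebra lift $j\colon Z\to A$.

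The central construction involves two endomorphism operads $E_0,E_1$ in $\CC$ associated to the square, defined via ends as in Section \ref{sec:preliminaries}. I would set up $E_0$ so that a map of operads $\capA\to E_0$ encodes compatible $\capA$-algebra structures on $(X,Z,A,B)$ making $i,f,h,p$ all $\capA$-linear, together with a $\capO'$-compatibility on $(X,Z,i)$ that agrees via $g$ with the induced $\capO$-action; the operad $E_1$ encodes the same data augmented by a $\capA$-linear diagonal $j\colon Z\to A$ satisfying $ji=f$ and $pj=h$. By construction there is a canonical forgetful map $E_1\to E_0$ of operads, and the hypotheses of the theorem yield natural maps $\capO\to E_0$ (from the square lying in $\AlgO$) and $\capO'\to E_0$ (from the $\capO'$-algebra structure on $(X,Z,i)$, extending the former along $g$); producing the desired diagonal in $\AlgO$ is then equivalent to factoring $\capO\to E_0$ through $E_1$.

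The technical heart of the proof is to show that $E_1\to E_0$ is an acyclic fibration of operads in the sense of Definition \ref{defn:model-cat-operads}. This is verified levelwise: for each $t\geq 0$, the fiber of $E_1[\mathbf{t}]\to E_0[\mathbf{t}]$ is controlled by the induced ``lifting-space'' map
\[
\Map(Z,A)\rarrow\Map(Z,B)\times_{\Map(X,B)}\Map(X,A),
\]
which is an acyclic fibration in $\CC$ by the pushout-product axiom applied to the acyclic fibration $p$ and $i$ regarded as a cofibration in $\CC$ through its $\capO'$-enrichment; the cofibrancy of $X$ in $\CC$ controls the smash powers $X^{\wedge t}$ appearing in $\End(X)[\mathbf{t}]$. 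Once this is established, the cofibration $g$ provides a diagonal in the operadic lifting problem
\[
\xymatrix{
\capO \ar[r] \ar[d]_g & E_1 \ar[d]^{\simeq} \\
\capO' \ar[r] \ar@{.>}[ur] & E_0,
}
\]
and precomposing the diagonal $\capO'\to E_1$ with $g$ yields the required map $\capO\to E_1$ and hence the diagonal $j$.

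The main obstacle will be the precise construction of $E_0$ and $E_1$ so that the natural map $\capO'\to E_0$ is available and $E_1\to E_0$ is an acyclic fibration of operads; this requires delicate bookkeeping of the interplay between the $\capO'$-enrichment on $(X,Z)$ and the $\capO$-action on $(A,B)$ glued along $g$. For part (b), I would apply (a) to the cofibration $\emptyset_{\capO'}\to Y$ in $\Alg_{\capO'}$, after verifying that the initial $\capO'$-algebra is cofibrant in $\CC$; this uses the preservation hypothesis of (b) (giving $\capO[\mathbf{0}]$ cofibrant in $\CC$) together with the observation that the arity-$0$ part of the operad cofibration $g$ yields a cofibration $\capO[\mathbf{0}]\to\capO'[\mathbf{0}]$ in $\CC$, obtained by testing $g$ against the acyclic fibration of operads concentrated in arity $0$.
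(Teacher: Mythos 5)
Your strategy---encode the original lifting problem ($i$ against an acyclic fibration $\function{p}{A}{B}$ in $\AlgO$) as an operadic lifting problem $E_1\rarrow E_0$ and then lift $g$ against it---is genuinely different from the paper's, and it has gaps I do not think can be repaired. First, the square you need does not exist: $A$ and $B$ carry only $\capO$-algebra structures, so there is no natural map of operads $\capO'\rarrow E_0$ when $E_0$ parametrizes compatible structures on all four corners; the $\capO'$-action lives only on the column $(X,Z,i)$. You flag this as ``the main obstacle,'' but it is not bookkeeping---it is the reason the direct-lifting approach fails. (There is also a structural problem with $E_1$ itself: the sought diagonal $\function{j}{Z}{A}$ is a single morphism, not arity-indexed data, so ``operad maps into $E_1$'' cannot parametrize it in the usual endomorphism-operad formalism.) Second, your verification that $E_1\rarrow E_0$ is an acyclic fibration invokes ``$i$ regarded as a cofibration in $\CC$ through its $\capO'$-enrichment''; but a cofibration in $\Alg_{\capO'}$ is not known to be a cofibration in $\CC$ at this stage---results of exactly that type (Theorems \ref{thm:homotopical_analysis_of_forgetful_functors_monoidal} and \ref{thm:forgetful_functor_cofibrant_operad_lifting_argument}) require extra hypotheses and are proved downstream---so this step is circular. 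Third, for (b), applying (a) to $\emptyset_{\capO'}\rarrow Y$ cannot yield cofibrancy of $Y$ in $\AlgO$: the initial $\capO'$-algebra is $\capO'[\mathbf{0}]$, not the initial $\capO$-algebra, so at best you would learn that $\capO'[\mathbf{0}]\rarrow Y$ is a cofibration in $\AlgO$.

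The paper never solves the lifting problem for $i$ directly. Instead it factors the relevant map and transports structure. For (b): factor $\emptyset\rarrow Y$ in $\AlgO$ as a cofibration followed by an acyclic fibration $\emptyset\rarrow X\xrightarrow{p}Y$; the hypothesis that the forgetful functor preserves cofibrant objects makes $X$ cofibrant in $\CC$, so $\function{(\id,p)}{\Map^\circ(X,X)}{\Map^\circ(X,Y)}$ is an acyclic fibration and hence so is its pullback $\End(X\xrightarrow{p}Y)\rarrow\Map^\circ(Y,Y)$; lifting $g$ against this acyclic fibration of operads puts a $\capO'$-algebra structure on $X$ making $p$ a map of $\capO'$-algebras, and then $Y$ is a retract of $X$ in $\Alg_{\capO'}$, hence in $\AlgO$, where $X$ is cofibrant. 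Part (a) is the analogous argument applied to a factorization $X\xrightarrow{j}Y\xrightarrow{p}Z$ of $i$ (as in the proof of Theorem \ref{thm:forgetful_functor_cofibrant_operad_lifting_argument}(a), with $I\rarrow\capO$ replaced by $\capO\rarrow\capO'$): one lifts $g$ against the acyclic fibration $\End(X\xrightarrow{j}Y\xrightarrow{p}Z)\rarrow\End(X\xrightarrow{i}Z)$ to make $j$ and $p$ maps of $\capO'$-algebras, and then exhibits $i$ as a retract of the cofibration $j$. The cofibrancy of $X$ in $\CC$ enters exactly where you expected---in making the relevant map of endomorphism operads an acyclic fibration---but the lifting problem being solved operadically is the \emph{rigidification} of algebra structures along a factorization, not the original lifting problem for $i$.
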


\begin{proof}
It suffices to consider the case of left $\capO'$-modules. Consider part (b). Let $Y$ be a cofibrant left $\capO'$-module. The map $\emptyset\rarrow Y$ in $\LtO$ factors functorially in $\LtO$ as $\emptyset\rightarrow X\xrightarrow{p}Y$ a cofibration followed by an acyclic fibration; here, $\emptyset$ denotes an initial object in $\LtO$. We first want to show there exists a left $\capO'$-module structure on $X$ such that $p$ is a map in $\Lt_{\capO'}$. Consider the solid commutative diagram
\begin{align*}
\xymatrix{
  \capO\ar[d]_{g}\ar[r] & 
  \End(X\xrightarrow{p}Y)
  \ar[d]^{(*)}\ar[r]^-{(**)}
  & \Map^\circ(X,X)\ar[d]^{(\id,p)}\\
  \capO'\ar[r]^-{m}\ar@{.>}[ur]^-{\ol{m}} & 
  \Map^\circ(Y,Y)\ar[r]^{(p,\id)} & \Map^\circ(X,Y)
}
\end{align*}
in $\SymSeq$ such that the right-hand square is a pullback diagram. It is easy to verify that the maps $(*)$ and $(**)$ are morphisms of operads. By assumption, $X$ is cofibrant in $\SymSeq$, hence we know that $(\id,p)$ is an acyclic fibration by \cite[6.2]{Harper_Modules}, and therefore $(*)$ is an acyclic fibration in $\SymSeq$. Since $g$ is a cofibration of operads, there exists a morphism of operads $\ol{m}$ that makes the diagram commute. It follows that the composition 
$
  \capO'\xrightarrow{\ol{m}}\End(X\xrightarrow{p}Y)
  \xrightarrow{(**)}\Map^\circ(X,X)
$
of operad maps determines a left $\capO'$-module structure on $X$ such that $p$ is a morphism of left $\capO'$-modules. To finish the proof, we need to show that $Y$ is cofibrant in $\LtO$. Consider the solid commutative diagram
\begin{align*}
\xymatrix{
  \emptyset\ar[d]\ar[r]& X\ar[d]^{p}\\
  Y\ar@{.>}[ur]^-{\xi}\ar@{=}[r] & Y
}
\end{align*}
in $\Lt_{\capO'}$, where $\emptyset$ denotes an initial object in $\Lt_{\capO'}$. Since $Y$ is cofibrant in $\Lt_{\capO'}$, and $p$ is an acyclic fibration, this diagram has a lift $\xi$ in $\Lt_{\capO'}$. In particular, $Y$ is a retract of $X$ in $\Lt_{\capO'}$, and hence in $\LtO$. Noting that $X$ is cofibrant in $\LtO$ finishes the proof of part (b). Part (a) can be established exactly as in the proof of Theorem \ref{thm:forgetful_functor_cofibrant_operad_lifting_argument}(a), by replacing the map $I\rarrow\capO$ with the map $\capO\rarrow\capO'$.
\end{proof}

\begin{proof}[Proof of Theorem \ref{thm:rigidification}]
It suffices to consider the case of left $\capO$-modules. Since $X$ is cofibrant in $\LtO$ and $g_*$ is a left Quillen functor, $g_*(X)$ is cofibrant in $\Lt_{J_1}$ and hence by \ref{thm:comparing_homotopy_categories} and \ref{prop:unit_map_is_weak_equivalence} it follows that $g^*g_*(X)\wequiv\TQ(X)$. To iterate the argument, it suffices to verify that the right Quillen functor $g^*$ preserves cofibrant objects: this follows from Theorem \ref{thm:cofibration_property_needed_for_homology_completion} and Theorem \ref{thm:homotopical_analysis_of_forgetful_functors}.
\end{proof}

The following theorem is closely related to \cite[4.1.15]{Rezk}.

\begin{thm}
\label{thm:forgetful_functor_cofibrant_operad_lifting_argument}
Let $\capO$ be a cofibrant operad in $\CC$. Suppose that Homotopical Assumption \ref{HomotopicalAssumption} is satisfied. 
\begin{itemize}
\item[(a)] If $\function{i}{X}{Z}$ is a cofibration in $\AlgO$ (resp. $\LtO$), and $X$ is cofibrant in the underlying category $\CC$ (resp. $\SymSeq$), then $i$ is a cofibration in the underlying category $\CC$ (resp. $\SymSeq$).
\item[(b)] If $Y$ is a cofibrant $\capO$-algebra (resp. cofibrant left $\capO$-module), then $Y$ is cofibrant in the underlying category $\CC$ (resp. $\SymSeq$).
\item[(c)] If the unit $S$ is cofibrant in $\CC$, then $\capO[\mathbf{r}]$ is cofibrant in $\CC^{\Sigma_r^\op}$ for each $r\geq 0$.
\end{itemize}
\end{thm}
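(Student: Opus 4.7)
The plan is to prove (b) first, deduce (c) from it, and establish (a) by an analogous lifting argument. Both (b) and (a) rest on the principle that any map of operads $\capO \to \End(-)$ can be lifted along an acyclic fibration of operads, by cofibrancy of $\capO$; in each case this produces a compatible $\capO$-structure on an auxiliary object obtained by factoring a map in the underlying category, and a retract argument then concludes. The algebra and left module cases will run in parallel, using the analogous endomorphism operad constructions for diagrams in $\CC$ and $\SymSeq$, respectively.

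For (b), following the proof of Theorem \ref{thm:cofibration_property_needed_for_homology_completion}(b), factor the canonical map from the initial object to $Y$ in the underlying category as $\emptyset \to W \xrightarrow{p} Y$, a cofibration followed by an acyclic fibration, so that $W$ is cofibrant. The endomorphism operad $\End(W \xrightarrow{p} Y)$ fits into the pullback square of operads
\begin{align*}
\xymatrix{
\End(W \xrightarrow{p} Y) \ar[r] \ar[d]_{(*)} & \Map^\circ(\hat{W},\hat{W}) \ar[d]^{(\id,p)} \\
\Map^\circ(\hat{Y},\hat{Y}) \ar[r]_-{(p,\id)} & \Map^\circ(\hat{W},\hat{Y}).
}
\end{align*}
Since $W$ is cofibrant and $p$ is an acyclic fibration, $(\id,p)$ is an acyclic fibration in $\SymSeq$ by \cite[6.1, 6.2]{Harper_Modules}, hence so is the pullback $(*)$, which is therefore an acyclic fibration of operads in the sense of Definition \ref{defn:model-cat-operads}. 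By cofibrancy of $\capO$, the structure map $\capO \to \Map^\circ(\hat{Y},\hat{Y})$ of $Y$ lifts along $(*)$, endowing $W$ with a compatible $\capO$-structure for which $p$ is equivariant. Then $p: W \to Y$ is an acyclic fibration in $\AlgO$ (resp.~$\LtO$) by Homotopical Assumption \ref{HomotopicalAssumption}(ii), and cofibrancy of $Y$ there gives a section $\xi: Y \to W$, exhibiting $Y$ as a retract of $W$ in the underlying category and hence cofibrant there. Part (c) then drops out of (b) by self-application: operad multiplication makes $\capO \cong \capO \circ I$ into the free left $\capO$-module on $I$, which is cofibrant in $\SymSeq$ (projective model structure) since $S$ is cofibrant in $\CC$; the free--forgetful Quillen adjunction then makes $\capO$ cofibrant in $\LtO$, and (b) forces each $\capO[\mathbf{r}]$ to be cofibrant in $\CC^{\Sigma_r^{\op}}$.

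For (a), factor $i: X \to Z$ in the underlying category as $X \xrightarrow{j} W \xrightarrow{q} Z$, a cofibration followed by an acyclic fibration, so that $W$ is cofibrant. The goal is to equip $W$ with an $\capO$-structure making both $j$ and $q$ equivariant: then $q$ becomes an acyclic fibration in $\AlgO$ (resp.~$\LtO$), and lifting $\id_Z$ against $q$ along the cofibration $i$ yields a section $\xi: Z \to W$ in $\AlgO$ (resp.~$\LtO$), exhibiting $i$ as a retract of the cofibration $j$ in the arrow category of the underlying category, hence as a cofibration there. To produce the structure on $W$, consider the endomorphism operad $\End(X \to W \to Z)$ of the three-object diagram and the restriction map $\End(X \to W \to Z) \to \End(X \to Z)$ along the inclusion of the composite. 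The given map $\capO \to \End(X \to Z)$ (encoding the $\capO$-structures on $X$ and $Z$ together with the fact that $i$ is equivariant) will lift along this restriction by cofibrancy of $\capO$, provided the restriction is an acyclic fibration of operads. The hard part is precisely this last claim: testing the lifting property against a cofibration $K \to L$ in $\CC^{\Sigma_r^{\op}}$ and unwinding the $\hom$--$\Map^\circ$ adjunction reduces the problem to a lifting problem in the underlying category whose left vertical is the pushout-product of $K \to L$ with the cofibration $X^{\wedge r} \to W^{\wedge r}$ between cofibrant objects (obtained by iterated pushout-product of $j$), and whose right vertical is the acyclic fibration $q$; the desired lift exists by the pushout-product axiom, with careful attention paid to the $\Sigma_r$-equivariance.
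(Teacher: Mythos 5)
Your proposal is correct and follows essentially the same route as the paper: factor the map in the underlying category, lift the operad structure through the (acyclic fibration) restriction map of endomorphism operads using cofibrancy of $\capO$, conclude by a retract argument, and deduce (c) from (b) via $\capO\iso\capO\circ I$ exactly as the paper does. The one divergence is that for the key claim that $\End(X\rarrow W\rarrow Z)\rarrow\End(X\rarrow Z)$ is an acyclic fibration, the paper simply exhibits it as a pullback of the pullback-corner map $(j,q)$ and cites \cite[6.2]{Harper_Modules}, whereas you sketch a direct verification; if you carry that out, the left vertical of the adjoint lifting problem is the iterated pushout-product $Q_{r-1}^{r}\rarrow W^{\wedge r}$ of $j$ (with the $\Sigma_r$-coinvariants smashed against $K[\mathbf{r}]\rarrow L[\mathbf{r}]$), not $X^{\wedge r}\rarrow W^{\wedge r}$.
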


\begin{proof} The proof of this result is very similar to that of the previous theorem. It suffices to consider the case of left $\capO$-modules. Consider part (a). Let $\function{i}{X}{Z}$ be a cofibration in $\LtO$. The map $i$ factors functorially in the underlying category $\SymSeq$ as $X\xrightarrow{j}Y\xrightarrow{p}Z$, a cofibration followed by an acyclic fibration. We want first to show there exists a left $\capO$-module structure on $Y$ such that $j$ and $p$ are maps in $\LtO$. Consider the solid commutative diagram
\begin{align*}
\xymatrix{
  I\ar[d]\ar[r] & 
  \End(X\xrightarrow{j}Y\xrightarrow{p}Z)\ar[d]^{(*)}\ar[r]^{(**)} & 
  \Map^\circ(Y,Y)\ar[d]^{(j,p)}\\
  \capO\ar[r]^-{m}\ar@{.>}[ur]^(0.4){\ol{m}} & \End(X\xrightarrow{i}Z)\ar[r] &
  \Map^\circ(X,Y)\times_{\Map^\circ(X,Z)}\Map^\circ(Y,Z) 
}
\end{align*}
in $\SymSeq$ such that the right-hand square is a pullback diagram. It is easy to verify that the maps $(*)$ and $(**)$ are morphisms of operads. By assumption, $X$ is cofibrant in $\SymSeq$, hence we know that the pullback corner map $(j,p)$ is an acyclic fibration by \cite[6.2]{Harper_Modules}, and therefore $(*)$ is an acyclic fibration in $\SymSeq$. Since $\capO$ is a cofibrant operad,  the map $I\rarrow\capO$ is a cofibration of operads, and there exists a morphism of operads $\ol{m}$ that makes the diagram commute. It follows that the composition
$
  \capO\xrightarrow{\ol{m}}\End(X\xrightarrow{j}Y\xrightarrow{p}Z)
  \xrightarrow{(**)}\Map^\circ(Y,Y)
$
of operad maps determines a left $\capO$-module structure on $Y$ such that $j$ and $p$ are morphisms of left $\capO$-modules. To finish the proof, we need to show that $i$ is a cofibration in $\SymSeq$. Consider the solid commutative diagram
\begin{align*}
\xymatrix{
  X\ar[d]_{i}\ar[r]^{j} & Y\ar[d]^{p}\\
  Z\ar@{=}[r]\ar@{.>}[ur]^{\xi} & Z
}
\end{align*}
in $\LtO$. Since $i$ is a cofibration and $p$ is an acyclic fibration in $\LtO$, the diagram has a lift $\xi$ in $\LtO$. In particular, $i$ is a retract of $j$ in $\LtO$, and hence in the underlying category $\SymSeq$. Noting that $j$ is a cofibration in $\SymSeq$ finishes the proof of part (a). Part (b) follows immediately from \cite[proof of 10.2]{Harper_Bar}, which uses a similar argument; it is also a special case of Theorem \ref{thm:cofibration_property_needed_for_homology_completion}(b). Consider part (c). By assumption, the unit $S$ is cofibrant in $\CC$, hence the map $\emptyset\rarrow I$ is a cofibration in $\SymSeq$ and therefore $\capO\circ\emptyset\rarrow\capO\circ I$ is a cofibration in $\LtO$. Hence $\capO\Iso\capO\circ I$ is a cofibrant left $\capO$-module, and part (b) finishes the proof.
\end{proof}

\begin{thm}
\label{thm:forgetful_functor_operad_symmetric_spectra_explore_flat}
Let $\capO$ be a cofibrant operad in $\capR$-modules with respect to the positive flat stable model structure. 
\begin{itemize}
\item[(a)] $\capO[\mathbf{r}]$ is flat stable cofibrant in $\ModR^{\Sigma_r^\op}$ for each $r\geq 0$.
\item[(b)] If $\function{i}{X}{Z}$ is a cofibration in $\AlgO$ (resp. $\LtO$), and $X$ is flat stable cofibrant in the underlying category $\ModR$ (resp. $\SymSeq$), then $i$ is a flat stable cofibration in the underlying category $\ModR$ (resp. $\SymSeq$).
\end{itemize}
\end{thm}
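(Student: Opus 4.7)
The plan is to prove part (b) first by mimicking, in the positive flat stable setting, the lifting argument used for Theorem \ref{thm:forgetful_functor_cofibrant_operad_lifting_argument}(a), and then to deduce part (a) (or establish it by a parallel argument that analyzes the free-operad construction on generating cofibrations).

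For part (b), I would consider only the case of left $\capO$-modules (the algebra case is analogous). Suppose $\function{i}{X}{Z}$ is a cofibration in $\LtO$ with $X$ flat stable cofibrant in $\SymSeq$. First I would factor $i$ in the underlying category $\SymSeq$ with the flat stable model structure as
$X\xrightarrow{j}Y\xrightarrow{p}Z$, a flat stable cofibration followed by a flat stable acyclic fibration. Next I would lift the left $\capO$-action on $Y$ by contemplating the solid commutative diagram of operads
\[
\xymatrix{
  I\ar[d]\ar[r] &
  \End(X\xrightarrow{j}Y\xrightarrow{p}Z)\ar[d]^{(*)}\ar[r] &
  \Map^\circ(Y,Y)\ar[d]^{(j,p)}\\
  \capO\ar[r]\ar@{.>}[ur]^{\ol{m}} & \End(X\xrightarrow{i}Z)\ar[r] &
  \Map^\circ(X,Y)\times_{\Map^\circ(X,Z)}\Map^\circ(Y,Z)
}
\]
in $\SymSeq$, with the right-hand square a pullback. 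Since $X$ is flat stable cofibrant, $j$ is a flat stable cofibration, and $p$ is a flat stable acyclic fibration, the pullback corner map $(j,p)$ is a flat stable acyclic fibration by the monoidal model category property of the (positive) flat stable structure established in Section \ref{sec:model_structures} (in the spirit of \cite[6.2]{Harper_Modules}). Since every flat stable acyclic fibration in $\ModR$ is also a positive flat stable acyclic fibration (the positive structure has fewer cofibrations and hence at least as many acyclic fibrations), the same holds levelwise in $\SymSeq$, so $(*)$ is an acyclic fibration of operads in the sense of Definition \ref{defn:model-cat-operads}. The assumption that $\capO$ is a cofibrant operad produces the desired lift $\ol{m}$, endowing $Y$ with a left $\capO$-module structure making both $j$ and $p$ maps in $\LtO$. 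Then, exactly as in the proof of Theorem \ref{thm:forgetful_functor_cofibrant_operad_lifting_argument}(a), the lifting in $\LtO$ of the square $X\xrightarrow{j}Y$, $X\xrightarrow{i}Z\xrightarrow{=}Z$ through $p$ exhibits $i$ as a retract of $j$ in the underlying category $\SymSeq$, so $i$ is a flat stable cofibration.

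For part (a), the idea is to exploit that $I[\mathbf{r}]$ is flat stable cofibrant in $\ModR^{\Sigma_r^\op}$ for every $r\geq 0$ (since $I[\mathbf{1}]=\capR$ and $I[\mathbf{r}]=\emptyset$ for $r\neq 1$) and to propagate this property along cell attachments building a cofibrant operad from $I$. By the argument used in Proposition \ref{prop:functorial_factorizations_of_maps_of_operads}, any cofibration of operads is a retract of a (possibly transfinite) composition of pushouts of free operad maps $F(A)\rightarrow F(B)$, where $A\rightarrow B$ is a generating cofibration of $\SymSeq$ in the positive flat stable projective model structure. Since the generating cofibrations for this projective structure have the form $G_{\Sigma_r}\cdot k$ with $k$ a generating cofibration of $\ModR$ in the positive flat stable structure, both $A[\mathbf{s}]$ and $B[\mathbf{s}]$ are flat stable cofibrant in $\ModR^{\Sigma_s^\op}$ for every $s\geq 0$. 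I would then verify (via a tree-indexed filtration of free operads, analogous to Proposition \ref{prop:filtering_OA}) that both the free operad construction $F(-)$ and the pushout construction in operads preserve the property that every arity component is flat stable cofibrant in $\ModR^{\Sigma_r^\op}$. Taking retracts and transfinite compositions preserves this as well, which gives part (a).

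The main obstacle is the combinatorial filtration analysis in part (a): namely, producing a concrete tree-wise filtration of a pushout of free operad maps refined enough to see, at each arity, a presentation as an iterated pushout of maps of the form (generating cofibration in $\ModR^{\Sigma_r^\op}$) $\Smash_\Sigma$ (something flat stable cofibrant), thereby keeping the arity components in the class of flat stable cofibrant objects of $\ModR^{\Sigma_r^\op}$. Once this is set up, the pushout-product axiom in $\ModR$ for the flat stable monoidal model structure together with the observation that smashing with objects of the form $(-)\cdot_{\Sigma_t}\Sigma_r$ preserves flat stable cofibrancy in the equivariant setting will close the induction.
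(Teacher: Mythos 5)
Your part (b) is correct and is essentially the paper's argument: the whole content is the observation that every flat stable (acyclic) fibration is a positive flat stable (acyclic) fibration, so a positive-flat-cofibrant operad still has the left lifting property against the endomorphism-operad map $(*)$ built from a \emph{flat} stable factorization, and the retract argument of Theorem \ref{thm:forgetful_functor_cofibrant_operad_lifting_argument}(a) goes through verbatim. (Phrased slightly differently, the paper just notes up front that $\capO$ is therefore also a cofibrant operad with respect to the flat stable model structure and then quotes that theorem.)

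For part (a), however, your proposal has a genuine gap. You route the proof through a cellular analysis of the free operad functor and of pushouts of free operad maps, and the entire burden of the argument sits in the step you yourself flag as ``the main obstacle'': producing an arity-wise, tree-indexed filtration of a pushout $F(A)\rarrow F(B)$ of free operads exhibiting each $\capO[\mathbf{r}]$ as an iterated pushout of flat stable cofibrations in $\ModR^{\Sigma_r^\op}$. No such filtration is constructed anywhere in the paper, and it is not a routine consequence of Proposition \ref{prop:filtering_OA} (which filters $\capO_B$ for a fixed operad $\capO$ and a cell attachment in $\LtO$, not a cell attachment in the category of operads); the combinatorics of free operad extensions, including the $\Sigma_r$-equivariance of the attaching data, is substantially harder and is precisely what the paper's lifting-argument strategy is designed to avoid. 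As written, part (a) is therefore not proved. Note that your part (b) already gives you part (a) almost for free, which is how the paper proceeds: since every flat stable fibration is a positive flat stable fibration, $\capO$ is also a cofibrant operad for the flat stable structure, where the unit $\capR$ \emph{is} cofibrant; hence $\emptyset\rarrow I$ is a flat stable cofibration in $\SymSeq$, so $\capO\circ\emptyset\rarrow\capO\circ I\Iso\capO$ is a cofibration in $\LtO$ with cofibrant (initial) source, and the underlying-cofibrancy statement (Theorem \ref{thm:forgetful_functor_cofibrant_operad_lifting_argument}(b)--(c), proved by the same endomorphism-operad lifting trick) yields that each $\capO[\mathbf{r}]$ is flat stable cofibrant in $\ModR^{\Sigma_r^\op}$.
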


\begin{proof}
Since every flat stable fibration in $\SymSeq$ is a positive flat stable fibration in $\SymSeq$, it follows that $\capO$ is also a cofibrant operad in $\capR$-modules with respect to the flat stable model structure. The proof of Theorem \ref{thm:forgetful_functor_cofibrant_operad_lifting_argument} finishes the argument.
\end{proof}

\begin{prop}
\label{prop:pushout_description_of_OA}
Let $\capO$ be an operad in $\CC$ and $A\in\AlgO$ (resp. $A\in\LtO$). Consider the pushout diagram in $\LtO$ (resp. $\Lt_{\tilde{\capO}}$) of the form
\begin{align}
\label{eq:pushout_description_of_OA}
\xymatrix{
  \capO\circ \emptyset\ar[r]\ar[d] & 
  \hat{A}\ar[d]^{j}\\
  \capO\circ I\ar[r] & \hat{A}\amalg(\capO\circ I)
}\quad\quad\text{resp.}\quad
\xymatrix{
  \tilde{\capO}\circtilde \emptyset\ar[r]\ar[d] & 
  \tilde{A}\ar[d]^{j}\\
  \tilde{\capO}\circtilde \hat{I}\ar[r] & 
  \tilde{A}\amalg(\tilde{\capO}\circtilde\hat{I})
}
\end{align} 
There are natural isomorphisms
$
  \capO_A[\mathbf{t}]\Iso
  \bigl(\hat{A}\amalg(\capO\circ I)\bigr)[\mathbf{t}]
$ (resp. 
$
  \capO_A[\mathbf{t}][\mathbf{r}]\Iso
  \bigl(\tilde{A}\amalg(\tilde{\capO}\circtilde \hat{I})
  \bigr)[\mathbf{r}][\mathbf{t}]
$)
for each $r,t\geq 0$. Here, $\hat{I}$ is the symmetric array concentrated at $0$ with value $I$.
\end{prop}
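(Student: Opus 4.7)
The plan is to deduce both isomorphisms by chaining together Proposition \ref{prop:coproduct_modules} with the identifications supplied by Propositions \ref{prop:relating_the_OA_constructions}, \ref{prop:tilde_commutes_with_OA_constructions}, and \ref{prop:symmetric_sequence_decompositions}; the notation $\hat{A}\amalg(\capO\circ I)$ will be justified first by observing that the displayed pushouts are actually coproducts.

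First, I would observe that the pushouts in \eqref{eq:pushout_description_of_OA} coincide with the corresponding coproducts. Since $\functor{\capO\circ -}{\SymSeq}{\LtO}$ is a left adjoint (Proposition \ref{prop:basic_properties_LTO}), it preserves initial objects, so $\capO\circ\emptyset$ is initial in $\LtO$ and the pushout in the algebra case agrees with the coproduct $\hat{A}\amalg(\capO\circ I)$ in $\LtO$, as the notation indicates. An identical argument, using that $\tilde{\capO}\circtilde-$ is a left adjoint from $\SymArray$ to $\Lt_{\tilde{\capO}}$ (see \eqref{eq:evaluate_at_zero_adjunction}), handles the left module case.

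For the algebra case, I would apply the left module version of Proposition \ref{prop:coproduct_modules} with the module $\hat{A}$ (in place of $A$) and $Y=I$ (the unit of circle product in $\SymSeq$) to obtain, in the underlying category $\SymSeq$, a natural isomorphism
\begin{align*}
  \hat{A}\amalg(\capO\circ I)\Iso
  \coprod_{q\geq 0}\capO_{\hat{A}}[\mathbf{q}]\tensorcheck_{\Sigma_q}I^{\tensorcheck q}.
\end{align*}
By Proposition \ref{prop:relating_the_OA_constructions}, $\capO_{\hat{A}}[\mathbf{q}]\Iso\widehat{\capO_A[\mathbf{q}]}$. Evaluating at $\mathbf{t}$ and combining with the left-hand isomorphism of Proposition \ref{prop:symmetric_sequence_decompositions} applied to the symmetric sequence $B=\capO_A$ then yields $\bigl(\hat{A}\amalg(\capO\circ I)\bigr)[\mathbf{t}]\Iso\capO_A[\mathbf{t}]$, which is the desired natural isomorphism.

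The left module case follows by exactly the same argument, carried out one level up: work in the closed symmetric monoidal category $(\SymSeq,\tensorcheck,1)$ and its diagram category $\SymArray$, apply the analog of Proposition \ref{prop:coproduct_modules} to the left $\tilde{\capO}$-module $\tilde{A}$ with $Y=\hat{I}$ (the unit of $\circtilde$), use Proposition \ref{prop:tilde_commutes_with_OA_constructions} to rewrite $\tilde{\capO}_{\tilde{A}}[\mathbf{q}]\Iso\widetilde{\capO_A[\mathbf{q}]}$, and combine with the right-hand isomorphism of Proposition \ref{prop:symmetric_sequence_decompositions} (applied to the symmetric array naturally associated to $\capO_A$) to conclude. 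There is no real obstacle here: the argument is essentially bookkeeping, with the only subtlety being the initial identification of the pushouts with coproducts via the preservation of initial objects by the relevant free functors.
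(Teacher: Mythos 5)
Your proposal is correct and follows exactly the route of the paper, whose proof is the one-line citation of Propositions \ref{prop:coproduct_modules}, \ref{prop:relating_the_OA_constructions}, \ref{prop:tilde_commutes_with_OA_constructions}, and \ref{prop:symmetric_sequence_decompositions}; you have simply written out the chain of identifications (coproduct decomposition with $Y=I$, resp. $Y=\hat{I}$, then the $\hat{-}$ and $\tilde{-}$ compatibilities, then reassembly via Proposition \ref{prop:symmetric_sequence_decompositions}) that the paper leaves implicit. The preliminary observation that the pushouts are coproducts because $\capO\circ\emptyset$ (resp. $\tilde\capO\circtilde\emptyset$) is initial is a correct and worthwhile clarification of the notation.
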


\begin{proof}
This follows from Propositions \ref{prop:coproduct_modules}, \ref{prop:relating_the_OA_constructions}, \ref{prop:tilde_commutes_with_OA_constructions}, and \ref{prop:symmetric_sequence_decompositions}.
\end{proof}

\begin{prop}
\label{prop:cofibrant_operad_symmetric_array_underlying}
Let $\capO$ be a cofibrant operad in $\CC$. Suppose that $\capO,\tilde{\capO}$ and $\CC$ satisfy Homotopical Assumption \ref{HomotopicalAssumption}. If $\function{i}{X}{Z}$ is a cofibration in $\Lt_{\tilde{\capO}}$ such that $X$ is cofibrant in the underlying category $\SymArray$, then $i$ is a cofibration in the underlying category $\SymArray$.
\end{prop}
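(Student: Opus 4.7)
[Proof plan for Proposition \ref{prop:cofibrant_operad_symmetric_array_underlying}]
The plan is to reduce the statement to a direct application of (the argument of) Theorem \ref{thm:forgetful_functor_cofibrant_operad_lifting_argument}(a), but applied with $(\CC,\Smash,S)$ replaced by the closed symmetric monoidal category $(\SymSeq,\tensorcheck,1)$ and with $\capO$ replaced by $\tilde{\capO}$. For this reduction to work, the key preliminary point to establish is that $\tilde{\capO}$ is a cofibrant operad in $\SymSeq$.

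First I would verify that $\tilde{\capO}$ is a cofibrant operad in $\SymSeq$. The right-hand adjunction in \eqref{eq:evaluate_at_zero_adjunction} exhibits $\function{\tilde{-}}{\Operad(\CC)}{\Operad(\SymSeq)}$ as a left adjoint, with right adjoint $\Ev_0$. Since fibrations and weak equivalences of operads (Definition \ref{defn:model-cat-operads}) are defined levelwise in the underlying symmetric sequence, and since the functor $\function{\Ev_0}{\SymArray}{\SymSeq}$ evaluates the outer symmetric sequence at $\mathbf{0}$ and is therefore a right Quillen functor between projective model structures, it follows that $\Ev_0$ carries fibrations (resp.\ acyclic fibrations) of operads in $\SymSeq$ to fibrations (resp.\ acyclic fibrations) of operads in $\CC$. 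By adjointness, $\tilde{-}$ sends cofibrations of operads in $\CC$ to cofibrations of operads in $\SymSeq$; in particular, since $\capO$ is cofibrant, so is $\tilde{\capO}$.

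Next I would run the argument of Theorem \ref{thm:forgetful_functor_cofibrant_operad_lifting_argument}(a) verbatim in the new underlying context. Concretely: factor $\function{i}{X}{Z}$ in $\SymArray$ as $X\xrightarrow{j}Y\xrightarrow{p}Z$, a cofibration followed by an acyclic fibration, and consider the solid commutative diagram
\begin{align*}
\xymatrix{
  \tilde{I}\ar[d]\ar[r] &
  \End(X\xrightarrow{j}Y\xrightarrow{p}Z)\ar[d]^{(*)}\ar[r]^-{(**)} &
  \Map^{\tilde{\circ}}(Y,Y)\ar[d]^{(j,p)}\\
  \tilde{\capO}\ar[r]\ar@{.>}[ur]^-{\ol{m}} & \End(X\xrightarrow{i}Z)\ar[r] &
  \Map^{\tilde{\circ}}(X,Y)\times_{\Map^{\tilde{\circ}}(X,Z)}\Map^{\tilde{\circ}}(Y,Z)
}
\end{align*}
in $\SymArray$, whose right-hand square is a pullback and whose maps $(*)$ and $(**)$ are morphisms of operads in $\SymSeq$. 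Since $X$ is cofibrant in $\SymArray$ and $(\SymSeq,\tensorcheck,1)$ is a closed symmetric monoidal category, the analog of \cite[6.2]{Harper_Modules} (valid in the monoidal model category $\SymSeq$, since Homotopical Assumption \ref{HomotopicalAssumption} has been imposed on $\tilde{\capO}$) implies that $(j,p)$ is an acyclic fibration in $\SymArray$, hence so is $(*)$. By cofibrancy of $\tilde{\capO}$ as an operad, a lift $\ol{m}$ exists, and its composite with $(**)$ endows $Y$ with a left $\tilde{\capO}$-module structure making $j$ and $p$ into morphisms in $\Lt_{\tilde{\capO}}$.

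Finally, the lifting problem
\begin{align*}
\xymatrix{
  X\ar[d]_{i}\ar[r]^{j} & Y\ar[d]^{p}\\
  Z\ar@{=}[r]\ar@{.>}[ur]^{\xi} & Z
}
\end{align*}
in $\Lt_{\tilde{\capO}}$ admits a solution $\xi$ because $i$ is a cofibration in $\Lt_{\tilde{\capO}}$ and $p$ is an acyclic fibration there; thus $i$ is a retract of $j$ in $\Lt_{\tilde{\capO}}$, and hence in the underlying category $\SymArray$, so $i$ is a cofibration in $\SymArray$. I expect the only subtle point to be the verification that $\tilde{\capO}$ is a cofibrant operad in $\SymSeq$; once this is in hand, the rest of the argument is a mechanical translation of the proof of Theorem \ref{thm:forgetful_functor_cofibrant_operad_lifting_argument}(a) to the new monoidal setting.
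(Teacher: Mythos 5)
Your proof is correct and follows the paper's argument essentially verbatim; the only difference is in how the lift $\ol{m}$ is produced. The paper transposes the specific lifting problem through the adjunction between $\tilde{-}$ and $\Ev_0$ on operad categories and solves it in $\Operad(\CC)$ using cofibrancy of $\capO$, whereas you apply that same adjunction once and for all to conclude that $\tilde{\capO}$ is a cofibrant operad in $(\SymSeq,\tensorcheck,1)$ and then lift directly in $\Operad(\SymSeq)$ --- these are interchangeable, and both rest on the observation that $\Ev_0$ preserves fibrations and acyclic fibrations of operads.
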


\begin{proof}
This proof is similar to that of Theorem \ref{thm:forgetful_functor_cofibrant_operad_lifting_argument}, except for the following variation on the lifting argument. Let $\function{i}{X}{Z}$ be a cofibration in $\Lt_{\tilde{\capO}}$. The map $i$ factors functorially in the underlying category $\SymArray$ as
$X\xrightarrow{j}Y\xrightarrow{p}Z$, a cofibration followed by an acyclic fibration. We need to show there exists a left $\tilde{\capO}$-module structure on $Y$ such that $j$ and $p$ are maps in $\Lt_{\tilde{\capO}}$. Consider the solid diagram
\begin{align*}
\xymatrix{
  & \End(X\xrightarrow{j}Y\xrightarrow{p}Z)\ar[d]^{(*)}\ar[r]^{(**)} & 
  \Map^\circtilde(Y,Y)\ar[d]^{(j,p)}\\
  \tilde{\capO}\ar[r]^-{m}\ar@{.>}[ur]^(0.4){\ol{m}} & \End(X\xrightarrow{i}Z)\ar[r] &
  \Map^\circtilde(X,Y)\times_{\Map^\circtilde(X,Z)}\Map^\circtilde(Y,Z) 
}
\end{align*}
in $\SymArray$, such that the square is a pullback diagram. It is easy to verify that the maps $(*)$ and $(**)$ are morphisms of operads. Since $X$ is cofibrant in $\SymArray$,  the pullback corner map $(j,p)$ is an acyclic fibration in  $\SymArray$ by \cite[6.2]{Harper_Modules}, and therefore $(*)$ is as well. We need to show there exists a map of operads $\ol{m}$ that makes the diagram commute. By the right-hand adjunction in \eqref{eq:evaluate_at_zero_adjunction}, it is enough to show there exists a map $\ol{m}$ of operads in $\CC$ that makes the corresponding diagram
\begin{align*}
\xymatrix{
  & \Ev_0\bigl(\End(X\xrightarrow{j}Y\xrightarrow{p}Z)\bigr)
  \ar[d]^{\Ev_0(*)} \\
  \capO\ar[r]^-{m}\ar@{.>}[ur]^(0.4){\ol{m}} & 
  \Ev_0\bigl(\End(X\xrightarrow{i}Z)\bigr)
}
\end{align*}
of operads in $\CC$ commute. Since $\capO$ is a cofibrant operad in $\CC$,  the desired lift $\ol{m}$ exists. It follows that the composition $(**)\ol{m}$ of operad maps determines a left $\tilde{\capO}$-module structure on $Y$ such that $j$ and $p$ are morphisms of left $\tilde{\capO}$-modules. To finish the proof, we need to show that $i$ is a cofibration in $\SymArray$, which follows exactly as in the proof of Theorem \ref{thm:forgetful_functor_cofibrant_operad_lifting_argument}.
\end{proof}

\begin{prop}
\label{prop:analysis_of_OA_monoidal_model_category_cofibrant_operad}
Let $\capO$ be a cofibrant operad in $\CC$. Suppose that $\capO,\tilde{\capO}$ and $\CC$ satisfy Homotopical Assumption \ref{HomotopicalAssumption}. If the unit $S$ is cofibrant in $\CC$, and $A$ is an $\capO$-algebra (resp. left $\capO$-module) that is cofibrant in the underlying category $\CC$ (resp. $\SymSeq$), then $\capO_A[\mathbf{r}]$ is cofibrant in $\CC^{\Sigma_r^\op}$ (resp. $\SymSeq^{\Sigma_r^\op}$) for each $r\geq 0$.
\end{prop}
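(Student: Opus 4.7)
The plan is to use the pushout description of Proposition \ref{prop:pushout_description_of_OA} to express $\capO_A$ as a cobase change of a free map, and then transfer cofibrancy from $\LtO$ (resp. $\Lt_{\tilde{\capO}}$) down to the underlying category $\SymSeq$ (resp. $\SymArray$) by invoking Theorem \ref{thm:forgetful_functor_cofibrant_operad_lifting_argument}(a) (resp. Proposition \ref{prop:cofibrant_operad_symmetric_array_underlying}). This is where the hypothesis that $\capO$ is a cofibrant operad enters in an essential way.

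For the $\capO$-algebra case, I first note that $\hat{A}$ is cofibrant in $\SymSeq$ since $A$ is cofibrant in $\CC$ and $\hat{A}$ is concentrated at level $0$. Cofibrancy of $S$ in $\CC$ makes the unit $I$ cofibrant in $\SymSeq$, so $\emptyset\rarrow I$ is a cofibration there; applying the left Quillen functor $\capO\circ(-)\colon\SymSeq\rarrow\LtO$ gives a cofibration $\capO\circ\emptyset\rarrow\capO\circ I$ in $\LtO$. Cobase change in the left-hand pushout square of \eqref{eq:pushout_description_of_OA} then yields a cofibration $j\colon\hat{A}\rarrow\hat{A}\amalg(\capO\circ I)$ in $\LtO$, and Theorem \ref{thm:forgetful_functor_cofibrant_operad_lifting_argument}(a) upgrades it to a cofibration in $\SymSeq$ because $\hat{A}$ is cofibrant there. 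Consequently $\hat{A}\amalg(\capO\circ I)$ is cofibrant in $\SymSeq$, so its level-$r$ piece is cofibrant in $\CC^{\Sigma_r^\op}$; by the isomorphism in Proposition \ref{prop:pushout_description_of_OA}, this piece is exactly $\capO_A[\mathbf{r}]$.

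The left $\capO$-module case runs the identical argument ``one level up,'' with $\tilde{\capO}$, $\tilde{A}$, $\hat{I}$, and $\SymArray$ replacing $\capO$, $\hat{A}$, $I$, and $\SymSeq$. Since $A$ is cofibrant in $\SymSeq$, each $A[\mathbf{t}]$ is cofibrant in $\CC^{\Sigma_t^\op}$, so $\tilde{A}[\mathbf{t}]=\widehat{A[\mathbf{t}]}$ is cofibrant in $\SymSeq^{\Sigma_t^\op}$, and hence $\tilde{A}$ is cofibrant in $\SymArray$. Similarly, $S$ cofibrant in $\CC$ yields $\hat{I}$ cofibrant in $\SymArray$, so $\emptyset\rarrow\hat{I}$ is a cofibration there; the left Quillen functor $\tilde{\capO}\circtilde(-)\colon\SymArray\rarrow\Lt_{\tilde{\capO}}$ followed by a cobase change in the right-hand square of \eqref{eq:pushout_description_of_OA} produces a cofibration $j\colon\tilde{A}\rarrow\tilde{A}\amalg(\tilde{\capO}\circtilde\hat{I})$ in $\Lt_{\tilde{\capO}}$, which Proposition \ref{prop:cofibrant_operad_symmetric_array_underlying} (applicable since $\capO$ is a cofibrant operad in $\CC$) upgrades to a cofibration in $\SymArray$. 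Thus $\tilde{A}\amalg(\tilde{\capO}\circtilde\hat{I})$ is cofibrant in $\SymArray$, and the isomorphism $\capO_A[\mathbf{t}][\mathbf{r}]\cong(\tilde{A}\amalg(\tilde{\capO}\circtilde\hat{I}))[\mathbf{r}][\mathbf{t}]$ of Proposition \ref{prop:pushout_description_of_OA} gives cofibrancy of $\capO_A[\mathbf{t}][\mathbf{r}]$ in $\CC^{(\Sigma_t\times\Sigma_r)^\op}$ for every $r,t\geq 0$, which is exactly the levelwise criterion for $\capO_A[\mathbf{r}]$ to be cofibrant in $\SymSeq^{\Sigma_r^\op}$.

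The main subtlety is bookkeeping in the module case: one has to track the interplay between the symmetric array structure on $\tilde{A}\amalg(\tilde{\capO}\circtilde\hat{I})$ and the symmetric sequence structure on $\capO_A[\mathbf{r}]$, whose $\Sigma_r$- and $\Sigma_t$-actions appear in swapped index positions in Proposition \ref{prop:pushout_description_of_OA}. Once that translation is made precise, the proof is entirely a formal cobase-change argument together with the forgetful-functor lifting results established earlier in this section.
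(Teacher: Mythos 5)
Your proof is correct and is exactly the argument the paper intends: the paper's proof is a one-line citation of Proposition \ref{prop:pushout_description_of_OA}, Theorem \ref{thm:forgetful_functor_cofibrant_operad_lifting_argument}, and Proposition \ref{prop:cofibrant_operad_symmetric_array_underlying}, and you have unpacked precisely that cobase-change-plus-forgetful-functor argument, including the correct levelwise/index-swap bookkeeping in the left module case.
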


\begin{proof}
This follows from
Proposition \ref{prop:pushout_description_of_OA}, Theorem \ref{thm:forgetful_functor_cofibrant_operad_lifting_argument}, and Proposition \ref{prop:cofibrant_operad_symmetric_array_underlying}.
\end{proof}

\begin{prop}
\label{prop:analysis_of_OA_monoidal_model_category_cofibrant_operad_spectra}
Let $\capO$ be a cofibrant operad in $\capR$-modules with respect to the positive flat stable model structure. If $A$ is an $\capO$-algebra (resp. left $\capO$-module) that is flat stable cofibrant in $\ModR$ (resp. $\SymSeq$), then $\capO_A[\mathbf{r}]$ is flat stable cofibrant in $\ModR^{\Sigma_r^\op}$ (resp. $\SymSeq^{\Sigma_r^\op}$) for each $r\geq 0$.
\end{prop}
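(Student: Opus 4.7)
The plan is to mimic, step by step, the proof of Proposition \ref{prop:analysis_of_OA_monoidal_model_category_cofibrant_operad}, replacing each ingredient with its positive flat stable analog in $\capR$-modules. Concretely, by Proposition \ref{prop:pushout_description_of_OA} there is a natural isomorphism $\capO_A[\mathbf{t}][\mathbf{r}]\Iso \bigl(\tilde{A}\amalg(\tilde{\capO}\circtilde \hat{I})\bigr)[\mathbf{r}][\mathbf{t}]$ in the left module case (and the corresponding statement for algebras), so it suffices to show that the pushout $\tilde{A}\amalg(\tilde{\capO}\circtilde\hat{I})$ sitting in the right-hand diagram of \eqref{eq:pushout_description_of_OA} is flat stable cofibrant in the underlying category $\SymArray$, and then to extract the appropriate level. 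To do this I would package the argument into two steps: (i) show that the pushout is cofibrant in $\Lt_{\tilde{\capO}}$, and (ii) transfer cofibrancy from $\Lt_{\tilde{\capO}}$ to the underlying category of symmetric arrays via a lifting argument.

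For step (i), the map $\tilde{A}\to \tilde{A}\amalg(\tilde{\capO}\circtilde\hat{I})$ is a pushout of the cofibration $\tilde{\capO}\circtilde \emptyset \to \tilde{\capO}\circtilde \hat{I}$, which is a cofibration in $\Lt_{\tilde{\capO}}$ (with the induced positive flat stable model structure on $\SymArray$) since $\emptyset\to \hat{I}$ is a cofibration in $\SymArray$ (as the unit $1$ of $\tensorcheck$ is the symmetric sequence concentrated at $0$ with value $S=\capR$, which is flat stable cofibrant). Since $\tilde{A}$ is cofibrant in $\Lt_{\tilde{\capO}}$ by hypothesis (as $A$ is flat stable cofibrant in $\ModR$ resp.\ $\SymSeq$), the pushout is cofibrant in $\Lt_{\tilde{\capO}}$.

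For step (ii), I would prove the flat stable analog of Proposition \ref{prop:cofibrant_operad_symmetric_array_underlying}: if $\capO$ is a cofibrant operad in $\capR$-modules with the positive flat stable model structure, and $\function{i}{X}{Z}$ is a cofibration in $\Lt_{\tilde{\capO}}$ with $X$ flat stable cofibrant in $\SymArray$, then $i$ is a flat stable cofibration in $\SymArray$. The argument is the same lifting argument used in the proof of Proposition \ref{prop:cofibrant_operad_symmetric_array_underlying}: factor $i$ as $X\xrightarrow{j} Y\xrightarrow{p} Z$ in the underlying category $\SymArray$ with $j$ a flat stable cofibration and $p$ a flat stable acyclic fibration, then construct the required left $\tilde{\capO}$-module structure on $Y$ by solving a lifting problem against the endomorphism operad. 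The pullback corner map $\Map^\circtilde(Y,Y)\to \Map^\circtilde(X,Y)\times_{\Map^\circtilde(X,Z)}\Map^\circtilde(Y,Z)$ is a flat stable acyclic fibration in $\SymArray$ since $X$ is flat stable cofibrant (the flat stable pushout-product axiom, \cite[6.1]{Harper_Modules}), so the lift exists after transporting along the $(\Ev_0,\tilde{-})$ adjunction of operads in \eqref{eq:evaluate_at_zero_adjunction} and using that $\capO$ is cofibrant as an operad in $\capR$-modules. Then $i$ is a retract in $\Lt_{\tilde{\capO}}$ (and hence in $\SymArray$) of the flat stable cofibration $j$, which gives the claim.

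Assembling the pieces: applying this lifting result to the pushout of step (i) shows that $\tilde{A}\amalg(\tilde{\capO}\circtilde\hat{I})$ is flat stable cofibrant in $\SymArray$, hence each of its levels is flat stable cofibrant in $\ModR^{\Sigma_r^\op}$ (resp.\ $\SymSeq^{\Sigma_r^\op}$), and Proposition \ref{prop:pushout_description_of_OA} finishes the proof. The main obstacle is step (ii): verifying that the lifting argument used in Proposition \ref{prop:cofibrant_operad_symmetric_array_underlying} adapts cleanly to the positive flat stable model structure on $\SymArray$, in particular that the required pushout-product/pullback-corner behavior of the mapping object $\Map^\circtilde(-,-)$ holds in this setting; this is where one must use cofibrancy of $X$ in $\SymArray$ rather than merely in $\Lt_{\tilde{\capO}}$.
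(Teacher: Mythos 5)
Your overall route is the paper's: the published proof consists precisely of the observation that $\capO$ is also a cofibrant operad for the (non-positive) flat stable structure, followed by the proof of Proposition \ref{prop:analysis_of_OA_monoidal_model_category_cofibrant_operad}, i.e. the pushout description of Proposition \ref{prop:pushout_description_of_OA} plus the endomorphism-operad lifting arguments of Theorem \ref{thm:forgetful_functor_cofibrant_operad_lifting_argument} and Proposition \ref{prop:cofibrant_operad_symmetric_array_underlying}. Your step (ii) is exactly that lifting argument in the flat stable setting, and your assembly is correct.

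Two local slips in step (i) need fixing. First, you assert that $\tilde{A}$ is cofibrant in $\Lt_{\tilde{\capO}}$ ``by hypothesis''; the hypothesis only gives cofibrancy of $A$ in the underlying category, and that implication runs in the wrong direction. Fortunately you never need it: what your argument actually uses is that $\tilde{A}\rarrow\tilde{A}\amalg(\tilde{\capO}\circtilde\hat{I})$ is a pushout of $\tilde{\capO}\circtilde(\emptyset\rarrow\hat{I})$, hence has the left lifting property in $\Lt_{\tilde{\capO}}$ against every map whose underlying map is a flat stable acyclic fibration, together with the hypothesis that its \emph{domain} $\tilde{A}$ is flat stable cofibrant in $\SymArray$; step (ii) applied to this map then gives flat stable cofibrancy of the codomain. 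Second, step (i) cannot be run in the positive flat stable structure as written: $\emptyset\rarrow\hat{I}$ is \emph{not} a positive flat stable cofibration, since $\hat{I}$ is nontrivial in spectrum level $0$ (compare Proposition \ref{prop:cofibration_characterization}(b)). You must use the plain flat stable structure throughout, and for the lift in step (ii) you then need to record explicitly the observation that is the entire content of the paper's written proof: every flat stable fibration is a positive flat stable fibration, so an operad that is cofibrant for the positive flat stable structure is also cofibrant for the flat stable structure, and therefore lifts against the flat stable acyclic fibration $(*)$ of operads.
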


\begin{proof}
Since every flat stable fibration in $\SymSeq$ is a positive flat stable fibration in $\SymSeq$, it follows that $\capO$ is also a cofibrant operad in $\capR$-modules with respect to the flat stable model structure. The proof of Proposition \ref{prop:analysis_of_OA_monoidal_model_category_cofibrant_operad} finishes the argument.
\end{proof}

\subsection{Proofs}

The purpose of this short subsection is to prove Propositions \ref{prop:replacement_of_operads}, \ref{prop:connectivity}, \ref{prop:commuting_hocolim_of_simplicial_objects}, \ref{prop:connectivity_of_smash_powers_of_maps}, and \ref{prop:finiteness_derived_tensor_uses_bar_construction}.

\begin{proof}[Proof of Proposition \ref{prop:replacement_of_operads}]
This follows from a small object argument together with an analysis of the functor $F$ appearing in 
$
 \xymatrix@1{
  F\colon\SymSeq\ar@<0.5ex>[r] & 
  \Operad\colon U\ar@<0.5ex>[l]
}
$
the free-forgetful adjunction with left adjoint on top and $U$ the forgetful functor; here, $\Operad$ denotes the category of operads. It is easy to verify that the functor $F$ can be constructed by a  filtered colimit of the form
\begin{align*}
  F(A)\Iso\colim\bigl(I\rightarrow I\amalg A\rightarrow 
  I\amalg A\circ(I\amalg A)\rightarrow
  I\amalg A\circ(I\amalg A\circ(I\amalg A))\rightarrow\dotsc\bigr)
\end{align*}
in the underlying category $\SymSeq$; this useful description appears in \cite{Rezk}. Using this description of $F$, it is easy to verify that the unit map $I\rarrow\capO'$ of the operad $\capO'$ constructed in the small object argument satisfies the desired property in Cofibrancy Condition \ref{CofibrancyCondition}. 
\end{proof}

\begin{proof}[Proof of Proposition \ref{prop:connectivity}]
For a recent reference of part (a) in the context of symmetric spectra, see \cite{Schwede_book_project}. Consider part (b). It is enough to treat the special case where $X,Y$ are furthermore fibrant and cofibrant in the category of $\capR$-modules with the flat stable model structure. Let $\capR'\rarrow \capR$ be a cofibrant replacement in the category of monoids in $(\Spectra,\tensor_S,S)$ with the flat stable model structure \cite{Harper_Modules, Schwede_Shipley}. Since the sphere spectrum $S$ is flat stable cofibrant in $\Spectra$, we know by Theorem \ref{thm:homotopical_analysis_of_forgetful_functors_monoidal}(a)  that $\capR'$ is flat stable cofibrant in the underlying category $\Spectra$, and it follows from \cite{Harper_Spectra,Harper_Bar} by arguing as in the proof of Theorem \ref{thm:comparing_homotopy_completion_towers} that there are natural weak equivalences 
$
  X\Smash^\LL Y=
  X(\tensor_S)^\LL_\capR Y\wequiv
  X'(\tensor_S)^\LL_{\capR'} Y'\wequiv
  |\BAR^{\tensor_S}(X',\capR',Y')|=|B|
$.
Here, $X'\rarrow X$ and $Y'\rarrow Y$ are functorial flat stable cofibrant replacements in the category of right (resp. left) $\capR'$-modules. Denote by $B$ the indicated simplicial bar construction with respect to $\tensor_S$. We need to verify that $|B|$ is $(m+n+1)$-connected. We know by Theorem \ref{thm:homotopical_analysis_of_forgetful_functors_monoidal}(b) that $X',Y'$ are flat stable cofibrant in the underlying category $\Spectra$, hence it follows from part (a) that $B$ is objectwise $(m+n+1)$-connected and Proposition \ref{prop:connectivity_of_simplicial_maps_spectra} finishes the proof for part (b). Part (c) is verified exactly as in the proof of part (b) above, except using the group algebra spectrum $\capR[\Sigma_t]$ instead of $\capR$. Part (d) follows easily from part (b) together with \eqref{eq:tensor_check_calc}. Part (e) follows easily from parts (d) and (c) together with \eqref{eq:tensor_check_calc}.
\end{proof}

\begin{proof}[Proof of Proposition \ref{prop:commuting_hocolim_of_simplicial_objects}]
It suffices to consider the case of simplicial left $\tau_1\capO$-modules. Consider the map $\emptyset\rarrow B$ in $\sLt_{\tau_1\capO}$, and use functorial factorization in $\sLt_{\tau_1\capO}$ \cite[3.6]{Harper_Bar} to obtain $\emptyset\rarrow B^c\rarrow B$, a cofibration followed by an acyclic fibration. By Proposition \ref{prop:retract_property_and_derived_circle} and \cite[5.6]{Harper_Bar}, there is a retract of the form
\begin{align*}
\xymatrix{
  |i_k\capO\circ_{\tau_1\capO}B^c|\ar[d]^{(*)}\ar[r] &
  |\capO\circ_{\tau_1\capO}B^c|\ar[d]^{(**)}\ar[r] &
  |i_k\capO\circ_{\tau_1\capO}B^c|\ar[d]^{(*)}\\
  i_k\capO\circ_{\tau_1\capO}\colim\limits^{\Lt_{\tau_1\capO}}_{\Delta^\op}B^c\ar[r] &
  \capO\circ_{\tau_1\capO}\colim\limits^{\Lt_{\tau_1\capO}}_{\Delta^\op}B^c\ar[r] &
  i_k\capO\circ_{\tau_1\capO}\colim\limits^{\Lt_{\tau_1\capO}}_{\Delta^\op}B^c
}
\end{align*}
in $\SymSeq$. Since $B^c$ is cofibrant in $\sLt_{\tau_1\capO}$,  the proof of \cite[3.15]{Harper_Bar} implies that $\capO\circ_{\tau_1\capO}B^c$ is cofibrant in $\sLtO$. It follows therefore from \cite[5.24]{Harper_Bar} that $(**)$ is a weak equivalence, hence $(*)$ is also a weak equivalence. We know from \cite[3.12]{Harper_Bar} that $B^c$ is objectwise cofibrant in $\Lt_{\tau_1\capO}$, hence there are natural weak equivalences
$
  i_k\capO\circ_{\tau_1\capO}B^c\wequiv
  i_k\capO\circ^\HH_{\tau_1\capO}B^c\wequiv
  i_k\capO\circ^\HH_{\tau_1\capO}B.
$
It follows that there are natural weak equivalences
\begin{align*}
  i_k\capO\circ^\HH_{\tau_1\capO}\hocolim\limits^{\Lt_{\tau_1\capO}}_{\Delta^\op}B\wequiv
  i_k\capO\circ^\HH_{\tau_1\capO}\hocolim\limits^{\Lt_{\tau_1\capO}}_{\Delta^\op}B^c\wequiv
  i_k\capO\circ^\HH_{\tau_1\capO}\colim\limits^{\Lt_{\tau_1\capO}}_{\Delta^\op}B^c\\
  \wequiv
  i_k\capO\circ_{\tau_1\capO}\colim\limits^{\Lt_{\tau_1\capO}}_{\Delta^\op}B^c
  \wequiv
  |i_k\capO\circ_{\tau_1\capO}B^c|
  \wequiv
  \hocolim\limits^{\Lt_{\capO}}_{\Delta^\op}
  i_k\capO\circ_{\tau_1\capO}B^c \\
  \wequiv
  \hocolim\limits^{\Lt_{\capO}}_{\Delta^\op}
  i_k\capO\circ^\HH_{\tau_1\capO}B^c
  \wequiv
  \hocolim\limits^{\Lt_{\capO}}_{\Delta^\op}
  i_k\capO\circ^\HH_{\tau_1\capO}B
\end{align*}
which finishes the proof; here we have used \ref{main_hocolim_theorem}.
\end{proof}

\begin{proof}[Proof of Proposition \ref{prop:connectivity_of_smash_powers_of_maps}]
Consider part (a) and the case of $\capR$-modules. The map $f$ factors functorially in $\ModR$ with the flat stable model structure as $X\xrightarrow{g}Y\xrightarrow{h}Z$ a cofibration followed by an acyclic fibration, and hence the map $f^{\wedge t}$ factors as
$
  X^{\wedge t}\xrightarrow{g^{\wedge t}}Y^{\wedge t}
  \xrightarrow{h^{\wedge t}}Z^{\wedge t}
$. Since smashing with a flat stable cofibrant $\capR$-module preserves weak equivalences, $h^{\wedge t}$ is a weak equivalence, and hence it is enough to check that $g^{\wedge t}$ is $n$-connected. We argue by induction on $t$. Using the pushout diagrams in Definition \ref{def:filtration_setup_modules} (see, for instance, \cite[4.15]{Harper_Spectra}) together with the natural isomorphisms $Y^{\wedge t}/Q^t_{t-1}\Iso (Y/X)^{\wedge t}$, it follows that each of the maps
$
  X^{\wedge t}\rarrow Q^t_{1}\rarrow Q^t_2\rarrow\dotsb\rarrow
  Q^t_{t-1}\rarrow Y^{\wedge t}
$
is at least $n$-connected, which finishes the proof for the case of $\capR$-modules. The case of symmetric sequences is similar. Consider part (b). This follows by proceeding as in the proof of part (a), except using the positive flat stable model structure, together with part (a) and Propositions \ref{prop:cofibration}, \ref{prop:good_properties}, \ref{prop:cofibrations_to_mono}, and \ref{prop:connectivity}.
\end{proof}

Propositions \ref{prop:homotopy_spectral_sequence_chain_complexes}, \ref{prop:realzn_preserves_finiteness_properties_chain_complexes}, and \ref{prop:eilenberg_moore_chain_complexes} will be needed for the proof of Proposition \ref{prop:finiteness_derived_tensor_uses_bar_construction} below. The following homotopy spectral sequence for a simplicial unbounded chain complex is well known; for a recent reference, see \cite[5.6]{Weibel}.

\begin{prop}
\label{prop:homotopy_spectral_sequence_chain_complexes}
Let $Y$ be a simplicial unbounded chain complex over $\unit$. There is a natural homologically graded spectral sequence in the right-half plane such that
\begin{align*}
  E^2_{p,q} = H_p(H_q(Y))\Longrightarrow H_{p+q}(|Y|)
\end{align*}
Here, $H_q(Y)$ denotes the simplicial $\unit$-module obtained by applying $H_q$ levelwise to $Y$, and $\unit$ is any commutative ring.
\end{prop}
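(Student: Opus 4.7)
The plan is to identify $|Y|$ with the total complex of a naturally associated bicomplex, and then to invoke the standard spectral sequence of a first-quadrant (in the simplicial direction) filtered complex.

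First I would set up the bicomplex. Regarding $Y$ as a simplicial object in $\Chaincx_\unit$, form the bigraded $\unit$-module $C_{p,q}:=(Y_p)_q$, with vertical differential $d^v$ given by the chain differential of $Y_p$ and horizontal differential $d^h=\sum_{i=0}^p(-1)^i d_i$ given by the alternating sum of face maps of the simplicial object $Y$. The usual sign trick ($d^v$ twisted by $(-1)^p$) makes $(C_{*,*},d^h,d^v)$ into an honest bicomplex.

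Next I would identify $|Y|$ with the total complex $\Tot(C_{*,*})$. By the analog for chain complexes of Proposition \ref{prop:realzns_fit_into_adjunctions}, the realization is computed as the coend $Y\tensor_\Delta N_*(\Delta[-]_+)$, where $N_*$ is the normalized chain complex functor from pointed simplicial sets to $\Chaincx_\unit$. The Eilenberg--Zilber theorem (together with the standard cofinality computation of the coend) produces a natural quasi-isomorphism between $|Y|$ and $\Tot(C_{*,*})$; alternatively, one can take this Moore-style total complex as the definition of $|Y|$, in which case the identification is tautological.

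Finally, I would equip $\Tot(C_{*,*})$ with the increasing filtration $F_p\Tot(C_{*,*})$ by horizontal (simplicial) degree $\leq p$. This filtration is bounded below and exhaustive, and in each total degree only finitely many filtration quotients are non-zero, so the associated spectral sequence converges strongly to $H_{p+q}\bigl(\Tot(C_{*,*})\bigr)\iso H_{p+q}(|Y|)$. The $E^0$-page is the associated graded, which is the collection of columns $C_{p,*}$ with only the vertical differential; hence $E^1_{p,q}=H_q(Y_p)$, and the induced horizontal differential on $E^1$ is precisely the alternating sum of face maps of the simplicial $\unit$-module $H_q(Y)$ obtained by applying $H_q$ levelwise. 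Taking horizontal homology yields $E^2_{p,q}=H_p\bigl(H_q(Y)\bigr)$ as required, and naturality in $Y$ is immediate from the naturality of every construction involved.

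The main potential obstacle is the precise Eilenberg--Zilber-style identification between the coend definition of $|Y|$ and the bicomplex total complex; once that is in hand, everything else is standard spectral sequence bookkeeping, and indeed the same statement in a slightly different formulation appears in \cite[5.6]{Weibel} as cited just above the proposition.
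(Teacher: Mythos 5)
Your proposal is correct and is essentially the argument the paper has in mind: the paper offers no proof of this proposition at all, deferring to \cite[5.6]{Weibel}, and what you write out (the bicomplex $C_{p,q}=(Y_p)_q$, identification of $|Y|$ with its total complex, filtration by simplicial degree, $E^1_{p,q}=H_q(Y_p)$ with $d^1$ the alternating sum of face maps) is exactly the standard double-complex spectral sequence that citation refers to. One point in your convergence discussion is misstated, though: since the chain complexes $Y_p$ are unbounded below, in a fixed total degree $n$ the filtration quotients $C_{p,n-p}$ can be nonzero for infinitely many $p$, so it is not true that only finitely many filtration quotients survive in each total degree. The correct justification is simply that the filtration by simplicial degree on the direct-sum total complex is bounded below and exhaustive, so the Classical Convergence Theorem (\cite[5.5.1]{Weibel}) applies and gives convergence to $H_{p+q}(|Y|)$; no finiteness in each total degree is needed (and strong convergence in the sense of finitely many nonzero $E^\infty$ terms per degree only holds under additional connectivity hypotheses, such as those imposed in Proposition \ref{prop:realzn_preserves_finiteness_properties_chain_complexes} where the spectral sequence is actually used). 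With that substitution your argument is complete.
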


\begin{prop}
\label{prop:realzn_preserves_finiteness_properties_chain_complexes}
Let $Y$ be a simplicial unbounded chain complex over $\ZZ$. Let $m\in\ZZ$. Assume that $Y$ is levelwise $m$-connected.
\begin{itemize}
\item[(a)] If $H_k Y_n$ is finite for every $k,n$, then $H_k |Y|$ is finite for every $k$.
\item[(b)] If $H_k Y_n$ is a finitely generated abelian group for every $k,n$, then $H_k|Y|$ is a finitely generated abelian group for every $k$.
\end{itemize}
\end{prop}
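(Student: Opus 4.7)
The plan is to exploit the homotopy spectral sequence of Proposition \ref{prop:homotopy_spectral_sequence_chain_complexes}, which provides a convergent spectral sequence
\begin{align*}
  E^2_{p,q} = H_p(H_q(Y)) \Longrightarrow H_{p+q}(|Y|)
\end{align*}
living in the right-half plane. The strategy is to verify that (i) every $E^2$-entry inherits the relevant finiteness property, and (ii) the levelwise $m$-connectivity assumption ensures that on each diagonal $p+q = k$ only finitely many entries are nonzero. Strong (in fact finite) convergence will then force $H_k|Y|$ to have a finite filtration by subquotients of finite (resp.\ finitely generated) abelian groups.

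First, I would analyze the $E^2$-term. By the Dold--Kan correspondence, $H_p(H_q(Y))$ is the $p$-th homology of the (normalized) Moore complex of the simplicial abelian group $H_q(Y)$, so it is a subquotient of $H_q(Y_p)$. Since the classes of finite abelian groups and of finitely generated abelian groups are each closed under subquotients, the assumption that $H_k Y_n$ is finite (resp.\ finitely generated) for every $k, n$ implies that every $E^2_{p,q}$ is finite (resp.\ finitely generated). Consequently, each $E^r_{p,q}$ and each $E^\infty_{p,q}$ is also finite (resp.\ finitely generated), being a subquotient of $E^2_{p,q}$.

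Next, I would use the levelwise $m$-connectivity. Since $Y$ is levelwise $m$-connected, $H_q(Y_n) = 0$ for all $q \leq m$ and all $n$, so the simplicial abelian group $H_q(Y)$ vanishes for $q \leq m$, and therefore $E^2_{p,q} = 0$ for $q \leq m$. Combined with the fact that the spectral sequence is concentrated in the right-half plane ($p \geq 0$), this shows that for each fixed $k$, the nonzero entries on the line $p+q = k$ lie in the finite range $0 \leq p \leq k - m - 1$. Hence the spectral sequence collapses to a finite set of entries on each total-degree line, and the standard convergence statement yields a finite filtration of $H_k|Y|$ whose successive quotients are the $E^\infty_{p,q}$ with $p+q=k$.

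Finally, since finite abelian groups (resp.\ finitely generated abelian groups) are closed under finite extensions, combining this finite filtration with the finiteness (resp.\ finite generation) of each $E^\infty_{p,q}$ shows that $H_k|Y|$ is finite (resp.\ finitely generated) for every $k$, proving both (a) and (b). No step should present a serious obstacle; the only mild subtlety is the bookkeeping that confirms convergence is finite on each diagonal, and this is exactly what the $m$-connectivity hypothesis guarantees.
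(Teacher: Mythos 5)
Your proposal is correct and is essentially the paper's own proof: the paper simply cites the homotopy spectral sequence of Proposition \ref{prop:homotopy_spectral_sequence_chain_complexes}, and your write-up fills in exactly the intended details (finiteness of the $E^2$-terms as subquotients of the $H_qY_p$, the vanishing range from levelwise $m$-connectivity, and the resulting finite filtration of $H_k|Y|$).
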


\begin{proof}
This follows from Proposition \ref{prop:homotopy_spectral_sequence_chain_complexes}.
\end{proof}

Recall the following Eilenberg-Moore type spectral sequences; for a recent reference, see \cite[5.7]{Weibel}.

\begin{prop}
\label{prop:eilenberg_moore_chain_complexes}
Let $t\geq 1$. Let  $A,B$ be unbounded chain complexes over $\unit$ with a right (resp. left) $\Sigma_t$-action.  There is a natural homologically graded spectral sequence in the right-half plane such that
\begin{align*}
  E^2_{p,q} &= \Tor^{\unit[\Sigma_t]}_{p,q}(H_*A,H_*B)
  \Longrightarrow H_{p+q}(A\tensor^\LL_{\Sigma_t} B).
\end{align*}
Here, $\unit$ is any commutative ring, $(\Chaincx_\unit,\tensor,\unit)$ denotes the closed symmetric monoidal category of unbounded chain complexes over $\unit$, $\unit[\Sigma_t]$ is the group algebra, and $\tensor^\LL_{\Sigma_t}$ is the total left derived functor of $\tensor_{\Sigma_t}$.
\end{prop}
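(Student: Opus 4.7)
The plan is to realize the derived tensor product $A \tensor^\LL_{\Sigma_t} B$ as the geometric realization of a two-sided simplicial bar construction and then apply the homotopy spectral sequence of Proposition \ref{prop:homotopy_spectral_sequence_chain_complexes} to the resulting simplicial unbounded chain complex.

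First I would replace $A$ and $B$ by cofibrant replacements $A^c \to A$ and $B^c \to B$ in the projective model structure on right (resp.\ left) $\unit[\Sigma_t]$-module chain complexes, ensuring both are degreewise $\unit[\Sigma_t]$-projective, and hence degreewise $\unit$-projective since $\unit[\Sigma_t]$ is free over $\unit$. Then I would form the two-sided simplicial bar construction $Y := \BAR(A^c, \unit[\Sigma_t], B^c)$ whose $n$-th simplicial level is $A^c \tensor_\unit \unit[\Sigma_t]^{\tensor n} \tensor_\unit B^c$; a standard argument using the extra degeneracies on the associated augmented bar construction shows that $|Y| \simeq A^c \tensor_{\unit[\Sigma_t]} B^c$ computes $A \tensor^\LL_{\Sigma_t} B$.

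Next, applying Proposition \ref{prop:homotopy_spectral_sequence_chain_complexes} to $Y$ produces a natural right-half-plane homologically graded spectral sequence $E^2_{p,q} = H_p(H_q(Y)) \Longrightarrow H_{p+q}(A \tensor^\LL_{\Sigma_t} B)$. It remains to identify the $E^2$ page with $\Tor^{\unit[\Sigma_t]}_{p,q}(H_*A, H_*B)$. Since $\unit[\Sigma_t]^{\tensor n}$ is $\unit$-free and $A^c, B^c$ are degreewise $\unit$-projective, iterated application of the K\"unneth theorem yields a natural identification $H_q(A^c \tensor_\unit \unit[\Sigma_t]^{\tensor n} \tensor_\unit B^c) \cong (H_*A \tensor_\unit \unit[\Sigma_t]^{\tensor n} \tensor_\unit H_*B)_q$, and the induced simplicial structure in this degree is precisely the standard two-sided bar complex computing $\Tor^{\unit[\Sigma_t]}_{p,q}(H_*A, H_*B)$.

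The main obstacle will be the identification of the $E^2$ page: one must choose the cofibrant replacements $A^c, B^c$ carefully so that (i) the simplicial bar construction faithfully models the derived tensor product, and (ii) the K\"unneth identification applies at each simplicial level without Tor corrections. Both conditions are guaranteed once $A^c$ and $B^c$ are taken degreewise $\unit[\Sigma_t]$-projective (which forces degreewise $\unit$-projectivity), after which the passage from $E^2 = H_p(H_q(Y))$ to $\Tor^{\unit[\Sigma_t]}_{p,q}(H_*A, H_*B)$ reduces to a routine piece of classical homological algebra worked out, for example, in \cite[5.7]{Weibel}.
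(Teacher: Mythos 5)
The paper offers no proof of this proposition: it is stated as a recollection of a classical Eilenberg--Moore/K\"unneth spectral sequence with a citation to \cite[5.7]{Weibel}, so there is nothing internal to compare your argument against. Judged on its own terms, your bar-construction argument does produce a right-half-plane spectral sequence converging to $H_{*}(A\tensor^\LL_{\Sigma_t}B)$ (the realization of $\BAR(A^c,\unit[\Sigma_t],B^c)$ does model the derived tensor product, by the extra-degeneracy argument plus cofibrancy of $B^c$), but the identification of its $E^2$ page with $\Tor^{\unit[\Sigma_t]}_{p,q}(H_*A,H_*B)$ has a genuine gap.

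The problem is the levelwise K\"unneth step. Over an arbitrary commutative ring $\unit$, the K\"unneth theorem for $A^c\tensor_\unit C$ with $A^c$ degreewise $\unit$-projective gives a short exact sequence whose third term is $\bigoplus\Tor_1^\unit\bigl(H_i(A^c),H_j(C)\bigr)$; this vanishes only if $H_*A$ (or $H_*C$) is $\unit$-flat. Degreewise projectivity of a cofibrant replacement does \emph{not} force flatness of its homology (e.g.\ $\unit=\ZZ$ and $A^c=(\ZZ\xrightarrow{2}\ZZ)$ has $H_0=\ZZ/2$), so your asserted isomorphism $H_q(A^c\tensor_\unit\unit[\Sigma_t]^{\tensor n}\tensor_\unit B^c)\cong(H_*A\tensor_\unit\unit[\Sigma_t]^{\tensor n}\tensor_\unit H_*B)_q$ fails in general; no choice of cofibrant replacement repairs this, since the obstruction lives on homology. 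A second, related issue: even granting that identification, the complex $p\mapsto H_*A\tensor_\unit\unit[\Sigma_t]^{\tensor p}\tensor_\unit H_*B$ is the two-sided bar complex, whose homology is the \emph{relative} Tor of the pair $(\unit[\Sigma_t],\unit)$; it agrees with absolute $\Tor^{\unit[\Sigma_t]}$ only when $H_*A$ or $H_*B$ is $\unit$-flat, because the bar resolution has terms $\unit[\Sigma_t]\tensor_\unit(-)$ that are $\unit[\Sigma_t]$-projective only under that flatness. Both defects disappear when $\unit$ is a field or when $H_*A$ or $H_*B$ is $\unit$-flat, but the proposition is asserted for arbitrary $\unit$, and the paper applies it over $\ZZ$ with torsion homology. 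The standard general proof instead chooses a projective resolution of the graded $\unit[\Sigma_t]$-module $H_*A$, realizes it by a filtered (semifree) resolution of $A$ itself, and takes the spectral sequence of that filtration, so that $E^1_{p,*}=P_p\tensor_{\unit[\Sigma_t]}H_*B$ with $P_p$ projective and hence $E^2=\Tor^{\unit[\Sigma_t]}(H_*A,H_*B)$ on the nose; you would need to replace your bar filtration by such a resolution filtration to close the gap.
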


\begin{proof}[Proof of Proposition \ref{prop:finiteness_derived_tensor_uses_bar_construction}]
Consider part (a). It is enough to treat the special case where $M,N$ are furthermore cofibrant in the category of right (resp. left) $\capA$-modules. Let $\capA'\rarrow \capA$ be a cofibrant replacement in the category of monoids in $(\Chaincx_\ZZ,\tensor,\ZZ)$ with the model structure of \cite{Schwede_Shipley}. Since $\ZZ$ is cofibrant in $\Chaincx_\ZZ$, we know by Theorem \ref{thm:homotopical_analysis_of_forgetful_functors_monoidal}(a) that $\capA'$ is cofibrant in the underlying category $\Chaincx_\ZZ$, and it follows easily by arguing as in the proof of Theorem \ref{thm:comparing_homotopy_completion_towers} that there are natural weak equivalences 
$
  M\tensor^\LL_\capA N\wequiv M'\tensor^\LL_{\capA'}N'
  \wequiv|\BAR^\tensor(M',\capA',N')|=|B|.
$
Here, $M'\rarrow M$ and $N'\rarrow N$ are functorial cofibrant replacements in the category of right (resp. left) $\capA'$-modules. Denote by $B$ the indicated simplicial bar construction with respect to $\tensor$. We need  to verify that $H_k(|B|)$ is finite for every $k$. We know by Theorem \ref{thm:homotopical_analysis_of_forgetful_functors_monoidal}(b) that $M',N'$ are cofibrant in the underlying category $\Chaincx_\ZZ$, hence it follows from Proposition \ref{prop:eilenberg_moore_chain_complexes} (with $t=1$) that $H_k(B_n)$ is finite for every $k$ and $n$, and Proposition \ref{prop:realzn_preserves_finiteness_properties_chain_complexes} finishes the proof for part (a). Part (b) is similar.
\end{proof}

\section{Homotopical analysis of the simplicial bar constructions}
\label{sec:homotopical_analysis_bar_constructions}

The purpose of this section is to prove Theorem \ref{thm:reedy_cofibrant_for_bar_constructions} together with several closely related technical results on simplicial structures and the simplicial bar constructions. The results established here lie at the heart of the proofs of the main theorems in this paper.

\subsection{Simplicial structure on $\AlgO$ and $\LtO$}

The purpose of this subsection is to describe the simplicial structure on $\AlgO$ (resp. $\LtO$) and to prove several related results. The key technical results of this subsection are Proposition \ref{prop:realizations_are_isomorphic} and Theorem \ref{thm:simplicial_model_category_structure}. They are used in the proof of Theorem \ref{thm:reedy_cofibrant_for_bar_constructions} to construct skeletal filtrations in $\Alg_{\capO'}$ (resp. $\Lt_{\capO'}$) of realizations (Definition \ref{defn:realization}) of the simplicial bar constructions (Proposition \ref{prop:natural_map_is_weak_equivalence}).

Consider symmetric sequences in $\capR$-modules, and let $\capO\in\SymSeq$, $X$ in $\ModR$ (resp. $\SymSeq$), and $K\in\sSet$. Define $\nu$ to be the natural map
\begin{align*}
  \capO\circ(X)\Smash K_+\xrightarrow{\ \nu\ }\capO\circ(X\Smash K_+)
  \quad\quad
  \Bigl(\text{resp.}\quad
  (\capO\circ X)\Smash K_+\xrightarrow{\ \nu\ }\capO\circ(X\Smash K_+)
  \Bigr)
\end{align*}
in $\ModR$ (resp. $\SymSeq$) induced by the natural maps $K\rarrow K^{\times t}$ in $\sSet$ for $t\geq 0$; these are the diagonal maps for $t\geq 1$ and the constant map for $t=0$. Here, $\sSet$ denotes the category of simplicial sets. The construction of the tensor product below is motivated by \cite[VII.2.10]{EKMM}. Simplicial structures in the context of symmetric spectra have also been exploited in \cite{Hornbostel, Schwede_book_project}; see also \cite{Arone_Ching, McClure_Schwanzl_Vogt}.

\begin{defn}
\label{defn:tensor_with_simplicial_sets}
Let $\capO$ be an operad in $\capR$-modules, $X$ an $\capO$-algebra (resp. left $\capO$-module), and $K$ a simplicial set. Define the \emph{tensor product} $X\tensordot K$ in $\AlgO$ (resp. $\LtO$) by the reflexive coequalizer
\begin{align}
  \label{eq:tensor_with_simplicial_sets_algebras}
  X\tensordot K &:=\colim
  \Bigl(
  \xymatrix{
  \capO\circ(X\Smash K_+) & 
  \capO\circ\bigl(\capO\circ(X)\Smash K_+\bigr)
  \ar@<0.5ex>[l]^-{d_1}\ar@<-0.5ex>[l]_-{d_0}
  }
  \Bigr)\\
  \label{eq:tensor_with_simplicial_sets_modules}
  \Bigl(\text{resp.}\quad
  X\tensordot K &:=\colim
  \Bigl(
  \xymatrix{
  \capO\circ(X\Smash K_+) & 
  \capO\circ\bigl((\capO\circ X)\Smash K_+\bigr)
  \ar@<0.5ex>[l]^-{d_1}\ar@<-0.5ex>[l]_-{d_0}
  }
  \Bigr)
  \Bigr)
\end{align}
in $\AlgO$ (resp. $\LtO$), with $d_0$ induced by operad multiplication $\function{m}{\capO\circ\capO}{\capO}$ and the map $\nu$, while $d_1$ is induced by the left $\capO$-action map $\function{m}{\capO\circ(X)}{X}$ (resp. $\function{m}{\capO\circ X}{X}$).
\end{defn}

Let $\capO$ be an operad in $\capR$-modules, consider $X,Y$ in $\ModR$ (resp. $\SymSeq$), $K\in\sSet$, and recall the isomorphisms
\begin{align}
  \label{eq:adjunction_smash_with_simplicial_set_spectra}
  \hom_{\ModR}(X\Smash K_+,Y)&\Iso\hom_{\ModR}(X,\Map(K_+,Y))\\
  \label{eq:adjunction_smash_with_simplicial_set_symmetric_sequences}
  \Bigl(\text{resp.}\quad
  \hom_{\SymSeq}(X\Smash K_+,Y)&\Iso\hom_{\SymSeq}(X,\Map(K_+,Y))
  \Bigr)
\end{align}
natural in $X,K,Y$. Here, we are using the useful shorthand notation $\Map(K_+,-)$ to denote $\Map(\capR\tensor G_0K_+,-)$; see, just above \ref{defn:realization}. If $Y$ is an $\capO$-algebra (resp. left $\capO$-module), then $\Map(K_+,Y)$ has an $\capO$-algebra (resp. left $\capO$-module) structure induced by $\function{m}{\capO\circ(Y)}{Y}$ (resp. $\function{m}{\capO\circ Y}{Y}$). The next proposition is a formal argument left to the reader. We will use it below in several proofs.

\begin{prop}
\label{prop:useful_commutative_diagram_involving_nu_map}
Let $\capO$ be an operad in $\capR$-modules. Let $X\in\SymSeq$, $Y\in\LtO$, and $K\in\sSet$. If $\function{f}{X\Smash K_+}{Y}$ is a map in $\SymSeq$, then the diagram
\begin{align*}
\xymatrix{
  (\capO\circ X)\Smash K_+\ar[d]_{\nu}\ar[r]^-{\id\circ f\Smash\id} & 
  \bigl(\capO\circ\Map(K_+,Y)\bigr)\Smash K_+\ar[r]^-{\nu} &
  \capO\circ\bigl(\Map(K_+,Y)\Smash K_+\bigr)\ar[d]^{\id\circ\ev} \\
  \capO\circ(X\Smash K_+)\ar[rr]^-{\id\circ f} && \capO\circ Y
}
\end{align*}
in $\SymSeq$ commutes. Here, $\ev$ denotes the evaluation map, and we have used the same notation for both $f$ and its adjoint \eqref{eq:adjunction_smash_with_simplicial_set_symmetric_sequences}.
\end{prop}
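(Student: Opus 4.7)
The plan is to verify commutativity by using the explicit coproduct/coend description of the circle product together with the adjunction relation between $f$ and its adjoint $\tilde{f}\colon X\rarrow\Map(K_+,Y)$. Recall that by \eqref{eq:circle_product_calc}, $\capO\circ X\Iso\coprod_{t\geq 0}\capO[\mathbf{t}]\Smash_{\Sigma_t}X^{\tensorcheck t}$, so it suffices to check commutativity after restricting to each summand indexed by $t\geq 0$. On the $t$-th summand, the map $\nu$ is induced, after smashing with $\capO[\mathbf{t}]\Smash_{\Sigma_t}(-)$, by the diagonal $\function{\Delta_t}{K_+}{(K_+)^{\wedge t}}$ (the constant map to the basepoint when $t=0$), composed with the natural shuffle $X^{\tensorcheck t}\Smash K_+^{\wedge t}\rarrow (X\Smash K_+)^{\tensorcheck t}$.

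First I would unpack the top-right composite on the $t$-th summand: it is built out of $(\id\circ f\Smash\id)$, which on this summand corresponds to $\capO[\mathbf{t}]\Smash_{\Sigma_t}\tilde{f}^{\tensorcheck t}\Smash\id_{K_+}$ (using the $(-\Smash K_+,\Map(K_+,-))$ adjunction to replace $f\Smash\id$ by the adjoint $\tilde{f}\Smash\id$ followed by the evaluation), followed by the diagonal-induced map into $\capO\circ(\Map(K_+,Y)\Smash K_+)$ and then by $\id\circ\ev$. Using the counit relation $f=\ev\circ(\tilde{f}\Smash\id)$, this is naturally identified with the map $\capO[\mathbf{t}]\Smash_{\Sigma_t}X^{\tensorcheck t}\Smash K_+\rarrow \capO[\mathbf{t}]\Smash_{\Sigma_t}Y^{\tensorcheck t}$ obtained by applying $\Delta_t$, shuffling, and then smashing with $f^{\tensorcheck t}$.

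Next I would unpack the bottom-left composite: on the $t$-th summand, $\nu$ applies the diagonal $\Delta_t$ and shuffles to produce $\capO[\mathbf{t}]\Smash_{\Sigma_t}(X\Smash K_+)^{\tensorcheck t}$, and then $\id\circ f$ applies $f^{\tensorcheck t}$. Both composites therefore reduce to the same map, via the naturality of the shuffle isomorphism $X^{\tensorcheck t}\Smash K_+^{\wedge t}\rarrow (X\Smash K_+)^{\tensorcheck t}$ in the $X$-variable (applied to $\tilde{f}$ and to the evaluation), together with the naturality of $f$ itself.

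The only mildly delicate point---the main thing to be careful about---is keeping track of the shuffle isomorphism and the $\Sigma_t$-equivariance when pushing $\Delta_t$ and $f^{\tensorcheck t}$ past one another; this is ultimately a consequence of the symmetric monoidal structure of $(\ModR,\Smash,\capR)$ combined with the fact that the diagonal $K\rarrow K^{\times t}$ is $\Sigma_t$-equivariant for the trivial action on $K$. Once the coend/coproduct decompositions are written out and the adjunction identity $f=\ev\circ(\tilde{f}\Smash\id)$ is used, commutativity reduces to this naturality statement and is then purely formal; the same argument applies verbatim in the $\capO$-algebra case by replacing $\tensorcheck$ with $\Smash$ throughout.
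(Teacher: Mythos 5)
Your argument is correct: decomposing $\capO\circ(-)$ into its summands $\capO[\mathbf{t}]\Smash_{\Sigma_t}(-)^{\tensorcheck t}$, pushing the diagonal-induced map $\nu$ past $\tilde{f}^{\tensorcheck t}\Smash\id$ by naturality of the diagonal and the shuffle, and then invoking the counit identity $f=\ev\circ(\tilde{f}\Smash\id)$ is exactly the formal verification the paper has in mind (it explicitly leaves this proposition ``as a formal argument left to the reader''). The equivariance point you flag is handled correctly, and the $t=0$ summand is trivially fine since $f^{\tensorcheck 0}$ is the identity on the unit.
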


The following proposition will be useful.

\begin{prop}
\label{prop:natural_isomorphisms_tensordot_and_mapping_object_algebras}
Let $\capO$ be an operad in $\capR$-modules. Let $X,Y$ be $\capO$-algebras (resp. left $\capO$-modules) and $K$ a simplicial set. There are isomorphisms
\begin{align*}
  \hom_{\AlgO}(X\tensordot K,Y)&\Iso\hom_{\AlgO}(X,\Map(K_+,Y))\\
  \Bigl(\text{resp.}\quad
  \hom_{\LtO}(X\tensordot K,Y)&\Iso\hom_{\LtO}(X,\Map(K_+,Y))
  \Bigr)
\end{align*}
natural in $X,K,Y$.
\end{prop}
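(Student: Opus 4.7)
The plan is to verify the natural isomorphism by unwinding the defining coequalizer for $X \tensordot K$ and applying the standard adjunctions. It suffices to treat the case of left $\capO$-modules, since the case of $\capO$-algebras follows by the same argument (or via the embedding $\hat{-}\colon\AlgO\rarrow\LtO$).

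First I would apply $\hom_{\LtO}(-,Y)$ to the reflexive coequalizer \eqref{eq:tensor_with_simplicial_sets_modules} to present $\hom_{\LtO}(X \tensordot K, Y)$ as the equalizer
\begin{align*}
  \lim\Bigl(
  \xymatrix{
  \hom_{\LtO}(\capO\circ(X\Smash K_+),Y)\ar@<0.5ex>[r]^-{d_0^*}\ar@<-0.5ex>[r]_-{d_1^*} &
  \hom_{\LtO}\bigl(\capO\circ((\capO\circ X)\Smash K_+),Y\bigr)
  }
  \Bigr).
\end{align*}
Next, using the free-forgetful adjunction $\capO\circ-\dashv U$ from \eqref{eq:free_forgetful_adjunction}, together with the smash-mapping object adjunction \eqref{eq:adjunction_smash_with_simplicial_set_symmetric_sequences} applied in $\SymSeq$, I would rewrite this equalizer as
\begin{align*}
  \lim\Bigl(
  \xymatrix{
  \hom_{\SymSeq}(X,\Map(K_+,Y))\ar@<0.5ex>[r]\ar@<-0.5ex>[r] &
  \hom_{\SymSeq}(\capO\circ X,\Map(K_+,Y))
  }
  \Bigr).
\end{align*}

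The main step is to identify the two parallel arrows in this last diagram with the pair whose equalizer computes $\hom_{\LtO}(X,\Map(K_+,Y))$; namely, the map induced by the $\capO$-action on $X$ and the map induced by the $\capO$-action on $\Map(K_+,Y)$. For the map coming from $d_1$ (the left $\capO$-action on $X$), this is immediate from the naturality of the adjunctions. For the map coming from $d_0$ (operad multiplication and $\nu$), I would invoke Proposition \ref{prop:useful_commutative_diagram_involving_nu_map} to show that, under the adjoint correspondences, $d_0^*$ translates into precomposition with the $\capO$-action map on $\Map(K_+,Y)$ induced by the action on $Y$. This is the place where the definition of the $\capO$-action on $\Map(K_+,Y)$ via $\capO\circ(Y)\rarrow Y$ and the diagonal-induced map $\nu$ must be matched precisely.

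The hard part will be verifying this compatibility cleanly: one has to chase a diagram involving the unit, $\nu$, evaluation, and the $\capO$-action maps to confirm that the adjoint of $d_0$ agrees with the adjoint description of the $\capO$-module structure on $\Map(K_+,Y)$. Once this commutativity is established, the equalizer presentation of $\hom_{\LtO}(X,\Map(K_+,Y))$ (as maps in $\SymSeq$ that respect the left $\capO$-action) finishes the proof. Naturality in $X$, $K$, and $Y$ is automatic since every step is built from natural adjunction bijections and natural transformations.
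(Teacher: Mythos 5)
Your proposal is correct and follows essentially the same route as the paper's proof: both reduce to left $\capO$-modules, exploit the defining reflexive coequalizer \eqref{eq:tensor_with_simplicial_sets_modules} together with the adjunctions \eqref{eq:free_forgetful_adjunction} and \eqref{eq:adjunction_smash_with_simplicial_set_symmetric_sequences}, and use Proposition \ref{prop:useful_commutative_diagram_involving_nu_map} for the key compatibility of $d_0$ with the $\capO$-action on $\Map(K_+,Y)$. The only difference is presentational: you package the argument as an identification of equalizer diagrams of hom-sets, whereas the paper chases a given map $f$ through both directions of the correspondence.
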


\begin{proof}
It suffices to consider the case of left $\capO$-modules. We need to verify that specifying a map $X\tensordot K\rarrow Y$ in $\LtO$ is the same as specifying a map $X\rarrow\Map(K_+,Y)$ in $\LtO$, and that the resulting correspondence is natural. Suppose $\function{f}{X\tensordot K}{Y}$ is a map of left $\capO$-modules, and consider the corresponding commutative diagram
\begin{align}
\label{eq:induced_map_on_reflexive_coequalizers_tensor_with_simplicial_set}
\xymatrix{
  X\tensordot K\ar[d]_{f} &
  \capO\circ(X\Smash K_+)\ar[d]\ar[l]\ar[dl]_{f} & 
  \capO\circ\bigl((\capO\circ X)\Smash K_+\bigr)
  \ar[d]\ar@<0.5ex>[l]^-{d_1}\ar@<-0.5ex>[l]_-{d_0}\\
  Y & \capO\circ Y\ar[l]_-{m} & 
  \capO\circ\capO\circ Y\ar@<0.5ex>[l]^-{\id\circ m}
  \ar@<-0.5ex>[l]_-{m\circ\id}
}
\end{align}
in $\LtO$ with rows reflexive coequalizer diagrams. Using the same notation for both $\function{f}{\capO\circ(X\Smash K_+)}{Y}$ in $\LtO$ and its adjoints $\function{f}{X\Smash K_+}{Y}$ in $\SymSeq$ \eqref{eq:free_forgetful_adjunction} and $\function{f}{X}{\Map(K_+,Y)}$ in $\SymSeq$ \eqref{eq:adjunction_smash_with_simplicial_set_symmetric_sequences}, it follows easily from \eqref{eq:induced_map_on_reflexive_coequalizers_tensor_with_simplicial_set} and Proposition \ref{prop:useful_commutative_diagram_involving_nu_map} that the diagram
\begin{align*}
\xymatrix{
  (\capO\circ X)\Smash K_+\ar[d]_{\id\circ f\Smash\id}
  \ar[r]^-{m\Smash\id} & 
  X\Smash K_+\ar[r]^-{f\Smash\id} & 
  \Map(K_+,Y)\Smash K_+\ar[dd]^{\ev}\\
  \bigl(\capO\circ\Map(K_+,Y)\bigr)\Smash K_+\ar[d]_{\nu}\\
  \capO\circ\bigl(\Map(K_+,Y)\Smash K_+\bigr)\ar[r]^-{\id\circ\ev} & 
  \capO\circ Y\ar[r]^-{m} & Y
}
\end{align*}
in $\SymSeq$ commutes, which implies that $\function{f}{X}{\Map(K_+,Y)}$ is a map of left $\capO$-modules. Conversely, suppose $\function{f}{X}{\Map(K_+,Y)}$ is a map of left $\capO$-modules, and consider the corresponding map $\function{f}{X\Smash K_+}{Y}$ in $\SymSeq$. We need to verify that the adjoint map $\function{f}{\capO\circ (X\Smash K_+)}{Y}$ in $\LtO$ induces a map $\function{f}{X\tensordot K}{Y}$ in $\LtO$. Applying  $\capO\circ-$ to the commutative diagram in Proposition \ref{prop:useful_commutative_diagram_involving_nu_map}, it follows that $fd_0=fd_1$, which finishes the proof.
\end{proof}

\begin{defn} 
\label{defn:realization_algebras_and_modules}
Let $\capO$ be an operad in $\capR$-modules. The \emph{realization} functors $\functor{|-|_\AlgO}{\sAlgO}{\AlgO}$ and $\functor{|-|_\LtO}{\sLtO}{\LtO}$ for simplicial $\capO$-algebras and simplicial left $\capO$-modules are defined objectwise by the coends
\begin{align*}
  X\longmapsto |X|_\AlgO:=X\tensordot_{\Delta}\Delta[-]_+\ ,
  \quad\quad
  X\longmapsto |X|_\LtO:=X\tensordot_{\Delta}\Delta[-]_+\ .
\end{align*}
\end{defn}

Recall that the realization functors $|-|$ in Definition \ref{defn:realization} are the left adjoints in the adjunctions \eqref{eq:realization_mapping_object_adjunction_underlying} with right adjoints the functors $\Map(\Delta[-]_+,-)$. The following proposition is closely related to \cite[VII.3.3]{EKMM}; see also \cite[A]{Arone_Ching}.

\begin{prop}
\label{prop:realizations_are_isomorphic}
Let $\capO$ be an operad in $\capR$-modules and $X$ a simplicial $\capO$-algebra (resp. simplicial left $\capO$-module). The realization functors fit into adjunctions
\begin{align}
\label{eq:realization_adjunction_algebras_modules_nice}
&\xymatrix{
  \sAlgO
  \ar@<0.5ex>[r]^-{|-|_\AlgO} & 
  \AlgO,\ar@<0.5ex>[l]
}\quad\quad
\xymatrix{
  \sLtO
  \ar@<0.5ex>[r]^-{|-|_\LtO} & 
  \LtO,\ar@<0.5ex>[l]
}\\
\label{eq:underlying_realization_adjunction_algebras_modules_nice}
&\xymatrix{
  \sAlgO
  \ar@<0.5ex>[r]^-{|-|} & 
  \AlgO,\ar@<0.5ex>[l]
}\quad\quad
\xymatrix{
  \sLtO
  \ar@<0.5ex>[r]^-{|-|} &
  \LtO,\ar@<0.5ex>[l]
}
\end{align}
with left adjoints on top and right adjoints the functors $\Map(\Delta[-]_+,-)$. In particular, there are isomorphisms $|X|\Iso|X|_\AlgO$ in $\AlgO$ (resp. $|X|\Iso|X|_\LtO$ in $\LtO$), natural in $X$.  
\end{prop}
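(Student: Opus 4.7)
The plan is to establish the left-hand adjunction in \eqref{eq:realization_adjunction_algebras_modules_nice} first, then use it to deduce the isomorphism $|X|\Iso|X|_\AlgO$, and from this the right-hand adjunction in \eqref{eq:underlying_realization_adjunction_algebras_modules_nice}. The case of left $\capO$-modules will be handled in parallel.

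First, I would prove the adjunction for $|-|_\AlgO$. Writing $|X|_\AlgO$ as a coend in $\AlgO$ and using that the contravariant Hom functor sends coends in the first variable to ends, we get
\begin{align*}
  \hom_\AlgO\bigl(|X|_\AlgO,Y\bigr)
  &\Iso\catend_{[n]\in\Delta}\hom_\AlgO\bigl(X_n\tensordot\Delta[n]_+,Y\bigr)\\
  &\Iso\catend_{[n]\in\Delta}\hom_\AlgO\bigl(X_n,\Map(\Delta[n]_+,Y)\bigr)
  \Iso\hom_{\sAlgO}\bigl(X,\Map(\Delta[-]_+,Y)\bigr),
\end{align*}
where the second isomorphism uses Proposition \ref{prop:natural_isomorphisms_tensordot_and_mapping_object_algebras}, naturally in $X$ and $Y$. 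The same argument works in $\LtO$.

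Next, I would prove that $|X|_\AlgO\Iso|X|$ as $\capO$-algebras, where the right-hand side carries the induced $\capO$-algebra structure of Remark \ref{rem:realization_induces_functor_on_algebras}. The key observation is that for a fixed $\capO$-algebra $Z$ and simplicial set $K$, the reflexive coequalizer \eqref{eq:tensor_with_simplicial_sets_algebras} computing $Z\tensordot K$ in $\AlgO$ is preserved by the forgetful functor $U$ (Proposition \ref{prop:basic_properties_LTO}), and a direct inspection of the coequalizer shows $U(Z\tensordot K)\Iso Z\Smash K_+$ in $\ModR$, with the natural diagonal $\capO$-action on $Z\Smash K_+$ corresponding to the $\capO$-algebra structure on $Z\tensordot K$. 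Applying this levelwise and using that $U$, being a right adjoint as well, preserves coends (which are coequalizers of coproducts that are reflexive via degeneracies), together with Proposition \ref{prop:realization_respects_smash_tensor_and_circle}(a), gives a natural isomorphism
\begin{align*}
  U\bigl(|X|_\AlgO\bigr)
  \Iso\catend^{[n]\in\Delta}\bigl(X_n\Smash\Delta[n]_+\bigr)
  \Iso|X|
\end{align*}
in $\ModR$. A routine diagram chase, using the commutative diagram of Proposition \ref{prop:useful_commutative_diagram_involving_nu_map} and the fact that realization commutes with $\capO\circ-$ (Proposition \ref{prop:realization_respects_smash_tensor_and_circle}(b)), shows that this isomorphism intertwines the $\capO$-algebra structure on $|X|_\AlgO$ with the induced $\capO$-algebra structure on $|X|$, giving the desired isomorphism $|X|_\AlgO\Iso|X|$ in $\AlgO$. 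The argument for left $\capO$-modules is identical, replacing $\ModR$ with $\SymSeq$ and $\Smash K_+$ with the corresponding objectwise construction in symmetric sequences. Finally, the adjunction \eqref{eq:underlying_realization_adjunction_algebras_modules_nice} is immediate by combining the natural isomorphism $|X|\Iso|X|_\AlgO$ with the adjunction already established in \eqref{eq:realization_adjunction_algebras_modules_nice}.

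The main obstacle will be Step 2: verifying that the isomorphism $U(|X|_\AlgO)\Iso|X|$ on the underlying category actually intertwines the two $\capO$-algebra structures. Both structures ultimately come from the same $\capO$-action maps on the simplicial levels $X_n$, but tracking this through the reflexive coequalizer defining $\tensordot$ and the coend defining realization requires some care with the maps $\nu$ and $d_0,d_1$. The cleanest way is probably to check that the composite $\capO\circ|X|\Iso|\capO\circ X|\xrightarrow{|m|}|X|$ agrees with the action obtained from the coequalizer presentation of $|X|_\AlgO$, by working on the simplicial level and using Proposition \ref{prop:useful_commutative_diagram_involving_nu_map} together with the fact that the diagonal maps $K\to K^{\times t}$ intertwine with the $\capO$-action under realization.
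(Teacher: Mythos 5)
Your first step---deriving the adjunction \eqref{eq:realization_adjunction_algebras_modules_nice} from Proposition \ref{prop:natural_isomorphisms_tensordot_and_mapping_object_algebras} and the universal property of the coend---is exactly the paper's argument. The gap is in your second step, and it occurs before the point you flag as the ``main obstacle.'' The claim that inspection of the reflexive coequalizer \eqref{eq:tensor_with_simplicial_sets_algebras} yields $U(Z\tensordot K)\Iso Z\Smash K_+$ is false. Take $K$ to be a discrete two-point simplicial set: by Proposition \ref{prop:properties_for_establishing_simplicial_category}(a), $Z\tensordot K\Iso Z\amalg Z$, the coproduct in $\AlgO$, and by Proposition \ref{prop:coproduct_modules} its underlying $\capR$-module contains cross terms indexed by $\capO[\mathbf{q}]$ for $q\geq 2$ and is strictly larger than $Z\Smash K_+$, the coproduct of two copies of $Z$ in $\ModR$. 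The diagonal maps $K\rarrow K^{\times t}$ entering $\nu$ are precisely what keeps the coequalizer from collapsing onto $Z\Smash K_+$. There is a second error in the same step: $U$ is a right adjoint, hence preserves limits, and it preserves reflexive coequalizers and filtered colimits by Proposition \ref{prop:basic_properties_LTO}, but it does not preserve the coproducts over the objects and morphisms of $\Delta$ out of which the coend $|X|_\AlgO$ is assembled, so it does not ``preserve coends.'' The identity $U(|X|_\AlgO)\Iso|X|$ is true only after the coend over $\Delta$ has been taken; it is not a levelwise statement, and this is exactly what makes the proposition nontrivial (cf. \cite[VII.3.3]{EKMM}).

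The paper sidesteps the issue by running your steps in the opposite order. It proves the adjunction \eqref{eq:underlying_realization_adjunction_algebras_modules_nice} directly: a map $\function{f}{|X|}{Y}$ in the underlying category is a map of $\capO$-algebras (resp. left $\capO$-modules) if and only if its adjoint $X\rarrow\Map(\Delta[-]_+,Y)$ is a map of simplicial $\capO$-algebras (resp. simplicial left $\capO$-modules); this is checked using $|\capO\circ X|\Iso\capO\circ|X|$ on one side and the factorization of $\capO\circ X\rarrow\Map(\Delta[-]_+,\capO\circ Y)$ through $\capO\circ\Map(\Delta[-]_+,Y)$ on the other. The isomorphism $|X|\Iso|X|_\AlgO$ is then immediate from uniqueness of left adjoints. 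If you insist on your order, you must replace the false levelwise identification with an argument that interchanges the coend over $\Delta$ with the reflexive coequalizer defining $\tensordot$, using that $\capO\circ(-)$ preserves colimits and that realization commutes with $\capO\circ-$ (Proposition \ref{prop:realization_respects_smash_tensor_and_circle}(b)); as written, your proof does not go through.
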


\begin{proof}
It suffices to consider the case of left $\capO$-modules. Let $X$ be a simplicial left $\capO$-module. Verifying  \eqref{eq:realization_adjunction_algebras_modules_nice} follows easily from \ref{prop:natural_isomorphisms_tensordot_and_mapping_object_algebras} and the universal property of coends. Consider \eqref{eq:underlying_realization_adjunction_algebras_modules_nice}. Suppose $\function{f}{|X|}{Y}$ is a map of left $\capO$-modules, and consider the corresponding left-hand commutative diagram
\begin{align*}
\xymatrix{
  \capO\circ|X|\Iso|\capO\circ X|\ar@<5.0ex>[d]_-{\id\circ f}\ar[r]^-{|m|} & 
  |X|\ar[d]^{f}\\
  \quad\quad\quad\quad\quad\capO\circ Y\ar[r]^-{m} & Y
}\quad\quad
\xymatrix{
  \capO\circ X\ar[d]_{(*)}\ar[r]^-{m} & X\ar[d]^{f}\\
  \Map(\Delta[-]_+,\capO\circ Y)\ar[r]^-{(\id,m)} & \Map(\Delta[-]_+,Y)
}
\end{align*}
in $\SymSeq$. Using the same notation for both $\function{f}{|X|}{Y}$ in $\SymSeq$ and its adjoint $\function{f}{X}{\Map(\Delta[-]_+,Y)}$ in $\sSymSeq$ \eqref{eq:realization_mapping_object_adjunction_underlying}, we know by \eqref{eq:realization_mapping_object_adjunction_underlying} that the left-hand diagram commutes if and only if its corresponding right-hand diagram in $\sSymSeq$ commutes. Since the map $(*)$ factors in $\sSymSeq$ as
$
  \capO\circ X\xrightarrow{\id\circ f}\capO\circ\Map(\Delta[-]_+,Y)
  \rarrow\Map(\Delta[-]_+,\capO\circ Y)
$,
the proof is complete.
\end{proof}

\begin{prop}
\label{prop:properties_for_establishing_simplicial_category}
Let $\capO$ be an operad in $\capR$-modules. Let $X,Y$ be $\capO$-algebras (resp. left $\capO$-modules) and $K,L$ simplicial sets. Then
\begin{itemize}
\item[(a)] the functor $\function{X\tensordot-}{\sSet}{\AlgO}$ (resp. $\function{X\tensordot-}{\sSet}{\LtO}$) commutes with all colimits and there are natural isomorphisms $X\tensordot *\Iso X$,
\item[(b)] there are isomorphisms $X\tensordot(K\times L)\Iso (X\tensordot K)\tensordot L$, natural in $X,K,L$.
\end{itemize}
\end{prop}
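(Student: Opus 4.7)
It suffices to consider the case of left $\capO$-modules, the argument for $\capO$-algebras being entirely analogous (using the analogous defining coequalizer \eqref{eq:tensor_with_simplicial_sets_algebras}).

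For part (a), my plan is to establish that $X\tensordot -$ commutes with all colimits by inspecting the defining reflexive coequalizer \eqref{eq:tensor_with_simplicial_sets_modules}. Both terms of this coequalizer, regarded as functors of $K\in\sSet$, are composites of left adjoints: first $K\mapsto K_+$ (left adjoint to the forgetful functor $\sSet_*\rarrow\sSet$), then $\capR\tensor G_0(-)$ (left adjoint to evaluation at $0$), then smashing with $X$ or with $\capO\circ X$ (part of the closed symmetric monoidal structure on $\ModR$), and finally $\capO\circ(-)$ (left adjoint to the forgetful functor $\LtO\rarrow\SymSeq$ from \eqref{eq:free_forgetful_adjunction}). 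Each composite therefore preserves colimits, and since colimits commute with colimits in $\LtO$, so does the reflexive coequalizer formation in $K$. For the identity $X\tensordot *\Iso X$, observe that $*_+\Iso S^0$ is the unit for smash product in $\sSet_*$, so that $\capR\tensor G_0(*_+)\Iso\capR$ and the map $\nu$ reduces to the identity. The coequalizer \eqref{eq:tensor_with_simplicial_sets_modules} thus collapses to the familiar reflexive coequalizer presenting a left $\capO$-module $X$ as the coequalizer of its free simplicial resolution $\capO\circ X\leftleftarrows\capO\circ(\capO\circ X)$, whose colimit is $X$.

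For part (b), I will combine the adjunction of Proposition \ref{prop:natural_isomorphisms_tensordot_and_mapping_object_algebras} with the Yoneda lemma. For any left $\capO$-module $Y$, there is a natural chain of isomorphisms
\begin{align*}
  \hom_{\LtO}\bigl(X\tensordot(K\times L),Y\bigr)
  &\Iso\hom_{\LtO}\bigl(X,\Map((K\times L)_+,Y)\bigr)\\
  &\Iso\hom_{\LtO}\bigl(X,\Map(K_+,\Map(L_+,Y))\bigr)\\
  &\Iso\hom_{\LtO}\bigl(X\tensordot K,\Map(L_+,Y)\bigr)\\
  &\Iso\hom_{\LtO}\bigl((X\tensordot K)\tensordot L,Y\bigr),
\end{align*}
where the second isomorphism uses the identification $(K\times L)_+\Iso K_+\Smash L_+$ in $\sSet_*$ together with the tensor-hom adjunction of the closed symmetric monoidal structure on $\ModR$ (both transported via $\capR\tensor G_0$).

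The only real obstacle is verifying that the intermediate iso\-morphism $\Map((K\times L)_+,Y)\Iso\Map(K_+,\Map(L_+,Y))$ respects the induced left $\capO$-module structures. This reduces to a direct check using the explicit formula for the $\capO$-action on $\Map(K_+,-)$ (cf.\ Proposition \ref{prop:useful_commutative_diagram_involving_nu_map}) together with the observation that the diagonal maps $K\times L\rarrow(K\times L)^{\times t}$ factor compatibly through $K^{\times t}\times L^{\times t}$, so that the $\nu$-maps for $K\times L$, $K$, and $L$ fit into a commutative diagram. Once this compatibility is in hand, applying the Yoneda lemma to the composite bijection produces the desired natural isomorphism $X\tensordot(K\times L)\Iso(X\tensordot K)\tensordot L$.
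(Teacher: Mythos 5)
Your proposal is correct and follows essentially the same route as the paper: part (a) from the defining reflexive coequalizer \eqref{eq:tensor_with_simplicial_sets_modules} together with the adjunctions \eqref{eq:free_forgetful_adjunction}, and part (b) by Yoneda via repeated use of Proposition \ref{prop:natural_isomorphisms_tensordot_and_mapping_object_algebras} plus the check that $\Map(K_+,\Map(L_+,Y))\Iso\Map(K_+\Smash L_+,Y)$ respects the left $\capO$-module structures. The compatibility of the diagonal/$\nu$-maps that you flag is precisely the point the paper also singles out, so nothing is missing.
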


\begin{proof}
It suffices to consider the case of left $\capO$-modules. Part (a) follows easily from \eqref{eq:tensor_with_simplicial_sets_modules} and \eqref{eq:free_forgetful_adjunction}. Part (b) follows easily from the Yoneda lemma by verifying there are natural isomorphisms
$
  \hom_\LtO\bigl((X\tensordot K)\tensordot L,Y\bigr)
  \Iso
  \hom_\LtO\bigl(X\tensordot(K\times L),Y\bigr)
$; 
this involves several applications of Proposition \ref{prop:natural_isomorphisms_tensordot_and_mapping_object_algebras}, together with the observation that the natural isomorphism $\Map(K_+,\Map(L_+,Y))\Iso\Map(K_+\Smash L_+,Y)$ in $\SymSeq$ respects the left $\capO$-module structures.
\end{proof}

\begin{defn}
\label{defn:mapping_space_functor_simplicial_category}
Let $\capO$ be an operad in $\capR$-modules. Let $X,Y$ be $\capO$-algebras (resp. left $\capO$-modules). The \emph{mapping space} $\Hombold(X,Y)\in\sSet$  is defined objectwise by
\begin{align*}
  \Hombold(X,Y)_n := \hom_\AlgO(X\tensordot\Delta[n],Y)
  \quad
  \Bigl(\text{resp.}\ \ 
  \Hombold(X,Y)_n := \hom_\LtO(X\tensordot\Delta[n],Y)
  \Bigr).
\end{align*}
\end{defn}

\begin{prop}
\label{prop:simplicial_category_algebras_modules}
Let $\capO$ be an operad in $\capR$-modules. Then the category of $\capO$-algebras and the category of left $\capO$-modules are simplicial categories (in the sense of \cite[II.2.1]{Goerss_Jardine}), where the mapping space functor is that of Definition \ref{defn:mapping_space_functor_simplicial_category}.
\end{prop}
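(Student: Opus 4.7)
The plan is to verify that the data of Definition \ref{defn:mapping_space_functor_simplicial_category}, together with the tensor structure from Definition \ref{defn:tensor_with_simplicial_sets}, equip $\AlgO$ and $\LtO$ with an enrichment over $(\sSet,\times)$ in the sense of \cite[II.2.1]{Goerss_Jardine}. It suffices to treat the case of left $\capO$-modules, since the case of $\capO$-algebras is identical. The strategy is to observe that a simplicial category structure on a category $\M$ with small colimits is equivalent to the data of an action $\tensordot\colon\M\times\sSet\rarrow\M$ of the cartesian monoidal category $(\sSet,\times,*)$, i.e., a bifunctor equipped with coherent natural isomorphisms $X\tensordot *\Iso X$ and $X\tensordot(K\times L)\Iso (X\tensordot K)\tensordot L$ satisfying pentagon and unit coherence; given such an action, the mapping space $\Hombold(X,Y)_n=\hom(X\tensordot\Delta[n],Y)$ is automatically a simplicial set and inherits associative, unital composition via the diagonals on the simplices.

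First I would note that Proposition \ref{prop:properties_for_establishing_simplicial_category} furnishes precisely the required associativity and unit isomorphisms for the functor $\tensordot\colon\LtO\times\sSet\rarrow\LtO$ of Definition \ref{defn:tensor_with_simplicial_sets}. Coherence of these isomorphisms with respect to the pentagon and triangle identities is forced: both sides in each coherence diagram represent the same functor via the Yoneda-style adjunction in Proposition \ref{prop:natural_isomorphisms_tensordot_and_mapping_object_algebras}, since $\Map(-_+,Y)$ converts products of simplicial sets to iterated mapping objects compatibly with the closed symmetric monoidal structures at play. In particular, the degree $0$ identification $\Hombold(X,Y)_0=\hom(X\tensordot\Delta[0],Y)\Iso\hom(X\tensordot *,Y)\Iso\hom_\LtO(X,Y)$ follows from Proposition \ref{prop:properties_for_establishing_simplicial_category}(a).

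Next I would define the composition pairing. Given $n$-simplices $f\colon X\tensordot\Delta[n]\rarrow Y$ and $g\colon Y\tensordot\Delta[n]\rarrow Z$, form the composite
\begin{align*}
  X\tensordot\Delta[n]\xrightarrow{\id\tensordot\Delta}
  X\tensordot(\Delta[n]\times\Delta[n])\Iso
  (X\tensordot\Delta[n])\tensordot\Delta[n]
  \xrightarrow{f\tensordot\id}
  Y\tensordot\Delta[n]\xrightarrow{g}Z,
\end{align*}
where $\Delta$ is the diagonal of $\Delta[n]$ and the middle isomorphism is supplied by Proposition \ref{prop:properties_for_establishing_simplicial_category}(b). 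The identity $n$-simplex of $\Hombold(X,X)$ is the adjoint $X\tensordot\Delta[n]\rarrow X$ of the canonical map $\Delta[n]\rarrow *$ applied to $X\tensordot *\Iso X$. Naturality in $n$ with respect to morphisms of $\Delta$ is immediate from functoriality of $\tensordot$ in the second variable.

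The main (modest) obstacle is the verification of associativity and unit laws for this composition. Associativity reduces to the commutativity of a diagram involving the iterated diagonal $\Delta[n]\rarrow \Delta[n]\times\Delta[n]\rarrow \Delta[n]\times\Delta[n]\times\Delta[n]$ together with the associativity isomorphisms of Proposition \ref{prop:properties_for_establishing_simplicial_category}(b); this is a purely formal consequence of the fact that the diagonal makes $\Delta[n]$ a cocommutative comonoid in $(\sSet,\times)$, together with the coherence noted above. The unit laws similarly reduce to the triangle identity for the isomorphisms $X\tensordot *\Iso X$ and $X\tensordot(K\times *)\Iso(X\tensordot K)\tensordot *$, which are again forced by Yoneda via Proposition \ref{prop:natural_isomorphisms_tensordot_and_mapping_object_algebras}. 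Assembling these verifications completes the proof.
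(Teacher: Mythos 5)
Your argument is correct and is essentially the paper's proof: the paper deduces the proposition from Propositions \ref{prop:natural_isomorphisms_tensordot_and_mapping_object_algebras} and \ref{prop:properties_for_establishing_simplicial_category} by citing \cite[II.2.4]{Goerss_Jardine}, which is precisely the general recognition lemma (tensor functor with unit and associativity isomorphisms, composition via the diagonal of $\Delta[n]$) that you reprove inline. The only difference is that you unfold the citation rather than invoke it.
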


\begin{proof}
This follows from Propositions \ref{prop:natural_isomorphisms_tensordot_and_mapping_object_algebras} and \ref{prop:properties_for_establishing_simplicial_category}, together with \cite[II.2.4]{Goerss_Jardine}.
\end{proof}

\begin{prop}
\label{prop:simplicial_model_category_axiom}
Let $\capO$ be an operad in $\capR$-modules. Consider $\AlgO$ (resp. $\LtO$) with the model structure of Theorem \ref{thm:positive_flat_stable_AlgO} or \ref{thm:positive_stable_AlgO}.
\begin{itemize}
\item[(a)] If $\function{j}{K}{L}$ is a cofibration in $\sSet$, and $\function{p}{X}{Y}$ is a fibration in $\AlgO$ (resp. $\LtO$), then 
$
\xymatrix@1{
  \Map(L_+,X)\ar[r] & 
  \Map(K_+,X)\times_{\Map(K_+,Y)}
  \Map(L_+,Y)
}
$
is a fibration in $\AlgO$ (resp. $\LtO$) that is an acyclic fibration if either $j$ or $p$ is a weak equivalence.
\item[(b)] If $\function{j}{A}{B}$ is a cofibration in $\AlgO$ (resp. $\LtO$), and $\function{p}{X}{Y}$ is a fibration in $\AlgO$ (resp. $\LtO$), then the pullback corner map is a fibration
$
\xymatrix@1{
  \Hombold(B,X)\ar[r] & 
  \Hombold(A,X)\times_{\Hombold(A,Y)}
  \Hombold(B,Y)
}
$
in $\sSet$ that is an acyclic fibration if either $j$ or $p$ is a weak equivalence.
\end{itemize}
\end{prop}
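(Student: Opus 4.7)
My plan is to first observe that parts (a) and (b) are equivalent to a single pushout--product SM7 axiom via adjunction, and then reduce that axiom to the monoidal model structure on $\ModR$ (resp.\ $\SymSeq$) using the explicit form of generating cofibrations in $\AlgO$ (resp.\ $\LtO$). It suffices to treat the case of left $\capO$-modules; the case of $\capO$-algebras is identical.

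First I would set up the pushout--product reformulation. Combining the adjunctions \eqref{eq:adjunction_smash_with_simplicial_set_symmetric_sequences} and Proposition \ref{prop:natural_isomorphisms_tensordot_and_mapping_object_algebras}, together with the Yoneda-style description of mapping spaces $\Hombold(A,-)_n = \hom_\LtO(A\tensordot\Delta[n],-)$, both (a) and (b) are equivalent (via the standard pullback-hom/pushout-product adjunction game) to the following statement: \emph{for every cofibration $i\colon A\rarrow B$ in $\LtO$ and every cofibration $j\colon K\rarrow L$ in $\sSet$, the pushout--product map
$$
  i\square j\colon A\tensordot L\amalg_{A\tensordot K}B\tensordot K\rarrow B\tensordot L
$$
is a cofibration in $\LtO$ that is acyclic if either $i$ or $j$ is a weak equivalence.} I would concentrate on proving this single assertion.

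Next I would reduce to the case where $i$ is a generating (acyclic) cofibration. Cofibrations (resp.\ acyclic cofibrations) in $\LtO$ are retracts of transfinite compositions of pushouts of generating (acyclic) cofibrations, and the constructions $-\tensordot K$, $-\tensordot L$ and the indicated pushout commute with colimits in the first variable (Proposition \ref{prop:properties_for_establishing_simplicial_category}(a)); retracts and transfinite compositions of cofibrations are cofibrations, so the pushout--product operation $(-)\square j$ preserves cofibrations provided it does so on generators. A symmetric argument reduces $j$ to a generating (acyclic) cofibration in $\sSet$. The generating (acyclic) cofibrations in $\LtO$ have the form $\capO\circ f$ with $f$ a generating (acyclic) cofibration in $\SymSeq$ (for the relevant model structure, see Section \ref{sec:model_structures}), and by a double adjoint computation using \eqref{eq:free_forgetful_adjunction}, Proposition \ref{prop:natural_isomorphisms_tensordot_and_mapping_object_algebras}, and \eqref{eq:adjunction_smash_with_simplicial_set_symmetric_sequences} there is a natural isomorphism
$$
  (\capO\circ A')\tensordot K\Iso \capO\circ(A'\Smash K_+)
$$
in $\LtO$. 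Since $\capO\circ(-)\colon\SymSeq\rarrow\LtO$ is a left adjoint it commutes with the pushout in question, so $(\capO\circ f)\square j$ is naturally isomorphic to $\capO\circ(f\square' j)$, where $f\square' j$ denotes the pushout--product in $\SymSeq$ formed with $-\Smash K_+$ and $-\Smash L_+$.

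Finally, since $\capO\circ(-)$ is the left adjoint of a Quillen adjunction between $\SymSeq$ (resp.\ $\ModR$) and $\LtO$ (resp.\ $\AlgO$) by Theorem \ref{thm:positive_flat_stable_AlgO}, it preserves cofibrations and acyclic cofibrations, so the conclusion reduces to the assertion that $f\square' j$ is a (acyclic) cofibration in $\SymSeq$ (resp.\ $\ModR$). This in turn is the pushout--product axiom for the monoidal model category $(\SymSeq,\tensorcheck)$ (resp.\ $(\ModR,\Smash)$), applied to $f$ and to the map $\capR\tensor G_0 K_+\rarrow \capR\tensor G_0 L_+$ induced by $j$, since $X\Smash K_+\Iso X\Smash(\capR\tensor G_0 K_+)$. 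The main obstacle, and the only content beyond formal manipulations, is verifying that the functor $\sSet\rarrow\ModR$, $K\longmapsto\capR\tensor G_0 K_+$ carries (acyclic) cofibrations of simplicial sets to (acyclic) cofibrations in the (positive) flat stable (resp.\ positive stable) model structure on $\ModR$, and that the corresponding pushout--product/monoidal model category axioms for $(\ModR,\Smash)$ and $(\SymSeq,\tensorcheck)$ hold: all of these are recorded as part of the technical analysis of the model structures in Section \ref{sec:model_structures}.
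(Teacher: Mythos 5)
Your overall strategy is sound and is essentially the pushout--product reformulation of what the paper does: the paper verifies the cotensor form (a) directly in the underlying category (fibrations and weak equivalences in $\LtO$ are created by the forgetful functor, so (a) is literally a statement about the pullback corner map in $\SymSeq$, which is the content of the proof of Proposition \ref{prop:mixing_flat_stable_with_positive_flat_stable_tensorcheck}) and then deduces (b) from (a) via the standard equivalence of the forms of SM7; you instead verify the pushout--product form on generating cofibrations $\capO\circ f$ using the isomorphism $(\capO\circ A')\tensordot K\Iso\capO\circ(A'\Smash K_+)$, which is a legitimate alternative route, albeit one that forces a reduction to cellular objects in $\LtO$ that the paper's argument avoids.

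There is, however, a genuine gap at your final step. The map $\capR\tensor G_0K_+\rarrow\capR\tensor G_0L_+$ induced by a cofibration $j$ of simplicial sets is a \emph{flat stable} (resp.\ stable) cofibration in $\ModR$, but it is \emph{not} a positive flat stable (resp.\ positive stable) cofibration: its level-$0$ part is $\capR_0\Smash K_+\rarrow\capR_0\Smash L_+$, which is not an isomorphism, and this is exactly what Proposition \ref{prop:flat_cofibrations_and_positive_flat_cofibrations} rules out. Consequently the pushout--product axiom for the positive flat stable monoidal model structure on $(\SymSeq,\tensorcheck)$ (resp.\ $(\ModR,\Smash)$), which requires \emph{both} maps to be positive cofibrations, does not apply to the pair $(f,\capR\tensor G_0j_+)$; and the non-positive flat stable pushout--product axiom only yields that $f\square' j$ is a flat stable (acyclic) cofibration, which is \emph{not} enough, since positive flat stable fibrations need not have the right lifting property against flat stable acyclic cofibrations. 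What is actually required is the mixed statement of Proposition \ref{prop:mixing_flat_stable_with_positive_flat_stable_tensorcheck}(a): a flat stable cofibration pushout--producted against a positive flat stable cofibration is a positive flat stable cofibration. Its content beyond the axioms recorded in Section \ref{sec:model_structures} is the verification, via Proposition \ref{prop:flat_cofibrations_and_positive_flat_cofibrations} and the formula \eqref{eq:tensor_check_calc}, that the pushout corner map is an isomorphism at simplicial level $0$ in each symmetric sequence degree. Your proof needs to cite or reprove this mixed lemma in place of the blanket appeal to ``the pushout--product axiom for the monoidal model category,'' and the parenthetical claim that $K\longmapsto\capR\tensor G_0K_+$ lands in \emph{positive} cofibrations should be deleted, since it is false.
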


\begin{proof}
Consider the case of left $\capO$-modules with the positive flat stable model structure. Part (a) follows from the proof of Proposition \ref{prop:mixing_flat_stable_with_positive_flat_stable_tensorcheck}, and part (b) follows from part (a) together with \cite[II.3.13]{Goerss_Jardine}. The case of $\capO$-algebras with the positive flat stable model structure is similar. Consider the case of $\capO$-algebras or left $\capO$-modules with the positive stable model structure. This follows by exactly the same argument as above together with the fact that $\capR\tensor G_0(-)_+$ applied to a cofibration in $\sSet$ gives a cofibration in $\ModR$ with the stable model structure (Section \ref{sec:model_structures} and \cite{Schwede_book_project}).
\end{proof}

The following theorem states that the simplicial structure respects the model category structure; this has also been observed in the context of symmetric spectra in \cite{Hornbostel, Schwede_book_project}; see also \cite{Arone_Ching, EKMM, McClure_Schwanzl_Vogt}.

\begin{thm}
\label{thm:simplicial_model_category_structure}
Let $\capO$ be an operad in $\capR$-modules. Consider $\AlgO$ (resp. $\LtO$) with the model structure of Theorem \ref{thm:positive_flat_stable_AlgO} or \ref{thm:positive_stable_AlgO}. Then $\AlgO$ (resp. $\LtO$) is a simplicial model category with the mapping space functor of Definition \ref{defn:mapping_space_functor_simplicial_category}.
\end{thm}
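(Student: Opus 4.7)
The plan is to assemble this directly from the preceding results in this subsection, since nearly all of the work has already been done. Recall that to verify $\AlgO$ (resp. $\LtO$) is a simplicial model category, it suffices (in the sense of \cite[II.3]{Goerss_Jardine}) to check three things: (i) the underlying category is a simplicial category with a compatible tensor and cotensor over $\sSet$, (ii) the axiom SM0 (namely, that this enrichment exists), and (iii) the axiom SM7 relating the pullback corner map of a cofibration and a fibration to (acyclic) fibrations in $\sSet$.

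First, I would note that the simplicial structure is already in place: the mapping space functor of Definition \ref{defn:mapping_space_functor_simplicial_category} makes $\AlgO$ and $\LtO$ into simplicial categories by Proposition \ref{prop:simplicial_category_algebras_modules}. Moreover, the tensor functor $X \tensordot K$ (Definition \ref{defn:tensor_with_simplicial_sets}) and the cotensor functor $\Map(K_+,-)$ together with the natural isomorphisms of Proposition \ref{prop:natural_isomorphisms_tensordot_and_mapping_object_algebras} give the required two-variable adjunction
\begin{align*}
  \hom_{\AlgO}(X\tensordot K,Y) \Iso
  \hom_{\sSet}\bigl(K,\Hombold(X,Y)\bigr) \Iso
  \hom_{\AlgO}\bigl(X,\Map(K_+,Y)\bigr),
\end{align*}
and similarly for $\LtO$. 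The first isomorphism here follows easily by unpacking Definition \ref{defn:mapping_space_functor_simplicial_category} together with Proposition \ref{prop:properties_for_establishing_simplicial_category}, which also provides the associativity $X\tensordot(K\times L)\Iso (X\tensordot K)\tensordot L$ needed to ensure the tensoring is compatible with the simplicial enrichment.

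Second, for the axiom SM7, the key input is Proposition \ref{prop:simplicial_model_category_axiom}(b): given a cofibration $\function{j}{A}{B}$ in $\AlgO$ (resp. $\LtO$) and a fibration $\function{p}{X}{Y}$ in $\AlgO$ (resp. $\LtO$), the natural pullback corner map
\begin{align*}
  \Hombold(B,X)\rarrow
  \Hombold(A,X)\times_{\Hombold(A,Y)}\Hombold(B,Y)
\end{align*}
is a fibration in $\sSet$, and is acyclic whenever either $j$ or $p$ is a weak equivalence. This is precisely SM7.

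There is really no serious obstacle here; the only subtle point is that Proposition \ref{prop:simplicial_model_category_axiom}(b) has already been reduced, via the standard equivalence between the three forms of SM7 given in \cite[II.3.13]{Goerss_Jardine}, to part (a) of the same proposition, which in turn rests on the analogous pushout-product statement for $(\ModR,\Smash,\capR)$ (i.e., Proposition \ref{prop:mixing_flat_stable_with_positive_flat_stable_tensorcheck} in the flat stable case, and the analogous cofibrancy of $\capR\tensor G_0(K_+)$ in the positive stable case). Once these pieces are cited, the verification of SM0 and SM7 is immediate, and the theorem follows.
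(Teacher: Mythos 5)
Your proposal is correct and follows essentially the same route as the paper, which deduces the theorem from Proposition \ref{prop:simplicial_category_algebras_modules} (the simplicial category structure) and Proposition \ref{prop:simplicial_model_category_axiom} (SM7), together with the equivalence of forms of SM7 in \cite[II.3.13]{Goerss_Jardine}. Your version simply spells out the tensor--cotensor adjunction and the reduction to the pushout-product statement in $(\ModR,\Smash,\capR)$ more explicitly than the paper's one-line proof does.
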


\begin{proof}
This follows from Propositions \ref{prop:simplicial_category_algebras_modules} and \ref{prop:simplicial_model_category_axiom}, together with \cite[II.3.13]{Goerss_Jardine}.
\end{proof}

\subsection{Homotopical analysis of the simplicial bar constructions}

The purpose of this subsection is to prove Theorem \ref{thm:reedy_cofibrant_for_bar_constructions}. This will require that we establish certain homotopical properties of the tensor product (Proposition \ref{prop:mixing_flat_stable_with_positive_flat_stable_tensorcheck}) and circle product (Theorem \ref{thm:mixing_flat_stable_with_positive_flat_stable} and Proposition \ref{prop:flat_stable_cofibration_properties_symmetric_sequences}) constructions arising in the description of the degenerate subobjects (Proposition \ref{prop:nice_description_of_degenerate_subobjects}).

\begin{prop}
\label{prop:flat_cofibrations_and_positive_flat_cofibrations}
Consider symmetric sequences in $\capR$-modules. Let $A,B$ be symmetric sequences.
\begin{itemize}
\item[(a)] $\function{f}{X}{Y}$ is a flat stable cofibration in $\ModR$ and $X_0\xrightarrow{\Iso}Y_0$ is an isomorphism if and only if $f$ is a positive flat stable cofibration in $\ModR$.
\item[(b)] $\function{f}{X}{Y}$ is a flat stable cofibration in $\SymSeq$ and $X[\mathbf{r}]_0\xrightarrow{\Iso}Y[\mathbf{r}]_0$ is an isomorphism for each $r\geq 0$, if and only if $f$ is a positive flat stable cofibration in $\SymSeq$.
\item[(c)] If $X,Y\in\ModR$, then there is a natural isomorphism $(X\Smash Y)_0\Iso X_0\Smash_{\capR_0} Y_0$.
\item[(d)] If $X,Y\in\ModR$ and $Y_0=*$, then $(X\Smash Y)_0=*$.
\item[(e)] If $B[\mathbf{r}]_0=*$ for each $r\geq 0$, then $(A\tensorcheck B)[\mathbf{r}]_0=*$ for each $r\geq 0$.
\item[(f)] If $A[\mathbf{0}]_0=*=B[\mathbf{r}]_0$ for each $r\geq 0$, then $(A\circ B)[\mathbf{r}]_0=*$ for each $r\geq 0$.
\item[(g)] If $A[\mathbf{r}]_0=*$ for each $r\geq 0$, then $(A\circ B)[\mathbf{r}]_0=*$ for each $r\geq 0$.
\end{itemize}
\end{prop}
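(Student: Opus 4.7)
My plan is to prove parts (a) and (b) first by unpacking the difference between the flat stable and positive flat stable model structures, and then to deduce (c)--(g) from elementary calculations with the coequalizer/coend descriptions of the smash, tensor and circle products. Throughout I will use the notation of Section~\ref{sec:model_structures}: the (positive) flat stable model structure on $\ModR$ is cofibrantly generated, with generating cofibrations of the form $\capR\tensor G_p(\partial\Delta[n]_+\rarrow\Delta[n]_+)$ for $p\geq 0$ (resp.\ $p\geq 1$), and similarly for $\SymSeq$.

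For (a), the ``only if'' direction is essentially a direct check: every positive flat stable cofibration is a flat stable cofibration (the positive structure has fewer generators), and every generator $\capR\tensor G_p(i_n)$ with $p\geq 1$ is levelwise an isomorphism at level $0$, hence so is any (transfinite) composition of pushouts of coproducts of such maps, and so is any retract thereof. For the ``if'' direction, I would use the characterization by lifting: a map $f$ is a positive flat stable cofibration if and only if it has the left lifting property against every positive flat stable acyclic fibration. Every flat stable acyclic fibration is a positive flat stable acyclic fibration, so a flat stable cofibration already lifts against these. A positive flat stable acyclic fibration $p$ fails to be a flat stable acyclic fibration only at level $0$, where it need only be a fibration of pointed simplicial sets. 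The hypothesis that $f$ is an isomorphism at level $0$ lets one construct a lift against $p$ by first choosing an arbitrary lift at level $0$ (using the isomorphism), and then extending to higher levels using the flat stable acyclic fibration property of $p$ above level $0$. Part (b) reduces levelwise to (a) via the adjunctions $(G_p,\Ev_p)$ and the standard projective-type generating set for $\SymSeq$; this is the main obstacle, though it is largely bookkeeping.

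For (c), recall that $X\Smash Y:=X\tensor_\capR Y$ is the coequalizer of $X\tensor_S\capR\tensor_S Y\rightrightarrows X\tensor_S Y$ in $\ModR$. Evaluating at level $0$ commutes with colimits, and for the Hovey--Shipley--Smith tensor product of symmetric spectra one has a natural isomorphism $(A\tensor_S B)_0\Iso A_0\Smash B_0$ (all maps out of the bottom level in the Day convolution formula are isomorphisms). Applying this to the displayed coequalizer identifies $(X\Smash Y)_0$ with the coequalizer of $X_0\Smash\capR_0\Smash Y_0\rightrightarrows X_0\Smash Y_0$, which is precisely $X_0\Smash_{\capR_0}Y_0$. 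Part (d) is immediate from (c): if $Y_0=*$, then $X_0\Smash_{\capR_0}*=*$.

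Finally I would deduce (e)--(g) from the explicit formulas \eqref{eq:tensor_check_calc} and \eqref{eq:circle_product_calc} together with (d). For (e), each summand of $(A\tensorcheck B)[\mathbf{r}]$ contains at least one factor of the form $B[\mathbf{r_i}]$ whose level $0$ vanishes, so (d) (applied iteratively) forces the level $0$ of each smash factor to vanish; taking $\Sigma$-orbits of $*$ still gives $*$, and a coproduct of $*$'s at level $0$ is $*$. For (f), write $(A\circ B)[\mathbf{r}]=\coprod_{t\geq 0}A[\mathbf{t}]\Smash_{\Sigma_t}(B^{\tensorcheck t})[\mathbf{r}]$. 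The $t=0$ term is $A[\mathbf{0}]$ when $r=0$ and $*$ when $r>0$; in the first case $A[\mathbf{0}]_0=*$ by hypothesis. For $t\geq 1$, each summand of $(B^{\tensorcheck t})[\mathbf{r}]$ contains at least one $B[\mathbf{r_i}]$-factor, so its level $0$ vanishes by (d) and the hypothesis $B[\mathbf{r_i}]_0=*$. For (g), the same analysis works but now $A[\mathbf{t}]_0=*$ for every $t\geq 0$ kills each summand directly via (d).
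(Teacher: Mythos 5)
Your treatment of parts (c)--(g) is correct and is exactly the ``easy exercise'' the paper has in mind: evaluation at simplicial level $0$ commutes with the colimits defining $\Smash$, $\tensorcheck$ and $\circ$, the identification $(X\Smash Y)_0\Iso X_0\Smash_{\capR_0}Y_0$ follows from $(A\tensor_S B)_0\Iso A_0\Smash B_0$ applied to the defining coequalizer, and the vanishing statements then propagate through the coproduct/orbit formulas \eqref{eq:tensor_check_calc} and \eqref{eq:circle_product_calc}. Likewise your argument that a positive flat stable cofibration is a flat stable cofibration which is an isomorphism at level $0$ (cell induction over the positive generating cofibrations, plus retracts) is correct.

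The gap is in the converse direction of (a), and hence of (b). A lift of $\capR$-modules cannot be produced by ``first choosing an arbitrary lift at level $0$ and then extending to higher levels'': the levels of a putative lift $Y\rarrow C$ are tied together by the structure maps and the $\capR$-action, and the extension step is precisely the entire content of the claim --- making it rigorous amounts to an induction over latching-type objects, i.e.\ to reproving the characterization of (positive) flat stable cofibrations. This is how the paper actually argues: both (a) and (b) are read off from Proposition \ref{prop:cofibration_characterization}, whose positive-case condition differs from the non-positive case only in requiring the level-$0$ maps to be isomorphisms rather than cofibrations. If you want to salvage your lifting strategy without citing that characterization, the standard repair is the retract argument: factor $f=q\circ j$ with $j$ a relative cell complex on the positive generators and $q$ a positive flat stable acyclic fibration. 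By your easy direction $j_0$ is an isomorphism, hence so is $q_0$; by adjunction against the $m=0$ generating cofibrations $\capR\tensor G^H_0\partial\Delta[k]_+\rarrow\capR\tensor G^H_0\Delta[k]_+$, having $q_0$ an acyclic fibration of pointed simplicial sets is all that separates $q$ from being an honest flat stable acyclic fibration, so $q$ is one; the flat cofibration $f$ therefore lifts against $q$ and is a retract of $j$, hence is a positive flat stable cofibration. The same argument, run with the generators $G_p$ applied to those of $\ModR$, gives (b); note that ``reduces levelwise to (a)'' is not quite accurate, since projective cofibrations of symmetric sequences are not detected objectwise, although (acyclic) fibrations are.
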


\begin{proof}
Parts (a) and (b) follow from \ref{prop:cofibration_characterization}. The remaining parts are an easy exercise left to the reader.
\end{proof}

\begin{prop}
\label{prop:mixing_flat_stable_with_positive_flat_stable_tensorcheck}
Consider symmetric sequences in $\capR$-modules, and consider $\SymSeq$ with the positive flat stable model structure.
\begin{itemize}
\item[(a)] If $\function{i}{K}{L}$ is a flat stable cofibration in $\SymSeq$, and $\function{j}{A}{B}$ is a cofibration in $\SymSeq$, then 
$
\xymatrix@1{
  L\tensorcheck A\coprod_{K\tensorcheck A} K\tensorcheck B\ar[r] & 
  L\tensorcheck B
}
$
is a cofibration in $\SymSeq$ that is an acyclic cofibration if either $i$ or $j$ is a weak equivalence.
\item[(b)] If $\function{j}{A}{B}$ is a flat stable cofibration in $\SymSeq$, and $\function{p}{X}{Y}$ is a fibration in $\SymSeq$, then
$
\xymatrix@1{
  \Map^\tensorcheck(B,X)\ar[r] & 
  \Map^\tensorcheck(A,X)\times_{\Map^\tensorcheck(A,Y)}
  \Map^\tensorcheck(B,Y)
}
$
is a fibration in $\SymSeq$ that is an acyclic fibration if either $j$ or $p$ is a weak equivalence.
\item[(c)] If $\function{j}{A}{B}$ is a cofibration in $\SymSeq$, and $\function{p}{X}{Y}$ is a fibration in $\SymSeq$, then
$
\xymatrix@1{
  \Map^\tensorcheck(B,X)\ar[r] & 
  \Map^\tensorcheck(A,X)\times_{\Map^\tensorcheck(A,Y)}
  \Map^\tensorcheck(B,Y)
}
$
is a flat stable fibration in $\SymSeq$ that is a flat stable acyclic fibration if either $j$ or $p$ is a weak equivalence.
\end{itemize}
\end{prop}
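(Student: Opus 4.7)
The plan is to prove part (a) directly and then to deduce parts (b) and (c) from (a) by the standard adjunction between pushout-product and pullback-cotensor. For part (a), given a flat stable cofibration $i\colon K\to L$ and a positive flat stable cofibration $j\colon A\to B$, the pushout-product map $h\colon L\tensorcheck A\coprod_{K\tensorcheck A} K\tensorcheck B \to L\tensorcheck B$ is to be shown to be a positive flat stable cofibration. By Proposition \ref{prop:flat_cofibrations_and_positive_flat_cofibrations}(b), this reduces to two separate claims: (i) $h$ is a flat stable cofibration, and (ii) $h$ is an isomorphism after evaluation at $[\mathbf{r}]_0$ for each $r\geq 0$.

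Claim (i) is immediate from the pushout-product axiom for the flat stable monoidal model structure on $(\SymSeq,\tensorcheck,1)$ established in \cite{Harper_Modules}; here $j$ qualifies as a flat stable cofibration by Proposition \ref{prop:flat_cofibrations_and_positive_flat_cofibrations}(b). For claim (ii), I would combine Proposition \ref{prop:flat_cofibrations_and_positive_flat_cofibrations}(c) with the coproduct decomposition \eqref{eq:tensor_check_calc} to see that, because $A\to B$ is an isomorphism on every $[\mathbf{s}]_0$, so are the induced maps $K\tensorcheck A\to K\tensorcheck B$ and $L\tensorcheck A\to L\tensorcheck B$ after evaluation at $[\mathbf{r}]_0$. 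Since evaluation at $[\mathbf{r}]_0$ commutes with pushouts and the pushout along an isomorphism is again an isomorphism, $h$ is an isomorphism at $[\mathbf{r}]_0$. The acyclic case follows at once: the flat stable monoidal model structure yields a flat stable acyclic cofibration when either $i$ or $j$ is a weak equivalence, and since positive flat stable and flat stable weak equivalences both coincide with stable weak equivalences, $h$ is a positive flat stable acyclic cofibration.

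For parts (b) and (c), I would apply the adjunction \eqref{eq:tensorcheck_mapping_sequence_adjunction} to translate each required right lifting property for the pullback-cotensor map into a left lifting property for the pushout-product against $p$. These are then resolved by part (a), using that $\tensorcheck$ is symmetric so the roles of the two cofibrations may be exchanged freely, and that every positive flat stable cofibration is in particular a flat stable cofibration. I expect the main subtlety to be in claim (ii) of (a): one must carefully track the coproduct summands of \eqref{eq:tensor_check_calc} through evaluation at $[\mathbf{r}]_0$ and through the pushout, but once that claim reduces to the general categorical fact that pushouts preserve isomorphisms, the rest of the argument is purely formal.
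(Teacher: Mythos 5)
Your proposal is correct and follows essentially the same route as the paper: reduce part (a) via Proposition \ref{prop:flat_cofibrations_and_positive_flat_cofibrations} to the pushout-product axiom for the flat stable structure on $(\SymSeq,\tensorcheck,1)$ from \cite{Harper_Modules} plus an isomorphism check at level $[\mathbf{r}]_0$ using \eqref{eq:tensor_check_calc}, and then deduce (b) and (c) from (a) by the adjunction \eqref{eq:tensorcheck_mapping_sequence_adjunction}. Your treatment of the level-$0$ isomorphism claim is in fact spelled out in more detail than the paper's.
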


\begin{proof}
Consider part (a). Suppose $\function{i}{K}{L}$ is a flat stable cofibration in $\SymSeq$ and $\function{j}{A}{B}$ is a cofibration in $\SymSeq$. The pushout corner map is a flat stable cofibration in $\SymSeq$ by \cite[6.1]{Harper_Modules}, hence by Proposition \ref{prop:flat_cofibrations_and_positive_flat_cofibrations} it suffices to verify the pushout corner map
$
\xymatrix@1{
  (L\tensorcheck A)[\mathbf{r}]_0\coprod_{(K\tensorcheck A)[\mathbf{r}]_0} (K\tensorcheck B)[\mathbf{r}]_0\ar[r] & 
  (L\tensorcheck B)[\mathbf{r}]_0
}
$
is an isomorphism for each $r\geq 0$. We can therefore conclude by \eqref{eq:tensor_check_calc} together with Proposition \ref{prop:flat_cofibrations_and_positive_flat_cofibrations}. The other cases are similar. Parts (b) and (c) follow from part (a) and the natural isomorphisms \eqref{eq:tensorcheck_mapping_sequence_adjunction}.
\end{proof}

\begin{thm}
\label{thm:mixing_flat_stable_with_positive_flat_stable}
Consider symmetric sequences in $\capR$-modules, and consider $\SymSeq$ with the positive flat stable model structure.
\begin{itemize}
\item[(a)] If $\function{i}{K}{L}$ is a map in $\SymSeq$ such that $K[\mathbf{r}]\rarrow L[\mathbf{r}]$ is a flat stable cofibration in $\ModR$ for each $r\geq 1$, and $\function{j}{A}{B}$ is a cofibration between cofibrant objects in $\SymSeq$, then
$
\xymatrix@1{
  L\circ A\coprod_{K\circ A} K\circ B\ar[r] & L\circ B
}
$
is a cofibration in $\SymSeq$ that is an acyclic cofibration if either $i$ or $j$ is a weak equivalence.
\item[(b)] If $\function{i}{K}{L}$ is a map in $\SymSeq$ such that $K[\mathbf{r}]\rarrow L[\mathbf{r}]$ is a flat stable cofibration in $\ModR$ for each $r\geq 0$, $K[\mathbf{0}]_0\xrightarrow{\Iso}L[\mathbf{0}]_0$ is an isomorphism, and $B$ is a cofibrant object in $\SymSeq$, then the map 
$
 K\circ B\rarrow L\circ B
$
is a cofibration in $\SymSeq$ that is an acyclic cofibration if $i$ is a weak equivalence.
\end{itemize}
\end{thm}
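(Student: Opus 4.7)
The strategy is to decompose the pushout corner map for $\circ$ into a coproduct of Leibniz constructions for $\Smash_{\Sigma_t}$ using the circle product formula \eqref{eq:circle_product_calc}, and then to leverage Proposition \ref{prop:mixing_flat_stable_with_positive_flat_stable_tensorcheck} together with iterated filtration arguments in the spirit of Definition \ref{def:filtration_setup_modules}. By Proposition \ref{prop:flat_cofibrations_and_positive_flat_cofibrations}(b), a map is a positive flat stable cofibration in $\SymSeq$ if and only if it is a flat stable cofibration in $\SymSeq$ whose value on $[\mathbf{r}]_0$ is an isomorphism for every $r\geq 0$, so the proof splits naturally into verifying these two conditions.

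\textbf{First}, I would observe that since $\Smash_{\Sigma_t}$ and coproducts commute with pushouts in each variable, the pushout corner map $L\circ A\cup_{K\circ A}K\circ B\to L\circ B$ decomposes in $\SymSeq$ as a coproduct $\coprod_{t\geq 0}P_t$, where
\begin{align*}
P_t\colon (L[\mathbf{t}]\Smash_{\Sigma_t}A^{\tensorcheck t})\cup_{K[\mathbf{t}]\Smash_{\Sigma_t}A^{\tensorcheck t}}(K[\mathbf{t}]\Smash_{\Sigma_t}B^{\tensorcheck t})\rarrow L[\mathbf{t}]\Smash_{\Sigma_t}B^{\tensorcheck t}
\end{align*}
is the Leibniz construction under $\Smash_{\Sigma_t}$ applied to $K[\mathbf{t}]\to L[\mathbf{t}]$ and $A^{\tensorcheck t}\to B^{\tensorcheck t}$. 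Since $A^{\tensorcheck 0}=1=B^{\tensorcheck 0}$, the summand $P_0$ is the identity on $L[\mathbf{0}]$ concentrated at $0$ (and trivial elsewhere). The isomorphism condition on $[\mathbf{r}]_0$ for $r\geq 0$ in part (a) follows from Proposition \ref{prop:flat_cofibrations_and_positive_flat_cofibrations}(c)--(e): $j$ being a positive flat stable cofibration gives $A[\mathbf{s}]_0\xrightarrow{\Iso}B[\mathbf{s}]_0$ for all $s$, whence $A^{\tensorcheck t}[\mathbf{r}]_0\xrightarrow{\Iso}B^{\tensorcheck t}[\mathbf{r}]_0$ by a direct summand calculation via \eqref{eq:tensor_check_calc}, and smashing over $\Sigma_t$ with $K[\mathbf{t}]$ or $L[\mathbf{t}]$ preserves this level-$0$ iso.

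\textbf{Next}, for the flat stable cofibration condition I would apply the filtration $Q^t_q$ from Definition \ref{def:filtration_setup_modules} to build $A^{\tensorcheck t}\to B^{\tensorcheck t}$ as a finite composite of pushouts of Leibniz maps against $j$. Since $j$ is a positive flat stable cofibration between cofibrant objects and $(\SymSeq,\tensorcheck,1)$ is a monoidal model category (Proposition \ref{prop:mixing_flat_stable_with_positive_flat_stable_tensorcheck}), each successive pushout is a flat stable cofibration in $\SymSeq^{\Sigma_t^{\op}}$ with its projective flat model structure. Smashing over $\Sigma_t$ with the flat stable cofibration $K[\mathbf{t}]\to L[\mathbf{t}]$ in $\ModR$ (for $t\geq 1$), together with the standard mixed pushout-product analysis in the style of \cite[6.1, 6.2, 7.19]{Harper_Modules}, yields that each $P_t$ with $t\geq 1$ is a flat stable cofibration in $\SymSeq$, and the coproduct $\coprod_{t\geq 0}P_t$ inherits this property. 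The acyclic variant follows from the same filtration by replacing the relevant cofibration with an acyclic cofibration at the step where $i$ or $j$ is assumed to be a weak equivalence, since Proposition \ref{prop:mixing_flat_stable_with_positive_flat_stable_tensorcheck} preserves acyclicity under pushout product. Part (b) is the specialization to $A=\emptyset$, where the map reduces to $\coprod_{t\geq 0}(K[\mathbf{t}]\to L[\mathbf{t}])\Smash_{\Sigma_t}B^{\tensorcheck t}$; the extra hypothesis $K[\mathbf{0}]_0\xrightarrow{\Iso}L[\mathbf{0}]_0$ is exactly what is needed to handle the $t=0$ summand (which is no longer trivial), ensuring the level-$0$ degree-$0$ isomorphism condition.

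\textbf{The main obstacle} will be the careful bookkeeping of the $\Sigma_t$-equivariance: one must identify the right projective flat model structure on $\SymSeq^{\Sigma_t^{\op}}$ so that $\Smash_{\Sigma_t}$-ing a flat stable cofibration in $\ModR$ (treated via its natural $\Sigma_t$-action) with a cofibration in $\SymSeq^{\Sigma_t^{\op}}$ yields a flat stable cofibration in $\SymSeq$. This is precisely the role played by the "flat" hypothesis (over "stable"), and it is where the distinction between $t\geq 1$ (part (a)) and $t\geq 0$ (part (b)) matters for locating the critical level-$0$ contribution that must separately be controlled by the isomorphism hypothesis on $K[\mathbf{0}]_0\to L[\mathbf{0}]_0$.
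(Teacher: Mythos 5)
Your overall strategy---decompose the pushout corner map into summands $P_t$ indexed by $t$ via \eqref{eq:circle_product_calc}, observe that the $t=0$ summand is an isomorphism, and reduce the positivity condition to level-$0$, degree-$0$ isomorphisms via Proposition \ref{prop:flat_cofibrations_and_positive_flat_cofibrations}---matches the shape of the paper's proof, and your treatment of the $t=0$ summand and of part (b) is essentially right. The gap is in the step where you handle $P_t$ for $t\geq 1$. You assert that, because $(\SymSeq,\tensorcheck,1)$ is a monoidal model category, the filtration of Definition \ref{def:filtration_setup_modules} exhibits $A^{\tensorcheck t}\rarrow B^{\tensorcheck t}$ as a cofibration in the projective (positive) flat stable model structure on $\Sigma_t$-objects. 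The pushout-product axiom only controls the non-equivariant cofibrancy of the stages $Q^t_{q-1}\rarrow Q^t_q$; it says nothing about projective cofibrancy of the induced $\Sigma_t$-action, and the maps $\Sigma_t\cdot_{\Sigma_{t-q}\times\Sigma_q}A^{\tensorcheck(t-q)}\tensorcheck(-)$ appearing in the filtration presuppose exactly the equivariant cofibrancy you are trying to establish. This equivariant statement is Proposition \ref{prop:cofibration}(a); it is genuinely nontrivial (it is the reason the positive flat structure is used at all) and is proved in the paper not formally but by reduction to generating cofibrations, the explicit Calculation \ref{calculation_example}, and the characterization of flat cofibrations in Proposition \ref{prop:cofibration_characterization}. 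Your proposal neither cites nor reproves it.

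Second, even granting that $A^{\tensorcheck t}\rarrow B^{\tensorcheck t}$ is a projective $\Sigma_t$-cofibration, the step you yourself label ``the main obstacle''---combining it under $\Smash_{\Sigma_t}$ with $K[\mathbf{t}]\rarrow L[\mathbf{t}]$, which is only a flat stable cofibration in $\ModR$ carrying an unspecified $\Sigma_t$-action---is flagged rather than resolved. The paper's device is an adjunction: a lifting problem for $P_t$ against a positive flat stable acyclic fibration $\function{p}{X}{Y}$ in $\SymSeq$ transposes to a lifting problem in $\SymSeq^{\Sigma_t}$ for $A^{\tensorcheck t}\rarrow B^{\tensorcheck t}$ against the pullback corner map $\Map(L[\mathbf{t}],X)\rarrow\Map(K[\mathbf{t}],X)\times_{\Map(K[\mathbf{t}],Y)}\Map(L[\mathbf{t}],Y)$, and the latter is an objectwise acyclic fibration by Proposition \ref{prop:mixing_flat_stable_with_positive_flat_stable_tensorcheck} (applied to symmetric sequences concentrated at $0$), since only the non-equivariant flat cofibrancy of $K[\mathbf{t}]\rarrow L[\mathbf{t}]$ is needed on that side. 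With Proposition \ref{prop:cofibration}(a) and this transposition in hand your argument closes; without them it does not.
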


\begin{proof}
Consider part (a). Suppose $K[\mathbf{t}]\rarrow L[\mathbf{t}]$ is a flat stable cofibration in $\ModR$ for each $t\geq 1$, and $\function{j}{A}{B}$ is a cofibration between cofibrant objects in $\SymSeq$. We want to verify each 
$
\xymatrix@1{
  L[\mathbf{t}]\Smash_{\Sigma_t} A^{\tensorcheck t}
  \coprod_{K[\mathbf{t}]\Smash_{\Sigma_t} A^{\tensorcheck t}} 
  K[\mathbf{t}]\Smash_{\Sigma_t} B^{\tensorcheck t}
  \ar[r] & 
  L[\mathbf{t}]\Smash_{\Sigma_t} B^{\tensorcheck t}
}
$
is a cofibration in $\SymSeq$. If $t=0$, this map is an isomorphism. Let $t\geq 1$. Consider any acyclic fibration $\function{p}{X}{Y}$ in $\SymSeq$. We want to show that the pushout corner map has the left lifting property with respect to $p$. Consider any such lifting problem; we want to verify that the corresponding solid commutative diagram
\begin{align*}
\xymatrix{
  A^{\tensorcheck t}\ar[d]\ar[r] & \Map(L[\mathbf{t}], X)\ar[d]^{(*)}\\
  B^{\tensorcheck t}\ar[r]\ar@{.>}[ur] &
  \Map(K[\mathbf{t}], X)\times_{\Map(K[\mathbf{t}], Y)}
  \Map(L[\mathbf{t}], Y)
}
\end{align*}
in $\SymSeq^{\Sigma_t}$ has a lift. We know that the left-hand vertical map is a cofibration in $\SymSeq^{\Sigma_t}$ by Proposition \ref{prop:cofibration}, hence it suffices to verify that the map $(*)[\mathbf{r}]$ is a positive flat stable acyclic fibration in $\ModR$ for each $r\geq 0$.  By considering symmetric sequences concentrated at $0$, Proposition \ref{prop:mixing_flat_stable_with_positive_flat_stable_tensorcheck} finishes the argument for this case. The other cases are similar. Consider part (b). Suppose $K[\mathbf{t}]\rarrow L[\mathbf{t}]$ is a flat stable cofibration in $\ModR$ for each $t\geq 0$, $K[\mathbf{0}]_0\xrightarrow{\Iso}L[\mathbf{0}]_0$ is an isomorphism, and $B$ is a cofibrant object in $\SymSeq$. We need to check that each induced map
$
  K[\mathbf{t}]\Smash_{\Sigma_t}B^{\tensorcheck t}\rarrow
  L[\mathbf{t}]\Smash_{\Sigma_t}B^{\tensorcheck t}
$
is a cofibration in $\SymSeq$. The proof of part (a) implies this for $t\geq 1$, and Proposition \ref{prop:flat_cofibrations_and_positive_flat_cofibrations} implies this for $t=0$. The other case is similar.
\end{proof}

\begin{prop}
\label{prop:flat_stable_cofibration_properties_symmetric_sequences}
Let $\capO$ be an operad in $\capR$-modules such that $\capO[\mathbf{0}]=*$, and let $\function{\eta}{I}{\capO}$ be its unit map. Assume that  $I[\mathbf{r}]\rarrow\capO[\mathbf{r}]$ is a flat stable cofibration between flat stable cofibrant objects in $\ModR$ for each $r\geq 0$. 
\begin{itemize}
\item[(a)] If $\function{i}{K}{L}$ is a map in $\SymSeq$ such that $K[\mathbf{r}]\rarrow L[\mathbf{r}]$ is a flat stable cofibration in $\ModR$ for each $r\geq 1$, then the pushout corner map 
$
\xymatrix@1{
  \bigl(L\circ I\coprod_{K\circ I} K\circ \capO\bigr)[\mathbf{r}]
  \ar[r] & (L\circ\capO)[\mathbf{r}]
}
$
is a flat stable cofibration in $\ModR$ for each $r\geq 0$.
\item[(b)] If $t\geq 1$, then the induced map $(I^{\tensorcheck t})[\mathbf{r}]\rarrow(\capO^{\tensorcheck t})[\mathbf{r}]$ is a flat stable cofibration in $\ModR^{\Sigma_t}$ for each $r\geq 0$.
\end{itemize}
\end{prop}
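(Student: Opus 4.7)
The plan is to prove part (b) first using the $Q^t_q$-filtration from Definition \ref{def:filtration_setup_modules}, and then derive part (a) by splitting the pushout corner map into its arity-indexed summands.

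For part (b), apply the construction $Q^t_q(\eta)$ to the unit map $\function{\eta}{I}{\capO}$ in $\SymSeq$. This produces a factorization of $I^{\tensorcheck t}\rarrow\capO^{\tensorcheck t}$ in $\SymSeq^{\Sigma_t}$ as a composite of pushouts
\[
  I^{\tensorcheck t}=Q^t_0\rarrow Q^t_1\rarrow\dotsb\rarrow Q^t_t=\capO^{\tensorcheck t},
\]
where each transition map $Q^t_{q-1}\rarrow Q^t_q$ is a pushout of the induced map $\Sigma_t\cdot_{\Sigma_{t-q}\times\Sigma_q}\bigl(I^{\tensorcheck(t-q)}\tensorcheck Q^q_{q-1}\bigr)\rarrow\Sigma_t\cdot_{\Sigma_{t-q}\times\Sigma_q}\bigl(I^{\tensorcheck(t-q)}\tensorcheck\capO^{\tensorcheck q}\bigr)$. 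Evaluating at level $\mathbf{r}$ via the formula \eqref{eq:tensor_check_calc}, each step becomes a coproduct of smash-product cells induced up from the subgroup $\Sigma_{t-q}\times\Sigma_q\subset\Sigma_t$, giving a free $\Sigma_t/(\Sigma_{t-q}\times\Sigma_q)$-cell structure. The base case is immediate, since $(I^{\tensorcheck t})[\mathbf{r}]$ is either trivial or a free $\Sigma_t$-module on $\capR$. The inductive step on $q$ follows by iterating the pushout-product property for $\Smash$ in $\ModR$ (Proposition \ref{prop:mixing_flat_stable_with_positive_flat_stable_tensorcheck}), using the hypothesis that $\eta$ is a flat stable cofibration between flat stable cofibrant $\capR$-modules at every level.

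For part (a), since circle product commutes with colimits in the symmetric sequence variable, the pushout corner map at level $\mathbf{r}$ splits as a coproduct over $t\geq 0$ of maps of the form
\[
  L[\mathbf{t}]\Smash_{\Sigma_t}(I^{\tensorcheck t})[\mathbf{r}]\amalg_{K[\mathbf{t}]\Smash_{\Sigma_t}(I^{\tensorcheck t})[\mathbf{r}]}K[\mathbf{t}]\Smash_{\Sigma_t}(\capO^{\tensorcheck t})[\mathbf{r}]\rarrow L[\mathbf{t}]\Smash_{\Sigma_t}(\capO^{\tensorcheck t})[\mathbf{r}].
\]
The $t=0$ summand is an isomorphism since $I^{\tensorcheck 0}\Iso\capO^{\tensorcheck 0}$ (both equal to the unit concentrated at $0$ with value $\capR$). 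For $t\geq 1$, this is $\Smash_{\Sigma_t}$ applied to the pushout product of $K[\mathbf{t}]\rarrow L[\mathbf{t}]$ (a flat stable cofibration in $\ModR$ by hypothesis) with $(I^{\tensorcheck t})[\mathbf{r}]\rarrow(\capO^{\tensorcheck t})[\mathbf{r}]$ (a flat stable cofibration in $\ModR^{\Sigma_t}$ by part (b)). The pushout-product axiom in $\ModR$, combined with the free $\Sigma_t$-cell structure from part (b), then yields a flat stable cofibration in $\ModR$.

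The main obstacle will be the equivariant bookkeeping in part (b): one must verify that after evaluating the filtration at $\mathbf{r}$, the cells attached really do assemble into free $\Sigma_t$-cells induced from $\Sigma_{t-q}\times\Sigma_q$, so that the equivariant cofibration condition in $\ModR^{\Sigma_t}$ reduces to a non-equivariant pushout-product statement in $\ModR$ handled by Proposition \ref{prop:mixing_flat_stable_with_positive_flat_stable_tensorcheck}. Once this identification is in place, the induction on $q$ proceeds cleanly, and part (a) follows formally from part (b) by the splitting above.
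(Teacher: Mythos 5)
Part (a) of your proposal is essentially the paper's argument: the paper likewise splits the pushout corner map into its arity-$t$ summands, notes the $t=0$ summand is an isomorphism, and for $t\geq 1$ combines part (b) with the pushout-product property of $\Smash_{\Sigma_t}$, phrased there as an adjoint lifting problem against flat stable acyclic fibrations. No complaints there.

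The gap is in part (b), and you have correctly located it but not closed it. The entire content of (b) is that $(I^{\tensorcheck t})[\mathbf{r}]\rarrow(\capO^{\tensorcheck t})[\mathbf{r}]$ is a cofibration in the \emph{projective} model structure on $\ModR^{\Sigma_t}$, whose generating cofibrations are free $\Sigma_t$-cells. A cell of the form $\Sigma_t\cdot_{\Sigma_{t-q}\times\Sigma_q}(-)$ is a projective $\Sigma_t$-cofibration only if its argument is a projective $(\Sigma_{t-q}\times\Sigma_q)$-cofibration, so the phrase ``free $\Sigma_t$-cells induced from $\Sigma_{t-q}\times\Sigma_q$'' begs the question; and Proposition \ref{prop:mixing_flat_stable_with_positive_flat_stable_tensorcheck}, which you invoke for the inductive step, is a non-equivariant statement in $\SymSeq$ and cannot deliver equivariant cofibrancy. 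The decisive input, which your argument never uses, is the hypothesis $\capO[\mathbf{0}]=*$: by \eqref{eq:tensor_check_calc}, $(\capO^{\tensorcheck t})[\mathbf{r}]$ is a coproduct over $r_1+\dotsb+r_t=r$ with every $r_i\geq 1$, and it is precisely because every block has size at least $1$ that the block-permutation action of the relevant subgroups of $\Sigma_t$ on $(\Sigma_{r_1}\times\dotsb\times\Sigma_{r_t})\backslash\Sigma_r$ is free. Without $\capO[\mathbf{0}]=*$ the statement is false already for $t=2$ and $r=0$, where the swap acts trivially on $\capO[\mathbf{0}]\Smash\capO[\mathbf{0}]$. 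The paper closes this step by first recording that the map is an isomorphism for $r<t$ and then, for $r\geq t$, checking the explicit characterization of flat stable cofibrations in $G$-diagrams (Proposition \ref{prop:cofibration_characterization}) on the cells computed in Calculation \ref{calculation_example}, where the freeness is visible; this is the verification you would need to supply. Note also that running your filtration before evaluating at $\mathbf{r}$ would amount to claiming a cofibration in $\SymSeq^{\Sigma_t}$, which is stronger than the hypotheses support: you are only given that $I[\mathbf{r}]\rarrow\capO[\mathbf{r}]$ is a flat stable cofibration in $\ModR$, not in $\ModR^{\Sigma_r^\op}$.
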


\begin{proof}
Consider part (b). The induced map is an isomorphism for $0\leq r\leq t-1$ and the case for $r\geq t$ follows from Proposition \ref{prop:cofibration_characterization} by arguing as in the proof of Proposition \ref{prop:cofibration}. Consider part (a). We need to verify that each 
\begin{align*}
\xymatrix{
  L[\mathbf{t}]\Smash_{\Sigma_t} (I^{\tensorcheck t})[\mathbf{r}]
  \coprod_{K[\mathbf{t}]\Smash_{\Sigma_t} (I^{\tensorcheck t})[\mathbf{r}]} 
  K[\mathbf{t}]\Smash_{\Sigma_t} (\capO^{\tensorcheck t})[\mathbf{r}]
  \ar[r] & 
  L[\mathbf{t}]\Smash_{\Sigma_t} (\capO^{\tensorcheck t})[\mathbf{r}]
}
\end{align*}
is a flat stable cofibration in $\ModR$. If $t=0$, this map is an isomorphism. Let $t\geq 1$, and let $\function{p}{X}{Y}$ be a flat stable acyclic fibration in $\ModR$. We need to show that the pushout corner map has the left lifting property with respect to $p$. Consider any such lifting problem; we want to verify that the corresponding solid commutative diagram
\begin{align*}
\xymatrix{
  (I^{\tensorcheck t})[\mathbf{r}]\ar[d]\ar[r] & 
  \Map(L[\mathbf{t}], X)\ar[d]^{(*)}\\
  (\capO^{\tensorcheck t})[\mathbf{r}]\ar[r]\ar@{.>}[ur] &
  \Map(K[\mathbf{t}], X)\times_{\Map(K[\mathbf{t}], Y)}
  \Map(L[\mathbf{t}], Y)
}
\end{align*}
in $\ModR^{\Sigma_t}$ has a lift. The left-hand vertical map is a flat stable cofibration in $\ModR^{\Sigma_t}$ by part (b), hence it suffices to verify the map $(*)$ is a flat stable acyclic fibration in $\ModR$. By assumption, each $K[\mathbf{t}]\rarrow L[\mathbf{t}]$ is a flat stable cofibration in $\ModR$, which finishes the proof.
\end{proof}

\begin{defn} Let $\capO$ be an operad in $\capR$-modules, $t\geq 1$ and $n\geq 0$.
\begin{itemize}
\item $\Cube_t$ is the category with objects the vertices $(v_1,\dotsc,v_t)\in\{0,1\}^t$ of the unit $t$-cube. There is at most one morphism between any two objects, and there is a morphism
$
  (v_1,\dotsc,v_t)\rarrow  (v_1',\dotsc,v_t')
$
if and only if $v_i\leq v_i'$ for each $1\leq i\leq t$. In particular, $\Cube_t$ is the category associated to a partial order on the set $\{0,1\}^t$.
\item The \emph{punctured cube} $\pCube_t$ is the full subcategory of $\Cube_t$ with all objects except the terminal object $(1,\dotsc,1)$ of $\Cube_t$.
\item Define the functor $\function{w}{\pCube_t}{\SymSeq}$ objectwise by
\begin{align*}
  w(v_1,\dotsc,v_t):=c_1\circ\dotsb\circ c_t\quad\quad\text{with}
  \quad\quad
  c_i :=
  \left\{
    \begin{array}{rl}
    I,&\text{for $v_i=0$,}\\
    \capO,&\text{for $v_i=1$,}
  \end{array}
  \right.
\end{align*}
and with morphisms induced by the unit map $\function{\eta}{I}{\capO}$.
\item If $X$ is an object in $\sModR$ or $\sSymSeq$, denote by $DX_n\subsetof X_n$ the \emph{degenerate subobject} \cite[9.12]{Harper_Bar} of $X_n$.
\end{itemize}
\end{defn}

The following proposition gives a useful construction of degenerate subobjects.

\begin{prop}
\label{prop:nice_description_of_degenerate_subobjects}
Let $\capO$ be an operad in $\capR$-modules, $Y$ an $\capO$-algebra (resp. left $\capO$-module) and $N$ a right $\capO$-module. Let $t\geq 1$ and $n\geq 0$. Define $X:=\BAR(N,\capO,Y)$ and $Q^t:=\colim_{\pCube_t}(N\circ w)$, and consider the induced maps $\function{\eta_*}{Q^0:=*}{N}$ and $\function{\eta_*}{Q^t}{N\circ\capO^{\circ t}}$.
\begin{itemize}
\item[(a)] The inclusion map $DX_n\rarrow X_n$ is isomorphic to the map
$
  Q^n\circ (Y)\xrightarrow{\eta_*\circ(\id)}
  N\circ\capO^{\circ n}\circ (Y)
$ (resp. 
$
  Q^n\circ Y\xrightarrow{\eta_*\circ\id} 
  N\circ\capO^{\circ n}\circ Y
$).
\item[(b)] The induced map $\function{\eta_*}{Q^{n+1}}{N\circ\capO^{\circ(n+1)}}$ is isomorphic to the pushout corner map
$
  (N\circ\capO^{\circ n}\circ I)\amalg_{(Q^n\circ I)}(Q^n\circ\capO)\rarrow N\circ\capO^{\circ (n+1)}
$ induced by $\function{\eta}{I}{\capO}$ and $\function{\eta_*}{Q^n}{N\circ\capO^{\circ n}}$.
\end{itemize}
\end{prop}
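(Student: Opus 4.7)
The plan is to reinterpret both parts as computations of colimits over cube-shaped diagrams, exploiting the fact that degeneracies in $\BAR(N,\capO,Y)$ arise from inserting the unit $\eta\colon I\to\capO$ into one of the $n$ operadic slots of $X_n=N\circ\capO^{\circ n}\circ(Y)$ (resp.\ $N\circ\capO^{\circ n}\circ Y$). Iterated degeneracies correspond bijectively to non-empty subsets of $\{1,\dotsc,n\}$ recording which slots carry units rather than operads, so the poset of resulting subobjects is isomorphic to $\pCube_n$. The key technical point will be to verify that the ``union'' of degeneracy images, computed via this poset, genuinely agrees with the colimit $Q^n$.

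For part (a), I would first extend $w$ to a functor $\bar w\colon\Cube_n\to\SymSeq$ by the same formula, so that $\bar w(1,\dotsc,1)=\capO^{\circ n}$ and $\colim_{\Cube_n}(N\circ\bar w)\iso N\circ\capO^{\circ n}$ (the terminal vertex). The inclusion $\pCube_n\subset\Cube_n$ then induces exactly the map $\eta_*\colon Q^n\to N\circ\capO^{\circ n}$ in the statement. Since $(-)\circ(Y)$ (resp.\ $(-)\circ Y$) commutes with colimits in the first variable---being objectwise a coproduct of functors $A[\mathbf{t}]\Smash_{\Sigma_t}Y^{\wedge t}$ (resp.\ $A[\mathbf{t}]\tensorcheck_{\Sigma_t}Y^{\tensorcheck t}$), each a left adjoint in $A[\mathbf{t}]$---one obtains
\begin{align*}
  Q^n\circ(Y)\iso\colim_{\pCube_n}\bigl(N\circ w(-)\circ(Y)\bigr).
\end{align*}
For each vertex $v\in\pCube_n$, I would identify $N\circ w(v)\circ(Y)$ with the image in $X_n$ of the iterated degeneracy indexed by the non-empty set $S_v=\{i:v_i=0\}$; the simplicial identities $s_is_j=s_{j+1}s_i$ (for $i\leq j$) guarantee this image is independent of the order in which the relevant degeneracies are composed. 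The order-preserving maps in $\pCube_n$ correspond to the natural inclusions of such subobjects, so the colimit recovers the union $\bigcup_{0\leq i\leq n-1}s_i(X_{n-1})=DX_n$ inside $X_n$, and the resulting inclusion is precisely $\eta_*\circ(\id)$.

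For part (b), I would decompose the vertex set of $\pCube_{n+1}$ by the value of $v_{n+1}$. The subcategory on $\{v_{n+1}=0\}$ is the full $n$-cube $\Cube_n$ (since $(1,\dotsc,1,0)$ is not excluded), on which $N\circ w$ restricts to $N\circ\bar w(-)\circ I$, with colimit $N\circ\capO^{\circ n}\circ I$. The subcategory on $\{v_{n+1}=1\}$ is the punctured cube $\pCube_n$ (since $(1,\dotsc,1,1)$ is excluded), on which $N\circ w$ restricts to $N\circ w(-)\circ\capO$, with colimit $Q^n\circ\capO$. Every morphism of $\pCube_{n+1}$ from a $v_{n+1}=0$ vertex to a $v_{n+1}=1$ vertex factors through a morphism of the form $(v',0)\to(v',1)$ with $v'\in\pCube_n$, and the diagram of these connecting maps evaluated at $N\circ w$---each being $\id\circ\eta\colon N\circ w(v')\circ I\to N\circ w(v')\circ\capO$---has source colimit $Q^n\circ I$. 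A standard argument for colimits over such two-layer posets then identifies
\begin{align*}
  Q^{n+1}\iso(N\circ\capO^{\circ n}\circ I)\amalg_{Q^n\circ I}(Q^n\circ\capO),
\end{align*}
with the induced map to $N\circ\capO^{\circ(n+1)}$ being exactly the pushout corner map in the statement.

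The hard part is the combinatorial verification in part (a) that the set-theoretic union of iterated degeneracy images coincides with the colimit over $\pCube_n$. This requires careful bookkeeping with the simplicial identities to show that $s_i(X_{n-1})\cap s_j(X_{n-1})$ (for $i\neq j$) is the image of a double degeneracy, hence corresponds to a vertex of $\pCube_n$ with two zero coordinates; iterating this identifies the full poset of intersection subobjects with $\pCube_n$. Once this combinatorial identification is in place, both parts reduce to formal manipulations using colimit-preservation of $(-)\circ(Y)$ (resp.\ $(-)\circ Y$) together with the two-layer decomposition of $\pCube_{n+1}$ used in part (b).
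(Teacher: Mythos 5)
Your argument is correct and follows essentially the same route as the paper: the paper's proof likewise commutes $-\circ Y$ (resp.\ $-\circ(Y)$) past the $\pCube_n$-colimit for part (a) and obtains part (b) from the observation that $-\circ B$ preserves colimits, so that $Q^{n+1}$ is computed inductively by pushout corner maps exactly as in your two-layer decomposition of $\pCube_{n+1}$. The one step you flag as the ``hard part''---identifying the degenerate subobject $DX_n$ with the punctured-cube colimit via the simplicial identities and the intersection lemma for iterated degeneracy images---is precisely what the paper delegates to the cited result \cite[9.23]{Harper_Bar}, so your sketch there is a re-derivation of that lemma rather than a genuinely different ingredient.
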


\begin{proof}
It suffices to consider the case of left $\capO$-modules. Consider part (a). It follows easily from \cite[9.23]{Harper_Bar}, together with the fact that $\function{-\circ Y}{\SymSeq}{\SymSeq}$ commutes with colimits \eqref{eq:circle_mapping_sequence_adjunction}, that there are natural isomorphisms
\begin{align*}
  DX_0&\Equal*,\quad DX_1\Iso N\circ I\circ Y,\\
  DX_2&\Iso (N\circ\capO\circ I\circ Y)\amalg_{(N\circ I\circ I\circ Y)}
  (N\circ I\circ\capO\circ Y)\\
  &\Iso\bigl(
  (N\circ\capO\circ I)\amalg_{(N\circ I\circ I)}
  (N\circ I\circ\capO)
  \bigr)\circ Y,\, \dotsc\, ,\\
  DX_t&\Iso\colim_{pCube_t}(N\circ w\circ Y)\Iso
  \bigl(\colim_{pCube_t}(N\circ w)\bigr)\circ Y
\end{align*}
in $\SymSeq$. Consider part (b). Since $\function{-\circ B}{\SymSeq}{\SymSeq}$ commutes with colimits for each $B\in\SymSeq$, it follows easily that the colimit $Q^{n+1}$ may be computed inductively using pushout corner maps.
\end{proof}

\begin{thm}
\label{thm:inclusion_of_degenerate_subobjects_positive_flat_stable}
Let $\capO$ be an operad in $\capR$-modules such that $\capO[\mathbf{0}]=*$, $Y$ an $\capO$-algebra (resp. left $\capO$-module) and $N$ a right $\capO$-module, and consider the unit map $\function{\eta}{I}{\capO}$. Assume that  $I[\mathbf{r}]\rarrow\capO[\mathbf{r}]$ is a flat stable cofibration between flat stable cofibrant objects in $\ModR$ for each $r\geq 0$ and that $N[\mathbf{r}]$ is flat stable cofibrant in $\ModR$ for each $r\geq 0$. Let $X:=\BAR(N,\capO,Y)$. If $Y$ is positive flat stable cofibrant in $\ModR$ (resp. $\SymSeq$) and $N[\mathbf{0}]_0=*$, then the inclusion maps
\begin{align*}
  *\rarrow DX_n\rarrow X_n,\quad\quad
  *\rarrow |\BAR(N,\capO,Y)|,
\end{align*}
are positive flat stable cofibrations in $\ModR$ (resp. $\SymSeq$) for each $n\geq 0$. In particular, the simplicial bar construction $\BAR(N,\capO,Y)$ is Reedy cofibrant in $\sModR$ (resp. $\sSymSeq$) with respect to the positive flat stable model structure.
\end{thm}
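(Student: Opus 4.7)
The plan is to induct on $n$, using Proposition \ref{prop:nice_description_of_degenerate_subobjects}, which identifies the inclusion $DX_n\rarrow X_n$ with the map $Q^n\circ(Y)\rarrow (N\circ\capO^{\circ n})\circ(Y)$ (resp.\ $Q^n\circ Y\rarrow (N\circ\capO^{\circ n})\circ Y$) induced by $\eta_*\colon Q^n\rarrow N\circ\capO^{\circ n}$, and which expresses $\eta_*\colon Q^{n+1}\rarrow N\circ\capO^{\circ(n+1)}$ as the pushout corner map built from $\eta\colon I\rarrow\capO$ and $\eta_*\colon Q^n\rarrow N\circ\capO^{\circ n}$. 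Separating the ``symmetric sequence'' part of the argument from the ``circle with $Y$'' part allows me to first work entirely in $\SymSeq$, where the pushout-corner machinery of Section \ref{sec:homotopical_analysis_bar_constructions} lives, and only then smash in $Y$.

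The first step is to show by induction on $n$ that for every $r\geq 0$, the map $Q^n[\mathbf{r}]\rarrow (N\circ\capO^{\circ n})[\mathbf{r}]$ is a flat stable cofibration in $\ModR$. The base case $n=0$ is $*\rarrow N$, which has this property by the hypothesis that each $N[\mathbf{r}]$ is flat stable cofibrant. The inductive step is precisely Proposition \ref{prop:flat_stable_cofibration_properties_symmetric_sequences}(a) applied with $K=Q^n$ and $L=N\circ\capO^{\circ n}$, which uses the standing hypothesis that $I[\mathbf{r}]\rarrow\capO[\mathbf{r}]$ is a flat stable cofibration for every $r$. The second step is to verify the level-zero isomorphism condition required by Theorem \ref{thm:mixing_flat_stable_with_positive_flat_stable}(b): since $I[\mathbf{0}]=*=\capO[\mathbf{0}]$, the circle product formula \eqref{eq:circle_product_calc} together with $(B^{\tensorcheck t})[\mathbf{0}]\Iso B[\mathbf{0}]^{\wedge t}$ yields $(N\circ\capO^{\circ n})[\mathbf{0}]\Iso N[\mathbf{0}]$, and the same computation applied to the defining punctured-cube colimit of $Q^n$ gives $Q^n[\mathbf{0}]\Iso N[\mathbf{0}]$; since $N[\mathbf{0}]_0=*$ by hypothesis, the map $Q^n[\mathbf{0}]_0\rarrow(N\circ\capO^{\circ n})[\mathbf{0}]_0$ is the identity on the null object.

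With these two ingredients, Theorem \ref{thm:mixing_flat_stable_with_positive_flat_stable}(b), applied with $B=Y$ in the left-module case and with $B=\hat{Y}$ in the algebra case (using \eqref{eq:circ_product_and_evaluate_at_zero} to transport the conclusion from $\SymSeq$ back to $\ModR$), immediately gives both that $DX_n\rarrow X_n$ is a positive flat stable cofibration and that $DX_n$ is positive flat stable cofibrant; the latter is obtained by running the same argument with $N$ replaced by the initial object, i.e., applying Theorem \ref{thm:mixing_flat_stable_with_positive_flat_stable}(b) to $*\rarrow Q^n$. Reedy cofibrancy then follows at once, since the latching map of a simplicial object in this setting is precisely $DX_n\rarrow X_n$. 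For the realization, the standard skeletal filtration expresses $|\Sk_n\BAR|$ as the pushout of $DX_n\Smash\Delta[n]_+\amalg_{DX_n\Smash\partial\Delta[n]_+}X_n\Smash\partial\Delta[n]_+\rarrow X_n\Smash\Delta[n]_+$; Proposition \ref{prop:mixing_flat_stable_with_positive_flat_stable_tensorcheck}(a), applied to $\partial\Delta[n]_+\rarrow\Delta[n]_+$ (a flat stable cofibration in $\SymSeq$ after passage through $\capR\tensor G_0$) and to $DX_n\rarrow X_n$, shows each stage $|\Sk_{n-1}\BAR|\rarrow|\Sk_n\BAR|$ is a positive flat stable cofibration, so the transfinite composition yields the desired conclusion for $*\rarrow|\BAR(N,\capO,Y)|$. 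The main obstacle will be the careful bookkeeping between the flat stable and positive flat stable cofibration classes: the inductive pushout-corner step of Proposition \ref{prop:flat_stable_cofibration_properties_symmetric_sequences}(a) only produces flat stable cofibrations levelwise, and it is precisely the level-zero degree-zero vanishing furnished by $\capO[\mathbf{0}]=*$ and $N[\mathbf{0}]_0=*$ that, via Proposition \ref{prop:flat_cofibrations_and_positive_flat_cofibrations}, upgrades these to the positive variant required by the target model structure.
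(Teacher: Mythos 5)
Your argument follows the paper's proof essentially step for step: identify $DX_n\rarrow X_n$ with $\eta_*\circ(\id)$ via Proposition \ref{prop:nice_description_of_degenerate_subobjects}, induct on $n$ using Proposition \ref{prop:flat_stable_cofibration_properties_symmetric_sequences}(a) to obtain levelwise flat stable cofibrations, verify the level-zero, simplicial-degree-zero condition, feed the result into Theorem \ref{thm:mixing_flat_stable_with_positive_flat_stable}(b) with $B=Y$ (resp.\ $\hat{Y}$), and finish the realization via the skeletal filtration and the pushout-product property. One small repair is needed: your inductive claim only asserts that $Q^n[\mathbf{r}]\rarrow(N\circ\capO^{\circ n})[\mathbf{r}]$ is a flat stable cofibration, but when you later apply Theorem \ref{thm:mixing_flat_stable_with_positive_flat_stable}(b) to $*\rarrow Q^n$ (to conclude that $DX_n$ itself is positive flat stable cofibrant) you need each $Q^n[\mathbf{r}]$ to be flat stable cofibrant, i.e., the clause ``$*\rarrow Q^n[\mathbf{r}]$ is a flat stable cofibration'' must be part of the inductive statement. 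This is exactly how the paper phrases its induction, and the extra clause does propagate through the pushout description of $Q^{n+1}$ (apply Proposition \ref{prop:flat_stable_cofibration_properties_symmetric_sequences}(a) once more with $K=*$ and $L=Q^n$ to see $(Q^n\circ\capO)[\mathbf{r}]$ is cofibrant, then use that $Q^n\circ\capO\rarrow Q^{n+1}$ is a cobase change of $Q^n\rarrow N\circ\capO^{\circ n}$), so the fix is routine rather than structural. Everything else, including the identification $Q^n[\mathbf{0}]\Iso N[\mathbf{0}]\Iso(N\circ\capO^{\circ n})[\mathbf{0}]$ forced by $\capO[\mathbf{0}]=*$ and the use of $N[\mathbf{0}]_0=*$, matches the paper's reasoning.
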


\begin{proof}
It suffices to consider the case of left $\capO$-modules. Consider Proposition \ref{prop:nice_description_of_degenerate_subobjects}; let's verify that the left-hand induced maps
\begin{align}
\label{eq:cofibration_properties_of_degenerate_subobjects}
  *\rarrow Q^n[\mathbf{r}]\rarrow (N\circ\capO^{\circ n})[\mathbf{r}],
  \quad\quad
  Q^n[\mathbf{0}]_0=*=(N\circ\capO^{\circ n})[\mathbf{0}]_0
\end{align} 
are flat stable cofibrations in $\ModR$ for each $n,r\geq 0$ and that the right-hand relations are satisfied for each $n\geq 0$. It is easy to check this for $n=0$, and by induction on $n$, the general case follows from Propositions \ref{prop:flat_stable_cofibration_properties_symmetric_sequences} and \ref{prop:nice_description_of_degenerate_subobjects}. By assumption, $Y$ is positive flat stable cofibrant in $\SymSeq$, hence by Proposition \ref{prop:nice_description_of_degenerate_subobjects} and Theorem \ref{thm:mixing_flat_stable_with_positive_flat_stable}, the inclusion maps
$
  *\rarrow DX_n\rarrow X_n
$
are positive flat stable cofibrations in $\SymSeq$ for each $n\geq 0$. Since $DX_n$ and $X_n$ are positive flat stable cofibrant in $\SymSeq$ for each $n\geq 0$, we know by \ref{prop:cofibration_characterization} that the relations
$
  DX_n[\mathbf{r}]_0=*=X_n[\mathbf{r}]_0
$
are satisfied for each $n,r\geq 0$.  It then follows easily from the skeletal filtration of realization \cite[9.11, 9.16]{Harper_Bar}, together with Proposition \ref{prop:flat_cofibrations_and_positive_flat_cofibrations}, that $|\BAR(N,\capO,Y)|$ is positive flat stable cofibrant in $\SymSeq$. It is easy to check that the natural map $DX_n\rarrow X_n$ is isomorphic to the natural map $L_nX\rarrow X_n$ described in \cite[VII.1.8]{Goerss_Jardine}. Hence, in particular, we have verified that $X$ is Reedy cofibrant \cite[VII.2.1]{Goerss_Jardine} in $\sSymSeq$.
\end{proof}

\begin{prop}
\label{prop:cofibrant_bar_constructions_for_chain_complexes_algebras}
Let $\capO$ be an operad in $\capR$-modules, $Y$ an $\capO$-algebra (resp. left $\capO$-module) and $N$ a right $\capO$-module. Consider $\SymSeq$ with the flat stable model structure. Assume that the unit map $I\rarrow\capO$ is a cofibration between cofibrant objects in $\SymSeq$ and that $N$ is cofibrant in $\SymSeq$. If $Y$ is flat stable cofibrant in $\ModR$ (resp. $\SymSeq$), then $|\BAR(N,\capO,Y)|$ is flat stable cofibrant in $\ModR$ (resp. $\SymSeq$).
\end{prop}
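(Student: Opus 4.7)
The plan is to mirror the strategy used in the proof of Theorem \ref{thm:inclusion_of_degenerate_subobjects_positive_flat_stable}, but adapted to the flat stable model structure (rather than positive flat stable), where the level-$0$ bookkeeping ($X_n[\mathbf{r}]_0 = *$, etc.) is no longer relevant. It suffices to treat the case of left $\capO$-modules, since the case of $\capO$-algebras follows by applying the symmetric sequence argument to $\hat{Y}$ and observing that $\hat{-}$ interacts correctly with circle product via \eqref{eq:circ_product_and_evaluate_at_zero}. Let $X := \BAR(N,\capO,Y)$ and, as in Proposition \ref{prop:nice_description_of_degenerate_subobjects}, define $Q^n := \colim_{\pCube_n}(N \circ w)$, so that the inclusion $DX_n \subsetof X_n$ is identified with the map $Q^n \circ Y \rarrow N \circ \capO^{\circ n} \circ Y$ induced by the unit $\eta_*\colon Q^n \rarrow N \circ \capO^{\circ n}$.

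First I would verify, by induction on $n$ using the inductive pushout corner description in Proposition \ref{prop:nice_description_of_degenerate_subobjects}(b), that $\eta_*\colon Q^n \rarrow N \circ \capO^{\circ n}$ is a flat stable cofibration between flat stable cofibrant objects in $\SymSeq$. The induction step requires the flat stable analog of Proposition \ref{prop:flat_stable_cofibration_properties_symmetric_sequences}(a), namely that for any flat stable cofibration $K \rarrow L$ between cofibrant objects in $\SymSeq$, the pushout corner map $L \circ I \amalg_{K \circ I} K \circ \capO \rarrow L \circ \capO$ is a flat stable cofibration in $\SymSeq$. This follows by the same lifting argument used in the proof of Proposition \ref{prop:flat_stable_cofibration_properties_symmetric_sequences}, invoking the flat stable version of \cite[6.1]{Harper_Modules}, the assumption that $I \rarrow \capO$ is a cofibration between cofibrant objects in $\SymSeq$, and Proposition \ref{prop:mixing_flat_stable_with_positive_flat_stable_tensorcheck} (or its flat stable counterpart), together with the analog of \eqref{eq:tensor_tilde_calc} to control the $\Sigma_t$-equivariant structure.

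Next I would apply the flat stable analog of Theorem \ref{thm:mixing_flat_stable_with_positive_flat_stable}(a) to $\eta_*\colon Q^n \rarrow N \circ \capO^{\circ n}$ together with the cofibrancy of $Y$ in $\SymSeq$ to conclude that the inclusion $DX_n \subsetof X_n$ is a flat stable cofibration between flat stable cofibrant objects in $\SymSeq$. Identifying $DX_n$ with the latching object $L_nX$ as in the proof of Theorem \ref{thm:inclusion_of_degenerate_subobjects_positive_flat_stable}, this exhibits $\BAR(N,\capO,Y)$ as Reedy cofibrant in $\sSymSeq$ with respect to the flat stable model structure. Finally, the skeletal filtration of realization \cite[9.11, 9.16]{Harper_Bar} expresses $|\BAR(N,\capO,Y)|$ as a transfinite composition of pushouts along the inclusions $DX_n \subsetof X_n$ smashed with the boundary inclusions $\partial\Delta[n]_+ \rarrow \Delta[n]_+$; by the pushout-product property of the flat stable model structure on $\ModR$ (and hence on $\SymSeq$), each such pushout is a flat stable cofibration, and we conclude that $|\BAR(N,\capO,Y)|$ is flat stable cofibrant in $\SymSeq$ (resp., by specializing to symmetric sequences concentrated at $0$ for the algebra case, in $\ModR$).

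The main technical obstacle is establishing the flat stable analogs of Theorem \ref{thm:mixing_flat_stable_with_positive_flat_stable} and Proposition \ref{prop:flat_stable_cofibration_properties_symmetric_sequences}, since the proofs given there are phrased in terms of positive flat stable (co)fibrations and use Proposition \ref{prop:flat_cofibrations_and_positive_flat_cofibrations} to reduce to a level-$0$ isomorphism check. However, inspecting those arguments shows that the essential input is a lifting statement against flat stable acyclic fibrations via the adjunctions \eqref{eq:tensorcheck_mapping_sequence_adjunction} and the construction in Definition \ref{def:filtration_setup_modules}, together with Proposition \ref{prop:cofibration_characterization}. Without the positivity constraint, these arguments go through more cleanly; no level-$0$ compatibility is required, and the corresponding flat stable statements follow.
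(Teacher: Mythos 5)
Your architecture is exactly the paper's: the published proof of this proposition consists of the single line ``argue as in the proof of Theorem \ref{thm:inclusion_of_degenerate_subobjects_positive_flat_stable}'', and your outline (degenerate subobjects via Proposition \ref{prop:nice_description_of_degenerate_subobjects}, induction on pushout corner maps, a circle-product pushout-product lemma, then the skeletal filtration of realization) unfolds that pointer correctly. The gap is in your last paragraph, where you dispose of the technical obstacle you rightly isolated. The positivity hypothesis in Theorem \ref{thm:mixing_flat_stable_with_positive_flat_stable}(a) is not merely level-$0$ bookkeeping that disappears in the flat stable setting: its proof requires $A^{\tensorcheck t}\rarrow B^{\tensorcheck t}$ to be a \emph{projective} cofibration in $\SymSeq^{\Sigma_t}$ (Proposition \ref{prop:cofibration}(a)), and that is precisely where positivity enters --- Calculation \ref{calculation_example} shows the external $\Sigma_t$-action on the cells of $Y^{\tensorcheck t}$ is free only because $m\geq 1$. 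For a merely flat stable cofibrant $Y$ this fails: $Y=\capR$ is flat stable cofibrant, yet $Y^{\wedge t}=\capR$ with the trivial $\Sigma_t$-action is not cofibrant in $\ModR^{\Sigma_t}$ with the projective flat stable structure (by Proposition \ref{prop:cofibration_characterization} its level-$0$ piece would need a free $\Sigma_t$-action away from the basepoint). So the ``flat stable analog of Theorem \ref{thm:mixing_flat_stable_with_positive_flat_stable}(a)'' with unchanged hypotheses on the left factor is not available, and your third paragraph cannot invoke it as stated.

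What rescues the argument --- and explains why this proposition strengthens the hypotheses on $I\rarrow\capO$ and $N$ from the objectwise Cofibrancy Condition \ref{CofibrancyCondition} to cofibrancy in $\SymSeq$ --- is that the burden of $\Sigma_t$-equivariant cofibrancy must be shifted to the \emph{left} factor. Since $N$ and $\capO$ (hence, by your induction, $Q^n$ and $N\circ\capO^{\circ n}$) are projectively cofibrant in $\SymSeq$, each $L[\mathbf{t}]$ is built from free $\Sigma_t$-cells of the form $C\cdot\Sigma_t$; by cellular induction on $K[\mathbf{t}]\rarrow L[\mathbf{t}]$ the coequalized pushout corner map into $L[\mathbf{t}]\Smash_{\Sigma_t}Y^{\tensorcheck t}$ reduces to an ordinary pushout-product in $\ModR$ (resp.\ $\SymSeq$), where only the \emph{non-equivariant} flat stable cofibrancy of $Y^{\tensorcheck t}$ is needed --- and that does hold for flat stable cofibrant $Y$ by iterating the pushout-product axiom. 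The same rerouting is needed in your inductive step producing $Q^{n+1}\rarrow N\circ\capO^{\circ(n+1)}$: the $\Sigma_t$-freeness must again be extracted from the left factor $N\circ\capO^{\circ n}$, not from $(I^{\tensorcheck t})\rarrow(\capO^{\tensorcheck t})$. With the lifting arguments rerouted this way, the rest of your outline goes through.
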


\begin{proof}
Argue as in the proof of Theorem \ref{thm:inclusion_of_degenerate_subobjects_positive_flat_stable}.
\end{proof}

\begin{proof}[Proof of Theorem \ref{thm:reedy_cofibrant_for_bar_constructions}]
It suffices to consider the case of left $\capO$-modules. Consider part (a). This follows as in the proof of Theorem \ref{thm:inclusion_of_degenerate_subobjects_positive_flat_stable}, except using the skeletal filtration in \cite[VII.3.8]{Goerss_Jardine}, 
Proposition \ref{prop:nice_description_of_degenerate_subobjects} and Theorem \ref{thm:mixing_flat_stable_with_positive_flat_stable}, together with the fact that $\function{\capO'\circ-}{\SymSeq}{\Lt_{\capO'}}$ is a left Quillen functor and hence preserves both colimiting cones and cofibrations. Part (b) follows immediately from part (a) together with Proposition \ref{prop:realizations_are_isomorphic}, Theorem \ref{thm:simplicial_model_category_structure}, and \cite[VII.3.4]{Goerss_Jardine}.
\end{proof}

\section{Model structures}
\label{sec:model_structures}

The purpose of this section is to prove Theorems \ref{thm:positive_flat_stable_AlgO}, \ref{thm:positive_stable_AlgO}, and \ref{thm:comparing_homotopy_categories}, together with Theorems \ref{thm:bar_calculates_derived_circle}, \ref{main_hocolim_theorem}, and \ref{thm:fattened_replacement} which improve the main results in \cite{Harper_Spectra, Harper_Bar} from operads in symmetric spectra to the more general context of operads in $\capR$-modules. Our approach to this generalization, which is motivated by Hornbostel \cite{Hornbostel}, is to establish only the necessary minimum of technical propositions for $\capR$-modules needed for the proofs of the main results as described in \cite{Harper_Spectra, Harper_Bar} to remain valid in the more general context of $\capR$-modules.

\subsection{Smash products and $\capR$-modules}

Denote by $(\Spectra,\tensor_S,S)$ the closed symmetric monoidal category of symmetric spectra \cite{Hovey_Shipley_Smith, Schwede_book_project}. To keep this section as concise as possible, from now on we will freely use the notation from \cite[Section 2]{Harper_Spectra} which agrees (whenever possible) with \cite{Hovey_Shipley_Smith}.  

The following is proved in \cite[2.1]{Hovey_Shipley_Smith} and states that tensor product in the category $\sSet^\Sigma_*$ inherits many of the good properties of smash product in the category $\sSet_*$.

\begin{prop}
\label{prop:closed_symmetric_monoidal_structure_on_sym_sequences_pointed_ssets}
$(\sSet^\Sigma_*,\tensor,S^0)$ has the structure of a closed symmetric monoidal category. All small limits and colimits exist and are calculated objectwise. The unit $S^0\in\sSet^\Sigma_*$ is given by $S^0[\mathbf{n}]=*$ for each $n\geq 1$ and $S^0[\mathbf{0}]=S^0$.
\end{prop}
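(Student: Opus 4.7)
The strategy is to realize this proposition as a direct specialization of Proposition \ref{prop:closed_monoidal_on_symmetric_sequences}(a) to the case $\CC = (\sSet_*, \Smash, S^0)$. Under this specialization, the category $\sSet^\Sigma_*$ coincides with the category $\SymSeq$ of the paper (observing that $\Sigma$ is a groupoid, so functors $\Sigma \to \sSet_*$ and functors $\Sigma^{\op} \to \sSet_*$ agree up to canonical isomorphism), and the tensor product described in Proposition \ref{prop:closed_symmetric_monoidal_structure_on_sym_sequences_pointed_ssets} is precisely $\tensorcheck$ in this instance. Hence almost all of the work has already been carried out in the body of the paper; what remains is to confirm the hypothesis of Proposition \ref{prop:closed_monoidal_on_symmetric_sequences}(a).

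First I would verify that $(\sSet_*, \Smash, S^0)$ is itself a closed symmetric monoidal category with all small limits and colimits. The smash product $X \Smash Y = (X \times Y)/(X \vee Y)$ of pointed simplicial sets is well known to be associative, commutative, and unital with unit $S^0$; closedness is witnessed by the pointed mapping space $\Map_*(K,L)$ of basepoint-preserving maps, and limits and colimits are created from the ordinary limits and colimits in $\sSet$ (with colimits formed after adjoining the basepoint where necessary). Each of these is routine and requires no calculation beyond standard facts about pointed simplicial sets.

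Having checked the hypothesis, Proposition \ref{prop:closed_monoidal_on_symmetric_sequences}(a) then supplies the tensor product as the left Kan extension of objectwise smash product along disjoint union of finite sets, with the calculation
\begin{align*}
  (A\tensor B)[\mathbf{r}]\Iso
  \coprod_{r_1+r_2=r}A[\mathbf{r_1}]\Smash B[\mathbf{r_2}]\underset{\Sigma_{r_1}\times\Sigma_{r_2}}{\cdot}\Sigma_r
\end{align*}
as in \eqref{eq:tensor_check_calc}, and identifies the unit $S^0 \in \sSet_*^\Sigma$ with the symmetric sequence concentrated at $\mathbf{0}$ with value $S^0 \in \sSet_*$, matching the description in the statement exactly. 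The internal hom is the symmetric sequence whose levels are the ends $\Map_*(B, C[\mathbf{t}\amalg -])^\Sigma$ of Definition \ref{defn:circle_product}, giving the closed structure. Finally, small limits and colimits in $\sSet_*^\Sigma$ exist and are computed objectwise because $\sSet_*^\Sigma$ is a diagram category over the complete and cocomplete category $\sSet_*$.

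The only mildly subtle point, and the closest thing to an obstacle, is the notational reconciliation between $\sSet_*^\Sigma$ as written in the statement and the paper's convention $\SymSeq = \CC^{\Sigma^{\op}}$; but since $\Sigma$ is a groupoid, this distinction is vacuous and entails no mathematical content. No genuine calculation is required beyond invoking the abstract framework already established.
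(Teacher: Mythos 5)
Your proposal is correct. The only point worth flagging is that the paper does not actually prove this proposition at all: it is stated as a recollection of \cite[2.1]{Hovey_Shipley_Smith}, so the "official" proof is a citation to the construction of the tensor product of symmetric sequences of pointed simplicial sets in Hovey--Shipley--Smith. Your route instead specializes the paper's own general Proposition \ref{prop:closed_monoidal_on_symmetric_sequences}(a) to $\CC=(\sSet_*,\Smash,S^0)$, which is legitimate: that proposition is stated for an arbitrary closed symmetric monoidal $\CC$ with all small limits and colimits, $\sSet_*$ satisfies the hypothesis, and there is no circularity since the general framework of Section \ref{sec:preliminaries} does not depend on the spectrum-level constructions of Section \ref{sec:model_structures}. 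The underlying mathematics is the same in both cases --- Day convolution (left Kan extension of the objectwise smash product along disjoint union) on a diagram category indexed by the groupoid $\Sigma$ --- and your handling of the $\Sigma$ versus $\Sigma^{\op}$ convention, the identification of the unit as the sequence concentrated at $\mathbf{0}$ with value $S^0$, and the objectwise computation of limits and colimits are all correct. What your approach buys is self-containment within the paper's own machinery; what the citation buys is consistency with the source in which symmetric spectra and $\tensor$ are originally defined, which is presumably why the authors phrase it that way.
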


There are two naturally occurring maps $S\tensor S\rarrow S$ and $S^0\rarrow S$ in $\sSet^\Sigma_*$ that give $S$ the structure of a commutative monoid in $(\sSet^\Sigma_*,\tensor,S^0)$. Furthermore, for any symmetric spectrum $X$, there is a naturally occurring map $\function{m}{S\tensor X}{X}$ endowing $X$ with a left action of $S$ in $(\sSet^\Sigma_*,\tensor,S^0)$. The following is proved in \cite[2.2]{Hovey_Shipley_Smith} and provides a useful interpretation of symmetric spectra. 

\begin{prop} 
\label{prop:symmetric_spectra_as_S_modules}
Define the category $\Sigma':=\amalg_{n\geq 0}\Sigma_n$, a skeleton of $\Sigma$.
\begin{itemize}
\item[(a)] The sphere spectrum $S$ is a commutative monoid in $(\sSet^\Sigma_*,\tensor,S^0)$.
\item[(b)] The category of symmetric spectra is equivalent to the category of left $S$-modules in $(\sSet^\Sigma_*,\tensor,S^0)$.
\item[(c)] The category of symmetric spectra is isomorphic to the category of left $S$-modules in $(\sSet^{\Sigma'}_*,\tensor,S^0)$.
\end{itemize}
\end{prop}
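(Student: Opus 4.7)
The plan is to follow Hovey--Shipley--Smith's original treatment \cite{Hovey_Shipley_Smith}, splitting into the three parts. For part (a), I would construct $S$ as the symmetric sequence with $S[\mathbf{n}]=(S^1)^{\wedge n}$ equipped with the standard permutation action of $\Sigma_n$, and define the multiplication $\function{\mu}{S\tensor S}{S}$ using the coproduct decomposition
\begin{align*}
 (S\tensor S)[\mathbf{n}]\Iso\coprod_{p+q=n}(S^1)^{\wedge p}\wedge(S^1)^{\wedge q}\underset{\Sigma_p\times\Sigma_q}{\cdot}\Sigma_n
\end{align*}
from \eqref{eq:tensor_check_calc} applied in $(\sSet^\Sigma_*,\tensor,S^0)$. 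The component of $\mu$ on each summand is the canonical identification $(S^1)^{\wedge p}\wedge(S^1)^{\wedge q}\Iso(S^1)^{\wedge(p+q)}$, extended equivariantly along the inclusion $\Sigma_p\times\Sigma_q\hookrightarrow\Sigma_{p+q}$. The unit $\function{\eta}{S^0}{S}$ is the identity at level $0$ and trivial elsewhere. Associativity, commutativity, and the unit axiom then reduce to the coherence of iterated smash products in $(\sSet_*,\wedge,S^0)$.

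For part (b), I would set up a pair of functors realizing the equivalence. In one direction, a symmetric spectrum $X$ with structure maps $\function{\sigma}{X_n\wedge S^1}{X_{n+1}}$ gives iterated structure maps $\function{\sigma^{(p)}}{(S^1)^{\wedge p}\wedge X_q}{X_{p+q}}$ which, by the symmetric spectrum axioms, are $\Sigma_p\times\Sigma_q$-equivariant. Assembling these over $n=p+q$ via the coproduct decomposition produces an $S$-action map $\function{m}{S\tensor X}{X}$. In the other direction, given $\function{m}{S\tensor X}{X}$, extract the structure map as the restriction of $m[\mathbf{n+1}]$ to the summand indexed by $(p,q)=(1,n)$. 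Verifying that the two constructions are mutually inverse (up to natural isomorphism, owing to the choice of skeleton) uses that associativity of the $S$-action corresponds to compatibility of iterated structure maps, and the unit axiom to the $p=0$ component being the identity. For part (c), exactly the same constructions apply with $\Sigma$ replaced by its skeleton $\Sigma'$; since a functor on $\Sigma'$ is determined by its values on $\mathbf{0},\mathbf{1},\mathbf{2},\dotsc$ without any coherent choices of bijections of finite sets, the equivalence upgrades to an isomorphism of categories on the nose.

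The main obstacle is bookkeeping the symmetric group equivariance: one must verify carefully that the iterated structure maps $\sigma^{(p)}$ are $\Sigma_p\times\Sigma_q$-equivariant, so the induced map on $(S^1)^{\wedge p}\wedge X_q$ descends to the $\Sigma_p\times\Sigma_q$-quotient and extends $\Sigma_{p+q}$-equivariantly to the coproduct summand. This is implicit in the axioms for symmetric spectra, but making it explicit---and matching it with the associativity constraint for the $S$-module structure and the commutativity of $\mu$---requires care. Once this is established, all three parts follow, with part (c) being essentially immediate from (b) via the passage to the skeleton.
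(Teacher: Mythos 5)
Your proposal is correct and is essentially the standard argument of Hovey--Shipley--Smith \cite[2.1--2.2]{Hovey_Shipley_Smith}, which is precisely what the paper cites; the paper itself offers no proof beyond that citation, so there is nothing to diverge from. The only point worth making explicit is that commutativity of $\mu$ does not follow from smash-product coherence alone: the symmetry isomorphism of $\tensor$ on symmetric sequences inserts the block (shuffle) permutation $\chi_{p,q}\in\Sigma_{p+q}$, and one must check that the canonical identifications $(S^1)^{\wedge p}\wedge(S^1)^{\wedge q}\Iso(S^1)^{\wedge(p+q)}$ intertwine the twist of smash factors with the action of $\chi_{p,q}$ --- but this is exactly the equivariance bookkeeping you already flag as the main obstacle.
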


In this paper we will not distinguish between these equivalent descriptions of symmetric spectra. 

\begin{defn}
\label{defn:left_R_modules}
Let $\capR$ be a commutative monoid in $(\Spectra,\tensor_S,S)$ (Basic Assumption \ref{assumption:commutative_ring_spectrum}). A \emph{left $\capR$-module} is an object in $(\Spectra,\tensor_S,S)$ with a left action of $\capR$ and a \emph{morphism of left $\capR$-modules} is a map in $\Spectra$ that respects the left $\capR$-module structure. Denote by $\ModR$ the category of left $\capR$-modules and their morphisms.
\end{defn}

The \emph{smash product} $X\Smash Y\in\ModR$ of left $\capR$-modules $X$ and $Y$ is defined by
\begin{align*}
  X\Smash Y
  :=\colim
  \Bigl(
  \xymatrix{
  X\tensor_S Y & X\tensor_S \capR\tensor_S Y
  \ar@<-0.5ex>[l]_-{m\tensor\id}\ar@<0.5ex>[l]^-{\id\tensor m}
  }
  \Bigr)
  \Iso
  \colim
  \Bigl(
  \xymatrix{
  X\tensor Y & X\tensor \capR\tensor Y
  \ar@<-0.5ex>[l]_-{m\tensor\id}\ar@<0.5ex>[l]^-{\id\tensor m}
  }
  \Bigr)
\end{align*}
the indicated colimit. Here, $m$ denotes the indicated $\capR$-action map and  since $\capR$ is a commutative monoid in $(\Spectra,\tensor_S,S)$, a left action of $\capR$ on $X$ determines a right action $\function{m}{X\tensor_S \capR}{X}$, which gives $X$ the structure of an $(\capR,\capR)$-bimodule. Hence the smash product $X\Smash Y$ of left $\capR$-modules, which is naturally isomorphic to $X\tensor_\capR Y$, has the structure of a left $\capR$-module.

\begin{rem}
\label{rem:dropping_the_adjective_left}
Since $\capR$ is commutative, we usually drop the adjective ``left'' and simply refer to the objects of $\ModR$ as $\capR$-modules.
\end{rem}

The following is an easy consequence of \cite[2.2]{Hovey_Shipley_Smith}.

\begin{prop}
$(\ModR,\Smash,\capR)$ has the structure of a closed symmetric monoidal category. All small limits and colimits exist and are calculated objectwise.
\end{prop}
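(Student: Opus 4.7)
The plan is to apply the standard construction of the closed symmetric monoidal structure on the category of modules over a commutative monoid object $\capR$ in a closed symmetric monoidal category, here taken to be $(\Spectra,\tensor_S,S)$ (or equivalently $(\sSet^{\Sigma'}_*,\tensor,S^0)$ in view of Proposition \ref{prop:symmetric_spectra_as_S_modules}).

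First I would verify that $\Smash$ gives a well-defined symmetric monoidal structure with unit $\capR$. The tensor product $X\Smash Y$ is constructed as the reflexive coequalizer displayed just above the proposition; this exists in $\Spectra$ since $\Spectra$ is cocomplete (Proposition \ref{prop:closed_symmetric_monoidal_structure_on_sym_sequences_pointed_ssets}). The induced $\capR$-action on $X\Smash Y$ comes from the action on either factor, and associativity and symmetry isomorphisms are inherited from those of $\tensor_S$, with symmetry relying crucially on the commutativity of $\capR$ (so that a left $\capR$-module structure canonically determines a compatible right $\capR$-module structure and the two tensor products over $\capR$ agree). The unit isomorphism $\capR\Smash X\Iso X$ reflects the fact that the action map $\capR\tensor_S X\rarrow X$ exhibits $X$ as a reflexive coequalizer of the defining diagram with $\capR$ in place of the first factor.

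Next I would construct the internal mapping object $\Map_\capR(Y,Z)$ as the equalizer
\begin{align*}
  \Map_\capR(Y,Z):=\lim\bigl( \Map(Y,Z)\rightrightarrows\Map(\capR\tensor_S Y,Z)\bigr),
\end{align*}
where $\Map$ denotes the internal hom in $(\Spectra,\tensor_S,S)$ and the parallel pair encodes the two ways of comparing compatibility with the left $\capR$-actions on $Y$ and $Z$. A routine verification shows that this object inherits an $\capR$-module structure from $Z$ and that it fits into adjunction isomorphisms
$
  \hom_{\ModR}(X\Smash Y,Z)\Iso\hom_{\ModR}(X,\Map_\capR(Y,Z))
$
natural in $X,Y,Z$, which is the desired closed structure.

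Finally, I would observe that the forgetful functor $\ModR\rarrow\Spectra$ creates limits (since an $\capR$-action on a diagram induces a canonical $\capR$-action on its limit by naturality), and creates colimits (since $\capR\tensor_S -$ preserves colimits as a left adjoint, so the action map $\capR\tensor_S\colim_i X_i\Iso\colim_i(\capR\tensor_S X_i)\rarrow\colim_i X_i$ descends from the levelwise actions). Combined with Proposition \ref{prop:closed_symmetric_monoidal_structure_on_sym_sequences_pointed_ssets}, which computes limits and colimits in $\Spectra$ objectwise, this yields the objectwise description of limits and colimits in $\ModR$. There is no serious obstacle here; the entire argument is the standard one for the category of modules over a commutative monoid in a bicomplete closed symmetric monoidal category, and the only point requiring care is bookkeeping of the $\capR$-action through the equalizer defining $\Map_\capR(Y,Z)$ and through the coequalizer defining $X\Smash Y$.
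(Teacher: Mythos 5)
Your proof is correct and is exactly the standard argument for modules over a commutative monoid in a bicomplete closed symmetric monoidal category; the paper itself simply cites \cite[2.2]{Hovey_Shipley_Smith} for this, and the content of that reference is the construction you spell out (smash product as a reflexive coequalizer, internal hom as an equalizer, limits and colimits created by the forgetful functor since $\capR\tensor_S-$ preserves colimits).
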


\subsection{Model structures on $\capR$-modules}
\label{sec:model_structures_on_capR_modules}

The material below intentionally parallels \cite[Section 4]{Harper_Spectra}, except that we work in the more general context of $\capR$-modules instead of symmetric spectra. We need to recall just enough notation so that we can describe and work with the (positive) flat stable model structure on $\capR$-modules, and the corresponding projective model structures on the diagram categories $\SymSeq$ and $\SymSeq^G$ of $\capR$-modules, for $G$ a finite group. The functors involved in such a description are easy to understand when defined as the left adjoints of  appropriate functors, which is how they naturally arise in this context.

For each $m\geq 0$ and subgroup $H\subsetof\Sigma_m$, denote by $\function{l}{H}{\Sigma_m}$ the inclusion of groups and define the \emph{evaluation} functor $\functor{\ev_m}{\sSet^\Sigma_*}{\sSet^{\Sigma_m}_*}$ objectwise by $\ev_m(X):=X_m$. There are adjunctions
$
\xymatrix@1{
  \sSet_*\ar@<0.5ex>[r] & 
  \sSet^{H}_*\ar@<0.5ex>[l]^-{\lim_H}
  \ar@<0.5ex>[r]^-{\Sigma_m\cdot_H-} &
  \sSet^{\Sigma_m}_*\ar@<0.5ex>[l]^-{l^*}\ar@<0.5ex>[r] &
  \sSet^{\Sigma}_*\ar@<0.5ex>[l]^-{\ev_m}
}
$
with left adjoints on top. Define $\functor{G^H_m}{\sSet_*}{\sSet^\Sigma_*}$ to be the composition of the three top functors, and define $\functor{\lim_H \ev_m}{\sSet^\Sigma_*}{\sSet_*}$ to be the composition of the three bottom functors; we have dropped the restriction functor $l^*$ from the notation. It is easy to check that if $K\in\sSet_*$, then $G^H_m(K)$ is the object in $\sSet^\Sigma_*$ that is concentrated at $m$ with value $\Sigma_m\cdot_H K$. Consider the forgetful functors $\Spectra\rarrow\sSet^\Sigma_*$ and $\ModR\rarrow\Spectra$. It follows from Proposition \ref{prop:symmetric_spectra_as_S_modules} that there are adjunctions
\begin{align}
\label{eq:new_adjunctions_for_spectra_as_S_modules}
\xymatrix{
  \sSet^\Sigma_*\ar@<0.5ex>[r]^-{S\tensor-} & 
  \Spectra\ar@<0.5ex>[l]\ar@<0.5ex>[r]^-{\capR\tensor_S-} &
  \ModR\ar@<0.5ex>[l]
},
\quad\quad
\xymatrix{
  \sSet^\Sigma_*\ar@<0.5ex>[r]^-{\capR\tensor-} &
  \ModR\ar@<0.5ex>[l]
},
\end{align}
with left adjoints on top; the latter adjunction is the composition of the former adjunctions. For each $p\geq 0$, define the \emph{evaluation} functor $\functor{\Ev_p}{\SymSeq}{\ModR}$ objectwise by $\Ev_p(A):=A[\mathbf{p}]$, and for each finite group $G$, consider the forgetful functor $\SymSeq^G\rarrow\SymSeq$. There are adjunctions
$
\xymatrix@1{
  \ModR\ar@<0.5ex>[r]^-{G_p} & 
  \SymSeq\ar@<0.5ex>[l]^-{\Ev_p}\ar@<0.5ex>[r]^-{G\cdot-} & 
  \SymSeq^G\ar@<0.5ex>[l]
}
$
with left adjoints on top. It is easy to check that if $X\in\ModR$, then $G_p(X)$ is the symmetric sequence concentrated at $p$ with value $X\cdot\Sigma_p$. Putting it all together, there are adjunctions
\begin{align}
\label{eq:adjunctions_stable_flat}
\xymatrix{
  \sSet_*\ar@<0.5ex>[r]^-{G^H_m} & 
  \sSet^\Sigma_*\ar@<0.5ex>[l]^-{\ \lim_H \ev_m}
  \ar@<0.5ex>[r]^-{\capR\tensor-}&
  \ModR\ar@<0.5ex>[l]\ar@<0.5ex>[r]^-{G_p}&
  \SymSeq\ar@<0.5ex>[l]^-{\Ev_p}\ar@<0.5ex>[r]^-{G\cdot-}&
  \SymSeq^G\ar@<0.5ex>[l]
}
\end{align}
with left adjoints on top. We are now in a good position to describe several  useful model structures. It is proved in \cite{Shipley_comm_ring} that the following two model category structures exist on $\capR$-modules.

\begin{defn}\ 
\label{defn:lets_define_flat_model_structure}
\begin{itemize}
\item[(a)] The \emph{flat stable model structure} on $\ModR$ has weak equivalences the stable equivalences, cofibrations the retracts of (possibly transfinite) compositions of pushouts of maps 
\begin{align*}
  \capR\tensor G^H_m \partial\Delta[k]_+\rarrow 
  \capR\tensor G^H_m\Delta[k]_+\quad
  (m\geq 0,\ k\geq 0,\ H\subsetof \Sigma_m \ \text{subgroup}),
\end{align*} 
and fibrations the maps with the right lifting property with respect to the acyclic cofibrations.
\item[(b)] The \emph{positive flat stable model structure} on $\ModR$ has weak equivalences the stable equivalences, cofibrations the retracts of (possibly transfinite) compositions of pushouts of maps
\begin{align*} 
  \capR\tensor G^H_m \partial\Delta[k]_+\rarrow 
  \capR\tensor G^H_m\Delta[k]_+\quad
  (m\geq 1,\ k\geq 0,\ H\subsetof \Sigma_m \ \text{subgroup}),
\end{align*} 
and fibrations the maps with the right lifting property with respect to the acyclic cofibrations.
\end{itemize}
\end{defn}

\begin{rem}
\label{rem:flat_notation}
In the sets of maps above, it is important to note that $H$ varies over all subgroups of $\Sigma_m$. For ease of notation purposes, we have followed Schwede \cite{Schwede_book_project} in using the term \emph{flat} (e.g., flat stable model structure) for what is called $\capR$ (e.g., stable $\capR$-model structure) in \cite{Hovey_Shipley_Smith, Schwede, Shipley_comm_ring}. 
\end{rem}

Several useful properties of the flat stable model structure are summarized in the following two propositions, which are consequences of \cite[5.3, 5.4] {Hovey_Shipley_Smith} as indicated below; see also \cite{Schwede_book_project}. These properties are used in several sections of this paper.

\begin{prop}
\label{prop:weak_equivalences_and_monos_are_preserved}
Consider $\ModR$ with the flat stable model structure. If $Z\in\ModR$ is cofibrant, then the functor $\function{-\Smash Z}{\ModR}{\ModR}$ preserves (i) weak equivalences and (ii) monomorphisms.
\end{prop}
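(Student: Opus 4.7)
The plan is to deduce both claims from the corresponding flatness properties of the flat stable model structure on $\Spectra$ established in \cite[5.3, 5.4]{Hovey_Shipley_Smith}, transferred to $\ModR$ along the free-forgetful adjunction $\capR\tensor_S-\dashv U$ of \eqref{eq:new_adjunctions_for_spectra_as_S_modules}. The key structural fact I would first verify is that if $Z$ is cofibrant in $\ModR$ with the flat stable model structure, then its underlying symmetric spectrum is flat stable cofibrant in $\Spectra$. This is immediate: the generating cofibrations $\capR\tensor G^H_m\partial\Delta[k]_+\rarrow\capR\tensor G^H_m\Delta[k]_+$ of Definition \ref{defn:lets_define_flat_model_structure}(a) have underlying maps which are generating cofibrations for the flat stable model structure on $\Spectra$, and the forgetful functor $\ModR\rarrow\Spectra$ preserves the relevant retracts, pushouts, and transfinite compositions since it is a right adjoint that also preserves colimits of the appropriate shape (it preserves all colimits except that coproducts of $\capR$-modules become $\tensor_S$-colimits in $\Spectra$, which however still preserves the required cellular structure up to the canonical isomorphism $X\Smash Y\iso X\tensor_\capR Y$).

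For (i), the result then follows from the flatness statement for $\Spectra$: if $Z$ is flat stable cofibrant in $\Spectra$ then $-\tensor_S Z$ preserves stable equivalences (this is one of the main advantages of the flat structure over the projective stable structure, and is part of the content of \cite[5.3]{Hovey_Shipley_Smith}). Since $X\Smash Z\iso X\tensor_\capR Z$ is computed as the reflexive coequalizer of $X\tensor_S Z$ and $X\tensor_S \capR\tensor_S Z$, and since both of these are homotopy-invariant in $X$ when $Z$ (and hence $\capR\tensor_S Z$) is flat stable cofibrant in $\Spectra$, a standard argument using the homotopy invariance of reflexive coequalizers of stable equivalences between cofibrant diagrams yields the claim.

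For (ii), I would argue directly. The generating cofibrations $\capR\tensor G^H_m\partial\Delta[k]_+\rarrow\capR\tensor G^H_m\Delta[k]_+$ are levelwise monomorphisms of pointed simplicial sets, since $\partial\Delta[k]_+\rarrow\Delta[k]_+$ is a monomorphism and both $G^H_m$ and $\capR\tensor-$ preserve levelwise monomorphisms (the former is a left adjoint with target $\sSet^\Sigma_*$, where colimits are computed objectwise; the latter is a coproduct of smash products with the levels of $\capR$). Pushouts, filtered colimits, and retracts of levelwise monomorphisms in $\ModR$ are again levelwise monomorphisms, so every cofibration in the flat stable model structure on $\ModR$ is a levelwise monomorphism; in particular, every cofibrant $Z$ is levelwise $\Sigma_n$-equivariantly built from pointed simplicial sets via $\capR\Smash-$ in a cellular manner. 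The smash product $-\Smash Z$ is then a filtered colimit of levelwise smash products with such cellular pieces, and since smash product of pointed simplicial sets preserves monomorphisms in each variable, the map $f\Smash Z$ is a levelwise monomorphism whenever $f$ is.

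The main obstacle I anticipate is the bookkeeping in (i): carefully checking that cofibrant replacement in $\ModR$ produces underlying symmetric spectra that are flat stable cofibrant, so that the Hovey-Shipley-Smith flatness result truly applies, and verifying that the reflexive coequalizer defining $\Smash$ over $\capR$ is itself homotopically well-behaved. This is routine but requires identifying the cellular filtration explicitly at the level of $\Spectra$.
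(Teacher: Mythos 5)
There is a genuine gap, and it sits at the very first step. You claim that a cofibrant object of $\ModR$ (flat stable structure) has flat stable cofibrant underlying symmetric spectrum because ``the generating cofibrations $\capR\tensor G^H_m\partial\Delta[k]_+\rarrow\capR\tensor G^H_m\Delta[k]_+$ have underlying maps which are generating cofibrations for the flat stable model structure on $\Spectra$.'' They do not: the underlying map is $\capR\tensor_S(-)$ applied to a generating flat cofibration of $\Spectra$, and for this to be a flat cofibration in $\Spectra$ one would need $\capR$ itself to be flat stable cofibrant as a symmetric spectrum, which is not part of Basic Assumption \ref{assumption:commutative_ring_spectrum}. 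A concrete counterexample to your structural claim is $Z=\capR$: it is cofibrant in $\ModR$ (being $\capR\tensor G^{\{e\}}_0\Delta[0]_+$), but its underlying spectrum is an arbitrary commutative ring spectrum. So the Hovey--Shipley--Smith flatness results for $\Spectra$ cannot be applied to the underlying spectrum of $Z$, and the reduction your proof of (i) rests on collapses. (Note that the conclusion for $Z=\capR$ is nevertheless trivially true, since $-\Smash\capR=\id$; this cancellation $X\tensor_\capR(\capR\tensor G^H_m K)\iso X\tensor_S(S\tensor G^H_m K)$ is precisely the mechanism the correct argument must exploit, cell by cell, \emph{inside} $\ModR$ rather than after forgetting to $\Spectra$.) This is why the paper proves the statement as the ``$\capR$-module analog'' of \cite[5.3.7, 5.3.10]{Hovey_Shipley_Smith}, following \cite[proof of 5.4.4]{Hovey_Shipley_Smith} and Schwede's intrinsic characterization of flat cofibrations of $\capR$-modules in terms of $\capR_0$-modules and latching maps (the characterization reproduced as Proposition \ref{prop:cofibration_characterization}), rather than by passage to underlying spectra.

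Two further steps are also unjustified as written. In (i), the assertion that the reflexive coequalizer computing $X\tensor_\capR Z$ is homotopy invariant in $X$ is not a ``standard argument'': reflexive coequalizers are strict colimits and do not preserve weak equivalences of diagrams in general; one either runs the cell induction on $Z$ directly or replaces the coequalizer by a bar construction, and neither is done here. In (ii), the claim that $f\Smash Z$ is a levelwise monomorphism because smash of pointed simplicial sets preserves monomorphisms ignores that $(X\Smash Z)_n$ is a quotient of a coend, and quotients do not preserve monomorphisms; the correct argument again filters $Z$ by cells, uses the explicit level formula for $X\tensor G^H_m K$ (as in Calculation \ref{ex:calculation}), and checks that each cell attachment preserves the monomorphism. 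Both repairs amount to reproving the HSS lemmas in the $\capR$-module context, which is what the cited proof does.
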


\begin{prop}
\label{prop:sending_cofibrations_to_monos}
If $B\in\ModR$ and $X\rarrow Y$ is a flat stable cofibration in $\ModR$, then $B\Smash X\rarrow B\Smash Y$ in $\ModR$ is a monomorphism.
\end{prop}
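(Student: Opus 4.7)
The approach is a standard cell-induction reduction to the generating flat stable cofibrations, followed by an explicit level-wise calculation. The first observation I would record is that a map in $\ModR$ is a monomorphism if and only if it is objectwise a monomorphism in $\sSet_*$, since small colimits (and finite limits) in $\ModR$ are computed objectwise in $\sSet^\Sigma_*$. Consequently, the class of monomorphisms in $\ModR$ inherits from $\sSet_*$ closure under retracts, pushouts (cobase change), and (possibly transfinite) compositions. The functor $B\Smash-\colon\ModR\rarrow\ModR$ is a left adjoint, so it preserves all small colimits and therefore respects these closure operations.

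By Definition \ref{defn:lets_define_flat_model_structure}, every flat stable cofibration $X\rarrow Y$ in $\ModR$ is a retract of a (possibly transfinite) composition of pushouts of the generating cofibrations
\[
  \capR\tensor G^H_m\partial\Delta[k]_+\rarrow\capR\tensor G^H_m\Delta[k]_+
  \qquad(m,k\geq 0,\ H\subsetof\Sigma_m).
\]
In view of the closure observations above, it therefore suffices to show that for each such generating cofibration the induced map $B\Smash(\capR\tensor G^H_m\partial\Delta[k]_+)\rarrow B\Smash(\capR\tensor G^H_m\Delta[k]_+)$ is a monomorphism in $\ModR$.

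For the final step, I would invoke the natural isomorphism
\[
  B\Smash(\capR\tensor Y)\Iso B\tensor Y,\qquad Y\in\sSet^\Sigma_*,
\]
that follows from the adjunctions \eqref{eq:adjunctions_stable_flat} together with the identification $B\Smash_\capR\capR\Iso B$; here the right-hand side is the Day convolution in $\sSet^\Sigma_*$ applied to the underlying symmetric sequence of $B$ and to $Y$. Since $G^H_m K_+$ is the symmetric sequence concentrated at $m$ with value $\Sigma_m\cdot_H K_+$, the Day convolution formula identifies the map in question, at each level $n$, as a coproduct of terms of the form $B[\mathbf{n-m}]\Smash(\Sigma_m\cdot_H(-))$ applied to the monomorphism $\partial\Delta[k]_+\rarrow\Delta[k]_+$ in $\sSet_*$. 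The operations $\Sigma_m\cdot_H(-)$ and $B[\mathbf{n-m}]\Smash(-)$ both preserve monomorphisms in $\sSet_*$ (smashing with any pointed simplicial set preserves monomorphisms, since products do in $\sSet$ and the quotient defining the smash product is levelwise a quotient by a monomorphism), so each level is a monomorphism and we are done.

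I expect the main obstacle to be the careful verification of the identification $B\Smash(\capR\tensor Y)\Iso B\tensor Y$; once this is pinned down, the rest of the argument is formal closure properties of monomorphisms combined with a direct objectwise inspection.
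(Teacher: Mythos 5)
Your proof is correct and is essentially the argument that the paper delegates to its citation of the $\capR$-module analog of \cite[5.3.7, proof of 5.4.4]{Hovey_Shipley_Smith}: reduce to the generating flat stable cofibrations of Definition \ref{defn:lets_define_flat_model_structure}(a) using that $B\Smash-$ preserves colimits and that objectwise monomorphisms are closed under retracts, cobase change, and transfinite composition, then identify $B\Smash(\capR\tensor G^H_m K_+)$ with the induced object $\Sigma_n\cdot_{\Sigma_{n-m}\times\Sigma_m}\bigl(B_{n-m}\Smash(\Sigma_m/H\cdot K_+)\bigr)$ in each level (as in Calculation \ref{ex:calculation}) and observe levelwise injectivity. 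The only nitpick is notational: the levels of the $\capR$-module $B$ should be written $B_{n-m}$ rather than $B[\mathbf{n-m}]$, which the paper reserves for symmetric sequences.
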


\begin{proof}[Proof of Proposition \ref{prop:weak_equivalences_and_monos_are_preserved}]
Part (i) is the $\capR$-module analog of \cite[5.3.10]{Hovey_Shipley_Smith}. It can also be verified as a consequence of \cite[5.3.10]{Hovey_Shipley_Smith} by arguing exactly as in the proof of \cite[4.29(b)]{Harper_Spectra}. Part (ii) follows from the $\capR$-module analog of \cite[5.3.7]{Hovey_Shipley_Smith}; see, \cite[proof of 5.4.4]{Hovey_Shipley_Smith} or \cite{Schwede_book_project}.
\end{proof}

\begin{proof}[Proof of Proposition \ref{prop:sending_cofibrations_to_monos}]
This follows from the $\capR$-module analog of \cite[5.3.7]{Hovey_Shipley_Smith}; see, \cite[proof of 5.4.4]{Hovey_Shipley_Smith} or \cite{Schwede_book_project}.
\end{proof}

The \emph{stable model structure} on $\ModR$ is defined by fixing $H$ in Definition \ref{defn:lets_define_flat_model_structure}(a) to be the trivial subgroup. This is one of several model category structures that is proved in \cite{Hovey_Shipley_Smith} to exist on $\capR$-modules. The \emph{positive stable model structure} on $\ModR$ is defined by fixing $H$ in Definition \ref{defn:lets_define_flat_model_structure}(b) to be the trivial subgroup. This model category structure is proved in \cite{Mandell_May_Schwede_Shipley} to exist on $\capR$-modules. It follows immediately that every (positive) stable cofibration is a (positive) flat stable cofibration.

These model structures on $\capR$-modules enjoy several good properties, including that smash products of $\capR$-modules mesh nicely with each of the model structures defined above. More precisely, each model structure above is cofibrantly generated, by generating cofibrations and acyclic cofibrations with small domains, and with respect to each model structure $(\ModR,\Smash,\capR)$ is a monoidal model category.

If $G$ is a finite group, it is easy to check that the diagram categories $\ModR^G$, $\SymSeq$ and $\SymSeq^G$ inherit corresponding projective model category structures, where the weak equivalences (resp. fibrations) are the maps that are underlying objectwise weak equivalences (resp. objectwise fibrations). We refer to these model structures by the names above (e.g., the \emph{positive flat stable} model structure on $\SymSeq^G$). Each of these model structures is cofibrantly generated by generating cofibrations and acyclic cofibrations with small domains. Furthermore, with respect to each model structure $(\SymSeq,\tensor,1)$ is a monoidal model category; this is proved in \cite{Harper_Modules}.

\subsection{Model structures on $\capO$-algebras and left $\capO$-modules}

The purpose of this subsection is to prove the following two theorems. These generalizations are motivated by Hornbostel \cite{Hornbostel} and improve the corresponding results in \cite[1.1, 1.3]{Harper_Spectra} from operads in symmetric spectra to the more general context involving operads in $\capR$-modules and play a key role in this paper. An important first step in establishing these theorems was provided by the characterization given by Schwede \cite{Schwede_book_project} of flat stable cofibrations in $\ModR$ in terms of objects with an $\capR_0$-action; see Proposition \ref{prop:cofibration_characterization} below for the needed generalization of this.

\begin{thm}[Positive flat stable model structure on $\AlgO$ and $\LtO$]
\label{thm:positive_flat_stable_AlgO}
Let $\capO$ be an operad in $\capR$-modules. Then the category of $\capO$-algebras (resp. left $\capO$-modules) has a model category structure with weak equivalences the stable equivalences (resp. objectwise stable equivalences) and fibrations the maps that are positive flat stable fibrations (resp. objectwise positive flat stable fibrations) in the underlying category of $\capR$-modules (Definition \ref{defn:lets_define_flat_model_structure}(b)).
\end{thm}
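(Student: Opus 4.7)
The plan is to apply the standard Kan transfer theorem for cofibrantly generated model structures along the free-forgetful adjunctions
\begin{align*}
\capO\circ(-)\colon \ModR \rightleftarrows \AlgO \colon U, \quad\quad \capO\circ- \colon \SymSeq \rightleftarrows \LtO \colon U,
\end{align*}
where the underlying categories carry the positive flat stable model structure (and its objectwise version on $\SymSeq$). We declare a map in $\AlgO$ (resp.\ $\LtO$) to be a weak equivalence or a fibration if $U$ sends it to one, and we take as generating (acyclic) cofibrations the images under $\capO\circ(-)$ of the generating (acyclic) cofibrations in $\ModR$ (resp.\ $\SymSeq$). The smallness hypothesis needed for the small object argument is inherited from the underlying category via the fact that $U$ preserves filtered colimits (Proposition \ref{prop:basic_properties_LTO}), so the domains of $\capO\circ(i)$ are small in $\AlgO$ (resp.\ $\LtO$) whenever the domains of $i$ are small in $\ModR$ (resp.\ $\SymSeq$).

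The two nontrivial axioms of Kan's theorem are retracts and the acyclicity condition: every relative cell complex built from the image of a generating acyclic cofibration must lie in the class of weak equivalences of the underlying category. The first is automatic. The second is the main obstacle, and the key technical tool is the filtration of Proposition \ref{prop:small_arg_pushout_modules}. Given a pushout along a free map $\capO\circ(i)$ with $i\colon X\rarrow Y$ a positive flat stable generating acyclic cofibration, this filtration presents the pushout as a sequential colimit of pushouts
\begin{align*}
\xymatrix{
  \capO_A[\mathbf{t}]\Smash_{\Sigma_t} Q_{t-1}^t \ar[d]_-{\id\Smash_{\Sigma_t} i_*}\ar[r] & A_{t-1}\ar[d]\\
  \capO_A[\mathbf{t}]\Smash_{\Sigma_t} Y^{\wedge t}\ar[r] & A_t.
}
\end{align*}
It therefore suffices to show that each map $\id\Smash_{\Sigma_t} i_*$ is a weak equivalence in $\ModR$ (resp.\ $\SymSeq$).

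The hard part is precisely this last reduction. Iterating the pushout-product axiom of Proposition \ref{prop:mixing_flat_stable_with_positive_flat_stable_tensorcheck} together with the positive flat stable analog for smash products shows that $i_*$ is a $\Sigma_t$-equivariant acyclic cofibration for the positive flat stable model structure on the category of objects with a $\Sigma_t$-action; here the ``positivity'' in the generating set (requiring $m\geq 1$) is exactly what makes the $\Sigma_t$-action on $Y^{\wedge t}$ sufficiently free that $\Sigma_t$-orbits preserve weak equivalences. The smashing over $\Sigma_t$ with the not-necessarily-cofibrant object $\capO_A[\mathbf{t}]$ is then controlled by verifying that smashing with \emph{any} object preserves this particular subclass of acyclic cofibrations, which is a generalization of Shipley's and Mandell's flatness arguments from the case of $S$ and of commutative $S$-algebras to a general commutative ring spectrum $\capR$; this is precisely the input that forces us to work with the $\capR$-module analogs of Propositions \ref{prop:weak_equivalences_and_monos_are_preserved} and \ref{prop:sending_cofibrations_to_monos} recorded in Section \ref{sec:model_structures_on_capR_modules}.

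Once the acyclicity condition is verified, Kan's theorem produces the desired cofibrantly generated model structure on $\AlgO$, and the forgetful functor automatically creates weak equivalences and fibrations by construction. The proof for $\LtO$ is formally identical, working in the diagram category $\SymSeq$ with its objectwise positive flat stable model structure; all of the filtration and pushout-product arguments carry over verbatim using the circle product and $\tensorcheck$ calculations recorded in Section \ref{sec:preliminaries}.
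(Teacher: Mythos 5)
Your proposal is correct and follows essentially the same route as the paper: transferring the positive flat stable structure along the free-forgetful adjunction, reducing the acyclicity condition via the filtration of Proposition \ref{prop:small_arg_pushout_modules} to the maps $\id\Smash_{\Sigma_t}i_*$, and controlling these via the flatness and equivariant cofibrancy properties of $\capR$-modules (the paper packages this last step as Proposition \ref{prop:homotopical_analysis_of_certain_pushouts}, resting on Propositions \ref{prop:cofibration} and \ref{prop:good_properties}).
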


\begin{thm}[Positive stable model structure on $\AlgO$ and $\LtO$]
\label{thm:positive_stable_AlgO}
Let $\capO$ be an operad in $\capR$-modules. Then the category of $\capO$-algebras (resp. left $\capO$-modules) has a model category structure with weak equivalences the stable equivalences (resp. objectwise stable equivalences) and fibrations the maps that are positive stable fibrations (resp. objectwise positive stable fibrations) in the underlying category of $\capR$-modules (Definition \ref{defn:lets_define_flat_model_structure}(b) and below Proposition \ref{prop:sending_cofibrations_to_monos}).
\end{thm}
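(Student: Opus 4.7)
The plan is to transfer the positive stable model structure from $\ModR$ (resp.\ $\SymSeq$) to $\AlgO$ (resp.\ $\LtO$) across the free--forgetful adjunctions \eqref{eq:free_forgetful_adjunction}, via the standard lifting theorem. The argument is structurally identical to the proof of Theorem \ref{thm:positive_flat_stable_AlgO} carried out in \cite{Harper_Spectra} for symmetric spectra, and generalized to $\capR$-modules in the technical results of the present section. Weak equivalences and fibrations in $\AlgO$ (resp.\ $\LtO$) are defined to be those maps whose image under the forgetful functor $U$ is a stable equivalence and a positive stable fibration, respectively; the generating (acyclic) cofibrations are obtained by applying $\capO\circ(-)$ (resp.\ $\capO\circ -$) to the generators of the positive stable model structure on $\ModR$ (resp.\ the projective positive stable model structure on $\SymSeq$).

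First I would verify the set-theoretic smallness hypotheses needed for the small object argument. The generating (acyclic) cofibrations of the positive stable model structure on $\ModR$ have small domains, and the forgetful functors preserve filtered colimits by Proposition \ref{prop:basic_properties_LTO}; hence smallness transfers across the adjunction. This sets up the small object argument on the $\AlgO$ side, and produces the desired functorial factorizations, provided the key homotopical input below is available.

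The main step is to verify that every relative $\capO\circ(J)$-cell complex is a stable equivalence in the underlying category, where $J$ denotes the set of generating acyclic cofibrations of the positive stable model structure. Using the filtration of pushouts along free maps given in Proposition \ref{prop:small_arg_pushout_modules}, it suffices to show that for each $t\geq 1$, each $\capO$-algebra (resp.\ left $\capO$-module) $A$, and each generating positive stable acyclic cofibration $\function{i}{X}{Y}$ in $\ModR$ (resp.\ $\SymSeq$), the induced map $\id\Smash_{\Sigma_t}i_*\colon \capO_A[\mathbf{t}]\Smash_{\Sigma_t}Q_{t-1}^t\rarrow\capO_A[\mathbf{t}]\Smash_{\Sigma_t}Y^{\wedge t}$ is a stable acyclic cofibration in $\ModR$ (resp.\ $\SymSeq$).

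The main obstacle is precisely this pushout-product analysis, and this is where the qualifier \emph{positive} becomes essential: the generating (acyclic) cofibrations of the positive stable model structure are built from cells of the form $G_p$ with $p\geq 1$, so the iterated smash powers $Y^{\wedge t}$ carry a levelwise free $\Sigma_t$-action, and the coinvariant functor $(-)\Smash_{\Sigma_t}(-)$ behaves as its homotopy analog on these generators. With this in hand, the lifting argument used in the proof of Theorem \ref{thm:homotopical_analysis_of_forgetful_functors} goes through verbatim, after substituting the projective positive stable structure on $\ModR^{\Sigma_t}$ for the positive flat stable structure used there, and invoking the corresponding pushout-product compatibility statement in place of \cite[6.1]{Harper_Modules}. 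Once this is established, Kan's lifting theorem produces the claimed cofibrantly generated model structure on $\AlgO$; the remaining axioms (retracts, two-out-of-three, lifting, and factorization) follow formally from the transfer. The case of left $\capO$-modules is entirely analogous, replacing $(\ModR,\Smash,\capR)$ by $(\SymSeq,\tensorcheck,1)$ with its projective positive stable model structure.
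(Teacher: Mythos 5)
Your overall strategy---transferring the positive stable structure across the free--forgetful adjunction and reducing, via the filtration of Proposition \ref{prop:small_arg_pushout_modules}, to an analysis of the maps $\id\Smash_{\Sigma_t}i_*$ on the filtration quotients---is exactly the paper's route: the paper reduces both Theorem \ref{thm:positive_flat_stable_AlgO} and Theorem \ref{thm:positive_stable_AlgO} to Proposition \ref{prop:homotopical_analysis_of_certain_pushouts}. The gap is in your key step, which asserts too strong a statement and justifies it with the wrong lemma. You claim that $\capO_A[\mathbf{t}]\Smash_{\Sigma_t}Q_{t-1}^t\rarrow\capO_A[\mathbf{t}]\Smash_{\Sigma_t}Y^{\wedge t}$ is a stable \emph{acyclic cofibration}, to be proved by running ``verbatim'' the lifting argument from the proof of Theorem \ref{thm:homotopical_analysis_of_forgetful_functors}. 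That argument adjoints the lifting problem over to $\Map^\tensorcheck(\capO_A[\mathbf{t}],-)$ and therefore needs $\capO_A[\mathbf{t}]$ to be flat stable cofibrant, which by Proposition \ref{prop:analysis_of_OA_symmetric_spectra} requires a cofibrancy hypothesis on $\capO$ (and on $A$). Theorem \ref{thm:positive_stable_AlgO} makes no such hypothesis: $\capO$ is an arbitrary operad, $\capO_A[\mathbf{t}]$ is an arbitrary $\Sigma_t$-object, and the map in question need not be a stable cofibration at all. So the step ``the lifting argument goes through verbatim'' is precisely where the proof breaks.

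What is actually needed, and all that is true in this generality, is that $\id\Smash_{\Sigma_t}i_*$ is a monomorphism and a stable equivalence; one then concludes that each $j_t$, hence $j$, hence every relative cell map, is a stable equivalence, using that cobase change along monomorphisms and transfinite composition of such maps preserve stable equivalences. This is exactly the content of Proposition \ref{prop:homotopical_analysis_of_certain_pushouts}, whose proof rests on Propositions \ref{prop:cofibration} and \ref{prop:good_properties}: positivity guarantees that $Q_{t-1}^t\rarrow Y^{\wedge t}$ is an acyclic cofibration between cofibrant objects in $\ModR^{\Sigma_t}$ with the positive \emph{flat} stable equivariant structure (the paper's lemmas are stated for the flat equivariant structure, not the projective one you propose to substitute), and for that structure the functor $B\Smash_{\Sigma_t}-$ preserves weak equivalences between cofibrant objects and sends cofibrations to monomorphisms for an \emph{arbitrary} coefficient object $B$---no cofibrancy of $\capO_A[\mathbf{t}]$ is needed or available. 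Since every generating acyclic cofibration of the positive stable structure is in particular a positive flat stable acyclic cofibration, that proposition applies as stated; with this correction your argument coincides with the paper's.
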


 We defer the proof of the following two propositions to Subsection \ref{sec:flat_stable_cofibrations}.

\begin{prop}
\label{prop:cofibration}
Let $B\in\ModR^{\Sigma_t^\op}$ (resp. $B\in\SymSeq^{\Sigma_t^\op}$) and $t\geq 1$. If $\function{i}{X}{Y}$ is a cofibration between cofibrant objects in $\ModR$ (resp. $\SymSeq$) with the positive flat stable model structure, then 
\begin{itemize}
\item [(a)] $X^{\wedge t}\rarrow Y^{\wedge t}$ (resp. $X^{\tensorcheck t}\rarrow Y^{\tensorcheck t}$) is a cofibration between cofibrant objects in $\ModR^{\Sigma_t}$ (resp. $\SymSeq^{\Sigma_t}$) with the positive flat stable model structure, which is a weak equivalence if $i$ is a weak equivalence,
\item [(b)] the map $B\Smash_{\Sigma_t}Q_{t-1}^t\rarrow B\Smash_{\Sigma_t}Y^{\wedge t}$ (resp. $B\tensorcheck_{\Sigma_t}Q_{t-1}^t\rarrow B\tensorcheck_{\Sigma_t}Y^{\tensorcheck t}$) is a monomorphism.
\end{itemize}
\end{prop}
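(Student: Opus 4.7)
\medskip

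\noindent\textbf{Proof proposal.} The plan is to exploit the filtration $X^{\wedge t}=Q^t_0\to Q^t_1\to\dotsb\to Q^t_{t-1}\to Q^t_t=Y^{\wedge t}$ of Definition \ref{def:filtration_setup_modules}, combined with the pushout--product axiom for the flat stable monoidal model structure on $(\ModR,\Smash,\capR)$ (resp.\ on $(\SymSeq,\tensorcheck,1)$). It suffices to handle the $\capR$-module case; the symmetric sequence version then follows by the same argument applied levelwise, using the fact that the positive flat stable model structure on $\SymSeq$ is a monoidal model structure (see Subsection \ref{sec:model_structures_on_capR_modules}).

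For part (a), I would proceed by induction on $t$, the case $t=1$ being trivial. For the inductive step, recall that each intermediate attaching map
\[
 \Sigma_t\cdot_{\Sigma_{t-q}\times\Sigma_q}X^{\wedge(t-q)}\Smash Q_{q-1}^q\xrightarrow{i_*}\Sigma_t\cdot_{\Sigma_{t-q}\times\Sigma_q}X^{\wedge(t-q)}\Smash Y^{\wedge q}
\]
occurring in the pushout definition of $Q_q^t$ is built by first applying the pushout--product axiom to the inductive hypothesis (giving a flat stable cofibration $Q^q_{q-1}\to Y^{\wedge q}$ in $\ModR^{\Sigma_q}$), then smashing with the flat stable cofibrant $X^{\wedge(t-q)}\in\ModR^{\Sigma_{t-q}}$, and finally inducing up along the left Quillen functor $\Sigma_t\cdot_{\Sigma_{t-q}\times\Sigma_q}(-)\colon\ModR^{\Sigma_{t-q}\times\Sigma_q}\rightarrow\ModR^{\Sigma_t}$. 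The pushout--product axiom for the flat stable structure, which is genuinely $\Sigma$-equivariant in the sense of Hovey--Shipley--Smith, then guarantees that this map, and hence $Q^t_{q-1}\to Q^t_q$, is a flat stable cofibration in $\ModR^{\Sigma_t}$. To upgrade to a positive flat stable cofibration between positive flat stable cofibrant objects, I invoke Proposition \ref{prop:flat_cofibrations_and_positive_flat_cofibrations}(a), (b), (d): since $X_0=*=Y_0$ we have $(X^{\wedge t})_0[\mathbf{r}]_0=*=(Y^{\wedge t})_0$ at every level, which is exactly the extra condition distinguishing positive flat stable cofibrations. The weak-equivalence statement then follows by iterating Proposition \ref{prop:weak_equivalences_and_monos_are_preserved}(i) along the telescope $X^{\wedge t}\to Y\Smash X^{\wedge(t-1)}\to\dotsb\to Y^{\wedge t}$, since each $X^{\wedge(t-k)}$ and $Y^{\wedge k}$ is flat stable cofibrant in $\ModR$.

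For part (b), observe first that the proof of part (a), applied at the final stage, yields that $Q^t_{t-1}\to Y^{\wedge t}$ is a (positive) flat stable cofibration in $\ModR^{\Sigma_t}$. It then suffices to verify the $\Sigma_t$-equivariant extension of Proposition \ref{prop:sending_cofibrations_to_monos}: if $f\colon U\to V$ is a flat stable cofibration in $\ModR^{\Sigma_t}$ and $B\in\ModR^{\Sigma_t^{\op}}$, then $B\Smash_{\Sigma_t}U\to B\Smash_{\Sigma_t}V$ is a monomorphism in $\ModR$. This statement reduces, via the standard cell presentation, to checking the case of the generating cofibrations $\capR\tensor G^H_m\partial\Delta[k]_+\to\capR\tensor G^H_m\Delta[k]_+$, where $H$ ranges over subgroups of $\Sigma_m$ and the coinvariants $B\Smash_{\Sigma_t}(\Sigma_t\cdot_H(-))\Iso B\Smash_H(-)$ are computed pointwise; there the assertion amounts to smashing a cofibration of pointed simplicial sets with an arbitrary $\capR$-module, which preserves monomorphisms by Proposition \ref{prop:sending_cofibrations_to_monos}.

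The main obstacle I expect is the equivariant pushout--product step in part (a), namely verifying that the inductively constructed map lies in $\ModR^{\Sigma_t}$ with the flat model structure and not merely in a projective structure. Working in the flat (rather than projective) setting is essential so that coinvariants along $\Sigma_t$ in part (b) behave well; this flatness is built into the generating cofibrations $\capR\tensor G^H_m\partial\Delta[k]_+\to\capR\tensor G^H_m\Delta[k]_+$ via the variation of the subgroup $H$ (Remark \ref{rem:flat_notation}), and is precisely what distinguishes the flat stable model structure from the stable model structure.
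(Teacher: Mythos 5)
Your part (b) is essentially the paper's argument: given that $Q^t_{t-1}\rarrow Y^{\wedge t}$ is a flat stable cofibration in $\ModR^{\Sigma_t}$, one applies the fact that $B\Smash_{\Sigma_t}-$ sends such cofibrations to monomorphisms, which is exactly Proposition \ref{prop:cofibrations_to_mono}. But part (a) has a genuine gap, and it sits exactly where you flagged "the main obstacle": the assertion that the pushout--product axiom, being "genuinely $\Sigma$-equivariant in the sense of Hovey--Shipley--Smith," guarantees that the iterated pushout--product of $i$ is a cofibration in $\ModR^{\Sigma_t}$ with the \emph{projective} (positive flat stable) model structure. There is no such equivariant pushout--product axiom; the ordinary axiom only produces a cofibration in the underlying category $\ModR$ after forgetting the $\Sigma_t$-action. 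The equivariant statement is precisely the nontrivial content of Proposition \ref{prop:cofibration} -- it is false for the (non-flat) stable model structure, which is why the flat structure is used here at all -- so your argument assumes what is to be proved. The paper's proof (following the corresponding argument in \cite{Harper_Spectra}) resolves this by reducing to generating positive flat cofibrations $G_p(\capR\tensor G^H_m\partial\Delta[k]_+)\rarrow G_p(\capR\tensor G^H_m\Delta[k]_+)$, computing $Q^t_{t-1}$ and $Y^{\tensorcheck t}$ explicitly for these (Calculation \ref{calculation_example}), and then checking the latching-map characterization of flat cofibrations in $\SymSeq^{\Sigma_t}$ (Proposition \ref{prop:cofibration_characterization}, which rests on the mixed $\Sigma_n$-equivariant structure of Proposition \ref{prop:mixed_diagram_model_structure}). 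The point, which your closing intuition about the subgroups $H$ gestures at but does not establish, is that the $t$-fold smash power of a flat cell is levelwise induced from a wreath-product subgroup, together with a free permutation factor coming from positivity, and this is what makes it a \emph{projective} $\Sigma_t$-cofibration; this must be verified on cells, and your induction on $t$ through the objects $Q^t_q$ never reduces to cells.

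Two smaller points. First, your reduction goes the wrong way: the symmetric sequence case does not follow "levelwise" from the $\capR$-module case, since $(X^{\tensorcheck t})[\mathbf{r}]$ is a coproduct of objects induced up from $\Sigma_{r_1}\times\dotsb\times\Sigma_{r_t}$ mixing different levels of $X$, with $\Sigma_t$ permuting the factors; the paper instead reduces $\ModR$ to $\SymSeq$ by viewing $\capR$-modules as symmetric sequences concentrated at $0$. Second, your inductive hypothesis concerns $X^{\wedge q}\rarrow Y^{\wedge q}$, but the attaching map for $Q^t_{q-1}\rarrow Q^t_q$ requires cofibrancy of $Q^q_{q-1}\rarrow Y^{\wedge q}$, which is a strictly stronger statement and is not supplied by the induction as set up. The deduction of the weak equivalence clause from Proposition \ref{prop:weak_equivalences_and_monos_are_preserved}, and the passage from flat to positive flat via Proposition \ref{prop:flat_cofibrations_and_positive_flat_cofibrations}, are fine.
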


\begin{prop}
\label{prop:good_properties}
Let $G$ be a finite group and consider $\ModR$, $\ModR^G$, $\ModR^{G^\op}$, $\SymSeq$, $\SymSeq^G$, and $\SymSeq^{G^\op}$, each with the flat stable model structure.
\begin{itemize}
\item [(a)] If $B\in\ModR^{G^\op}$ (resp. $B\in\SymSeq^{G^\op}$), then the functor
\begin{align*}
  \functor{B\Smash_G -}{\ModR^G}{\ModR}
  \quad\quad
  \Bigl(\text{resp.}\quad
  \functor{B\tensorcheck_G -}{\SymSeq^G}{\SymSeq}
  \Bigr)
\end{align*}
preserves weak equivalences between cofibrant objects, and hence its total left derived functor exists.
\item [(b)] If $Z\in\ModR^G$ (resp. $Z\in\SymSeq^G$) is cofibrant, then the functor
\begin{align*}
  \functor{-\Smash_G Z}{\ModR^{G^\op}}{\ModR}
  \quad\quad
  \Bigl(\text{resp.}\quad
  \functor{-\tensorcheck_G Z}{\SymSeq^{G^\op}}{\SymSeq}
  \Bigr)
\end{align*}
preserves weak equivalences.
\end{itemize}
\end{prop}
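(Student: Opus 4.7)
The plan is to adapt the argument for symmetric spectra from [Harper\_Spectra, 4.29] to the more general setting of $\capR$-modules. I will treat only the case of $\capR$-modules explicitly; the symmetric sequence case follows by applying the formula \eqref{eq:tensor_check_calc} and arguing objectwise, since the flat stable model structure on $\SymSeq$ is projective. The essential technical inputs are Propositions \ref{prop:weak_equivalences_and_monos_are_preserved} and \ref{prop:sending_cofibrations_to_monos}, our $\capR$-module analogs of Hovey--Shipley--Smith's and Schwede's results on the flat stable model structure for symmetric spectra.

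For part (a), I would apply Ken Brown's lemma. Since $B\Smash_G -$ is a left adjoint it preserves colimits and retracts, so it suffices to show that it carries acyclic cofibrations between cofibrant objects in $\ModR^G$ to weak equivalences. Every such map is a retract of a transfinite composition of pushouts along generating acyclic cofibrations of the projective flat stable model structure on $\ModR^G$, and these have the form $G\cdot j$ for $j$ a generating acyclic cofibration in $\ModR$. The natural isomorphism $B\Smash_G (G\cdot -)\Iso B\Smash -$ then reduces the problem to checking that $B\Smash j$ is simultaneously a weak equivalence and a monomorphism in $\ModR$, which ensures that pushouts and transfinite compositions preserve the weak equivalence property. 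The monomorphism statement is Proposition \ref{prop:sending_cofibrations_to_monos}; for the weak equivalence statement I would first factor a cofibrant replacement $B^c\rarrow B$ in $\ModR^{G^\op}$, apply Proposition \ref{prop:weak_equivalences_and_monos_are_preserved} to $B^c\Smash j$, and propagate.

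For part (b), I would argue by cellular induction on a presentation of $Z$. A cofibrant $Z\in\ModR^G$ is a retract of a (possibly transfinite) sequential colimit $\colim_\alpha Z_\alpha$ where $Z_0=\emptyset$ and each $Z_\alpha\rarrow Z_{\alpha+1}$ is a pushout along a coproduct of generating cofibrations of the form $G\cdot f$ with $f$ a generating cofibration in $\ModR$. I would verify inductively that $-\Smash_G Z_\alpha$ preserves all weak equivalences in $\ModR^{G^\op}$. The successor step uses the natural identification $X\Smash_G (G\cdot W)\Iso X\Smash W$ together with Proposition \ref{prop:weak_equivalences_and_monos_are_preserved}, which asserts that smashing with a flat stable cofibrant object preserves \emph{all} weak equivalences in $\ModR$; the transfinite composition step uses Proposition \ref{prop:sending_cofibrations_to_monos} to guarantee that the relevant colimits are along monomorphisms, so filtered colimits of weak equivalences remain weak equivalences. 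Retracts then finish the argument.

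The main obstacle will be verifying that at each stage of the cellular construction we simultaneously produce weak equivalences and underlying monomorphisms, so that the inductive hypothesis carries through both the pushouts and the transfinite compositions; this is precisely what the combined force of Propositions \ref{prop:weak_equivalences_and_monos_are_preserved} and \ref{prop:sending_cofibrations_to_monos} is designed to supply.
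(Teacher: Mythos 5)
Your part (b) follows the same route as the paper's (the paper's proof is a pointer to \cite[proof of 4.29(b)]{Harper_Spectra}, adapted from $(\Spectra,\tensor_S,S)$ to $(\ModR,\Smash,\capR)$): a cellular induction over a presentation of $Z$ by cells $G\cdot f$ with $f$ a generating cofibration, the identification $X\Smash_G(G\cdot W)\Iso X\Smash W$, Proposition \ref{prop:weak_equivalences_and_monos_are_preserved} for the comparison maps on cells (the generating cofibrations do have flat stable cofibrant domains and codomains), and Proposition \ref{prop:sending_cofibrations_to_monos} so that the pushouts and transfinite compositions are along monomorphisms and hence homotopy invariant. One quibble: you reduce the $\SymSeq$ case to the $\ModR$ case ``objectwise,'' whereas the paper reduces the other way ($\ModR$ is the case of sequences concentrated at $0$); since $\tensorcheck_G$ mixes levels via \eqref{eq:tensor_check_calc} and introduces induced $\Sigma_{r_1}\times\dotsb\times\Sigma_{r_t}$-actions, the clean move is to run the identical cellular argument directly in $\SymSeq^G$ with its own generating cofibrations.

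Part (a) is where you genuinely diverge, and where there is a gap. The paper treats (b) first and mentions the generating cofibrations only for (b), because (a) is deduced from (b): for a weak equivalence $\function{f}{X}{Y}$ between cofibrant objects and a cofibrant replacement $\function{q}{B^c}{B}$ in $\ModR^{G^\op}$, the square comparing $B^c\Smash_G f$ with $B\Smash_G f$ has vertical maps $q\Smash_G X$, $q\Smash_G Y$ that are weak equivalences by (b) (with $Z=X$, resp.\ $Z=Y$, cofibrant) and a top map that is a weak equivalence by (b) again (with $Z=B^c$); two-out-of-three finishes. Your alternative --- Ken Brown's lemma plus a cellular induction over generating acyclic cofibrations --- can be completed, but your ``propagate'' step conceals the real issue: to pass from $B^c\Smash j$ to $B\Smash j$ you need $q\Smash X$ and $q\Smash Y$ to be weak equivalences, which via Proposition \ref{prop:weak_equivalences_and_monos_are_preserved} requires the domains and codomains of the generating \emph{acyclic} cofibrations of the flat stable structure to be flat stable cofibrant. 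That is true but is recorded nowhere in the paper (which asserts only smallness of the domains); the alternative is the $\capR$-module analogue of the monoid-axiom statement \cite[5.3.7]{Hovey_Shipley_Smith} that smashing an arbitrary object with a flat stable acyclic cofibration is a stable equivalence, which is not among your quoted inputs --- Proposition \ref{prop:sending_cofibrations_to_monos} supplies only the monomorphism half. Deducing (a) from (b) sidesteps all of this, which is presumably why the paper orders the proof as it does.
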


\begin{prop}
\label{prop:homotopical_analysis_of_certain_pushouts}
Let $\capO$ be an operad in $\capR$-modules, $A\in\AlgO$ (resp. $A\in\LtO$), and $\function{i}{X}{Y}$ a generating acyclic cofibration in $\ModR$ (resp. $\SymSeq$) with the positive flat stable model structure. Consider any pushout diagram in $\AlgO$ (resp. $\LtO$) of the form \eqref{eq:small_arg_pushout_modules}. Then $j$ is a monomorphism and a weak equivalence in $\ModR$ (resp. $\SymSeq$).
\end{prop}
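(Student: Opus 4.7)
The approach is to exploit the filtration from Proposition \ref{prop:small_arg_pushout_modules}, which realizes $B$ as a filtered colimit $\colim_t A_t$ of $A = A_0 \to A_1 \to A_2 \to \cdots$ in which each map $j_t \colon A_{t-1} \to A_t$ is a pushout (in the underlying category $\ModR$ or $\SymSeq$) of the attaching map $\id \Smash_{\Sigma_t} i_* \colon \capO_A[\mathbf{t}] \Smash_{\Sigma_t} Q_{t-1}^t \to \capO_A[\mathbf{t}] \Smash_{\Sigma_t} Y^{\wedge t}$ (respectively the $\tensorcheck_{\Sigma_t}$ version for left $\capO$-modules). Since monomorphisms and stable equivalences are preserved by filtered colimits in $\ModR$ (resp. $\SymSeq$), the proof reduces to showing that each $j_t$ is a monomorphism and a weak equivalence.

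For each $t\geq 1$, the first task is to analyze the attaching map itself. The monomorphism property is immediate from Proposition \ref{prop:cofibration}(b) applied with $B = \capO_A[\mathbf{t}]$, noting that $i$ is in particular a cofibration between cofibrant objects in the positive flat stable model structure. For the weak equivalence property, Proposition \ref{prop:cofibration}(a) shows that $i^{\wedge t} \colon X^{\wedge t} \to Y^{\wedge t}$ is a weak equivalence between cofibrant objects in $\ModR^{\Sigma_t}$ with the positive flat stable model structure. An iterated pushout-product argument along the filtration of Definition \ref{def:filtration_setup_modules} then upgrades this to the statement that $i_* \colon Q_{t-1}^t \to Y^{\wedge t}$ is also an acyclic cofibration between cofibrant objects in $\ModR^{\Sigma_t}$ (the pushout-product axiom holding in this equivariant setting precisely because $i$ is positive flat stable). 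Finally, Proposition \ref{prop:good_properties}(a), applied to the functor $\capO_A[\mathbf{t}] \Smash_{\Sigma_t} (-)$, gives that $\id \Smash_{\Sigma_t} i_*$ is a weak equivalence in $\ModR$.

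The remaining step is to transfer the mono/weak-equivalence combination across the pushout defining $j_t$. Preservation of monomorphisms under pushouts is categorical. For preservation of the weak equivalence, I plan to exploit the fact that in the flat stable model structure on $\ModR$ (resp. $\SymSeq$), a pushout of a map that is simultaneously a monomorphism and a stable equivalence remains a stable equivalence; the cleanest way to see this is to upgrade the attaching map to a flat stable cofibration by combining Proposition \ref{prop:sending_cofibrations_to_monos} with the fact that $i_*$ is an equivariant cofibration, then to invoke left properness of the flat stable model structure. Once each $j_t$ is established to be a mono and a weak equivalence, transfinite composition and passage to the filtered colimit complete the argument.

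The main obstacle I anticipate is the last step: verifying rigorously that the pushout of $\id \Smash_{\Sigma_t} i_*$ along the map to $A_{t-1}$ remains a weak equivalence without already having access to a model structure on $\AlgO$ (resp. $\LtO$), since this proposition is being used in the very construction of that model structure. Circumventing this requires a careful direct analysis at the level of $\capR$-modules, likely by showing that $\id \Smash_{\Sigma_t} i_*$ is not merely a mono and weak equivalence but in fact a flat stable cofibration in $\ModR$, so that left properness of the flat stable model structure on $\ModR$ applies directly; the fact that $\capO_A[\mathbf{t}]$ is not assumed cofibrant as a $\Sigma_t$-equivariant object is the source of the difficulty, and overcoming it will rely on the specific combinatorics of the orbit functor $(-) \Smash_{\Sigma_t} (-)$ on equivariant flat stable cofibrations.
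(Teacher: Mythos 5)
Your overall skeleton is the same as the paper's (which defers to the proof of \cite[4.4]{Harper_Spectra}): reduce via the filtration of Proposition \ref{prop:small_arg_pushout_modules} to the maps $j_t$, show the attaching map $\id\Smash_{\Sigma_t}i_*$ is a monomorphism via Proposition \ref{prop:cofibration}(b) and a weak equivalence via Proposition \ref{prop:cofibration}(a) together with Proposition \ref{prop:good_properties}(a), and then transfer across the pushout and the transfinite composition. The first two stages of your argument are correct and match the intended proof.

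The gap is in the transfer step, and it is exactly where you flagged your own worry. Your plan is to upgrade $\id\Smash_{\Sigma_t}i_*$ to a \emph{flat stable cofibration} and then invoke left properness. This cannot work: $\capO_A[\mathbf{t}]$ is an arbitrary object of $\ModR^{\Sigma_t^\op}$ (this proposition is used to \emph{construct} the model structure on $\AlgO$, so no cofibrancy of $A$ or of $\capO$ is available), and smashing an arbitrary equivariant object against an equivariant flat cofibration and passing to $\Sigma_t$-orbits yields only a monomorphism (Propositions \ref{prop:sending_cofibrations_to_monos}, \ref{prop:cofibration}(b)), not a cofibration --- compare Proposition \ref{prop:homotopical_analysis_of_OA_on_maps_symmetric_spectra}, where genuine cofibrancy hypotheses are needed for such a conclusion. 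Moreover, left properness is the wrong tool even in principle: you need closure of a class of acyclic maps under \emph{cobase change along arbitrary maps} and under transfinite composition, not the pushout of a weak equivalence along a cofibration. The correct device is the injective stable model structure on $\ModR$ (resp.\ levelwise on $\SymSeq$), in which the cofibrations are precisely the monomorphisms and the weak equivalences are the stable equivalences; there ``monomorphism and stable equivalence'' means ``acyclic cofibration,'' and acyclic cofibrations are closed under cobase change and transfinite composition in any model category. (Equivalently, one argues directly that a pushout of a monomorphism replicates its cofiber, and a monomorphism with stably contractible cofiber is a stable equivalence.) Note that this also repairs your reduction step: a filtered colimit of stable equivalences is not a stable equivalence in general, but a transfinite composition of injective acyclic cofibrations is.
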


\begin{proof}
It suffices to consider the case of left $\capO$-modules. This is verified exactly as in \cite[proof of 4.4]{Harper_Spectra}, except using $(\ModR,\Smash,\capR)$ and Propositions \ref{prop:cofibration}, \ref{prop:good_properties} instead of $(\Spectra,\tensor_S,S)$ and \cite[4.28, 4.29]{Harper_Spectra}, respectively.
\end{proof}

\begin{proof}[Proof of Theorem~\ref{thm:positive_flat_stable_AlgO}]
Consider $\SymSeq$ and $\ModR$, both with the positive flat stable model structure. We will prove that the model structure on $\LtO$ (resp. $\AlgO$) is created by the middle (resp. left-hand) free-forgetful adjunction in \eqref{eq:free_forgetful_adjunction}. 

Define a map $f$ in $\LtO$ to be a weak equivalence (resp. fibration) if $U(f)$ is a weak equivalence (resp. fibration) in $\SymSeq$. Similarly, define a map $f$ in $\AlgO$ to be a weak equivalence (resp. fibration) if $U(f)$ is a weak equivalence (resp. fibration) in $\ModR$. Define a map $f$ in $\LtO$ (resp. $\AlgO$) to be a cofibration if it has the left lifting property with respect to all acyclic fibrations in $\LtO$ (resp. $\AlgO$).

Consider the case of $\LtO$. We want to verify the model category axioms (MC1)-(MC5) in \cite{Dwyer_Spalinski}. Arguing exactly as in \cite[proof of 1.1]{Harper_Spectra}, this reduces to the verification of Proposition \ref{prop:homotopical_analysis_of_certain_pushouts}. By construction, the model category is cofibrantly generated. Argue similarly for the case of $\AlgO$ by considering left $\capO$-modules concentrated at $0$.
\end{proof}

\begin{proof}[Proof of Theorem \ref{thm:positive_stable_AlgO}]
Consider $\SymSeq$ and $\ModR$, both with the positive stable model structure. We will prove that the model structure on $\LtO$ (resp. $\AlgO$) is created by the middle (resp. left-hand) free-forgetful adjunction in \eqref{eq:free_forgetful_adjunction}. 

Define a map $f$ in $\LtO$ to be a weak equivalence (resp. fibration) if $U(f)$ is a weak equivalence (resp. fibration) in $\SymSeq$. Similarly, define a map $f$ in $\AlgO$ to be a weak equivalence (resp. fibration) if $U(f)$ is a weak equivalence (resp. fibration) in $\ModR$. Define a map $f$ in $\LtO$ (resp. $\AlgO$) to be a cofibration if it has the left lifting property with respect to all acyclic fibrations in $\LtO$ (resp. $\AlgO$).

The model category axioms are verified exactly as in the proof of Theorem \ref{thm:positive_flat_stable_AlgO}; this reduces to the verification of Proposition \ref{prop:homotopical_analysis_of_certain_pushouts}.
\end{proof}

\subsection{Relations between homotopy categories}

The purpose of this subsection is to prove the following theorem. This generalization improves the corresponding result in \cite[1.4]{Harper_Spectra} from operads in symmetric spectra to the more general context involving operads in $\capR$-modules. It plays a key role in this paper.

\begin{thm}[Comparing homotopy categories]
\label{thm:comparing_homotopy_categories}
Let $\capO$ be an operad in $\capR$-modules and let $\AlgO$ (resp. $\LtO$) be the category of $\capO$-algebras (resp. left $\capO$-modules) with the model structure of Theorem \ref{thm:positive_flat_stable_AlgO} or \ref{thm:positive_stable_AlgO}. If $\function{f}{\capO}{\capO'}$ is a map of operads, then the adjunctions
$
\xymatrix@1{
  f_*\colon\Alg_{\capO}\ar@<0.5ex>[r] & 
  \Alg_{\capO'}:f^*\ar@<0.5ex>[l]
}
$ and 
$
\xymatrix@1{
  f_*\colon\Lt_{\capO}\ar@<0.5ex>[r] & 
  \Lt_{\capO'}:f^*\ar@<0.5ex>[l]
}
$
are Quillen adjunctions with left adjoints on top and $f^*$ the forgetful functor. If furthermore, $f$ is an objectwise stable equivalence, then the adjunctions are Quillen equivalences, and hence induce equivalences on the homotopy categories.
\end{thm}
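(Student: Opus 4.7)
The Quillen adjunction portion is essentially formal. By Theorems \ref{thm:positive_flat_stable_AlgO} and \ref{thm:positive_stable_AlgO}, the weak equivalences and fibrations in $\AlgO$, $\Alg_{\capO'}$, $\LtO$, and $\Lt_{\capO'}$ are all created by the forgetful functors to $\ModR$ (resp.\ $\SymSeq$). Since the underlying-category forgetful functor out of $\Alg_{\capO'}$ (resp.\ $\Lt_{\capO'}$) factors through $f^*$, this right adjoint automatically preserves fibrations and acyclic fibrations, so $(f_*, f^*)$ is a Quillen adjunction.

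For the Quillen equivalence claim, assume $f$ is an objectwise stable equivalence. The functor $f^*$ both preserves and reflects weak equivalences, since these are detected in the underlying category. By a standard criterion for Quillen equivalences, it then suffices to check that the derived unit
\begin{align*}
  \eta\colon X\rarrow \RR f^*\bigl(\LL f_*(X)\bigr) = f^*\bigl(\capO'\circ^\LL_\capO (X)\bigr)
\end{align*}
is a weak equivalence for every cofibrant $X$ in $\AlgO$ (resp.\ $\LtO$). Since $f^*$ preserves all weak equivalences, this reduces to producing a natural zigzag of weak equivalences between $X$ and $\capO'\circ_\capO^\LL X$ in the underlying category $\ModR$ (resp.\ $\SymSeq$).

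To construct this zigzag, the plan is to identify both sides with realizations of simplicial bar constructions. First, invoke Proposition \ref{prop:replacement_of_operads} to reduce to the case where $\capO$ satisfies Cofibrancy Condition \ref{CofibrancyCondition}; an analogous replacement can be performed for $\capO'$ so that $f$ is promoted to a weak equivalence of operads both of which satisfy the cofibrancy condition. With cofibrancy in hand, Theorem \ref{thm:reedy_cofibrant_for_bar_constructions} makes both $\BAR(\capO,\capO,X)$ and $\BAR(\capO',\capO,X)$ Reedy cofibrant, so Proposition \ref{prop:natural_map_is_weak_equivalence} supplies natural weak equivalences $X\wequiv |\BAR(\capO,\capO,X)|$ and $\capO'\circ^\LL_\capO X \wequiv |\BAR(\capO',\capO,X)|$. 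The operad map $f$ induces a map of simplicial bar constructions $\BAR(\capO,\capO,X)\rarrow\BAR(\capO',\capO,X)$, and a level-by-level analysis combining Theorem \ref{thm:homotopical_analysis_of_forgetful_functors} with Propositions \ref{prop:cofibration} and \ref{prop:good_properties} shows that this map is a levelwise weak equivalence because $f$ is. Proposition \ref{prop:realization_monomorphisms_weak_equivalences} then transfers this to a weak equivalence of realizations, assembling the desired zigzag.

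The main obstacle is the delicate interplay between the absence of cofibrancy hypotheses in the statement and the fact that the bar-construction identifications fundamentally require Cofibrancy Condition \ref{CofibrancyCondition}. The replacement step has a mildly circular flavor, because verifying that replacing an operad by a cofibrant model preserves the homotopy theory is itself a Quillen equivalence statement of the type we are proving; however, in this special replacement case both operads are cofibrant, so the bar-construction argument applies directly without circularity, and a two-out-of-three argument for Quillen equivalences then reduces the general case to the cofibrant one.
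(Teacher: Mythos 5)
Your formal reductions are fine: the Quillen adjunction claim is exactly the paper's argument (fibrations and weak equivalences are created in the underlying category, so $f^*$ preserves them), and since $f^*$ preserves and reflects all weak equivalences, the Quillen equivalence does reduce to showing that the unit $X\rarrow f^*f_*X$ is a weak equivalence for cofibrant $X$. The gap is in how you propose to verify this. Your bar-construction route leans on Proposition \ref{prop:replacement_of_operads}, Theorem \ref{thm:reedy_cofibrant_for_bar_constructions}, and Theorem \ref{thm:homotopical_analysis_of_forgetful_functors}(b), all of which carry the hypothesis $\capO[\mathbf{0}]=*$ (and the latter two require Cofibrancy Condition \ref{CofibrancyCondition} or \ref{CofibrancyConditionWeaker}); Theorem \ref{thm:comparing_homotopy_categories} is stated for an \emph{arbitrary} operad in $\capR$-modules, with no condition on $\capO[\mathbf{0}]$ and no cofibrancy condition, so your argument cannot produce the theorem in the stated generality. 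Moreover, your resolution of the circularity is based on a false premise: Proposition \ref{prop:replacement_of_operads} produces a map $g\colon\capO'\rarrow\capO$ in which only the \emph{source} satisfies the cofibrancy condition, so it is not true that ``in this special replacement case both operads are cofibrant''; and since the replacement is an existence statement, getting a comparison map between the replacements of $\capO$ and $\capO'$ requires an extra step (e.g., using the composite $P\rarrow\capO\rarrow\capO'$) that you do not supply.

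The paper avoids all of this by proving the unit statement directly as Proposition \ref{prop:unit_map_is_weak_equivalence}: a cofibrant $X$ is a retract of a cell object built by attaching free cells $\capO\circ(-)$, the effect of $f_*=\capO'\circ_\capO(-)$ on such cell attachments is controlled by the filtration of Proposition \ref{prop:small_arg_pushout_modules}, the free case is handled by Proposition \ref{prop:preserves_weak_equivalences} (the functor $-\circ W$ preserves weak equivalences for $W$ cofibrant), and the pushouts and sequential colimits are controlled by Propositions \ref{prop:cofibration} and \ref{prop:good_properties}. This cellular induction needs no hypothesis on the operad and is logically prior to the bar-construction machinery of Sections \ref{sec:homotopical_analysis_of_the_completion_tower}--\ref{sec:homotopical_analysis_bar_constructions}, which in the paper's architecture is built \emph{on top of} Theorem \ref{thm:comparing_homotopy_categories} (e.g., via Theorem \ref{thm:comparing_homotopy_completion_towers}). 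If you want to salvage your approach you would at minimum have to restrict to operads with $\capO[\mathbf{0}]=*$ and carefully order the dependencies; as written, the proof does not establish the theorem.
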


First we make the following observation.

\begin{prop}
\label{prop:preserves_weak_equivalences}
Consider $\ModR$ and $\SymSeq$ with the positive flat stable model structure. If $W\in\ModR$ (resp. $W\in\SymSeq$) is cofibrant, then the functor
$
  \functor{-\circ(W)}{\SymSeq}{\ModR}
$ (resp. 
$ \functor{-\circ W}{\SymSeq}{\SymSeq}
$) preserves weak equivalences.
\end{prop}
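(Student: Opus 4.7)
The plan is to decompose the circle product as a coproduct via \eqref{eq:circle_product_calc}, apply Proposition \ref{prop:good_properties}(b) to each summand using the cofibrancy of the $t$-fold tensor powers furnished by Proposition \ref{prop:cofibration}(a), and then conclude by noting that coproducts preserve stable equivalences.

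First I would handle the case $W\in\ModR$. Let $\function{f}{A}{B}$ be a weak equivalence in $\SymSeq$, so $\function{f[\mathbf{t}]}{A[\mathbf{t}]}{B[\mathbf{t}]}$ is a weak equivalence in $\ModR^{\Sigma_t^\op}$ for every $t\geq 0$. By Definition \ref{defn:corresponding_functor} the induced map $A\circ(W)\rarrow B\circ(W)$ decomposes in $\ModR$ as the coproduct over $t\geq 0$ of the maps $f[\mathbf{t}]\Smash_{\Sigma_t}\id_{W^{\wedge t}}$. By Proposition \ref{prop:cofibration}(a) the $t$-fold smash $W^{\wedge t}$ is cofibrant in $\ModR^{\Sigma_t}$ with the (positive) flat stable model structure, so Proposition \ref{prop:good_properties}(b) applied with $G=\Sigma_t$ and $Z=W^{\wedge t}$ guarantees that $-\Smash_{\Sigma_t}W^{\wedge t}$ preserves weak equivalences, making each summand a weak equivalence. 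I would then conclude by noting that coproducts (wedges) of stable equivalences in $\ModR$ are stable equivalences, since $\ModR$ is stable.

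The case $W\in\SymSeq$ proceeds along the same lines, using the symmetric sequence versions of Propositions \ref{prop:cofibration}(a) and \ref{prop:good_properties}(b). Embedding $A[\mathbf{t}]\in\ModR^{\Sigma_t^\op}$ into $\SymSeq^{\Sigma_t^\op}$ via $\hat{-}$ (Definition \ref{defn:hat_construction_embed_at_zero}) preserves weak equivalences, and by \eqref{eq:circle_product_calc} together with \eqref{eq:tensor_check_calc} there is a natural isomorphism
\[
  A\circ W \Iso \coprod_{t\geq 0}\widehat{A[\mathbf{t}]}\,\tensorcheck_{\Sigma_t}\,W^{\tensorcheck t}
\]
in $\SymSeq$. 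Proposition \ref{prop:cofibration}(a) gives that $W^{\tensorcheck t}$ is cofibrant in $\SymSeq^{\Sigma_t}$, and the symmetric sequence version of Proposition \ref{prop:good_properties}(b) then ensures each summand map is a weak equivalence in $\SymSeq$. Since weak equivalences in $\SymSeq$ are objectwise in $\ModR$, the final coproduct step reduces to the $\ModR$ case handled above.

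The main obstacle is essentially bookkeeping: one must carefully align the left/right $\Sigma_t$-equivariant structures on $W^{\wedge t}$ and $W^{\tensorcheck t}$ with the hypotheses of Proposition \ref{prop:good_properties}(b), keeping the permutation action of $\Sigma_t$ on tensor factors distinct from the internal $\Sigma_r$-actions at each level $\mathbf{r}$ of a symmetric sequence. Once these identifications are made, the argument is a direct application of Propositions \ref{prop:cofibration}(a) and \ref{prop:good_properties}(b), together with the standard fact that wedges preserve stable equivalences.
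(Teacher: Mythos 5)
Your proposal is correct and follows essentially the same route as the paper, which simply defers to the proof of \cite[5.3]{Harper_Spectra} with $(\Spectra,\tensor_S,S)$ replaced by $(\ModR,\Smash,\capR)$ and with Propositions \ref{prop:cofibration} and \ref{prop:good_properties} supplying exactly the two inputs you identify: cofibrancy of the tensor powers $W^{\wedge t}$ (resp. $W^{\tensorcheck t}$) and preservation of weak equivalences by $-\Smash_{\Sigma_t}Z$ (resp. $-\tensorcheck_{\Sigma_t}Z$) for $Z$ cofibrant, followed by the coproduct decomposition \eqref{eq:circle_product_calc} and the stability of stable equivalences under wedges.
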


\begin{proof}
It suffices to consider the case of symmetric sequences. This is verified exactly as in \cite[proof of 5.3]{Harper_Spectra}, except using $(\ModR,\Smash,\capR)$ and Propositions \ref{prop:cofibration}, \ref{prop:good_properties} instead of $(\Spectra,\tensor_S,S)$ and \cite[4.28, 4.29]{Harper_Spectra}, respectively.
\end{proof}

\begin{prop}
\label{prop:unit_map_is_weak_equivalence}
Let $\function{f}{\capO}{\capO'}$ be a map of operads in $\capR$-modules and consider $\AlgO$ (resp. $\LtO$) with the positive flat stable model structure. If $Z\in\AlgO$ (resp. $Z\in\LtO$) is cofibrant and $f$ is a weak equivalence in the underlying category $\SymSeq$ with the positive flat stable model structure, then the natural map $Z\rarrow f^*f_*Z$ is a weak equivalence in $\AlgO$ (resp. $\LtO$).
\end{prop}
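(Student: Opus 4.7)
It suffices to consider the case of left $\capO$-modules, since $\capO$-algebras embed into $\LtO$ as the left $\capO$-modules concentrated at $0$. The plan is a standard cell induction. The base case is when $Z = \capO \circ W$ is free on a cofibrant $W \in \SymSeq$: then $f_* Z \Iso \capO' \circ W$, and the unit map $Z \to f^* f_* Z$, viewed in the underlying category $\SymSeq$, is the map $\capO \circ W \to \capO' \circ W$ induced by $f \circ \id_W$. Since $W$ is cofibrant, Proposition \ref{prop:preserves_weak_equivalences} shows that $-\circ W$ preserves the weak equivalence $f$, so the unit is a weak equivalence.

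For the inductive step, I would present an arbitrary cofibrant $Z$ as a retract of a (possibly transfinite) composition of pushouts of the form \eqref{eq:small_arg_pushout_modules}, starting from $\capO \circ \emptyset$ and attaching cells along generating cofibrations $i \colon X \to Y$ of $\SymSeq$. Assuming that $Z_0 \to f^* f_* Z_0$ is a weak equivalence, I need to show the same for the pushout $Z_1$ of $\capO \circ i$ along a map $\capO \circ X \to Z_0$. For this I would apply the filtration of Proposition \ref{prop:small_arg_pushout_modules} to both $Z_1$ and the underlying symmetric sequence of $f_* Z_1$ (using $\capO'$ and $f_* Z_0$ in place of $\capO$ and $Z_0$). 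This produces a natural map of sequential towers in $\SymSeq$ whose structure maps are monomorphisms by Proposition \ref{prop:cofibration}(b), with filtration quotients built from the maps $\capO_{Z_0}[\mathbf{t}] \tensorcheck_{\Sigma_t} Q_{t-1}^t \to \capO_{Z_0}[\mathbf{t}] \tensorcheck_{\Sigma_t} Y^{\tensorcheck t}$ and their $\capO'$-analogs.

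The hard part will be showing that the induced map $\capO_{Z_0}[\mathbf{t}] \to \capO'_{f_* Z_0}[\mathbf{t}]$ (in the underlying category) is a weak equivalence. Here I would use the reflexive coequalizer description from Proposition \ref{prop:coproduct_modules}, which expresses each side as a coequalizer of coproducts of terms of the form $\capO[\mathbf{p}\boldsymbol{+}\mathbf{t}] \tensorcheck_{\Sigma_p} Z_0^{\tensorcheck p}$ (resp.\ its $\capO'$-analog built from $f_* Z_0$). The inductive hypothesis that $Z_0 \to f^* f_* Z_0$ is a weak equivalence, combined with Proposition \ref{prop:good_properties} and Proposition \ref{prop:cofibration} to control cofibrancy and weak equivalences under the $\tensorcheck_{\Sigma_p}$ operations, implies that each term-wise map is a weak equivalence between cofibrant objects, and the reflexive coequalizers and coproducts in play preserve this property. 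Once this is in hand, smashing over $\Sigma_t$ with the cofibrations $Q_{t-1}^t \to Y^{\tensorcheck t}$ (via Proposition \ref{prop:cofibration}) yields weak equivalences on each filtration quotient, and passage to sequential colimits of monomorphisms (which preserves stable equivalences) gives the weak equivalence on $Z_1$. Transfinite composition and retract closure then complete the proof.
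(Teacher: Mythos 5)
Your overall architecture—cell induction starting from $\capO\circ\emptyset$, with the pushout stages analyzed via the filtration of Proposition \ref{prop:small_arg_pushout_modules} and the comparison finished by Propositions \ref{prop:cofibration} and \ref{prop:good_properties} plus sequential colimits along monomorphisms—is the right one, and it is essentially the argument the paper invokes (the proof is cited from \cite[proof of 5.2]{Harper_Spectra}, transported to $\capR$-modules via Propositions \ref{prop:cofibration}, \ref{prop:good_properties}, \ref{prop:preserves_weak_equivalences}). You have also correctly isolated where the real work is: the map $\capO_{Z_0}[\mathbf{t}]\rarrow\capO'_{f_*Z_0}[\mathbf{t}]$. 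But your proposed treatment of that step has a genuine gap. The reflexive coequalizer presentation of Proposition \ref{prop:coproduct_modules} is an honest colimit, not a homotopy colimit, and a termwise weak equivalence of reflexive coequalizer diagrams does not in general induce a weak equivalence of coequalizers; the assertion that ``the reflexive coequalizers and coproducts in play preserve this property'' is precisely the kind of homotopy-invariance statement the whole cellular machinery exists to avoid. There is a second problem with the termwise comparison itself: the terms place $Z_0^{\tensorcheck p}$ (and $(f_*Z_0)^{\tensorcheck p}$) in the slot of $\tensorcheck_{\Sigma_p}$ where Proposition \ref{prop:good_properties}(b) requires a \emph{cofibrant} object of $\SymSeq^{\Sigma_p}$, and a cofibrant left $\capO$-module need not be cofibrant in the underlying $\SymSeq$ unless $\capO$ satisfies extra cofibrancy hypotheses (cf.\ Theorem \ref{thm:homotopical_analysis_of_forgetful_functors}), which the proposition does not assume.

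The repair is to strengthen the induction hypothesis: carry along the statement that $\capO_{Z_i}[\mathbf{r}]\rarrow\capO'_{f_*Z_i}[\mathbf{r}]$ is a weak equivalence for \emph{every} $r\geq 0$, not merely for $r=0$ (which recovers $Z_i\rarrow f^*f_*Z_i$ since $\capO_B[\mathbf{0}]\iso B$). The base case is \eqref{eq:calculating_OA_for_initial_algebras_and_modules}: $\capO_{\capO\circ\emptyset}[\mathbf{r}]\iso\widehat{\capO[\mathbf{r}]}\rarrow\widehat{\capO'[\mathbf{r}]}$ is a weak equivalence because $f$ is. The inductive step uses the filtration of Proposition \ref{prop:filtering_OA} (the level-$\mathbf{r}$ generalization of Proposition \ref{prop:small_arg_pushout_modules}), whose attaching maps have the form $\capO_{Z_i}[\mathbf{t}\boldsymbol{+}\mathbf{r}]\tensorcheck_{\Sigma_t}\bigl(Q^t_{t-1}\rarrow Y^{\tensorcheck t}\bigr)$; here the cofibrant input $(Y/X)^{\tensorcheck t}$ built from the generating cofibration sits in the slot where Propositions \ref{prop:cofibration} and \ref{prop:good_properties} apply, and the inductive hypothesis supplies the weak equivalence $\capO_{Z_i}[\mathbf{t}\boldsymbol{+}\mathbf{r}]\rarrow\capO'_{f_*Z_i}[\mathbf{t}\boldsymbol{+}\mathbf{r}]$ in the other slot. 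With this strengthened induction the rest of your argument (comparison of filtration quotients, sequential colimits along monomorphisms, transfinite composition, retracts) goes through as you describe.
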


\begin{proof}
It suffices to consider the case of left $\capO$-modules. This is verified exactly as in \cite[proof of 5.2]{Harper_Spectra}, except using $(\ModR,\Smash,\capR)$ and Propositions \ref{prop:cofibration}, \ref{prop:good_properties}, \ref{prop:preserves_weak_equivalences} instead of $(\Spectra,\tensor_S,S)$ and \cite[4.28, 4.29, 5.3]{Harper_Spectra}, respectively.
\end{proof}

\begin{proof}[Proof of Theorem \ref{thm:comparing_homotopy_categories}]
This is verified exactly as in \cite[proof of 1.4]{Harper_Spectra}, except using $(\ModR,\Smash,\capR)$ and Proposition \ref{prop:unit_map_is_weak_equivalence} instead of $(\Spectra,\tensor_S,S)$ and \cite[5.2]{Harper_Spectra}, respectively.
\end{proof}

\subsection{Homotopy colimits and simplicial bar constructions}

The following theorems play a key role in this paper. They improve the corresponding results in \cite{Harper_Bar} from operads in symmetric spectra to the more general context involving operads in $\capR$-modules, and are verified exactly as in the proof of \cite[1.10, 1.6, 1.8]{Harper_Bar}, respectively.

\begin{thm}
\label{thm:bar_calculates_derived_circle}
Let $\function{f}{\capO}{\capO'}$ be a morphism of operads in $\capR$-modules. Let $X$ be an $\capO$-algebra (resp. left $\capO$-module) and consider $\AlgO$ (resp. $\LtO$) with the model structure of Theorem \ref{thm:positive_flat_stable_AlgO} or \ref{thm:positive_stable_AlgO}. If the simplicial bar construction $\BAR(\capO,\capO,X)$ is objectwise cofibrant in $\AlgO$ (resp. $\LtO$), then there is a zigzag of weak equivalences
$
  \LL f_*(X)\wequiv
  |\BAR(\capO',\capO,X)|
$
in the underlying category, natural in such $X$. Here, $\LL f_*$ is the total left derived functor of $f_*$.
\end{thm}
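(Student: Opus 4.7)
The plan is to exhibit $|\BAR(\capO,\capO,X)|$ as a cofibrant replacement of $X$ in $\AlgO$ (resp.\ $\LtO$), and then observe that $f_*$ applied to this bar construction yields $|\BAR(\capO',\capO,X)|$. I would follow the arguments in \cite{Harper_Bar}, adapting each step from $(\Spectra,\tensor_S,S)$ to the more general monoidal model category $(\ModR,\Smash,\capR)$; most adaptations are routine since all the needed cofibrancy lemmas for $(\ModR,\Smash,\capR)$ are already in place (Theorem \ref{thm:mixing_flat_stable_with_positive_flat_stable}, Proposition \ref{prop:flat_stable_cofibration_properties_symmetric_sequences}, etc.).

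First, I would check that the augmentation $\BAR(\capO,\capO,X)_\bullet \rarrow X$ (viewing $X$ as the constant simplicial object) admits extra degeneracies in the underlying category $\SymSeq$ (resp.\ $\ModR$), obtained by applying the unit map $\eta\colon I \rarrow \capO$ in the leftmost $\capO$ slot. Consequently the induced map $|\BAR(\capO,\capO,X)| \rarrow X$ is a simplicial homotopy equivalence in the underlying category, and therefore a weak equivalence in $\AlgO$ (resp.\ $\LtO$), since weak equivalences in these categories are created by the forgetful functor. Second, the algebraic identity $f_*\,\BAR(\capO,\capO,X) \cong \BAR(\capO',\capO,X)$ holds simplicial degree by simplicial degree because $f_*$ takes $\capO \circ Y$ to $\capO' \circ Y$ naturally in $Y$; since $f_*$ is a left adjoint it commutes with realization (cf.\ Propositions \ref{prop:realization_respects_smash_tensor_and_circle} and \ref{prop:realizations_are_isomorphic}), yielding a natural isomorphism $f_*|\BAR(\capO,\capO,X)| \cong |\BAR(\capO',\capO,X)|$.

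The main obstacle will be the third step: upgrading the objectwise cofibrancy hypothesis on $\BAR(\capO,\capO,X)$ to cofibrancy of the realization $|\BAR(\capO,\capO,X)|$ in $\AlgO$ (resp.\ $\LtO$). For this I would invoke the skeletal filtration of the realization provided by the simplicial model structure of Theorem \ref{thm:simplicial_model_category_structure}, together with the explicit description of the degenerate subobjects from Proposition \ref{prop:nice_description_of_degenerate_subobjects}. The latching inclusions $L_n\BAR(\capO,\capO,X) \rarrow \BAR(\capO,\capO,X)_n$ are built from the unit maps $I \rarrow \capO$ and the face/degeneracy structure in a sufficiently controlled manner that the objectwise cofibrancy hypothesis propagates, via pushouts along the cofibrations $\partial\Delta[n]_+ \rarrow \Delta[n]_+$ tensored with cofibrant objects, to cofibrancy of each skeletal stage, and hence of the colimit $|\BAR(\capO,\capO,X)|$.

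Assembling the three steps, $|\BAR(\capO,\capO,X)|$ serves as a cofibrant replacement of $X$, so $\LL f_*(X)\wequiv f_*|\BAR(\capO,\capO,X)|\cong|\BAR(\capO',\capO,X)|$ in the underlying category, natural in $X$. The claimed zigzag of weak equivalences is produced in the usual way: choose a functorial cofibrant replacement $X^c\rarrow X$ in $\AlgO$ (resp.\ $\LtO$), and take the natural chain $\LL f_*(X)=f_*(X^c)\xleftarrow{\wequiv} f_*|\BAR(\capO,\capO,X^c)|=|\BAR(\capO',\capO,X^c)|\rarrow|\BAR(\capO',\capO,X)|$, where the left-hand map is a weak equivalence because $f_*$ is left Quillen and the relevant objects are cofibrant by the argument above, and the right-hand map is a weak equivalence via a comparison of bar constructions under $X^c\xrightarrow{\wequiv} X$.
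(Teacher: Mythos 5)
Your steps 1 and 2 are sound and are indeed ingredients of the intended argument: the extra degeneracy obtained by inserting the unit in the leftmost slot shows $|\BAR(\capO,\capO,X)|\rarrow X$ is a simplicial homotopy equivalence in the underlying category, and $f_*\BAR(\capO,\capO,X)\iso\BAR(\capO',\capO,X)$ with $f_*$ commuting with realization. The gap is in step 3. Objectwise cofibrancy of $\BAR(\capO,\capO,X)$ in $\AlgO$ does \emph{not} give cofibrancy of the realization: for that you need Reedy cofibrancy, i.e., that the latching maps $L_n\BAR(\capO,\capO,X)\rarrow\BAR(\capO,\capO,X)_n$ are cofibrations, and by Proposition \ref{prop:nice_description_of_degenerate_subobjects} these are built from iterated pushout-products of the unit map $\function{\eta}{I}{\capO}$ applied to $X$. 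Every tool in the paper that controls such maps (Proposition \ref{prop:flat_stable_cofibration_properties_symmetric_sequences}, Theorem \ref{thm:inclusion_of_degenerate_subobjects_positive_flat_stable}, Theorem \ref{thm:reedy_cofibrant_for_bar_constructions}) explicitly assumes Cofibrancy Condition \ref{CofibrancyCondition} on $\eta$ together with underlying (positive flat stable) cofibrancy of $X$ --- neither of which is a hypothesis of Theorem \ref{thm:bar_calculates_derived_circle}. So the assertion that ``the objectwise cofibrancy hypothesis propagates'' to the skeletal stages is exactly the unproved (and, under the stated hypotheses, unavailable) claim. Your closing zigzag inherits the same problem: Ken Brown's lemma needs $|\BAR(\capO,\capO,X^c)|$ cofibrant in $\AlgO$ for $f_*$ to preserve the augmentation weak equivalence, and the comparison map $|\BAR(\capO',\capO,X^c)|\rarrow|\BAR(\capO',\capO,X)|$ also needs a separate argument that $X^c\rarrow X$ induces an objectwise weak equivalence of bar constructions.

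The paper's proof (it simply imports the proof of \cite[1.10]{Harper_Bar}) avoids ever needing the realization to be cofibrant. It writes $X\wequiv\hocolim^{\AlgO}_{\Delta^\op}\BAR(\capO,\capO,X)$ (Theorem \ref{thm:fattened_replacement}), commutes $\LL f_*$ past the homotopy colimit, uses the objectwise cofibrancy hypothesis only to identify $\LL f_*$ with $f_*$ in each simplicial degree (yielding $\BAR(\capO',\capO,X)$), and then invokes Theorem \ref{main_hocolim_theorem} to identify the underlying object of $\hocolim^{\Alg_{\capO'}}_{\Delta^\op}\BAR(\capO',\capO,X)$ with the realization $|\BAR(\capO',\capO,X)|$; that last comparison holds with no Reedy hypothesis because the internal cofibrant replacement of the simplicial object is compared to the original via Proposition \ref{prop:realization_monomorphisms_weak_equivalences} (realization of simplicial $\capR$-modules preserves objectwise weak equivalences unconditionally). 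This is also why the theorem only asserts a zigzag in the underlying category rather than in $\Alg_{\capO'}$. To repair your write-up, replace step 3 and the final assembly with this homotopy-colimit route, or else add the hypotheses of Theorem \ref{thm:reedy_cofibrant_for_bar_constructions} --- but then you would be proving a weaker statement than the one asserted.
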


\begin{thm}
\label{main_hocolim_theorem}
Let $\capO$ be an operad in $\capR$-modules. If $X$ is a simplicial $\capO$-algebra (resp. simplicial left $\capO$-module), then there are zigzags of weak equivalences
\begin{align*}
  U\hocolim\limits^{\AlgO}_{\Delta^\op}X & \wequiv |U X| \wequiv
  \hocolim\limits_{\Delta^\op}U X \\
  \Bigl(
  \text{resp.}\quad
  U\hocolim\limits^{\LtO}_{\Delta^\op}X & \wequiv |U X| \wequiv
  \hocolim\limits_{\Delta^\op}U X
  \Bigr)
\end{align*}
natural in $X$. Here, $U$ is the forgetful functor, $\sAlgO$ (resp. $\sLtO$) is equipped with the projective model structure inherited from the model structure of Theorem \ref{thm:positive_flat_stable_AlgO} or \ref{thm:positive_stable_AlgO}.
\end{thm}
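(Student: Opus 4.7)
The plan is to follow the strategy of \cite[proofs of 1.6, 1.8, 1.10]{Harper_Bar} by reducing both sides of the desired zigzag to the underlying realization $|UX|$, using the fact that in a simplicial model category the realization of a Reedy cofibrant simplicial object represents the homotopy colimit over $\Delta^\op$.

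First I would fix a Reedy cofibrant replacement $\function{\varphi}{X^c}{X}$ of $X$ in $\sAlgO$ (resp.\ $\sLtO$) with respect to the Reedy model structure induced by Theorem \ref{thm:simplicial_model_category_structure}, together with a Reedy cofibrant replacement $\function{\psi}{Y^c}{UX}$ in the underlying diagram category $\sModR$ (resp.\ $\sSymSeq$). Using that $|-|_\AlgO$ (resp.\ $|-|_\LtO$) of Definition \ref{defn:realization_algebras_and_modules} is a left Quillen functor from the Reedy model structure---a formal consequence of the simplicial structure together with the adjunctions in Proposition \ref{prop:realizations_are_isomorphic}---there are natural weak equivalences
\begin{align*}
  \hocolim\nolimits^\AlgO_{\Delta^\op}X\wequiv |X^c|_\AlgO
  \quad\text{and}\quad
  \hocolim\nolimits_{\Delta^\op}UX\wequiv|Y^c|
\end{align*}
(and analogously in the left module case).

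Next I would invoke Proposition \ref{prop:realizations_are_isomorphic}, which supplies a natural isomorphism $|X^c|_\AlgO\Iso|X^c|$ in $\AlgO$, where the right-hand side is the realization of the underlying simplicial $\capR$-module endowed with the induced $\capO$-algebra structure of Remark \ref{rem:realization_induces_functor_on_algebras}. In particular, $U|X^c|_\AlgO\Iso|UX^c|$ in $\ModR$. The theorem is then reduced to exhibiting weak equivalences
\begin{align*}
  |UX^c|\wequiv|UX|\wequiv|Y^c|
\end{align*}
in the underlying category. Since the forgetful functor $U$ preserves weak equivalences (Proposition \ref{prop:basic_properties_LTO}), both $UX^c\rarrow UX$ and $Y^c\rarrow UX$ are objectwise stable equivalences, so both of these weak equivalences follow immediately from the homotopy invariance of realization recorded in Proposition \ref{prop:realization_monomorphisms_weak_equivalences}.

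The main technical point will be verifying that $|-|_\AlgO$ (resp.\ $|-|_\LtO$) is a left Quillen functor on $\sAlgO$ (resp.\ $\sLtO$) equipped with the Reedy model structure, so that $|X^c|_\AlgO$ correctly represents the homotopy colimit $\hocolim^\AlgO_{\Delta^\op}X$. This amounts to the standard pushout-product compatibility applied to the skeletal filtration of the cosimplicial simplicial set $\Delta[-]_+$, and reduces to the simplicial model category axiom established in Theorem \ref{thm:simplicial_model_category_structure} together with the right adjoint description of realization in Proposition \ref{prop:realizations_are_isomorphic}; the argument is formally identical to the one in \cite[proof of 1.10]{Harper_Bar}, with $(\ModR,\Smash,\capR)$ replacing $(\Spectra,\tensor_S,S)$ throughout. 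Assembling these ingredients yields the desired zigzag of natural weak equivalences
\begin{align*}
  U\hocolim\nolimits^\AlgO_{\Delta^\op}X\wequiv U|X^c|_\AlgO\Iso|UX^c|\wequiv|UX|\wequiv|Y^c|\wequiv\hocolim\nolimits_{\Delta^\op}UX.
\end{align*}
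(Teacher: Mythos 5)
Your argument is correct and is essentially the argument the paper intends, since the paper's own proof is just a deferral to \cite[proof of 1.6]{Harper_Bar}: pass to a Reedy cofibrant replacement, identify the homotopy colimit with its realization using the simplicial model structure of Theorem \ref{thm:simplicial_model_category_structure}, commute realization past the forgetful functor via Proposition \ref{prop:realizations_are_isomorphic}, and then use that realization of simplicial $\capR$-modules (and, objectwise, of simplicial symmetric sequences) preserves all objectwise weak equivalences (Proposition \ref{prop:realization_monomorphisms_weak_equivalences}) to compare with $|UX|$ and $\hocolim_{\Delta^\op}UX$. The only step worth stating more precisely is that $\hocolim^{\AlgO}_{\Delta^\op}X\wequiv|X^c|_{\AlgO}$ uses not merely that $|-|_{\AlgO}$ is left Quillen from the Reedy structure, but the standard comparison theorem (e.g.\ \cite[Chapter 18]{Hirschhorn}, \cite[VII.3]{Goerss_Jardine}) that the natural map from the homotopy colimit to the realization of a Reedy cofibrant simplicial object is a weak equivalence---which is exactly the fact you invoke at the outset.
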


\begin{thm}
\label{thm:fattened_replacement}
Let $\capO$ be an operad in $\capR$-modules. If $X$ is an $\capO$-algebra (resp. left $\capO$-module), then there is a zigzag of weak equivalences in $\AlgO$ (resp. $\LtO$)
\begin{align*}
  X \wequiv \hocolim\limits^{\AlgO}_{\Delta^\op}\BAR(\capO,\capO,X)
  \quad\quad
  \Bigl(
  \text{resp.}\quad
  X \wequiv \hocolim\limits^{\LtO}_{\Delta^\op}\BAR(\capO,\capO,X)
  \Bigr)
\end{align*}
natural in $X$. Here, $\sAlgO$ (resp. $\sLtO$) is equipped with the projective model structure inherited from the model structure of Theorem \ref{thm:positive_flat_stable_AlgO} or \ref{thm:positive_stable_AlgO}.
\end{thm}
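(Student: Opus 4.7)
The plan is to follow the strategy of \cite[proof of 1.8]{Harper_Bar}: exploit extra degeneracies in the underlying category, then transfer the resulting simplicial homotopy equivalence back to $\AlgO$ (resp.\ $\LtO$) via Theorem \ref{main_hocolim_theorem}. Since the supporting results for $\capR$-modules --- the positive (flat) stable model structures on $\AlgO$ and $\LtO$, the commutation of the forgetful functor with $\hocolim_{\Delta^\op}$, and the creation of weak equivalences by the forgetful functor --- have already been established earlier in Section \ref{sec:model_structures}, the argument is essentially a formal translation from the symmetric-spectra context of \cite{Harper_Bar} to the $\capR$-module context.

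First I would consider the natural augmentation $\varepsilon\colon \BAR(\capO,\capO,X) \to c(X)$ of simplicial $\capO$-algebras (resp.\ simplicial left $\capO$-modules), where $c(X)$ denotes the constant simplicial object with value $X$ and $\varepsilon_n$ is the iterated left $\capO$-action map $\capO\circ\capO^{\circ n}\circ(X)\to X$ (resp.\ $\capO\circ\capO^{\circ n}\circ X\to X$). After applying the forgetful functor $U$ to the underlying category, the simplicial map $U\varepsilon$ admits extra degeneracies constructed from the unit map $\function{\eta}{I}{\capO}$ of the operad, by inserting $I$ into the leftmost position of the circle-product strings; hence $U\varepsilon$ is a simplicial homotopy equivalence in $\SymSeq$ (resp.\ $\ModR$).

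Next I would invoke Theorem \ref{main_hocolim_theorem} to commute the forgetful functor past the homotopy colimit, giving zigzags of natural weak equivalences
\begin{align*}
  U\hocolim\nolimits^{\AlgO}_{\Delta^\op}\BAR(\capO,\capO,X) &\wequiv \hocolim\nolimits_{\Delta^\op}U\BAR(\capO,\capO,X),\\
  U\hocolim\nolimits^{\AlgO}_{\Delta^\op} c(X) &\wequiv \hocolim\nolimits_{\Delta^\op} U c(X) \wequiv UX,
\end{align*}
and similarly in the left $\capO$-module case. Since $\hocolim_{\Delta^\op}$ in $\SymSeq$ (resp.\ $\ModR$) carries simplicial homotopy equivalences to weak equivalences, $U\varepsilon$ induces a weak equivalence on $\hocolim_{\Delta^\op}$; therefore $\varepsilon$ induces a weak equivalence on $\hocolim^{\AlgO}_{\Delta^\op}$ (resp.\ $\hocolim^{\LtO}_{\Delta^\op}$), because weak equivalences in $\AlgO$ (resp.\ $\LtO$) are created by the forgetful functor. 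Stringing the zigzags together with the natural weak equivalence $\hocolim^{\AlgO}_{\Delta^\op}c(X)\wequiv X$ (resp.\ $\hocolim^{\LtO}_{\Delta^\op}c(X)\wequiv X$) produces the asserted natural zigzag.

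The principal obstacle is really just bookkeeping: one must confirm that every technical lemma used in \cite[proof of 1.8]{Harper_Bar} has been suitably generalized from $(\Spectra,\tensor_S,S)$ to $(\ModR,\Smash,\capR)$. The substantive ingredients --- the extra-degeneracy construction, the compatibility of $U$ with $\hocolim_{\Delta^\op}$ furnished by Theorem \ref{main_hocolim_theorem}, and the homotopy invariance of $\hocolim_{\Delta^\op}$ under simplicial homotopy equivalences --- are either purely formal or have already been arranged by the results of Section \ref{sec:model_structures}, so no genuinely new technical input is required.
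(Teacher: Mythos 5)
Your proposal is correct and matches the paper's intended argument: the paper proves Theorem \ref{thm:fattened_replacement} by citing that it "is verified exactly as in the proof of \cite[1.8]{Harper_Bar}," which is precisely the extra-degeneracy argument you describe, combined with Theorem \ref{main_hocolim_theorem} and the fact that weak equivalences in $\AlgO$ (resp. $\LtO$) are created by the forgetful functor, with the required $\capR$-module technicalities supplied by Section \ref{sec:model_structures}. (Only a cosmetic remark: your ``(resp.)'' convention for the underlying categories $\ModR$ and $\SymSeq$ is reversed relative to the paper's.)
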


\subsection{Flat stable cofibrations}
\label{sec:flat_stable_cofibrations}

The purpose of this subsection is to prove Propositions \ref{prop:cofibration} and \ref{prop:good_properties}. This requires several calculations (\ref{ex:calculation} and \ref{calculation_example}) together with a characterization of flat stable cofibrations (Proposition \ref{prop:cofibration_characterization}). This characterization is motivated by the characterization given in Schwede \cite{Schwede_book_project}, in terms of left $\capR_0$--modules, of flat stable cofibrations in $\ModR$.

Since $\capR$ is a commutative monoid in $(\Spectra,\tensor_S,S)$, it follows  that $\capR_0$ is a commutative monoid in $(\sSet_*,\Smash,S^0)$.  In particular, by \cite[2.4]{Harper_Modules} we can regard $\capR_0$ as a commutative monoid in $(\sSet_*^{\Sigma_n},\Smash,S^0)$ with the trivial $\Sigma_n$-action.

\begin{defn}
Let $n\geq 0$. A \emph{left $\capR_0$-module} is an object in $(\sSet_*^{\Sigma_n},\Smash,S^0)$ with a left action of $\capR_0$ and a \emph{morphism of left $\capR_0$-modules} is a map in $\sSet_*^{\Sigma_n}$ that respects the left $\capR_0$-module structure. Denote by $\capR_0-\sSet_*^{\Sigma_n}$ the category of left $\capR_0$-modules and their morphisms.
\end{defn}

For each $n\geq 0$, there is an adjunction
$
\xymatrix@1{
  \sSet_*^{\Sigma_n}\ar@<0.5ex>[r]^-{\capR_0\Smash-} & 
  \capR_0-\sSet_*^{\Sigma_n}\ar@<0.5ex>[l]
}
$
with left adjoint on top. It is proved in \cite{Shipley_comm_ring} that the following model category structure exists on left $\Sigma_n$-objects in pointed simplicial sets. 

\begin{defn}
\label{def:mixed_model_structure}
Let $n\geq 0$. 
\begin{itemize}
\item The \emph{mixed $\Sigma_n$-equivariant model structure} on $\sSet_*^{\Sigma_n}$ has weak equivalences the underlying weak equivalences of simplicial sets, cofibrations the retracts of (possibly transfinite) compositions of pushouts of maps
\begin{align*}
  \Sigma_n/H\cdot \partial\Delta[k]_+\rarrow 
  \Sigma_n/H\cdot \Delta[k]_+ \quad
  (k\geq 0,\ H\subsetof \Sigma_n\ \text{subgroup}),
\end{align*} 
and fibrations the maps with the right lifting property with respect to the acyclic cofibrations.
\end{itemize}
\end{defn}

Furthermore, it is proved in \cite{Shipley_comm_ring} that this model structure is cofibrantly generated by generating cofibrations and acyclic cofibrations with small domains, and that the cofibrations are the monomorphisms. It is easy to prove that the category $\capR_0-\sSet_*^{\Sigma_n}$ inherits a corresponding model structure created by the free-forgetful adjunction above Definition \ref{def:mixed_model_structure}, and that furthermore the diagram category of $(\Sigma_r^\op\times G)$-shaped diagrams in $\capR_0-\sSet^{\Sigma_n}_*$ appearing in the following proposition inherits a corresponding projective model structure. This proposition, whose proof is left to the reader, will be needed for identifying flat stable cofibrations in $\SymSeq^G$.

\begin{prop}\label{prop:mixed_diagram_model_structure}
Let $G$ be a finite group and consider any $n,r\geq 0$. The diagram category $\bigl(\capR_0-\sSet_*^{\Sigma_n}\bigr)^{\Sigma_r^\op\times G}$ inherits a corresponding model structure from the mixed $\Sigma_n$-equivariant model structure on $\sSet_*^{\Sigma_n}$. The weak equivalences (resp. fibrations) are the underlying weak equivalences (resp. fibrations) in $\sSet_*^{\Sigma_n}$.
\end{prop}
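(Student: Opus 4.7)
The plan is to obtain the desired model structure on $\bigl(\capR_0-\sSet_*^{\Sigma_n}\bigr)^{\Sigma_r^\op\times G}$ in two steps, each an application of a standard transfer/lifting principle.

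First, I would establish a cofibrantly generated model structure on $\capR_0-\sSet_*^{\Sigma_n}$ itself by transferring the mixed $\Sigma_n$-equivariant model structure on $\sSet_*^{\Sigma_n}$ across the free-forgetful adjunction
\begin{align*}
\xymatrix{
  \capR_0\Smash-\colon\sSet_*^{\Sigma_n}\ar@<0.5ex>[r] &
  \capR_0-\sSet_*^{\Sigma_n}\colon U\ar@<0.5ex>[l]
}
\end{align*}
using Kan's lifting lemma (compare \cite[2.3]{Schwede_Shipley}). The hypotheses are readily verified: the mixed $\Sigma_n$-equivariant model structure on $\sSet_*^{\Sigma_n}$ is cofibrantly generated by maps with small domains, and the forgetful functor $U$ commutes with all small colimits, so the smallness conditions are inherited in $\capR_0-\sSet_*^{\Sigma_n}$. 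It remains to check that regular cofibrations built from pushouts of $(\capR_0\Smash-)$ applied to a generating acyclic cofibration are underlying weak equivalences; this follows because the cofibrations in $\sSet_*^{\Sigma_n}$ are precisely the monomorphisms and smashing with a pointed simplicial set (in particular with $\capR_0$, equipped with the trivial $\Sigma_n$-action) preserves both monomorphisms and weak equivalences in $\sSet_*^{\Sigma_n}$. Consequently, $\capR_0-\sSet_*^{\Sigma_n}$ inherits a cofibrantly generated model structure whose weak equivalences and fibrations are created by $U$.

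Second, having a cofibrantly generated model category in hand, I would invoke the standard projective diagram model structure result (e.g., \cite[11.6.1]{Hirschhorn}): for any cofibrantly generated model category $\M$ and any small category $\DD$, the diagram category $\M^\DD$ admits the projective model structure in which the weak equivalences (resp.~fibrations) are the objectwise ones, and the generating (acyclic) cofibrations are obtained by applying the left adjoints of evaluation functors to the generating (acyclic) cofibrations of $\M$. Taking $\M = \capR_0-\sSet_*^{\Sigma_n}$ and $\DD = \Sigma_r^\op\times G$, which is a (finite, hence) small category, produces the required model structure on $\bigl(\capR_0-\sSet_*^{\Sigma_n}\bigr)^{\Sigma_r^\op\times G}$, and the weak equivalences and fibrations are underlying ones in $\sSet_*^{\Sigma_n}$ as claimed.

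The main point requiring care is the transfer step, specifically verifying that $(\capR_0\Smash-)$ takes acyclic cofibrations to underlying weak equivalences and that their transfinite compositions remain weak equivalences; this reduces to the observation that $\capR_0$ is well-pointed in $\sSet_*$ (so smashing with it preserves weak equivalences between pointed simplicial sets) together with the fact that monomorphism-preserving transfinite compositions of weak equivalences in $\sSet_*^{\Sigma_n}$ are weak equivalences. Once this is in place, the two transfer steps fit together without further difficulty and yield the statement.
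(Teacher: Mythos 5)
Your proposal is correct and follows exactly the route the paper intends: the paper leaves the proof to the reader but explicitly indicates the same two-step argument, first transferring the mixed $\Sigma_n$-equivariant model structure to $\capR_0-\sSet_*^{\Sigma_n}$ along the free-forgetful adjunction and then passing to the projective model structure on $(\Sigma_r^\op\times G)$-shaped diagrams. Your verification of the transfer hypotheses (smallness via $U$ preserving colimits, and $\capR_0\Smash-$ sending acyclic cofibrations to acyclic monomorphisms since cofibrations in the mixed structure are the monomorphisms and all pointed simplicial sets are cofibrant) is the right content for the step the paper glosses over.
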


\begin{defn}
\label{defn:overline_capR}
Define $\ol{\capR}\in\ModR$ such that $\ol{\capR}_n:=\capR_n$ for $n\geq 1$ and $\ol{\capR}_0:=*$. The structure maps are the naturally occurring ones such that there exists a map of $\capR$-modules $\function{i}{\ol{\capR}}{\capR}$ satisfying $i_n=\id$ for each $n\geq 1$.
\end{defn}

The following calculation, which follows easily from \cite[2.9]{Harper_Spectra}, will be needed for characterizing flat stable cofibrations in $\SymSeq^G$.

\begin{calculation}
\label{ex:calculation}
Let $G$ be a finite group. Let $m,p\geq 0$, $H\subsetof \Sigma_m$ a subgroup, and $K$ a pointed simplicial set. Recall from \eqref{eq:adjunctions_stable_flat} the functors $G_p$ and $G^H_m$. Define $X:=G\cdot G_p(\capR\tensor G^H_m K)\in\SymSeq^G$. Here, $X$ is obtained by applying the indicated functors in \eqref{eq:adjunctions_stable_flat} to $K$. Then for $r=p$ we have
\begin{align*}
(\ol{\capR}\Smash X[\mathbf{r}])_n
  &\Iso
  \left\{
    \begin{array}{rl}
    G\cdot\bigl(\Sigma_n\cdot_{\Sigma_{n-m}\times\Sigma_m}\ol{\capR}_{n-m}
    \Smash(\Sigma_m/H\cdot K)\bigr)\cdot\Sigma_p & \text{for $n>m$,}\\
    *&\text{for $n\leq m$,}
    \end{array}
  \right.\\
  X[\mathbf{r}]_n 
  &\Iso
  \left\{
    \begin{array}{rl}
    G\cdot\bigl(\Sigma_n\cdot_{\Sigma_{n-m}\times\Sigma_m}\capR_{n-m}
    \Smash(\Sigma_m/H\cdot K)\bigr)\cdot\Sigma_p&\text{for $n>m$,}\\
    G\cdot\bigl(\capR_0\Smash(\Sigma_m/H\cdot K)\bigr)\cdot\Sigma_p&\text{for $n=m$,}\\
    *&\text{for $n<m$,}
    \end{array}
  \right.
\end{align*}
and for $r\neq p$ we have $X[\mathbf{r}]=*=\ol{\capR}\Smash X[\mathbf{r}]$.
\end{calculation}

The following characterization of flat stable cofibrations in $\SymSeq^G$ is motivated by the characterization given in Schwede \cite{Schwede_book_project} of flat stable cofibrations in $\ModR$. It improves the corresponding characterization given in \cite[6.6]{Harper_Spectra} from the context of $(\Spectra,\tensor_S,S)$ to the more general context of $(\ModR,\Smash,\capR)$.

\begin{prop}
\label{prop:cofibration_characterization}
Let $G$ be a finite group. 
\begin{itemize}
\item[(a)] A map $\function{f}{X}{Y}$ in $\SymSeq^G$ with the flat stable model structure is a cofibration if and only if the induced maps
\begin{align*}
  X[\mathbf{r}]_0\rarrow Y[\mathbf{r}]_0, &\quad r\geq 0,\ n=0,\\
  (\ol{\capR}\Smash Y[\mathbf{r}])_n\amalg_{(\ol{\capR}\Smash X[\mathbf{r}])_n}X[\mathbf{r}]_n\rarrow Y[\mathbf{r}]_n, &\quad r\geq 0,\ n\geq 1,
\end{align*}
are cofibrations in $\bigl(\capR_0-\sSet^{\Sigma_n}_*\bigr)^{\Sigma_r^\op \times G}$ with the model structure in \ref{prop:mixed_diagram_model_structure}.
\item[(b)] A map $\function{f}{X}{Y}$ in $\SymSeq^G$ with the positive flat stable model structure is a cofibration if and only if the maps
$
  X[\mathbf{r}]_0\rarrow Y[\mathbf{r}]_0
$, 
$r\geq 0$, are isomorphisms, and the induced maps
\begin{align*}
  (\ol{\capR}\Smash Y[\mathbf{r}])_n\amalg_{(\ol{\capR}\Smash X[\mathbf{r}])_n}X[\mathbf{r}]_n\rarrow Y[\mathbf{r}]_n, &\quad r\geq 0,\ n\geq 1,
\end{align*}
are cofibrations in $\bigl(\capR_0-\sSet^{\Sigma_n}_*\bigr)^{\Sigma_r^\op \times G}$ with the model structure in \ref{prop:mixed_diagram_model_structure}. 
\end{itemize} 
\end{prop}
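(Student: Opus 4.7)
The plan is to mimic the proof of \cite[6.6]{Harper_Spectra}, replacing the ambient monoidal model category $(\Spectra,\tensor_S,S)$ by $(\ModR,\Smash,\capR)$ throughout, and using as base case the characterization of flat stable cofibrations in $\ModR$ in terms of the augmentation $\ol{\capR}$ from Definition \ref{defn:overline_capR} (this is the $\capR$-module analog of Schwede's characterization for symmetric spectra, obtained simply by replacing $\ol{S}$ by $\ol{\capR}$). The strategy is the standard one for cofibrantly generated model categories: verify that the stated conditions hold on the generating cofibrations of $\SymSeq^G$, check that they are closed under retracts, pushouts, and (possibly transfinite) compositions, and establish the converse that every map satisfying the conditions is a cofibration.

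For the ``only if'' direction of part (a), the generating flat stable cofibrations of $\SymSeq^G$ are obtained, via the left adjoint chain in \eqref{eq:adjunctions_stable_flat}, from the inclusions $\partial\Delta[k]_+\rarrow\Delta[k]_+$; each thus has the form $G\cdot G_p(\capR\tensor G_m^H \partial\Delta[k]_+) \rarrow G\cdot G_p(\capR\tensor G_m^H \Delta[k]_+)$. Applying Calculation \ref{ex:calculation} to both source and target, one reads off that for such a generator the level-$0$ map $X[\mathbf{r}]_0\rarrow Y[\mathbf{r}]_0$ and each pushout-product map at level $n\geq 1$ reduces either to an isomorphism or, at the essentially unique nontrivial values of $r$ and $n$, to a standard generating cofibration of $\bigl(\capR_0-\sSet_*^{\Sigma_n}\bigr)^{\Sigma_r^\op\times G}$. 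Because evaluation at $\mathbf{r}$ and at simplicial level $n$ preserves colimits, and because $\ol{\capR}\Smash-$ is a left adjoint and hence also preserves colimits, the class of maps satisfying the stated conditions is closed under pushouts, transfinite composition, and retracts; this yields the forward implication.

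For the converse, given $f\colon X\rarrow Y$ satisfying the conditions, apply the small object argument to factor $f=pj$ with $j\colon X\rarrow Z$ a relative cell complex of generating flat stable cofibrations and $p\colon Z\rarrow Y$ an acyclic fibration. By the forward direction, $j$ itself satisfies the stated conditions. Comparing these conditions for $f$ and $j$ level by level, and using that acyclic fibrations in $\bigl(\capR_0-\sSet_*^{\Sigma_n}\bigr)^{\Sigma_r^\op\times G}$ are detected levelwise as trivial Kan fibrations compatible with the $\capR_0$-action and the equivariance, one solves lifting problems inductively up the levels to construct a section of $p$, exhibiting $f$ as a retract of $j$ and hence as a flat stable cofibration. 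Part (b) follows from the same argument applied to the positive flat stable generators, i.e.\ those with $m\geq 1$: by Calculation \ref{ex:calculation} such generators have $X[\mathbf{r}]_0=Y[\mathbf{r}]_0=*$, so closure under pushouts and transfinite composition immediately forces the isomorphism condition at $n=0$ stated in (b), while the pushout-product characterization at levels $n\geq 1$ is unchanged from (a). The main obstacle is the construction of the section of $p$ in the converse step, which requires a careful level-by-level and simplicial-degree-by-degree argument matching the pushout-product structure of $f$ with the trivial-fibration structure of $p$; the details parallel those of \cite[proof of 6.6]{Harper_Spectra} and go through with $\ol{S}$ replaced by $\ol{\capR}$ and $\sSet_*^{\Sigma_n}$-modules over $S_0$ replaced by $\capR_0$-modules in $\sSet_*^{\Sigma_n}$.
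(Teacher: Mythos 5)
Your proposal is correct and takes essentially the same route as the paper: the paper's proof simply defers to \cite[proof of 6.6]{Harper_Spectra} with $(\Spectra,\tensor_S,S)$ replaced by $(\ModR,\Smash,\capR)$ and with Proposition \ref{prop:mixed_diagram_model_structure} and Calculation \ref{ex:calculation} substituted for their symmetric-spectra counterparts, which is precisely the argument you have fleshed out. Your identification of the key ingredients (the generating cofibrations read off via Calculation \ref{ex:calculation}, closure of the pushout-corner conditions under cobase change, retracts and transfinite composition, and the retract/lifting argument for the converse) matches the intended proof.
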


\begin{proof}
This is verified exactly as in \cite[proof of 6.6]{Harper_Spectra}, except using $(\ModR,\Smash,\capR)$, Proposition \ref{prop:mixed_diagram_model_structure} and Calculation \ref{ex:calculation} instead of $(\Spectra,\tensor_S,S)$, \cite[6.3]{Harper_Spectra} and \cite[6.5]{Harper_Spectra}, respectively.
\end{proof}

\begin{proof}[Proof of Proposition \ref{prop:good_properties}]
It suffices to consider the case of symmetric sequences. Consider part (b). This is verified exactly as in \cite[proof of 4.29(b)]{Harper_Spectra}, except using $(\ModR,\Smash,\capR)$ and the map $g_*$ obtained by applying the indicated functors in \eqref{eq:adjunctions_stable_flat}, instead of $(\Spectra,\tensor_S,S)$ and the map $g_*$ obtained by applying the indicated functors in \cite[(4.1)]{Harper_Spectra}, respectively. Consider part (a). This is verified exactly as in \cite[proof of 4.29(a)]{Harper_Spectra}, except using $(\ModR,\Smash,\capR)$ instead of $(\Spectra,\tensor_S,S)$.
\end{proof}

\begin{prop}
\label{prop:cofibrations_to_mono}
Let $G$ be a finite group. If $B\in\ModR^{G^\op}$ (resp. $B\in\SymSeq^{G^\op}$), then the functor
$
  \functor{B\Smash_G -}{\ModR^G}{\ModR}
$ 
(resp. 
$  \functor{B\tensorcheck_G -}{\SymSeq^G}{\SymSeq}
$) sends cofibrations in $\ModR^G$ (resp. $\SymSeq^G$) with the flat stable model structure to monomorphisms.
\end{prop}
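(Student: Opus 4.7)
The plan is to reduce the statement to generating cofibrations and then apply Proposition \ref{prop:sending_cofibrations_to_monos}. First, observe that the functor $B\Smash_G-$ (resp. $B\tensorcheck_G-$) is a left adjoint---its right adjoint is given by the appropriate mapping object with $G$-action---and therefore preserves all small colimits. Since the class of monomorphisms in $\ModR$ (resp. $\SymSeq$) is closed under pushouts, retracts, and (possibly transfinite) compositions, and since every cofibration in $\ModR^G$ (resp. $\SymSeq^G$) with the flat stable model structure is a retract of a (possibly transfinite) composition of pushouts of the generating cofibrations, it suffices to verify the assertion on generating cofibrations.

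Second, I would identify the generating cofibrations in $\ModR^G$ (resp. $\SymSeq^G$) with the flat stable model structure. Using the adjunction \eqref{eq:adjunctions_stable_flat} of $(G\cdot-,U)$, these take the form $G\cdot i$ where $i$ is a generating cofibration in $\ModR$ (resp. $\SymSeq$) with the flat stable model structure. Moreover, for any $X\in\ModR$ (resp. $X\in\SymSeq$), there is a natural isomorphism $B\Smash_G(G\cdot X)\Iso B\Smash X$ (resp. $B\tensorcheck_G(G\cdot X)\Iso B\tensorcheck X$), because $G\cdot X$ is the free $G$-object on $X$ and so taking $B\Smash_G-$ (resp. $B\tensorcheck_G-$) collapses the free $G$-action. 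Thus the task is reduced to showing that $B\Smash i$ (resp. $B\tensorcheck i$) is a monomorphism in $\ModR$ (resp. $\SymSeq$) for each such generating cofibration $i$.

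Third, in the $\ModR$ case this is immediate from Proposition \ref{prop:sending_cofibrations_to_monos}, which asserts precisely that smashing with any $\capR$-module sends flat stable cofibrations to monomorphisms. In the $\SymSeq$ case, I would use the calculation \eqref{eq:tensor_check_calc} to rewrite $(B\tensorcheck i)[\mathbf{r}]$ objectwise as a coproduct of maps of the form $B[\mathbf{r_1}]\Smash i[\mathbf{r_2}]\cdot_{\Sigma_{r_1}\times\Sigma_{r_2}}\Sigma_r$; since the underlying map $i[\mathbf{r_2}]$ is a flat stable cofibration in $\ModR$, Proposition \ref{prop:sending_cofibrations_to_monos} makes each summand a monomorphism, and coproducts of monomorphisms of $\capR$-modules are monomorphisms.

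The main subtlety I would expect is not in any single step but rather in verifying cleanly that the natural isomorphism $B\Smash_G(G\cdot X)\Iso B\Smash X$ (resp. $B\tensorcheck_G(G\cdot X)\Iso B\tensorcheck X$) carries generating cofibrations of $\ModR^G$ (resp. $\SymSeq^G$) precisely to the cofibrations in $\ModR$ (resp. $\SymSeq$) to which Proposition \ref{prop:sending_cofibrations_to_monos} applies---in particular, that the identification respects the required $\Sigma_n$-equivariance so that the cited proposition is applicable. Once this bookkeeping is in place, the proof is essentially a formal consequence of Proposition \ref{prop:sending_cofibrations_to_monos} together with the fact that $B\Smash_G-$ preserves colimits.
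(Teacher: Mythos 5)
Your argument is correct and is essentially the route the paper takes: the paper's proof simply defers to \cite[proof of 6.11]{Harper_Spectra}, which likewise reduces to the generating cofibrations $G\cdot i$ via colimit-preservation of $B\Smash_G-$ (resp. $B\tensorcheck_G-$) and closure of monomorphisms under cobase change, transfinite composition, and retracts, identifies $B\Smash_G(G\cdot i)\Iso B\Smash i$, and then invokes the smash-with-a-flat-cofibration-is-a-monomorphism fact recorded here as Proposition \ref{prop:sending_cofibrations_to_monos}. The equivariance worry in your last paragraph is unfounded: after the cancellation $B\Smash_G(G\cdot X)\Iso B\Smash X$ only the underlying object of $B$ and the underlying flat stable cofibration $i$ in $\ModR$ matter, which is exactly the setting of Proposition \ref{prop:sending_cofibrations_to_monos}.
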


\begin{proof}
It suffices to consider the case of symmetric sequences. This is verified exactly as in \cite[proof of 6.11]{Harper_Spectra}, except using $(\ModR,\Smash,\capR)$ and the map $g_*$ obtained by applying the indicated functors in \eqref{eq:adjunctions_stable_flat}, instead of $(\Spectra,\tensor_S,S)$ and the map $g_*$ obtained by applying the indicated functors in \cite[(4.1)]{Harper_Spectra}, respectively.
\end{proof}

The following calculation, which follows easily from \cite[2.9]{Harper_Spectra} and \eqref{eq:tensor_check_calc}, will be needed in the proof of Proposition \ref{prop:cofibration} below.

\begin{calculation}
\label{calculation_example}
Let $k,m,p\geq 0$, $H\subsetof \Sigma_m$ a subgroup, and $t\geq 1$. Let the map $\function{g}{\partial\Delta[k]_+}{\Delta[k]_+}$ be a generating cofibration for $\sSet_*$ and define $X\rarrow Y$ in $\SymSeq$ to be the induced map
$
\xymatrix@1{
  g_*\colon G_p(\capR\tensor G^H_m\partial\Delta[k]_+)\ar[r] & 
  G_p(\capR\tensor G^H_m\Delta[k]_+)
}
$.
Here, the map $g_*$ is obtained by applying the indicated functors in \eqref{eq:adjunctions_stable_flat} to the map $g$. For $r=tp$ we have the calculation
\begin{align*}
  \bigl((Y^{\tensorcheck t})[\mathbf{r}]\bigr)_n 
  &\Iso
  \left\{
    \begin{array}{rl}
    \Sigma_n\cdot_{\Sigma_{n-tm}\times H^{\times t}}\capR_{n-tm}
    \Smash(\Delta[k]^{\times t})_+\cdot\Sigma_{tp}&\text{for $n>tm$,}\\
    \Sigma_{tm}\cdot_{H^{\times t}}
    \capR_0\Smash(\Delta[k]^{\times t})_+\cdot\Sigma_{tp}&\text{for $n=tm$,}\\
    *&\text{for $n<tm$,}
    \end{array}
  \right.\\
  \bigl(\ol{\capR}\Smash (Y^{\tensorcheck t})[\mathbf{r}]\bigr)_n
  &\Iso
  \left\{
    \begin{array}{rl}
    \Sigma_n\cdot_{\Sigma_{n-tm}\times H^{\times t}}\ol{\capR}_{n-tm}
    \Smash(\Delta[k]^{\times t})_+\cdot\Sigma_{tp}&\text{for $n>tm$,}\\
    *&\text{for $n\leq tm$,}
    \end{array}
  \right.
  \end{align*}
  \begin{align*}
  \bigl(Q_{t-1}^t[\mathbf{r}]\bigr)_n 
  &\Iso
  \left\{
    \begin{array}{rl}
    \Sigma_n\cdot_{\Sigma_{n-tm}\times H^{\times t}}\capR_{n-tm}
    \Smash\partial(\Delta[k]^{\times t})_+\cdot\Sigma_{tp}&\text{for $n>tm$,}\\
    \Sigma_{tm}\cdot_{H^{\times t}}
    \capR_0\Smash\partial(\Delta[k]^{\times t})_+\cdot\Sigma_{tp}&\text{for $n=tm$,}\\
    *&\text{for $n<tm$,}
    \end{array}
  \right.\\
  \bigl(\ol{\capR}\Smash Q_{t-1}^t[\mathbf{r}]\bigr)_n
  &\Iso
  \left\{
    \begin{array}{rl}
    \Sigma_n\cdot_{\Sigma_{n-tm}\times H^{\times t}}\ol{\capR}_{n-tm}
    \Smash\partial(\Delta[k]^{\times t})_+\cdot\Sigma_{tp}&\text{for $n>tm$,}\\
    *&\text{for $n\leq tm$,}
    \end{array}
  \right.
  \end{align*}
and for $r\neq tp$ we have $(Y^{\tensorcheck t})[\mathbf{r}]=*=\ol{\capR}\Smash (Y^{\tensorcheck t})[\mathbf{r}]$ and $Q_{t-1}^t[\mathbf{r}]=*=\ol{\capR}\Smash Q_{t-1}^t[\mathbf{r}]$.
\end{calculation}

\begin{proof}[Proof of Proposition \ref{prop:cofibration}]
It suffices to consider the case of symmetric sequences. Consider part (a). This is verified exactly as in \cite[proof of 4.28(a)]{Harper_Spectra}, except using $(\ModR,\Smash,\capR)$, the map $g_*$ obtained by applying the indicated functors in \eqref{eq:adjunctions_stable_flat}, Proposition \ref{prop:cofibration_characterization}, and Calculation \ref{calculation_example} instead of $(\Spectra,\tensor_S,S)$ the map $g_*$ obtained by applying the indicated functors in \cite[(4.1)]{Harper_Spectra}, \cite[6.6 and 6.15]{Harper_Spectra}, respectively. The acyclic cofibration assertion follows immediately from \cite[7.19]{Harper_Modules}. Consider part (b). This is verified exactly as in \cite[proof of 4.28(b)]{Harper_Spectra}, except using $(\ModR,\Smash,\capR)$ and Proposition \ref{prop:cofibrations_to_mono} instead of $(\Spectra,\tensor_S,S)$ and \cite[6.11]{Harper_Spectra}, respectively.
\end{proof}

The following will be needed in other sections of this paper.

\begin{prop}\label{prop:generating_cofibration}
Let $t\geq 1$. If $\function{i}{X}{Y}$ is a generating cofibration in $\ModR$ (resp. $\SymSeq$) with the positive flat stable model structure, then $Q_{t-1}^t\rarrow Y^{\wedge t}$ (resp. $Q_{t-1}^t\rarrow Y^{\tensorcheck t}$) is a cofibration between cofibrant objects in $\ModR^{\Sigma_t}$ (resp. $\SymSeq^{\Sigma_t}$) with the positive flat stable model structure.
\end{prop}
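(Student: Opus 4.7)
It suffices to treat the case of symmetric sequences; the $\capR$-module case reduces to this by regarding $\ModR$ as the subcategory of symmetric sequences concentrated at level $0$, or equivalently by running the same argument with $\SymSeq$ replaced by $\ModR$ throughout. Let $\function{i}{X}{Y}$ be a generating cofibration in $\SymSeq$ with the positive flat stable model structure. By Definition \ref{defn:lets_define_flat_model_structure}(b) together with the adjunctions in \eqref{eq:adjunctions_stable_flat}, we may assume $i$ is of the form $\function{g_*}{G_p(\capR\tensor G^H_m\partial\Delta[k]_+)}{G_p(\capR\tensor G^H_m\Delta[k]_+)}$ for some $m\geq 1$, $p,k\geq 0$, and subgroup $H\subset\Sigma_m$. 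The overall plan is to apply the characterization of positive flat stable cofibrations in $\SymSeq^{\Sigma_t}$ given by Proposition \ref{prop:cofibration_characterization}(b), together with the explicit computations in Calculation \ref{calculation_example}.

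First I would verify that both $Y^{\tensorcheck t}$ and $Q^t_{t-1}$ are cofibrant in $\SymSeq^{\Sigma_t}$, by applying \ref{prop:cofibration_characterization}(b) to the maps $\emptyset\rarrow Y^{\tensorcheck t}$ and $\emptyset\rarrow Q^t_{t-1}$. Since $m\geq 1$ implies $tm\geq 1$, Calculation \ref{calculation_example} immediately gives the level-$0$ vanishing $Y^{\tensorcheck t}[\mathbf{r}]_0=*=Q^t_{t-1}[\mathbf{r}]_0$ for every $r\geq 0$. The pushout corner condition is non-trivial only at $r=tp$ with $n\geq tm$; for $n>tm$ one has $\ol{\capR}_{n-tm}=\capR_{n-tm}$, making the comparison map an isomorphism, and for $n=tm$ the values $\Sigma_{tm}\cdot_{H^{\times t}}\capR_0\Smash(\Delta[k]^{\times t})_+\cdot\Sigma_{tp}$ and $\Sigma_{tm}\cdot_{H^{\times t}}\capR_0\Smash\partial(\Delta[k]^{\times t})_+\cdot\Sigma_{tp}$ are cofibrant in $(\capR_0-\sSet^{\Sigma_{tm}}_*)^{\Sigma_{tp}^\op\times\Sigma_t}$, since each is built from cofibrant pointed simplicial sets by smashing with $\capR_0$ and applying induction operations that preserve cofibrancy.

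Next I would apply Proposition \ref{prop:cofibration_characterization}(b) to the map $Q^t_{t-1}\rarrow Y^{\tensorcheck t}$ itself. The level-$0$ condition is trivially satisfied (both sides are $*$). For the pushout corner condition, the cases $r\neq tp$, $r=tp$ with $n<tm$, and $r=tp$ with $n>tm$ are all handled either by both sides vanishing or by the $\ol{\capR}$-smash terms matching, so that the corner map is an isomorphism; the only substantive case is $r=tp$ with $n=tm$. Here, by Calculation \ref{calculation_example}, the pushout corner reduces to the map
\begin{align*}
\Sigma_{tm}\cdot_{H^{\times t}}\capR_0\Smash\partial(\Delta[k]^{\times t})_+\cdot\Sigma_{tp}
\rarrow
\Sigma_{tm}\cdot_{H^{\times t}}\capR_0\Smash(\Delta[k]^{\times t})_+\cdot\Sigma_{tp}
\end{align*}
obtained from the $t$-fold pushout product $\partial(\Delta[k]^{\times t})_+\rarrow(\Delta[k]^{\times t})_+$ of pointed simplicial sets by smashing with $\capR_0$ and then applying $\Sigma_{tm}\cdot_{H^{\times t}}(-)\cdot\Sigma_{tp}$. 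Since $(\sSet_*,\Smash,S^0)$ is a monoidal model category, this pushout product is a monomorphism in $\sSet_*^{\Sigma_t}$ (with $\Sigma_t$ permuting the tensor factors), hence a cofibration in the mixed $\Sigma_t$-equivariant model structure, and the subsequent inductions preserve this property.

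The hard part, while conceptually routine, is the careful bookkeeping of the several $\Sigma$-actions and $H$-actions interacting at the key case $r=tp$, $n=tm$: one must verify that smashing with $\capR_0$ over $H^{\times t}$, inducing up to $\Sigma_{tm}$, and freely adjoining a $\Sigma_{tp}^\op$-action sends cofibrations of pointed $\Sigma_t$-simplicial sets to cofibrations in $(\capR_0-\sSet^{\Sigma_{tm}}_*)^{\Sigma_{tp}^\op\times\Sigma_t}$ as described in Proposition \ref{prop:mixed_diagram_model_structure}. Once this functoriality is in hand, all four cases of the pushout corner analysis close up, and Proposition \ref{prop:cofibration_characterization}(b) delivers both cofibrancy assertions.
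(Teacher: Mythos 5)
Your proposal is correct and follows essentially the same route as the paper, which deduces the statement from the proof of Proposition \ref{prop:cofibration} (itself resting on the characterization of (positive) flat stable cofibrations in Proposition \ref{prop:cofibration_characterization} and the explicit Calculation \ref{calculation_example}). The only cosmetic difference is that you verify the characterization directly for the single map $Q^t_{t-1}\rarrow Y^{\tensorcheck t}$ using the computed values of $Q^t_{t-1}$, whereas the paper extracts the claim from the step-by-step filtration $Q^t_0\rarrow\dotsb\rarrow Q^t_t$ appearing in that earlier proof; the inputs and case analysis are identical, and you correctly isolate the equivariant bookkeeping at $r=tp$, $n=tm$ as the one substantive verification.
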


\begin{proof}
It suffices to consider the case of symmetric sequences. This follows immediately from the proof of Proposition \ref{prop:cofibration}.
\end{proof}

\section{Operads in chain complexes over a commutative ring}
\label{sec:chain_complexes_over_a_commutative_ring}

The purpose of this section is to observe that the main results of this paper remain true in the context of unbounded chain complexes over a commutative ring, provided that the desired model category structures exist on algebras (resp. left modules) over operads $\capO$ and $\tau_k\capO$. Since the constructions and proofs of the theorems are essentially identical to the arguments above in the context of $\capR$-modules, modulo the obvious changes, the arguments are left to the reader.

\begin{assumption}
\label{assumption:commutative_ring_notation}
From now on in this section, we assume that $\unit$ is any commutative ring.\end{assumption}

Denote by $(\Chaincx_\unit,\tensor,\unit)$ the closed symmetric monoidal category of unbounded chain complexes over $\unit$ \cite{Hovey, MacLane_homology}.

\begin{homotopical_assumption}
\label{HomotopicalAssumption_chain_complexes}
If $\capO$ is an operad in $\Chaincx_\unit$, assume that the following model structure exists on $\Alg_{\tilde{\capO}}$ (resp. $\Lt_{\tilde{\capO}}$) for $\tilde{\capO}=\capO$ and $\tilde{\capO}=\tau_k\capO$ for each $k\geq 1$: the model structure on $\Alg_{\tilde\capO}$ (resp. $\Lt_{\tilde{\capO}}$) has weak equivalences the homology isomorphisms (resp. objectwise homology isomorphisms) and fibrations the maps that are dimensionwise surjections (resp. objectwise dimensionwise surjections).
\end{homotopical_assumption}

\begin{cofibrancy}
\label{CofibrancyCondition_chain_complexes}
If $\capO$ is an operad in $\Chaincx_\unit$, consider the unit map $\function{\eta}{I}{\capO}$ of the operad $\capO$ and assume that  $I[\mathbf{r}]\rarrow\capO[\mathbf{r}]$ is a cofibration (\cite[3.1]{Harper_Bar}) between cofibrant objects in $\Chaincx_\unit^{\Sigma_r^\op}$ for each $r\geq 0$.
\end{cofibrancy}

If $\unit$ is any field of characteristic zero, then Homotopical Assumption \ref{HomotopicalAssumption_chain_complexes} and Cofibrancy Condition \ref{CofibrancyCondition_chain_complexes} are satisfied by every operad in $\Chaincx_\unit$ (see \cite{Harper_Modules, Hinich}). In the case of algebras over operads, if $\unit$ is any commutative ring and $\capO'$ is any non-$\Sigma$ operad in $\Chaincx_\unit$, then it is proved in \cite{Harper_Modules, Hinich} that the corresponding operad $\capO=\capO'\cdot\Sigma$ satisfies Homotopical Assumption \ref{HomotopicalAssumption_chain_complexes}.

The following is a commutative rings version of Definitions \ref{defn:homotopy_completion} and \ref{defn:quillen_homology}.

\begin{defn}
\label{defn:homotopy_completion_chain_complexes}
Let $\capO$ be an operad in $\Chaincx_\unit$ such that $\capO[\mathbf{0}]=*$. Assume that $\capO$ satisfies Homotopical Assumption \ref{HomotopicalAssumption_chain_complexes}. Let $X$ be an $\capO$-algebra (resp. left $\capO$-module). The \emph{homotopy completion} $X^\hwedge$ of $X$ is the $\capO$-algebra (resp. left $\capO$-module) defined by 
$
  X^\hwedge:=\holim^\AlgO_k \bigl(\tau_k\capO\circ_\capO (X^c)\bigr)
$ (resp. 
$
  X^\hwedge:=\holim^\LtO_k \bigl(\tau_k\capO\circ_\capO X^c\bigr)
$)
the homotopy limit of the completion tower of the functorial cofibrant replacement $X^c$ of $X$ in $\AlgO$ (resp. $\LtO$). The \emph{Quillen homology complex} (or Quillen homology object) $\QQ(X)$ of $X$ is the $\capO$-algebra $\tau_1\capO\circ^\HH_\capO (X)$ (resp. left $\capO$-module $\tau_1\capO\circ^\HH_\capO X$).
\end{defn}

The following is a commutative rings version of Theorem \ref{thm:finiteness}.

\begin{thm}
\label{thm:finiteness_commutative_rings}
Let $\capO$ be an operad in $\Chaincx_\unit$ such that $\capO[\mathbf{0}]$ is trivial. Assume that $\capO$ satisfies Homotopical Assumption \ref{HomotopicalAssumption_chain_complexes} and Cofibrancy Condition \ref{CofibrancyCondition_chain_complexes}. Let $X$ be a $0$-connected $\capO$-algebra (resp. left $\capO$-module) and assume that $\capO$ is $(-1)$-connected and $H_k\capO[\mathbf{r}],U\unit$ are finitely generated abelian groups for every $k,r$.
\begin{itemize}
\item[(a)] If the Quillen homology groups $H_k\QQ(X)$ (resp. $H_k\QQ(X)[\mathbf{r}]$) are finite for every $k,r$, then the homology groups $H_k X$ (resp. $H_k X[\mathbf{r}]$) are finite for every $k,r$. 
\item[(b)] If the Quillen homology groups $H_k\QQ(X)$ (resp. $H_k\QQ(X)[\mathbf{r}]$) are finitely generated abelian groups for every $k,r$, then the homology groups $H_k X$ (resp. $H_k X[\mathbf{r}]$) are finitely generated abelian groups for every $k,r$. 
\end{itemize}
Here, $U$ denotes the forgetful functor from commutative rings to abelian groups.
\end{thm}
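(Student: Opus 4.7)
The plan is to mirror the proof of Theorem \ref{thm:finiteness} essentially verbatim, replacing homotopy groups $\pi_*$ by homology groups $H_*$ and the smash product of $\capR$-modules by the tensor product of chain complexes over $\unit$. As in the spectra case, it suffices to treat left $\capO$-modules; the case of $\capO$-algebras follows by viewing an $\capO$-algebra as a left $\capO$-module concentrated at $0$. Using the chain-complex analog of Theorem \ref{thm:comparing_homotopy_completion_towers} together with the analog of Proposition \ref{prop:replacement_of_operads}, and noting that the forgetful functors commute with homotopy limits (Proposition \ref{prop:forgetful_functor_commutes_with_holim}), we may replace $\capO$ by a weakly equivalent operad satisfying Cofibrancy Condition \ref{CofibrancyCondition_chain_complexes} without disturbing the finiteness hypothesis on Quillen homology. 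We are then reduced to the special case of a cofibrant left $\capO$-module $X$ such that $\tau_1\capO\circ_\capO X$ is $0$-connected with $H_k(\tau_1\capO\circ_\capO X)[\mathbf{r}]$ finite (resp. finitely generated) for every $k,r$.

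The next step is induction up the homotopy completion tower. For each $k\geq 2$, the chain-complex analogs of Propositions \ref{prop:pushout_diagram_for_bar_construction_tower} and \ref{prop:fattened_version_of_tower} give a homotopy cofiber sequence
\begin{align*}
  |\BAR(i_k\capO,\capO,X)|\rarrow
  |\BAR(\tau_k\capO,\capO,X)|\rarrow
  |\BAR(\tau_{k-1}\capO,\capO,X)|
\end{align*}
in $\SymSeq$, computing the $k$-th layer of the tower. Via Proposition \ref{prop:refined_bar_construction_calculation_for_homotopy_fiber} (whose proof, being an exercise in commuting derived functors and homotopy colimits of simplicial objects, transports to $\Chaincx_\unit$), this layer is equivalent to $|\BAR(i_k\capO,\tau_1\capO,|\BAR(\tau_1\capO,\capO,X)|)|$. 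Decoding the simplicial structure and using the objectwise form $\capO[\mathbf{k}]\tensorcheck_{\Sigma_k}(-)^{\tensorcheck k}$ of $i_k\capO\circ-$ reduces the finiteness question for each layer to a finiteness question about (i) realizations of simplicial chain complexes with levelwise finite (resp. finitely generated) homology, and (ii) derived tensor products over group-algebras and over $\tau_1\capO$ of chain complexes with such finiteness properties.

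Both ingredients are already at hand in the chain complex setting: finiteness under realization is the chain complex analog of Proposition \ref{prop:realzn_preserves_finiteness_properties}, proved by the homology spectral sequence of a simplicial chain complex (Proposition \ref{prop:homotopy_spectral_sequence_chain_complexes}), and finiteness under derived tensor product over a monoid is precisely Proposition \ref{prop:finiteness_derived_tensor_uses_bar_construction}, which is already stated and proved in the chain complex context (over $\ZZ$; the general commutative ring $\unit$ case follows by the same bar-construction argument combined with the Eilenberg--Moore spectral sequence of Proposition \ref{prop:eilenberg_moore_chain_complexes}). Therefore $H_k$ of the $k$-th layer inherits finiteness (resp. finite generation) levelwise, and induction on $k$ using the cofiber sequence above propagates this property to each $H_k|\BAR(\tau_k\capO,\capO,X)|$. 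Strong convergence of the homotopy completion tower (the chain complex analog of Theorem \ref{MainTheorem}(a), whose proof is the argument given in Section \ref{sec:homotopical_analysis_of_the_completion_tower} using only the connectivity estimates of Proposition \ref{prop:connectivity_estimates}, which remain valid in $\Chaincx_\unit$) then ensures that for each fixed $i$ the homology $H_iX^\hwedge$ is a finite (resp. finitely generated) abelian group, and the equivalence $X\wequiv X^\hwedge$ for $0$-connected $X$ finishes the proof.

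The main obstacle is not conceptual but bookkeeping: one must verify that each auxiliary result invoked above (the layer formula \ref{thm:calculating_fiber_of_induced_map}, the cofibrancy of $|\BAR(\tau_1\capO,\capO,X)|$ as a $\tau_1\capO$-algebra or left module, the strong convergence argument, and the analog of Proposition \ref{prop:useful_finiteness_properties_dervied_smash}) goes through in $\Chaincx_\unit$. Most of these were formulated in the general monoidal-model-category setting of Section \ref{sec:homotopical_analysis_forgetful_functors} and apply directly under Homotopical Assumption \ref{HomotopicalAssumption_chain_complexes} and Cofibrancy Condition \ref{CofibrancyCondition_chain_complexes}; the remainder reduce, via the respective spectral sequences, to Proposition \ref{prop:finiteness_derived_tensor_uses_bar_construction}, which is the central technical input and has already been established in the chain complex framework.
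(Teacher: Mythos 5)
Your proposal matches the paper's intent exactly: the paper gives no separate proof of this theorem, stating only that the arguments in Section \ref{sec:chain_complexes_over_a_commutative_ring} are ``essentially identical to the arguments above in the context of $\capR$-modules, modulo the obvious changes,'' and your transport of the proof of Theorem \ref{thm:finiteness} --- replacing $\pi_*$ by $H_*$, running the same induction up the homotopy completion tower via the layer identifications, and substituting Propositions \ref{prop:homotopy_spectral_sequence_chain_complexes}, \ref{prop:eilenberg_moore_chain_complexes}, and \ref{prop:finiteness_derived_tensor_uses_bar_construction} for their spectral counterparts --- is precisely the intended argument. No gaps.
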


The following is a commutative rings version of Theorem \ref{thm:hurewicz}.

\begin{thm}
\label{thm:hurewicz_commutative_rings}
Let $\capO$ be an operad in $\Chaincx_\unit$ such that $\capO[\mathbf{0}]$ is trivial. Assume that $\capO$ satisfies Homotopical Assumption \ref{HomotopicalAssumption_chain_complexes} and Cofibrancy Condition \ref{CofibrancyCondition_chain_complexes}. Let $X$ be a $0$-connected $\capO$-algebra (resp. left $\capO$-module), $n\geq 0$, and assume that $\capO$ is $(-1)$-connected.
\begin{itemize}
\item[(a)] The Quillen homology complex $\QQ(X)$ is $n$-connected if and only if $X$ is $n$-connected.
\item[(b)] If the Quillen homology complex $\QQ(X)$ is $n$-connected, then the natural Hurewicz map $H_k X\rarrow H_k\QQ(X)$ is an isomorphism for $k\leq 2n+1$ and a surjection for $k=2n+2$.
\end{itemize}
\end{thm}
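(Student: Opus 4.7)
The plan is to follow the proof of Theorem \ref{thm:hurewicz} verbatim, after replacing the spectral infrastructure with its chain complex analog. Since Homotopical Assumption \ref{HomotopicalAssumption_chain_complexes} and Cofibrancy Condition \ref{CofibrancyCondition_chain_complexes} are in force on $\capO$, all of the point-set level machinery of Sections \ref{sec:homotopical_analysis_of_the_completion_tower}--\ref{sec:homotopical_analysis_bar_constructions} carries over to $(\Chaincx_\unit,\tensor,\unit)$ with essentially identical proofs; I will cite the corresponding chain complex analogs without further comment. The first step is to use the chain complex analogs of Theorem \ref{thm:comparing_homotopy_completion_towers}, Proposition \ref{prop:replacement_of_operads}, and Proposition \ref{prop:forgetful_functor_commutes_with_holim} to reduce to the case where $\capO$ satisfies Cofibrancy Condition \ref{CofibrancyCondition_chain_complexes}; a further reduction allows us to assume $X$ is cofibrant in $\AlgO$ (resp. $\LtO$).

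Next I would prove the implication that is simultaneously the content of part (b) and one direction of part (a). Assume $\QQ(X)=\tau_1\capO\circ_\capO X$ is $n$-connected. Then by the chain complex analog of Proposition \ref{prop:fattened_version_of_tower}, the fattened model $|\BAR(\tau_1\capO,\capO,X)|$ is $n$-connected. Using the refined layer identification (analog of Proposition \ref{prop:refined_bar_construction_calculation_for_homotopy_fiber}) together with the chain complex analogs of Theorem \ref{thm:homotopical_analysis_of_forgetful_functors}, Proposition \ref{prop:connectivity}, and Proposition \ref{prop:connectivity_of_simplicial_maps_spectra}, the layers $|\BAR(i_{k+1}\capO,\capO,X)|$ are at least $((k+1)n+k)$-connected for each $k\geq 1$. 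Feeding this into the cofiber sequences of Proposition \ref{prop:pushout_diagram_for_bar_construction_tower} and inducting up the tower shows that the natural maps
\begin{align*}
  H_i(\tau_{k+1}\capO\circ_\capO X)\rarrow H_i(\tau_k\capO\circ_\capO X)
\end{align*}
are isomorphisms for $i\leq (k+1)n+k$ and surjections for $i=(k+1)(n+1)$; in particular each stage $\tau_k\capO\circ_\capO X$ is $n$-connected. Strong convergence of the homotopy completion tower (chain complex analog of Theorem \ref{MainTheorem}(a) and its proof) yields $H_i X\iso H_i(\tau_k\capO\circ_\capO X)$ for $i\leq k$, so taking $k$ large enough shows $X$ is $n$-connected, proving this direction of (a). Specializing the above telescoping at $k=1$ (or equivalently applying the $2n+1$ bound directly) shows that the Hurewicz map $H_i X\to H_i\QQ(X)$ is an isomorphism for $i\leq 2n+1$ and surjection for $i=2n+2$, which is precisely part (b).

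The converse direction of (a) is the dual induction: if $X$ is $n$-connected, then the analogous connectivity estimates imply $|\BAR(\tau_k\capO,\capO,X)|$ is $n$-connected and both $|\BAR(\capO^{>k},\capO,X)|$ and $|\BAR(i_{k+1}\capO,\capO,X)|$ are $((k+1)n+k)$-connected. Combining Propositions \ref{prop:pushout_diagram_for_bar_construction_tower}, \ref{prop:pushout_diagram_for_bar_construction_tower_coaugmented}, and \ref{prop:fattened_version_of_tower} in their chain complex forms gives that $H_iX\to H_i(\tau_1\capO\circ_\capO X)$ is an isomorphism for $i\leq 2n+1$ and surjection for $i=2n+2$, so $\QQ(X)$ inherits $n$-connectedness from $X$.

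The main obstacle, which however is not a genuine difficulty, is verifying the chain complex analog of Proposition \ref{prop:connectivity}, namely that for $(-1)$-connected, cofibrant chain complexes $X,Y$ with a $\Sigma_t$-action, $X\tensor^\LL_{\Sigma_t}Y$ is $(m+n+1)$-connected when $X$ is $m$-connected and $Y$ is $n$-connected. In the spectral setting this was the content of Proposition \ref{prop:connectivity} whose proof proceeds via replacing one of the factors with a simplicial bar construction and applying the connectivity of realization. The same strategy works over $\Chaincx_\unit$: replace $\unit[\Sigma_t]$ by a cofibrant monoidal replacement $\capA'$, present $X\tensor^\LL_{\Sigma_t}Y$ as the realization of $\BAR^\tensor(X',\capA',Y')$, and use the Eilenberg--Moore style argument of Proposition \ref{prop:eilenberg_moore_chain_complexes} combined with the chain complex analog of Proposition \ref{prop:connectivity_of_simplicial_maps_spectra}. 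Once this is in place, the argument outlined above goes through and establishes both parts of the theorem.
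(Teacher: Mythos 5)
Your proposal is correct and follows essentially the same route as the paper, which explicitly leaves this argument to the reader as being identical (mutatis mutandis) to the proof of Theorem \ref{thm:hurewicz} in the spectral setting. The only superfluous step is the initial operad replacement via the analog of Proposition \ref{prop:replacement_of_operads}: in the chain complex version, Cofibrancy Condition \ref{CofibrancyCondition_chain_complexes} is already a hypothesis of the theorem, so only the reduction to cofibrant $X$ is needed.
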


The following is a commutative rings version of Theorem \ref{thm:relative_hurewicz}.

\begin{thm}
\label{thm:relative_hurewicz_commutative_rings}
Let $\capO$ be an operad in $\Chaincx_\unit$ such that $\capO[\mathbf{0}]$ is trivial. Assume that $\capO$ satisfies Homotopical Assumption \ref{HomotopicalAssumption_chain_complexes} and Cofibrancy Condition \ref{CofibrancyCondition_chain_complexes}. Let $\function{f}{X}{Y}$ be a map of $\capO$-algebras (resp. left $\capO$-modules) and $n\geq 0$. Assume that $\capO$ is $(-1)$-connected.
\begin{itemize}
\item[(a)] If $X,Y$ are $0$-connected, then $f$ is $n$-connected if and only if $f$ induces an $n$-connected map $\QQ(X)\rarrow\QQ(Y)$ on Quillen homology complexes.
\item[(b)] If $X,Y$ are $(-1)$-connected and $f$ is $(n-1)$-connected, then $f$ induces an $(n-1)$-connected map $\QQ(X)\rarrow\QQ(Y)$ on Quillen homology complexes.
\item[(c)] If $f$ induces an $n$-connected map $\QQ(X)\rarrow\QQ(Y)$ on Quillen homology complexes between $(-1)$-connected objects, then $f$ induces an $(n-1)$-connected map $X^\hwedge\rarrow Y^\hwedge$ on homotopy completion.
\item[(d)] If the Quillen homology complex $\QQ(X)$ is $(n-1)$-connected, then homotopy completion $X^\hwedge$ is $(n-1)$-connected.
\end{itemize}
Here, $\QQ(X)\rarrow\QQ(Y)$, $X^\hwedge\rarrow Y^\hwedge$ denote the natural induced zigzags in the category of $\capO$-algebras (resp. left $\capO$-modules) with all backward facing maps weak equivalences.
\end{thm}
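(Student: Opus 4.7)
The plan is to follow verbatim the structure of the proof of Theorem \ref{thm:relative_hurewicz}, after first establishing the chain complex analogs of the key technical ingredients used there. Specifically, I would first verify that the analogs of Theorem \ref{thm:comparing_homotopy_completion_towers} (comparison of homotopy completion towers), Proposition \ref{prop:fattened_version_of_tower} (the fattened tower), Theorem \ref{thm:calculating_fiber_of_induced_map} and Proposition \ref{prop:refined_bar_construction_calculation_for_homotopy_fiber} (calculation of the $k$-th layer as a simplicial bar construction over $\tau_1\capO$ applied to Quillen homology), together with the connectivity estimates of Propositions \ref{prop:connectivity_of_simplicial_maps_spectra}, \ref{prop:connectivity}, and \ref{prop:connectivity_of_smash_powers_of_maps}, and the homotopy limit short exact sequence of Proposition \ref{prop:short_exact_sequence}, all hold with identical statements in $\Chaincx_\unit$ under Homotopical Assumption \ref{HomotopicalAssumption_chain_complexes} and Cofibrancy Condition \ref{CofibrancyCondition_chain_complexes}. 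The arguments transcribe directly, using: (i) the monoidal model structure on $(\Chaincx_\unit,\tensor,\unit)$; (ii) the elementary connectivity bound $H_k(M\tensor^\LL_\unit N)=0$ for $k\leq m+n+1$ when $M$ is $m$-connected and $N$ is $n$-connected (via the K\"unneth spectral sequence, playing the role of Proposition \ref{prop:connectivity}); and (iii) the fact that the symmetric group quotients $(-)\tensor^\LL_{\Sigma_t}(-)$ inherit the same connectivity estimates when applied to cofibrant objects, by exactly the filtration argument used in the proof of Proposition \ref{prop:connectivity_of_smash_powers_of_maps}.

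With these chain complex analogs in place, I would then proceed as in the proof of Theorem \ref{thm:relative_hurewicz}. First, reduce to operads satisfying Cofibrancy Condition \ref{CofibrancyCondition_chain_complexes} via the (chain complex) comparison theorem for homotopy completion towers, and restrict to a map between cofibrant objects. For part (c), analyze the commutative diagram
\begin{align*}
\xymatrix{
  |\BAR(i_k\capO,\capO,X)|\ar[d]\ar[r] &
  |\BAR(\tau_k\capO,\capO,X)|\ar[d]\ar[r] &
  |\BAR(\tau_{k-1}\capO,\capO,X)|\ar[d] \\
  |\BAR(i_k\capO,\capO,Y)|\ar[r] &
  |\BAR(\tau_k\capO,\capO,Y)|\ar[r] &
  |\BAR(\tau_{k-1}\capO,\capO,Y)|
}
\end{align*}
in $\SymSeq$ (in chain complexes): the right-hand vertical map is $n$-connected at $k=2$ by assumption, and the left-hand vertical map is $n$-connected for each $k\geq 2$ by the layer calculation $i_k\capO\circ^\HH_{\tau_1\capO}\QQ(-)$ combined with the chain complex connectivity estimates for derived symmetric group quotients of smash powers. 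Induction on $k$ and pushout diagrams of type \eqref{eq:pushout_diagram_bar_constructions} then give that the middle vertical map is $n$-connected for every $k\geq 2$, and the chain complex analog of Proposition \ref{prop:holim_and_n_connected_maps} (via Proposition \ref{prop:short_exact_sequence}) yields $(n-1)$-connectedness on homotopy completion.

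Part (b) is immediate from the same layer connectivity estimates applied at $k=2$, which show the map $\tau_1\capO\circ_\capO X \rightarrow \tau_1\capO\circ_\capO Y$ is $(n-1)$-connected whenever $f$ is. Part (a) follows by combining (b) with the diagram
\begin{align*}
\xymatrix{
  H_iX\ar[r]\ar[d] & H_iY\ar[d]\\
  H_i(\tau_k\capO\circ_\capO X)\ar[r] & H_i(\tau_k\capO\circ_\capO Y)
}
\end{align*}
and the fact that strong convergence of the homotopy completion tower for $0$-connected cofibrant objects (the chain complex version of the proof of Theorem \ref{MainTheorem}(a), exactly as stated for this commutative rings context) makes the vertical maps isomorphisms for $k\geq i$. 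Part (d) is obtained by specializing the layer connectivity estimates to $X=Y$: when $\QQ(X)$ is $(n-1)$-connected, each layer $i_k\capO\circ^\HH_{\tau_1\capO}\QQ(X)$ is $(n-1)$-connected, so by induction on $k$ each stage $\tau_k\capO\circ_\capO X$ is $(n-1)$-connected, and the short exact sequence of Proposition \ref{prop:short_exact_sequence} transports this to $X^\hwedge$.

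The main obstacle will be verifying the chain complex analogs of Theorem \ref{thm:calculating_fiber_of_induced_map} and Proposition \ref{prop:refined_bar_construction_calculation_for_homotopy_fiber}, which require the entire homotopical analysis of the $\capO_A$ constructions, the forgetful functors, and the simplicial bar constructions (Sections \ref{sec:homotopical_analysis_forgetful_functors} and \ref{sec:homotopical_analysis_bar_constructions}) to go through in $\Chaincx_\unit$. However, these analyses are carried out in the paper in the generality of a monoidal model category satisfying Homotopical Assumption \ref{HomotopicalAssumption} (via Theorem \ref{thm:homotopical_analysis_of_forgetful_functors_monoidal} and Proposition \ref{prop:analysis_of_OA_monoidal_model_category}), so the chain complex versions follow immediately from Homotopical Assumption \ref{HomotopicalAssumption_chain_complexes} and Cofibrancy Condition \ref{CofibrancyCondition_chain_complexes}; the only genuine checking required concerns the monoidal pushout-product and mixing lemmas (the chain complex analogs of Proposition \ref{prop:mixing_flat_stable_with_positive_flat_stable_tensorcheck} and Theorem \ref{thm:mixing_flat_stable_with_positive_flat_stable}), which are standard in $(\Chaincx_\unit,\tensor,\unit)$.
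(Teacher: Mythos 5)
Your proposal is correct and is essentially the paper's own (intended) proof: the paper explicitly leaves this section to the reader on the grounds that the arguments are identical to the $\capR$-module case, and your plan of transcribing the proof of Theorem \ref{thm:relative_hurewicz} after verifying the chain-complex analogs of the comparison theorem, the layer calculations, the connectivity estimates, and the $\lim^1$ sequences is precisely that transcription. Your observation that the homotopical analysis of $\capO_A$, the forgetful functors, and the bar constructions is already carried out in the paper at the level of monoidal model categories satisfying Homotopical Assumption \ref{HomotopicalAssumption} correctly identifies why the transfer goes through under Homotopical Assumption \ref{HomotopicalAssumption_chain_complexes} and Cofibrancy Condition \ref{CofibrancyCondition_chain_complexes}.
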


The following is a commutative rings version of Theorem \ref{MainTheorem}.
\begin{thm}
\label{MainTheorem_commutative_rings}
Let $\capO$ be an operad in $\Chaincx_\unit$ such that $\capO[\mathbf{0}]$ is trivial. Assume that $\capO$ satisfies Homotopical Assumption \ref{HomotopicalAssumption_chain_complexes} and Cofibrancy Condition \ref{CofibrancyCondition_chain_complexes}. Let $\function{f}{X}{Y}$ be a map of $\capO$-algebras (resp. left $\capO$-modules). 
\begin{itemize}
\item[(a)] If $X$ is $0$-connected and $\capO$ is $(-1)$-connected, then the natural  coaugmentation $X\wequiv X^\hwedge$ is a weak equivalence.
\item[(b)] If the Quillen homology complex $\QQ(X)$ is $0$-connected and $\capO$ is $(-1)$-connected, then the homotopy completion spectral sequence
\begin{align*}
  E^1_{-s,t} &= H_{t-s}\Bigl(i_{s+1}\capO\circ^{\HH}_{\tau_1\capO}\bigl(\QQ(X)\bigr)\Bigr)
  \Longrightarrow
  H_{t-s}\bigl(X^{\hwedge}\bigr)\\
  \text{resp.}\quad
  E^1_{-s,t}[\mathbf{r}] &= H_{t-s}\Bigl(\bigl(i_{s+1}\capO\circ^{\HH}_{\tau_1\capO}\QQ(X)\bigr)[\mathbf{r}]\Bigr)
  \Longrightarrow
  H_{t-s}\bigl(X^{\hwedge}[\mathbf{r}]\bigr),\quad\text{$r\geq 0$},
\end{align*}
converges strongly.
\item[(c)] If $f$ induces a weak equivalence $\QQ(X)\wequiv\QQ(Y)$ on Quillen homology complexes, then $f$ induces a weak equivalence $X^\hwedge\wequiv Y^\hwedge$ on homotopy completion. 
\end{itemize}
\end{thm}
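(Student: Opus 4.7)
The proof will proceed in parallel to that of Theorem \ref{MainTheorem}, systematically replacing $(\ModR,\Smash,\capR)$ with $(\Chaincx_\unit,\tensor,\unit)$, the (positive) flat stable model structures with the model structure of Homotopical Assumption \ref{HomotopicalAssumption_chain_complexes}, and stable homotopy groups $\pi_*$ with homology groups $H_*$. The underlying strategy is to induct ``up the homotopy completion tower,'' using explicit descriptions of the layers in terms of simplicial bar constructions together with connectivity estimates, and then to invoke strong convergence. Unlike the $\capR$-module setting, where Proposition \ref{prop:replacement_of_operads} allowed us to reduce to an operad satisfying Cofibrancy Condition \ref{CofibrancyCondition}, here we have assumed Cofibrancy Condition \ref{CofibrancyCondition_chain_complexes} on $\capO$ outright, which is exactly what is needed so that all the bar construction arguments go through.

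The first step is to establish the chain complex analogs of the technical results used in the proof of Theorem \ref{MainTheorem}: (i) the fattened tower replacement (Proposition \ref{prop:fattened_version_of_tower}), coming from realization of the simplicial bar construction $\BAR(\tau_k\capO,\capO,X^c)$; (ii) the pushout/fibration sequences for the bar construction tower and its coaugmented version (Propositions \ref{prop:pushout_diagram_for_bar_construction_tower} and \ref{prop:pushout_diagram_for_bar_construction_tower_coaugmented}); (iii) the calculation of the $k$-th layer as $i_k\capO\circ^{\HH}_{\tau_1\capO}\QQ(X)$ (Theorem \ref{thm:calculating_fiber_of_induced_map}); and (iv) the connectivity estimates (Propositions \ref{prop:connectivity_of_simplicial_maps_spectra}, \ref{prop:connectivity}, and \ref{prop:connectivity_estimates}). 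Each of these has an essentially formal analog in the setting of $(\Chaincx_\unit,\tensor,\unit)$: realization of a simplicial chain complex corresponds to the normalized total complex, and connectivity estimates for $\tensor^\LL$ and $\tensor^\LL_{\Sigma_t}$ can be obtained by the same bar construction argument used in the proof of Proposition \ref{prop:connectivity} (working over $\unit$ instead of $\capR$), modulo Homotopical Assumption \ref{HomotopicalAssumption_chain_complexes} and Cofibrancy Condition \ref{CofibrancyCondition_chain_complexes}.

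For part (a), I would replicate the argument given for Theorem \ref{MainTheorem}(a) verbatim: cofibrantly replace $X$ in $\AlgO$ (resp. $\LtO$), pass to the fattened tower $\{|\BAR(\tau_k\capO,\capO,X)|\}$, and use the Milnor short exact sequence
\begin{align*}
  0\to\lim\nolimits^1_k H_{i+1}B_k\to H_i\holim_k B_k\to\lim\nolimits_k H_i B_k\to 0
\end{align*}
(the chain complex analog of Proposition \ref{prop:short_exact_sequence}), together with the connectivity estimates coming from Proposition \ref{prop:pushout_diagram_for_bar_construction_tower} and its coaugmented counterpart applied to $0$-connected $X$ over a $(-1)$-connected operad. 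Strong convergence here means that for each fixed $i$ the tower $\{H_i(\tau_k\capO\circ_\capO X)\}$ is eventually constant at $H_i X$, which is exactly what is needed to run the same diagram chase used in the $\capR$-module proof. Part (b) then follows by taking $n=0$ in the connectivity analysis used to establish the Hurewicz-type estimate (the chain complex analog of the first part of the proof of Theorem \ref{thm:hurewicz}), so that the homotopy completion spectral sequence---the homology spectral sequence of a fibrant replacement of the tower, reindexed as a second quadrant homologically graded spectral sequence---converges strongly in the sense specified. For part (c), we use the analog of Proposition \ref{prop:induction_argument_cofiber_sequence}: a $\QQ$-equivalence gives a weak equivalence $|\BAR(\tau_1\capO,\capO,X)|\simeq|\BAR(\tau_1\capO,\capO,Y)|$, and induction on $k$ using the bar-construction fibration sequence and the layer calculation (Theorem \ref{thm:calculating_fiber_of_induced_map}(a) in the chain complex setting) shows each $\tau_k\capO\circ_\capO X\to\tau_k\capO\circ_\capO Y$ is a weak equivalence; passing to homotopy limits finishes the argument.

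The main obstacle will be verifying cleanly that the technical scaffolding of Section \ref{sec:homotopical_analysis_of_the_completion_tower}---in particular the Reedy cofibrancy of $\BAR(\capO',\capO,Y)$ (Theorem \ref{thm:reedy_cofibrant_for_bar_constructions}) and the cofibrancy of realizations $|\BAR(\tau_1\capO,\capO,X)|$ in $\Alg_{\tau_1\capO}$ (resp. $\Lt_{\tau_1\capO}$)---carries over. In the $\capR$-module context this was handled via the delicate interplay between the flat stable and positive flat stable model structures (Section \ref{sec:homotopical_analysis_bar_constructions}). In the chain complex setting, since we have only assumed the existence of the model structure on $\Alg_{\tilde{\capO}}$ and $\Lt_{\tilde{\capO}}$ via Homotopical Assumption \ref{HomotopicalAssumption_chain_complexes}, we must check that the corresponding lifting arguments (mirroring Theorem \ref{thm:mixing_flat_stable_with_positive_flat_stable} and its consequences) go through using only the projective model structure on $\Chaincx_\unit^{\Sigma_r^\op}$ and Cofibrancy Condition \ref{CofibrancyCondition_chain_complexes}. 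This is the one place where the ``obvious changes'' are not entirely trivial, but it is exactly the context in which the statements in Homotopical Assumption \ref{HomotopicalAssumption_chain_complexes} have been set up, and the arguments in Section \ref{sec:homotopical_analysis_forgetful_functors} (especially Theorem \ref{thm:homotopical_analysis_of_forgetful_functors_monoidal} and Proposition \ref{prop:analysis_of_OA_monoidal_model_category}) were formulated precisely so that they apply in any monoidal model category satisfying these assumptions.
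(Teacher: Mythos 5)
Your proposal is correct and follows exactly the route the paper intends: Section \ref{sec:chain_complexes_over_a_commutative_ring} explicitly leaves the proof to the reader on the grounds that it is "essentially identical to the arguments above in the context of $\capR$-modules, modulo the obvious changes," and your translation dictionary (replacing $(\ModR,\Smash,\capR)$ by $(\Chaincx_\unit,\tensor,\unit)$, $\pi_*$ by $H_*$, and the flat stable cofibrancy input by Cofibrancy Condition \ref{CofibrancyCondition_chain_complexes} together with Homotopical Assumption \ref{HomotopicalAssumption_chain_complexes}) is precisely the intended one. You also correctly identify the one genuinely non-automatic point---carrying over the Reedy-cofibrancy and realization-cofibrancy scaffolding---and point to the monoidal-model-category versions of the relevant results (Theorem \ref{thm:homotopical_analysis_of_forgetful_functors_monoidal}, Proposition \ref{prop:analysis_of_OA_monoidal_model_category}) that the paper set up for exactly this purpose.
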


\bibliographystyle{plain}
\bibliography{HomotopyCompletion.bib}

\begin{thebibliography}{10}

\bibitem{Andre}
M.~Andr{\'e}.
\newblock {\em Homologie des alg\`ebres commutatives}.
\newblock Springer-Verlag, Berlin, 1974.
\newblock Die Grundlehren der mathematischen Wissenschaften, Band 206.

\bibitem{Arone_Ching}
G.~Arone and M.~Ching.
\newblock Operads and chain rules for the calculus of functors.
\newblock {\em Ast\'erisque}, (338):vi+158, 2011.

\bibitem{Arone_Kankaanrinta}
G.~Arone and M.~Kankaanrinta.
\newblock A functorial model for iterated {S}naith splitting with applications
  to calculus of functors.
\newblock In {\em Stable and unstable homotopy ({T}oronto, {ON}, 1996)},
  volume~19 of {\em Fields Inst. Commun.}, pages 1--30. Amer. Math. Soc.,
  Providence, RI, 1998.

\bibitem{Baker_Gilmour_Reinhard}
A.~Baker, H.~Gilmour, and P.~Reinhard.
\newblock Topological {A}ndr\'e-{Q}uillen homology for cellular commutative
  {$S$}-algebras.
\newblock {\em Abh. Math. Semin. Univ. Hambg.}, 78(1):27--50, 2008.

\bibitem{Baker_Richter}
A.~Baker and B.~Richter, editors.
\newblock {\em Structured ring spectra}, volume 315 of {\em London Mathematical
  Society Lecture Note Series}.
\newblock Cambridge University Press, Cambridge, 2004.

\bibitem{Basterra}
M.~Basterra.
\newblock Andr\'e-{Q}uillen cohomology of commutative {$S$}-algebras.
\newblock {\em J. Pure Appl. Algebra}, 144(2):111--143, 1999.

\bibitem{Basterra_Mandell}
M.~Basterra and M.~A. Mandell.
\newblock Homology and cohomology of {$E\sb \infty$} ring spectra.
\newblock {\em Math. Z.}, 249(4):903--944, 2005.

\bibitem{Basterra_Mandell_thh}
M.~Basterra and M.~A. Mandell.
\newblock Homology of {$E_n$} ring spectra and iterated {$THH$}.
\newblock {\em Algebr. Geom. Topol.}, 11(2):939--981, 2011.

\bibitem{Bousfield_Kan}
A.~K. Bousfield and D.~M. Kan.
\newblock {\em Homotopy limits, completions and localizations}.
\newblock Lecture Notes in Mathematics, Vol. 304. Springer-Verlag, Berlin,
  1972.

\bibitem{Carlsson_equivariant}
G.~Carlsson.
\newblock Equivariant stable homotopy and {S}ullivan's conjecture.
\newblock {\em Invent. Math.}, 103(3):497--525, 1991.

\bibitem{Carlsson}
G.~Carlsson.
\newblock Derived completions in stable homotopy theory.
\newblock {\em J. Pure Appl. Algebra}, 212(3):550--577, 2008.

\bibitem{Chataur_Rodriguez_Scherer}
D.~Chataur, J.~L. Rodr{\'{\i}}guez, and J.~Scherer.
\newblock Realizing operadic plus-constructions as nullifications.
\newblock {\em $K$-Theory}, 33(1):1--21, 2004.

\bibitem{Ching_duality}
M.~Ching.
\newblock Bar-cobar duality for operads in stable homotopy theory.
\newblock {\em J. Topol.}, 5(1):39--80, 2012.

\bibitem{Dugger_Isaksen}
D.~Dugger and D.~C. Isaksen.
\newblock Topological hypercovers and {$\Bbb A\sp 1$}-realizations.
\newblock {\em Math. Z.}, 246(4):667--689, 2004.

\bibitem{Dwyer_strong_convergence}
W.~G. Dwyer.
\newblock Strong convergence of the {E}ilenberg-{M}oore spectral sequence.
\newblock {\em Topology}, 13:255--265, 1974.

\bibitem{Dwyer_Greenlees_Iyengar}
W.~G. Dwyer, J.~P.~C. Greenlees, and S.~Iyengar.
\newblock Duality in algebra and topology.
\newblock {\em Adv. Math.}, 200(2):357--402, 2006.

\bibitem{Dwyer_Spalinski}
W.~G. Dwyer and J.~Spali{\'n}ski.
\newblock Homotopy theories and model categories.
\newblock In {\em Handbook of algebraic topology}, pages 73--126.
  North-Holland, Amsterdam, 1995.

\bibitem{EKMM}
A.~D. Elmendorf, I.~Kriz, M.~A. Mandell, and J.~P. May.
\newblock {\em Rings, modules, and algebras in stable homotopy theory},
  volume~47 of {\em Mathematical Surveys and Monographs}.
\newblock American Mathematical Society, Providence, RI, 1997.
\newblock With an appendix by M. Cole.

\bibitem{Elmendorf_Mandell}
A.~D. Elmendorf and M.~A. Mandell.
\newblock Rings, modules, and algebras in infinite loop space theory.
\newblock {\em Adv. Math.}, 205(1):163--228, 2006.

\bibitem{Fresse_lie_theory}
B.~Fresse.
\newblock Lie theory of formal groups over an operad.
\newblock {\em J. Algebra}, 202(2):455--511, 1998.

\bibitem{Fresse}
B.~Fresse.
\newblock Koszul duality of operads and homology of partition posets.
\newblock In {\em Homotopy theory: relations with algebraic geometry, group
  cohomology, and algebraic $K$-theory}, volume 346 of {\em Contemp. Math.},
  pages 115--215. Amer. Math. Soc., Providence, RI, 2004.

\bibitem{Fresse_modules}
B.~Fresse.
\newblock {\em Modules over operads and functors}, volume 1967 of {\em Lecture
  Notes in Mathematics}.
\newblock Springer-Verlag, Berlin, 2009.

\bibitem{Ginzburg_Kapranov}
V.~Ginzburg and M.~Kapranov.
\newblock Koszul duality for operads.
\newblock {\em Duke Math. J.}, 76(1):203--272, 1994.

\bibitem{Goerss_f2_algebras}
P.~G. Goerss.
\newblock On the {A}ndr\'e-{Q}uillen cohomology of commutative {${\bf F}\sb
  2$}-algebras.
\newblock {\em Ast\'erisque}, (186):169, 1990.

\bibitem{Goerss_Hopkins}
P.~G. Goerss and M.~J. Hopkins.
\newblock Andr\'e-{Q}uillen (co)-homology for simplicial algebras over
  simplicial operads.
\newblock In {\em Une d\'egustation topologique [Topological morsels]: homotopy
  theory in the Swiss Alps (Arolla, 1999)}, volume 265 of {\em Contemp. Math.},
  pages 41--85. Amer. Math. Soc., Providence, RI, 2000.

\bibitem{Goerss_Hopkins_moduli_spaces}
P.~G. Goerss and M.~J. Hopkins.
\newblock Moduli spaces of commutative ring spectra.
\newblock In {\em Structured ring spectra}, volume 315 of {\em London Math.
  Soc. Lecture Note Ser.}, pages 151--200. Cambridge Univ. Press, Cambridge,
  2004.

\bibitem{Goerss_Jardine}
P.~G. Goerss and J.~F. Jardine.
\newblock {\em Simplicial homotopy theory}, volume 174 of {\em Progress in
  Mathematics}.
\newblock Birkh\"auser Verlag, Basel, 1999.

\bibitem{Goerss_Schemmerhorn}
P.~G. Goerss and K.~Schemmerhorn.
\newblock Model categories and simplicial methods.
\newblock In {\em Interactions between homotopy theory and algebra}, volume 436
  of {\em Contemp. Math.}, pages 3--49. Amer. Math. Soc., Providence, RI, 2007.

\bibitem{Goodwillie_calc2}
T.~G. Goodwillie.
\newblock Calculus. {II}. {A}nalytic functors.
\newblock {\em $K$-Theory}, 5(4):295--332, 1991/92.

\bibitem{Goodwillie_calc3}
T.~G. Goodwillie.
\newblock Calculus. {III}. {T}aylor series.
\newblock {\em Geom. Topol.}, 7:645--711 (electronic), 2003.

\bibitem{Harper_Spectra}
J.~E. Harper.
\newblock Homotopy theory of modules over operads in symmetric spectra.
\newblock {\em Algebr. Geom. Topol.}, 9(3):1637--1680, 2009.

\bibitem{Harper_Bar}
J.~E. Harper.
\newblock Bar constructions and {Q}uillen homology of modules over operads.
\newblock {\em Algebr. Geom. Topol.}, 10(1):87--136, 2010.

\bibitem{Harper_Modules}
J.~E. Harper.
\newblock Homotopy theory of modules over operads and non-{$\Sigma$} operads in
  monoidal model categories.
\newblock {\em J. Pure Appl. Algebra}, 214(8):1407--1434, 2010.

\bibitem{Hess}
K.~Hess.
\newblock {A general framework for homotopic descent and codescent}.
\newblock {\em {\\ \tt arXiv:1001.1556v3 [math.AT]}}, 2010.

\bibitem{Hinich}
V.~Hinich.
\newblock Homological algebra of homotopy algebras.
\newblock {\em Comm. Algebra}, 25(10):3291--3323, 1997.
\newblock Erratum: {\tt{arXiv:math/0309453 [math.QA]}}.

\bibitem{Hirschhorn}
P.~S. Hirschhorn.
\newblock {\em Model categories and their localizations}, volume~99 of {\em
  Mathematical Surveys and Monographs}.
\newblock American Mathematical Society, Providence, RI, 2003.

\bibitem{Hornbostel}
J.~Hornbostel.
\newblock Preorientations of the derived motivic multiplicative group.
\newblock {\em \\ {\tt arXiv:1005.4546 [math.KT]}}, 2011.

\bibitem{Hovey}
M.~Hovey.
\newblock {\em Model categories}, volume~63 of {\em Mathematical Surveys and
  Monographs}.
\newblock American Mathematical Society, Providence, RI, 1999.

\bibitem{Hovey_Shipley_Smith}
M.~Hovey, B.~Shipley, and J.~H. Smith.
\newblock Symmetric spectra.
\newblock {\em J. Amer. Math. Soc.}, 13(1):149--208, 2000.

\bibitem{Jardine_generalized_etale}
J.~F. Jardine.
\newblock {\em Generalized \'etale cohomology theories}, volume 146 of {\em
  Progress in Mathematics}.
\newblock Birkh\"auser Verlag, Basel, 1997.

\bibitem{Johnson_McCarthy}
B.~Johnson and R.~McCarthy.
\newblock Deriving calculus with cotriples.
\newblock {\em Trans. Amer. Math. Soc.}, 356(2):757--803 (electronic), 2004.

\bibitem{Kriz_May}
I.~Kriz and J.~P. May.
\newblock Operads, algebras, modules and motives.
\newblock {\em Ast\'erisque}, (233):iv+145pp, 1995.

\bibitem{Kuhn}
N.~J. Kuhn.
\newblock Localization of {A}ndr\'e-{Q}uillen-{G}oodwillie towers, and the
  periodic homology of infinite loopspaces.
\newblock {\em Adv. Math.}, 201(2):318--378, 2006.

\bibitem{Kuhn_survey}
N.~J. Kuhn.
\newblock Goodwillie towers and chromatic homotopy: an overview.
\newblock In {\em Proceedings of the {N}ishida {F}est ({K}inosaki 2003)},
  volume~10 of {\em Geom. Topol. Monogr.}, pages 245--279. Geom. Topol. Publ.,
  Coventry, 2007.

\bibitem{Lawson}
T.~Lawson.
\newblock The plus-construction, {B}ousfield localization, and derived
  completion.
\newblock {\em J. Pure Appl. Algebra}, 214(5):596--604, 2010.

\bibitem{Lazarev}
A.~Lazarev.
\newblock Cohomology theories for highly structured ring spectra.
\newblock In {\em Structured ring spectra}, volume 315 of {\em London Math.
  Soc. Lecture Note Ser.}, pages 201--231. Cambridge Univ. Press, Cambridge,
  2004.

\bibitem{Livernet_thesis}
M.~Livernet.
\newblock {\em Homotopie rationnelle des alg\`ebres sur une op\'erade}.
\newblock PhD thesis, Universit\'e Louis Pasteur (Strasbourg I), 1998.
\newblock \\ Available at {\verb=http://www.math.univ-paris13.fr/~livernet/=}.

\bibitem{Livernet}
M.~Livernet.
\newblock On a plus-construction for algebras over an operad.
\newblock {\em $K$-Theory}, 18(4):317--337, 1999.

\bibitem{MacLane_homology}
S.~Mac~Lane.
\newblock {\em Homology}.
\newblock Classics in Mathematics. Springer-Verlag, Berlin, 1995.
\newblock Reprint of the 1975 edition.

\bibitem{MacLane_categories}
S.~Mac~Lane.
\newblock {\em Categories for the working mathematician}, volume~5 of {\em
  Graduate Texts in Mathematics}.
\newblock Springer-Verlag, New York, second edition, 1998.

\bibitem{Mandell}
M.~A. Mandell.
\newblock {$E\sb \infty$} algebras and {$p$}-adic homotopy theory.
\newblock {\em Topology}, 40(1):43--94, 2001.

\bibitem{Mandell_TAQ}
M.~A. Mandell.
\newblock Topological {A}ndr\'e-{Q}uillen cohomology and {$E_\infty$}
  {A}ndr\'e-{Q}uillen cohomology.
\newblock {\em Adv. Math.}, 177(2):227--279, 2003.

\bibitem{Mandell_May_Schwede_Shipley}
M.~A. Mandell, J.~P. May, S.~Schwede, and B.~Shipley.
\newblock Model categories of diagram spectra.
\newblock {\em Proc. London Math. Soc. (3)}, 82(2):441--512, 2001.

\bibitem{McCarthy_Minasian_preprint}
R.~McCarthy and V.~Minasian.
\newblock {On triples, operads, and generalized homogeneous functors}.
\newblock {\em {\tt arXiv:math/0401346v1 [math.AT]}}, 2004.

\bibitem{McClure_Schwanzl_Vogt}
J.~E. McClure, R.~Schw{\"a}nzl, and R.~Vogt.
\newblock {$THH(R)\cong R\otimes S^1$} for {$E_\infty$} ring spectra.
\newblock {\em J. Pure Appl. Algebra}, 121(2):137--159, 1997.

\bibitem{McClure_Smith_conjecture}
J.~E. McClure and J.~H. Smith.
\newblock A solution of {D}eligne's {H}ochschild cohomology conjecture.
\newblock In {\em Recent progress in homotopy theory (Baltimore, MD, 2000)},
  volume 293 of {\em Contemp. Math.}, pages 153--193. Amer. Math. Soc.,
  Providence, RI, 2002.

\bibitem{Miller}
H.~R. Miller.
\newblock The {S}ullivan conjecture on maps from classifying spaces.
\newblock {\em Ann. of Math. (2)}, 120(1):39--87, 1984.
\newblock Correction: \emph{Ann. of Math. (2)}, 121(3):605-609, 1985.

\bibitem{Minasian}
V.~Minasian.
\newblock Andr\'e-{Q}uillen spectral sequence for {$THH$}.
\newblock {\em Topology Appl.}, 129(3):273--280, 2003.

\bibitem{Quillen}
D.~Quillen.
\newblock {\em Homotopical algebra}.
\newblock Lecture Notes in Mathematics, No. 43. Springer-Verlag, Berlin, 1967.

\bibitem{Quillen_rings}
D.~Quillen.
\newblock On the (co-) homology of commutative rings.
\newblock In {\em Applications of Categorical Algebra (Proc. Sympos. Pure
  Math., Vol. XVII, New York, 1968)}, pages 65--87. Amer. Math. Soc.,
  Providence, R.I., 1970.

\bibitem{Rezk}
C.~Rezk.
\newblock {\em Spaces of Algebra Structures and Cohomology of Operads}.
\newblock PhD thesis, MIT, 1996.
\newblock Available at {\verb=http://www.math.uiuc.edu/~rezk/=}.

\bibitem{Richter}
B.~Richter.
\newblock An {A}tiyah-{H}irzebruch spectral sequence for topological
  {A}ndr\'e-{Q}uillen homology.
\newblock {\em J. Pure Appl. Algebra}, 171(1):59--66, 2002.

\bibitem{Rognes_topological_Galois}
J.~Rognes.
\newblock Galois extensions of structured ring spectra. {S}tably dualizable
  groups.
\newblock {\em Mem. Amer. Math. Soc.}, 192(898):viii+137, 2008.

\bibitem{Rognes_logarithmic}
J.~Rognes.
\newblock Topological logarithmic structures.
\newblock In {\em New topological contexts for {G}alois theory and algebraic
  geometry ({BIRS} 2008)}, volume~16 of {\em Geom. Topol. Monogr.}, pages
  401--544. Geom. Topol. Publ., Coventry, 2009.

\bibitem{Schwede_cotangent}
S.~Schwede.
\newblock Spectra in model categories and applications to the algebraic
  cotangent complex.
\newblock {\em J. Pure Appl. Algebra}, 120(1):77--104, 1997.

\bibitem{Schwede}
S.~Schwede.
\newblock {$S$}-modules and symmetric spectra.
\newblock {\em Math. Ann.}, 319(3):517--532, 2001.

\bibitem{Schwede_algebraic}
S.~Schwede.
\newblock Stable homotopy of algebraic theories.
\newblock {\em Topology}, 40(1):1--41, 2001.

\bibitem{Schwede_book_project}
S.~Schwede.
\newblock {\em An untitled book project about symmetric spectra}.
\newblock 2007,2009.
\newblock Available at: {\verb=http://www.math.uni-bonn.de/people/schwede/=}.

\bibitem{Schwede_homotopy_groups}
S.~Schwede.
\newblock On the homotopy groups of symmetric spectra.
\newblock {\em Geom. Topol.}, 12(3):1313--1344, 2008.

\bibitem{Schwede_Shipley}
S.~Schwede and B.~Shipley.
\newblock Algebras and modules in monoidal model categories.
\newblock {\em Proc. London Math. Soc. (3)}, 80(2):491--511, 2000.

\bibitem{Shipley_comm_ring}
B.~Shipley.
\newblock A convenient model category for commutative ring spectra.
\newblock In {\em Homotopy theory: relations with algebraic geometry, group
  cohomology, and algebraic $K$-theory}, volume 346 of {\em Contemp. Math.},
  pages 473--483. Amer. Math. Soc., Providence, RI, 2004.

\bibitem{Sullivan_MIT_notes}
D.~Sullivan.
\newblock {\em Geometric topology. {P}art {I}}.
\newblock Massachusetts Institute of Technology, Cambridge, Mass., 1971.
\newblock Localization, periodicity, and Galois symmetry, Revised version.

\bibitem{Sullivan_genetics}
D.~Sullivan.
\newblock Genetics of homotopy theory and the {A}dams conjecture.
\newblock {\em Ann. of Math. (2)}, 100:1--79, 1974.

\bibitem{Turner}
J.~M. Turner.
\newblock On simplicial commutative algebras with vanishing {A}ndr\'e-{Q}uillen
  homology.
\newblock {\em Invent. Math.}, 142(3):547--558, 2000.

\bibitem{Weibel}
C.~A. Weibel.
\newblock {\em An introduction to homological algebra}, volume~38 of {\em
  Cambridge Studies in Advanced Mathematics}.
\newblock Cambridge University Press, Cambridge, 1994.

\end{thebibliography}

\end{document}